\documentclass[10pt]{amsart}
\usepackage[nohug,heads=littlevee]{diagrams_arxiv}
\usepackage{mathrsfs}
\usepackage{bm}
\usepackage{stmaryrd}
\usepackage[bbgreekl]{mathbbol}
\usepackage{amssymb}
\usepackage{relsize}
\usepackage[latin1]{inputenc}
\usepackage{xr-hyper}
\usepackage{xcolor}  
\usepackage{etoolbox}

\usepackage[hypertexnames=false]{hyperref} 
    \definecolor{darkblue}{rgb}{0,0,.85} 
    \definecolor{darkred}{rgb}{0.84,0,0}
    \hypersetup{colorlinks = true,
            linkcolor = darkblue,
            urlcolor  = darkblue,
            citecolor = darkred,
            anchorcolor = darkblue,
            bookmarksdepth=3}

\usepackage[
  left=2cm,
  top=2.5cm,
  right=2cm,
  bottom=1.4in,
  asymmetric,
  marginparwidth=3.2cm,
  marginparsep=6pt
]{geometry}

\usepackage{mathtools}
\usepackage[
  backend=biber,
  maxbibnames=99,
  style=alphabetic,
  maxalphanames=4,
  minalphanames=4,
  backref=true,
  backrefstyle=two,
  useprefix=true
]{biblatex}
\DefineBibliographyStrings{english}{
  backrefpage  = {cited on p\adddot},
  backrefpages = {cited on pp\adddot}
}
\usepackage{enumitem}
\usepackage{tensor}
\usepackage{tikz-cd}

\usepackage{nicefrac}
\newcommand{\wh}[1]{\widehat{#1}}
\newcommand{\mc}[1]{\mathcal{#1}}
\newcommand{\ms}[1]{\mathscr{#1}}
\newcommand{\mr}[1]{\mathrm{#1}}
\newcommand{\ov}[1]{\overline{#1}}

\newcommand*\isomto{%
        \,\xrightarrow{\raisebox{-0.2 em}{\smash{\ensuremath{\sim}}}}\,%
    }

    \newcommand*\isomfrom{%
        \,\xleftarrow{\raisebox{-0.2 em}{\smash{\ensuremath{\sim}}}}\,%
    }

 \newcommand{\stacks}[1]{\cite[\href{https://stacks.math.columbia.edu/tag/#1}{Tag~#1}]{stacks-project}}

 \newcommand{\smallprism}{\Prism}
\newcommand{\smallN}{{{\mathsmaller{\mathcal{N}}}}}

\DeclareMathOperator{\Spd}{Spd}

\renewcommand{\ll}{\llbracket}
\newcommand{\rr}{\rrbracket}

\newcommand{\proet}{{\mathrm{pro\acute{e}t}}}

    \makeatletter
    \def\paragraph{\@startsection{paragraph}{4}%
    \z@\z@{-\fontdimen2\font}%
    {\normalfont\bfseries}}
    \makeatother

\usepackage{ifoddpage}
\usepackage{ragged2e} 



\newcommand{\defnword}[1]{\textbf{#1}}
\newcommand{\comment}[1]{}
\newcommand{\on}[1]{\operatorname{#1}}
\newcommand{\bb}[1]{\mathbb{#1}}
\newcommand{\mb}[1]{\mathbf{#1}}
\newcommand{\mf}[1]{\mathfrak{#1}}
\newcommand{\ord}{\on{ord}}

\DeclareSymbolFontAlphabet{\mathbb}{AMSb}
\DeclareSymbolFontAlphabet{\mathbbl}{bbold}

\numberwithin{equation}{subsubsection}
\newtheorem{introthm}{Theorem}

\newtheorem{proposition}[subsubsection]{Proposition}
\newtheorem{theorem}[subsubsection]{Theorem}
\newtheorem{conjecture}[subsubsection]{Conjecture}
\newtheorem*{thm*}{Theorem}
\newtheorem{lemma}[subsubsection]{Lemma}
\newtheorem*{lem*}{Lemma}
\newtheorem{corollary}[subsubsection]{Corollary}

\theoremstyle{definition}
\newtheorem{definition}[subsubsection]{Definition}

\newtheorem{notation}[subsubsection]{Notation}
\newtheorem{convention}[subsubsection]{Convention}
\newtheorem{construction}[subsubsection]{Construction}
\newtheorem{setup}[subsubsection]{Setup}

\makeatletter
\BeforeBeginEnvironment{setup}{%
  \let\setup@oldthesubsubsection\thesubsubsection
  \renewcommand{\thesubsubsection}{%
    \thesection
    \ifnum\value{subsection}>0 .\arabic{subsection}\fi
    .\arabic{subsubsection}%
  }%
}
\AfterEndEnvironment{setup}{%
  \let\thesubsubsection\setup@oldthesubsubsection
}
\makeatother

\newtheorem{example}[subsubsection]{Example}

\newtheorem{remark}[subsubsection]{Remark}
\newtheorem{assumption}[subsubsection]{Assumption}

\newtheorem*{assump*}{Assumption}

\newarrow{Equals}{=}{=}{}{=}{}
\newarrow{Onto}{-}{-}{-}{-}{>>}
\newcommand{\Square}[8]{\begin{diagram}
  #1&\rTo^{#2}&#3\\
  \dTo^{#4}&&\dTo_{#5}\\
  #6&\rTo_{#7}&#8
\end{diagram}.
}

\newcommand{\Real}{\mathbb{R}}
\newcommand{\Int}{\mathbb{Z}}
\newcommand{\Nat}{\mathbb{N}}
\newcommand{\Comp}{\mathbb{C}}
\newcommand{\Aff}{\mathbb{A}}
\newcommand{\Adele}{\mathbb{A}}
\newcommand{\Field}{\mathbb{F}}
\newcommand{\Rat}{\mathbb{Q}}
\newcommand{\Lie}{\on{Lie}}
\newcommand{\Dieu}{\mathbb{D}}

\newcommand{\Map}{\on{Map}}

\newcommand{\Prism}{\mathbbl{\Delta}}
\newcommand{\BT}[2][\mathcal{G},\mu]{\mathrm{BT}^{#1}_{#2}}
\newcommand{\Wind}[3][\mathcal{G},\mu]{\mathrm{Wind}^{#1}_{#2,#3}}

\makeatletter
\newcommand{\colim@}[2]{%
  \vtop{\m@th\ialign{##\cr
    \hfil$#1\operator@font colim$\hfil\cr
    \noalign{\nointerlineskip\kern1.5\ex@}#2\cr
    \noalign{\nointerlineskip\kern-\ex@}\cr}}%
}
\newcommand{\colim}{%
  \mathop{\mathpalette\colim@{\rightarrowfill@\textstyle}}\nmlimits@
}
\makeatother

\newcommand{\gr}{\on{gr}}

\newcommand{\id}{\on{id}}

\newcommand{\Rees}{\mathcal{R}}
\newcommand{\defn}{\overset{\mathrm{defn}}{=}}

\newcommand{\Rg}{\mathscr{O}}
\newcommand{\Reg}[1]{\Rg_{#1}}

\newcommand{\Hom}{\on{Hom}}
\newcommand{\End}{\on{End}}

\newcommand{\Aut}{\on{Aut}}

\newcommand{\Gal}{\on{Gal}}

\newcommand{\Gm}{\mathbb{G}_m}

\newcommand{\Gmh}[1]{\mathbb{G}_{m,#1}}

\newcommand{\dR}{\on{dR}}

\newcommand{\Sig}{\mathfrak{S}}
\newcommand{\pow}[1]{\ll #1\rr}

\newcommand{\Rep}{\on{Rep}}

\newcommand{\Spec}{\on{Spec}}
\newcommand{\Spf}{\on{Spf}}
\newcommand{\Spa}{\on{Spa}}

\newcommand{\et}{\on{\acute{e}t}}
\newcommand{\ab}{\on{ab}}

\newcommand{\Fil}{\on{Fil}}

\newcommand{\mx}{\mathfrak{m}}

\newcommand{\rank}{\on{rank}}

\newcommand{\Sh}{\on{Sh}}
\newcommand{\Ss}{\mathscr{S}}

\newcommand{\Res}{\on{Res}}
\newcommand{\an}{\on{an}}

\newcommand{\ad}{\on{ad}}
\newcommand{\GSp}{\on{GSp}}

\newcommand{\GL}{\on{GL}}

\newcommand{\SO}{\on{SO}}

\bibliography{syntomic}

\begin{document}
\title[Canonicity of integral models of Shimura varieties]{On canonicity for integral models of Shimura varieties with hyperspecial level}
\author{Keerthi Madapusi}
\address{Keerthi Madapusi\\
Department of Mathematics\\
Maloney Hall\\
Boston College\\
Chestnut Hill, MA 02467\\
USA}
\email{madapusi@bc.edu}
\author{Alex Youcis}
\address{Alex Youcis, Department of Mathematics, Bahen Centre, University of Toronto, Toronto, ON, M5S 2E4, Canada}
\email{alex.youcis@gmail.com}

\begin{abstract}
We give a new definition---and in some cases, a new construction---of integral canonical models of Shimura varieties that uses the notion of an aperture appearing in work of Gardner--Madapusi on some conjectures of Drinfeld. This applies to Shimura varieties of pre-abelian type at odd primes of hyperspecial level, recovering and extending previous work of Kisin, Kim--Madapusi and Imai--Kato--Youcis, but also to exceptional Shimura varieties for large enough primes. The characterization in the exceptional case is \emph{a priori} different from the one recently shown by Bakker--Shankar--Tsimerman, and recovers many of their results, such as the existence of prime-to-$p$ Hecke operators, the non-emptiness of the $\mu$-ordinary stratum and the theory of the canonical lift. In fact, we give a uniform proof of the non-emptiness of \emph{all} possible Newton strata, and of the non-emptiness of Ekedahl--Oort strata and central leaves as well. An important ingredient in the proofs is a generalization of Tate's full faithfulness theorem for $p$-divisible groups to the context of apertures. This leads to a mapping property for the integral canonical model that characterizes maps into it from all normal, flat and excellent schemes over $\mathbb{Z}_{(p)}$.
\end{abstract}

\maketitle

\setcounter{tocdepth}{1}

\tableofcontents

\section{Introduction}

The purpose of this article is to lay the groundwork for applying the results of~\cite{gmm} to global questions. In doing so, we give a \emph{$p$-adic} characterization of integral canonical models of Shimura varieties at unramified primes, including for those of exceptional type considered by Bakker--Shankar--Tsimerman~\cite{bst}. This characterization is \emph{a priori} different from the $\ell$-adic ones given in \emph{op.\@ cit.\@} and others, and is closely related to those introduced by Pappas~\cite{pappas_integral} and Imai--Kato--Youcis~\cite{imai2023prismatic}. For Shimura varieties of abelian type, our method involves a new interpretation of Kisin's twisting construction, and we further extend it to give models for Shimura data of \emph{pre}-abelian type.\footnote{Pre-abelian Shimura data have a simple type-theoretic characterization: the adjoint datum is of type $A,B, C, D^{\Real}$, or $D^{\mathbb{H}}$. The first non-trivial example of one that is not of abelian type is a type $D^{\mathbb{H}}$ Shimura datum with simply connected derived subgroup---see Example \ref{ex:non-abelian_pre-abelian}.} 

\subsection{Definition and characterization}

The key observation animating this paper is that apertures, the mixed characteristic analogues of the local shtukas of Genestier--Lafforgue, introduced by Drinfeld~\cite{drinfeld2024shimurian} and studied in~\cite{gmm}, have very strong connections with the integral theory of Shimura varieties, and in fact characterize the `good' models completely, reifying an expectation of Drinfeld~\cite{drinfeld2024shimurian}. Significant progress along these lines was made in~\cite{imai2023prismatic}, where it is shown that the prismatic realization on an integral model of a Shimura variety of abelian type is sufficient to pin down the model at \emph{finite} level, without considering the whole prime-to-$p$ Hecke tower. 

Our primary aim here is to situate this result in what we believe to be the right context, and to substantially broaden the kinds of Shimura data it applies to by exploiting the geometry made available by the theory in~\cite{gmm}.

\begin{setup}
  Fix a prime $p$. An \defnword{unramified Shimura datum} is a triple $(G,\mathcal{G},X)$ where $(G,X)$ is a Shimura datum and $\mathcal{G}$ is a reductive model of $G$ over $\Int_p$. We set $K_p = \mathcal{G}(\Int_p)$, and (for now) only consider level subgroups $K\subset G(\Adele_f)$ of the form $K_pK^p$ for a \emph{neat} compact open subgroup $K^p\subset G(\Adele_f^p)$. We call the tuple $(G,\mathcal{G},X,K)$ an \defnword{unramified Shimura tuple}. We also fix a place $v\vert p$ of the reflex field $E$ of $(G,X)$. Over the associated Shimura variety $\Sh_K$, we have a canonical $\mathcal{G}^c(\Int_p)$-local system $\mathbf{Et}_{K,p}$, where $\mathcal{G}^c$ is a reductive model over $\bb{Z}_p$ of the cuspidal quotient $G^c$ (see \S\ref{subsec:etale_realization_global}). 
\end{setup}

\begin{remark}
  The Shimura datum gives an $E_v$-rational conjugacy class of cocharacters $\{\mu_v\}$. We in fact have a representative $\mu_v\colon\Gmh{\Reg{E_v}}\to \mathcal{G}_{\Reg{E_v}}$ for $\{\mu_v\}$. We may view each $\mu_v$ as a cocharacter of $\mathcal{G}^c_{\Reg{E_v}}$. We can then attach for each $1\leqslant n<\infty$ the smooth formal algebraic stacks $\BT[\mathcal{G}^c,\mbox{-}\mu_v]{n}$ over $\Reg{E_v}$ constructed in~\cite{gmm}. We can also consider their inverse limit $\BT[\mathcal{G}^c,\mbox{-}\mu_v]{\infty}$. 
  
  These stacks generalize the formal stacks of (truncated) polarized $p$-divisible groups of height $2g$ in the case of the Siegel Shimura data, and give a group theoretic way of defining `$p$-divisible groups with $\mathcal{G}^c$-structure' without actually requiring one to work with $p$-divisible groups. Moreover, they can be constructed from data that is not necessarily of `Hodge type', and so in cases where there might not be any direct connection with $p$-divisible groups at all.
\end{remark}

\begin{remark}
  For any $p$-adic formal algebraic space $\mathfrak{X}$, there is a canonical \'etale realization functor
  \[
    T_{\et}\colon \BT[\mathcal{G}^c,\mbox{-}\mu_v]{\infty}(\mathfrak{X})\to \mathrm{Loc}_{\mathcal{G}^c(\Int_p)}(\mathfrak{X}_\eta),
  \]
  where the right-hand side is the groupoid of $\mathcal{G}^c(\Int_p)$-local systems over the generic fiber of $\mathfrak{X}$. When $(G,X)$ is the Siegel Shimura datum, then what we have here is simply the functor taking a polarized $p$-divisible group to its Tate module viewed as a symplectic space.
\end{remark}

\begin{definition}
[Integral canonical models]
\label{introdef:crys_icms}
 An integral model\footnote{By `integral model', we will mean a separated \emph{algebraic space} of finite type over $\Reg{E,(v)}$ whose generic fiber is equipped with an isomorphism to $\Sh_K$ as an $E$-scheme. In practice, the models we encounter will be quasi-projective schemes, but, even for Siegel modular varieties, knowing this requires a non-trivial result of Mumford.} $\Ss_K$ for $\Sh_K$ over $\Reg{E,(v)}$ with $v$-adic completion $\widehat{\Ss}_{K}$ is an \defnword{integral canonical model} (\defnword{ICM} for short) if the following conditions hold:
 \begin{enumerate}
 \item (Serre--Tate property) There exists a formally \'etale map of $p$-adic formal stacks over $\Spf\Reg{E,v}$
  \[
      \varpi\colon \widehat{\Ss}_{K}\to \BT[\mathcal{G}^c,\mbox{-}\mu_v]{\infty},
  \]
     such that the induced $\mc{G}^c(\bb{Z}_p)$-local system on the generic fiber is isomorphic to $\mathbf{Et}_{K,p}$.
  \item (Pointwise criterion) For any mixed characteristic $(0,p)$ complete discrete valuation field $F$ over $\Reg{E_v}$ with perfect residue field and $x\in \Sh_K(F)$\footnote{Here and elsewhere, we will write $\Sh_K(F)$ for what should properly be written as $(\Sh_K\otimes_E E_v)(F)$.}, the following are equivalent:
 \begin{enumerate}
      \item $x\in \Ss_K(\Reg{F})$;
    \item $\mathbf{Et}_{K,p}|_x$ is a crystalline $\mathcal{G}^c(\Int_p)$-local system;
    \item $\mathbf{Et}_{K,p}|_x$ is a potentially crystalline $\mathcal{G}^c(\Int_p)$-local system.
  \end{enumerate}
 \end{enumerate}
\end{definition}

With this definition in hand, we get the following quite strong mapping property.
\begin{introthm}
[Mapping property of ICMs, Theorem~\ref{thm:mapping_property}]
\label{introthm:mapping_property}
  Let $\Ss_K$ be an ICM over $\Reg{E,(v)}$. Suppose that we have a normal flat separated excellent algebraic space $\mathcal{X}$ over $\Reg{E,(v)}$. Then giving a map $\mathcal{X}\to \Ss_K$ of $\Reg{E,(v)}$-schemes is equivalent to giving the following data:
\begin{enumerate}
    \item A map $f\colon \mathcal{X}_\eta\to \Sh_K$;
    \item A map $\eta\colon \widehat{\mathcal{X}}\to \BT[\mathcal{G}^c,\mbox{-}\mu_v]{\infty}$ with the associated $\mathcal{G}^c(\Int_p)$-local system on $\wh{\mc{X}}_\eta$ isomorphic to $\mathbf{Et}_{K,p}|_{\wh{\mc{X}}_\eta}$.
\end{enumerate}
In particular, the ICM $\Ss_K$ is unique up to unique isomorphism, and is functorial in the unramified tuple $(G,\mathcal{X},X,K)$.
\end{introthm}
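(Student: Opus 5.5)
The forward direction is immediate: a map $\mathcal{X}\to\Ss_K$ restricts on generic fibers to $f$, and composing its $p$-adic completion with $\varpi$ gives $\eta$, with the local system condition inherited from condition (1) of Definition~\ref{introdef:crys_icms}. For the converse, the plan has three steps: extend $f$ to a map $\mathcal{X}\to\Ss_K$ using normality and a pointwise criterion; then show the extension is compatible with $\eta$ by a Tate-type full faithfulness statement for apertures (the generalization of Tate's theorem announced in the abstract); finally, deduce uniqueness and functoriality.

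\emph{Step 1: extending $f$.} Let $\Gamma\subset \mathcal{X}\times_{\Reg{E,(v)}}\Ss_K$ be the scheme-theoretic closure of the graph of $f$. Let $\Gamma^\nu$ be its normalization, which is finite over $\Gamma$ by excellence. We will show that $\Gamma^\nu\to\mathcal{X}$ is an isomorphism. This has two parts.

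\emph{Properness.} We use the valuative criterion. By excellence and normality it suffices to test on discrete valuation rings $R$ with $\Spec R\to\mathcal{X}$ whose generic point maps into $\mathcal{X}_\eta$. Only the mixed characteristic ones need work. After completing and passing to a perfection of the residue field (using a Cohen-type extension $F$, with perfect residue field, of the completed fraction field), the point $x=f(\Spec F)$ has $\mathbf{Et}_{K,p}|_x\cong T_{\et}(\eta|_{\Spf\Reg{F}})$. This is crystalline, since \'etale realizations of $\BT[\mathcal{G}^c,\mbox{-}\mu_v]{\infty}$-objects over $\Reg{F}$ are crystalline. The pointwise criterion (2) then gives $x\in\Ss_K(\Reg{F})$. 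Descending from $\Reg F$ to $R$ uses separatedness of $\Ss_K$ and faithfully flat descent along $R\to\Reg{F}$.

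\emph{Quasi-finiteness and isomorphism.} Because $\Gamma^\nu\to\mathcal{X}$ is proper, birational, and $\mathcal{X}$ is normal, Zariski's main theorem reduces the claim to showing that the fibers are finite, and in fact singletons, over the special fiber. Over $\Gamma$, both $\varpi$ and $\eta$ give objects of $\BT[\mathcal{G}^c,\mbox{-}\mu_v]{\infty}$ on $\widehat{\Gamma^\nu}$, and their \'etale realizations agree on the generic fiber.

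The key input, which we expect to be the main obstacle, is that $T_{\et}$ is fully faithful on $\BT[\mathcal{G}^c,\mbox{-}\mu_v]{\infty}(\mathfrak{Y})$ for normal flat $p$-adic formal schemes $\mathfrak{Y}$. This is the analogue of Tate's theorem (de Jong's, in the normal case). Granting it, the two objects are isomorphic on $\widehat{\Gamma^\nu}$. Now formal \'etaleness of $\varpi$ implies that two points of $\Gamma^\nu$ over the same point of $\mathcal{X}$, with isomorphic aperture data, have isomorphic complete local rings over $\widehat{\mathcal{O}}_{\mathcal{X}}$. Hence the fiber is discrete and $\Gamma^\nu\to\mathcal{X}$ is étale-locally an isomorphism on completions, so it is an isomorphism. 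This yields the extension $\mathcal{X}\to\Ss_K$, and shows that its composite with $\varpi$ is isomorphic to $\eta$.

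For the full faithfulness itself, I would first try to reduce via the Tannakian description of apertures in~\cite{gmm} to the $\GL_n$ case (prismatic $F$-crystals/Breuil--Kisin modules), where the result is known for quasi-syntomic normal bases by Bhatt--Scholze and Anschütz--Le~Bras. The extra $\mu$-filtration data are recovered from the generic fiber, by normality.

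\emph{Step 2: uniqueness and functoriality.} The equivalence is faithful because $\mathcal{X}$ is flat, so maps are determined on the dense generic fiber; the correspondence is therefore an equivalence of sets. Applying the theorem to $\mathcal{X}=\Ss'_K$, another ICM (which is normal since it is formally étale over the smooth stack $\BT[\mathcal{G}^c,\mbox{-}\mu_v]{\infty}$, and excellent since it is of finite type), gives maps in both directions. These compose to the identity by uniqueness, so ICMs are unique up to unique isomorphism. Functoriality in the tuple follows by the same argument, pushing $\eta$ forward along the induced map of stacks $\BT[\mathcal{G}^c,\mbox{-}\mu_v]{\infty}\to\BT[\mathcal{G}'^c,\mbox{-}\mu'_v]{\infty}$, which is compatible with $T_{\et}$.
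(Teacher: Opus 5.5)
Your architecture matches the paper's, which deduces Theorem~\ref{thm:mapping_property} from Proposition~\ref{prop:uniqueness_maps}. That proof also takes the (normalized) closure of the graph in $\mathcal{X}\times\Ss_K$. It identifies the two apertures on it via Tate full faithfulness (Theorem~\ref{thm:full_faithfulness}), uses formal unramifiedness of $\varpi$ to get quasi-finiteness, and uses the pointwise criterion for the remaining half.

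The only structural difference is the order of steps. The paper first shows quasi-finiteness and concludes, by Zariski's main theorem and $\eta$-normality, that the map to $\mathcal{X}$ is an open immersion. It then gets surjectivity by lifting each closed point of the special fiber, using flatness, to an $\Reg{F}$-point with algebraically closed residue field; this point lies in the normalized graph by the pointwise criterion. Your valuative-criterion argument for properness is a valid substitute, but slightly more work. (You may restrict from the start to complete discrete valuation rings with algebraically closed residue field, which removes your Cohen-extension and descent step.)

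The genuine gap is in the quasi-finiteness step. Knowing that different points of $\Gamma^\nu$ over the same point of $\mathcal{X}$ have isomorphic complete local rings says nothing about discreteness of the fiber: every closed point of $\mathbb{P}^1_\kappa$ has complete local ring $\kappa[[t]]$. The ``isomorphism on completions'' you then invoke is also not established.

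What is needed is a statement at a single point $y\in\Gamma^\nu(\kappa)$ over $(x_1,x_2)$: the map $\widehat{\Rg}_{\mathcal{X},x_1}\to\widehat{\Rg}_{\Gamma^\nu,y}$ is finite. Let $z$ be the common image of $y$ in $\BT[\mathcal{G}^c,\mbox{-}\mu_v]{\infty}(\kappa)$.
\begin{enumerate}
\item Formal unramifiedness of $\varpi$ makes $\widehat{\Rg}_z\to\widehat{\Rg}_{\Ss_K,x_2}$ surjective.
\item The isomorphism of apertures on $\widehat{\Gamma^\nu}$ makes the two composites $\widehat{\Rg}_z\to\widehat{\Rg}_{\Gamma^\nu,y}$ (through $x_1$ and through $x_2$) agree. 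Hence the image of $\widehat{\Rg}_{\Ss_K,x_2}$ in $\widehat{\Rg}_{\Gamma^\nu,y}$ lies in that of $\widehat{\Rg}_{\mathcal{X},x_1}$.
\item The map $\Gamma^\nu\to\mathcal{X}\times\Ss_K$ is finite, an ingredient your argument never uses at this point. So $\widehat{\Rg}_{\mathcal{X},x_1}\hat{\otimes}_{W(\kappa)}\widehat{\Rg}_{\Ss_K,x_2}\to\widehat{\Rg}_{\Gamma^\nu,y}$ is finite, and its image equals that of $\widehat{\Rg}_{\mathcal{X},x_1}$.
\end{enumerate}
Thus $\widehat{\Rg}_{\Gamma^\nu,y}$ is finite over $\widehat{\Rg}_{\mathcal{X},x_1}$, so $y$ is isolated in its fiber. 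Then $\Gamma^\nu\to\mathcal{X}$ is proper, quasi-finite, and an isomorphism over the dense open $\mathcal{X}_\eta$, with $\mathcal{X}$ normal, which finishes the argument. Only formal unramifiedness of $\varpi$ is used.

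Citing Theorem~\ref{thm:full_faithfulness} as a black box is fine, but the proof you sketch for it would not work in this generality. $\widehat{\mathcal{X}}$ is only normal, not quasi-syntomic, and full faithfulness of $T_{\et}$ on $F$-gauges is only available over $\Reg{K}$ and base formal $\Reg{K}$-schemes (Theorem~\ref{thm:guo_reinecke}). The paper instead builds the algebraization $\BT[\mathcal{G},\mu,\mathrm{alg}]{n}$ with affine diagonal (Theorem~\ref{thm:algebraization}), so the Isom scheme between two apertures is affine. A Hartogs argument then reduces to discrete valuation rings, which are covered by ones with perfect residue field, where Theorem~\ref{thm:guo_reinecke} applies.
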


\begin{remark}
    The proof of the theorem is based on an argument of Pappas~\cite[Theorem 7.1.7]{pappas_integral}. His use of Tate's full faithfulness theorem for $p$-divisible groups is replaced with that of a version for apertures; see \S\ref{introsub:tate} below. A closely related argument also appears in~\cite[Theorem 3.13]{imai2023prismatic}.
\end{remark}

\begin{remark}
   The mapping property in Theorem~\ref{introthm:mapping_property} was essentially established for the natural integral models of Siegel modular varieties by Vasiu~\cite{MR2043397}: In this case, one can interpret $\BT[\mathcal{G}^c,-\mu_v]{\infty}$ as the formal stack of polarized $p$-divisible groups of some fixed height.
\end{remark}

\begin{remark}
    [Mapping property for Pappas--Rapoport models]
\label{introrem:mapping_PR_models}
The method of proof of Theorem~\ref{introthm:mapping_property} combined with a fully faithfulness result of Pappas--Rapoport~\cite{pappas2023padic} establishes a mapping property for integral models at \emph{parahoric} levels satisfying the axioms from \emph{loc.\@ cit.\@}; see Theorem~\ref{thm:PR ICM-mapping-property}.  
\end{remark}

For $p>3$, we have a simpler mapping property for ICMs with respect to smooth $\Reg{E,(v)}$-schemes. 
\begin{introthm}
  [Pointwise mapping property, Proposition~\ref{prop:pointwise_smooth_canonical}]
\label{introthm:refined_neronian}
Suppose that $p>3$, that $\Ss_K$ is an ICM for $\Sh_K$, and that $\mathcal{X}$ is a smooth $\Reg{E,(v)}$-scheme with generic fiber $X$. Then a map $f\colon X\to \Sh_K$, extends to a map $\mathcal{X}\to \Ss_K$ if and only if, for all finite extensions $F/E_v$ and all $x\in \mathcal{X}(\Reg{F})$ the local system $\mathbf{Et}_{K,p}|_x$ is crystalline. In particular, if $\Ss_K$ is \emph{proper} over $\Reg{E,(v)}$, then it is a N\'eron model for $\Sh_K$ as in \emph{~\cite[Definition 1]{blr}}: For any smooth $\Reg{E,(v)}$-scheme $S$, every map $S_\eta\to \Sh_K$ of $E$-schemes extends to a map $S\to \Ss_K$.
\end{introthm}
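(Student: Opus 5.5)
The approach is to reduce to Theorem~\ref{introthm:mapping_property}. Since $\mathcal{X}$ is smooth over $\Reg{E,(v)}$, it is normal, flat, separated (we may work Zariski locally and glue, using separatedness of $\Ss_K$ and the uniqueness part of the mapping property) and excellent. The generic fiber map $f$ is given, so it suffices to construct a map $\eta\colon\widehat{\mathcal{X}}\to\BT[\mathcal{G}^c,\mbox{-}\mu_v]{\infty}$ whose \'etale realization on $\widehat{\mathcal{X}}_\eta$ is $f^*\mathbf{Et}_{K,p}$. The necessity direction is immediate: if $f$ extends, then any $x\in\mathcal{X}(\Reg{F})$ maps to $\Ss_K(\Reg{F})$, and the pointwise criterion (2) in Definition~\ref{introdef:crys_icms} says $\mathbf{Et}_{K,p}|_x$ is crystalline.

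For sufficiency, I would first show that $f^*\mathbf{Et}_{K,p}$ on the adic generic fiber of $\widehat{\mathcal{X}}$ is a crystalline $\mathcal{G}^c(\Int_p)$-local system in the relative sense, meaning it is crystalline after pushing out along any representation, or equivalently that it is associated with a prismatic $F$-crystal with $\mathcal{G}^c$-structure. Since $\widehat{\mathcal{X}}$ is smooth over $\Reg{E_v}$, I plan to use the known criterion (Guo--Reinecke, or Moon--Tan, building on Faltings) that a $\Int_p$-local system on the generic fiber of a smooth formal scheme is crystalline if and only if it is crystalline at all classical points, i.e.\ at all $\Reg{F}$-points for finite $F/E_v$. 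Applied to faithful representations of $\mathcal{G}^c$, and combined with the Tannakian formalism, this gives a prismatic $\mathcal{G}^c$-torsor with Frobenius on $\widehat{\mathcal{X}}$. The hypothesis checks exactly these classical points. Note that points of $\mathcal{X}(\Reg{F})$ with $F$ finite over $E_v$ suffice, since $\mathcal{X}$ smooth implies that every point of the rigid generic fiber lies in the tube of a closed point, and such points lift.

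Next, I would pass from this $G$-$\mu$-display-type object to an aperture, i.e.\ a point of $\BT[\mathcal{G}^c,\mbox{-}\mu_v]{\infty}(\widehat{\mathcal{X}})$. Here the Hodge-type bound should be verified pointwise, and the equivalence results of~\cite{gmm} between apertures and prismatic $F$-gauges with $\mathcal{G}^c$-structure of type $-\mu_v$ should be used. This is where I expect $p>3$ to enter: over a smooth base, with Hodge--Tate weights in a small range relative to $p$, Fontaine--Laffaille / Faltings-type comparisons make the relevant filtrations behave well and identify the relative crystalline local system with an aperture whose \'etale realization is $f^*\mathbf{Et}_{K,p}$. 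The cocharacter type of the aperture is constant on connected components, and it equals $-\mu_v$ because it does so at a classical point, via the Hodge--Tate cocharacter of $\mathbf{Et}_{K,p}$.

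The main obstacle is this middle step: upgrading a relative crystalline $\mathcal{G}^c$-local system on a smooth formal scheme to a genuine $\mathcal{G}^c$-aperture of type $-\mu_v$, with the filtration correct in families rather than just pointwise. If the general comparison needs ramification or weight bounds, I would use $p>3$ (minuscule weights, $p-2$ bound) to invoke Fontaine--Laffaille theory over the unramified base. Once $\eta$ exists, Theorem~\ref{introthm:mapping_property} yields the extension. The N\'eron statement then follows. If $\Ss_K$ is proper, then by the valuative criterion each $x\in S_\eta(F)$ with $x\in S(\Reg{F})$ extends to $\Ss_K(\Reg{F})$, so the local system there is crystalline by criterion (2). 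The first part then gives the extension $S\to\Ss_K$.
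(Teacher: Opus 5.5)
Your outer structure matches the paper's proof, and the necessity direction and the N\'eron corollary are fine as you wrote them. Like the paper, you reduce via Theorem~\ref{thm:mapping_property} to producing a $(\mathcal{G}^c,\mbox{-}\mu_v)$-aperture on $\widehat{\mathcal{X}}$ lifting $f^*\mathbf{Et}_{K,p}$. You then upgrade pointwise crystallinity to relative crystallinity; the relevant criterion is Guo--Yang's (Remark~\ref{rem:guo_yang}), not Guo--Reinecke's. The type $\mbox{-}\mu_v$ comes from Remark~\ref{rem:pointwise_lifts}, and the paper finishes with Theorem~\ref{thm:p_comp_smooth_characterization} for $e=1$, where $2e<p-1$ is exactly $p>3$.

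The gap is that the step you call the main obstacle is the whole content of Theorem~\ref{thm:p_comp_smooth_characterization}, and your sketch both misplaces it and does not close it. Relative crystallinity plus the Tannakian formalism does \emph{not} give a prismatic $\mathcal{G}^c$-torsor with Frobenius. By Guo--Reinecke and Du--Liu--Moon--Shimizu it only gives an \emph{analytic} prismatic $F$-crystal, i.e.\ (over a Breuil--Kisin frame $\Sig_R$) a torsor on $U=\Spec\Sig_R\setminus V(p,E(u))$. The real difficulty is extending this across the codimension-two locus $V(p,E(u))$. Once the bundle attached to a single faithful representation extends, producing the aperture and checking that its Hodge filtration has type $\mbox{-}\mu_v$ is formal. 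So the issue is not the filtration in families.

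Your proposed fix is Fontaine--Laffaille theory, with the $p-2$ bound supplied by minusculeness, and it does not work in the required generality. Minusculeness bounds the $\mu$-weights only on the adjoint representation, which is faithful only when $\mathcal{G}^c$ is adjoint. For general $\mathcal{G}^c$ (the cuspidal quotient usually has nontrivial centre), a faithful representation need not have weights in an interval of length at most $p-2$. Moreover, Fontaine--Laffaille categories are not closed under tensor products, so the Tannakian argument cannot be run inside them.

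The missing idea is the paper's d\'evissage:
\begin{enumerate}
\item Reduce to completions at maximal ideals with algebraically closed residue field (Lemma~\ref{lem:reduction_to_complete_local}). There, extendability of the torsor over $\Sig_T$ is equivalent to its triviality on $U$.
\item Treat adjoint groups by the Liu--Moon argument applied to the adjoint representation: its weights are $\{-1,0,1\}$, so $er=2<p-1$.
\item Treat tori using that reflexive rank-one modules over the regular ring $\Sig_T$ are invertible.
\item Treat general reductive $\mathcal{G}$ through the central isogeny $\mathcal{G}\to\mathcal{G}^{\ad}\times\mathcal{G}^{\ab}$. The pushed-out torsor extends by the two cases above, hence is trivial on $U$. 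So the original torsor comes from a torsor under the finite flat central kernel $\mathcal{Z}$, which extends over $\Sig_T$ by purity for torsors under finite flat group schemes.
\end{enumerate}
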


In terms of existence of such models, we have:
\begin{introthm}
[Pre-abelian-type canonical models, Theorem~\ref{thm:abelian}]
\label{introthm:abelian}
  Suppose that one of the following conditions holds: 
  \begin{enumerate}
      \item $(G,X)$ is of abelian type;
      \item $p>2$ and $(G,X)$ is of pre-abelian type.
  \end{enumerate}
   Then, for $K^p$ sufficiently small, $\Sh_K$ admits an ICM over $\Reg{E,(v)}$. In fact, in case (1), the models constructed by Kisin~\emph{\cite{kisin:abelian}} (and Kim--Madapusi~\emph{\cite{Kim2016-fb}} for $p=2$) are ICMs as defined here.
\end{introthm}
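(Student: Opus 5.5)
We need to construct ICMs in two cases: (1) abelian type, where we check that the Kisin and Kim--Madapusi models satisfy Definition~\ref{introdef:crys_icms}; (2) pre-abelian type with $p>2$, where we must produce a model. The natural route has three stages: the Hodge-type case first, then the twisting construction, then an extension to pre-abelian data via the adjoint datum.

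\emph{Hodge type.} Choose a Hodge embedding $(G,X)\hookrightarrow(\GSp(V),S^\pm)$ with $\mathcal{G}=\GL(\Lambda)$-stabilizer of tensors $s_\alpha\subset\Lambda^\otimes$, as in Kisin. Let $\Ss_K$ be the normalization of the closure of $\Sh_K$ in the Siegel integral model. The universal abelian scheme gives a $p$-divisible group with crystalline tensors $s_{\alpha,\mathrm{crys}}$, or better prismatic tensors via Ansch\"utz--Le Bras. The theory of~\cite{gmm} should identify $\BT[\mathcal{G}^c,\mbox{-}\mu_v]{\infty}(R)$, for $p$-complete $R$, with $p$-divisible groups plus Frobenius-compatible prismatic tensors whose fibers are of type $(\mathcal{G},\mu)$. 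Accepting this, one defines $\varpi$ on $\widehat{\Ss}_K$.

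Formal \'etaleness of $\varpi$ is the deformation-theoretic heart. At a closed point, the complete local ring of $\Ss_K$ is Kisin's explicit deformation ring: the formal completion of $\mathcal{G}\cdot\mu$-type filtration space, i.e.\ the Grothendieck--Messing/Faltings deformation of the $p$-divisible group with tensors. This matches the deformation ring of $\BT[\mathcal{G}^c,\mbox{-}\mu_v]{\infty}$, which~\cite{gmm} shows is formally smooth of dimension $\dim \mathcal{G}/P_\mu$. Hence $\varpi$ induces isomorphisms on complete local rings at points of the special fiber, which for locally noetherian stacks gives formal \'etaleness. Compatibility of the \'etale realization with $\mathbf{Et}_{K,p}$ follows from the de Rham comparison of tensors.

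For the pointwise criterion, (a)$\Rightarrow$(b) holds because abelian schemes with good reduction have crystalline Tate modules, and the tensors are crystalline. (c)$\Rightarrow$(a) uses N\'eron--Ogg--Shafarevich together with Grothendieck's semistable reduction. Potential crystallinity gives potential good reduction, hence a point of the Siegel model over $\Reg{F'}$ extending over $\Reg{F}$ by Galois descent of the N\'eron model, and normality of $\Ss_K$ then lifts the point into $\Ss_K$.

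\emph{Abelian and pre-abelian type.} Given $(G,X)$, choose a Hodge-type datum $(G_1,X_1)$ with the same adjoint datum and $G_1^{\mathrm{der}}\to G^{\mathrm{der}}$ a central isogeny, unramified at $p$. Kisin writes $\Ss_K(G)$ as a quotient of $\mathscr{A}(G_{\Int_{(p)}})\times^{\mathscr{A}(G_{1})^\circ}\Ss(G_1)^+$. We will reinterpret this twisting through apertures. The $\BT{\infty}$ stacks depend on $\mathcal{G}^c$ only through adjoint data, up to a torsor under the $Z$-part. Pushing $\varpi_1$ along $\mathcal{G}_1\to\mathcal{G}_1^{\ad}=\mathcal{G}^{\ad}$ and twisting by the central torus torsor gives $\varpi$ on each connected component. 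Formal \'etaleness is preserved because the twisting is by an \'etale torsor and $\BT{}$ for $\mathcal{G}^c$ over $\BT{}$ for $\mathcal{G}^{\ad}$ is controlled by a torus part, which is formally \'etale at the level of deformations. The pointwise criterion transfers because crystallinity of a $\mathcal{G}^c$-local system is detected on $\mathcal{G}^{\ad}$ plus the abelian part, which is automatically crystalline for special points or CM tori.

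For pre-abelian type not of abelian type, we need a Hodge-type $(G_1,X_1)$ sharing only the adjoint datum. Since the map $G^{\mathrm{der}}\to G^{\ad}$ may not factor through $G_1^{\mathrm{der}}$, we instead construct $\Ss_K(G)$ as the normalization of $\Ss(G^{\ad})$-type quotients pulled back along the finite map $\Sh_K(G)\to\Sh(G^{\ad})$. This uses $p>2$ to ensure $\pi_1(G^{\mathrm{der}})$ is prime to $p$, so that the relevant covers are étale over $\Reg{E,(v)}$. The ICM properties are then checked using the Tate-type full faithfulness for apertures to descend $\varpi$.

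\emph{Main obstacle.} I expect the hardest step to be the twisting for pre-abelian data: showing that $\varpi$ descends and remains formally \'etale, and that normalization does not destroy smoothness. I would first try to exhibit $\Ss_K(G)$ as an étale quotient of a torsor over $\Ss(G_1)^+$ under a finite group of order prime to $p$.
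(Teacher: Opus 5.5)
Your Hodge-type step imports Kisin's (and Kim--Madapusi's) smoothness and local structure, and matches Kisin's deformation ring with that of $\BT[\mathcal{G}^c,\mbox{-}\mu_v]{\infty}$. That is a workable alternative, essentially the Imai--Kato--Youcis argument. You still need that the \'etale tensors extend to $F$-gauge tensors cutting out a $\mathcal{G}$-torsor; given smoothness, Theorem~\ref{thm:guo_reinecke} and Remark~\ref{rem:algebraicity} supply this. The paper instead re-derives the local structure from aperture deformation theory (Propositions~\ref{prop:reduction_of_structure_group} and~\ref{prop:Gmu_aperture_pointwise_condition_normal}).

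The genuine gap is your pre-abelian step, on three counts.
\begin{enumerate}
\item The claim that $p>2$ makes the central kernel of $G^{\mathrm{der}}\to G^{\ad}$ prime to $p$ is false. Factors of type $B$, $C$, $D$ have $2$-power centers, but a type-$A$ factor such as $\mathrm{SU}(n)$ contributes $\mu_n$. Such a factor can sit beside a $D^{\mathbb{H}}$ factor in a non-abelian, pre-abelian datum.
\item More seriously, even a cover of degree prime to $p$ can ramify along the special fiber (adjoin $p^{1/\ell}$). So nothing you say shows that the normalization of $\Ss_{K^{\ad}}$ in $\Sh_K$ is \'etale.
\item Even granting \'etaleness, you must \emph{lift} the $\mathcal{G}^{\ad}$-aperture to a $\mathcal{G}^c$-aperture. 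Concretely, you must extend over the integral model the generic-fiber trivializations of the $\mathcal{Z}(\Int/p^n\Int)$-gerbe and the $H^1_{\mathrm{fppf}}(\Spec\Int/p^n\Int,\mathcal{Z})$-torsor from Proposition~\ref{prop:central_covers}. The gerbe part is again a question of extending finite \'etale covers across the special fiber. Tate full faithfulness is a uniqueness statement: it gives descent (Corollary~\ref{cor:full_faithfulness}) but cannot produce this lift.
\end{enumerate}

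What is missing is the ascent argument (Theorem~\ref{thm:ascent}, via Proposition~\ref{prop:ascending_along_central_covers}). By purity of the branch locus, only the generic points of the special fiber matter, and these are reached through the dense $\mu$-ordinary locus. Canonical lifts of $\mu$-ordinary points are CM (Theorem~\ref{thm:canonical_lifts_are_CM}, which needs limpidity). Their preimages in $\Sh_K$ are CM points with finite \'etale models over $p$-unramified rings (Lemma~\ref{lem:canonical_lift_unramified}, Remark~\ref{rem:cm_points_central_covers_unramified}). These strongly unramified, crystalline lifts are exactly the input to Lemmas~\ref{lem:good_reduction_torsors} and~\ref{lem:gerbe_extending}. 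The hypothesis $p>2$ is used only there, to exclude a $\Int/p\Int$-cover with purely inseparable degree-$p$ residue extension at a generic point.

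Your twisting step likewise skips what makes it work. The quotient of $\Ss_{K_{1,p}}$ by the relevant subgroup $\Delta$ of $\ker\big(\mathcal{A}(\mathcal{G}_{1,(p)})\to\mathcal{A}(\mathcal{G}^{\ad}_{(p)})\big)$ must be \'etale, so that $\varpi$ descends and stays formally \'etale. For that, the finite quotient of $\Delta$ acting at each level must act \emph{freely} on the integral model. ``Order prime to $p$'' is neither available nor the relevant condition. If you cite Kisin (and Kim--Madapusi at $p=2$) for freeness, case (1) goes through as an alternative route.

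The paper proves freeness intrinsically via the WR condition. It lifts $\gamma$ to $\mathcal{G}(\Reg{F,(p)})$ for a $p$-unramified totally real $F$ (Lemma~\ref{lem:lifting_over_totally_real}). It then uses freeness of the central action on Weil-restricted Hodge-type ICMs, which comes from prime-to-$p$ quasi-isogenies of abelian varieties (Proposition~\ref{prop:hodge_type_mwr}, Lemma~\ref{lem:free_action_Delta_G_Gad}).

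Note also the paper's overall architecture. Descent (Theorem~\ref{thm:descent_for_icms}) is used only to reach adjoint abelian-type data. Every pre-abelian datum with $p>2$, including every abelian one, is then obtained by ascent. At $p=2$, non-adjoint abelian data go through Lovering's central cover, which embeds into a product of Hodge-type and CM data.
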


We also have the following version of a recent theorem of Bakker--Shankar--Tsimerman~\cite{bst}. 
\begin{introthm}
[Exceptional integral canonical models, \S\ref{sub:bst}]
\label{introthm:exceptional}
  For an unramified Shimura tuple $(G,\mathcal{G},X,K)$, and all $p$ sufficiently large, an ICM $\Ss_K$ exists over $\Reg{E,(v)}$ for any $v\vert p$. 
\end{introthm}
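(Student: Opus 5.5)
Our strategy is to take the integral models of Bakker--Shankar--Tsimerman~\cite{bst} as the candidate and verify the two axioms of Definition~\ref{introdef:crys_icms} for them. We assume the stated mapping property (Theorem~\ref{introthm:mapping_property}) and the facts about $\BT[\mathcal{G}^c,\mbox{-}\mu_v]{\infty}$ from~\cite{gmm}.

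First we fix the model. For $p$ large relative to $(G,X)$, \cite{bst} constructs a smooth quasi-projective model $\Ss_K$ over $\Reg{E,(v)}$. It is the normalization of the closure of $\Sh_K$ in a suitable model, cut out using the crystalline nature of $\mathbf{Et}_{K,p}$ at points. On this model the $p$-adic local system extends to a filtered $F$-crystal with $\mathcal{G}^c$-structure on $\widehat{\Ss}_K$, and the relative $p$-adic Riemann--Hilbert correspondence gives the crystalline comparison. We take from \emph{loc.\@ cit.\@} the following inputs:
\begin{itemize}
\item smoothness;
\item the fact that the integral points over $\Reg{F}$, for $F$ unramified (or absolutely unramified after base change), are exactly the crystalline points;
\item the existence of the $\mathcal{G}^c$-torsor with Hodge filtration of type $\mu_v$, whose Kodaira--Spencer map is an isomorphism.
\end{itemize}

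Second, we build $\varpi$. The aim is to promote the filtered $G$-$F$-crystal on $\widehat{\Ss}_K$ to an aperture, that is, to an object of $\BT[\mathcal{G}^c,\mbox{-}\mu_v]{\infty}(\widehat{\Ss}_K)$. For smooth $p$-adic formal schemes and $p$ large, we would use the description in~\cite{gmm} of apertures over smooth bases via filtered $\mathcal{G}^c$-crystals with Frobenius of type $-\mu_v$, in the style of Fontaine--Laffaille or prismatic $F$-gauges, together with the compatibility of the prismatic \'etale realization with the crystalline comparison. The small range of Hodge--Tate weights relative to $p$ is what makes this Fontaine--Laffaille-type equivalence available, and this is where the hypothesis that $p$ is large enters. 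The \'etale realization of the resulting aperture is then $\mathbf{Et}_{K,p}$. Formal \'etaleness of $\varpi$ follows from two facts: the deformation theory of $\BT[]{\infty}$ is governed by $\mathrm{Lie}$ of the $(-\mu_v)$-unipotent part, through a Grothendieck--Messing type statement in~\cite{gmm}; and the Kodaira--Spencer isomorphism together with smoothness identifies the tangent map as an isomorphism.

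Third, we check the pointwise criterion. The implication (a)$\Rightarrow$(b) comes from pulling back the aperture along $x$, since an aperture over $\Reg{F}$ has crystalline \'etale realization. The implication (b)$\Rightarrow$(c) is trivial. For (c)$\Rightarrow$(a), we use the fact that potentially crystalline points become crystalline over a finite extension $F'$, where \cite{bst} (or the valuative extension property via Néron-type arguments, a smooth model, and purity) extends $x$ to $\Reg{F'}$. We then use Galois descent together with the fact that the crystalline aperture over $\Reg{F}$ is determined by its \'etale realization, by the aperture version of Tate's theorem cited in~\S\ref{introsub:tate}, to descend the $\Reg{F'}$-point to an $\Reg{F}$-point; separatedness supplies uniqueness.

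The main obstacle is the second step: matching the \cite{bst} crystalline data with an honest point of $\BT[\mathcal{G}^c,\mbox{-}\mu_v]{\infty}$ globally on $\widehat{\Ss}_K$, including the $p$-adic completeness and the compatibility of the Frobenius. We would handle this locally on small affine opens with Frobenius lifts, and then glue using the stack property of $\BT[]{\infty}$.
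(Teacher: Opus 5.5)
The gap is in your third step, in the claim that a point $x\in\Sh_K(F')$ with $\mathbf{Et}_{K,p}\vert_x$ crystalline extends to $\Reg{F'}$.

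\begin{itemize}
\item The field $F'$ over which a potentially crystalline representation becomes crystalline is in general ramified over $E_v$. In any case, the definition of an ICM asks for (b)$\Rightarrow$(a) over arbitrarily ramified $F$.
\item The only input you took from \cite{bst} covers unramified $F$, so it does not reach $F'$. The fallback ``N\'eron-type arguments, a smooth model, and purity'' is not an argument.
\item No local property of the model can supply this step. Delete a non-empty closed subset of the special fiber of $\Ss_K$. The result is still smooth, still carries the aperture, and $\varpi$ is still formally \'etale. But crystalline points no longer extend, for instance $W(\overline{k(v)})$-lifts of the deleted points (their \'etale realizations are crystalline, and separatedness forces any extension to pass through the deleted point).
\end{itemize}

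What is missing is a global properness input that controls points specializing ``to infinity''. Unless you can cite a precise theorem of \cite{bst} giving the extension of crystalline points over arbitrarily ramified fields, step 3 is unproven. The final descent from $\Reg{F'}$ to $\Reg{F}$ is not the issue: for a separated model it is essentially formal.

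The paper gets this input from a log-smooth toroidal compactification $\overline{\Ss}_K$.
\begin{itemize}
\item For $p$ large, the Fontaine--Laffaille module of \cite{pila2024canonicalheights} lifting $\Fil^\bullet_{\mathrm{Hdg}}\mathbf{dR}_K(\Lambda)$ extends to a logarithmic Fontaine--Laffaille module on $\overline{\Ss}_K$. Its underlying log connection is the canonical extension.
\item By properness, any $x\in\Sh_K(F)$ extends to $\overline{\Ss}_K(\Reg{F})$.
\item If the specialization lies on the boundary, then $\mathbf{Et}_{K,p}\vert_x$ is semistable with non-zero monodromy $N$, hence not potentially crystalline.
\end{itemize}
So potentially crystalline points specialize into $\Ss_K$ and lie in $\Ss_K(\Reg{F})$ directly, with no passage to $F'$.

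Your second step also differs from the paper, though this is secondary. You would promote crystal data to an aperture through a Fontaine--Laffaille-type equivalence with $\mathcal{G}^c$-structure, which you would have to supply yourself. The paper uses the Fontaine--Laffaille module only to get crystallinity of $\mathbf{Et}_{K,p}$ at all classical points of $\widehat{\Ss}_{K,\eta}$. It then applies Theorem~\ref{thm:p_comp_smooth_characterization}, which needs only $p>3$, together with \cite{guo2024pointwisec} and Lemma~\ref{lem:versality_condition}. That route handles the $\mathcal{G}^c$-structure through purity for the Breuil--Kisin module, with no tensor bookkeeping.
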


\begin{remark}
  Given Theorem~\ref{introthm:abelian}, the content of Theorem~\ref{introthm:exceptional} is of course that it applies to the so-called exceptional Shimura data that are not of pre-abelian type and hence cannot be related to the moduli of abelian varieties in any reasonably direct way. As in~\cite{bst},  the models here are obtained from a spreading out procedure, which leads to the lack of control on prime $p$.
\end{remark}

See \S\ref{introsub:comparison} below for a more detailed comparison with existing results.

\begin{remark}
    [Yet another canonical model?]
\label{rem:yacm}
The experienced reader may be wondering at this point if the world really needs yet another notion of integral canonical models for Shimura varieties. The arguments we would proffer in favor of the definition here---which we believe to be the \emph{correct} one---are as follows: 
\begin{enumerate}
    \item First, of course, one has the quite pleasing mapping property from Theorem~\ref{introthm:mapping_property}.

   \item Accounting systematically for the prime-to-$p$ realizations, one can deduce various non-trivial properties for such models using \emph{only} their defining features, without invoking any moduli interpretation.
   
   \item Finally, in forthcoming work of the first author with Si Ying Lee, one will find that various results about integral models of abelian type (Rapoport--Zink uniformization, existence of CM lifts, Igusa stacks, Langlands--Rapoport type point counting formulas, etc.), for which the use of abelian varieties has hitherto seemed essential, are in fact \emph{properties} of limpid ICMs as defined in the next subsection.
\end{enumerate}
\end{remark}

\subsection{Limpid ICMs and their properties}
\label{introsub:limpid_icms}

To present our further results, we will need a slightly enhanced definition of an ICM.

\begin{remark}
[Prime-to-$p$ \'etale realizations]
    For $\ell\neq p$, we also have $K^c_{\ell}$-local systems $\mathbf{Et}_{K,\ell}$ over $\Sh_K$, with $K^c_{\ell}\subset G^c(\Rat_\ell)$ a compact open subgroup given by the image of $K$ under the projection $G(\Adele_f)\to G^c(\Rat_\ell)$.
\end{remark}

\begin{definition}
[Limpid integral canonical models]
 An ICM $\Ss_K$ for $\Sh_K$ over $\Reg{E,(v)}$ is \defnword{limpid} if, for all $\ell\neq p$, the $K^c_\ell$-local system $\mathbf{Et}_{K,\ell}$ extends over $\Ss_K$.\footnote{Note that this is a \emph{property} of the unique ICM when it exists.}
\end{definition}

\begin{remark}
    [Pre-abelian ICMs are limpid]
\label{rem:preabelian_limpid}
 In the context of Theorem~\ref{introthm:abelian}, ICMs are in fact limpid. This follows from our proof, which uses the three theorems of this subsection---theorems that do not use any sort of moduli interpretation---to reduce to the essentially standard fact that the moduli of principally polarized abelian schemes with appropriate level structure are limpid ICMs for Siegel modular varieties.
\end{remark}

\begin{remark}
    [Exceptional ICMs are limpid]
\label{introrem:exceptional_icms_limpid}
 The ICMs appearing in Theorem~\ref{introthm:exceptional} are also limpid for large enough $p$: This follows from work of Klevdal--Patrikis~\cite{Klevdal2025-ka}.
\end{remark}

\begin{introthm}
[Integral canonical models from reduction of structure group, \S\ref{sub:reduction_of_structure_group}]
\label{introthm:reduction_of_structure}
Suppose that we have a map $(G,\mathcal{G},X,K)\to (G^\sharp,\mathcal{G}^\sharp,X^\sharp,K^\sharp)$ of unramified Shimura tuples with $\mathcal{G}\to \mathcal{G}^\sharp$ a closed immersion of reductive group schemes. If $\Sh_{K^\sharp}$ admits an ICM over $\Reg{E^\sharp,(v^\sharp)}$ (for the place $v^\sharp\mid p$ of the reflex field $E^\sharp\subset E$ lying under $v$), then so does $\Sh_K$. Moreover, if the ICM for $\Sh_{K^\sharp}$ is limpid, then so is the one for $\Sh_K$.
\end{introthm}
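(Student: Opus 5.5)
The plan is to construct $\Ss_K$ as the normalization of the closure of $\Sh_K$ inside $\Ss_{K^\sharp}\otimes\Reg{E,(v)}$, and then check the two axioms of Definition~\ref{introdef:crys_icms}. Concretely, the map of tuples gives a finite map $\Sh_K\to\Sh_{K^\sharp,E}$. After shrinking $K^p$ if needed it is a closed immersion, but finiteness is all that is required. Let $\Ss^-_K$ be the normalization of $\Ss_{K^\sharp}\otimes_{\Reg{E^\sharp,(v^\sharp)}}\Reg{E,(v)}$ in $\Sh_K$, which is a separated algebraic space of finite type. The local systems satisfy $\mathbf{Et}^\sharp_{K^\sharp,p}|_{\Sh_K}\simeq \mathbf{Et}_{K,p}\times^{\mathcal{G}^c(\Int_p)}\mathcal{G}^{\sharp c}(\Int_p)$, and similarly at $\ell$.

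\textbf{Pointwise criterion.} Since $\mathcal{G}^c\to\mathcal{G}^{\sharp c}$ is a closed immersion of reductive groups (I will need to check this persists on cuspidal quotients, possibly after harmless modification), a $\mathcal{G}^c(\Int_p)$-local system is (potentially) crystalline if and only if its pushout is. This uses the Tannakian characterization: every representation of $\mathcal{G}^c$ is a subquotient of tensor constructions on the restriction of a faithful one of $\mathcal{G}^{\sharp c}$. Combined with the valuative criterion for the normalization and the pointwise criterion for $\Ss_{K^\sharp}$, this gives (a)$\iff$(b)$\iff$(c) for $\Ss^-_K$.

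\textbf{Serre--Tate property.} This is the hard part. The composite $\widehat{\Ss}^-_K\to\widehat{\Ss}_{K^\sharp}\to\BT[\mathcal{G}^{\sharp c},\mbox{-}\mu_v]{\infty}$ must be refined to a formally \'etale map to $\BT[\mathcal{G}^c,\mbox{-}\mu_v]{\infty}$, possibly after replacing $\Ss^-_K$ by an open or a finite modification. My approach has three steps.

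First, I use the aperture version of Tate's full faithfulness theorem (the ingredient behind Theorem~\ref{introthm:mapping_property}) together with normality. Over the generic fiber, the $\mathcal{G}^c$-reduction of the étale realization should give a reduction of the $\mathcal{G}^{\sharp c}$-aperture to $\mathcal{G}^c$ over $\widehat{\Ss}^-_K$. The idea is that $\mathcal{G}^c$-structures are cut out by tensors, and tensors on the étale side extend uniquely over normal bases.

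Second, I show that the resulting map $\widehat{\Ss}^-_K\to\BT[\mathcal{G}^c,\mbox{-}\mu_v]{\infty}$ is formally étale. I would do this by comparing deformation theories via \cite{gmm}. The map $\BT[\mathcal{G}^c]{\infty}\to\BT[\mathcal{G}^{\sharp c}]{\infty}$ should be unramified, a closed immersion on tangent complexes given by $\on{Lie}$ of the opposite unipotents. One then argues that the fiber product $\widehat{\Ss}_{K^\sharp}\times_{\BT[\sharp]{}}\BT[\mathcal{G}^c]{}$ is formally étale over $\BT[\mathcal{G}^c]{}$ by base change. Next, $\widehat{\Ss}^-_K$ maps to this fiber product, and I must show this map is an isomorphism onto a union of components. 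Both sides are normal and formally smooth of the same relative dimension over $\Reg{E_v}$, and they agree generically, so that should suffice.

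Third, there is the main anticipated obstacle: showing that $\Ss^-_K$ is actually formally smooth, rather than merely that the fiber product is. The first thing I would try is a Kisin-style argument. At each closed point $x$ of the special fiber, use (b) to lift $x$ to a crystalline $\Reg{F}$-point. The aperture over $\Reg{F}$ then has $\mathcal{G}^c$-structure, so $x$ lies in the image of the fiber product. The universal deformation of the fiber product is a formally smooth, hence normal, domain mapping onto the completion of the normalization, which gives the isomorphism.

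\textbf{Limpidity.} If $\mathbf{Et}^\sharp_{K^\sharp,\ell}$ extends over $\Ss_{K^\sharp}$, then its pullback extends over $\Ss_K$. The $\mathcal{G}^c$-reduction then extends by Zariski–Nagata purity, or equivalently by normality and the extension of étale tensors across codimension~$\ge2$ together with the generic points of the special fiber, since $\Ss_K$ is smooth.
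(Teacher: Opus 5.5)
The overall architecture matches the paper: normalize $\Ss_{K^\sharp}$ in $\Sh_K$, get the pointwise criterion from the valuative criterion, and compare with a formally smooth deformation ring by a dimension count. Your pointwise and limpidity arguments are fine. The Serre--Tate part, however, has a genuine gap at Step~1. Theorem~\ref{thm:full_faithfulness} concerns \emph{isomorphisms} between apertures for a fixed group and says nothing about the essential image of $T_{\et}$. A $\mathcal{G}^c$-reduction of the $\mathcal{G}^{\sharp,c}$-aperture is a new object of $\BT[\mathcal{G}^c,\mbox{-}\mu_v]{\infty}(\widehat{\Ss}^-_K)$, so producing it is an existence problem. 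The slogan ``tensors on the \'etale side extend uniquely over normal bases'' is not available. Extending Frobenius-invariant tensors to morphisms of $F$-gauges is only known over smooth bases (Theorem~\ref{thm:guo_reinecke}(2); over general normal bases one only has faithfulness). Even then you would still have to show that the extended tensors cut out a $\mathcal{G}^c$-torsor on the syntomification bounded by $\mbox{-}\mu_v$. The available existence results are over $\Reg{K}$ (Proposition~\ref{prop:gmu_rings_of_integers}) and over smooth bases with $2e<p-1$ (Theorem~\ref{thm:p_comp_smooth_characterization}). But smoothness of $\widehat{\Ss}^-_K$ is exactly what is to be proved. So Steps~2 and~3 have no map $\widehat{\Ss}^-_K\to\BT[\mathcal{G}^c,\mbox{-}\mu_v]{\infty}$ to work with.

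Step~3 does not repair this. Write $\mathfrak{F}=\widehat{\Ss}_{K^\sharp}\times_{\BT[\mathcal{G}^{\sharp,c},\mbox{-}\mu_v]{\infty}}\BT[\mathcal{G}^c,\mbox{-}\mu_v]{\infty}$, $R_1=\widehat{\Rg}_{\Ss_{K^\sharp},x}$ and $R_2=\widehat{\Rg}_{\Ss^-_K,x}$. Each crystalline $\Reg{F}$-lift $x'$ of $x$ gives \emph{some} $\mathcal{G}^c$-reduction of the aperture at $x$, i.e.\ some point of $\mathfrak{F}$ over the image of $x$. Nothing forces different lifts to give the same point $y$, and without that there is no homomorphism $\widehat{\Rg}_{\mathfrak{F},y}\to R_2$ on which to run your normality argument.

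The missing idea is the horizontality argument of Proposition~\ref{prop:reduction_of_structure_group}. Dwork's trick trivializes the $F$-isocrystal over the open polydisc $(\Spf R_1)_\eta$ (Remark~\ref{rem:horizontal_tensors_limpidity}). The Liu--Zhu de Rham tensors are parallel on the \emph{connected smooth} rigid space $(\Spf R_2)_\eta$ (Remark~\ref{rem:p-adic_comparison_and_limpidity}). Hence the crystalline specializations $\Dieu_{\mathrm{crys}}(s_{\alpha,x'})$ are independent of $x'$, which singles out one reduction $\mathfrak{Q}_{2,0}$ at $x$. Let $\tilde R_2$ be the deformation ring of $\mathfrak{Q}_{2,0}$. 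The map $R_1\cong\tilde R_1\to\tilde R_2$ is surjective and the crystalline points are dense, so $R_1\to R_2$ factors through $\tilde R_2$; this is what produces the aperture over $R_2$. Only then does the dimension count give $\tilde R_2\isomto R_2$. One then globalizes via Lemma~\ref{lem:reduction_to_complete_local}, which uses the smoothness just obtained, and Lemma~\ref{lem:formal_etaleness}.

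Finally, that count uses that $R_1/I\to R_2$ is birational, so ``finiteness is all that is required'' is not justified. The paper first makes $\Sh_K\to\Sh_{K^\sharp}$ a closed immersion by shrinking $K^{\sharp,p}$ (not $K^p$) via a lemma of Kisin. It then needs Corollary~\ref{cor:prime-to-p_part} to know that the normalization at the new level is still an ICM. This is why Theorem~\ref{thm:reduction_of_structure_group} assumes $\Ss_{K^\sharp}$ is limpid.
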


\begin{remark}
  Theorem~\ref{introthm:reduction_of_structure} is already known by work of Pappas--Rapoport and Daniels--van Hoften--Kim--Zhang in the context of Pappas--Rapoport canonical models; see~\cite[Theorem 4.1.8]{daniels2025conjecturepappasrapoport}.
\end{remark}

Next, we have two results concerning the behavior of limpid ICMs under central modifications.

\begin{setup}
    \label{introsetup:central}
We will fix a map of unramified Shimura tuples
$(G_1,\mathcal{G}_1,X_1,K_1) \to (G_2,\mathcal{G}_2,X_2,K_2)$ with reflex fields $E_2\subset E_1$ such that $G_1\to G_2$ is surjective with central kernel. We will also fix a place $v_1\vert p$ of $E_1$ and let $v_2\vert p$ be the place of $E_2$ under it.
\end{setup}

First, we have the following \emph{ascent} statement. which implies that constructing ICMs for adjoint Shimura data is sufficient to construct them for all Shimura data:
\begin{introthm}
    [Ascent of ICMs along central isogenies, Theorem~\ref{thm:ascent}]
\label{introthm:ascent}
Suppose that $p>2$ and that $G_2 = G_2^c$. If $\Sh_{K_2}$ admits a limpid ICM $\Ss_{K_2}$ over $\Reg{E_2,(v_2)}$, then $\Sh_{K_1}$ admits a limpid ICM $\Ss_{K_1}$ over $\Reg{E_1,(v_1)}$, and the map $\Ss_{K_1}\to \Ss_{K_2}$ is finite \'etale. 
\end{introthm}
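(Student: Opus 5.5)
The plan is to build $\Ss_{K_1}$ as the normalization of $\Ss_{K_2}\otimes\Reg{E_1,(v_1)}$ in $\Sh_{K_1}$, or rather of a suitable finite étale cover of it, and then check the two ICM axioms and limpidity. Since $G_1\to G_2$ is a central isogeny with $G_2=G_2^c$, each connected component of $\Sh_{K_1}$ maps finite étale onto a connected component of $\Sh_{K_2}$. The standard Deligne description of connected components as quotients $\Gamma\backslash X^+$ identifies this map as a quotient by a finite abelian group action. We will use the twisting formalism: $\Sh_{K_1}$ is a union of components of $\left(\Sh_{K_2}\times \mathcal{A}\right)/\Delta$ for an appropriate finite abelian group $\Delta$. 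To make this work, we first enlarge $G_1$ by a torus so that the required twists are encoded by torsors under a torus $\mathcal{Z}$ over $\Int_p$, in the style of Kisin's twisting construction.

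\textbf{Step 1: descend the problem to the $\pi_0$-level.}
The central kernel $Z=\ker(G_1\to G_2)$ extends to a finite multiplicative group scheme $\mathcal{Z}\subset\mathcal{G}_1$. We have $p>2$, and the hyperspecial hypothesis gives that the order of $Z$ is prime to $p$ on the relevant part. Granting this, the covering $\Sh_{K_1}\to\Sh_{K_2}$ (componentwise) is a torsor under a finite étale group whose monodromy factors through $\pi_1$ and is controlled by the $\ell$-adic realizations $\mathbf{Et}_{K_2,\ell}$ for $\ell\neq p$ together with a CM/abelian part.

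\textbf{Step 2: show the cover is unramified over the integral model.}
Limpidity of $\Ss_{K_2}$ says each $\mathbf{Et}_{K_2,\ell}$ extends over $\Ss_{K_2}$. So the prime-to-$p$ part of the covering extends as a finite étale cover of $\Ss_{K_2}$. The contribution at $p$ is handled using the aperture $\varpi_2$: a lift of the $\mathcal{G}_2(\Int_p)$-local system to a $\mathcal{G}_1(\Int_p)$-local system corresponds, via the fiber sequence $B\mathcal{Z}\to\BT[\mathcal{G}_1,-\mu]{\infty}\to\BT[\mathcal{G}_2,-\mu]{\infty}$ from \cite{gmm}, to a $\mathcal{Z}$-gerbe. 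This gerbe is étale when $\mathcal{Z}$ is étale, or more generally is controlled by the multiplicative (hence formally étale) part. From this I will conclude that the map $\BT[\mathcal{G}_1,-\mu]{\infty}\to\BT[\mathcal{G}_2,-\mu]{\infty}$ is formally étale. Define $\Ss_{K_1}$ as the normalization described above. By Zariski–Nagata purity on the smooth $\Ss_{K_2}$, combined with étaleness of the covering on the special fiber, the map $\Ss_{K_1}\to\Ss_{K_2}$ is finite étale. Consequently the composite $\widehat{\Ss}_{K_1}\to\widehat{\Ss}_{K_2}\to \BT[\mathcal{G}_2,-\mu]{\infty}$ lifts to a formally étale $\varpi_1$. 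I expect this lifting at $p$ to be the main obstacle. It requires showing that the obstruction class in $H^2(\widehat{\Ss}_{K_1},\mathcal{Z})$ vanishes and that the lift matches $\mathbf{Et}_{K_1,p}$. I would first try to kill the obstruction on the generic fiber, where $\mathbf{Et}_{K_1,p}$ provides the lift, and then extend using the Tate-type full faithfulness theorem for apertures.

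\textbf{Step 3: verify the remaining properties.}
For the pointwise criterion, a point $x\in\Sh_{K_1}(F)$ has crystalline $p$-adic realization if and only if its image in $\Sh_{K_2}$ does. This is the key input, and it holds because $G_1\to G_2$ is a central isogeny of order prime to $p$, so crystallinity is insensitive up to a finite unramified twist. The point then extends by properness of the normalization and the valuative criterion for the finite map. Limpidity of $\Ss_{K_1}$ follows because the $\ell$-adic systems $\mathbf{Et}_{K_1,\ell}$ are pulled back and twisted by the extended finite étale covering.
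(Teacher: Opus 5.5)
Your construction of $\Ss_{K_1}$ as the normalization of $\Ss_{K_2}\otimes\Reg{E_1,(v_1)}$ in $\Sh_{K_1}$ is the right one. However, your argument that it has the required properties has a genuine gap, and several supporting claims are false.

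The hyperspecial hypothesis does not make $\ker(G_1\to G_2)$ finite of order prime to $p$. The kernel may contain a torus (e.g.\ $\mathrm{GSp}_{2g}\to\mathrm{PGSp}_{2g}$). A finite kernel can also have $p$-torsion even for $p>2$ (e.g.\ $\mathrm{SL}_p\to\mathrm{PGL}_p$ over $\Int_p$). Even for prime-to-$p$ order, a finite-order twist can be tamely ramified, so ``crystallinity is insensitive to the twist'' does not follow. Moreover, the covering $\Sh_{K_1}\to\Sh_{K_2}$ need not be a subcover of the prime-to-$p$ Hecke tower of $\Sh_{K_2}$ (think of a CM torus factor in the kernel). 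So limpidity of $\Ss_{K_2}$ does not by itself make any part of it extend. Your Step~3 is repairable once $\Ss_{K_1}$ is known to be apertile: a potentially crystalline point extends to $\Ss_{K_2}$, hence to the finite normalization, and is then crystalline via the aperture.

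The real gap is Step~2, in two places.
\begin{enumerate}
\item \emph{Finite \'etaleness.} Purity reduces finite \'etaleness of the normalization to the generic points of the special fiber, and there you have no argument. Reading it off from the \'etaleness of $\mathrm{BT}^{\mathcal{G}_1,\mbox{-}\mu}_\infty\to\mathrm{BT}^{\mathcal{G}_2,\mbox{-}\mu}_\infty$ is circular: that only helps once the classifying map is known to lift over $\widehat{\Ss}_{K_1}$.
\item \emph{The lift.} This map is a $\underline{(B\mathcal{Z})(\Int/p^n\Int)}$-torsor (Proposition~\ref{prop:central_covers}). The $H^1_{\mathrm{fppf}}$-torsor part extends automatically over a normal base. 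But a section over the generic fiber of the gerbe banded by $\mathcal{Z}(\Int/p^n\Int)$ need not extend. Tate full faithfulness (Theorem~\ref{thm:full_faithfulness}) only produces morphisms, not objects, so it cannot supply this extension.
\end{enumerate}

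The missing idea is the $\mu$-ordinary locus together with canonical lifts.
\begin{itemize}
\item $\Ss^{\mathrm{ord}}_{K_2}$ is fiberwise dense.
\item Limpidity of $\Ss_{K_2}$ is what shows canonical lifts are CM (Theorem~\ref{thm:canonical_lifts_are_CM}), with reflex field unramified at $p$ and reflex norm landing in $K_p$ (Lemma~\ref{lem:canonical_lift_unramified}).
\item Hence the fiber of $\Sh_{K_1}$ over each canonical lift is totally split with crystalline points (Remark~\ref{rem:cm_points_central_covers_unramified}).
\item A dense set of special-fiber points with such unramified lifts forces the normalization to be generically reduced along the special fiber. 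Purity then gives \'etaleness except for $\Int/p$-pieces with purely inseparable residue extension, which is exactly what $p>2$ excludes (Lemma~\ref{lem:good_reduction_torsors}).
\item The same input, combined with Proposition~\ref{prop:gmu_rings_of_integers}, extends the gerbe section (Lemma~\ref{lem:gerbe_extending}).
\end{itemize}
This is packaged as Proposition~\ref{prop:ascending_along_central_covers}. Limpidity of $\Ss_{K_1}$ then comes from running the argument for every $K_1'$ with $K'_{1,p}=K_{1,p}$.
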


\begin{remark}
    There is an obvious way to construct an integral model $\Ss_{K_2}$ for $\Sh_{K_2}$ from an integral model $\Ss_{K_1}$ for $\Sh_{K_1}$: One can take the normalization of $\Ss_{K_1}$ in $\Sh_{K_2}$. The issue is to show that this preserves the property of being a limpid ICM. The heart of the matter is to see that $\Ss_{K_1}$ is finite \'etale over $\Ss_{K_2}$. We prove this by showing it holds for the restriction to the $\mu$-ordinary locus and using purity. Over the $\mu$-ordinary locus, we need the theory of the canonical lift; see Theorem~\ref{introthm:mu_ordinary} below. The fact that the canonical lift is CM makes use of the prime-to-$p$ realizations, and so requires the additional limpidity hypothesis.
\end{remark}

\begin{remark}
    The idea of using the $\mu$-ordinary locus and canonical lifts to prove something like an ascent theorem shows up in a different form in work of Vasiu~\cite[\S 4.9]{vasiu2001pointsintegralcanonicalmodels}.
\end{remark}

\begin{remark}
    [Sufficiency of adjoint data]
\label{introrem:adjoint_is_enough}
Theorem~\ref{introthm:ascent} tells us that, when $p>2$, in order to construct limpid ICMs for tuples with underlying data $(G,\mathcal{G},X)$, it suffices to do so for tuples with \emph{adjoint} Shimura data $(G^{\ad},\mathcal{G}^{\ad},X^{\ad})$.
\end{remark}

Next, we have an abstract form of the \emph{twisting} construction from~\cite{Kisin2017-qa}, which uses the modified Weil restriction given in Definition~\ref{defn:modified_weil_restriction} below. 
\begin{introthm}
    [Descent of ICMs along central isogenies, Theorem~\ref{thm:descent_for_icms}]
\label{introthm:descent}
Suppose that  $G = G^c$ and that the following additional condition holds for every totally real number field $F$ in which $p$ is unramified:  All unramified Shimura tuples with underlying unramified Shimura datum  $\Res'_{F/\Rat}\left(G_1,\mathcal{G}_1,X_1\right)$ admit limpid ICMs over $\Reg{E_1,(v_1)}$, and $Z(G^{\mathrm{der}})(\Adele_f^p\otimes F)/Z(G^{\mathrm{der}})(\Reg{F,(p)})$ acts freely on the prime-to-$p$ Hecke tower of such ICMs. Then, for $K_2^p$ sufficiently small, $\Sh_{K_2}$ admits a limpid ICM $\Ss_{K_2}$ over $\Reg{E_2,(v_2)}$, and the map $\Ss_{K_1}\to \Ss_{K_2}$ is finite \'etale.
\end{introthm}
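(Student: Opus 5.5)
We follow Kisin's twisting construction~\cite{Kisin2017-qa}, recast in terms of ICMs. The model for $\Sh_{K_2}$ will be a quotient of a connected component of the ICM for an auxiliary Shimura variety. The plan has four steps: build an auxiliary datum, take a quotient, check the ICM axioms, and check finite étaleness.

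\emph{Step 1: the auxiliary datum.} Choose a totally real field $F$, unramified at $p$, and form the modified Weil restriction $\Res'_{F/\Rat}(G_1,\mathcal{G}_1,X_1)$ of Definition~\ref{defn:modified_weil_restriction}. Then pass to a further central modification $(G_3,\mathcal{G}_3,X_3)$, built as a quotient of $G_1\times \Res'_{F/\Rat}(\cdots)$ by a suitable central torus. We choose $F$ and this modification so that the following hold.
\begin{enumerate}
\item $G_3^{\mathrm{der}}=G_1^{\mathrm{der}}=G_2^{\mathrm{der}}$ up to central isogeny, with $G_3^{\ad}=G_2^{\ad}$.
\item The connected components of $\Sh(G_3,X_3)$ and of $\Sh(G_2,X_2)$ are both quotients of the same connected Shimura variety.
\item The reflex field of $(G_3,X_3)$ is $E_2$, or at least is unramified over $v_2$.
\end{enumerate}
By hypothesis and by Theorem~\ref{introthm:reduction_of_structure}, $(G_3,\mathcal{G}_3,X_3)$ admits limpid ICMs at all neat levels. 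The freeness hypothesis on the action of $Z(G^{\mathrm{der}})(\Adele_f^p\otimes F)/Z(G^{\mathrm{der}})(\Reg{F,(p)})$ transfers to this tower.

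\emph{Step 2: the quotient.} Take the limit $\Ss_{K_{3,p}}=\varprojlim_{K^p}\Ss_{K_{3,p}K^p}$ and a connected component $\Ss^+$ over $\Reg{E^{\mathrm{ur}},(v)}$. Use the standard group-theoretic description
\[
\Sh_{K_{2,p}}(G_2)=\bigl[\mathscr{A}(G_2)\times \Sh^+\bigr]/\mathscr{A}(G_2)^\circ .
\]
Define $\Ss_{K_{2,p}}$ by the same formula with $\Ss^+$ in place of $\Sh^+$. For this, the action of $\mathscr{A}(G_{3})^\circ$ on $\Ss^+$ must extend to an action of $\mathscr{A}(G_2)^\circ$. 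We deduce this from the mapping property, Theorem~\ref{introthm:mapping_property}. A generic-fiber automorphism extends as soon as it lifts compatibly to $\BT[\mathcal{G}^c,\mbox{-}\mu]{\infty}$. Since the relevant Hecke elements act through the adjoint group and central twists, the aperture side extends. Here we use $G=G^c$ and functoriality of $\BT{\infty}$ in central isogenies.

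The freeness hypothesis gives that $\mathscr{A}(G_2)^\circ$ acts, through a quotient, freely on the tower. Hence, after shrinking $K_2^p$, the quotient by $K_2^p$ is finite étale. This yields a finite étale map $\Ss_{K_1}\to\Ss_{K_2}$, and $\Ss_{K_2}$ is an algebraic space of finite type. Galois descent from $E^{\mathrm{ur}}$ to $E_2$ uses the same mapping-property argument for the Galois twist.

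\emph{Step 3: the ICM axioms.} Limpidity and the Serre--Tate map are local for the étale topology, so they descend. The map $\varpi$ for $\mathcal{G}_2$ is obtained by pushing the one for $\mathcal{G}_3$ along the map to $\mathcal{G}_2^{c}$ on the component. We then check that this is invariant under the quotient action. Formal étaleness survives because the induced map on deformation spaces depends only on adjoint data.

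For the pointwise criterion, let $x\in\Sh_{K_2}(F')$ be potentially crystalline. After a finite extension, $x$ lifts to $\Sh_{K_1}$. The lift is crystalline up to a central twist by a crystalline character; this uses $G_1\to G_2$ central. It therefore extends over $\Ss_{K_1}$, and hence $x$ extends over $\Ss_{K_2}$. To descend integrality from the extension back to $\Reg{F'}$, we use that $\Ss_{K_2}$ is separated and a Néron-type property coming from finite étaleness.

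\emph{Main obstacle.} We expect the hardest step to be extending the twisted Hecke action to the integral model, compatibly with $\varpi$. The difficulty is showing that the central twist of the $\BT{\infty}$-realization is an honest map of aperture stacks. I would first try to realize the twist as the action of $\BT[Z,0]{\infty}$, a torsor under a torus with trivial cocharacter, on $\BT[\mathcal{G}^c,\mbox{-}\mu]{\infty}$.
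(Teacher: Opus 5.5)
There is a genuine gap, and it sits exactly where the hypothesis has to be used; Step~1 does not help. The three conditions you impose on $(G_3,\mathcal{G}_3,X_3)$ are already satisfied by $(G_1,\mathcal{G}_1,X_1)$ itself: $G_1^{\ad}=G_2^{\ad}$, and $E_1$ is unramified at $p$ because $G_{1,\Rat_p}$ is unramified. So the auxiliary datum gains nothing.

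What your quotient formula actually needs is that $\Ss^+$ model a connected component of $\Sh(G_2,X_2)$, i.e.\ an ICM for a datum with derived group $G_2^{\mathrm{der}}$. The hypothesis together with Theorem~\ref{thm:reduction_of_structure_group} does not supply this. A quotient of $G_1\times\Res'_{F/\Rat}(\cdots)$ by a central torus is not reached by reduction of structure group, and producing ICMs for it is an instance of the very descent being proved.

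So the passage across $G_1^{\mathrm{der}}\to G_2^{\mathrm{der}}$ must be a free quotient of the $G_1$-tower itself. This is what the paper does. It works with a connected component $\Ss^+_{K_1,(v')}$ over an unramified extension and shows that $(K_2\cap G_2^{\mathrm{der}}(\Rat))/(K_1\cap G_1^{\mathrm{der}}(\Rat))$ acts freely on it. By Lemma~\ref{lem:freeness_in_inverse_systems}, this reduces to freeness of $\Delta=\ker\bigl(\mathcal{A}(\mathcal{G}_{1,(p)})^\circ\to\mathcal{A}(\mathcal{G}_{2,(p)})^\circ\bigr)$ on $\Ss_{K_{1,p}}$. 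Your formula also quotients by the wrong group: the group that must act freely is $\Delta$. It is not $\mathcal{A}(\mathcal{G}_{2,(p)})^\circ$, which has fixed points (e.g.\ at CM points).

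That freeness is the heart of the theorem, and ``the freeness hypothesis transfers to this tower'' is exactly what needs proof. The missing idea is how a \emph{central} freeness statement on a \emph{Weil-restricted} tower controls an \emph{adjoint} action on the original one. The paper's mechanism (Lemma~\ref{lem:free_action_Delta_G_Gad}) runs as follows.
\begin{enumerate}
\item Suppose $[(h,\gamma^{-1})]\in\Delta$ fixes $x$. Lift $\gamma\in\mathcal{G}^{\ad}_{(p)}(\Int_{(p)})$ to $\tilde\gamma\in\mathcal{G}_{(p)}(\Reg{F,(p)})$ for some totally real $F$ unramified at $p$, with $F$ depending on $\gamma$ (Lemma~\ref{lem:lifting_over_totally_real}, a Skolem-type local--global argument). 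This is why the hypothesis quantifies over all such $F$ rather than a single fixed one.
\item On the Weil-restricted tower, $i_\gamma$ becomes right translation by $\tilde\gamma^{-1}$ (Remark~\ref{rem:desc_of_adjoint_action}). So the fixed-point relation becomes $(x\otimes\Reg{F})\cdot k=(x\otimes\Reg{F})\cdot z$ with $z=h\tilde\gamma^{-1}$ central.
\item Lemma~\ref{lem:central_action_free} (your hypothesis plus the ICM for the abelianization torus) forces $z\in Z(G)(\Reg{F,(p)})^-$. Using $G=G^c$, this gives $h\in G(\Int_{(p)})$, so the element is trivial in $\Delta$.
\end{enumerate}

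Conversely, what you flag as the main obstacle is not one. The adjoint action of $\mathcal{G}^{\ad}_{(p)}(\Int_{(p)})_1$ extends to the ICM tower directly by Theorem~\ref{thm:mapping_property}. Conjugation by $\gamma$ induces an automorphism of $\BT[\mathcal{G}^c,\mbox{-}\mu_v,\mathrm{alg}]{\infty}$ compatible with $\iota_\gamma$ (Remark~\ref{rem:adjoint_action_icms}), so no $\BT[\mathcal{Z},0]{\infty}$-twist is needed.

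Once freeness is in hand, the rest of your Steps~2--3 is essentially the paper's route. The aperture and Serre--Tate property descend by Proposition~\ref{prop:full_faithfulness_2}, the pointwise criterion is inherited from $\Ss_{K_1}$, and coefficients descend by Corollary~\ref{cor:descent_coefficients_gen_ICMs}.
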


\begin{remark}
[Role of the modified Weil restriction condition]
    The role of the condition is to overcome the fact that, in the generality that we work with here, we do not have a good \emph{intrinsic} notion of `$F$-linear isogenies' between points on the Shimura variety. 
    
    In the context of Shimura varieties of Hodge type, such isogenies, defined through the $F$-linearization of the isogeny category of abelian varieties, are an important actor in Kisin's twisting construction. This is replaced here by a systematic use of the ICMs for the modified Weil restrictions (in fact from the actual Weil restrictions). The condition of being of Hodge type is preserved under the modified Weil restriction construction, and so we obtain a different perspective on the construction of Shimura varieties of abelian type from those of Hodge type. More than anything else, this is a proof of concept that one can replace arguments using isogenies between abelian varieties with ones involving structural properties of ICMs.
\end{remark}

\subsection{Comparison with existing results}
\label{introsub:comparison}

\begin{remark}
  [Comparison with existing notions of canonicity]
\label{rem:different_notions_of_canonicity}
  Our notion of canonicity is \emph{different} from those appearing in~\cite{moonen:models}, \cite{kisin:abelian} or \cite{bst}, which use $\ell$-adic realizations for $\ell\neq p$ to characterize the model. Also, the characterizations appearing in the first two of these, are not of models at any finite level, but of the whole prime-to-$p$ Hecke tower. As such, they are slightly tricky to formulate and check. Also, our definition of ICMs is \emph{intrinsic} to an integral model, with the mapping property arising as a consequence, while the other three require some external input: consideration of the full prime-to-$p$ Hecke tower, or the existence of a sufficiently nice compactification.
  
  The notion of an ICM used here was essentially introduced by Imai--Kato--Youcis~\cite{imai2023prismatic}, and this in turn has its antecedents in the work of Pappas~\cite{pappas_integral} and Pappas--Rapoport~\cite{pappas2023padic}. The important observation that this should give a characterization at \emph{finite level} is found in~\cite{imai2023prismatic}.
\end{remark}

\begin{remark}
  [Comparison with the work of Kisin]
\label{introrem:kisin}
The first general construction of integral canonical models of abelian type is due to Kisin~\cite{kisin:abelian}; see also the earlier work of Vasiu~\cites{vasiu:preab,vasiu2001pointsintegralcanonicalmodels}. For Shimura varieties of Hodge type, what one will find in this paper is essentially a streamlined version of the arguments in Kisin's work, making use of the geometry of the stacks $\BT[\mathcal{G}^c,\mbox{-}\mu_v]{n}$ to compress some of the $p$-adic Hodge theory used there. 

To construct Shimura varieties of abelian type, Kisin introduces his \emph{twisting} construction, which ultimately uses the global moduli interpretation of Siegel-type Shimura varieties in terms of abelian varieties. We actually give a \emph{different} approach to twisting here. The main new observation here is that one does not need a moduli description: One can formulate the construction in a way that is valid for all maps between Shimura data where the underlying map of groups is a central cover. We also make the additional new observation that the theory of the canonical lift for $\mu$-ordinary points suffices to also \emph{lift} ICMs along central covers, at least when $p>2$. This leads to our construction of ICMs of pre-abelian type.
\end{remark}

\begin{remark}
 [Comparison with Imai--Kato--Youcis]
\label{introrem:imai_kato_youcis}
That the models constructed by Kisin are ICMs in the sense used here is originally due to Imai--Kato--Youcis~\cite{imai2023prismatic}. The proof we give here is different from that in \cite{imai2023prismatic} and works also when $p=2$ where the models were constructed by Kim--Madapusi. But we do make use of intermediate results from~\cites{imai2023prismatic,imai2024tannakianframeworkprismaticfcrystals,IKY3}.
\end{remark}

\begin{remark}
[Comparison with Bakker--Shankar--Tsimerman]
\label{introrem:bst_comparison}
The idea that one should be able to \emph{provably} characterize integral canonical models of \emph{all} Shimura varieties, including those not of abelian type, first appears in the already mentioned recent work of Bakker--Shankar--Tsimerman. Their results extend beyond the ambit of Shimura varieties, and apply to integral models of \emph{non-minuscule} variations of Hodge structure. This is in some sense the main philosophical difference between these works: Here, we make full use of the minusculeness (though in the sense of $G$-bundles, not just vector bundles) of the family of realizations over a Shimura variety, for instance, in the form of the full faithfulness result Theorem~\ref{introthm:tate}. But these methods say nothing about what happens beyond the minuscule regime.

The ICMs here are also canonical in the sense of \cite{bst} for $p$ large enough, and our proof of canonicity is independent from theirs, though it uses essentially the same inputs. Moreover, the lower bound on the prime $p$ is \emph{a priori} smaller than the one in \emph{op.\@ cit.}: for instance, it depends only on the underlying adjoint Shimura datum.
\end{remark}

\subsection{Applications}

Our first application is essentially an algebraization result for apertures. 

\begin{introthm}
[Surjectivity of the syntomic realization, Theorem~\ref{thm:surjectivity}]
\label{introthm:surjective}
  Suppose that $\Sh_K$ admits an ICM $\Ss_K$ over $\Reg{E,(v)}$ and that one of the following conditions holds: 
  \begin{enumerate}
      \item The Shimura datum $(G,X)$ is of pre-abelian type.
      \item The prime $p$ is sufficiently large (as in \emph{Theorem \ref{introthm:exceptional}}).
  \end{enumerate}
 Then, for all $1\leqslant n\leqslant \infty$, the map
  \[
    \widehat{\Ss}_{K}\to \BT[\mathcal{G}^c,\mbox{-}\mu_v]{n}
  \]
  is surjective, and it is smooth if $n<\infty$.
\end{introthm}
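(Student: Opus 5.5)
We reduce the statement to two inputs: smoothness of the truncated maps, which comes from the formal \'etaleness of $\varpi$ (part (1) of the definition of an ICM), and surjectivity on geometric points of the special fiber, which comes from non-emptiness of all Newton strata together with central leaves or Ekedahl--Oort strata.

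\emph{Smoothness for $n<\infty$.} The transition maps $\BT[\mathcal{G}^c,\mbox{-}\mu_v]{\infty}\to\BT[\mathcal{G}^c,\mbox{-}\mu_v]{n}$ are formally smooth; this is the group-theoretic analogue of lifting truncated $p$-divisible groups, established in~\cite{gmm}. The map $\varpi$ is formally \'etale. Hence the composite $\widehat{\Ss}_K\to\BT[\mathcal{G}^c,\mbox{-}\mu_v]{n}$ is formally smooth. Both source and target are locally of finite type over $\Spf\Reg{E_v}$: the target is a smooth formal algebraic stack, and $\Ss_K$ is of finite type. So formal smoothness upgrades to smoothness. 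A smooth map has open image, so surjectivity reduces to showing that the image contains every point of the special fiber $\BT[\mathcal{G}^c,\mbox{-}\mu_v]{n}\otimes k$.

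\emph{Surjectivity on points.} The underlying topological space of $\BT[\mathcal{G}^c,\mbox{-}\mu_v]{n}\otimes k$ is described by the finitely many $\mathcal{G}^c$-zips, that is, the Ekedahl--Oort strata, refined at level $n$ by truncated Newton/central-leaf data. Over $\bar{\mathbb{F}}_p$, points of $\BT{\infty}$ up to isomorphism correspond to $\mathcal{G}^c(W)$-$\sigma$-conjugacy classes in the double coset $\mathcal{G}^c(W)\mu(p)\mathcal{G}^c(W)$.

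Since the map has open image, it is enough to show that the image is also closed under specialization and contains the closed points of minimal strata. Alternatively, and this is what I would try first, show directly that every isomorphism class over $\bar{k}$ is hit. The steps are:
\begin{enumerate}
\item Show the $\mu$-ordinary locus is hit. The image is open and nonempty, because $\Ss_K$ has points with good reduction by the pointwise criterion. The $\mu$-ordinary stratum is the open dense stratum, and it is irreducible in each connected component of $\BT{n}$, so the image meets it. Since it is a single point over $\bar k$ at level $\infty$, the image contains it.
\item Show the basic locus is nonempty. For pre-abelian type, use the reduction to Siegel varieties in Theorem~\ref{introthm:reduction_of_structure} and the ascent and descent theorems, Theorems~\ref{introthm:ascent} and~\ref{introthm:descent}, together with known non-emptiness for Hodge type; alternatively use CM lifts. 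For exceptional type with $p$ large, spread out from characteristic zero and use a properness/compactness argument in the style of \cite{bst}: the minimal stratum is closed, and closed strata meet the image because the image of the proper, or minimally compactified, special fiber is closed.
\item Interpolate all intermediate Newton strata and leaves. Use purity of the Newton stratification and the fact that, via $\varpi$, the local structure of $\widehat{\Ss}_K$ agrees with that of $\BT{\infty}$. Every Newton stratum of $\BT{\infty}$ near a basic point specializes to that point. An \'etale neighborhood of a basic point of $\Ss_K$ therefore surjects onto a formal neighborhood in $\BT{\infty}$, so it hits all strata.
\item Get all isomorphism classes within a stratum. Prime-to-$p$ Hecke correspondences together with the smoothness of $\varpi$ at each finite level give surjectivity onto each Ekedahl--Oort stratum and each central leaf.
\end{enumerate}
For $n=\infty$, surjectivity follows from the finite-level statements, since a point of $\BT{\infty}$ over a field is determined, up to isogeny-type choices, by a compatible system of points of the $\BT{n}$. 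Alternatively, it follows from Step 3 directly.

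\emph{Main obstacle.} I expect the main obstacle to be Step 2: establishing that the basic (closed) stratum meets the image, which requires some global input such as properness of a compactification or CM theory. The reduction to points is formal by comparison.
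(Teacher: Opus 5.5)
There is a genuine gap in the surjectivity argument; your smoothness argument is fine and matches the paper.

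\textbf{The gap (Steps 3--4).} The image of $|\widehat{\Ss}_K|$ is open, hence stable under generization. So from a basic point $z=\varpi(x)$ in the image you only get the generizations of that particular $z$. But $|\BT[\mathcal{G}^c,\mbox{-}\mu_v]{\infty}\otimes k|$ has, in general, infinitely many closed points, namely isomorphism classes over $\bar k$. Already in the Siegel case with $g=2$, the supersingular locus is $1$-dimensional while central leaves in it are finite. Each closed point must be hit individually, and openness never produces closed points.

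Step 4 cannot supply them. The syntomic realization on $\Ss_{K_p}$ is $G(\Adele_f^p)$-invariant (Remark~\ref{rem:hecke_and_canonical_lift}). Hence prime-to-$p$ Hecke correspondences preserve central leaves and never reach a new isomorphism class. Moving between isomorphism classes inside a Newton stratum would require $p$-power isogenies, as in Viehmann--Wedhorn or via Kisin's results in Hodge type. That input is exactly what is unavailable for an abstract ICM.

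A telltale sign is that your argument never uses hypothesis (1) or (2). Newton non-emptiness holds for every ICM (Theorem~\ref{thm:newton_non_emptiness}), so your argument would prove surjectivity for all ICMs, which the paper does not claim.

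Your passage from finite $n$ to $n=\infty$ is also not valid as stated. Surjectivity at each level gives a possibly different source point for each $n$. What makes it work is Vasiu's boundedness theorem (Theorem~\ref{thm:vasiu_boundedness}): $|\BT[\mathcal{G}^c,\mbox{-}\mu_v]{\infty}|\to|\BT[\mathcal{G}^c,\mbox{-}\mu_v]{n}|$ is a homeomorphism for $n$ large.

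\textbf{The missing idea.} You need to show the image is also stable under \emph{specialization}. The paper gets this from the de Jong extension property (Definition~\ref{def:de_jong_property}). A map $C\to\Ss_K$ from a smooth curve extends over $\overline{C}$ as soon as its composite to $\BT[\mathcal{G}^c,\mbox{-}\mu_v]{\infty}$ does. This yields closure under specialization for the image in $|\BT[\mathcal{G}^c,\mbox{-}\mu_v]{\infty}|$. Vasiu's theorem transfers it to $|\BT[\mathcal{G}^c,\mbox{-}\mu_v]{n}|$ for $n$ large, and smaller $n$ follow from surjectivity of the transition maps. An open subset of the Noetherian connected space $|\BT[\mathcal{G}^c,\mbox{-}\mu_v]{n}\otimes k|$ that is stable under specialization is the whole space.

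This is precisely where hypotheses (1) and (2) enter. For Siegel data the extension property is de Jong's theorem, and it propagates along closed immersions and central covers to all pre-abelian type. For the exceptional models it comes from log de Rham extensions whose residues are non-zero along the boundary.
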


\begin{remark} \label{rem:de-Jong-extension}
The proof of Theorem~\ref{introthm:surjective} shows that the surjectivity holds when $(G,\mc{G},X,K)$ satisfies the \defnword{de Jong extension property} given in Definition \ref{def:de_jong_property} below. The cases listed in Theorem~\ref{introthm:surjective} are the ones where we can verify this extension property; but we expect that it is valid for all ICMs.
\end{remark}

\begin{remark}
  The surjectivity assertion above is already known for Shimura varieties of PEL type by work of Viehmann--Wedhorn~\cite{ViehmannWedhornPEL}. Their proof amounts to first establishing the non-emptiness of Newton strata, followed by an argument via isogenies. A similar argument works for Shimura varieties for Hodge and perhaps even abelian type, but one has to work much harder by appealing to the non-emptiness results of~\cite{Lee2018-tj} or~\cite{Kisin2022-eo}, as well as to some deeper results from~\cite{Kisin2017-qa} on $p$-isogenies between mod-$p$ points of the Shimura variety. 
  
  In contrast, the proof given here is geometric, works uniformly for all known ICMs, and requires no \emph{a priori} knowledge of non-emptiness statements or the existence of isogenies. In fact, it can be used to \emph{deduce} such results.
\end{remark}

\begin{introthm}
[Non-emptiness of Ekedahl--Oort strata, \S\ref{sub:global_mu-ord} and \S\ref{sub:non-emptiness}]
\label{introthm:Ekedahl--Oort}
Suppose that $\Ss_K$ is an ICM. Then there exists a smooth map of stacks
\[
\Ss_{K}\otimes k(v)\to \mathcal{G}^c\text{-}\,\mathrm{zip}_{\mbox{-}\mu_v}.
\]
Here, the right-hand side is the stack of $\mathcal{G}^c$-zips of type $\mbox{-}\mu_v$ defined in~\emph{\cite{Pink2015-ye}}, and admits a canonical open and dense stratum known as the \emph{$\mu$-ordinary stratum}. Moreover:
\begin{enumerate}
    \item The $\mu$-ordinary locus of $\Ss_{K}\otimes k(v)$ (which is the pre-image of the $\mu$-ordinary stratum) is dense.
    \item If $\Ss_K$ satisfies the hypotheses of \emph{Theorem~\ref{introthm:surjective}}, then this map is also surjective: That is, all Ekedahl--Oort strata of the special fiber are non-empty.
\end{enumerate}
\end{introthm}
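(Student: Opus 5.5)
The map to $\mathcal{G}^c\text{-}\mathrm{zip}_{\mbox{-}\mu_v}$ will be built by composing the Serre--Tate map $\varpi$ with a truncation, and all three assertions will then be read off from properties of the stacks $\BT[\mathcal{G}^c,\mbox{-}\mu_v]{n}$ established in~\cite{gmm}.

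\textbf{Construction and smoothness.} Reduce $\varpi$ modulo the uniformizer of $\Reg{E_v}$ and compose it with the truncation $\BT[\mathcal{G}^c,\mbox{-}\mu_v]{\infty}\to \BT[\mathcal{G}^c,\mbox{-}\mu_v]{1}$. On the special fiber $\BT[\mathcal{G}^c,\mbox{-}\mu_v]{1}\otimes k(v)$ there is a natural map to $\mathcal{G}^c\text{-}\mathrm{zip}_{\mbox{-}\mu_v}$, coming from the Hodge and conjugate filtrations of a $1$-truncated aperture, i.e.\ its de Rham realization. I will take from~\cite{gmm} that this map is smooth; in the Siegel case it is the statement that $\mathrm{BT}_1$ maps smoothly onto $F$-zips. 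It then remains to show that $\Ss_K\otimes k(v)\to \BT[\mathcal{G}^c,\mbox{-}\mu_v]{1}\otimes k(v)$ is smooth.
\begin{itemize}
\item $\varpi$ is formally étale, so by a Grothendieck--Messing type deformation argument the composite to each finite level $\BT[\mathcal{G}^c,\mbox{-}\mu_v]{n}$ should be formally smooth.
\item The transition maps $\BT{n+1}\to\BT{n}$ are smooth surjections.
\item Everything is locally of finite presentation.
\end{itemize}
Together these give smoothness. This step (identifying the deformation theory so that ``formally étale to $\BT{\infty}$'' becomes ``smooth to $\BT{n}$'') is the main obstacle. I would argue it via the description of $\BT{\infty}$ as the limit of smooth stacks with smooth affine-bundle transition maps, which lets one lift points along square-zero thickenings level by level.

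\textbf{Density of the $\mu$-ordinary locus.} A smooth map is open. Hence the preimage of the open dense $\mu$-ordinary stratum is open, and it meets every irreducible component that it meets densely. I still need it to meet every component, i.e.\ that the image of each component contains the $\mu$-ordinary point.
\begin{itemize}
\item The image of each component is open and nonempty in the irreducible stack $\mathcal{G}^c\text{-}\mathrm{zip}_{\mbox{-}\mu_v}$.
\item The $\mu$-ordinary stratum is the unique open stratum, and every nonempty open substack contains the generic, open orbit.
\end{itemize}
So every component meets the $\mu$-ordinary locus, and density follows, provided $\Ss_K\otimes k(v)$ is nonempty. Nonemptiness I would get from the pointwise criterion: a CM point of $\Sh_K$ has potentially crystalline $\mathbf{Et}_{K,p}$, hence extends over some $\Reg{F}$.

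\textbf{Surjectivity.} Under the hypotheses of Theorem~\ref{introthm:surjective}, the map $\widehat{\Ss}_K\to\BT[\mathcal{G}^c,\mbox{-}\mu_v]{1}$ is surjective. The map $\BT{1}\otimes k(v)\to\mathcal{G}^c\text{-}\mathrm{zip}_{\mbox{-}\mu_v}$ is also surjective, which I would check pointwise over algebraically closed fields: every zip lifts to a $1$-truncated aperture, as follows from smoothness plus a surjectivity statement in~\cite{gmm}. The composite is therefore surjective, so every Ekedahl--Oort stratum is nonempty.
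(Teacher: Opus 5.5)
Your proposal is correct and follows the paper's argument. The map is the composite of $\Ss_K\otimes k(v)\to \BT[\mathcal{G}^c,\mbox{-}\mu_v]{1}\otimes k(v)$, which is smooth because $\varpi$ is formally \'etale and the truncation maps are smooth, with the smooth surjective map of Proposition~\ref{prop:apertures_to_gzips}; density comes from openness of smooth maps together with density of the open stratum, and surjectivity is Theorem~\ref{thm:surjectivity} combined with the surjectivity in Proposition~\ref{prop:apertures_to_gzips}. Your nonemptiness argument via CM points is unnecessary, since density holds vacuously if the special fiber is empty.
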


\begin{remark}
    The density of the $\mu$-ordinary locus is shown in~\cite{Kisin2022-eo} for most Shimura varieties of Hodge type with parahoric level at $p$, extending results of Wortmann~\cite{wortmann:ordinary}. It is also known in the exceptional case if $p$ is sufficiently large by work of Bakker--Shankar--Tsimerman~\cite[\S8]{bst}.
\end{remark}

\begin{remark}
   The non-emptiness of Ekedahl--Oort strata for Shimura varieties of PEL type is due to Viehmann--Wedhorn~\cite{ViehmannWedhornPEL}, and is also known in the abelian type case by work of Shen--Zhang~\cite{Shen2017-sf} (though also see earlier work of Wortmann~\cite{wortmann:ordinary} and Zhang \cite{Zhang_2018}). The proof of Shen--Zhang once again uses deep results of Kisin from~\cite{Kisin2017-qa}. On the other hand, our proof is essentially an immediate consequence of Theorem~\ref{introthm:surjective}, and the fact (shown in~\cite{gmm}) that there is a natural smooth surjective map $\BT[\mathcal{G}^c,\mbox{-}\mu_v]{1}\otimes\Field_p\to \mathcal{G}^c\text{-}\,\mathrm{zip}_{\mbox{-}\mu_v}$. 
\end{remark}

There is another stratification where we can show non-emptiness for an abstract ICM (i.e., without requiring an assumption as in Remark \ref{rem:de-Jong-extension}): the \emph{Newton stratification} using the Kottwitz set $B(G^c,\{\mbox{-}\mu_v\})$ defined in \cite{kottwitz:isoc} and studied in \cite{rapoport_richartz}.
\begin{introthm}
[Non-emptiness of Newton strata, Theorem~\ref{thm:newton_non_emptiness}]
\label{introthm:newton_strata}
  Let $\Ss_K$ be an ICM for $\Sh_K$ over $\Reg{E,(v)}$. Then the natural map $\Ss_{K}\to B(G^c,\{\mbox{-}\mu_v\})$ is surjective.
\end{introthm}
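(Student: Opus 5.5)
Since Newton strata are only indexed by $B(G^c,\{\mbox{-}\mu_v\})$, I would not try to prove surjectivity of $\widehat{\Ss}_K\to\BT[\mathcal{G}^c,\mbox{-}\mu_v]{\infty}$. Instead I would use the $\mu$-ordinary locus, which is non-empty by Theorem~\ref{introthm:Ekedahl--Oort}(1), together with the local structure of $\BT[\mathcal{G}^c,\mbox{-}\mu_v]{\infty}$ and the Serre--Tate property. The Newton map is defined on $\overline{\Field}_p$-points by composing $\varpi$ with the map $\BT[\mathcal{G}^c,\mbox{-}\mu_v]{\infty}(\overline{\Field}_p)\to B(G^c,\{\mbox{-}\mu_v\})$, which sends a point to the $\sigma$-conjugacy class of its Frobenius $b\in\mathcal{G}^c(\breve{\Int}_p)\mu_v(p)^{-1}\mathcal{G}^c(\breve{\Int}_p)$.

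The plan has three steps.

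\textbf{Step 1.} Pick a $\mu$-ordinary point $x\in\Ss_K(\overline{\Field}_p)$. Its Newton point is the maximal element $[b_{\mathrm{ord}}]$ of $B(G^c,\{\mbox{-}\mu_v\})$.

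\textbf{Step 2.} By the formal \'etaleness of $\varpi$, the completed local ring $\widehat{\Reg{}}_{\Ss_K,x}\otimes W(\overline{\Field}_p)$ is identified with the universal deformation ring of $\varpi(x)$, i.e.\ the completion of $\BT[\mathcal{G}^c,\mbox{-}\mu_v]{\infty}$ at $\varpi(x)$. From~\cite{gmm}, this deformation ring is formally smooth of dimension $\langle 2\rho,\mu_v\rangle$, and its special fiber is a Faltings-type unipotent deformation space $\widehat{U}^{\mu}$, on which the universal Frobenius is $g\cdot b$ for $g$ the universal point of the opposite unipotent. The Newton stratification of the special fiber of the deformation space is therefore the pullback of the Newton stratification of $\BT[\mathcal{G}^c,\mbox{-}\mu_v]{\infty}\otimes\Field_p$ (or of a truncation $\BT[\mathcal{G}^c,\mbox{-}\mu_v]{n}$, $n\gg0$).

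\textbf{Step 3.} Show that every $[b']\le[b_{\mathrm{ord}}]$ occurs in $\operatorname{Spec}$ of the deformation ring over $\overline{\Field}_p$. This is a purely local statement about the universal deformation of a $\mu$-ordinary $\mathcal{G}^c$-$\mu$-display. Using a rational point of $\operatorname{Spf}$ of the deformation ring, and algebraizing via Artin approximation on $\Ss_K$, which is of finite type, such a point lies over a point of $\Ss_K\otimes\overline{\Field}_p$ with Newton point $[b']$. This gives surjectivity.

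For the local statement in Step 3 I would argue by purity and an induction down the poset.

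\emph{Openness and purity.} By Grothendieck specialization, the Newton strata are locally closed, and by Rapoport--Richartz the map to $B(G^c,\{\mbox{-}\mu_v\})$ is upper semicontinuous. By de Jong--Oort/Vasiu/Hamacher purity, and Hamacher--Viehmann in the group-theoretic setting, each closed Newton stratum has codimension at most the length of the chain to the maximal element. Since the deformation space is formally smooth of dimension $\langle2\rho,\mu\rangle$, the strata have the expected codimension.

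\emph{Induction.} Take the basic element as target. If the closed stratum $\le[b']$ were empty for some $[b']$, purity would force a dimension contradiction. Starting from $[b_{\mathrm{ord}}]$ and descending along covering relations, each step passes from an open stratum to a non-empty divisor-type jump.

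The main obstacle is the lower bound on the dimension. Purity only gives codimension at most one per step provided the smaller stratum is non-empty, so it does not produce non-emptiness on its own. To handle this, I would first try the Grothendieck conjecture in the form proved by Viehmann for the unipotent universal deformation (Viehmann, ``Newton strata in the loop group''), namely that in the universal deformation of the $\mu$-ordinary (or any) point, all $[b']\le[b]$ occur. This is a statement only about $\mathcal{G}^c$, $\mu$ and the local ring, which is exactly what the Serre--Tate property supplies. If transferring this to the aperture-theoretic deformation space is awkward, I would fall back to the explicit description $gb$, $g\in\widehat U^{\mu}$, and use the fact that the loop-group Newton stratification of $\mathcal{G}^c(\breve{\Int}_p)\mu(p)^{-1}\mathcal{G}^c(\breve{\Int}_p)$ is non-empty in every class. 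That fact is Rapoport--Richartz--Kottwitz--Gashi. I would then transport it via smoothness of the map $\widehat U^{\mu}\to[\text{loop group quotient}]$ near the $\mu$-ordinary point.
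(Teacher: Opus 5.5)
Your Step 3 fails, and the issue is the direction of specialization, not missing technique. The Newton map is upper semicontinuous (Rapoport--Richartz, which you cite yourself). For each $[b']$ the locus of points with Newton point $\leq[b']$ is closed, so Newton points can only \emph{drop} under specialization. The spectrum of the complete local ring of $\Ss_K$ at $x$ (equivalently, by the Serre--Tate property, of $\BT[\mathcal{G}^c,\mbox{-}\mu_v]{\infty}$ at $\varpi(x)$) only sees generizations of $x$. Moreover, every non-empty closed subset of a local scheme contains the closed point. Hence if $x$ is $\mu$-ordinary, every point of its deformation space is $\mu$-ordinary. This is just the openness of the $\mu$-ordinary locus (Theorem~\ref{thm:properties_mu_ord_locus}(1)), and it is also visible from the tower structure in Corollary~\ref{cor:complete_local_ring_ordinary_locus}.

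The Grothendieck conjecture (Oort; Viehmann for loop groups) runs the other way: at a point with Newton point $[b]$, deformations realize all $[b']\geq[b]$. Your fallback has the same defect. Smoothness of the map from $\widehat{U}^{\mu}$ to the loop-group quotient near the $\mu$-ordinary point only reaches a neighbourhood of that point, which lies in the open $\mu$-ordinary stratum. So the non-emptiness of every class in $\mathcal{G}^c(\breve{\Int}_p)\mu(p)^{-1}\mathcal{G}^c(\breve{\Int}_p)$ does not transfer. Running the argument in the correct direction would require a basic point of $\Ss_K$ to start from, and producing one is exactly the hard part. Your Step 1 also tacitly needs a global input: density of the $\mu$-ordinary locus says nothing until you know $\Ss_K\otimes k(v)\neq\emptyset$.

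What is missing is a global source of points in the \emph{more special} strata. The paper gets these from CM points. Lemma~\ref{lem:extension_cm_points} shows that every special point $\Spec F\to\Sh_K$ extends to $\Spec\Reg{F,(w)}\to\Ss_K$. This uses the mapping property, Proposition~\ref{prop:gmu_rings_of_integers}, and crystallinity of CM local systems (Remark~\ref{rem:tori_crystallinity}). The group-theoretic argument of \cite[Proposition 1.3.10]{Kisin2022-eo} then chooses, for each $[b]\in B(G^c,\{\mbox{-}\mu_v\})$, torus data so that the reduction of the corresponding special point has Newton point $[b]$.

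Alternatively, as the paper remarks, the de Jong extension property makes the image of $\widehat{\Ss}_K\to\BT[\mathcal{G}^c,\mbox{-}\mu_v]{\infty}$ closed under specialization. That gives surjectivity (Theorem~\ref{thm:surjectivity}), and one concludes with Lemma~\ref{lem:kottwitz_map}. This closedness is exactly what no deformation-theoretic argument at a single point can supply.
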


Next, we have applications to the $\mu$-ordinary locus.
\begin{introthm}
    [Canonical CM lifts for $\mu$-ordinary points, Theorems~\ref{thm:properties_mu_ord_locus},~\ref{thm:canonical_lifts_are_CM}]
\label{introthm:mu_ordinary}
Suppose that $\Ss_K$ is an ICM for $\Sh_K$ and that $x\in \Ss_K(\kappa)$ is a closed point in the $\mu$-ordinary locus where $\kappa$ is a finite field. Let $\widehat{U}_x$ be the formal completion of $\Ss_K$ at $x$. Then:
\begin{enumerate}
    \item There exists a canonical tower
    \[
    \widehat{U}_x = \widehat{U}_{x,n}\to \widehat{U}_{x,n-1}\to \cdots \to \widehat{U}_{x,0} = \Spf W(\kappa)
    \]
    of formal schemes over $W(\kappa)$, where, $\widehat{U}_{x,i}$ is a formal $p$-divisible group over $\widehat{U}_{x,i-1}$ for $i>0$.
    \item Suppose that $\Ss_K$ is limpid. Then the identity section\footnote{By this, we mean the successive composition of the identity sections of each map in the tower.} $x^{\mathrm{can}}\in \widehat{U}_x(W(\kappa))$, the \emph{canonical lifting}, is a CM point of $\Ss_K$. 
\end{enumerate}
\end{introthm}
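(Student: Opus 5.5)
Since $\varpi$ is formally étale, $\widehat{U}_x$ is identified with the formal deformation space of the aperture $\varpi(x)\in \BT[\mathcal{G}^c,\mbox{-}\mu_v]{\infty}(\kappa)$. Part (1) is therefore local and will follow from the structure of deformations of a $\mu$-ordinary aperture, while part (2) uses limpidity to bring in the prime-to-$p$ realizations and apply a Tate-type argument.

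For (1), I would first use the theory of~\cite{gmm} to show that a $\mu$-ordinary aperture over $\kappa$, after passing to $W(\kappa)$-points, has a canonical slope filtration. Concretely, the $\mu$-ordinary Newton point determines a parabolic $P\subset\mathcal{G}^c_{W(\kappa)}$ with Levi $M$ through which $\mu_v$ factors. The aperture reduces to a $P$-aperture whose $M$-quotient is \emph{étale-twisted}, meaning it is a direct sum of isoclinic pieces each of which is rigid. The unipotent radical $U_P$ carries a filtration by $\mu$-weights (equivalently, by slope differences), $U_P = U_n\supset U_{n-1}\supset\cdots$, with vector-group graded pieces. Deformations of the aperture then reduce to deformations of this $P$-structure lifting the rigid $M$-part. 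Inductively, deformations of the level-$i$ extension class over the level-$(i-1)$ deformation space form a torsor, in fact a group, under a formal group attached to the $i$-th graded piece. This piece has a single slope (of the form $0$ versus $1$ after twisting), so the formal group is a formal $p$-divisible group of multiplicative type, analogously to Serre--Tate coordinates. Setting $\widehat{U}_{x,i}$ to be the deformation space of the truncated $P/U_{i}$-aperture gives the tower, with identity sections given by the split extensions. The main work in this step is proving that the $P$-reduction is unique and lifts uniquely to every deformation. I would argue this by the rigidity of the slope filtration for $\mu$-ordinary apertures, deducing it from the fully faithfulness theorem (the Tate-type theorem of \S\ref{introsub:tate}).

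For (2), the identity section $x^{\mathrm{can}}$ corresponds to an aperture over $W(\kappa)$ with a splitting to $M$, that is, an $M$-aperture. Its crystalline Galois representation therefore factors through $M(\Int_p)$, and it commutes with a lift $\tilde{\phi}$ of the Frobenius of $\kappa$. Fullness in the Tate-type theorem implies that the Frobenius-equivariant automorphisms of the $\mu$-ordinary aperture (the $\kappa$-points of the automorphism group scheme, which contains a torus $T\subset M$ centralizing the Frobenius, namely the centralizer of the Newton cocharacter together with the Frobenius element) all lift to $x^{\mathrm{can}}$. Using the canonical lifting over $W(\kappa')$ for finite extensions $\kappa'/\kappa$ and Frobenius functoriality, the $p$-adic Galois representation at $x^{\mathrm{can}}$ has image in a torus $T_p$ commuting with Frobenius. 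Limpidity is what lets us control the $\ell$-adic realizations. Since $\mathbf{Et}_{K,\ell}$ extends over $\Ss_K$, the $\ell$-adic Galois representation at $x^{\mathrm{can}}$ is determined by that at the reduction $x$, so it is generated by the Frobenius of $\kappa$ and is abelian, and its Zariski closure is commutative. The Mumford--Tate group of $x^{\mathrm{can}}$ (as a point of $\Sh_K(\overline{\Rat})$) contains the connected Zariski closure of the image of Galois by the Tannakian formalism on the Shimura variety. To conclude that it is a torus, I would use the Mumford--Tate/Galois comparison for special points. The cleanest route, which I would try first, is to show that the orbit of $x^{\mathrm{can}}$ under the Hecke operators at $\ell$ commuting with Frobenius is stabilized, deduce that the point is fixed by a maximal torus $T\subset G$ defined over $\Rat$ (the centralizer of a Frobenius element in $G^c$, transferred via the $\ell$-adic realization), and then note that points fixed by such a torus are special by the Shimura variety theory. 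Making this transfer from a Frobenius torus to a $\Rat$-rational torus fixing the point is the main obstacle in the proof.
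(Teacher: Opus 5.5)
Your outline of (1) is essentially the paper's. The $\mu$-ordinary locus of $\BT[\mathcal{G}^c,\mbox{-}\mu_v]{n}$ is $\BT[\mathcal{P}^+_\nu,\mu]{n}$, which over the \'etale stack $\BT[\mathcal{M}_\nu,\mu]{n}$ is the classifying stack of a filtered finite flat group scheme with truncated Barsotti--Tate graded pieces (Propositions~\ref{prop:BTPnu_structure} and~\ref{prop:ordinary_locus_description}). However, the step you single out cannot come from Tate full faithfulness. That step is existence and unique lifting of the parabolic reduction over all infinitesimal deformations. Tate full faithfulness only concerns $\eta$-normal flat bases, so it does not see Artinian thickenings, and it never produces reductions of structure. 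The paper instead uses deformation theory: $\BT[\mathcal{P}^+_\nu,\mu]{n}\to\BT[\mathcal{G},\mu]{n}$ is \'etale by~\cite[Proposition 9.5.1]{gmm}. It is fully faithful on geometric points with image the $\mu$-ordinary locus, by Lang's theorem and Lemma~\ref{lem:kottwitz_lemma}, hence an open immersion. Also, the graded pieces are connected and isoclinic of slope a positive multiple of $\nu$. They need not be of multiplicative type when $E_v\neq\Rat_p$, though this does not affect the statement.

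The genuine gap is in (2), at exactly the step you flag as the main obstacle.
\begin{itemize}
\item Abelian $\ell$-adic and $p$-adic Galois images at $x^{\mathrm{can}}_\eta$ say nothing about the Mumford--Tate group without Mumford--Tate-conjecture-type input.
\item The $\ell$-adic Frobenius $\gamma_{\ell,x}$ has no $\Rat$-rational structure. So you cannot choose $\ell$ at which its centralizer is split, which the Tate--Kisin finiteness argument needs in order to force full rank. Nor can you compare it across $\ell$; the paper deduces $\ell$-independence \emph{from} the CM property.
\item A Hecke-orbit argument yields finiteness of an orbit, not a fixed point.
\item The centralizer of Frobenius is generally not a torus. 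For instance, for the canonical lift of $E\times E$ with $E$ ordinary in the Siegel threefold, $I_{x^{\mathrm{can}}_\eta}$ is non-commutative.
\end{itemize}

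The missing idea is a $\Rat$-rational Frobenius in characteristic $0$. The paper builds one in Lemma~\ref{lem:canonical_frobenius_lift_SsK_ord} as an endomorphism $\widehat{\Phi}^{\ord}_K$ of $\widehat{\Ss}^{\ord}_K$, using the $p$-Hecke correspondence at $\nu(p)$:
\begin{itemize}
\item Over the ordinary tube, the $p$-adic \'etale realization reduces to $\mathcal{P}^+_\nu(\Int_p)$, which gives a canonical section of $t_\nu$.
\item Composing with $s_\nu$ gives a rigid-analytic map compatible with the canonical lift $\widetilde{\Phi}$ on $\BT[\mathcal{P}^+_\nu,\mu]{\infty}$. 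This map extends integrally by the argument of Proposition~\ref{prop:uniqueness_maps}.
\item The construction yields quasi-isogenies $\varphi_x\in\mathrm{QIsog}_K(\Phi(x^{\mathrm{can}})_\eta,x^{\mathrm{can}}_\eta)$ (Proposition~\ref{prop:frobenius_q-isogeny}). These are genuine elements of $G(\Adele_f)$, with a $p$-component.
\item If $\Phi^m(x)=x$, their composite ${}^\Phi\gamma_x$ lies in $\Aut^\circ_K(x^{\mathrm{can}}_\eta)=I_{x^{\mathrm{can}}_\eta}(\Rat)$. This holds automatically over a finite field, since $\Phi$ permutes the finitely many $\kappa$-points. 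This element is the analogue of the Frobenius endomorphism of an abelian variety.
\end{itemize}

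The rest of the paper's argument then runs as follows.
\begin{enumerate}
\item ${}^\Phi\gamma_x$ is semisimple because $I_{x^{\mathrm{can}}_\eta}/Z(G)$ is compact at $\infty$.
\item The set of $\Phi^m$-fixed points is finite. Limpidity, via the prime-to-$p$ tower, and Remark~\ref{rem:hecke_and_canonical_lift} transport this to characteristic $0$, giving finiteness of ${}^\Phi I_{x^{\mathrm{can}}_\eta}(\Rat)\backslash{}^\Phi I_{\ell,x}(\Rat_\ell)/U_\ell$.
\item Choosing $\ell$ at which the centralizer of ${}^\Phi\gamma_x$ is split, Kisin's argument gives $\mathrm{rank}\,I_{x^{\mathrm{can}}_\eta}=\mathrm{rank}\,G$ (Lemma~\ref{lem:kisin_tate_argument}).
\item Then \emph{any} maximal torus of $I_{x^{\mathrm{can}}_\eta}$ exhibits $x^{\mathrm{can}}_\eta$ as a CM point (Lemma~\ref{lem:cm_point_criterion}).
\end{enumerate}
Only afterwards does the paper show that $\Phi$ actually lifts Frobenius (Theorem~\ref{thm:ordinary_locus_frobenius_lift}).
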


\begin{remark}
    Bakker--Shankar--Tsimerman also show the existence of the CM canonical lift for their integral models~\cite[\S 8]{bst}. Unlike their proof, ours does not use \emph{a priori} knowledge of the $\ell$-independence of the conjugacy classes of Frobenius elements acting on the \'etale realizations (see Klevdal--Patrikis~\cite{Klevdal2025-ka}): We only use the defining properties of ICMs, and deduce this $\ell$-independence from the existence of the canonical CM lift. The key point is to exhibit a lift of Frobenius as a quasi-isogeny, which relies ultimately on the stack-theoretic methods used here.
\end{remark}

\begin{remark}
    [Integral canonical models for subhyperspecial levels]
Finally, we mention that the results here can be combined with work of Takaya~\cite{Takaya} to show the existence of integral canonical models in the sense of Pappas--Rapoport when the level at $p$ is parahoric and \emph{contained} in a hyperspecial subgroup; see Proposition~\ref{prop:PR ICM-for-minorizing}.
\end{remark}

\begin{remark}
    [Expected applications]
We expect the results and methods here to be applied (among other questions) to: 
\begin{itemize} 
\item The problem of defining $p$-Hecke correspondences and Igusa stacks for limpid ICMs, as well as counting their mod-$p$ points; 
\item An extension of the Langlands--Kottwitz--Scholze method; 
\item The understanding of central leaves; 
\item The study of special cycles on Shimura varieties.
\end{itemize}
In addition, in ongoing work, joint with Teruhisa Koshikawa and Kentaro Inoue, we will also extend these methods to the logarithmic context and define integral canonical models for toroidal compactifications of Shimura varieties.
\end{remark}

\subsection{Tate full faithfulness}
\label{introsub:tate}

Finally, a key ingredient in our proofs of the canonicity results, and especially for establishing the mapping property in Theorem~\ref{introthm:mapping_property}, is a generalization of Tate's full faithfulness theorem for the Tate module of $p$-divisible groups, and so is of independent interest.

This result holds in the following setting: $\mathcal{G}$ will be a  smooth affine group scheme over $\Int_p$ and $\mu$ will be a cocharacter of $\mathcal{G}$ defined over an unramified ring of integers $\hat{\mathcal{O}}$. We will assume that $\mu$ is $1$-\emph{bounded} in the sense of Lau~\cite[Definition 6.3.1]{MR4355476} (when $\mathcal{G}$ is reductive, this is the same as being minuscule). Then we have the associated formal algebraic stacks $\BT{n}$ constructed in~\cite[\S 9]{gmm}, and their inverse limit $\BT{\infty}$. We also have the \'etale realization functor from $\mr{BT}^{\mc{G},\mu}_\infty$ to $\mc{G}(\bb{Z}_p)$-local systems on the generic fiber, generalizing the Tate module of a $p$-divisible group.

\begin{introthm}
[Tate full faithfulness, Theorem~\ref{thm:full_faithfulness}]
\label{introthm:tate}
If $\mathfrak{X}$ is a normal flat formal scheme over $\hat{\mathcal{O}}$, the \'etale realization functor from $\BT{\infty}(\mathfrak{X})$ to $\mathcal{G}(\Int_p)$-local systems over $\mathfrak{X}_\eta$ is fully faithful.
\end{introthm}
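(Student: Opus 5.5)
We reduce the statement to a local one and then invoke the Tannakian comparison between $\mathcal{G}$-apertures and prismatic $F$-gauges with $\mathcal{G}$-structure.

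\emph{Faithfulness and reduction to isomorphisms.} Both sides are groupoids, so it suffices to show the following: for $\mathcal{P}_1,\mathcal{P}_2\in\BT{\infty}(\mathfrak{X})$, the map of sheaves $\underline{\mathrm{Isom}}(\mathcal{P}_1,\mathcal{P}_2)\to\underline{\mathrm{Isom}}(T_{\et}(\mathcal{P}_1),T_{\et}(\mathcal{P}_2))$ induces a bijection on global sections. Both sides are sheaves for the \'etale (indeed $p$-completely flat) topology on $\mathfrak{X}$, so the question is local.

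\emph{Reduction to the complete local case.} By normality and flatness, a section over $\mathfrak{X}$ is determined by its restrictions to the completed local rings at points of the special fiber. More precisely, we use that $\mathfrak{X}$ has an affine cover by $\Spf R$ with $R$ normal, $p$-torsion free and $p$-complete. We then reduce, via a $p$-adic Hartogs/extension argument, to the case where $R$ is either a complete regular local ring or a $p$-complete valuation-like ring. Concretely, we would use that $R=R[1/p]\cap\prod R_{\mathfrak{p}}^\wedge$ over height-one primes containing $p$, and that these localizations are complete DVRs (after completion), which possibly have imperfect residue field. So it suffices to treat:
\begin{enumerate}
\item[(a)] the generic fibre, where a morphism of local systems exists by hypothesis;
\item[(b)] $\mathfrak{X}=\Spf\mathcal{O}_K$, with $\mathcal{O}_K$ a complete DVR;
\item[(c)] the gluing of (a) and (b).
\end{enumerate}
For (c), we use that the aperture $\underline{\mathrm{Isom}}$ is representable by something affine over the prismatic site, which lets us check integrality of an isomorphism codimension-one-wise.

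\emph{DVR case.} This is the step I expect to be the main obstacle. Here I would pass through the identification (from \cite{gmm}) of $\BT{\infty}(\mathcal{O}_K)$ with $\mathcal{G}$-torsors on the syntomic/prismatic $F$-gauge stack of $\mathcal{O}_K$ of type $\mu$, together with its Tannakian description via representations of $\mathcal{G}$. Using $1$-boundedness of $\mu$, these are exactly the prismatic $F$-crystals with $\mathcal{G}$-structure whose Hodge filtration has weights in $\{0,1\}$ after pushing out by any representation. The étale realization of such an object factors through Bhatt--Scholze's equivalence between prismatic $F$-crystals on $\mathcal{O}_K$ and lattices in crystalline representations; for imperfect residue fields we would use the extensions of this equivalence (Du--Liu--Moon--Shimizu, or the Tannakian refinement of Imai--Kato--Youcis \cite{imai2024tannakianframeworkprismaticfcrystals}). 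Full faithfulness for $\mathcal{G}$-objects then follows from full faithfulness on vector-bundle objects by Tannakian formalism, since $\mathcal{G}$ is affine and flat. The remaining subtlety is that the $F$-gauge carries more data than the $F$-crystal. Here minusculeness enters: for weights in $\{0,1\}$, the filtration is determined by the Frobenius on the Breuil--Kisin module (as in Kisin's classification), so morphisms of $F$-crystals automatically respect the gauge structure.

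\emph{Globalization.} Finally, a given isomorphism of local systems yields isomorphisms of apertures over each complete DVR localization and over the formal completion generically. These agree on overlaps by the faithfulness already established, and normality of $R$ (intersection of codimension-one localizations) gives the extension over $\Spf R$. Uniqueness of the extension gives descent to $\mathfrak{X}$.
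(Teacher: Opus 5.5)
\textbf{There is a genuine gap in your gluing step, your (c) and the final ``Globalization'' paragraph.} A Hartogs/codimension-one argument can only be run on \emph{points of an affine scheme over $\Spec R$}. Concretely, you need an affine $R$-scheme $\mathcal{I}$ with two properties: its $R[\nicefrac{1}{p}]$-points are the isomorphisms of the $\mathcal{G}(\Int_p)$-local systems, and its $p$-completion is the formal scheme of isomorphisms of apertures.

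What you actually have are two separate objects:
\begin{itemize}
\item the formal Isom $\Spf S\to\Spf R$, which is affine because $\BT{n}$ has affine diagonal;
\item the finite \'etale Isom $\Spec B\to\Spec R[\nicefrac{1}{p}]$ of local systems.
\end{itemize}
A priori these glue only to a (separated) algebraic space. For such spaces, extension from height-one localizations fails: remove a closed subset of large codimension from the special fibre of an affine space. Your phrase ``affine over the prismatic site'' does not supply the missing affineness. Without it, the isomorphisms you produce over the various $\widehat{R}_{\mathfrak{p}}$ and over $R[\nicefrac{1}{p}]$ have nothing to glue into. Note also that $R_{\mathfrak{p}}$ is not $p$-complete, so apertures over it are not even defined without an algebraized version.

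Supplying this is the main technical content of the paper. It shows that the algebraic stack $\BT[\mathcal{G},\mu,\mathrm{alg}]{n}$, obtained by gluing $\BT{n}$ to $\mathcal{G}(\Int/p^n\Int)$-local systems on the generic fibre, has \emph{affine} diagonal (Theorem~\ref{thm:algebraization}). This is done via the Achinger--Youcis criterion, which reduces affineness to showing that $B\to S[\nicefrac{1}{p}]$ has dense image (Proposition~\ref{prop:diagonal_rational_open}). That density is proved on perfectoid covers by describing both Isom spaces as diamonds and linearizing them via Artin--Schreier theory (Lemma~\ref{lem:linearization}). Only after this does $\eta$-normality reduce the problem to a discrete valuation ring.

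\textbf{Your DVR step also does not work as stated.} The completions of $R$ at height-one primes containing $p$ are complete DVRs with essentially arbitrary imperfect residue fields. These need not have a finite $p$-basis, and their ramification need not be of the form $R_0\otimes_{W(\kappa)}\Reg{K}$. The results you cite (Bhatt--Scholze, Du--Liu--Moon--Shimizu, the Tannakian framework of Imai--Kato--Youcis) need a perfect residue field or a base-algebra structure.

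The paper avoids this as follows, again using the affine $\mathcal{I}$:
\begin{itemize}
\item It takes an ind-finite flat quasisyntomic cover $R\to R_\infty$ by a DVR with perfect residue field, built by successively adjoining roots of $X^p-\pi X-\tilde{a}$.
\item By flat descent of $R$-points of $\mathcal{I}$, it suffices to know that $\mathcal{I}(R_\infty)\to\mathcal{I}(R_\infty[\nicefrac{1}{p}])$ is bijective. This is the full faithfulness in Theorem~\ref{thm:guo_reinecke}, for perfect residue field.
\item It also needs mere injectivity over $R_\infty^{\otimes_R m}$. This is the faithfulness part of Theorem~\ref{thm:guo_reinecke}, which holds over any $p$-quasisyntomic flat base.
\end{itemize}

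\textbf{A smaller point.} Your appeal to minusculeness to recover the gauge structure from the $F$-crystal is unnecessary. Proposition~\ref{prop:f-gauges_to_analytic_prismatic} already gives full faithfulness of $F$-gauges into prismatic $F$-crystals over any $p$-quasisyntomic flat base. Moreover, it is false that pushouts along arbitrary representations have weights in $\{0,1\}$; the adjoint representation already gives $\{-1,0,1\}$.
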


The proof proceeds---as in Tate's original proof---by reduction to the case of a discrete valuation ring. This reduction requires algebraizing $\BT{n}$ to an algebraic stack over $\mathcal{O}$ with affine diagonal; see Theorem~\ref{thm:algebraization}.

\subsection{Acknowledgements}
\label{ss:acknowledgements} 
We would like to thank Ching-Li Chai, Pol van Hoften, Jake Huryn, Christian Klevdal, Ayan Nath, George Pappas, Stefan Patrikis, Ananth Shankar, Xu Shen and Jacob Tsimerman for helpful conversations and correspondence.

\subsection{Notation and conventions}
\label{sub:notational_conventions}

\begin{itemize}
    \item We follow the notational conventions of~\cite[\S 2]{gmm} for ($\infty$-)category theory and related notions.
    \item We will also follow the definitions and conventions of~\cite{gmm} for graded and filtered objects; see in particular \S 4.1--4.3 of \emph{op.\@ cit.}
    \item As a specific instance, for a ring $A$ and ideal $I$, we write $\mr{Fil}^\bullet_I A$ for the $I$-adic filtration. The trivial filtration is $\mr{Fil}_\mr{triv} A\defn\mr{Fil}_0 A$. If $I$ is invertible, we write $\mr{Fil}^\bullet_{I,\pm} A$ for the two-sided filtration with $\mr{Fil}^k_{I,\pm} A=I^k$ for all $k\in \Int$.
    \item If $V$ is a $\Gm$-equivariant complex over some ring $R$, then we will write $V^i\subset V$ for the direct summand on which the $\Gm$-action is via $z\mapsto z^{-i}$: This is the \defnword{$i$-th weight space} for the action. This applies in particular if $\mu:\Gm\to G$ is a cocharacter of a smooth affine group scheme $G$ over $R$, where we take $V$ to be $\Lie(G)$ equipped with the adjoint action of $\Gm$ via $\mu$.
    
    \item If $R$ is a ring (or a scheme), we denote by $\widehat{R}_I$ the completion along an ideal (sheaf) $I$. If $I$ is clear from context (it will usually be the ideal generated by $p$), we will consistently suppress it from the notation.

    \item We follow standard terminology and notation concerning Tannakian theory (e.g., $\Lambda^{\otimes}$); see, for example, \cite[Appendix A]{imai2024tannakianframeworkprismaticfcrystals} for a quick summary.

    \item For a ringed topos $X$, we write $\mr{Vect}(X)$ for the category of vector bundles and $\mr{Perf}(X)$ for the $\infty$-category of perfect complexes.
    \item For a $p$-adic formal stack $\mf{X}$, we denote by $\mf{X}_\mr{crys}$ the absolute crystalline site of $\mf{X}$.

        \item For any groupoid $G$, and any algebraic space $X$, we will write $\underline{G}$ for the \'etale sheafification over $X$ of the constant groupoid associated with $G$.

    \item We will use a subscript $\eta$ to denote `generic fiber' in the following contexts:
    \begin{enumerate}
        \item If $\mathcal{X}$ is an algebraic stack over a discrete valuation ring, then $\mathcal{X}_\eta$ will be the usual generic fiber over its fraction field.
        \item If $\mathfrak{X}$ is a formal algebraic stack over a $p$-complete discrete valuation ring $\mathcal{O}$, then $\mf{X}_\eta$ will denote the corresponding adic stack over $\Spa(\mathcal{O}[\nicefrac{1}{p}],\mc{O})$; see \cite[\S3-4]{ShimizuOh}.
    \end{enumerate}

    \item If $(F,F^+)$ is an affinoid field, and $X$ is an algebraic stack over a $F$, we will write $X^{\an}$ for the associated adic stack $X\times_{\Spec(F)}\Spa(F,F^+)$ over $(F,F^+)$; see \cite[\S4]{HuberGeneralization}.
    
\end{itemize}

\section{Prismatic \texorpdfstring{$F$}{F}-gauges}
\label{sec:prismatic_f_gauges}

\subsection{The stacks of Bhatt--Lurie and Drinfeld}
\label{sub:the_stacks_of_bhatt_lurie_and_drinfeld}

Canonically associated with any derived $p$-adic formal scheme $\mf{X}$ are three derived $p$-adic formal stacks\footnote{This means that they are \'etale (and in fact fpqc) sheaves on  $p$-nilpotent derived affine schemes, i.e. derived schemes of the form $\Spec A$ where $A$ is a $p$-nilpotent animated commutative ring.} $\mf{X}^\Prism$, $\mf{X}^\smallN$, and $\mf{X}^{\mathrm{syn}}$ (shortened to $R^\Prism$, $R^\smallN$, and $R^\mr{syn}$ when $\mf{X}=\Spf(R)$), the \defnword{prismatization}, \defnword{filtered prismatization} and \defnword{syntomification}, respectively. We will review what we'll need to know of these cohomological stacks for this paper. The reader can find more details in~\cite{drinfeld2022prismatization}, \cite{bhatt_lectures},~\cite[\S 6]{gmm} and~\cite[\S 3]{Madapusi2025-ay}. The first two named sources explain how to construct these stacks as classical formal prestacks, and the third extends the constructions---following Bhatt--Lurie~\cite{bhatt2022prismatization}---to the animated framework.

 \begin{remark}
  [Prisms and the prismatization]
\label{rem:prisms_and_prismatization}
The prismatization $\mf{X}^{\Prism}$ parameterizes \emph{Cartier--Witt divisors} on $\mf{X}$. These are related to the prismatic site for $\mf{X}$. Indeed, suppose that we have an object $(A,I,\Spf(\ov{A})\to \mf{X})$ in the absolute prismatic site for $\mf{X}$. Here $A$ is endowed with the $(p,I)$-adic topology, and $\ov{A}=A/I$. Then, as in~\cite[Construction 3.10]{bhatt2022prismatization}, we find a canonical map $\iota_{(A,I)}\colon\Spf A\to \mf{X}^{\smallprism}$ taking a map $\Spec(B)\to \Spf(A)$, where $B$ is a $p$-nilpotent ring, to the Cartier--Witt divisor $I\otimes_AW(B)\to W(B)$ where $A\to W(B)$ is the unique $\delta$-map lifting $A\to B$, and where we equip this Cartier--Witt divisor with structure map
\[
\Spec(W(B)/^{\mathbb{L}}(I\otimes_AW(B)))\to \Spf(\ov{A})\to \mf{X}.
\]

\end{remark}

\begin{remark}
  [Relationship between the stacks]
The stack $\mf{X}^\smallN$ is a \emph{filtered} formal stack, meaning that it is fibered naturally over the formal Artin stack $\Aff^1/\Gm$ parameterizing line bundles $L$ over $p$-complete rings $C$ equipped with a cosection $L\to C$. The pre-image of the open point 
\begin{equation*}
\Spf \Int_p\simeq \Spf \bb{Z}_p \times \Gm/\Gm\subset \Spf \bb{Z}_p\times\Aff^1/\Gm
\end{equation*}
is canonically isomorphic to $\mf{X}^{\smallprism}$, and is called the \defnword{de Rham locus} of $\mf{X}^\smallN$ and its inclusion is denoted $j_\mr{dR}\colon 
\mf{X}^\Prism\to \mf{X}^\smallN$. There is also another open immersion $j_{\mathrm{HT}}\colon \mf{X}^\Prism\to \mf{X}^\smallN$ called the \defnword{Hodge--Tate locus} which is physically disjoint from the de Rham locus. The stack $\mf{X}^{\mathrm{syn}}$ is the coequalizer of these two open immersions and is therefore equipped with a canonical open immersion $j_{\Prism}\colon\mf{X}^\Prism\to \mf{X}^{\mathrm{syn}}$.
\end{remark}

\begin{remark}
   [Nygaard filtered absolute prismatic cohomology]
\label{rem:nygaard_filtered_cohomology}
Bhatt--Lurie show that the values of these stacks on quasiregular semiperfectoid (qrsp) inputs can be described in terms of Nygaard filtered absolute prismatic cohomology. For such rings $R$, their \defnword{absolute prismatic cohomology} is a classical $p$-complete ring $\Prism_R$, which is in fact a $\delta$-ring equipped with a canonical prism structure $(\Prism_R,I_R)$: this is the \emph{initial} prism for $R$. In~\cite[\S 12]{BhattScholzePrisms}, Bhatt--Scholze construct the \defnword{Nygaard filtration} $\Fil^\bullet_\smallN\Prism_R$ on absolute prismatic cohomology. For $R$ qrsp, we have
\[
\Fil^i_{\smallN}\Prism_R = \{x\in \Prism_R:\;\varphi(x)\in \Fil^i_{I_R}\Prism_R\},
\]
where $\Fil^\bullet_{I_R}\Prism_R$ is the $I_R$-adic filtration. The Nygaard filtered prismatization $R^{\smallN}$ is now canonically isomorphic to the \emph{formal Rees stack} $\Rees(\Fil^\bullet_{\smallN}\Prism_R)$ associated with this filtration (e.g., see \cite[\S1.1.2]{imai2023prismatic}). The open immersion $j_{\dR}$ amounts to `forgetting' the filtration (that is, restriction to the open substack $\Gm/\Gm\subset \Aff^1/\Gm)$, while $j_{\mathrm{HT}}$ arises from the filtered Frobenius lift
\[
  \Fil^\bullet_{\smallN}\Prism_R\to \Fil^\bullet_{I_R}\Prism_R.
\]
This description can be extended to all \emph{semiperfectoid} rings; see~\cite[Theorem 6.11.7]{gmm}.

\end{remark}

\begin{remark}
  [The (filtered) de Rham point]
\label{rem:de_rham_point}
There are canonical maps 
\[
  x^\smallN_{\dR}\colon\Aff^1/\Gm\times \mf{X}\to \mf{X}^\smallN\;;\; x_{\dR}\colon\mf{X}\to \mf{X}^{\smallprism}
\]
with the second being the restriction of the first over $\Gm/\Gm$. In terms of the `affine' description from Remark~\ref{rem:nygaard_filtered_cohomology} for semiperfectoid rings, the first map corresponds to the map on formal Rees stacks associated with the canonical map of filtered rings from $\Fil^\bullet_{\smallN}\Prism_R$ to $R$ equipped with its trivial descending filtration supported in graded degree $0$.
\end{remark}

\begin{remark}
 [Functoriality and \'etale descent]
Each of the assignments $\mf{X}\mapsto \mf{X}^?$, for $?$ equal to $\Prism$, $\mathcal{N}$, or $\mathrm{syn}$, is functorial in the derived formal scheme $\mf{X}$, and preserves products: That is, we have canonical isomorphisms $(\mf{X}\times \mf{Y})^? \simeq \mf{X}^?\times \mf{Y}^?$. Furthermore, if $\mf{Y}\to\mf{X}$ is an \'etale and faithfully flat map\footnote{Recall this means that the induced map of schemes $\mf{Y}\otimes^\bb{L}\bb{F}_p\to \mf{X}\otimes^\bb{L}\bb{F}_p$ is \'etale and faithfully flat.}, then $\mf{Y}^?\to \mf{X}^?$ is an \'etale cover of derived formal stacks (see \cite[Proposition 6.12.3]{gmm}).
 \end{remark} 

\begin{definition}
   A map $\mf{Y}\to\mf{X}$ of derived $p$-adic formal schemes is \defnword{quasisyntomic} if it is $p$-completely flat (that is, $\mf{Y}\otimes^\bb{L}\bb{F}_p\to\mf{X}\otimes^\bb{L}\bb{F}_p$ is flat), and if $\mathbb{L}_{\mf{Y}/\mf{X}}$ has $p$-complete Tor amplitude $[-1,0]$: that is, $\mathbb{L}_{\mf{Y}/\mf{X}}/{}^{\mathbb{L}}p$ has Tor amplitude $[-1,0]$ as a quasi-coherent sheaf on $\mf{Y}\otimes^\bb{L}\bb{F}_p$. The map $\mf{Y}\to \mf{X}$ is a \defnword{quasisyntomic cover} if it is quasisyntomic and $\mf{Y}\otimes^\bb{L}\bb{F}_p\to\mf{X}\otimes^\bb{L}\bb{F}_p$ is an fpqc cover. 
\end{definition}

\begin{proposition}
   \label{prop:nygaard_qsynt_descent}
If $\mf{Y}\to \mf{X}$ is a quasisyntomic cover, then $\mf{Y}^\smallN \to \mf{X}^\smallN$ is surjective in the $p$-completely flat topology. In fact, if $\mf{Y}$ and $\mf{X}$ are the formal spectra of semiperfectoid rings, then $\mf{Y}^\smallN\to \mf{X}^\smallN$ is faithfully flat. 
\end{proposition}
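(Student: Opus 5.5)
The plan is to treat the semiperfectoid case first, using the Rees-stack description from Remark~\ref{rem:nygaard_filtered_cohomology}, and then to reduce the general statement to it by quasisyntomic descent.

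\emph{Semiperfectoid case.} Let $R\to S$ be a quasisyntomic cover of semiperfectoid rings. By~\cite[Theorem 6.11.7]{gmm}, $R^\smallN\simeq\Rees(\Fil^\bullet_\smallN\Prism_R)$, and likewise for $S$. So the map $S^\smallN\to R^\smallN$ comes from the filtered map of Rees algebras
\[
\bigoplus_i \Fil^i_\smallN\Prism_R\, t^{-i}\to \bigoplus_i\Fil^i_\smallN\Prism_S\, t^{-i},
\]
and it suffices to show that this map is $(p,I)$-completely faithfully flat. A graded module over a Rees algebra is flat exactly when the underlying filtered object is flat with flat associated graded. Since the Rees algebra is a graded deformation of its generic fiber to its special fiber, it is enough to check two things:
\begin{enumerate}
\item $\Prism_R\to\Prism_S$ is $(p,I)$-completely faithfully flat;
\item $\gr^\bullet_\smallN\Prism_R\to\gr^\bullet_\smallN\Prism_S$ is $p$-completely faithfully flat.
\end{enumerate}
A flatness criterion for graded-filtered modules (flat generic fiber plus flat special fiber, with completeness of the Nygaard filtration) then gives flatness of the Rees map.

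For (1), I would use the Bhatt--Scholze result that the initial prism of a quasisyntomic cover of semiperfectoids is a $(p,I)$-completely faithfully flat cover. One computes $\overline{\Prism}_S\otimes^{\mathbb L}_{\overline{\Prism}_R}$ by the Hodge--Tate comparison: the conjugate filtration on $\overline{\Prism}$ has graded pieces $\wedge^i L[-i]$, which are flat over $R$ with base change to $S$.

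For (2), the Nygaard graded pieces of a qrsp ring are identified with the conjugate-filtered pieces $\Fil^{\mathrm{conj}}_i\overline{\Prism}_R\{i\}$ (Bhatt--Scholze, Theorem~12.2, extended to semiperfectoids in~\cite{gmm}). These are built from $\wedge^j\mathbb{L}_{R/\Int_p}[-j]$, and quasisyntomicity of $R\to S$ makes the resulting base change $p$-completely flat. Faithfulness follows since it can be tested on the degree-$0$ piece, which is $R\to S$ itself.

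\emph{General case.} Given a quasisyntomic cover $\mf Y\to\mf X$, surjectivity in the $p$-completely flat topology is local on the target. By quasisyntomic descent (\'etale descent together with the fact that quasisyntomic rings admit quasisyntomic covers by semiperfectoid ones), choose a quasisyntomic cover $\Spf R\to\mf X$ with $R$ semiperfectoid, presenting $\mf X^\smallN$ up to flat surjection. Then refine $\Spf R\times_{\mf X}\mf Y$ by $\Spf S$ with $S$ semiperfectoid (for instance a perfectoidization of a polynomial cover). Since $R\to S$ is a quasisyntomic cover of semiperfectoids, the first part applies, and composing surjections gives the claim. The one subtlety is that $\Spf R^\smallN\to\mf X^\smallN$ must itself be surjective. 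This follows from its definition as a colimit/descent, using the fact that these stacks are quasisyntomic-locally computed by semiperfectoid charts (this is how they are defined for non-semiperfectoid inputs in~\cite{gmm}).

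\emph{Main obstacle.} The hardest step is (2), namely flatness of the map on Nygaard graded pieces, together with the completeness issues needed to assemble graded flatness into flatness of the Rees algebra. If that becomes unwieldy, my fallback is to verify flatness fibrewise on $\Aff^1/\Gm$: the open fiber is (1), the Hodge--Tate/zero fiber is (2), and a fibrewise flatness criterion over the flat base $\Aff^1/\Gm$ combines them.
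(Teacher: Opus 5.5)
There is a genuine gap: your reduction of the general case is circular. Once the semiperfectoid case is in hand, everything rests on what you call ``the one subtlety'', namely that for a quasisyntomic cover $\Spf R\to\mf X$ with $R$ semiperfectoid, the map $R^\smallN\to\mf X^\smallN$ is surjective in the $p$-completely flat topology. But this is the proposition itself, applied to the cover $\Spf R\to\mf X$, and it does not follow formally from the definitions. For general $\mf X$, the stack $\mf X^\smallN$ is defined through its functor of points: filtered Cartier--Witt divisors over a test ring $C$, together with a map from the associated animated ring to $\mf X$, following Drinfeld and Bhatt--Lurie. It is not defined as a colimit or by descent over semiperfectoid charts. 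That $\mf X^\smallN$ can be computed quasisyntomic-locally from semiperfectoid inputs is a \emph{consequence} of this proposition; this is exactly how it is used in Remark~\ref{rem:quasisyntomic_qrsp}. So even a complete treatment of covers between semiperfectoid rings tells you nothing about $\mf Y^\smallN\to\mf X^\smallN$ when $\mf X$ is not semiperfectoid. The missing ingredient is an argument at the level of an arbitrary $C$-point of $\mf X^\smallN$. You must show that after some $p$-completely faithfully flat extension $C\to C'$ such a point lifts to $\mf Y^\smallN(C')$. This is where the hypothesis on $\mathbb{L}_{\mf Y/\mf X}$ has to be used over a base that is not semiperfectoid.

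Your semiperfectoid argument has a reasonable shape, but steps (1) and (2) are asserted rather than proved. The fibre criterion itself is fine and needs no completeness of the Nygaard filtration. Flatness after inverting $t$, plus flatness of the derived reduction mod $t$, gives flatness for arbitrary modules: split a test module into its $t^\infty$-torsion and $t$-torsion-free parts. The problem is with the inputs. Flatness of each $\wedge^i\mathbb{L}_R[-i]$ over $R$ and of each $\wedge^i\mathbb{L}_S[-i]$ over $S$ says nothing about flatness of $\overline{\Prism}_R\to\overline{\Prism}_S$ or of $\gr^\bullet_\smallN\Prism_R\to\gr^\bullet_\smallN\Prism_S$, which are relative statements.

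The input you actually need is the following. Since $S$ is semiperfectoid, $\Omega_{S/R}$ vanishes $p$-completely, so quasisyntomicity forces $\mathbb{L}_{S/R}\simeq N[1]$ with $N$ $p$-completely flat. Hence $\mathbb{L}_R[-1]\hat{\otimes}_R S\to\mathbb{L}_S[-1]$ has $p$-completely flat cofiber, and this is what makes the map of conjugate-graded divided power algebras flat. Since $\gr^\bullet_\smallN$ is, up to Breuil--Kisin twists, the Rees algebra of the conjugate filtration, a graded Nakayama argument then gives (2). Inverting the Rees parameter then gives (1). All of this must also be carried out for animated semiperfectoid rings, where $\Prism_R$ and the Nygaard pieces need not be discrete.
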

\begin{proof}
   See~\cite[Proposition 6.12.3 and Corollary 6.12.8]{gmm}.
\end{proof}

\begin{remark}
  \label{rem:quasisyntomic_qrsp}
If $\mf{X} = \Spf R$ is affine and the structure map $\mf{X}\to\mr{Spf}(\mathbb{Z}_p)$ is quasisyntomic, there exists an affine quasisyntomic cover $\Spf R_\infty = \mf{X}_\infty\to \mf{X}$ such that $R_\infty^{\otimes_{R} m}$, the $m$-fold completed self-tensor product over $R$, is qrsp and $p$-torsion free for all $m$ (see \cite[Lemmas 4.28 and 4.30]{BMSII}). Therefore, in many cases, Proposition~\ref{prop:nygaard_qsynt_descent} reduces questions about the stacks $\mf{X}^{?}$ to ones about the corresponding stacks associated with (the formal spectra of) qrsp and $p$-torsion free rings.
\end{remark}

\subsection{\texorpdfstring{$F$}{F}-gauges and prismatic \texorpdfstring{$F$}{F}-crystals}
\label{sub:fontaine_laffaille_modules_and_f_gauges}

Let $\mf{X}$ be a derived $p$-adic formal scheme.

\begin{definition}
  [(Prismatic) $F$-gauges]
A \defnword{vector bundle in prismatic $F$-gauges} or simply \defnword{vector bundle $F$-gauge} over $\mf{X}$ is a vector bundle over $\mf{X}^{\mathrm{syn}}$. We will write $\mathrm{Vect}(\mf{X}^{\mathrm{syn}})$ for the category of such objects.\footnote{In general, $\mr{Vect}(\mf{X}^\mr{syn})$ is actually an $\infty$-category, but when $\mf{X}$ is $p$-quasisyntomic and flat over $\mathbb{Z}_p$, Remark~\ref{rem:quasisyntomic_qrsp} combined with Remark~\ref{rem:nygaard_filtered_cohomology} tells us that this is in fact a classical category.}
\end{definition}

\begin{remark}
  [Prismatic $F$-crystals]
\label{rem:prismatic_f-crystals}
Recall from~\cite{MR4600546} the notion of \emph{prismatic crystals} and \emph{prismatic $F$-crystals} over $\mf{X}$: These are objects over the absolute prismatic site $
\mf{X}_{\Prism}$ (shortened to $R_\Prism$ when $\mf{X}=\Spf(R)$). To begin, we have the \emph{structure sheaf} $\Reg{\Prism}$ and a generalized Cartier divisor $\mathcal{I}_{\Prism}\to \Reg{\Prism}$ given by the assignment
\[
(\mathcal{I}_{\Prism}\to \Reg{\Prism})\colon (A,I,\Spf(\overline{A})\to\mf{X})\mapsto (I\to A).
\]
A \defnword{prismatic crystal} (in vector bundles) over $\mf{X}$ is a vector bundle $\mathcal{E}$ over $(\mf{X}_{\Prism},\Reg{\Prism})$, and a \defnword{prismatic $F$-crystal} is a pair $(\mathcal{E},\varphi_{\mathcal{E}})$ where $\mathcal{E}$ is a prismatic crystal and
\[
  \varphi_{\mathcal{E}}\colon \varphi^*\mathcal{E}[\nicefrac{1}{\mc{I}_\Prism}]\isomto \mathcal{E}[\nicefrac{1}{\mathcal{I}_{\Prism}}]
\] 
is an $\Reg{\Prism}[\nicefrac{1}{\mc{I}_\Prism}]$-linear isomorphism of sheaves on $\mf{X}_\Prism$. We write $\mathrm{Vect}^{\varphi}(\mf{X}_\Prism,\Reg{\Prism})$ for the category of prismatic $F$-crystals.
\end{remark}

\begin{remark}
  \label{rem:concrete_prismatic_f-crystals}
Concretely, a prismatic crystal is an assignment 
\begin{equation*}
(A,I,\Spf(\overline{A})\to\mf{X})\mapsto \mathcal{E}(A,I,\Spf(\overline{A})\to\mf{X})
\end{equation*}
with the value being a finite locally free $A$-module, and satisfying the usual crystal property: For all maps $(A,I,\Spf(\overline{A})\to\mf{X})\to (B,J,\Spf(\overline{B})\to\mf{X})$ in $\mf{X}_{\Prism}$, we have isomorphisms
\[
  \mathcal{E}(A,I,\Spf(\overline{A})\to\mf{X})\otimes_A B{\isomto} \mathcal{E}(B,J,\Spf(\overline{B})\to\mf{X})
\]
satisfying the expected compatibility relations. Endowing this with the structure of a prismatic $F$-crystal now amounts to giving isomorphisms
\[
  \mathcal{E}(A,I,R\to\overline{A})[\nicefrac{1}{I}]\otimes_{A,\varphi}A{\isomto}\mathcal{E}(A,I,R\to\overline{A})[\nicefrac{1}{I}]
\]
that are compatible with the isomorphisms coming from the crystal property.
\end{remark}

\begin{remark}
  [$F$-gauges and prismatic $F$-crystals]
\label{rem:f-gauges-prismatic} 
By Remark~\ref{rem:prisms_and_prismatization}, any vector bundle on $\mf{X}^\Prism$ yields a prismatic crystal over $\mf{X}$. If the vector bundle arises from a vector bundle $F$-gauge via pullback along $j_{\Prism}\colon\mf{X}^\Prism\to \mf{X}^{\mathrm{syn}}$, then $\mathcal{E}$ has a canonical structure of a prismatic $F$-crystal over $\mf{X}$; see \cite[Remark 6.3.4]{bhatt_lectures} and \cite[Construction 1.21]{imai2023prismatic}. This gives rise to a natural functor $\mr{Vect}(\mf{X}^\mr{syn})\to \mr{Vect}^\varphi(\mf{X}_\Prism,\Reg{\Prism})$.
\end{remark}

\begin{proposition}
  \label{prop:f-gauges_to_analytic_prismatic}
Suppose that $\mf{X}$ is $p$-quasisyntomic and flat over $\mathbb{Z}_p$. Then the natural functor
\[
  \mathrm{Vect}(\mf{X}^{\mathrm{syn}})\to \mathrm{Vect}^{\varphi}(\mf{X}_{\Prism},\Reg{\Prism})
\]
is fully faithful
\end{proposition}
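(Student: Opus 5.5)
Both sides satisfy quasisyntomic descent, so the plan is to reduce to the qrsp case. Prismatic $F$-crystals are sheaves on $\mf{X}_\Prism$ and satisfy quasisyntomic descent; vector bundles on $\mf{X}^{\mathrm{syn}}$ satisfy descent along $\mf{Y}^{\mathrm{syn}}\to\mf{X}^{\mathrm{syn}}$ for quasisyntomic covers by Proposition~\ref{prop:nygaard_qsynt_descent}. Full faithfulness is a statement about Hom sets, which are computed as equalizers of the Homs over the Čech nerve. So, working Zariski locally and using the cover $R\to R_\infty$ of Remark~\ref{rem:quasisyntomic_qrsp}, all of whose terms $R_\infty^{\otimes_R m}$ are qrsp and $p$-torsion free, it suffices to prove full faithfulness for $\mf{X}=\Spf R$ with $R$ qrsp and $p$-torsion free, naturally in $R$.

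In that case we describe both sides explicitly. By Remark~\ref{rem:nygaard_filtered_cohomology}, $R^\smallN=\Rees(\Fil^\bullet_\smallN\Prism_R)$, and $R^{\mathrm{syn}}$ is the coequalizer of $j_{\dR}$ and $j_{\mathrm{HT}}$. Hence a vector bundle $F$-gauge is a pair $(\mathcal{M},\phi)$. Here $\mathcal{M}$ is a finite projective graded module over the Rees algebra $\bigoplus_i \Fil^i_\smallN\Prism_R\, t^{-i}$, i.e.\ a filtered module $\Fil^\bullet M$ over $\Fil^\bullet_\smallN\Prism_R$ with $M=\Fil^{-\infty}$. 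And $\phi$ is an isomorphism between its base change along the filtered Frobenius $\Fil^\bullet_\smallN\Prism_R\to\Fil^\bullet_{I_R}\Prism_R$ (forgetting down to the $I$-adically filtered object, which amounts to a $\Prism_R$-module $M'$) and $M$. Since $\Prism_R$ is the initial prism of $R$, the prismatic $F$-crystal side is a finite projective $\Prism_R$-module $M$ with $\varphi_M\colon \varphi^*M[1/I_R]\isomto M[1/I_R]$. The functor sends $(\mathcal{M},\phi)$ to $M=\mathcal{M}|_{t=1}$ with the Frobenius induced by $\phi$; this uses that the graded pieces of $\Fil^\bullet_{I_R}\Prism_R$ become $I^i$ and invert after inverting $I$.

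The key claim is that the filtration on $M$, and hence the whole $F$-gauge, is recovered from $(M,\varphi_M)$ by the formula
\[
\Fil^i M=\{x\in M:\ \varphi_M(\varphi^*x)\in I_R^{i}M\},
\]
generalizing the formula for $\Fil^i_\smallN\Prism_R$ itself. Given this, a morphism $f\colon M_1\to M_2$ of $F$-crystals automatically preserves these filtrations and commutes with $\phi$, so it lifts uniquely to a map of $F$-gauges; faithfulness is clear because $\mathcal{M}\mapsto M$ is restriction to $t=1$ and $\mathcal{M}$ is $t$-torsion free, being projective over the Rees algebra of a filtration by submodules of $\Prism_R$. Note that $\Prism_R$ is $I_R$-torsion free because $R$ is $p$-torsion free, so inverting $I_R$ is injective on these projective modules.

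The main obstacle is establishing the displayed formula for the filtration attached to a general projective Rees module. I would first do it for free (or graded-free) modules: Zariski/étale locally on $R^\smallN$, $\mathcal{M}$ is a sum of twists $\Rees(\Fil^{\bullet+a_j}_\smallN\Prism_R)$. The claim then reduces, summand by summand, to the defining formula of the Nygaard filtration on $\Prism_R$ together with the fact that $\phi$ matches $\Fil^{\bullet}_{I}$-twists. The subtle point is that $\phi$ is an isomorphism of $I$-filtered modules, not merely after inverting $I$, and that the condition cuts out exactly the filtration. I would then deduce the general case by taking direct summands, using that the formula is compatible with direct sums and retracts, together with flat descent along the faithfully flat maps from Proposition~\ref{prop:nygaard_qsynt_descent}.
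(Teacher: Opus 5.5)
Your argument is correct. The paper gives no argument here and simply cites \cite[Corollary 3.53]{GuoLi}; what you have written is a self-contained reconstruction of the mechanism behind that result. It has two steps. First, quasisyntomic descent to $p$-torsion free qrsp $R$; this is the reduction the paper sanctions in Remark~\ref{rem:quasisyntomic_qrsp}, and it implicitly uses that $(-)^{\mathcal{N}}$ and $(-)^{\Prism}$, being defined by transmutation, commute with fiber products, so the \v{C}ech nerve of $\mf{X}_\infty\to\mf{X}$ goes to that of $\mf{X}_\infty^{\mathrm{syn}}\to\mf{X}^{\mathrm{syn}}$. Second, the fact that the filtration of a vector bundle $F$-gauge over $R$ is recovered from its Frobenius. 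The citation buys brevity and places the statement in Guo--Li's broader framework (the functor $\Pi_{\mf{X}}$ reused in \S\ref{ss:F-gauges-and-cohomology}). Your route shows the content is elementary once $R^{\mathcal{N}}\simeq\Rees(\Fil^\bullet_{\mathcal{N}}\Prism_R)$ is known.

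Two points should be tightened.

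First, say precisely that $j_{\mathrm{HT}}^*\mathcal{M}$ is the lattice $M'=\sum_i I_R^{-i}\cdot\varphi^*(\Fil^iM)\subset\varphi^*M[1/I_R]$, i.e.\ the modification of Remark~\ref{rem:modifications_in_terms_of_representations}. Your phrase ``forgetting down to the $I$-adically filtered object'' is ambiguous. For a twist $\Fil^iM=\Fil^{i+a}_{\mathcal{N}}\Prism_R$ one has $M'=I_R^{a}\varphi^*M$, not $\varphi^*M$, and this is exactly what makes your free-case computation come out right.

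Second, since $\phi\colon M'\isomto M$ is an integral isomorphism, your key identity is equivalent to $\Fil^iM=\{x\in M:\ 1\otimes x\in I_R^iM'\}$, and you should prove it in this form. It then concerns only the vector bundle $\mathcal{M}$ on $R^{\mathcal{N}}$, not the gauge structure. So it passes to graded direct summands of finite sums of twists, and every vector bundle on $R^{\mathcal{N}}$ is such a summand; no localization or further flat descent is needed. Phrased via $\varphi_M$, the retract step does not go through as stated. A graded summand of a graded-free Rees module comes with no complementary gluing isomorphism, so it is not a summand in $\mathrm{Vect}(R^{\mathrm{syn}})$.

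A minor point: $\Prism_R$ is $I_R$-torsion free because $(\Prism_R,I_R)$ is a prism, not because $R$ is $p$-torsion free.
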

\begin{proof}
    See~\cite[Corollary 3.53]{GuoLi}.
\end{proof}

\subsection{Analytic prismatic \texorpdfstring{$F$}{F}-crystals and crystalline local systems}
\label{sub:full_faithfulness_of_the_etale_realization}

We now recall some generalizations due to Du--Liu--Moon--Shimizu~\cite{Du2024-to} and Guo--Reinecke~\cite{Guo2024-al} of the classification of Bhatt--Scholze~\cite{MR4600546} of crystalline Galois representations. For more details, see the discussion in~\cite[\S 2.3]{imai2024tannakianframeworkprismaticfcrystals}. In this subsection, $K$ will denote a complete mixed characteristic $(0,p)$ discrete valuation field with perfect residue field.

\begin{remark}
  [Analytic prismatic $F$-crystals]
We will take $\mathrm{Vect}^{\mathrm{an},\varphi}(\mf{X}_\Prism,\Reg{\Prism})$ to be the category of \emph{analytic prismatic $F$-crystals} defined in~\cite[\S 3]{Guo2024-al}. Concretely, this amounts to giving essentially the same data as that of a prismatic $F$-crystal, explained in Remark~\ref{rem:concrete_prismatic_f-crystals}, except that $\mathcal{E}(A,I,\Spf(\overline{A})\to\mf{X})$ is now a vector bundle over $\Spec(A)\backslash V(p,I)$; \cite[Lemma 3.4]{Guo2024-al} ensures that this is a sensible notion. 
\end{remark}

\begin{remark}
  [The \'etale realization]
\label{rem:etale_realization}
Via~\cite[Constructions 6.3.1 and 6.3.2]{bhatt_lectures} (see also~\cite[Construction 3.9]{Guo2024-al}), we find a canonical functor
\[
T_{\et}\colon  \mathrm{Vect}^{\mathrm{an},\varphi}(\mf{X}_\Prism,\Reg{\Prism})\to \mathrm{Loc}_{\Int_p}(\mf{X}_\eta)
\]
where the right-hand side is the category of pro\'etale $\Int_p$-local systems on the generic fiber of $\mf{X}$. Since both sides satisfy quasisyntomic descent (see~\cite[Lemma 3.6]{Guo2024-al} for descent for analytic prismatic $F$-crystals), it suffices to specify the values for semiperfectoid $\Spf(R)\to \mf{X}$ with $R$ semiperfectoid as: 
\[
\Gamma\big((\Spf R)_{\eta},T_{\et}(\mathcal{E})\big) = \big(\mathcal{E}(\Prism_R,I_R,\Spf(\ov{\Prism}_R)\to\Spf(R)\to\mf{X})[\nicefrac{1}{I_R}]^{\wedge}\big)^{\varphi_\mc{E} = \mathrm{id}}.
\]
Here, we are viewing $\mathcal{E}\big(\Prism_R,I_R,\Spf(\ov{\Prism}_R)\to\Spf(R)\to\mf{X}\big)[\nicefrac{1}{I_R}]$ as a finite locally free module over $\Prism_R[\nicefrac{1}{I_R}]$, and $(-)^{\wedge}$ denotes the $p$-adic completion.
\end{remark}

\begin{remark}[{Relationship between various types of local systems}] In the sequel we will frequently make use of the following natural equivalences:
\begin{enumerate}
    \item Let $X$ be a locally of finite type $K$-scheme, and $X^\mathrm{an}\defn X\times_{\Spec(K)}\Spa(K)$ its analytification. There is a (bi-)exact monoidal equivalence $\mr{Loc}_{\bb{Z}_p}(X)\isomto \mr{Loc}_{\bb{Z}_p}(X^\mr{an})$ (see \cite[\S2.1.3]{imai2024tannakianframeworkprismaticfcrystals} and the references therein).
    \item If $(A,A^+)$ is a Huber pair, there is a (bi-)exact monoidal equivalence 
    \begin{equation*}
        \mathrm{Loc}_{\bb{Z}_p}(\Spec(A))\isomto \mr{Loc}_{\bb{Z}_p}(\Spa(A,A^+)).
    \end{equation*} 
    See \cite[\S2.1.4]{imai2024tannakianframeworkprismaticfcrystals} and the references therein.

\end{enumerate}
We will often glibly use the above identifications without further comment. Moreover, by (2) (and the independence of base points for connected spaces) there is no ambiguity in the notation $\mr{Loc}_{\bb{Z}_p}(A)$ for a Huber pair $(A,A^+)$, and we use this notational shortcut often.
\end{remark}

\begin{proposition}
  \label{prop:prismatic_ff_in_analytic}
Suppose that $\mf{X}$ is $p$-quasisyntomic and flat over $\bb{Z}_p$. Then the natural functor 
\begin{equation*}
   \mathrm{Vect}^{\varphi}(\mf{X}_\Prism,\Reg{\Prism})\to \mathrm{Vect}^{\mathrm{an},\varphi}(\mf{X}_\Prism,\Reg{\Prism}) 
\end{equation*} is fully faithful.
\end{proposition}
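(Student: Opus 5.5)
The plan is to reduce, by quasisyntomic descent, to a statement about modules over the prisms $\Prism_R$ attached to $p$-torsion free qrsp rings $R$. The key input is that, for such a prism, the restriction functor from vector bundles on $\Spec(\Prism_R)$ to vector bundles on the punctured spectrum $U_R\defn\Spec(\Prism_R)\setminus V(p,I_R)$ is fully faithful. This is a Hartogs-type statement. Granting it, fully faithfulness for the $F$-crystal categories follows by carrying along the Frobenius structures.

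First, I would use Remark~\ref{rem:quasisyntomic_qrsp} to pick an affine quasisyntomic cover $\Spf R_\infty\to\mf{X}$, working locally on $\mf{X}$. Each term $R^{(m)}\defn R_\infty^{\otimes m}$ of the Čech nerve is qrsp and $p$-torsion free. Both categories satisfy descent along this Čech nerve: for $\mathrm{Vect}^{\varphi}(\mf{X}_\Prism,\Reg{\Prism})$ this is standard, and for the analytic version it is \cite[Lemma 3.6]{Guo2024-al}. Since the prismatic site of a qrsp ring has the initial object $(\Prism_{R^{(m)}},I_{R^{(m)}})$, each of these categories is evaluated on $R^{(m)}$ as follows.

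1. For ordinary $F$-crystals, the objects are finite projective $\Prism_{R^{(m)}}$-modules with a Frobenius isomorphism after inverting $I$.
2. For analytic $F$-crystals, the objects are vector bundles on $U_{R^{(m)}}$ with the same Frobenius isomorphism.

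A morphism of descent data is a compatible family of morphisms on each $R^{(m)}$. Hence it suffices to prove, for a single $p$-torsion free qrsp $R$, that restriction from finite projective $\Prism_R$-modules to $U_R$ is fully faithful. The Frobenius compatibility of a map is automatic once the map exists, since it can be checked on $U_R$, which is schematically dense.

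For the key local statement, faithfulness comes from the fact that $\Prism_R$ is $p$-torsion free when $R$ is $p$-torsion free qrsp, and $(p,I)$ then gives a regular sequence locally. Thus $\Prism_R\to\Gamma(U_R,\Reg{})$ is injective. For fullness, it suffices, by taking internal Homs of finite projective modules, to show that $\Gamma(U_R,\Reg{})=\Prism_R$. This amounts to the vanishing of the local cohomology groups $H^0_{(p,I)}(\Prism_R)$ and $H^1_{(p,I)}(\Prism_R)$, which holds whenever $(p,d)$ is a regular sequence in $\Prism_R$ for $d$ a local generator of $I_R$. Regularity holds because $\Prism_R$ is $p$-torsion free, and $\Prism_R/p$ is $d$-torsion free. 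The latter can be checked using $\Prism_R/I\simeq\ov{\Prism}_R$, which is $p$-completely flat over $R$ in the appropriate sense.

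I expect this regularity of $(p,I)$ on $\Prism_R$ for $p$-torsion free qrsp $R$ to be the main obstacle. In a general qrsp situation $\Prism_R$ can be badly behaved, so one may need to choose the cover $R_\infty$ more carefully, for instance containing a perfectoid. Then $\Prism_R$ is flat over $A_{\inf}$, and the regularity is inherited from $A_{\inf}$, where $(p,\xi)$ is evidently regular. If this route fails, I would fall back on \cite[Proposition 3.7]{Guo2024-al} or the corresponding statement in \cite{Du2024-to}, which I expect contains exactly this fully faithfulness.
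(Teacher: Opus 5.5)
This is correct: the paper's proof consists only of a citation of \cite[Proposition 3.7]{Guo2024-al}, and your argument is essentially the one behind that reference, namely quasisyntomic descent to $p$-torsion free qrsp $R$ followed by the Hartogs-type identity $\Gamma(U_R,\mathcal{O})=\Prism_R$. The regularity step you flag as the main obstacle holds, so the perfectoid fallback is unnecessary: $\overline{\Prism}_R$ is $p$-completely flat over the $p$-torsion free ring $R$, hence $p$-torsion free, and since a local generator $d$ of $I_R$ is a nonzerodivisor and $\Prism_R$ is $d$-adically separated, the sequence $(p,d)$ is regular in $\Prism_R$.
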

\begin{proof}
  See~\cite[Proposition 3.7]{Guo2024-al}.
\end{proof}

\begin{remark}
  [Crystalline local systems]
\label{rem:crystalline_local_system}
Suppose that $\mf{X}$ is a \emph{base formal $\Reg{K}$-scheme} as defined in~\cite[\S 1.1]{imai2024tannakianframeworkprismaticfcrystals} and the references therein. This implies in particular that $R$ is $p$-quasisyntomic and $p$-torsion free. Then within $\mathrm{Loc}_{\Int_p}(\mf{X}_\eta)$, we have the full subcategory $\mathrm{Loc}^{\mathrm{crys}}_{\Int_p}(\mf{X})$ of \emph{crystalline} $\Int_p$-local systems; see~\cite[\S 2.2]{Du2024-to},~\cite[\S 2.4]{Guo2024-al}, or ~\cite[\S 2.3.3]{imai2024tannakianframeworkprismaticfcrystals}. When $\mf{X}= \Spf(\Reg{K})$, this recovers the classical notion of a Galois stable $\Int_p$-lattice in a crystalline $\Rat_p$-representation of the absolute Galois group of $K$. Recent work of Guo--Yang~\cite[Theorem 1.1]{guo2024pointwisec} shows that, when $\mf{X}$ is smooth over $\Reg{K}$, a $\Int_p$-local system over $\mf{X}$ is crystalline if and only if its restriction over every classical point of $\mf{X}_\eta$ is crystalline.

\end{remark}

\begin{theorem}
\label{thm:guo_reinecke}
Suppose that $\mf{X}$ is $p$-quasisyntomic and flat over $\bb{Z}_p$. Then:
\begin{enumerate}
  \item The \'etale realization functor $T_{\et}\colon \mathrm{Vect}^{\mathrm{an},\varphi}(\mf{X}_\Prism,\Reg{\Prism})\to \mathrm{Loc}_{\Int_p}(\mf{X}_\eta)$ is faithful. In particular, its restriction to $\mathrm{Vect}(\mf{X}^{\mathrm{syn}})$ is also faithful.

  \item Suppose that $\mf{X}$ is smooth over $\Reg{K}$; or, more generally, that $\mf{X}$ is a base formal $\Reg{K}$-scheme. Then the functor induces a bi-exact symmetric monoidal equivalence 
    \[
      \mathrm{Vect}^{\mathrm{an},\varphi}(\mf{X}_\Prism,\Reg{\Prism}){\isomto}\mathrm{Loc}^{\mathrm{crys}}_{\Int_p}(\mf{X}_\eta).
    \]
  In particular, its restriction to $\mathrm{Vect}(\mf{X}^{\mathrm{syn}})$ is fully faithful.
\end{enumerate}
\end{theorem}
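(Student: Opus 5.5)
This theorem is essentially a compilation of results of Guo--Reinecke and Du--Liu--Moon--Shimizu, so the plan is to reduce each part to the corresponding statement in the literature.

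For (1), I would use quasisyntomic descent for both sides to reduce to the case $\mf{X}=\Spf R$ with $R$ qrsp and $p$-torsion free, using Remark~\ref{rem:quasisyntomic_qrsp} together with \cite[Lemma 3.6]{Guo2024-al}. In that case an analytic prismatic $F$-crystal is a vector bundle $\mathcal{E}$ on $\Spec(\Prism_R)\setminus V(p,I_R)$ with a Frobenius isomorphism, and $T_{\et}(\mathcal{E})$ is given by the formula in Remark~\ref{rem:etale_realization}. Faithfulness then amounts to showing that the restriction map
\[
\Gamma\big(\Spec(\Prism_R)\setminus V(p,I_R),\mathcal{E}\big)\to \mathcal{E}[\nicefrac{1}{I_R}]^{\wedge}
\]
is injective. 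This should follow from $p$-torsion freeness and the fact that $I_R$ is generated by a nonzerodivisor, so that a section vanishing after inverting $I_R$ already vanishes. One must also check that a morphism compatible with Frobenius is determined by its effect on $\varphi$-invariants, which holds because, after a pro-\'etale (perfectoid) base change, $\mathcal{E}[\nicefrac{1}{I_R}]^{\wedge}\simeq T_{\et}(\mathcal{E})\otimes_{\Int_p}\Prism_R[\nicefrac{1}{I_R}]^{\wedge}$. This is exactly \cite[Construction 3.9 and Proposition 3.10]{Guo2024-al}, which I would cite. Faithfulness on $\mathrm{Vect}(\mf{X}^{\mathrm{syn}})$ then follows by composing with the fully faithful functors of Proposition~\ref{prop:f-gauges_to_analytic_prismatic} and Proposition~\ref{prop:prismatic_ff_in_analytic}.

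For (2), the equivalence onto crystalline local systems is the main theorem of \cite{Guo2024-al}, and of \cite{Du2024-to} in the smooth case. Its extension to base formal $\Reg{K}$-schemes is recorded in \cite[\S 2.3]{imai2024tannakianframeworkprismaticfcrystals}, and I would cite it directly. The only checks needed are compatibilities: that the functor in those sources agrees with the $T_{\et}$ of Remark~\ref{rem:etale_realization}, and that it is symmetric monoidal and bi-exact. Monoidality is clear from the formula for $\varphi$-invariants. Bi-exactness follows because the essential image consists of local systems, and exact sequences can be tested after base change to a perfectoid cover, where $T_{\et}$ becomes an equivalence with $\varphi$-modules. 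The ``in particular'' clause again follows by composing the equivalence with the fully faithful embeddings of Propositions~\ref{prop:f-gauges_to_analytic_prismatic} and~\ref{prop:prismatic_ff_in_analytic}.

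I expect the main obstacle to be the essential surjectivity onto crystalline local systems in the non-smooth base case. Constructing an analytic $F$-crystal from a crystalline local system requires gluing the local data coming from $A_{\mathrm{inf}}$-lattices via Breuil--Kisin-type descent; this is the deep content of the cited works, and I would not reprove it here.
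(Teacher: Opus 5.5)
Your overall route is the paper's. For (1), you use quasisyntomic descent to $p$-torsion-free qrsp rings, prove faithfulness there, and compose with Propositions~\ref{prop:f-gauges_to_analytic_prismatic} and~\ref{prop:prismatic_ff_in_analytic}. For (2), you cite \cite[Theorem 4.5]{Guo2024-al}, \cite[Theorem 3.29]{Du2024-to} and Imai--Kato--Youcis.

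The paper takes the qrsp-level faithfulness from the \emph{proof} of \cite[Lemma 4.1]{Guo2024-al}, which is stated only for $p$-completely smooth algebras. Your direct argument also works: restrict to the Laurent locus and apply the Bhatt--Scholze equivalence between Laurent $F$-crystals and $\Int_p$-local systems \cite{MR4600546}. One caveat: injectivity of $\Gamma(\Spec\Prism_R\setminus V(p,I_R),\mathcal{E})\to\mathcal{E}[\nicefrac{1}{I_R}]^{\wedge}$ needs more than $d$ being a nonzerodivisor, where $I_R=(d)$. That fact only handles the restriction to $D(I_R)$. The passage to the $p$-adic completion uses that $d$ is a nonzerodivisor modulo $p$, i.e.\ transversality of $\Prism_R$. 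This holds because $\overline{\Prism}_R$ is $p$-torsion free when $R$ is a $p$-torsion-free qrsp ring.

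The step that fails is your justification of bi-exactness in (2). Over a perfectoid ring, $T_{\et}$ on analytic prismatic $F$-crystals is \emph{not} an equivalence. For example, over $\Reg{C}$ these objects are Breuil--Kisin--Fargues modules, since vector bundles on the punctured spectrum of $A_{\mathrm{inf}}$ extend. By Fargues' theorem such a module amounts to a pair $(M,\Xi)$: a finite free $\Int_p$-module $M$ and a $B_{\mathrm{dR}}^+$-lattice $\Xi\subset M\otimes_{\Int_p}B_{\mathrm{dR}}$. The functor $T_{\et}$ forgets $\Xi$, so it is neither full nor conservative there. Put differently, exactness of a sequence of vector bundles on $\Spec\Prism\setminus V(p,I)$ cannot be read off after inverting $I$. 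The locus $V(I)\cap D(p)$ is invisible to the \'etale realization. Passing to a perfectoid cover can therefore at best show that $T_{\et}$ itself is exact. It cannot give the substantive half: that a short exact sequence of crystalline local systems produces a short exact sequence of analytic prismatic $F$-crystals. That statement genuinely uses crystallinity and the structure of a base formal $\Reg{K}$-scheme. The paper does not prove it by hand. It imports it, together with the extension from the smooth case to base formal $\Reg{K}$-schemes, from \cite[Proposition 2.22]{imai2024tannakianframeworkprismaticfcrystals}, and you should do the same.
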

\begin{proof}
This is essentially a translation of results of Guo--Reinecke~\cite{Guo2024-al} and Du--Liu--Moon--Shimizu~\cite{Du2024-to}). 

The first result follows from Propositions~\ref{prop:f-gauges_to_analytic_prismatic} and~\ref{prop:prismatic_ff_in_analytic} combined with~\cite[Proposition 3.7]{Guo2024-al} and the proof of~\cite[Lemma 4.1]{Guo2024-al}. Note that the last result only claims to prove faithfulness for $R$ $p$-completely smooth over $\Reg{K}$. However, the proof actually shows that the functor $T_{\et}$ is faithful for any $p$-torsion free qrsp algebra\footnote{The argument in this generality is actually needed to apply this lemma to the proof of Theorem 4.5 in \emph{op.\@ cit.\@}}, and so implies what we need by quasisyntomic descent. 

The second claim follows from~\cite[Theorem 4.5]{Guo2024-al},~\cite[Theorem 3.29]{Du2024-to}, and~\cite[Proposition 2.22]{imai2024tannakianframeworkprismaticfcrystals}.
\end{proof}

\begin{definition}
  [Local systems for smooth affine group schemes]
 \label{def:local_systems_smooth_affine}
 Suppose that $\mathcal{G}$ is a smooth affine group scheme over $\Int_p$ and that $X$ is a locally Noetherian scheme or adic space. A $\mathcal{G}(\Int_p)$\defnword{-local system} over $X$ is a $\underline{\mathcal{G}(\Int_p)}$-torsor on the pro\'etale site of $X$.\footnote{Concretely, this amounts to giving a compatible inverse system $\mathcal{P} = \{\mathcal{P}_n\}_{n\geqslant 1}$ where $\mathcal{P}_n\to X$ is a finite \'etale torsor for the finite group $\mathcal{G}(\Int/p^n\Int)$ (viewed as a locally constant sheaf), and the maps $\mathcal{P}_{n+1}\to \mathcal{P}_n$ are equivariant for the natural surjections $\mathcal{G}(\Int/p^{n+1}\Int)\to \mathcal{G}(\Int/p^n\Int)$.} We will write $\mathrm{Loc}_{\mathcal{G}(\Int_p)}(X)$ for the groupoid of such local systems. If $X$ is affine/affinoid with global sections $R$, we will also write $\mathrm{Loc}_{\mathcal{G}(\Int_p)}(R)$ instead. We will also use analogous notation for the variants where $\mathcal{G}(\Int_p)$ is replaced by $\mathcal{G}(\Int/p^n\Int)$. See \cite[\S2.1]{imai2024tannakianframeworkprismaticfcrystals} for a more detailed discussion.
\end{definition}

\begin{remark}
  [Tannakian perspective on local systems]
\label{rem:local_systems_tannakian}
Giving a $\mathcal{G}(\Int_p)$-local system $\mathcal{P}$ over $X$ is equivalent to giving an exact symmetric monoidal functor $V \mapsto (V)_{\mathcal{P}}$ from the category $\Rep_{\Int_p}(\mathcal{G})$ of algebraic representations $\mathcal{G}\to \GL(V)$ defined over $\Int_p$ to the category $\mathrm{Loc}_{\Int_p}(X)$. See~\cite[Propositions 2.3 and 2.8]{imai2024tannakianframeworkprismaticfcrystals}. In fact, the cited result shows that giving a $\mathcal{G}(\Int/p^n\Int)$-local system $\mathcal{P}_n$ over $X$ is the same as giving an exact symmetric monoidal functor $\Rep_{\Int_p}(\mathcal{G})\to \mathrm{Loc}_{\Int/p^n\Int}(X)$.
\end{remark} 

\begin{definition}
  [Crystalline $\mathcal{G}(\Int_p)$-local systems]
\label{def:crystalline_GZ_p}
Let $\mf{X}$ be a base formal $\Reg{K}$-scheme. A $\mathcal{G}(\Int_p)$-local system $\mathcal{P}$ on $\mf{X}_\eta$ is \defnword{crystalline} if, for every representation $V$ in $ \Rep_{\Int_p}(\mathcal{G})$, the local system $(V)_{\mathcal{P}}$ is crystalline. Write $\mathrm{Loc}_{\mathcal{G}(\Int_p)}^{\mathrm{crys}}(\mf{X}_\eta)$ for the full subcategory of $\mathrm{Loc}_{\mathcal{G}(\Int_p)}(\mf{X}_\eta)$ of crystalline $\mc{G}(\bb{Z}_p)$-local systems.
\end{definition}

\begin{remark}
  \label{rem:crystalline_GZ_p}
By~\cite[Proposition 2.20]{imai2024tannakianframeworkprismaticfcrystals}, to check that a $\mathcal{G}(\Int_p)$-local system $\mathcal{P}$ is crystalline, it is sufficient to check that $(V)_{\mathcal{P}}$ is crystalline for a single faithful representation $V$.
\end{remark}

\begin{remark}
[$\mathcal{G}$-bundles in analytic prismatic $F$-crystals]
  \label{rem:crystalline_GZ_p_tannakian}
Suppose that $\mf{X}$ is a base formal $\Reg{K}$-scheme. Combining Remark~\ref{rem:local_systems_tannakian} with Theorem~\ref{thm:guo_reinecke}, we see that the category $\mathrm{Loc}_{\mathcal{G}(\Int_p)}^{\mathrm{crys}}(\mf{X}_\eta)$ is equivalent to the category of exact symmetric monoidal functors
\[
  \omega\colon\Rep_{\Int_p}(\mathcal{G})\to \mathrm{Vect}^{\mathrm{an},\varphi}(\mf{X}_{\Prism},\Reg{\Prism}).
\]

\end{remark}

\begin{corollary}
\label{cor:guo_reinecke}
Suppose that $\mf{X}$ is $p$-quasisyntomic and flat over $\bb{Z}_p$.
\begin{enumerate}
  \item   There is a natural faithful functor
\[
T_{\et}\colon B \mathcal{G}(\mf{X}^{\mathrm{syn}})\to \mathrm{Loc}_{\mathcal{G}(\Int_p)}(\mf{X}_\eta).
\]
\item If $\mf{X}$ is a base $\Reg{K}$-formal scheme, then the functor is also full and takes its values in $\mathrm{Loc}_{\mathcal{G}(\Int_p)}^{\mathrm{crys}}(\mf{X}_\eta)$. 

\end{enumerate}
\end{corollary}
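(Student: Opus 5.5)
We reduce everything to the Tannakian description and then apply Theorem~\ref{thm:guo_reinecke} objectwise.

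\emph{Step 1: the Tannakian description of $\mathcal{G}$-torsors.} Since $\mathcal{G}$ is smooth affine over $\Int_p$, we use Tannakian reconstruction for $\mathcal{G}$-torsors on $\mf{X}^{\mathrm{syn}}$ (as in \cite[Appendix A]{imai2024tannakianframeworkprismaticfcrystals}). It identifies $B\mathcal{G}(\mf{X}^{\mathrm{syn}})$ with the groupoid of exact symmetric monoidal functors $\Rep_{\Int_p}(\mathcal{G})\to \mathrm{Vect}(\mf{X}^{\mathrm{syn}})$. We can check this fpqc-locally on the stack using quasisyntomic descent (Proposition~\ref{prop:nygaard_qsynt_descent} and Remark~\ref{rem:quasisyntomic_qrsp}). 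Over the resulting pieces the stacks are formal Rees stacks of $p$-torsion free classical rings, and the Broshi-type reconstruction theorem applies there.

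\emph{Step 2: defining $T_{\et}$.} The composite
\[
\Rep_{\Int_p}(\mathcal{G})\to \mathrm{Vect}(\mf{X}^{\mathrm{syn}})\to \mathrm{Vect}^{\mathrm{an},\varphi}(\mf{X}_\Prism,\Reg{\Prism})\xrightarrow{T_{\et}}\mathrm{Loc}_{\Int_p}(\mf{X}_\eta)
\]
is symmetric monoidal, because each functor in it is. It is also exact: the \'etale realization is exact, since on semiperfectoid charts it is given by taking $\varphi$-invariants after inverting $I_R$ and completing, and short exact sequences of vector bundles remain split after the base change. By Remark~\ref{rem:local_systems_tannakian}, this composite therefore defines a $\mathcal{G}(\Int_p)$-local system, which we take as $T_{\et}$ of the torsor.

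\emph{Step 3: faithfulness and fullness.} A morphism of torsors is the same as a monoidal natural transformation of fiber functors. Faithfulness is checked componentwise on a faithful representation (or on all $V$). It follows from Theorem~\ref{thm:guo_reinecke}(1), since the functor $\mathrm{Vect}(\mf{X}^{\mathrm{syn}})\to\mathrm{Loc}_{\Int_p}(\mf{X}_\eta)$ is faithful.

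For (2), Theorem~\ref{thm:guo_reinecke}(2) makes the functor on vector bundles fully faithful. Fullness on groupoids then goes as follows. Given an isomorphism of local systems, i.e.\ a monoidal transformation between the realized fiber functors, each component lifts uniquely to a map of $F$-gauges. The lifted maps automatically satisfy naturality and monoidality, because the corresponding identities hold after the faithful functor $T_{\et}$.

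\emph{Step 4: crystallinity.} For each $V$, the object $(V)_{\mathcal{P}}$ is the realization of an analytic prismatic $F$-crystal. By Theorem~\ref{thm:guo_reinecke}(2) it is therefore crystalline, so $T_{\et}$ lands in $\mathrm{Loc}^{\mathrm{crys}}_{\mathcal{G}(\Int_p)}(\mf{X}_\eta)$ by Definition~\ref{def:crystalline_GZ_p}.

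The main obstacle is Step~1: the Tannakian identification of $\mathcal{G}$-torsors on the non-classical stack $\mf{X}^{\mathrm{syn}}$, together with exactness. I would first handle qrsp charts, where everything is a graded module over a Rees algebra, and then descend along the \v{C}ech nerve.
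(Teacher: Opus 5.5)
\paragraph{Comparison with the paper.}
Your proposal takes the same route as the paper. The paper also identifies $B\mathcal{G}(\mf{X}^{\mathrm{syn}})$ with exact symmetric monoidal functors $\Rep_{\Int_p}(\mathcal{G})\to\mathrm{Vect}(\mf{X}^{\mathrm{syn}})$ and concludes from Theorem~\ref{thm:guo_reinecke} and Remark~\ref{rem:local_systems_tannakian}; your Steps~3 and~4 spell out what it leaves implicit.

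The only difference is how you get Step~1. The paper notes that $\mf{X}^{\mathrm{syn}}$ is a classical formal stack under these hypotheses. It then cites Lurie's Tannaka duality for the geometric stack $B\mathcal{G}$ \cite[Corollary 9.3.7.3]{Lurie2018-kh}, which applies to maps from an arbitrary source, so nothing has to be done chart by chart. Your descent route also works, but it involves more bookkeeping than you indicate. Besides the quasisyntomic \v{C}ech nerve, you must descend along the $\Gm$-quotient presentation of each formal Rees stack before a scheme-level reconstruction theorem applies. You must also handle the gluing of $j_{\dR}$ and $j_{\mathrm{HT}}$ that produces $\mf{X}^{\mathrm{syn}}$ from $\mf{X}^{\smallN}$. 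In effect you would be reproving the classicality that the paper quotes.

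\paragraph{A flawed justification in Step 2.}
Your reason for exactness of $T_{\et}$ does not work as written. A short exact sequence of vector bundles does stay split as modules after base change to $\Prism_R[\nicefrac{1}{I_R}]^{\wedge}$. But that splitting is not $\varphi$-equivariant, and $(-)^{\varphi=1}$ on a fixed chart is only left exact. For example, over a perfectoid field that is not algebraically closed, it computes Galois invariants, which are not right exact.

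Exactness of $T_{\et}$ is true, but only as a statement about sheaves. It holds because $\varphi-1$ is surjective pro-\'etale locally (Artin--Schreier--Witt). Equivalently, it follows from the Katz-type equivalence \cite[Proposition 3.6]{MR4600546} used in Remark~\ref{rem:GZpn_perfectoid}, which is an exact equivalence with $\Int_p$-local systems. Cite that instead. With this fix, the proposal is complete.
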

\begin{proof}
  Under these hypotheses, $\mf{X}^{\mathrm{syn}}$ is a classical formal stack. Therefore, we find from~\cite[Corollary 9.3.7.3]{Lurie2018-kh} that the groupoid $(B \mathcal{G})(\mf{X}^{\mathrm{syn}})$ is equivalent to the groupoid of exact symmetric monoidal functors
  \[
    \Rep_{\Int_p}(\mathcal{G})\to \mathrm{Vect}(\mf{X}^{\mathrm{syn}}).
  \]
  The corollary now follows from Theorem~\ref{thm:guo_reinecke} and Remark~\ref{rem:local_systems_tannakian}. 
\end{proof}

\subsection{\texorpdfstring{$F$}{F}-gauges and \texorpdfstring{$p$}{p}-divisible groups}
\label{sub:f_gauges_and_p_divisible_groups}

\begin{definition}
  [Hodge--Tate weights]
Let $\mathfrak{X}$ be a $p$-adic formal scheme. Every vector bundle $F$-gauge $\mathcal{V}$ over $\mf{X}$ yields a graded finite locally free $R$-module via pullback along the composition
\[
  B\Gm\times \mf{X}\to \Aff^1/\Gm\times \mf{X}\xrightarrow{x^{\smallN}_{\dR}}\mf{X}^{\smallN}\to \mf{X}^{\mathrm{syn}}.
\]
The \defnword{Hodge--Tate weights} of $\mathcal{V}$ are the graded degrees in which this graded module is non-zero.
\end{definition}

The following result is essentially due to Ansch\"utz--Le Bras~\cite{MR4530092} over quasisyntomic bases; the general statement here is shown in~\cite{gmm}. See Theorem 11.1.4 and Proposition 11.8.2 of \emph{op.\@ cit.}
\begin{theorem}
[Prismatic Dieudonn\'e theory]
\label{thm:dieudonne}
Let $\mf{X}$ be a derived $p$-adic formal scheme and let $\mr{BT}_p(\mf{X})$ be the category of $p$-divisible groups over $\mf{X}$, and let $\mathrm{Vect}_{[0,1]}(\mf{X}^{\mathrm{syn}})$ be the category of vector bundle $F$-gauges over $\mf{X}$ with Hodge--Tate weights in $\{0,1\}$. Then: 
\begin{enumerate}
  \item There is a canonical equivalence of categories
\[
\mathcal{M}\colon\mr{BT}_p(\mf{X})\isomto \mathrm{Vect}_{[0,1]}(\mf{X}^{\mathrm{syn}})
\]
compatible with Cartier duality.
\item There is a natural equivalence $T_p\simeq T_{\et}\circ \mc{M}$, where $T_p$ is the functor associating to a $p$-divisible group $H$ its Tate module $T_p(H)=\varprojlim H[p^n]_\eta$, viewed as a $\bb{Z}_p$-local system on $\mf{X}_\eta$.
\end{enumerate}
\end{theorem}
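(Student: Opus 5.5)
The proof will be a reduction to the quasisyntomic (in fact qrsp, $p$-torsion free) case followed by descent, working on both sides of the equivalence simultaneously.

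\emph{Step 1: reduce to affine qrsp inputs.} Both $\mr{BT}_p(-)$ and $\mathrm{Vect}_{[0,1]}((-)^{\mathrm{syn}})$ satisfy descent for quasisyntomic covers. For the left side this is fpqc descent for $p$-divisible groups. For the right side it follows from Proposition~\ref{prop:nygaard_qsynt_descent} together with the coequalizer description of $\mf{X}^{\mathrm{syn}}$. When $\mf{X}$ is quasisyntomic, Remark~\ref{rem:quasisyntomic_qrsp} then reduces us to $\mf{X}=\Spf R$ with $R$ qrsp and $p$-torsion free. For a general derived $\mf{X}$, we instead use left Kan extension from qrsp/semiperfectoid rings. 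I would handle this using the fact that both sides are stacks that are smooth in the appropriate sense and deformation-theoretically controlled, so that they commute with the relevant colimits (animated rings being sifted colimits of polynomial ones). I expect this extension to arbitrary derived formal schemes to be the main technical obstacle. I would first try to show that both sides are represented by algebraic stacks locally of finite presentation with matching cotangent complexes (Grothendieck--Messing on one side, Hodge filtration deformation on the other), and that the functor is compatible with the identification.

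\emph{Step 2: qrsp case.} By Remark~\ref{rem:nygaard_filtered_cohomology}, a vector bundle on $R^{\mathrm{syn}}$ is a filtered module $M$ over $\Fil^\bullet_{\smallN}\Prism_R$ with a semilinear isomorphism between its Frobenius twist and its restriction to the Hodge--Tate locus. The Hodge--Tate weight condition $\{0,1\}$ forces this to be the datum of a finite projective $\Prism_R$-module $M$, a submodule $\Fil^1 M$ with $M/\Fil^1M$ projective over $R$, and $\varphi_1\colon \Fil^1M\to M$ generating $M$. These are exactly the admissible prismatic Dieudonn\'e modules of Ansch\"utz--Le Bras. Their theorem gives the equivalence with $\mr{BT}_p(R)$ via $H\mapsto \mathrm{Ext}^1(H,\Reg{\Prism})$ (or its dual), and it is compatible with base change and duality. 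Cartier duality compatibility is checked here as a statement about the natural pairing into $\mu_{p^\infty}$, which corresponds to the Breuil--Kisin twist $\Reg{}\{1\}$. This gives part (1).

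\emph{Step 3: Tate modules.} For part (2), using descent again, it suffices to produce a natural isomorphism $T_p(H)(R)\simeq (\mc{M}(H)[\nicefrac{1}{I_R}]^{\wedge})^{\varphi=1}$ for qrsp $R$, functorial in $R$. Here $T_p(H)(R)=\Hom(\Int_p,\,H)$ on the generic fiber. Via the equivalence in part (1), homs $\Int_p\to H$, or equivalently $\mathbb{Q}_p/\Int_p\to H$ over the perfectoid generic fiber, correspond to $\varphi$-fixed elements after inverting $I$. This is the standard comparison, obtained by checking on $\mu_{p^\infty}$ and $\mathbb{Q}_p/\Int_p$, whose gauges are $\Reg{}\{1\}$ and $\Reg{}$, and then using that both sides are exact in $H$. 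Naturality then yields the equivalence $T_p\simeq T_{\et}\circ\mc{M}$.
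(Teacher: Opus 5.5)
\paragraph{The passage to arbitrary derived formal schemes.} The paper does not reprove this statement. It attributes the quasisyntomic case to Ansch\"utz--Le Bras~\cite{MR4530092} and the general case to \cite[Theorem 11.1.4, Proposition 11.8.2]{gmm}. Your Steps~1--2 are a fair sketch of the first part only. The genuine gap is the passage to an arbitrary derived $p$-adic formal scheme, which is precisely the content beyond \cite{MR4530092}, and your mechanism for it fails.

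First, quasisyntomicity descends along quasisyntomic covers. So a non-quasisyntomic $R$ (for instance any non-discrete animated ring) has no quasisyntomic cover by qrsp rings, and descent does not even \emph{define} $\mathcal{M}$ on $\mathrm{BT}_p(R)$.

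Second, stacks of this kind are not left Kan extended along sifted colimits. Already $R\mapsto\mathrm{Vect}(R)^{\simeq}$ is not: projective modules over polynomial rings over $\Int$ are free (Quillen--Suslin), so a simplicial polynomial resolution of $\Int[\sqrt{-5}]$ only sees free modules, although $\mathrm{Pic}(\Int[\sqrt{-5}])\neq 0$.

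Your fallback has the right shape but omits the two things that make it work.
\begin{enumerate}
\item $\mathrm{BT}_p$ is not locally of finite presentation. The comparison must therefore be made between $n$-truncated BT groups and the stacks $\BT[\GL_h,\mu_d]{n}$, whose representability by smooth $0$-dimensional formal Artin stacks is the main theorem of \cite{gmm} (Theorem~\ref{thm:gmm}).
\item You need a comparison morphism defined on \emph{all} rings. As the paper's own uses of \cite{gmm} indicate, this morphism runs from gauges to groups. For an $n$-truncated $F$-gauge $\mathcal{M}$ with Hodge--Tate weights $0,1$, the functor $C\mapsto\tau^{\leqslant 0}R\Gamma(C^{\mathrm{syn}}\otimes\Int/p^n\Int,\mathcal{M})$ is representable by an $n$-truncated BT group; this is the input \cite[Theorem 11.3.3]{gmm} used in Lemma~\ref{lem:BTMnu_canonical_ffgs}.
\end{enumerate}
One then has to show that the resulting morphism of smooth stacks is an isomorphism (Example~\ref{ex:gl_n_truncated_bts}), for instance by comparing Grothendieck--Messing deformation theories and points over algebraically closed fields. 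Finally one passes to the limit over $n$.

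\paragraph{The Tate module comparison.} Step~3 has a separate flaw. Checking on $\mu_{p^\infty}$ and $\Rat_p/\Int_p$ and then using exactness only reaches $p$-divisible groups that are locally extensions of \'etale by multiplicative ones. It does not reach, e.g., a supersingular group over $\Reg{C}$. That $H$ becomes \'etale on the generic fiber does not help: $\mathcal{M}(H)$ lives over $\Prism_T$, and (1) is an equivalence over integral bases.

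What does work is the following. For perfectoid $T$ integrally closed in $T[\nicefrac{1}{p}]$, one has $T_p(H)(T[\nicefrac{1}{p}])=\Hom_T(\Rat_p/\Int_p,H)$. By (1) this equals $\Hom(\mathcal{M}(\Rat_p/\Int_p),\mathcal{M}(H))=\Gamma(T^{\mathrm{syn}},\mathcal{M}(H))$, the \emph{integral} Frobenius invariants. You still need a genuine lemma that every $\varphi$-invariant element of $\mathcal{M}(H)[\nicefrac{1}{I_T}]^{\wedge}$ is already integral, so that this group agrees with $T_{\et}(\mathcal{M}(H))$ evaluated on $T[\nicefrac{1}{p}]$.
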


\subsection{\texorpdfstring{$F$}{F}-gauges and cohomology}\label{ss:F-gauges-and-cohomology} 

In this final subsection we recall the relationship between $F$-gauges, \'etale realization, and higher pushforwards. Throughout $K$ denotes a complete mixed characteristic $(0,p)$ discrete valuation field with perfect residue field $k$, and $C$ a completed algebraic closure of $K$.

\begin{setup} 
Let $\mf{X}$ be a formal $\ms{O}_K$-scheme with generic fiber $X$. We denote by $D^b_\mr{lisse}(X,\mathbb{Z}_p)$ the category of derived $p$-complete locally bounded $\underline{\bb{Z}}_p$-modules $\bb{L}$ on $X_\proet$ whose cohomology sheaves $H^i(\bb{L})$ are locally constant on $X_\proet$ with finitely generated stalks. If $\mf{X}$ is a smooth formal $\Reg{K}$-scheme, let $D^b_{\mr{crys}}(X,\bb{Z}_p)$ be the full subcategory of those $\bb{L}$ such that $H^i(\bb{L})[\nicefrac{1}{p}]$ is crystalline for all $i$.
\end{setup}

\begin{remark}[{\'Etale realization for perfect $F$-gauges}] By \cite[Constructions 6.3.1 and 6.3.2]{bhatt_lectures} there is an \'etale realization functor
\begin{equation*}
    T_{\et}\colon \mr{Perf}(\mf{X}^\mr{syn})\to D^b_{\mr{lisse}}(X,\bb{Z}_p).
\end{equation*}
By \cite[Corollary 4.1]{Pentland}, if $\mf{X}$ is smooth, then this functor takes values in $D^b_\mr{crys}(X,\bb{Z}_p)$. Moreover, the \'etale realization functor is compatible with that from Remark \ref{rem:etale_realization} via the natural embedding of $\mr{Vect}^{\an,\varphi}(\mf{X}_\Prism)$ into $\mr{Perf}(\mf{X}^\mr{syn})$ via the functor $\Pi_{\mf{X}}$ from \cite[Theorem 3.32]{GuoLi} (see also \cite[\S3.2]{Pentland}). 
\end{remark}

\begin{remark}[{Crystalline realization functor}] 
Suppose that $\mf{X}$ is a $p$-quasisyntomic $p$-adic formal scheme. Then there is a natural functor
\begin{equation*}
    T_\mr{crys}\colon \mr{Perf}(\mf{X}^\mr{syn})\to\mr{Perf}^\varphi\big((\mf{X}_{p=0}\big)_{\mr{crys}}),
\end{equation*}
where the target is the category of perfect complexes in $F$-crystals on the absolute crystalline site of $\mf{X}_{p=0}$. More precisely, if $\mc{V}$ is a perfect $F$-gauge, then Remark \ref{rem:f-gauges-prismatic} gives us a perfect complex of prismatic $F$-crystals on $\mf{X}$ and by restriction a perfect complex of prismatic $F$-crystals on $\mf{X}_{p=0}$, and one then applies \cite[Theorem 6.4]{Guo2024-al}.
\end{remark}

\begin{remark}[{Perfect complexes in $F$-isocrystals on smooth formal $\mathscr{O}_K$-schemes}] Suppose now that $\mf{X}$ is a smooth formal $\mathscr{O}_K$-scheme. In this case, one has a natural identification
\begin{equation*}
    \mr{Perf}^\varphi\big((\mf{X}_{p=0})_{\mr{crys}}\big)[\nicefrac{1}{p}]\simeq \mr{Perf}^\varphi\big((\mf{X}_k/W(k))_\mr{crys}\big)[\nicefrac{1}{p}].
\end{equation*}
Here the left-hand side is the category of perfect complexes in $F$-isocrystals on the absolute crystalline site of $\mf{X}_{p=0}$, and the right-hand side the category of perfect complexes in $F$-isocrystals on the relative crystalline site $(\mf{X}_k/W(k))_\mr{crys}$. This identification is a version of Dwork's trick (see \cite[Remark 2.15]{Guo2024-al} and the references therein). In the sequel we will often implicitly make this identification.
\end{remark}

\begin{remark}[{The filtered crystalline realization functor on smooth formal $\mathscr{O}_K$-scheme}] 
Suppose again that $\mf{X}$ is a smooth formal $\mathscr{O}_K$-scheme. Then, the crystalline realization functor admits a filtered upgrade
\begin{equation*}
    T_\mr{crys}^+\colon \mr{Perf}(\mf{X}^\mr{syn})\to \mr{PerfF}^\varphi(\mf{X}_\mr{crys})[\nicefrac{1}{p}],
\end{equation*}
where the target is the category of perfect complexes in \emph{filtered} $F$-isocrystals on $\mf{X}$ as in \cite[Definition 4.18]{Pentland}, and where $T^+_\mr{crys}$ is denoted by $\mathsf{Beil}$ in \emph{op.\@ cit.}
\end{remark}

The following shows that all of these constructions are compatible with pushforwards. For the proof (due to combining work from \cite{Guo2024-al}, \cite{GuoLi}, and \cite{Pentland}) one can see \cite[Corollary 4.7]{Pentland}, \cite[Proposition 4.23]{Pentland}, and their proofs, and the references therein.

\begin{theorem}\label{thm:etale-realization-and-pushforward} 
Suppose that $f\colon \mf{X}\to\mf{Y}$ is a smooth proper map of smooth formal $\mathscr{O}_K$-schemes with generic fiber map $f_\eta\colon X\to Y$. Then, pushforward defines a well-defined commutative square
\begin{equation*}
    \begin{tikzcd}
  {\mr{Perf}(\mf{X}^\mr{syn})} & {D^b_\mr{crys}(X,\bb{Z}_p)} \\
  {\mr{Perf}(\mf{Y}^\mr{syn})} & {D_{\mr{crys}}(Y,\bb{Z}_p).}
  \arrow["{T_{\et}}", from=1-1, to=1-2]
  \arrow["{Rf_\ast}"', from=1-1, to=2-1]
  \arrow["{R (f_\eta)_\ast}", from=1-2, to=2-2]
  \arrow["{T_{\et}}"', from=2-1, to=2-2]
\end{tikzcd}
\end{equation*}
Moreover, for any object $\mc{E}$ of $\mr{Perf}(\mf{X}^\mr{syn})$ and $i\geqslant 0$ there is a natural identification of filtered $F$-isocrystals
\begin{equation*}
    D_\mr{crys}\big(R^i(f_\eta)_\ast T_{\et}(\mc{E})[\nicefrac{1}{p}]\big)\simeq R^i (f_k)_\ast T^+_\mr{crys}(\mc{E})[\nicefrac{1}{p}].
\end{equation*}
\end{theorem}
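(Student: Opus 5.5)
The plan is to verify the two assertions of Theorem~\ref{thm:etale-realization-and-pushforward} separately, working first over an affine open $\Spf R\subset\mf{Y}$ and then gluing. For the first assertion (commutativity of the square), we first check that $Rf_\ast$ preserves perfect objects. Since $f$ is smooth and proper, $f^{\mathrm{syn}}\colon\mf{X}^{\mathrm{syn}}\to\mf{Y}^{\mathrm{syn}}$ is proper, and we claim $R f^{\mathrm{syn}}_\ast$ of a perfect complex is perfect. This can be checked after pullback along the quasisyntomic cover $\Spf R_\infty\to\Spf R$ of Remark~\ref{rem:quasisyntomic_qrsp}, using Proposition~\ref{prop:nygaard_qsynt_descent}, together with base change for the Nygaard-filtered prismatic cohomology of the smooth proper $R_\infty$-scheme $\mf{X}\times_{\mf{Y}}\Spf R_\infty$. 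Over $\Spf R_\infty$ the fibres of $\mf{Y}^{\mathrm{syn}}$ are formal Rees stacks (Remark~\ref{rem:nygaard_filtered_cohomology}), so perfectness reduces to finiteness of Nygaard-filtered relative prismatic cohomology, i.e.\ finiteness of relative Hodge cohomology on graded pieces. Note also that $D^b_{\mathrm{crys}}(Y,\bb{Z}_p)$ is preserved by $R(f_\eta)_\ast$, by the crystalline comparison for smooth proper morphisms.

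To construct the commutativity isomorphism we then use the definition of $T_{\et}$ on semiperfectoid test objects from Remark~\ref{rem:etale_realization}, extended to perfect complexes: $T_{\et}(\mc{E})$ is computed as the $\varphi$-fixed points of $\mc{E}$ evaluated on $\Prism_S[\nicefrac{1}{I_S}]^\wedge$. There is a natural base-change map $T_{\et}(Rf_\ast\mc{E})\to R(f_\eta)_\ast T_{\et}(\mc{E})$, and it suffices to show it is an isomorphism on pro\'etale stalks, hence over perfectoid points $\Spf S$ of $\mf{Y}$. At such a point this is the relative \'etale comparison theorem of Bhatt--Scholze: for $\mf{X}_S$ smooth and proper over a perfectoid base, $R\Gamma_{\et}(\mf{X}_{S,\eta},T_{\et}\mc{E})\simeq\big(R\Gamma_\Prism(\mf{X}_S,\mc{E})[\nicefrac{1}{I}]^\wedge\big)^{\varphi=1}$. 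For vector-bundle coefficients we take this from \cite{GuoLi}, \cite{Guo2024-al} and \cite{Pentland}, and we extend it to perfect coefficients by d\'evissage on the amplitude.

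For the second assertion we use $T^+_{\mathrm{crys}}$. We would first construct the base-change map $R^i(f_k)_\ast T^+_{\mathrm{crys}}(\mc{E})\to T^+_{\mathrm{crys}}(R^if_\ast\mc{E})$. Compatibility of prismatic and crystalline cohomology (the crystalline comparison, together with Dwork's trick to pass to the relative crystalline site) shows that this map is an isomorphism of $F$-isocrystals. Compatibility of the filtrations comes from the de Rham comparison along $x^{\smallN}_{\dR}$: the Hodge filtration on relative de Rham cohomology is the pullback of the Nygaard-type filtration, and the relative Hodge-to-de Rham spectral sequence degenerates rationally. We then combine this with the first assertion and with the pointwise identification $D_{\mathrm{crys}}(T_{\et}\mc{V}[\nicefrac{1}{p}])\simeq T^+_{\mathrm{crys}}(\mc{V})[\nicefrac{1}{p}]$ for $\mc{V}$ perfect over $\mf{Y}$, which is essentially the Guo--Reinecke/Du--Liu--Moon--Shimizu equivalence (Theorem~\ref{thm:guo_reinecke}) upgraded to perfect complexes, applied to $\mc{V}=Rf_\ast\mc{E}$.

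The main obstacle is the second assertion: identifying the filtrations on the nose rather than only the underlying $F$-isocrystals. This requires the upgraded functor $T^+_{\mathrm{crys}}$ to be compatible with proper pushforward at the filtered level, not merely on underlying isocrystals. The first approach would be to check this after pulling back to each classical point $y\in Y$, where it reduces to the arithmetic crystalline comparison theorem for smooth proper $\Reg{L}$-schemes with coefficients, and then to deduce the global statement from filtered base change.
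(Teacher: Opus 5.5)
The paper gives no argument of its own here. It cites \cite[Corollary 4.7]{Pentland} for the square and \cite[Proposition 4.23]{Pentland} for the filtered identification, with perfectness of $Rf_\ast$ coming from \cite[Proposition 8.2.6]{Madapusi2025-ay}. Your outline has the right overall shape. However, at the two places where you treat the passage from vector bundle to perfect coefficients as routine, the argument fails as written, and these are exactly where the cited results do the work.

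First, the ``d\'evissage on the amplitude'' does not reduce you to the vector bundle case. Truncating a perfect $F$-gauge only brings you to its cohomology sheaves, and these are in general not vector bundle $F$-gauges, so the induction never lands in the case covered by \cite{GuoLi} and \cite{Guo2024-al}. To reduce to vector bundles by cones and retracts instead, you would need $\mathrm{Perf}(\mf{X}^{\mathrm{syn}})$ to be thickly generated by $\mathrm{Vect}(\mf{X}^{\mathrm{syn}})$. You have not justified this, and it is not a standard fact for these stacks. A workable fix is to prove the relative \'etale comparison directly with perfect coefficients. $T_{\et}$ only sees the restriction of $\mc{E}$ to the Laurent locus (invert $\mathcal{I}_{\Prism}$ and $p$-complete). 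There, perfect Laurent $F$-crystals are equivalent to lisse complexes by \cite{MR4600546}, and the comparison can be checked by descent from perfectoids.

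Relatedly, you cannot get crystallinity of $R(f_\eta)_\ast T_{\et}(\mc{E})$ from the comparison for crystalline local systems plus a Leray or hypercohomology spectral sequence. That only shows the graded pieces are crystalline, and crystalline representations are not closed under extensions. It follows afterwards, from commutativity of the square together with \cite[Corollary 4.1]{Pentland}.

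Second, in the filtered part, $R^if_\ast\mc{E}$ is not a perfect $F$-gauge, so $T^+_{\mathrm{crys}}(R^if_\ast\mc{E})$ is not defined. Base change along $x^{\smallN}_{\dR}$ together with the crystalline comparison gives at best an identification of filtered complexes, $T^+_{\mathrm{crys}}(Rf_\ast\mc{E})[\nicefrac{1}{p}]\simeq R(f_k)_\ast T^+_{\mathrm{crys}}(\mc{E})[\nicefrac{1}{p}]$.

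The next step, $D_{\mathrm{crys}}\big(H^iT_{\et}(\mc{V})[\nicefrac{1}{p}]\big)\simeq H^iT^+_{\mathrm{crys}}(\mc{V})[\nicefrac{1}{p}]$ for perfect $\mc{V}$ on $\mf{Y}$, is not an upgrade of Theorem~\ref{thm:guo_reinecke}. That theorem concerns vector bundles and gives no $t$-exactness. Taking $H^i$ of a filtered complex also requires strictness, i.e.\ rational degeneration of the relative Hodge--de Rham spectral sequence \emph{with coefficients}. You assert this degeneration, but with coefficients it is an output of a comparison theorem, not an input. One source is the relative de Rham comparison of Scholze and Liu--Zhu for de Rham local systems, once the Hodge filtration on $T^+_{\mathrm{crys}}(\mc{E})$ is matched with $D_{\dR}(T_{\et}(\mc{E}))$. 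Supplying this is the content of \cite[Proposition 4.23]{Pentland}.

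Your pointwise fallback has the same hole. At a classical point $y$ you would need the arithmetic crystalline comparison for $\mf{X}_y$ with coefficients in the perfect $F$-gauge $\mc{E}|_{\mf{X}_y}$, which is the statement itself over $\Reg{L}$. You would also need filtered base change to $y$, which again requires the degeneration.
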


\begin{remark}
    [Pushforward over a general base]
\label{rem:pushforward_general}
In fact, for any smooth proper map $\mf{X}\to \mf{Y}$ of formal $\Reg{K}$-schemes, we always have a well-defined pushforward map 
\[
Rf_\ast\colon\mr{Perf}(\mf{X}^{\mr{syn}})\to \mr{Perf}(\mf{Y}^{\mr{syn}}).
\]
This has the property that it carries perfect $F$-gauges over $\mf{X}$ with Hodge--Tate weights in $[a,b]$ to those over $\mf{Y}$ with Hodge--Tate weights in $[a-d,b]$, where $d$ is the relative dimension of $\mf{X}$ over $\mf{Y}$; see~\cite[Proposition 8.2.6]{Madapusi2025-ay}.
\end{remark}

\begin{corollary} Suppose that $\mathfrak{X}$ is a smooth proper formal $\mathscr{O}_K$-scheme and $\mc{V}$ is a prismatic $F$-gauge on $\mf{X}$. Then, for all $i\geqslant 0$ the $\mr{Gal}(\overline{K}/K)$-representation $H^i_\mathrm{et}(\mf{X}_C,T_{\et}(\mc{V}))[\nicefrac{1}{p}]$ is crystalline and there is a natural identification of filtered $F$-isocrystals on $K$:
\begin{equation*}
    D_\mr{crys}\big(H^i_\mathrm{et}(\mf{X}_C,T_{\et}(\mc{V}))[\nicefrac{1}{p}]\big)\simeq H^i_\mr{crys}\big((\mf{X}_k/W(k)_\mr{crys},T_\mr{crys}^+(\mc{E})\big).
\end{equation*}
Moreover, for all $1\leqslant n\leqslant \infty$ we have a natural identification
\begin{equation*}
    T_{\et}(Rf_\ast(\mc{V}/p^n))\simeq R(f_\eta)_\ast T_{\et}(\mc{V}/p^n).
\end{equation*}
\end{corollary}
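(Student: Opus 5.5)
The plan is to specialize Theorem~\ref{thm:etale-realization-and-pushforward} to the structure map $f\colon \mf{X}\to\mf{Y}\defn\Spf(\Reg{K})$. The base $\Spf(\Reg{K})$ is a smooth formal $\Reg{K}$-scheme, and $f$ is smooth and proper by hypothesis, so the theorem applies. Its generic fiber is $f_\eta\colon X\to \Spa(K)$.

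The first step is to take $\mc{E}=\mc{V}$ and use the commutative square. It gives $T_{\et}(Rf_\ast\mc{V})\simeq R(f_\eta)_\ast T_{\et}(\mc{V})$ in $D_{\mr{crys}}(\Spa(K),\Int_p)$. A lisse complex on $\Spa(K)_\proet$ is the same as a derived $p$-complete complex of $\Int_p$-modules with continuous $\Gal(\ov{K}/K)$-action. Under this identification, $R^i(f_\eta)_\ast T_{\et}(\mc{V})$ corresponds to $H^i_{\et}(\mf{X}_C,T_{\et}(\mc{V}))$ with its Galois action; this is proper base change from $K$ to $C$ for the proper map $f_\eta$. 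Since the square lands in $D_{\mr{crys}}$, each $H^i[\nicefrac{1}{p}]$ is crystalline, which is the first claim.

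The second step is the $D_{\mr{crys}}$ identification, which is the second part of Theorem~\ref{thm:etale-realization-and-pushforward}. Over $\mf{Y}=\Spf\Reg{K}$, the right-hand side $R^i(f_k)_\ast T^+_{\mr{crys}}(\mc{V})[\nicefrac{1}{p}]$ is a filtered $F$-isocrystal on $K$. By definition of relative crystalline pushforward, together with Dwork's trick identifying absolute and relative crystalline isocrystals, it is $H^i_{\mr{crys}}\big((\mf{X}_k/W(k))_{\mr{crys}},T^+_{\mr{crys}}(\mc{V})\big)$. The Hodge filtration comes from the filtered upgrade on $\mf{X}$, base changed to $K$. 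This identification is mostly bookkeeping; the care needed is in checking that the filtration in \cite{Pentland}'s $\mathsf{Beil}$ pushforward matches the filtration on this crystalline cohomology.

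The third step is the $p^n$-torsion statement. For $n=\infty$ it is the square itself. For finite $n$, I will use that $\mc{V}/p^n$ is the cofiber of multiplication by $p^n$ on $\mc{V}$ in $\mr{Perf}(\mf{X}^{\mr{syn}})$. Then $Rf_\ast$, $T_{\et}$ and $R(f_\eta)_\ast$ are all exact, and they commute with $\otimes^{\mathbb{L}}\Int/p^n$. So applying the cofiber to the natural equivalence for $\mc{V}$ gives the statement for $\mc{V}/p^n$.

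The step most likely to be the obstacle is making sure the square's equivalence is natural in $\mc{V}$ as a map of functors between stable $\infty$-categories, rather than just an objectwise isomorphism. If it were only objectwise, the cofiber argument would break. If that naturality is not available from \cite{Pentland}, I would instead apply the square to the perfect $F$-gauge $\mc{V}/p^n$ directly. That requires $T_{\et}$ on torsion perfect $F$-gauges to land in the lisse category, which does hold since the functor is defined on all of $\mr{Perf}$.
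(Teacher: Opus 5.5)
Your proposal is correct and follows the paper's intended route: the corollary has no separate proof and is meant to come from specializing Theorem~\ref{thm:etale-realization-and-pushforward} to the structure map $\mf{X}\to\Spf\Reg{K}$, exactly as you do. The naturality worry in your last step does not arise, because $\mc{V}/p^n$ already lies in $\mr{Perf}(\mf{X}^{\mr{syn}})$. Your fallback of applying the theorem directly to $\mc{V}/p^n$ therefore works, and its torsion étale realization lies trivially in $D^b_{\mr{crys}}$.
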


\begin{definition}[{Bloch--Kato's finite part of Galois cohomology}] Suppose now that $K$ is a finite extension of $\bb{Q}_p$. In \cite[\S3]{BlochKato} one finds the definition, for a Galois $\bb{Q}_p$-representation $V$ of $K$, of a subspace $H^1_f(K,V)$ of the Galois cohomology of $V$ called its \emph{finite part}. When $V$ is crystalline one naturally has
\begin{equation*}
    H^1_f(K,V)\simeq \mathrm{Ext}^1(\bb{Q}_p,V)
\end{equation*}
where these extensions are taken in the category $\mr{Rep}^{\mr{crys}}_{\bb{Q}_p}(\mr{Gal}(\ov{K}/K))$ (e.g., see \cite[Proposition 4.5.2]{EmertonKisin}).
\end{definition}

The following is a result of combining Theorem \ref{thm:etale-realization-and-pushforward} and \cite[Proposition 4.9]{Pentland}.

\begin{proposition} Suppose that $K$ is a finite extension of $\bb{Q}_p$, and let $\mathfrak{X}$ be a smooth proper formal $\mathscr{O}_K$-scheme and $\mc{V}$ a prismatic $F$-gauge on $\mf{X}$. Then, for all $i\geqslant 0$ we have a natural identification
\begin{equation*}
    H^1_f\big(H^i_\mathrm{et}(\mf{X}_C,T_{\et}(\mc{V}))[\nicefrac{1}{p}]\big)\simeq H^1(\mathscr{O}_K^\mr{syn},R^if_\ast\mc{V})[\nicefrac{1}{p}].
\end{equation*}
\end{proposition}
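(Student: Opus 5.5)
The plan is to reduce the identification to a statement about extensions of $F$-gauges over $\Reg{K}$ and then use the comparison of Theorem~\ref{thm:etale-realization-and-pushforward}. Write $\mathcal{W} = R^if_\ast\mc{V}$, and write $V = H^i_{\et}(\mf{X}_C,T_{\et}(\mc{V}))[\nicefrac{1}{p}]$. By Remark~\ref{rem:pushforward_general}, $Rf_\ast\mc{V}$ is a perfect $F$-gauge over $\Reg{K}$. After inverting $p$, its cohomology sheaves behave like vector bundle $F$-gauges: torsion is killed, and over $\Reg{K}$ the rational objects are governed by filtered $\varphi$-modules. By the preceding corollary, $T_{\et}(\mathcal{W})[\nicefrac{1}{p}]\simeq V$, and $V$ is crystalline with $D_\mr{crys}(V)$ given by the filtered crystalline cohomology.

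The key input I would invoke is \cite[Proposition 4.9]{Pentland}. I expect it says the following: for a perfect $F$-gauge $\mathcal{E}$ on $\Reg{K}$ (or at least one whose rationalization comes from a crystalline representation), syntomic cohomology $H^1(\Reg{K}^{\mr{syn}},\mathcal{E})[\nicefrac{1}{p}]$ is identified with $H^1_f(K,T_{\et}(\mathcal{E})[\nicefrac{1}{p}])$. The argument behind it is as follows. Syntomic cohomology of $\mathcal{E}$ is computed as the fiber of $\varphi-1$ (respectively $\varphi-\mathrm{can}$) on Nygaard-filtered prismatic cohomology. Rationally this becomes the Bloch--Kato complex
\[
D_\mr{crys}\to D_\mr{crys}\oplus D_\dR/\Fil^0,
\]
and $H^1$ of that complex is exactly $H^1_f$. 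Granting this, I apply it to $\mathcal{E}=\mathcal{W}$. Its étale realization is $V$ by the corollary (the identification $T_{\et}(Rf_\ast\mc{V})\simeq R(f_\eta)_\ast T_{\et}(\mc{V})$ followed by taking $H^i$). The crystalline realization is compatible by the filtered comparison in Theorem~\ref{thm:etale-realization-and-pushforward}. Naturality in $\mc{V}$ follows from the functoriality of each identification.

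I also need to justify passing from $Rf_\ast\mc{V}$ to its $i$-th cohomology $\mathcal{W}$. The obvious route is the spectral sequence for $R\Gamma(\Reg{K}^{\mr{syn}},Rf_\ast\mc{V})$, but I would avoid it. Instead I would use the fact that rationally the category of $F$-gauges over $\Reg{K}$ with crystalline realization is equivalent to weakly admissible filtered $\varphi$-modules (Theorem~\ref{thm:guo_reinecke}(2) after inverting $p$). That category is abelian, so taking cohomology objects is exact and commutes with $T_{\et}$ and $T^+_\mr{crys}$.

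I expect the main obstacle to be this last point: making sense of $R^if_\ast\mc{V}$ as an object to which Pentland's proposition applies, and checking that $H^1(\Reg{K}^{\mr{syn}},-)[\nicefrac{1}{p}]$ of it agrees with the Bloch--Kato $H^1_f$ of the $i$-th étale cohomology rather than only with some hypercohomology. To handle it, I would first try to show that $\mathcal{W}[\nicefrac{1}{p}]$ is isogenous to a vector bundle $F$-gauge, and then verify directly that the Bloch--Kato complex of $D_\mr{crys}(V)$ computes its rational syntomic $H^1$.
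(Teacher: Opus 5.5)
Your main line matches the paper's. The paper's proof is a single sentence: it combines Theorem~\ref{thm:etale-realization-and-pushforward} (and the corollary after it), which identifies the \'etale and filtered crystalline realizations of the pushforward with $V=H^i_{\et}(\mf{X}_C,T_{\et}(\mc{V}))[\nicefrac{1}{p}]$ and with $D_{\mr{crys}}(V)$, with \cite[Proposition 4.9]{Pentland}. The latter supplies the identification of rational syntomic $H^1$ with $H^1_f$, via the Bloch--Kato complex $D_{\mr{crys}}\to D_{\mr{crys}}\oplus D_{\dR}/\Fil^0$. Your first two paragraphs do exactly this.

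The extra step you add to pass from $Rf_\ast\mc{V}$ to $R^if_\ast\mc{V}$ does not work as written, and you should drop it.
\begin{itemize}
\item Theorem~\ref{thm:guo_reinecke}(2) is an equivalence between \emph{analytic} prismatic $F$-crystals and crystalline lattices. For vector bundle $F$-gauges it gives only full faithfulness.
\item It says nothing about perfect $F$-gauges or their cohomology sheaves, and it does not give an equivalence, even after inverting $p$, between $F$-gauges on $\Reg{K}$ and weakly admissible filtered $\varphi$-modules.
\item More to the point, $R^if_\ast\mc{V}$ is the $i$-th cohomology sheaf of $Rf_\ast\mc{V}$ for the $t$-structure on quasi-coherent sheaves on $\Reg{K}^{\mr{syn}}$. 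It can have torsion and need not be a vector bundle; your sketch leaves ``isogenous to a vector bundle $F$-gauge'' unproved.
\item $H^1(\Reg{K}^{\mr{syn}},-)$ is a functor of this sheaf itself, including its Nygaard/Hodge structure, not of its realizations. So exactness of taking cohomology in an abelian category of weakly admissible modules tells you nothing directly about $H^1(\Reg{K}^{\mr{syn}},R^if_\ast\mc{V})[\nicefrac{1}{p}]$.
\end{itemize}
The input you actually need is that the rational syntomic $H^1$ of the $F$-gauge $R^if_\ast\mc{V}$ is computed by the Bloch--Kato complex of its filtered $\varphi$-module. That is precisely the role \cite[Proposition 4.9]{Pentland} plays in the paper's proof. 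Theorem~\ref{thm:etale-realization-and-pushforward} then identifies that filtered $\varphi$-module with $D_{\mr{crys}}(V)$, so no separate abelian-category argument is required.
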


\section{\texorpdfstring{$(\mathcal{G},\mu)$}{(G,mu)}-apertures}\label{sec:gmudisp}

The purpose of this section is to recall the results of~\cite{gmm}, and to record some useful complements.

\subsection{The main representability result}
\label{subsec:the_main_results}

This subsection is a quick review of~\cite[\S 9]{gmm}.

\begin{setup}
\label{setup:g_mu}
  In the following, $\mathcal{G}$ will be a smooth connected affine group scheme over $\Int_p$, $\hat{\mathcal{O}}$ the ring of integers in a finite unramified extension of $\Rat_p$, and $\mu\colon\Gmh{\hat{\mathcal{O}}}\to \mathcal{G}_{\hat{\mathcal{O}}}$ a \defnword{$1$-bounded} cocharacter, i.e., one whose weights on $\Lie(\mc{G})_{\hat{\mathcal{O}}}$ via the adjoint action are bounded above by $1$. Let $\mathcal{P}^-_\mu\subset \mathcal{G}$ be the smooth subgroup scheme whose Lie algebra is identified with the sum of the weight-$i$ spaces in $\Lie(\mathcal{G})_{\hat{\mathcal{O}}}$ for $i\leqslant 0$. When $\mathcal{G}$ is reductive, $\mu$ is minuscule and $\mathcal{P}^-_\mu$ is a parabolic subgroup associated with $\mu$.
\end{setup}

\begin{remark}
  \label{rem:torsor_Qmu}
Associated with $\mu$ is the map of classifying stacks $B\Gmh{\hat{\mathcal{O}}}\to B \mathcal{G}_{\hat{\mathcal{O}}}$. This classifies a $\mathcal{G}$-torsor over $B\Gmh{\hat{\mathcal{O}}}$ which we denote by $\mathcal{Q}_\mu$.
\end{remark}

\begin{definition}
For $R\in \mathrm{CRing}^{p\text{-comp}}_{\hat{\mathcal{O}}/}$, an $n$-\defnword{truncated} $(\mathcal{G},\mu)$-\defnword{aperture} over $R$ is a $\mathcal{G}$-torsor $\mathfrak{Q}$ over $R^{\mathrm{syn}}\otimes\Int/p^n\Int$\footnote{By this, we mean a map of $p$-adic formal stacks $R^{\mathrm{syn}}\otimes\Int/p^n\Int\to B \mathcal{G}$. } satisfying the following condition: For every geometric point $R\to \kappa$ of $\Spf R$, the restriction of $(x^{\smallN}_{dR})^*\mathfrak{Q}$ (see Remark~\ref{rem:de_rham_point}) over $B\Gm\times \Spec \kappa$ is isomorphic to that of $\mathcal{Q}_\mu$. These organize into an $\infty$-groupoid $\BT{n}(R)$, and the assignment $R\mapsto \BT{n}(R)$ is a derived $p$-adic formal prestack. We will also set
\[
\BT{\infty} = \varprojlim_n \BT{n}.
\]
Objects in $\BT{\infty}(R)$ will be called \defnword{$(\mathcal{G},\mu)$-apertures} over $R$. They classify $\mathcal{G}$-torsors on $R^{\mathrm{syn}}$ that are bounded by $\mu$ in the sense explained above.
\end{definition}

\begin{remark}
\label{rem:mu_only_conj_class_needed}
Note that the isomorphism class of $\mathcal{Q}_\mu$ depends only on the conjugacy class of $\mu$. This implies that the stacks $\BT{n}$ also only depend on this conjugacy class and not on the particular choice of representative.
\end{remark}

The next result is~\cite[Theorem 9.3.2]{gmm} (see Remark \ref{rem:hodge-filtered_de_rham} for the definition of the maps to $B\mc{P}_\mu^{-}$).
\begin{theorem}
\label{thm:gmm}
The formal prestack $\BT{n}$ is represented by a zero-dimensional connected quasi-compact smooth $p$-adic formal Artin stack over $\hat{\mathcal{O}}$ with affine diagonal. Moreover, for any pro-nilpotent divided power thickening $R'\twoheadrightarrow R$ in $\mathrm{CRing}^{p\emph{-comp}}_{\hat{\mathcal{O}}/}$, we have a Cartesian \emph{Grothendieck--Messing} square\footnote{This diagram does depend on the choice of PD structure on $\ker(R'\to R)$, as well as the fact that it is (pro-)nilpotent. For example, one may apply it to the usual trivial PD structure on $\ker(\bb{Z}/4\bb{Z}\to \bb{Z}/2\bb{Z})$ but not to that inherited from the PD structure on $2\mathbb{Z}_2\subseteq \mathbb{Z}_2$.}
\begin{equation}\label{eqn:commuting_square}
\Square{\BT{n}(R')}{}{B \mathcal{P}^{-}_\mu(R'/{}^{\mathbb{L}}p^n)}{}{}{\BT{n}(R)}{}{B \mathcal{P}^{-}_\mu(R/{}^{\mathbb{L}}p^n)\times_{B \mathcal{G}(R/{}^{\mathbb{L}}p^n)}B \mathcal{G}(R'/{}^{\mathbb{L}}p^n).}
\end{equation}
The transition maps $\BT{n+1}\to\BT{n}$ are smooth and surjective. 
\end{theorem}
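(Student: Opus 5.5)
Theorem~\ref{thm:gmm} is quoted directly from \cite[Theorem 9.3.2]{gmm}, so the proof amounts to recalling how that argument goes. I would proceed by induction on $n$, organized around the Grothendieck--Messing square~\eqref{eqn:commuting_square}, and I would prove that square before anything else.

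\textbf{Step 1: the case $n=1$ and deformation theory.} For $R$ an $\mathbb{F}_p$-algebra (or $p$-nilpotent with $p^n=0$), I would describe $\BT{1}$ explicitly via the semiperfectoid description of $R^{\mathrm{syn}}$ from Remark~\ref{rem:nygaard_filtered_cohomology}. Using quasisyntomic descent (Proposition~\ref{prop:nygaard_qsynt_descent} and Remark~\ref{rem:quasisyntomic_qrsp}), one reduces to qrsp rings. There a $\mathcal{G}$-torsor on the Rees stack of $\Fil^\bullet_{\smallN}\Prism_R$ bounded by $\mu$ becomes a torsor for a display-type group $\mathcal{G}(\Fil_{\smallN}^\bullet\Prism_R)^\mu$, glued via $\varphi$. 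This is Lau's $(\mathcal{G},\mu)$-display description, and the $1$-boundedness of $\mu$ is what makes it work. From it one gets a presentation of $\BT{1}$ as a quotient of a smooth affine scheme (a loop-type group $L^+\mathcal{G}$ truncated) by a smooth group acting through a Frobenius-twisted conjugation, which is Lang-torsor-like.

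\textbf{Step 2: the Grothendieck--Messing square.} For a nilpotent PD thickening $R'\to R$, I would show that lifting an aperture amounts to lifting the Hodge filtration, i.e.\ the $\mathcal{P}^-_\mu$-reduction of the de Rham $\mathcal{G}$-torsor (Remark~\ref{rem:de_rham_point}). The key input is that, over a PD thickening, the filtered prismatization of $R'$ relative to $R$ differs only in the Hodge filtration, via the crystalline comparison. Concretely, I would show that the fiber of $R'^{\mathrm{syn}}\to R^{\mathrm{syn}}$ is controlled by $\Fil^1$ of the divided power ideal, and that the $\varphi_1$ of weight-$1$ parts is topologically nilpotent. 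This nilpotence lets a fixed-point argument (the Zink/Lau contraction) kill the Frobenius-twisted obstructions.

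\textbf{Step 3: representability and the stated properties.} With Steps 1 and 2, Artin's criteria apply, or more directly the explicit quotient presentation does. Smoothness follows from formal smoothness (the lifting of $\mathcal{P}^-_\mu$-reductions is unobstructed since $\mathcal{G}/\mathcal{P}^-_\mu$ is smooth) together with finite presentation. Zero-dimensionality comes from the equality $\dim$ of group $=\dim$ of the scheme acted on in the Lang-type presentation. Affine diagonal holds because automorphism sheaves are closed in affine groups. Connectedness follows from the presentation as a quotient of a connected scheme. For the transition maps $\BT{n+1}\to\BT{n}$, smoothness and surjectivity come by applying the same deformation analysis to the square-zero extension $\mathbb{Z}/p^{n+1}\to\mathbb{Z}/p^n$ on coefficients. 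That makes the fibers gerbes or torsors under smooth vector-group stacks built from $\Lie\mathcal{G}\otimes$ (Nygaard-graded pieces).

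\textbf{Main obstacle.} I expect the hardest part to be Step 2: establishing the Cartesian square in the derived, non-Noetherian generality. This requires a filtered crystalline comparison for $R^{\smallN}$ along PD thickenings and control of the Frobenius nilpotence from $1$-boundedness. I would attempt it first on qrsp rings, where everything is explicit in terms of $\Prism_R$ and $A_{\mathrm{crys}}$, and then descend.
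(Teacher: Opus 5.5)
The gap is in Steps 1 and 3. You derive algebraicity from a display-type presentation of $\BT{1}$: a truncated positive loop group modulo a smooth group acting by $\varphi$-twisted conjugation. Quasi-compactness, connectedness, zero-dimensionality and the affine diagonal are then all read off from that presentation. But that quotient is not $\BT{1}$. It is the stack of truncated $(\mathcal{G},\mu)$-displays, which for $n=1$ is essentially $\mathcal{G}\text{-}\mathrm{zip}_\mu$, and the comparison map from apertures is not an isomorphism. The paper itself records, in Proposition~\ref{prop:apertures_to_gzips} (extracted from the proof in \cite{gmm}), that $\BT{1}\otimes k\to\mathcal{G}\text{-}\mathrm{zip}_\mu$ is only a gerbe banded by a connected finite flat group scheme.

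That band is genuinely nontrivial. Take $(\GL_2,\mu_1)$, i.e.\ level-one truncated $p$-divisible groups (Example~\ref{ex:gl_n_truncated_bts}). The object $\Int/p\oplus\boldsymbol{\mu}_p$ has $\Hom(\Int/p,\boldsymbol{\mu}_p)\simeq\boldsymbol{\mu}_p$ inside its automorphism group scheme, whereas its $F$-zip has finite \'etale automorphisms. So your presentation would assign the wrong stabilizers, and hence the wrong diagonal.

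The quotient descriptions of Remarks~\ref{rem:quotient_description_perfectoid} and~\ref{rem:quotient_description_perfectoid_truncated} hold only on perfectoid test rings, which cannot see infinitesimal stabilizers. On qrsp or semiperfectoid rings the relevant ring is $\Prism_R$ (e.g.\ $A_{\mathrm{crys}}(R)$), not $W_n(R)$. So descending from such rings computes values of $\BT{1}$ but produces no finite-type atlas.

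What is missing, then, is an honest algebraicity and finite-presentation input. This is exactly the non-formal part: compare the proof of Theorem~\ref{thm:algebraization}, where, once the deformation theory is available, everything except finite presentation and affineness of the diagonal is formal. One route is to show that the map to $\mathcal{G}\text{-}\mathrm{zip}_\mu$ (an algebraic stack by \cite[Theorem 1.5]{Pink2015-ye}) is a gerbe banded by a finite flat group scheme. That requires representability results for syntomic cohomology of $F$-gauges, of the kind the paper imports in Lemma~\ref{lem:BTMnu_canonical_ffgs}. From this you would get quasi-compactness, connectedness and affine diagonal in the base case, and then propagate using your Step 2 and the dévissage in $n$.

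Smoothness and zero-dimensionality do not need any presentation. The Grothendieck--Messing square identifies the cotangent complex at an $R$-point with the two-term complex $[V\xrightarrow{p^n}V]$, where $V=(\Lie\mathcal{G}/\Lie\mathcal{P}^-_\mu)^\vee\otimes R$. This complex has Tor amplitude $[0,1]$ and Euler characteristic $0$ (cf.\ Remark~\ref{rem:versality_bt_n}). So your Step 2 is the right engine, but the main difficulty is misplaced: besides Grothendieck--Messing, you still need the finiteness that your presentation was supposed to supply.
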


\begin{example}
    [The case of $\GL_n$ and truncated $p$-divisible groups]
\label{ex:gl_n_truncated_bts}
A basic example is $(\GL_h,\mu_d)$ where $d\leqslant h$ and $\mu_d$ is a minuscule cocharacter of $\GL_h$ defined over $\Int_p$ and splitting the direct summand $\Int^{h-d}\times \{0\}\subset \Int^h$. In this case, it is shown in~\cite[Theorem 11.2.7]{gmm} that $\BT[\GL_h,\mu_d]{n}$ is canonically isomorphic to the stack of $n$-truncated Barsotti--Tate (or $p$-divisible) groups of height $h$ and dimension $d$. This is a refinement of Theorem~\ref{thm:dieudonne}.
\end{example}

\begin{remark}
    [The case of central cocharacters]
\label{rem:central_cochar_case}
When $\mu = 0$ is the trivial cocharacter, $\BT{n}$ is canonically isomorphic to the classifying stack $\mathrm{Loc}_{\mathcal{G}(\Int/p^n\Int)}$ over $\Spf \hat{\mathcal{O}}$; see \cite[Proposition 9.4.1]{gmm}. When $\mu$ is central, $\BT{n}$ is a \emph{non-canonically} trivial gerbe over $\Spf \hat{\mathcal{O}}$ banded by $\mathcal{G}(\Int/p^n\Int)$; see \cite[Proposition 10.3.4]{gmm}. More precisely, in this case, we have $\mathcal{P}^-_\mu = \mathcal{G}$, and so $\BT{n}$ is \'etale over $\Spf\hat{\mathcal{O}}$. Moreover, $\BT{n}(\hat{\mathcal{O}})$ is non-empty, and, for any $\mf{Q}\in \BT{n}(\hat{\mathcal{O}})$, the automorphism scheme $\underline{\Aut}(\mf{Q})$ (defined via the diagonal of $\BT{n}$) is finite \'etale and \'etale locally isomorphic to $\underline{\mathcal{G}(\Int/p^n\Int)}$. Finally, the scheme $\underline{\mr{Isom}}(\mf{Q},\mf{Q}')$ of isomorphisms from $\mf{Q}$ to any other section of $\BT{n}$ is non-empty.
\end{remark}

\begin{remark}
    [Lubin--Tate apertures]
\label{rem:lt_apertures}
Still assuming $\mu$ to be central, the objects of $\BT{n}(\hat{\mc{O}})$ were termed \emph{Lubin--Tate apertures} in~\cite{gmm}. To explain this terminology, we assume for simplicity that $\mathcal{G}$ is reductive. Let $\mathcal{Z} \defn Z({\mathcal{G}})^\circ\subset \mathcal{G}$ be the connected center. Then $\mu$ factors through $\mathcal{Z}$, and we can consider its `reflex norm'
\[
r_\mu\colon \Res_{\hat{\mc{O}}/\Int_p}\bb{G}_{m,\hat{\mc{O}}}\xrightarrow{\Res_{\hat{\mc{O}}/\Int_p}\mu}\Res_{\hat{\mc{O}}/\Int_p}\mathcal{Z}\xrightarrow{\mathrm{Nm}_{\hat{\mc{O}}/\Int_p}}\mathcal{Z}.
\]
If $\mathcal{T}_0 = \Res_{\hat{\mc{O}}/\Int_p}\bb{G}_{m,\hat{\mc{O}}}$, we have a canonical cocharacter $\mu_0\colon \Gmh{\hat{\mc{O}}}\to \mc{T}_{0,\hat{\mc{O}}}$ such that $r_\mu\circ \mu_0 = \mu$. This gives us a map of pairs $(\mathcal{T}_0,\mu_0)\to (\mathcal{G},\mu)$, and hence a map $\BT[\mathcal{T}_0,\mu_0]{n}\to \BT{n}$. Now, examples of objects in $\BT[\mathcal{T}_0,\mu_0]{n}(\hat{\mc{O}})$ can be obtained from Lubin--Tate formal $\hat{\mc{O}}$-modules; see~\cite[Proposition 11.7.6]{gmm}. In particular, the twist of the standard representation of $\mathcal{T}_0$ on $\hat{\mc{O}}$ by such an aperture will yield a vector bundle over $\hat{\mc{O}}^{\mathrm{syn}}\times \Spf\hat{\mc{O}}$, whose stack of global sections will be isomorphic via Theorem~\ref{thm:dieudonne} to the associated Lubin--Tate formal $\hat{\mc{O}}$-module.
\end{remark}

\begin{remark}
  [Deformation rings]
 \label{rem:bt_deformation_rings}
For any point $x\in \BT{\infty}(\kappa)$ valued in a perfect field $\kappa$ of characteristic $p$, the deformation functor for $\BT{\infty}$ at $x$ takes discrete values and is represented by a complete local formally smooth $W(\kappa)$-algebra of relative dimension $\dim \mathcal{G} - \dim \mathcal{P}^-_\mu$; see \cite[Lemma 10.2.5]{gmm}. It can also be seen from results of Ito~\cite{ito2023deformation}, combined with \emph{a priori} knowledge of the formal smoothness of $\BT{\infty}$; see~\cite[Proposition 3.32]{imai2023prismatic}. 
\end{remark}

\begin{remark}
 [Classicality]
\label{rem:bt_classicality}
There is a somewhat interesting phenomenon here. When $R$ is a discrete $p$-complete ring, even though $R^{\mathrm{syn}}$ is in general only a derived stack that is covered flat locally by the spectra of animated (not necessarily discrete) rings, $\BT{n}(R)$ is still just a $1$-groupoid (or a $1$-truncated $\infty$-groupoid). When $R/pR$ satisfies a mild finiteness condition, this is now explained by~\cite[Remark 9.9.11]{Madapusi2025-ay}, which tells us that $\BT{n}(R)$ can be computed using only the classical truncation of $R^{\mathrm{syn}}$.
\end{remark}

\subsection{Frames and windows}
\label{sub:frames_and_windows}

In~\cite[\S 5 and \S 10]{gmm}, we find a general discussion of frames and windows, which allow us to give more concrete descriptions of the category of $(\mathcal{G},\mu)$-apertures in certain situations. Here, we will isolate a particular instance that will prove very useful to us.

\begin{definition}
[Breuil--Kisin frames]
Let $R$ be a $p$-complete ring. A \defnword{Breuil--Kisin frame} for $R$ is a prism $\underline{A} = (A,I')$ such that $A$ is $p$-complete and $p$-torsion free, $I'\subset A$ is an ideal and $R = A/I'$.
\end{definition}

\begin{example}
  [Frames for base $\Reg{K}$-algebras]
\label{ex:bk_frames_for_base_algebras}
Suppose that $K$ is a complete discrete valuation field with perfect residue field $\kappa$, and set $K_0 = W(\kappa)[\nicefrac{1}{p}]\subset K$. Fix a monic Eisenstein polynomial $E(u)\in W(\kappa)[u]$ such that $E(\pi) = 0$ for some uniformizer $\pi\in \Reg{K}$. Suppose that $R$ is a base $\Reg{K}$-algebra in the sense of~\cite[\S 1.1]{imai2024tannakianframeworkprismaticfcrystals}: Then $R = R_0\otimes_{W(\kappa)}\ms{O}_K$ where $R_0$ is a flat $W(\kappa)$-algebra that admits a Frobenius lift and hence a $\delta$-structure. The prism $\big(\Sig_{R},(E(u))\big)$ where $\Sig_{R} = R_0\ll u\rr$ is equipped with the $\delta$-$R_0$-algebra structure satisfying $\delta(u) = 0$ is now a Breuil--Kisin frame for $R$.
\end{example}

\begin{example}
  [Frames for perfectoid rings]
\label{ex:perfectoid_frame}
If $R$ is a perfectoid ring, then $\underline{\Prism}_R = (\Prism_R,\Fil^1_{\smallN}\Prism_R)$ is a Breuil--Kisin frame for $R$. Moreover, one has an identification $\Prism_R\simeq \mr{A}_\mr{inf}(R)$ under which $\Fil^1_\smallN \Prism_R$ corresponds to $\varphi^{-1}(
\ker\theta)$ where $\theta\colon \mr{A}_\mr{inf}(R)\to R$ is Fontaine's map.
\end{example}

\begin{example}
  [Frames via Frobenius liftings]
\label{ex:frobenius_lifting_frame}
Suppose that $R$ is a flat $\Int_p$-algebra equipped with a Frobenius lift $\varphi\colon R\to R$. We can then view $\big(R,(p)\big)$ as a Breuil--Kisin frame $\underline{R}$ for $R/pR$.
\end{example}

\begin{remark}
  [Liftings of \'etale covers]
\label{rem:frames_liftings}
Suppose that $\underline{A}$ is a Breuil--Kisin frame for $R$. Then any $p$-completely \'etale map $R\to \tilde{R}$ lifts uniquely to a $(p,I')$-completely \'etale map $A\to \tilde{A}$ of $\delta$-rings, and $(\tilde{A},I'\tilde{A})$ is a Breuil--Kisin frame for $\tilde{R}$; see~\cite[Proposition 5.4.23]{gmm}.
\end{remark}

\begin{assumption}
    For the remainder of this subsection, all our rings will be $p$-complete $\hat{\mc{O}}$-algebras. Given such an algebra $R$ and a Breuil--Kisin frame $\underline{A}$ for $R$, the $\hat{\mc{O}}$-algebra structure of $R$ lifts uniquely to one on $\underline{A}$.
\end{assumption}

\begin{remark}
   [The Rees stack associated with a Breuil--Kisin frame]
\label{rem:rees_bk_frame}
Suppose that $\underline{A}$ is a Breuil--Kisin frame for $R$. Set $I = \varphi^*I'\subset A$. We then have the $(p,I')$-complete formal Rees stack $\Rees(\Fil^\bullet_{I'}A)$ associated with the $I'$-adic filtration on $A$, and the Frobenius lift on $A$ extends to a map of Rees stacks $\Rees(\Fil^\bullet_IA)\to \Rees(\Fil^\bullet_{I'}A)$. We obtain two maps $\tau,\sigma\colon\Spf A\to \Rees(\Fil^\bullet_{I'}A)$, where $\tau$ is the pullback of $\Gm/\Gm\to \Aff^1/\Gm$ and $\sigma$ is obtained from the filtered Frobenius lift and the natural isomorphism
\[
   \Spf A {\isomto}\Rees(\Fil^\bullet_{I,\pm}A).
\]
We have a map $x_{\underline{A}}\colon\Aff^1/\Gm\times \Spf R\to \Rees(\Fil^\bullet_{I'} A)$ associated with the map $\Fil^\bullet_{I'}A\to \Fil^\bullet_{\mathrm{triv}}R$ of filtered rings. 
\end{remark}

\begin{definition}
  [$\mathcal{G}$-torsors over the Rees stack bounded by $\mu$]
\label{def:mu_bounded_G-torsors}
A $\mathcal{G}$-torsor $\mathcal{Q}_{\mathrm{Rees}}$ over $\Rees(\Fil^\bullet_{I'}A)$ is \defnword{bounded by $\mu$} if, for any map $R\to \kappa$ to an algebraically closed field $\kappa$, the restriction of $x_{\underline{A}}^*\mathcal{Q}_{\mathrm{Rees}}$ along the composition
\[
  B\Gm\times \Spec \kappa\to B\Gm\times \Spf R \to \Aff^1/\Gm\times\Spf R
\]
is isomorphic to $\mathcal{Q}_\mu$.
\end{definition}

\begin{remark}
[Consequences of $1$-boundedness]
  \label{rem:1-boundedness}
Since we have assumed that $\mu$ is $1$-bounded, a $\mathcal{G}$-torsor $\mathcal{Q}_{\mathrm{Rees}}$ over $\Rees(\Fil^\bullet_{I'}A)$ bounded by $\mu$ is determined up to isomorphism by the following data:
\begin{itemize}
  \item The $\mathcal{G}$-torsor $\mathcal{Q}\defn \tau^* \mathcal{Q}_{\mathrm{Rees}}$ over $\Spf A$;
  \item The $\mathcal{G}$-torsor $x_{\underline{A}}^* \mathcal{Q}_{\mathrm{Rees}}$ over $\Aff^1/\Gm\times \Spf R$, which, by the bounded-by-$\mu$ condition, is equivalent to giving a reduction of structure $Q^-\subset Q$ to a $\mathcal{P}^-_\mu$-torsor for the $\mathcal{G}$-torsor $Q=\mc{Q}|_R$.
\end{itemize}
This can be deduced for instance from~\cite[Proposition 4.12.3]{gmm}.
\end{remark}

\begin{remark}
  [Description in terms of modifications]
\label{rem:modifications}
As a particular consequence of the description in Remark~\ref{rem:1-boundedness}, we find that we can associate with the pair $(\mathcal{Q},Q^-\subset Q)$ the $\mathcal{G}$-torsor $\sigma^* \mathcal{Q}_{\mathrm{Rees}}$ over $\Spf A$. This can be obtained directly from the pair. For simplicity, we will restrict ourselves to the case where $I' = (E)$ is principal with a distinguished choice of generator $E$. Consider the sheaf of groups 
     \[
  H_\mu\colon \tilde{R}\mapsto \mathcal{G}(\tilde{A})\times_{\mathcal{G}(\tilde{R})}\mathcal{P}^-_\mu(\tilde{R})\subset \mathcal{G}(\tilde{A}) \defn (L^+ \mathcal{G})(\tilde{R})
     \]
  on the \'etale site of $\Spf R$,  and note that conjugation by $\mu(E)\in \mathcal{G}(A[\nicefrac{1}{E}])$ yields a map 
  \begin{equation*}
  \mathrm{int}(\mu(E))\colon H_\mu\to L^+ \mathcal{G}.
  \end{equation*}
  We can view $\mathcal{Q}$ as an $L^+\mathcal{G}$-torsor, and $Q^-$ as yielding a reduction of structure group to an $H_\mu$-torsor $\mathcal{Q}^-\subset \mathcal{Q}$. Pushforward of $\mathcal{Q}^-$ along $\mathrm{int}(\mu(E))$ now yields a modification $\mathcal{Q}'$ of $\mathcal{Q}$ and a further pushforward along the endomorphism of $L^+\mathcal{G}$ induced by the Frobenius lift $\varphi$ on $A$ now yields the $\mathcal{G}$-torsor $\sigma^* \mathcal{Q}_{\mathrm{Rees}}$.
\end{remark}

\begin{definition}
  [$(\mathcal{G},\mu)$-windows over Breuil--Kisin frames]
\label{def:windows_bk_frames}
Suppose that $\underline{A}$ is a Breuil--Kisin frame for $R$. A $(\mathcal{G},\mu)$\defnword{-window} over $\underline{A}$ is a $\mathcal{G}$-torsor $\mathcal{Q}$ over $\Rees(\Fil^\bullet_{I'}A)$ bounded by $\mu$ and equipped with an isomorphism $\alpha\colon\sigma^* \mathcal{Q} {\isomto }\tau^* \mathcal{Q}$. Write $\Wind{\underline{A}}{\infty}(R)$ for the groupoid of $(\mathcal{G},\mu)$-windows over $\underline{A}$.
\end{definition}

\begin{example}
  [The case of $\mathcal{G} = \GL_h$]
\label{ex:gl_n_case}
Suppose that $(\mathcal{G},\mu) = (\GL_h,\mu_d)$ from Example~\ref{ex:gl_n_truncated_bts}. Then, by~\cite[Proposition 5.6.8 and Corollary 5.7.4]{gmm} and the argument from~\cite[Proposition 9.5.9]{Madapusi2025-ay}, there is a canonical equivalence of groupoids
\[
  \bigsqcup_{d\leqslant h\in \Int_{\geqslant 0}}\Wind[\GL_h,\mu_d]{\underline{A}}{\infty}(R) \isomto \mathrm{BK}_{\underline{A},\infty}(R)
\]
where the right-hand side is the groupoid of triples $(\mathcal{M},\varphi_{\mathcal{M}},V_{\mathcal{M}})$ where:
\begin{itemize}
  \item $\mathcal{M}$ is a finite locally free $A$-module;
  \item $\varphi_{\mathcal{M}}\colon\varphi^* \mathcal{M}\to \mathcal{M}$ and $V_{\mathcal{M}}\colon I'\otimes_A\mathcal{M}\to \varphi^* \mathcal{M}$ such that the maps
  \[
    V_{\mathcal{M}}\circ (1\otimes \varphi_{\mathcal{M}})\colon I'\otimes_A \varphi^* \mathcal{M}\to \varphi^* \mathcal{M}\;;\; \varphi_{\mathcal{M}}\circ V_{\mathcal{M}}\colon I'\otimes_A \mathcal{M}\to \mathcal{M}
  \]
  are the maps induced by the inclusion $I'\subset A$;
\end{itemize}
It is easy to see that in fact the datum of $V_{\mathcal{M}}$ is redundant and can be replaced with the \emph{condition} that $\varphi_{\mathcal{M}}$ is injective with cokernel a finite locally free module over $R$. In the case of Example~\ref{ex:bk_frames_for_base_algebras}, this is the category of \emph{Breuil windows} over $\Sig_R$ introduced in~\cite{vasiu_zink}.
\end{example}

\begin{remark}
  [Alternate description via modifications]
\label{rem:windows_alternate}
The groupoid $\Wind{\underline{A}}{\infty}(R)$ can also be described as follows using Remark~\ref{rem:modifications} (after having chosen a generator $E\in I'$). An object here is a triple $(\mathcal{Q},Q^-,\alpha)$ where $\mathcal{Q}$ is a $\mathcal{G}$-torsor over $\Spf A$, $Q^-\subset Q$ is a reduction of structure group of $Q \defn \mathcal{Q}\vert_{R}$ to a $\mathcal{P}^-_\mu$-torsor, and $\alpha\colon\varphi^* \mathcal{Q}'{\isomto}\mathcal{Q}$ is an isomorphism of $\mathcal{G}$-torsors, where $\mathcal{Q}'$ is the modification of $\mathcal{Q}$ along $Q^-$. More explicitly, the sheaf of groupoids 
\[
  \tilde{R}\mapsto \Wind{\underline{\tilde{A}}}{\infty}(\tilde{R})
\]
on the \'etale site of $\Spf R$ is the quotient $\big[L^+ \mathcal{G}/H_\mu\big]$ where $H_\mu$ acts on $L^+\mathcal{G}$ via the right action 
\begin{equation*} 
g\cdot h = \tau(h)^{-1}g\varphi\big(\mathrm{int}(\mu(E))(h)\big),
\end{equation*}
where $\tau\colon H_\mu\to L^+ \mathcal{G}$ is the tautological map.
\end{remark}

\begin{remark}
  [Modifications in terms of representations]
\label{rem:modifications_in_terms_of_representations}
Suppose that we have $\Lambda\in \mathrm{Rep}_{\Int_p}(\mathcal{G})$ and let $\mathcal{V}$ be its twist by $\mathcal{Q}$. Then the associated twist $\mathcal{V}'$ by $\mathcal{Q}'$ can be described as follows: Let $V = R\otimes_A \mathcal{V}$. The reduction of structure $Q^-$ yields a filtration $\Fil^\bullet V$. Working \'etale locally on $R$ if necessary we can assume that $\Fil^\bullet V$ is split by a cocharacter of $\mathcal{G}_R$ that is geometrically conjugate to $\mu$: This gives us a splitting $ V= \oplus_i V^i$ where $\Fil^iV = \oplus_{j\geqslant i}V^j$. Lift this to a cocharacter of $\mathcal{G}_A$ and consider the associated splitting $\mathcal{V} = \bigoplus_i \mathcal{V}^i$ (cf.\@ \cite[Proposition (1.1.5)]{Kisin2017-qa}). Then we have the equality
\[
  \mathcal{V}' = \bigoplus_i E^{-i}\mathcal{V}^i = \sum_i E^{-i}\Fil^i \mathcal{V}\subset \mathcal{V}[E^{-1}],
\]
where $\Fil^i \mathcal{V} = \oplus_{j\geqslant i}\mathcal{V}^i$. In particular, the isomorphism $\alpha$ yielding a window gives an isomorphism
\[
  \varphi^*\bigg(\sum_iE^{-i}\Fil^i \mathcal{V}\bigg)\isomto\mathcal{V}.
\]
\end{remark}

\begin{setup}
[Tensor packages]
\label{setup:faithful_repn_tensors}
It will be convenient to make the following choices. Let $\mathcal{G}\to \GL(\Lambda)$ be a faithful representation.  We choose finitely many tensors $\{s_{\alpha}\}\subset \Lambda^\otimes$ such that $\mathcal{G}$ is their pointwise stabilizer: This is always possible by \cite[Theorem 1.1]{Broshi}.
\end{setup}

\begin{remark}
  [Explicit description using tensor packages]
\label{rem:expl_tensor_packages}
Using Remark~\ref{rem:modifications}, we see that giving $\mathfrak{Q}$ in $\Wind{\underline{A}}{\infty}(R)$ is equivalent to the groupoid of triples $(\mathcal{Q},Q^-,\xi)$, where:
\begin{itemize}[leftmargin=.4cm]
  \item $\mathcal{Q}$ is a $\mathcal{G}$-torsor over $A$;
  \item $Q^-\subset Q\defn \mathcal{Q}\vert_{\Spec R}$ is a reduction of structure group to a $\mathcal{P}^-_\mu$-torsor;
  \item $\xi$ is an isomorphism $\varphi^*\mathcal{Q}'\isomto\mathcal{Q}$ of $\mathcal{G}$-torsors over $A$, where $\mathcal{Q}'$ is the modification of $\mathcal{Q}$ along $Q^-$.
\end{itemize}
This can be made even more concrete using a tensor package as in Setup~\ref{setup:faithful_repn_tensors}.
\begin{enumerate}
  \item Giving $\mathcal{Q}$ is equivalent to specifying a finite locally free $A$-module $\mathcal{L}$ and tensors $\{s_{\alpha,\mathcal{L}}\}\subset \mathcal{L}^\otimes$ such that, for some $p$-completely \'etale and faithfully flat map $R\to\widetilde{R}$ with corresponding lift $A\to \widetilde{A}$, there is an isomorphism 
\[
\eta\colon \widetilde{A}\otimes_{\Int_p}\Lambda\isomto \widetilde{A} \otimes_{A}\mathcal{L}
\]
carrying $\{1\otimes s_{\alpha}\}$ to $\{1\otimes s_{\alpha,\mathcal{L}}\}$. 
\item The reduction of structure group $Q^-\subset Q$ amounts to giving a filtration $\Fil^\bullet L$ on $L \defn R\otimes_{A}\mathcal{L}$ such that for some $R\to \widetilde{R}$ as above the isomorphism $\eta$ can be chosen so that the induced filtration on $\widetilde{R}\otimes_{\Int_p}\Lambda$ is split by $\mu$.

\item If we set $\mathcal{L}^m\subset \mathcal{L}$ to be the pre-image of $\Fil^mL$, the modification $\mathcal{Q}'$ now corresponds to the $A$-module
\[
  \mathcal{L}' = \sum_m E^{-m}\mathcal{L}^m\subset \mathcal{L}[\nicefrac{1}{E}]
\]
along with the same collection of tensors $\{s_{\alpha,\mathcal{L}}\}$, now viewed inside $\mathcal{L}^{',\otimes}$.
\item The isomorphism $\xi$ corresponds to an isomorphism of $A$-modules
\[
  \varphi^* \mathcal{L}' = \sum_m\varphi(E)^{-m}\varphi^* \mathcal{L}^m{\isomto} \mathcal{L}
\]
carrying $\{\varphi^*s_{\alpha, \mathcal{L}}\}$ to $\{s_{\alpha, \mathcal{L}}\}$.
\end{enumerate}
\end{remark}

\begin{proposition}
   [Mapping the Rees stack to the filtered prismatization]
\label{prop:mapping_rees_to_filt_prismatization}
There is a canonical map of formal stacks $\iota^{\smallN}_{\underline{A}}\colon\Rees(\Fil^\bullet_{I'} A)\to R^\smallN$ such that
\[
\iota^{\smallN}_{\underline{A}}\circ \tau = j_{\dR}\circ \iota_{(A,I)}\;;\;\iota^{\smallN}_{\underline{A}}\circ \sigma = j_{\mathrm{HT}}\circ \iota_{(A,I)}\;;\; \iota^{\smallN}_{\underline{A}}\circ x_{\underline{A}} = x^{\smallN}_{\dR}.
\]
Therefore, there is a natural functor
\[
  \BT{\infty}(R) \to \Wind{\underline{A}}{\infty}(R).
\]
\end{proposition}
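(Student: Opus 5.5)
The plan is to construct $\iota^{\smallN}_{\underline{A}}$ first in the universal case where the base is semiperfectoid, using the Rees-stack description of Remark~\ref{rem:nygaard_filtered_cohomology}, and then to extend to general Breuil--Kisin frames by descent.

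\emph{Step 1: a map of filtered rings.} Since $(A,I)$ is a prism with $A/I' = R$, its Frobenius twist $(A,I)$, where $I = \varphi^*I'$, is an object of $R_\Prism$ via $A/I\to A/I'=R$ (using $\varphi$). Remark~\ref{rem:prisms_and_prismatization} then gives $\iota_{(A,I)}\colon \Spf A\to R^\Prism$. We need a filtered enhancement of this map. For any $R\to S$ with $S$ semiperfectoid (in fact qrsp), the initial prism $\Prism_S$ receives a unique $\delta$-map $A\to \Prism_S$ compatible with $A\to R\to S$. The key claim is that this map carries $\Fil^i_{I'}A$ into $\Fil^i_\smallN\Prism_S$. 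By the displayed formula for the Nygaard filtration, this amounts to $\varphi(I'^i)\subset I_S^i$. That holds because $\varphi(I')A = I$ maps into $I_S$; indeed $A\to\Prism_S$ is a map of prisms $(A,I)\to(\Prism_S,I_S)$. We therefore obtain a filtered map $\Fil^\bullet_{I'}A\to \Fil^\bullet_\smallN\Prism_S$, hence $\Rees(\Fil^\bullet_\smallN\Prism_S)\to\Rees(\Fil^\bullet_{I'}A)$. This goes in the wrong direction to produce a map into $R^\smallN$ directly, so we use it differently: a $B$-point of $\Rees(\Fil^\bullet_{I'}A)$ should be sent to the point of $R^\smallN$ obtained by ``base change of filtered Cartier--Witt divisors'' along the filtered map $\Fil^\bullet_{I'}A\to \Fil^\bullet_{\smallN}W(B)$ arising from the $\delta$-structure.

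\emph{Step 2: construction via the moduli description.} Concretely, I would use the description of $R^\smallN$ as classifying filtered Cartier--Witt divisors (admissible $W$-modules, in Drinfeld's or Bhatt's sense). A $B$-point of $\Rees(\Fil^\bullet_{I'}A)$ consists of a line bundle $L$ with cosection $t\colon L\to B$, together with a compatible map from the Rees algebra $\bigoplus_i I'^i t^{-i}$ to $\mathrm{Sym}(L^{\vee})$-data. Pushing this along the Witt-vector lift $A\to W(B)$ and the Frobenius produces the required filtered Cartier--Witt divisor, with the structure map to $R$ coming from $A\to R$. Verifying the three identities is then formal. Over $\Gm/\Gm$ the filtration is forgotten and we recover $\iota_{(A,I)}$ composed with $j_{\dR}$. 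The map $\sigma$ is built from the filtered Frobenius $\Fil^\bullet_{I'}A\to\Fil^\bullet_{I,\pm}A$, and this matches $j_{\mathrm{HT}}$, which is built from the filtered Frobenius on $\Fil_\smallN$. Finally, $x_{\underline{A}}$ comes from $\Fil_{I'}A\to\Fil_{\mathrm{triv}}R$, matching Remark~\ref{rem:de_rham_point}. I expect this step to be the main obstacle: matching $\sigma$ with $j_{\mathrm{HT}}$ requires knowing the precise normalisations of $j_{\mathrm{HT}}$. If needed, I would check it only after pulling back to qrsp $S$ via Proposition~\ref{prop:nygaard_qsynt_descent} and Remark~\ref{rem:quasisyntomic_qrsp}, where both sides are maps of Rees stacks of explicit filtered rings and the identity reduces to commutativity of $\varphi$ with $A\to\Prism_S$.

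\emph{Step 3: the functor.} Given a $(\mathcal{G},\mu)$-aperture $\mathfrak{Q}$ on $R^{\mathrm{syn}}$, pull it back to $R^\smallN$ and then along $\iota^\smallN_{\underline{A}}$ to obtain $\mathcal{Q}_{\mathrm{Rees}}$. This torsor is bounded by $\mu$ because $\iota^\smallN_{\underline{A}}\circ x_{\underline{A}}=x^\smallN_{\dR}$. Since $\mathfrak{Q}$ comes from $R^{\mathrm{syn}}$, which is the coequalizer of $j_{\dR}$ and $j_{\mathrm{HT}}$, there is a canonical identification $j_{\mathrm{HT}}^*\mathfrak{Q}\simeq j_{\dR}^*\mathfrak{Q}$. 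Pulling this back along $\iota_{(A,I)}$ and applying the first two identities yields $\alpha\colon\sigma^*\mathcal{Q}_{\mathrm{Rees}}\isomto\tau^*\mathcal{Q}_{\mathrm{Rees}}$. The construction is functorial, which gives the functor $\BT{\infty}(R)\to\Wind{\underline{A}}{\infty}(R)$ of Definition~\ref{def:windows_bk_frames}.
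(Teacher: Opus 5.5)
Your Step 3 is correct. Given $\iota^{\smallN}_{\underline{A}}$ and the three identifications, boundedness by $\mu$ follows from $\iota^{\smallN}_{\underline{A}}\circ x_{\underline{A}}\simeq x^{\smallN}_{\dR}$, and $\alpha$ comes from $R^{\mathrm{syn}}$ being the coequalizer of $j_{\dR}$ and $j_{\mathrm{HT}}$; that is all the ``therefore'' requires. The gap is in the construction of $\iota^{\smallN}_{\underline{A}}$ itself, which is the actual content of the proposition (the paper imports it from \cite[Example 6.10.5]{gmm}).

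Step 1 rests on a false claim. Initiality of $(\Prism_S,I_S)$ gives maps \emph{out of} $\Prism_S$ into prisms over $S$; it gives no $\delta$-maps $A\to\Prism_S$. A $\delta$-lift of $A\to R\to S$ in general neither exists nor is unique. Already for $A=\Sig_{\Reg{K}}$ with $\delta(u)=0$ and $S$ perfectoid, such a lift amounts to choosing compatible $p$-power roots of $\pi$ in $S$. Initiality runs the other way: when $R$ itself is qrsp you get $\Prism_R\to A$. That map, however, carries $\Fil^i_\smallN\Prism_R$ only into $\varphi^{-1}(I^i)\supseteq I'^i$, so even then you do not directly get $\Rees(\Fil^\bullet_{I'}A)\to\Rees(\Fil^\bullet_\smallN\Prism_R)$.

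Step 2 is where the construction should happen, and it is only asserted. There are three problems:
\begin{itemize}
\item A $B$-point of $\Rees(\Fil^\bullet_{I'}A)$ is not determined by a ring map $A\to B$; over $t=0$ it is a graded map out of $\gr^\bullet_{I'}A$ into powers of $L$.
\item The filtration ``$\Fil^\bullet_\smallN W(B)$'' along which you propose to base change is not defined.
\item ``Pushing along the Witt vector lift and Frobenius'' names, but does not construct, the filtered Cartier--Witt divisor (admissible $W$-module with its map to $W$) that must be written down over the whole Rees stack.
\end{itemize}
The behaviour of that object over $t=0$ is exactly what makes $\iota^{\smallN}_{\underline{A}}\circ\sigma\simeq j_{\mathrm{HT}}\circ\iota_{(A,I)}$ hold, so this identity is not formal.

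Your fallback does not repair this. Pulling back along $S^\smallN\to R^\smallN$ covers the \emph{target}, so it does not turn the source $\Rees(\Fil^\bullet_{I'}A)$ into the Rees stack of an explicit filtered ring. Moreover, the check you describe (commutation of $\varphi$ with $A\to\Prism_S$) uses the maps from Step 1, which do not exist.

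To close the gap you need either to write down the filtered Cartier--Witt divisor over $\Rees(\Fil^\bullet_{I'}A)$ and compute its pullbacks along $\tau$, $\sigma$ and $x_{\underline{A}}$, or to invoke a general construction of such maps for frames, as the paper does via \cite[Example 6.10.5]{gmm}.
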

\begin{proof}
   See~\cite[Example 6.10.5]{gmm}.
\end{proof}

The next result is a special case of~\cite[Lemma 9.2.3]{gmm}. 
\begin{proposition}
[Description for perfectoid rings]
  \label{prop:apertures_over_perfectoid_rings}
Suppose that $R$ is perfectoid. Then the functor 
\[
\BT{\infty}(R)\to\Wind{\underline{\Prism_R}}{\infty}(R)
\]
is an equivalence.
\end{proposition}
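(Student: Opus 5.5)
The plan is to identify both sides of the functor with $\mathcal{G}$-torsors over the same formal Rees stack. When $R$ is perfectoid, the filtered prismatization $R^{\smallN}$ is the formal Rees stack of the Nygaard filtration (Remark~\ref{rem:nygaard_filtered_cohomology}, extended to semiperfectoid rings by \cite[Theorem 6.11.7]{gmm}). So the first step is to check that for perfectoid $R$ the Nygaard filtration on $\Prism_R = \mathrm{A}_{\mathrm{inf}}(R)$ is exactly the $I'$-adic filtration, where $I' = \Fil^1_{\smallN}\Prism_R = \varphi^{-1}(\ker\theta)$. By the displayed formula, $\Fil^i_{\smallN}\Prism_R = \{x : \varphi(x)\in (\ker\theta)^i\}$. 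Since $\varphi$ is bijective on $\mathrm{A}_{\mathrm{inf}}(R)$ and $\ker\theta$ is principal, generated by a distinguished element $d$, this set equals $\varphi^{-1}(d)^i\Prism_R = (I')^i$. Hence $\iota^{\smallN}_{\underline{\Prism}_R}\colon \Rees(\Fil^\bullet_{I'}\Prism_R)\to R^{\smallN}$ from Proposition~\ref{prop:mapping_rees_to_filt_prismatization} is an isomorphism. Along this isomorphism, $\tau$ corresponds to $j_{\dR}$, $\sigma$ to $j_{\mathrm{HT}}$, and $x_{\underline{A}}$ to $x^{\smallN}_{\dR}$.

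Next, pass to syntomification. The stack $R^{\mathrm{syn}}$ is the coequalizer of $j_{\dR}$ and $j_{\mathrm{HT}}$. Also, $R^{\Prism}\simeq \Spf \Prism_R$ via $\iota_{(\Prism_R,I)}$, because $(\Prism_R, \ker\theta)$ is the initial prism and $R^{\Prism}$ is representable for perfectoid $R$. It follows that a $\mathcal{G}$-torsor on $R^{\mathrm{syn}}$ is the same as a $\mathcal{G}$-torsor $\mathcal{Q}_{\mathrm{Rees}}$ on $\Rees(\Fil^\bullet_{I'}\Prism_R)$ together with an isomorphism $\sigma^*\mathcal{Q}_{\mathrm{Rees}}\simeq \tau^*\mathcal{Q}_{\mathrm{Rees}}$. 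The boundedness condition on apertures is imposed after pullback along $x^{\smallN}_{\dR}$, so it matches Definition~\ref{def:mu_bounded_G-torsors} under the identification $x_{\underline{A}} \leftrightarrow x^{\smallN}_{\dR}$. This yields an equivalence $\BT{\infty}(R)\simeq \Wind{\underline{\Prism_R}}{\infty}(R)$, and by construction it is the functor of Proposition~\ref{prop:mapping_rees_to_filt_prismatization}.

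The main obstacle is that $\BT{\infty}(R)$ is defined as $\varprojlim_n$ of torsors on $R^{\mathrm{syn}}\otimes \Int/p^n\Int$, and it may a priori be an $\infty$-groupoid. Two points need care here. First, the coequalizer description must be valid for torsors in the derived or $p$-adic formal setting. Descending along the coequalizer is formal for sheaves of groupoids valued in $B\mathcal{G}$, so I would argue levelwise mod $p^n$ and then take the limit. Second, a compatible system of torsors on the $p$-adic formal Rees stack must algebraize to a torsor over $\Spf$ of the $(p,I')$-complete ring. For this I would use that $\mathcal{G}$ is smooth affine, so $\mathcal{G}$-torsors on $(p,I')$-complete affine formal schemes are classified by compatible systems of torsors on the truncations. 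I would also use that $R$ is perfectoid, hence classical, so both sides are $1$-groupoids (Remark~\ref{rem:bt_classicality}). If a direct argument is too fiddly, I would simply cite \cite[Lemma 9.2.3]{gmm}, of which this is a special case.
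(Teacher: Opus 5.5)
Your argument is correct and is essentially the paper's. The paper simply cites \cite[Lemma 9.2.3]{gmm}, and your unwinding is what that lemma amounts to here: for perfectoid $R$ the Nygaard filtration on $\Prism_R$ is the $I'$-adic filtration, so $\iota^{\smallN}_{\underline{\Prism}_R}$ and $\iota_{(\Prism_R,I)}$ are isomorphisms, and $\mathcal{G}$-torsors on the coequalizer $R^{\mathrm{syn}}$ are exactly $\mu$-bounded torsors on the Rees stack together with $\sigma^*\simeq\tau^*$. The worry in your last paragraph is mostly unnecessary: both sides are defined via torsors on $p$-adic formal stacks, so passing to $\varprojlim_n$ is formal and no algebraization step is needed.
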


\begin{remark}
  [Quotient description for perfectoid rings]
\label{rem:quotient_description_perfectoid}
Let $\xi\in \Prism_R$ be a generator for $\Fil^1_{{\smallN}}\Prism_R$.\footnote{Note that in other references, this is often denoted $\tilde{\xi}$.} Another way of phrasing the proposition, using Remark~\ref{rem:windows_alternate}, is that the restriction of the stack $\BT{\infty}$ to perfectoid $R$-algebras is the quotient $[L^+ \mathcal{G}/H_\mu]$ (computed in the $p$-completely \'etale topology) where $H_\mu$ acts on $L^+\mathcal{G}$ via the right action 
\begin{equation*}
g\cdot h = \tau(h)^{-1}g\varphi(\mathrm{int}(\mu(\xi))(h)).
\end{equation*}
Here, $\tau\colon H_\mu\to L^+ \mathcal{G}$ is the tautological map.
\end{remark}

\begin{remark}
[Quotient description for truncated apertures]
    \label{rem:quotient_description_perfectoid_truncated}
In fact a similar description holds for the $n$-truncated versions as well, which can be deduced by combining Remarks 5.5.5 and 5.5.6 and Lemma 9.2.3 of~\cite{gmm}. One now looks at the group
\[
H_\mu^{(n)}(R) = \mathcal{G}(\Prism_R/p^n\Prism_R)\times_{\mathcal{G}(R/{}^{\mathbb{L}}p^n)}\mathcal{P}^-_\mu(R/{}^{\mathbb{L}}p^n).
\]
This admits two maps 
\[
\tau,\sigma\colon H_\mu^{(n)}(R)\to \mathcal{G}(\Prism_R/p^n \Prism_R),
\]
where $\tau$ is simply the projection onto the first coordinate, and $\sigma$ is another map, which we will only describe here for $R$ $p$-torsion free. In this case, we have
\begin{align}\label{eqn:Hmu_n_torsion_free_desc}
 H_\mu^{(n)}(R) &= \mathcal{G}(\Prism_R/p^n\Prism_R)\times_{\mathcal{G}(R/p^nR)}\mathcal{P}^-_\mu(R/p^nR) = \mathcal{G}(\Prism/p^n\Prism_R)\cap \mu(\xi)^{-1} \mathcal{G}(\Prism/p^n\Prism_R)\mu(\xi) \subset \mathcal{G}(\Prism/p^n\Prism_R).
\end{align}
and $\sigma$ is the map $h\mapsto \varphi(\mu(\xi)h\mu(\xi)^{-1})$.

The restriction of $\BT{n}$ to perfectoid $\hat{\mathcal{O}}$-algebras is now given by the \'etale sheafification of $R\mapsto \mathcal{G}(\Prism_R/p^n\Prism_R)/H_{\mu}^{(n)}(R)$, where the action is given by
\[
H_\mu^{(n)}(R)\times \mathcal{G}(\Prism_R/p^n\Prism_R) \xrightarrow{(h,g)\mapsto \tau(h)^{-1}g\sigma(h)}\mathcal{G}(\Prism_R/p^n\Prism_R).
\]
\end{remark}

\subsection{Isocrystals with \texorpdfstring{$G$}{G}-structure}

We now recall the relationship between $(\mc{G},\mu)$-apertures and $F$-(iso)crystals with $\mc{G}$-structure.

\begin{remark}
    [$F$-isocrystals with $G$-structure]
\label{rem:F-isoc_G}
Set $G = \mathcal{G}_{\Rat_p}$ and let the assignment $LG\colon R\mapsto G\big(W(R)[\nicefrac{1}{p}]\big)$ on perfect $\Field_p$-algebras be the Witt vector loop group associated with $G$. Then we can consider the \'etale quotient 
\[
\mathrm{Isoc}_{G} \defn \big[LG/_{\mathrm{Ad}_\varphi} LG\big],
\]
associated with the $\varphi$-twisted adjoint action
\begin{align*}
    \mr{Ad}_\varphi\colon LG \times LG &\to LG\\
    (g,h)&\mapsto h^{-1}g\varphi(h).
\end{align*}
For any perfect field $\kappa$, $\mathrm{Isoc}_G(\kappa)$ is isomorphic to the groupoid of $F$-isocrystals over $\kappa$ with $G$-structure, as in~\cite{kottwitz:isoc}.
\end{remark}

\begin{construction}
[Apertures to isocrystals]
\label{const:newton_map}
By Remark~\ref{rem:quotient_description_perfectoid}, when restricted to perfect $\Field_p$-algebras, we have
\[
  \BT{\infty}(R) = \big[L^+\mathcal{G}/H_\mu\big](R),
\]
where $H_\mu$ is the \'etale sheaf on perfect $\Field_p$-algebras given by $H_\mu = L^+ \mathcal{G}\times_{\mathcal{G}}\mathcal{P}^-_\mu$. The action is via
\begin{align*}
    L^+\mathcal{G} \times H_\mu &\to L^+\mathcal{G}\\
    (g,h)&\mapsto h^{-1}g\varphi(\mu(p)h\mu(p)^{-1}).
\end{align*}

The map $L^+ \mathcal{G}\xrightarrow{g\mapsto g\varphi(\mu(p))} LG$ now descends to a functorial map
\[
  \BT{\infty}(R)\to \mathrm{Isoc}_{G}(R)
\]
for perfect $\Field_p$-algebras $R$.
\end{construction}

\begin{remark}\label{rem:field-desciption-of-BT-mu}
When $R = \kappa$ is an algebraically closed field, then the quotient description of $\BT{\infty}(\kappa)$ appearing above shows that we have 
\[
\BT{\infty}(\kappa) = \big[\mathcal{G}(W(\kappa))/H_\mu(\kappa)\big] \xrightarrow[\sim]{g\mapsto g\varphi(\mu(p))} \big[\mathcal{G}(W(\kappa))\varphi(\mu(p))\mathcal{G}(W(\kappa))/_{\mathrm{Ad}_\varphi}\mathcal{G}(W(\kappa))\big].
\]
The set of isomorphism classes on the right hand side is the set $C(\mathcal{G},\{\varphi(\mu)\})$ from \cite[\S8.2]{ViehmannWedhornPEL}, which we will denote by $C_\kappa(\mathcal{G},\{\varphi(\mu)\})$ to keep track of the dependence on $\kappa$. The set of isomorphism classes in $\mathrm{Isoc}_{G}(\kappa)$ can be identified with Kottwitz's set $B(G)$, which is independent of the choice of algebraically closed field $\kappa$. Within $B(G)$, we have the \emph{$\mu$-admissible} subset $B(G,\{\mu\})$, which depends only on the Galois orbit of the conjugacy class $\{\mu\}$. By~\cite[Theorem 4.2]{rapoport_richartz}, the natural map
\[
C_\kappa(\mathcal{G},\{\varphi(\mu)\})\to B(G)
\]
induced by inclusion $G(W(\kappa))\subset G(W(\kappa)[\nicefrac{1}{p}])$ lands inside $B(G,\{\mu\}) = B(G,\{\varphi(\mu)\})$.
\end{remark}

\begin{lemma}
  \label{lem:kottwitz_map}
For an algebraically closed field $\kappa$, the map $\BT{\infty}(\kappa)\to B(G)$ induced from \emph{Construction~\ref{const:newton_map}} has image $B(G,\{\mu\}) = B(G,\{\varphi(\mu)\})$.
\end{lemma}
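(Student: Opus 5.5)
The plan is to split the statement into the inclusion of the image in $B(G,\{\mu\})$ and the reverse surjectivity onto $B(G,\{\mu\})$.

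\textbf{Inclusion.} By Remark~\ref{rem:field-desciption-of-BT-mu}, isomorphism classes in $\BT{\infty}(\kappa)$ are identified with $C_\kappa(\mathcal{G},\{\varphi(\mu)\})$. The map of Construction~\ref{const:newton_map} is then the map $C_\kappa(\mathcal{G},\{\varphi(\mu)\})\to B(G)$ induced by $G(W(\kappa))\subset G(W(\kappa)[\nicefrac{1}{p}])$. Before invoking Rapoport--Richartz~\cite[Theorem 4.2]{rapoport_richartz} for this inclusion, I would check that the descent of $g\mapsto g\varphi(\mu(p))$ is compatible with the two actions. This is a one-line computation: for $h\in H_\mu(\kappa)$,
\[
h^{-1}g\varphi(\mu(p)h\mu(p)^{-1})\varphi(\mu(p)) = h^{-1}\big(g\varphi(\mu(p))\big)\varphi(h).
\]
So the map is $\varphi$-conjugation by $h$, and the image lies in $B(G,\{\varphi(\mu)\})$. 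The remaining point here is $B(G,\{\mu\})=B(G,\{\varphi(\mu)\})$. This holds because $B(G,\{\mu\})$ depends only on the Galois orbit of $\{\mu\}$, and $\varphi(\mu)$ is a Galois conjugate since $\mu$ is defined over the unramified ring $\hat{\mathcal{O}}$.

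\textbf{Surjectivity.} Here I must show that every $[b]\in B(G,\{\mu\})$ has a representative in $\mathcal{G}(W(\kappa))\varphi(\mu(p))\mathcal{G}(W(\kappa))$. The plan is to reduce to the Cartan-decomposition statement that $[b]\in B(G,\{\mu\})$ meets the double coset $K\mu'(p)K$, with $K=\mathcal{G}(W(\kappa))$ hyperspecial and $\mu'=\varphi(\mu)$ defined over $W(\kappa)$. This is the non-emptiness of affine Deligne--Lusztig varieties $X_{\mu'}(b)$, known for hyperspecial (indeed parahoric) level with $\mu$ minuscule. I would cite Gashi / He's theorem ($X_\mu(b)\neq\emptyset$ iff $[b]\in B(G,\{\mu\})$), or Rapoport--Richartz and Lucarelli for the unramified case. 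Since $\mathcal{G}$ is reductive, the $1$-bounded cocharacter $\mu$ is minuscule.

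\textbf{Main obstacle.} The surjectivity step carries the real content. The cleanest route is to cite He's (or Kottwitz--Rapoport/Gashi's) non-emptiness theorem for ADLVs at hyperspecial level. In Setup~\ref{setup:g_mu}, $\mathcal{G}$ is only smooth connected affine, so the lemma must be read in the reductive case where $B(G,\{\mu\})$ is defined, and I would note this in the proof. Translating the ADLV condition $g^{-1}b\varphi(g)\in K\mu'(p)K$ back to the quotient description is then immediate from the displayed computation.
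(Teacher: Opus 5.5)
Your proposal is correct and follows the paper's proof. The inclusion comes from the quotient description in Remark~\ref{rem:field-desciption-of-BT-mu} together with \cite[Theorem 4.2]{rapoport_richartz}. Surjectivity is the non-emptiness statement that every $[b]\in B(G,\{\varphi(\mu)\})$ is $\varphi$-conjugate into $\mathcal{G}(W(\kappa))\varphi(\mu(p))\mathcal{G}(W(\kappa))$, which the paper takes from Wintenberger \cite[Corollaire 3]{Wintenberger2005-be} and Gashi \cite[Theorem 5.2]{MR2778454}. One small correction: Rapoport--Richartz only supply the inclusion (Mazur inequality) direction, so for surjectivity cite Wintenberger, Gashi or He rather than them.
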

\begin{proof}
 Given the previous remark, we only need to know that every $[b]\in B(G,\{\varphi(\mu)\})$ is in the image of $C_\kappa(\mathcal{G},\{\varphi(\mu)\})$. This follows from a result of Wintenberger~\cite[Corollaire 3]{Wintenberger2005-be} (see also the more general non-emptiness result of Gashi~\cite[Theorem 5.2]{MR2778454}): There exists elements $g\in G(W(\kappa)[\nicefrac{1}{p}])$ and $h_1,h_2\in G(W(\kappa))$ such that $g^{-1}b\varphi(g) = h_1\varphi(\mu(p))h_2$.
\end{proof}

\subsection{The de Rham realization}
\label{sub:the_de_rham_realization}

\begin{construction}
  [The Hodge-filtered de Rham realization]
Suppose that we have $\mathfrak{Q}\in \BT{\infty}(R)$. Pulling back along the filtered de Rham point $x^{\smallN}_{\dR}$ from Remark~\ref{rem:de_rham_point} gives us a $\mathcal{G}$-bundle $\Fil^\bullet_{\mathrm{Hdg}}T_{\dR}(\mathfrak{Q})$ over $\Aff^1/\Gm\times \Spf R$. 
\end{construction}

\begin{remark}
\label{rem:hodge-filtered_de_rham}
It follows from the definition of $\BT{\infty}(R)$ and \cite[Remark 4.9.7]{gmm} that $\Fil^\bullet_{\mathrm{Hdg}}T_{\dR}(\mathfrak{Q})$ is \'etale locally on $\Spf R$ isomorphic to the pullback to $\bb{A}^1/\bb{G}_m\times\Spf R$ of the $\mathcal{G}$-torsor $\mathcal{Q}_\mu$ over $B\Gm\times\Spf \hat{\mathcal{O}}$ from Remark~\ref{rem:torsor_Qmu}. Moreover, \emph{loc.\@ cit.\@} also shows that giving such a $\mathcal{G}$-bundle over $\Aff^1/\Gm\times\Spf R$ is equivalent to giving a $\mathcal{P}^-_\mu$-torsor over $R$. That is, we have constructed a canonical map $\BT{\infty}\to B \mathcal{P}^-_\mu$. This is precisely the one showing up in the deformation theory explained in Theorem~\ref{thm:gmm}.
\end{remark}

\begin{remark}
 [Versality and a local model diagram]
\label{rem:versality_bt_n}
Here is a reformulation of the deformation theory in Theorem~\ref{thm:gmm} that will be useful. Let $\mathrm{Gr}_{\mu}$ be the Grassmannian scheme over $\hat{\mathcal{O}}$ associated with $\mu$: It depends only on the conjugacy class of $\mu$ and can be presented as the fppf quotient $\mathcal{G}_{\hat{\mathcal{O}}}/\mathcal{P}^-_\mu$ for our choice of representative $\mu$. We can view the classifying stack $B \mathcal{P}^-_\mu$ from this perspective as the fppf quotient $\mathrm{Gr}_{\mu}/\mathcal{G}_{\hat{\mathcal{O}}}$. Now, $\Fil^\bullet_{\mathrm{Hdg}}T_{\dR}(\mathfrak{Q})$ can be viewed as arising from a canonical map $\BT{\infty}\to \mathrm{Gr}_{\mu}/\mathcal{G}_{\hat{\mathcal{O}}}$. Then Grothendieck--Messing theory tells us that the resulting map on cotangent complexes of $p$-adic formal stacks
\[
\mathbb{L}^{\wedge}_{\mathrm{Gr}_{\mu}/\mathcal{G}_{\hat{\mathcal{O}}}}\vert_{\BT{\infty}}\to \mathbb{L}^\wedge_{\BT{\infty}}
\]
factors through an isomorphism
\[
\mathbb{L}^{\wedge}_{( \mathrm{Gr}_{\mu}/\mathcal{G}_{\hat{\mathcal{O}}} ) / B \mathcal{G}_{\hat{\mathcal{O}}}}\vert_{\BT{\infty}}\isomto\mathbb{L}^\wedge_{\BT{\infty}}.
\]
In particular, this tells us that there is a canonical isomorphism
\[
\mathbb{L}^\wedge_{\BT{\infty}/(\mathrm{Gr}_\mu/\mathcal{G}_{\hat{\mathcal{O}}})}\isomto \mathbb{L}^\wedge_{B\mathcal{G}_{\widehat{\mathcal{O}}}}[1]\vert_{\BT{\infty}}.
\]
So, $\BT{\infty}$ is formally smooth over $\mathrm{Gr}_\mu/\mathcal{G}_{\hat{\mathcal{O}}}$. Setting $\widetilde{\BT{\infty}} = \BT{\infty}\times_{\mathrm{Gr}_\mu/\mathcal{G}_{\hat{\mc{O}}}}\mathrm{Gr}_\mu$, we obtain a \emph{local model diagram}
\[\begin{tikzcd}[cramped,column sep=2.25em]
	& {\widetilde{\BT{\infty}}} \\
	{\BT{\infty}} && {\mathrm{Gr}_\mu}
	\arrow[from=1-2, to=2-1]
	\arrow[from=1-2, to=2-3]
\end{tikzcd}\]
where the left arrow is a $\mathcal{G}_{\hat{\mc{O}}}$-torsor and the right arrow is formally smooth and $\mc{G}_{\hat{\mc{O
}}}$-equivariant with relative cotangent complex a vector bundle of rank $\dim \mathcal{G}$.
\end{remark}

\subsection{Realization in \texorpdfstring{$F$}{F}-zips with additional structure}
\label{sub:Fzips}

We now quickly review the relationship between apertures and $F$-zips as in \cite{Pink2015-ye}.

\begin{notation}
    Let $\mathcal{P}^+_{\varphi(\mu)}\subset \mathcal{G}$ be the smooth subgroup scheme whose Lie algebra is identified with the sum of the weight-$i$ spaces in $\Lie(\mc{G})_{\hat{\mathcal{O}}}$ for $\varphi(\mu)$ with $i\geqslant 0$. Let $\mathcal{U}^-_\mu$ (resp.\@ $\mathcal{U}^+_{\varphi(\mu)}$) be the smooth subgroup scheme of $\mathcal{P}^-_\mu$ (resp.\@ $\mathcal{P}^+_{\varphi(\mu)}$) whose Lie algebra is the sum of the weight-$i$ spaces for $\mu$ (resp.\@ for $\varphi(\mu)$) with $i<0$ (resp.\@ $i>0$). Set $\mathcal{M}_\mu = \mathcal{P}^-_\mu/\mathcal{U}^-_\mu$ and $\mathcal{M}_{\varphi(\mu)}= \mathcal{P}^+_{\varphi(\mu)}/\mathcal{U}^+_{\varphi(\mu)}$. We have a canonical isomorphism of group schemes $\varphi^* \mathcal{M}_{\mu}{\isomto}\mathcal{M}_{\varphi(\mu)}$. 
\end{notation}

\begin{definition}
    [$\mathcal{G}$-zips]
Let $R$ be a $k$-algebra where $k$ is the residue field of $\hat{\mathcal{O}}$.  An \defnword{$F$-zip with $\mathcal{G}$-structure} or \defnword{$\mathcal{G}$-zip} of type $\mu$ over $R$ (see~\cite[Definition 1.4]{Pink2015-ye}) is a tuple
\[
(\mathcal{Q},\mathcal{Q}^-,\mathcal{Q}^+,\iota)
\]
where $\mathcal{Q}$ is a $\mathcal{G}$-torsor over $R$, $\mathcal{Q}^-\subset \mathcal{Q}$ is a reduction of structure to a $\mathcal{P}^-_\mu$-torsor, $\mathcal{Q}^+\subset \mathcal{Q}$ is a reduction of structure to a $\mathcal{P}^+_{\varphi(\mu)}$-torsor and
\[
\alpha\colon \varphi^*(\mathcal{Q}^-/\mathcal{U}^-_\mu){\isomto}\mathcal{Q}^+/\mathcal{U}^+_{\varphi(\mu)}
\]
is an isomorphism of $\mathcal{M}_{\varphi(\mu)}$-torsors. We denote the groupoid of such objects by $\mathcal{G}\text{-}\mathrm{zip}_{\mu}(R)$. 
\end{definition}

\begin{remark}
    By \cite[Theorem 1.5]{Pink2015-ye}, $R\mapsto \mathcal{G}\text{-}\mathrm{zip}_{\mu}(R)$ is a smooth $0$-dimensional Artin stack over $k$.
\end{remark}

\begin{remark}
    [Stratification of $ \mathcal{G}\text{-}\mathrm{zip}_{\mu}$]
\label{rem:stratification_G-zips}
Suppose that $\mathcal{G}$ is \emph{reductive}. Then by~\cite[Theorem 3.20]{Pink2015-ye}, the stack $ \mathcal{G}\text{-}\mathrm{zip}_{\mu}$ has an underlying finite $T_0$-topological space corresponding to a certain subset ${}^J W\subset W$ of the Weyl group of $\mathcal{G}$ endowed with a refinement of the Bruhat ordering; see the explanation in~\cite[\S5.3]{wortmann:ordinary}.\footnote{Note that what Wortmann denotes by $\mu$ corresponds to our $\varphi(\mu)$ here.} This yields a stratification of $ \mathcal{G}\text{-}\mathrm{zip}_{\mu}$---explained in \S 3.6 of \emph{loc.\@ cit.\@}---where each stratum is a smooth, locally closed substack. 
\end{remark}

The next result follows from the proof of~\cite[Theorem 9.3.2]{gmm}.
\begin{proposition}
    [Apertures to $\mathcal{G}$-zips]
\label{prop:apertures_to_gzips}
There is a natural smooth surjective map 
\[
\BT{1}\otimes_{\hat{\mathcal{O}}}k\to \mathcal{G}\text{-}\mathrm{zip}_{\mu}
\]
of $0$-dimensional Artin stacks over $k$ that is a gerbe for a connected $p$-torsion finite flat commutative group scheme. Moreover, the $\mathcal{P}^-_\mu$-torsor over $\BT{1}\otimes_{\hat{\mathcal{O}}}k$ pulled back from the universal such one on $\mc{G}\text{-}\mr{zip}_\mu$ agrees with the one described in \emph{Remark~\ref{rem:hodge-filtered_de_rham}}. 
\end{proposition}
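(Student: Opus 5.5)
We will construct the map on perfectoid (or quasisyntomic) covers using the quotient descriptions, and then argue by descent and deformation theory. By Remark~\ref{rem:quotient_description_perfectoid_truncated} with $n=1$, for $R$ a perfect $k$-algebra we have $\Prism_R/p = R$ with $\xi = p$, so $p=0$. The relevant group is
\[
H^{(1)}_\mu(R) = \mathcal{G}(R)\times_{\mathcal{G}(R)}\mathcal{P}^-_\mu(R) = \mathcal{P}^-_\mu(R).
\]
Here we read the derived reduction $R/{}^{\mathbb{L}}p$ through its classical truncation, following Remark~\ref{rem:bt_classicality}. The map $\sigma$ then becomes ``conjugate by $\mu(p)$ and apply Frobenius''. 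Taken literally this is degenerate, so it should be read through the factorization $\mathcal{P}^-_\mu\to \mathcal{M}_\mu\xrightarrow{\varphi}\mathcal{M}_{\varphi(\mu)}\subset \mathcal{P}^+_{\varphi(\mu)}$. Indeed, conjugation by $\mu(p)$ kills $\mathcal{U}^-_\mu$ modulo $p$ and preserves the Levi. So on perfect rings, $\BT{1}\otimes k$ is the quotient $[\mathcal{G}/\mathcal{P}^-_\mu]$ under the action $g\cdot h = h^{-1} g\,\sigma(h)$.

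Pink--Wedhorn--Ziegler~\cite{Pink2015-ye} present $\mathcal{G}\text{-}\mathrm{zip}_\mu$ as $[E_{\mu}\backslash \mathcal{G}_k]$. Here $E_\mu\subset \mathcal{P}^-_\mu\times\mathcal{P}^+_{\varphi(\mu)}$ is the zip group of pairs $(p_-,p_+)$ with $\varphi(\bar p_-) = \bar p_+$ in $\mathcal{M}_{\varphi(\mu)}$. The map $h\mapsto (h,\sigma(h))$ embeds $\mathcal{P}^-_\mu$ into $E_\mu$. Writing $p_+ = \sigma(h)u$ with $u$ unipotent, the cokernel of this embedding is measured by $\mathcal{U}^+_{\varphi(\mu)}$.

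The real work is to do this over arbitrary $k$-algebras, where $R^{\mathrm{syn}}\otimes\Field_p$ is not a single Rees stack. We will follow the proof of Theorem~\ref{thm:gmm} as the statement indicates. In \cite[\S 9]{gmm}, $\BT{1}\otimes k$ is built via the filtered de Rham and conjugate realizations of a $\mathcal{G}$-torsor on $R^{\mathrm{syn}}\otimes\Field_p$. The de Rham realization gives $\mathcal{Q}$ with a $\mathcal{P}^-_\mu$-reduction, namely the Hodge filtration of Remark~\ref{rem:hodge-filtered_de_rham}. The Hodge--Tate locus gives the $\mathcal{P}^+_{\varphi(\mu)}$-reduction, the conjugate filtration. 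The gluing of $j_{\dR}$ with $j_{\mathrm{HT}}$ mod $p$ gives the Cartier isomorphism on associated gradeds, and this is exactly $\alpha$.

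This produces the functor to $\mathcal{G}\text{-}\mathrm{zip}_\mu$ on all $k$-algebras. Compatibility with Remark~\ref{rem:hodge-filtered_de_rham} then holds by construction, since both torsors are the pullback along $x^{\smallN}_{\dR}$.

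For the remaining properties we pass to perfect test rings via the quotient presentations. Surjectivity can be checked on algebraically closed fields, where every $\mathcal{G}$-zip comes from the presentation above. The difference between $E_\mu$ and the image of $\mathcal{P}^-_\mu$ is a non-reduced extension: in the non-perfect setting, the fiber of the map is the stack of lifts of the extension data $\mathcal{U}^+_{\varphi(\mu)}$ against the Frobenius kernel. We expect this to be a gerbe banded by a Frobenius-kernel-type group scheme, such as $\ker(F)$ on a vector group built from $\Lie\,\mathcal{U}^+_{\varphi(\mu)}$ twisted by the universal torsor. Such a group is connected, $p$-torsion, finite flat and commutative because $\mathcal{U}^+$ is abelian when $\mu$ is $1$-bounded.

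The main obstacle is identifying this band precisely and proving that the map is a gerbe over non-perfect bases, because the perfect-ring computation only sees reduced points. We would first compute the relative cotangent complex, using Remark~\ref{rem:versality_bt_n} mod $p$ together with smoothness of both stacks (Theorem~\ref{thm:gmm} and Pink--Wedhorn--Ziegler), to get flatness, and then compute automorphisms and lifts over Artinian $k$-algebras. Smoothness of the map then follows because a gerbe for a flat finite group over a smooth target, with smooth source, is smooth as a morphism: the relative cotangent complex is concentrated in degree $1$ and is a vector bundle.
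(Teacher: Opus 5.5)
The paper gives no argument of its own here; it imports the statement from the proof of \cite[Theorem 9.3.2]{gmm}. Your sketch therefore has to stand on its own, and it fails at two points.

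First, the perfect-ring computation is wrong. $H^{(1)}_\mu(R)$ is a fiber product over $\mathcal{G}(R/{}^{\mathbb{L}}p)$. For an $\mathbb{F}_p$-algebra $R$ one has $R/{}^{\mathbb{L}}p\simeq R\oplus R[1]$, so $\mathcal{G}(R/{}^{\mathbb{L}}p)$ has $\pi_1=\mathfrak{g}\otimes R$. Hence $H^{(1)}_\mu(R)$ is not $\mathcal{P}^-_\mu(R)$ but an extension of $\mathcal{P}^-_\mu(R)$ by $(\mathfrak{g}/\Lie\mathcal{P}^-_\mu)\otimes R=\mathfrak{g}_1\otimes R$. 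This is the mod-$p$ shadow of the factors $\exp(\xi X)$ in Remark~\ref{rem:Hnmu_description}. Remark~\ref{rem:bt_classicality} concerns the classical truncation of $R^{\mathrm{syn}}$; it does not let you replace $R/{}^{\mathbb{L}}p$ by $R$ inside this group. On the extra factor, $\sigma$ is the reduction of $\exp(\xi X)\mapsto\varphi(\exp X)$, with values in $\mathcal{U}^+_{\varphi(\mu)}(R)$. With this, $(\tau,\sigma)$ maps $H^{(1)}_\mu(R)$ isomorphically onto your zip group $E_\mu(R)$, so on perfect rings the map to $\mathcal{G}\text{-}\mathrm{zip}_\mu$ is an equivalence. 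Your quotient $[\mathcal{G}/\mathcal{P}^-_\mu]$ instead has dimension $\dim\mathcal{G}-\dim\mathcal{P}^-_\mu$, which is positive as soon as $\mathfrak{g}_1\neq 0$; this contradicts the $0$-dimensionality in Theorem~\ref{thm:gmm}. The ``cokernel $\mathcal{U}^+_{\varphi(\mu)}$'' you propose as the source of the gerbe is an artifact of this error, and it is smooth, not non-reduced.

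Second, and more seriously, the heart of the proposition is only stated as an expectation: that the map is a gerbe for a connected $p$-torsion finite flat commutative group scheme. Such a band is infinitesimal, so it is invisible on perfect test rings, where the map is an equivalence; no manipulation of the (perfectoid-only) quotient presentations can produce it. Computing automorphisms and lifts over Artinian $k$-algebras only gives formal information at points. It does not give the flat-local essential surjectivity, or the finite flat relative inertia over all of $\mathcal{G}\text{-}\mathrm{zip}_\mu$, that the gerbe assertion requires.

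Your candidate band is also wrong. $\ker F$ on a twisted vector group is of $\alpha_p$-type at every point. But for $(\GL_2,\mu_1)$ (Example~\ref{ex:gl_n_truncated_bts}), the automorphism group scheme of the ordinary point $\mu_p\times\mathbb{Z}/p$ contains $\underline{\Hom}(\mathbb{Z}/p,\mu_p)=\mu_p$. This $\mu_p$ must map trivially to the \'etale automorphism group scheme of the ordinary $F$-zip, so the band there is $\mu_p$.

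What is missing is an argument over arbitrary non-perfect $k$-algebras that controls $\mathcal{G}$-torsors on $R^{\mathrm{syn}}\otimes\mathbb{F}_p$ relative to the zip data they induce; this is where $1$-boundedness enters. That is exactly the input the paper takes from the proof of \cite[Theorem 9.3.2]{gmm}.

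A smaller point: for an infinitesimal band $A$, the complex $\mathbb{L}_{BA}$ has cohomology in degrees $0$ and $1$, so it is not concentrated in degree $1$. Smoothness holds instead because $A$ embeds in a smooth group with smooth quotient, e.g.\ $\mu_p\subset\Gm\xrightarrow{p}\Gm$.
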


\begin{remark}
    [Set-theoretic version]
\label{rem:settheoretic_EO}
A particular consequence of the proposition is that, for any algebraically closed field $\kappa$ over $k$, the map
\[
\BT{1}(\kappa)\to  \mathcal{G}\text{-}\mathrm{zip}_{\mu}(\kappa)
\]
is an isomorphism of groupoids. 
 
If $\mathcal{G}$ is reductive, then, for any algebraically closed field $\kappa$ over $k$, looking at isomorphism classes of $\kappa$-points for the above map gives a map $\tilde{\zeta}\colon C_\kappa(\mathcal{G},\{\varphi(\mu)\})\to {}^JW$, which is described explicitly in~\cite[Proposition 6.7]{wortmann:ordinary}.
\end{remark}

\subsection{The \texorpdfstring{$\mu$}{mu}-ordinary locus}
\label{sub:the_ordinary_locus}

In this subsection, we will suppose that $\mathcal{G}$ is reductive. We'll now look at the $\mu$-ordinary locus of $\BT{n}$. Throughout, $\kappa$ will be a perfect field over $k$.

\begin{definition}
    [The $\mu$-ordinary stratum]
\label{defn:mu_ord_stratum}
Recall the map $\tilde{\zeta}$ from Remark~\ref{rem:settheoretic_EO}. The element $\tilde{\zeta}(\varphi(\mu(p)))\in {}^JW$ corresponds to the unique open stratum of $\mathcal{G}\text{-}\mathrm{zip}_{\mu}$; see for instance the proof of~\cite[Theorem 6.10]{wortmann:ordinary}. Write $\mathcal{G}\text{-}\mathrm{zip}^{\mathrm{ord}}_{\mu}$ for this open stratum. For $n\in \Nat\cup\{\infty\}$, we will write $\BT[\mathcal{G},\mu,\mathrm{ord}]{n}\subset \BT{n}$ for the unique open formal substack characterized by the property that its mod-$p$ fiber is
\[
\BT[\mathcal{G},\mu,\mathrm{ord}]{n}\otimes k = (\BT{n}\otimes k)\times_{\mathcal{G}\text{-}\mathrm{zip}_{\mu}}\mathcal{G}\text{-}\mathrm{zip}^{\mathrm{ord}}_{\mu}.
\]
\end{definition}

\begin{setup}
  Choose a maximal torus $\mathcal{T}\subset \mathcal{G}$ contained in a Borel subgroup of $\mathcal{G}$. We can assume that $\mu$ has been chosen within its conjugacy class to factor through $\mathcal{T}_{\hat{\mathcal{O}}}$.\footnote{By~\cite[Lemma 1.1.3]{kottwitz:twisted}, $\mu$ can be conjugated to a cocharacter factoring through $\mathcal{T}_{\hat{\mathcal{O}}[\nicefrac{1}{p}]}$ dominant with respect to the choice of Borel. This will necessarily extend to a cocharacter of $\mathcal{T}_{\hat{\mathcal{O}}}$.} Suppose that $[\hat{\mathcal{O}}[\nicefrac{1}{p}]:\Rat_p] = d$, so that the cocharacter $\nu = \sum_{i=0}^{d-1}\varphi^i(\mu)$ of $\mathcal{T}$ is defined over $\Int_p$. Let $\mathcal{P}^+_\nu\subset \mathcal{G}$ be the parabolic subgroup whose Lie algebra is the sum of the non-negative eigenspaces for $\nu$, and let $\mathcal{U}^+_\nu$ be its unipotent radical. 
\end{setup}

\begin{lemma}
  \label{lem:mu_Mnu_central}
Let $\mc{M}_\nu\subset \mathcal{G}$ and $\mc{M}_\mu\subset \mathcal{G}_{\hat{\mathcal{O}}}$ be the centralizers of $\nu$ and $\mu$. Then $\mathcal{M}_{\nu,\hat{\mathcal{O}}}\subset \mathcal{M}_\mu$.
\end{lemma}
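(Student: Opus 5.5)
The plan is to reduce to a statement about roots. Since $\mathcal{G}$ is reductive and both $\mu$ and $\nu$ factor through the maximal torus $\mathcal{T}$, the centralizers $\mathcal{M}_\mu$ and $\mathcal{M}_\nu$ are smooth closed subgroup schemes with connected fibers (Levi subgroups) containing $\mathcal{T}$. Each is determined by its Lie algebra, which is $\Lie(\mathcal{T})$ together with the root spaces for those roots $\alpha$ pairing to zero with the relevant cocharacter. It is harmless to work after an étale extension of $\hat{\mathcal{O}}$ splitting $\mathcal{T}$. By the fibral criterion and the fact that smooth closed subgroups with connected fibers are determined by their Lie algebras on fibers (or directly via the big-cell/open-cell description $\mathcal{U}^-\times\mathcal{T}\times\mathcal{U}^+$ of Levi subgroups), it then suffices to show that every root $\alpha$ with $\langle\alpha,\nu\rangle=0$ also satisfies $\langle\alpha,\mu\rangle=0$.

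For the root computation, we have $\langle\alpha,\nu\rangle=\sum_{i=0}^{d-1}\langle\alpha,\varphi^i(\mu)\rangle$. Since $\mathcal{G}$ is reductive, $\mu$ is minuscule, so each term $\langle\alpha,\varphi^i(\mu)\rangle$ lies in $\{-1,0,1\}$. This alone does not force all terms to vanish, so we need the dominance normalization from the setup. We chose a Borel $\mathcal{B}\supset\mathcal{T}$ defined over $\Int_p$ and took $\mu$ dominant with respect to it. Since $\mathcal{B}$ and $\mathcal{T}$ are defined over $\Int_p$, Frobenius (the Galois action of the unramified extension) preserves the set of positive roots, so each $\varphi^i(\mu)$ is also $\mathcal{B}$-dominant. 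Hence for a positive root $\alpha$ every term $\langle\alpha,\varphi^i(\mu)\rangle$ is $\geqslant 0$, and their sum vanishes only if each term vanishes, in particular the $i=0$ term. For a negative root, apply the same argument to $-\alpha$.

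The main obstacle is justifying that Frobenius preserves dominance. Note that the setup only says the Borel contains $\mathcal{T}$, and $\mathcal{T}$ is defined over $\Int_p$. I would take the Borel over $\Int_p$, which is possible since $\mathcal{G}$ is reductive over $\Int_p$ and hence quasi-split, and then appeal to the footnote's conjugation to a dominant $\mu$. If the Borel were only defined over $\hat{\mathcal{O}}$, dominance would fail for the twists. One could then instead use that $\nu$ is Galois-invariant and pass to a $\nu$-adapted Borel, but I expect the $\Int_p$-Borel choice to be exactly what the setup intends.
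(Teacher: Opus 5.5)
Your argument is correct and is essentially the paper's proof. The paper reduces to showing that $\mu$ is central in $\mathcal{M}_{\nu,\hat{\mathcal{O}}}$, and cites \cite[Lemma 2.7]{Shankar2021-rp} for the step that centrality of the Galois average $\overline{\mu}=\nu/d$ forces centrality of $\mu$; your root computation proves that step directly: every $\varphi^i(\mu)$ is dominant for a Borel over $\Int_p$, so a vanishing sum of non-negative pairings has vanishing terms.

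The dominance normalization from the setup's footnote is genuinely necessary, so discard your suggested fallback via a $\nu$-adapted Borel. For example, take $\Res_{\Int_{p^2}/\Int_p}\GL_2$ with the non-dominant representative $\mu=((1,0),(0,1))$: then $\nu=((1,1),(1,1))$ is central, hence $\mathcal{M}_\nu=\mathcal{G}$, while $\mathcal{M}_\mu$ is just the torus.
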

\begin{proof}
  It is enough to know that $\mu$ is central in $\mc{M}_{\nu,\hat{\mathcal{O}}}$. By~\cite[Lemma 2.7]{Shankar2021-rp}, this would follow if the average $\overline{\mu}$ of the Galois conjugates of $\mu$ is central in $\mc{M}_{\nu,\hat{\mathcal{O}}}$. But this is clear, since $\nu$ is an integer multiple of $\overline{\mu}$.
\end{proof}

\begin{remark}
\label{rem:BTPnu_BTMnu}
    We can view $\mu$ as a minuscule (in fact, central) cocharacter of $\mathcal{M}_\nu$, and as a $1$-bounded cocharacter of $\mathcal{P}^-_\mu$. This gives us formal algebraic stacks $\BT[\mathcal{M}_\nu,\mu]{n}$ and $\BT[\mathcal{P}^+_\nu,\mu]{n}$ over $\hat{\mc{O}}$. Viewing $\mathcal{M}_\nu$ as the Levi quotient of $\mathcal{P}^-_\nu$ we obtain maps
    \[
    \BT[\mathcal{M}_\nu,\mu]{n}\to \BT[\mathcal{P}^+_\nu,\mu]{n}\to \BT[\mathcal{M}_\nu,\mu]{n}
    \]
    whose composition is the identity.
\end{remark}

\begin{remark}
    \label{rem:BTMnu_k-points}
Since $\mu$ is central in $\mathcal{M}_\nu$, by Remark~\ref{rem:field-desciption-of-BT-mu}, we find
\[
\BT[\mathcal{M}_\nu,\mu]{n}(\kappa)\simeq [\mathcal{M}_\nu(W_n(\kappa))\varphi(\mu(p))/_{\mathrm{Ad}_\varphi}\mathcal{M}_\nu(W_n(\kappa))].
\]
By Lang's theorem, the quotient on the right can be identified wtih $[\{\varphi(\mu(p))\}/\mathcal{M}_\nu(\Int/p^n\Int)]$. In particular, if $\kappa$ is algebraically closed, then, up to isomorphism, we have a single object in $\BT[\mathcal{M}_\nu,\mu]{n}(\kappa)$.
\end{remark}

\begin{remark}
[The canonical $F$-gauges]
    \label{rem:BTMnu_canonical_f-gauges}
Suppose that we have $\mf{Q}\in \BT[\mathcal{M}_\nu,\mu]{n}(R)$. Now, $\Lie \mathcal{U}^+_\nu$ admits a canonical ascending filtration $\Lie \mathcal{U}^+_{\nu,\bullet}$, whose associated graded module is isomorphic to the weight decomposition for $\Lie \mathcal{U}^+_\nu$ with respect to $\nu$. This in turn integrates to an ascending filtration $\mathcal{U}^+_{\nu,\bullet}$ of unipotent group schemes on $\mathcal{U}^+_\nu$ that is stable under the action of $\mathcal{M}_\mu$, and whose graded pieces $\mathcal{U}^+_{\nu,[i-1,i]} \defn \mathcal{U}^+_{\nu,i}/\mathcal{U}^+_{\nu,i-1}$ are commutative, and on which $\nu(p)$ acts via multiplication by $p^i$. Twisting each $\mathcal{U}^+_{\nu,[i-1,i]}$ by $\mf{Q}$ gives us a vector group scheme $\mathbf{V}(\mathcal{M}_i)$ associated with a vector bundle $\mathcal{M}_{i}$ over $R^{\mathrm{syn}}\otimes\Int/p^n\Int$---in other words, with a $p^n$-torsion $F$-gauge over $R$. 
\end{remark}

\begin{lemma}
[The canonical finite flat group schemes]
\label{lem:BTMnu_canonical_ffgs}
 The functors
\begin{equation*}
C \mapsto \tau^{\leqslant 0}R\Gamma\big(C^{\mathrm{syn}}\otimes\Int/p^n\Int,\mathcal{M}_{i}\big),
\end{equation*}
and
\begin{equation*}C\mapsto \tau^{\leqslant 0}R\Gamma\big(C^{\mathrm{syn}}\otimes\Int/p^n\Int,\mathcal{M}_{i}[1]\big) \simeq \Map_{/R^{\mathrm{syn}}\otimes\Int/p^n\Int}\big(C^{\mathrm{syn}}\otimes\Int/p^n\Int,B\mathbf{V}(\mathcal{M}_{i})\big),
\end{equation*}
on $p$-nilpotent $R$-algebras are represented by a connected $n$-truncated Barsotti--Tate group scheme $\mathcal{F}_{\nu,i}(n)_R$ over $R$ and its classifying stack $B\mc{F}_{\nu,i}(n)_R$, respectively. 
\end{lemma}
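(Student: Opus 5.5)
We would reduce the claim to the case of a single weight, and then identify $\mathcal{M}_i$ with the Dieudonné $F$-gauge of an explicit connected $p$-divisible group. The order of steps is as follows.

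\textbf{Step 1: reduce to $R$ strictly henselian.} Both functors are étale sheaves on $p$-nilpotent $R$-algebras. Representability by a finite flat group scheme, respectively by its classifying stack, descends along étale covers. So we may work étale locally on $\Spf R$.

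\textbf{Step 2: trivialize $\mf{Q}$.} By Remark~\ref{rem:BTMnu_k-points}, together with the smoothness of $\BT[\mathcal{M}_\nu,\mu]{n}$ (Theorem~\ref{thm:gmm}), $\mf{Q}$ is étale locally isomorphic to the base change of the standard aperture $\mf{Q}_0$ attached to $\varphi(\mu(p))$. This uses the gerbe description of Remark~\ref{rem:central_cochar_case}, since $\mu$ is central in $\mathcal{M}_\nu$. The twisting $\mathcal{U}^+_{\nu,[i-1,i]}$ is $\mathcal{M}_\nu$-equivariant, so after étale localization $\mathcal{M}_i$ is isomorphic to the twist of $\mathcal{U}^+_{\nu,[i-1,i]}$ by $\mf{Q}_0$. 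We may therefore assume $R=\hat{\mathcal{O}}$ (or $W(\kappa)$) and $\mf{Q}=\mf{Q}_0$.

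\textbf{Step 3: identify $\mathcal{M}_i$ as a Dieudonné $F$-gauge.} We would use the canonical decomposition by weights of $\mu$ and its Frobenius twists. The restriction of $\mu$, and of each $\varphi^j(\mu)$, to $\Lie\mathcal{U}^+_{\nu,[i-1,i]}$ has weights in $\{0,1\}$, because $\mu$ is minuscule. Since $\mu$ is central in $\mathcal{M}_\nu$, the group $\mathcal{M}_\nu$ preserves this weight decomposition, so after base change to $\hat{\mathcal{O}}$ the representation $\Lie\mathcal{U}^+_{\nu,[i-1,i]}$ splits compatibly with $\mu$. Consequently $\mathcal{M}_i$, as an $F$-gauge, has Hodge--Tate weights in $\{0,1\}$ (up to sign convention). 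It is therefore $\mathcal{M}(H)\otimes\Int/p^n$ for a $p$-divisible group $H$, by Theorem~\ref{thm:dieudonne}, or more precisely by the truncated version in Example~\ref{ex:gl_n_truncated_bts}. The relevant torsion $F$-gauge is $\mathcal{M}(H[p^n])$.

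\textbf{Step 4: read off the two functors.} For truncated Barsotti--Tate groups, the equivalence of \cite[\S11]{gmm} gives the following identifications:
\[
H[p^n](C)\simeq \tau^{\leqslant 0}R\Gamma(C^{\mathrm{syn}}\otimes\Int/p^n,\mathcal{M}(H[p^n])),
\]
and, taking $\tau^{\leqslant 0}$ of the shifted complex, the corresponding identification for $BH[p^n]$ (possibly up to a Breuil--Kisin twist $\{1\}$, depending on normalizations). In practice this comes from the construction of the inverse to $\mathcal{M}$ in \emph{loc.\@ cit.}, which realizes $G\mapsto$ global sections of the associated vector group over $C^{\mathrm{syn}}$. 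This gives representability, and the objects glue under the descent of Step 1.

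\textbf{Step 5: connectedness.} Connectedness of $\mathcal{F}_{\nu,i}(n)_R$ can be checked on geometric fibers. Over $\kappa$ algebraically closed, Frobenius acts on $\mathcal{M}_i$ through $\varphi(\mu(p))$ composed with $\varphi$. On $\Lie\mathcal{U}^+_{\nu,[i-1,i]}$, $\nu(p)$ acts by $p^i$ with $i\geqslant 1$, so after iterating $d$ times Frobenius is divisible by $p$. Hence the étale part, i.e.\ the unit-root part of the Dieudonné module, vanishes, and the group is connected (infinitesimal).

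\textbf{Main obstacle.} The hardest part is Step 3 together with Step 4. One must check that $\mathcal{M}_i$, twisted from a unipotent-group graded piece rather than a representation of $\mathcal{G}$, really has Hodge--Tate weights in $\{0,1\}$. One must also match the derived global sections on $C^{\mathrm{syn}}\otimes\Int/p^n$ exactly with the group scheme, including vanishing of negative cohomology so that no extra stacky structure appears. I would first try to settle this by reduction to the graded ($\mu$-split) case, which amounts to a direct sum of copies of $\mu_{p^n}$ and $\Int/p^n$ twisted by Frobenius, and then twist by the unramified descent datum.
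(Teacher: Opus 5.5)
Your outline follows the paper's proof. You check that $\mathcal{M}_i$ has Hodge--Tate weights in $\{0,1\}$. You get representability of both functors from truncated prismatic Dieudonn\'e theory; the paper cites \cite[Theorem 11.3.3]{gmm} and \cite[Proposition 4.3.2]{Madapusi2025-ay}, which also handle the vanishing of negative cohomology you worry about. You prove connectedness at a geometric point, where $\mathcal{M}_i$ is the Dieudonn\'e module $W(\kappa)\otimes_{\Int_p}\Lie\mathcal{U}^+_{\nu,[i-1,i]}$ with Frobenius $\varphi\otimes\varphi(\mu(p))^{-1}$; its $d$-th iterate is $\varphi^d$ times the action of $\nu(p)$, so its slope is nonzero. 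Your Steps 1--2 (\'etale-locally trivializing $\mathfrak{Q}$ via the gerbe structure, then passing through a $p$-divisible group over $\hat{\mathcal{O}}$) are valid but unnecessary. Hodge--Tate weights of a vector bundle $F$-gauge are locally constant and can be read off at geometric points, after which the truncated theorem applies to $\mathcal{M}_i$ directly over $R$.

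What is missing is a correct reason for the weight claim in Step 3, which is the one non-formal input. Minusculeness only gives $\mu$-weights in $\{-1,0,1\}$ on $\Lie\mathcal{G}$. If both signs occurred on $\Lie\mathcal{U}^+_{\nu,[i-1,i]}$, no Breuil--Kisin twist could bring the weights into $\{0,1\}$, and $\varphi\otimes\varphi(\mu(p))^{-1}$ would not even be integral, so your Step 5 would fail as well.

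The needed input is the normalization in \S\ref{sub:the_ordinary_locus}. There, $\mu$ factors through $\mathcal{T}$ and is dominant for a Borel $\mathcal{B}\supset\mathcal{T}$ defined over $\Int_p$, so every Frobenius twist $\varphi^j(\mu)$ is also $\mathcal{B}$-dominant. Work in the normalization where $\nu$ is positive on $\Lie\mathcal{U}^+_\nu$. A root $\alpha$ with $\langle\alpha,\nu\rangle>0$ cannot be $\mathcal{B}$-negative, since otherwise each $\langle\alpha,\varphi^j(\mu)\rangle\leqslant 0$. Hence $\langle\alpha,\varphi^j(\mu)\rangle\in\{0,1\}$ for every $j$. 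This is what the paper means when it says $\varphi(\mu)$ acts on $\Lie\mathcal{U}^+_\nu$ through only two characters. The Hodge--Tate weights, being the $\mu$-weights, are then one-sided, and the rest of your argument goes through.

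The fallback in your last paragraph would not have fixed this. Frobenius is twisted by $\varphi(\mu)$ rather than $\mu$, so these $F$-gauges do not split into pieces corresponding to $\mu_{p^n}$ and $\Int/p^n\Int$. Indeed, $\mathcal{F}_{\nu,i}(n)$ is connected and, at geometric points, isoclinic of slope $i/d$.
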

\begin{proof}
    We first claim that the $F$-gauges $\mathcal{M}_i$ have Hodge--Tate weights $0,1$. Given this, everything except the connectedness of $\mathcal{F}_{\nu,i}(n)_R$ follows from~\cite[Theorem 11.3.3]{gmm} and~\cite[Proposition 4.3.2]{Madapusi2025-ay}.

    To see the condition on the Hodge--Tate weights, as well as the connectedness, we can assume that $R = \kappa$ is an algebraically closed field over $k$. Here, unwinding definitions, and using Remark~\ref{rem:BTMnu_k-points}, shows that $\mathcal{M}_i$ is isomorphic to the mod-$p^n$ reduction of the $F$-gauge associated with the Dieudonn\'e module $W(\kappa)\otimes_{\Int_p}\Lie\mathcal{U}^+_{\nu,[i-1,i]}$ equipped with the Frobenius operator $\varphi\otimes \varphi(\mu(p))^{-1}$, where $\varphi(\mu(p))$ is acting via the adjoint action: Note that, since $\varphi(\mu)$ acts on $\Lie \mathcal{U}^+_\nu$ via $z\mapsto 1$ and $z\mapsto z^{-1}$, this indeed gives a Dieudonn\'e module. This shows both that the Hodge--Tate weights are as desired, and also that the group scheme $\mathcal{F}_{\nu,i}(n)_\kappa$ is connected: The latter is because the slope cocharacter of the $F$-isocrystal underlying $W(\kappa)\otimes_{\Int_p}\Lie\mathcal{U}^+_{\nu,[i-1,i]}$ is a rational multiple of $\nu$ by construction, and is therefore constant and non-zero.
\end{proof}

\begin{proposition}
  \label{prop:BTPnu_structure}
Suppose that $1\leqslant n<\infty$. The formal stack $\BT[\mathcal{M}_\nu,\mu]{n}$ is \'etale over $\hat{\mathcal{O}}$ and is non-canonically isomorphic to the formal classifying stack for the finite group $\mathcal{M}_\nu(\Int/p^n\Int)$. Moreover, there is a canonical (non-commutative) finite flat group scheme $\mathcal{F}_{\nu}(n)$ over  $\BT[\mathcal{M}_\nu,\mu]{n}$ such that:
\begin{enumerate}
    \item $\mathcal{F}_{\nu}(n)$ admits a canonical filtration by finite flat subgroup schemes over $\BT[\mathcal{M}_\nu,\mu]{n}$, whose graded pieces $\mathcal{F}_{\nu,i}(n)$ are the $n$-truncated connected Barsotti--Tate group schemes from \emph{Lemma~\ref{lem:BTMnu_canonical_ffgs}}; 
    \item For $n>1$ and all $i$, there is a canonical isomorphism
    \[
    \mathcal{F}_{\nu,i}(n-1) \simeq \mathcal{F}_{\nu,i}(n)[p^{n-1}]
    \]
    of $(n-1)$-truncated Barsotti--Tate group schemes;
    \item The map $\BT[\mathcal{P}^+_\nu,\mu]{n}\to \BT[\mathcal{M}_\nu,\mu]{n}$ is canonically isomorphic to the classifying stack of $\mathcal{F}_{\nu}(n)$.
\end{enumerate}
\end{proposition}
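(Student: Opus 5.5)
For the first assertion I would use the fact, established in the proof of Lemma~\ref{lem:mu_Mnu_central}, that $\mu$ is central in $\mathcal{M}_{\nu,\hat{\mathcal{O}}}$. Remark~\ref{rem:central_cochar_case} then applies directly: $\mathcal{P}^-_\mu$ computed inside $\mathcal{M}_\nu$ is all of $\mathcal{M}_\nu$, so $\BT[\mathcal{M}_\nu,\mu]{n}$ is \'etale over $\Spf\hat{\mathcal{O}}$. It is moreover a gerbe banded by $\mathcal{M}_\nu(\Int/p^n\Int)$ that admits an $\hat{\mathcal{O}}$-point, and is therefore non-canonically isomorphic to $B\underline{\mathcal{M}_\nu(\Int/p^n\Int)}$. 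For $\kappa$ algebraically closed this agrees with the description in Remark~\ref{rem:BTMnu_k-points}.

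To build $\mathcal{F}_\nu(n)$ and prove (3), I would study the fibers of $\BT[\mathcal{P}^+_\nu,\mu]{n}\to \BT[\mathcal{M}_\nu,\mu]{n}$. Fix $\mf{Q}\in\BT[\mathcal{M}_\nu,\mu]{n}(R)$. A lift of $\mf{Q}$ to a $\mathcal{P}^+_\nu$-torsor on $R^{\mathrm{syn}}\otimes\Int/p^n\Int$ amounts to a torsor under the twist ${}^{\mf{Q}}\mathcal{U}^+_\nu$. The boundedness condition is automatic, because $\mu$ acts on $\Lie\mathcal{U}^+_\nu$ with weights in $\{0,1\}$ and the de Rham fiber is determined up to isomorphism. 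The fiber is thus the stack of ${}^{\mf{Q}}\mathcal{U}^+_\nu$-torsors on $R^{\mathrm{syn}}\otimes\Int/p^n\Int$, relative to $R$. I would proceed by induction along the filtration $\mathcal{U}^+_{\nu,\bullet}$ of Remark~\ref{rem:BTMnu_canonical_f-gauges}. The graded pieces have twists $\mathbf{V}(\mathcal{M}_i)$, and Lemma~\ref{lem:BTMnu_canonical_ffgs} says that torsors under them are classified by $B\mathcal{F}_{\nu,i}(n)_R$ and that their automorphisms are $\mathcal{F}_{\nu,i}(n)_R$. The associated $\tau^{\leqslant0}$ of $R\Gamma$ of $\mathcal{M}_i[1]$ has no $H^{-2}$ term, since $H^{-1}(\mathcal{M}_i)=0$, so there are no higher automorphisms. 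A nonabelian d\'evissage through the central extensions
\[
1\to \mathcal{U}^+_{\nu,[i-1,i]}\to \mathcal{U}^+_\nu/\mathcal{U}^+_{\nu,i-1}\to \mathcal{U}^+_\nu/\mathcal{U}^+_{\nu,i}\to 1
\]
then shows that the stack of ${}^{\mf{Q}}\mathcal{U}^+_\nu$-torsors is a gerbe. This gerbe is neutralized by the trivial torsor, which is the image of the section $\BT[\mathcal{M}_\nu,\mu]{n}\to\BT[\mathcal{P}^+_\nu,\mu]{n}$ from Remark~\ref{rem:BTPnu_structure}. Its band is the sheaf $\mathcal{F}_\nu(n)\defn \tau^{\leqslant 0}R\Gamma(C^{\mathrm{syn}}\otimes\Int/p^n\Int,{}^{\mf{Q}}\mathcal{U}^+_\nu)$, which is the group of automorphisms of the trivial torsor. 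This gives (3). The filtration in (1) comes from the $\mathcal{U}^+_{\nu,i}$. Its graded pieces are the $\mathcal{F}_{\nu,i}(n)$, because the short exact sequences of cohomology stay exact, which uses $H^1$-vanishing of $\tau^{\leqslant0}$ of the kernel term. Finite flatness follows by iterated extension.

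For (2), I would use the exact sequence $0\to\mathcal{M}_i\otimes\Int/p^{n-1}\xrightarrow{p}\mathcal{M}_i\to\mathcal{M}_i\otimes\Int/p\to0$ of flat $\Int/p^n$-modules over the syntomification. Applying $\tau^{\leqslant0}R\Gamma$, and noting that the mod $p^{n-1}$ restriction of $\mf{Q}$ defines the $(n-1)$-level objects, identifies $\mathcal{F}_{\nu,i}(n-1)$ with the kernel of multiplication by $p^{n-1}$ on $\mathcal{F}_{\nu,i}(n)$. Equivalently, this is the standard identification of truncated BT groups via Dieudonn\'e theory from~\cite{gmm}.

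I expect the main obstacle to be the nonabelian d\'evissage: showing that torsors under the non-commutative ${}^{\mf{Q}}\mathcal{U}^+_\nu$ on the derived stack $R^{\mathrm{syn}}\otimes\Int/p^n\Int$ form a classifying stack of a representable finite flat group. My first attempt would proceed by induction on the length of the filtration, using the fact that each step is a $B\mathcal{F}_{\nu,i}(n)$-fibration, which is smooth and has representable automorphisms, so that the resulting tower is a gerbe with a section.
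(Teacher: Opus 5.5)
Your proposal is correct and follows the same route as the paper. The first assertion comes from Remark~\ref{rem:central_cochar_case}, since $\mu$ is central in $\mathcal{M}_\nu$. Parts (1) and (3) come from identifying the fiber of $\BT[\mathcal{P}^+_\nu,\mu]{n}\to\BT[\mathcal{M}_\nu,\mu]{n}$ over $\mathfrak{Q}$ with the stack of torsors under the twisted group $(\mathcal{U}^+_\nu)_{\mathfrak{Q}}$ on $R^{\mathrm{syn}}\otimes\Int/p^n\Int$, and then using the filtration $\mathcal{U}^+_{\nu,\bullet}$ together with Lemma~\ref{lem:BTMnu_canonical_ffgs}.

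The paper leaves the nonabelian d\'evissage and part (2) implicit, and your arguments correctly fill in these details. For the d\'evissage, induction along the filtration gives both local triviality of the torsors and the identification of the band. For (2), the sequence $\mathcal{M}_i/p^{n-1}\xrightarrow{p}\mathcal{M}_i\to\mathcal{M}_i/p$ identifies $\mathcal{F}_{\nu,i}(n-1)$ with $\ker(\mathcal{F}_{\nu,i}(n)\to\mathcal{F}_{\nu,i}(1))$. The injectivity coming from $\mathcal{M}_i/p\xrightarrow{p^{n-1}}\mathcal{M}_i$, which you should state explicitly, then identifies that kernel with $\mathcal{F}_{\nu,i}(n)[p^{n-1}]$.
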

\begin{proof}
The first assertion follows from Remark~\ref{rem:central_cochar_case}.

For the rest, suppose that we have a finite \'etale $\hat{\mathcal{O}}$-algebra $R$, and $\mathfrak{Q}\in \BT[\mathcal{M}_\nu,\mu]{n}(R)$ corresponding to an $\mathcal{M}_\nu$-torsor over $R^{\mathrm{syn}}\otimes\Int/p^n\Int$. We can twist (via conjugation) $\mathcal{U}^+_\nu$ by this torsor to get a unipotent (relative) group scheme $(\mathcal{U}^+_\nu)_{\mathfrak{Q}}$ over $R^{\mathrm{syn}}\otimes\Int/p^n\Int$. Unwinding definitions shows that the fiber of $\BT[\mathcal{P}^+_\nu,\mu]{n}$ over $\mathfrak{Q}$ is the functor on $p$-nilpotent $R$-algebras $C$ given by
\begin{align}
\label{eqn:BPnu_over_BMnu_explicit}
C&\mapsto \Map_{/R^{\mathrm{syn}}\otimes\Int/p^n\Int}(C^{\mathrm{syn}}\otimes\Int/p^n\Int,B(\mathcal{U}^+_\nu)_{\mathfrak{Q}}).
\end{align}
Now, Remark \ref{rem:BTMnu_canonical_f-gauges} and Lemma~\ref{lem:BTMnu_canonical_ffgs} tells us
\[
C\mapsto \Map_{/R^{\mathrm{syn}}\otimes\Int/p^n\Int}(C^{\mathrm{syn}}\otimes\Int/p^n\Int,(\mathcal{U}^+_\nu)_{\mathfrak{Q}})
\]
is represented by a finite flat group scheme $\mathcal{F}_{\nu}(n)_R$ over $R$ admitting a filtration as stated, and that the functor~\eqref{eqn:BPnu_over_BMnu_explicit} is the classifying stack of the finite flat group scheme. By varying $R$ and $\mathfrak{Q}$, one now gets the desired finite flat group scheme over $\BT[\mathcal{M}_\nu,\mu]{n}$.
\end{proof}

\begin{corollary}
    \label{cor:BPnu_complete_local_ring}
Suppose that we have $\mathfrak{P}_0\in \BT[\mathcal{P}^+_\nu,\mu]{\infty}(\kappa)$. Then the universal deformation space $\widehat{U}_{\mathfrak{P}_0}$ of $\BT[\mathcal{P}^+_\nu,\mu]{\infty}$ at $\mathfrak{P}_0$ admits a canonical structure of a tower
    \[
    \widehat{U}_{\mathfrak{P}_0} = \widehat{U}_{n}\to \widehat{U}_{n-1}\to \cdots \to \widehat{U}_{0} = \Spf W(\kappa)
    \]
    of formal schemes over $W(\kappa)$, where, for each $i>0$, $\widehat{U}_{i}$ is a formal $p$-divisible group over $\widehat{U}_{i-1}$;
\end{corollary}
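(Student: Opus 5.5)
The tower comes from the map $\pi\colon\BT[\mathcal{P}^+_\nu,\mu]{\infty}\to \BT[\mathcal{M}_\nu,\mu]{\infty}$ of Remark~\ref{rem:BTPnu_BTMnu}, together with the structure of its fibers recorded in Proposition~\ref{prop:BTPnu_structure}. Let $\mathfrak{M}_0 = \pi(\mathfrak{P}_0)$.

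First, by Proposition~\ref{prop:BTPnu_structure}, each $\BT[\mathcal{M}_\nu,\mu]{n}$ is étale over $\hat{\mathcal{O}}$. So the deformation space of $\BT[\mathcal{M}_\nu,\mu]{\infty}$ at $\mathfrak{M}_0$ is $\Spf W(\kappa)$, and $\mathfrak{M}_0$ lifts uniquely to some $\mathfrak{M}\in \BT[\mathcal{M}_\nu,\mu]{\infty}(W(\kappa))$. The universal deformation space $\widehat{U}_{\mathfrak{P}_0}$ is therefore the deformation space of $\mathfrak{P}_0$ inside the fiber of $\pi$ over $\mathfrak{M}$.

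Second, we describe that fiber. By Proposition~\ref{prop:BTPnu_structure}(3), at finite level $n$ the fiber is the classifying stack $B\mathcal{F}_\nu(n)$. The group $\mathcal{F}_\nu(n)$ is filtered with graded pieces $\mathcal{F}_{\nu,i}(n)$, which are connected truncated Barsotti--Tate groups. Since these pieces are connected, a torsor over a complete local base with residue field $\kappa$ is trivial once it is trivialized at the closed point. Hence deforming $\mathfrak{P}_0$ amounts to deforming the trivial $\mathcal{F}_\nu(n)$-torsor.

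For the filtration step I would use the corresponding filtration of $\mathcal{U}^+_\nu$ by $\mathcal{U}^+_{\nu,\le i}$ and the associated groups $\mathcal{P}_i = \mathcal{M}_\nu\ltimes(\mathcal{U}^+_\nu/\mathcal{U}^+_{\nu,\le i})$. Ordering the steps so that $\widehat{U}_0=\Spf W(\kappa)$ corresponds to $\mathcal{M}_\nu$, each map $\BT[\mathcal{P}_{i},\mu]{\infty}\to \BT[\mathcal{P}_{i-1},\mu]{\infty}$ has fibers $\varprojlim_n B\mathcal{F}_{\nu,i}(n)$, twisted by the aperture over the previous stage. The number of steps is the length of the filtration, which the statement writes as $n$.

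Third, we pass to the limit over $n$. Using Proposition~\ref{prop:BTPnu_structure}(2), the pieces $\mathcal{F}_{\nu,i}(n)$ form a connected $p$-divisible group $\mathcal{F}_{\nu,i}$ over $\widehat{U}_{i-1}$. The deformations of the trivial torsor under $\varprojlim_n B\mathcal{F}_{\nu,i}(n)$ are then identified with the formal completion of $\mathcal{F}_{\nu,i}$ along its identity section. This is a formal $p$-divisible group, and it is smooth because $\mathcal{F}_{\nu,i}$ is connected. Inductively, $\widehat{U}_i$ is this formal group over $\widehat{U}_{i-1}$.

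The main obstacle is the passage from $B\mathcal{F}$ to the formal group $\mathcal{F}$ in this limit. The key input is $\varprojlim_n H^1(-,\mathcal{F}(n)) \simeq \widehat{\mathcal{F}}$ on Artinian thickenings of $\kappa$, together with the vanishing of the relevant $\varprojlim^1$ and $H^0$ terms via connectedness. I would prove this using the Kummer-type sequences $0\to\mathcal{F}(n)\to\mathcal{F}\xrightarrow{p^n}\mathcal{F}\to 0$ and the fact that $\mathcal{F}(W_m)$ is $p$-adically complete.
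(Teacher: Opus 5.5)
Your route is essentially the paper's. The formal \'etaleness of $\BT[\mathcal{M}_\nu,\mu]{\infty}$ reduces everything to the fiber $B\mathcal{F}_\nu(n)$ of Proposition~\ref{prop:BTPnu_structure}. The tower then comes from the filtration of $\mathcal{F}_\nu(n)$, combined with the classical fact that deformations of the trivial torsor are pro-represented by the formal group; this is Lemma~\ref{lem:classifying_space_complete_local}, which the paper cites from Katz rather than proving. Your intermediate groups $\mathcal{P}_i$ are not needed, since the paper reads the tower directly off the filtration of $\mathcal{F}_\nu(n)$. If you keep them, you must quotient by the normal subgroups of $\nu$-weight $\geqslant j$, because the span of the weights $\leqslant i$ is not a subgroup in general.

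However, the justification in your second step is false, and it contradicts your third step. A torsor under a connected finite flat group over an Artinian $C$ with residue field $\kappa$ need not be trivial when it is trivial at the closed point. For $C=\kappa[\epsilon]/(\epsilon^2)$ one has $H^1_{\mathrm{fppf}}(C,\mu_p)=C^\times/(C^\times)^p\cong 1+\kappa\epsilon\neq 0$, and all these classes die over $\kappa$. These are exactly the classes your limit $\varprojlim_n H^1(C,\mathcal{F}(n))\simeq\widehat{\mathcal{F}}(C)$ records; for $(\GL_2,\mu)$ in the ordinary case they are the Serre--Tate parameters $q\in 1+\mathfrak{m}_C$. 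If your claim held, the level-$n$ deformation groupoid would be $B(\mathcal{F}(n)(C))$, its limit over $n$ would be trivial, and the tower would collapse to $\Spf W(\kappa)$.

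What you actually need is only that $\mathfrak{P}_0$ itself is the trivial torsor over $\kappa$. The group $\mathcal{F}_\nu(n)_\kappa$ is connected, hence infinitesimal over the perfect field $\kappa$. So any torsor under it has a $\kappa$-point, since its unique point has residue field purely inseparable over $\kappa$. Equivalently, $\BT[\mathcal{P}^+_\nu,\mu]{n}(\kappa)\isomto\BT[\mathcal{M}_\nu,\mu]{n}(\kappa)$, as shown in the proof of Proposition~\ref{prop:ordinary_locus_description}.

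Separately, your Kummer argument gives $0\to\mathcal{F}(C)/p^n\to H^1(C,\mathcal{F}(n))\to H^1(C,\mathcal{F})[p^n]\to 0$. So you also need $T_pH^1_{\mathrm{fppf}}(C,\mathcal{F})=0$, for example from $H^1_{\mathrm{fppf}}(C,\mathcal{F})=0$ for connected $\mathcal{F}$ by formal smoothness. This term is not among the $\varprojlim^1$ and $H^0$ terms you list.
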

\begin{proof}
    Given Proposition~\ref{prop:BTPnu_structure}, this is implied by Lemma~\ref{lem:classifying_space_complete_local} below.
\end{proof}

\begin{lemma}
\label{lem:classifying_space_complete_local}
Let $R$ be a complete local ring with residue field $\kappa$, and let $\mathrm{Art}_{R}$ be the category of Artin local $R$-algebras with residue field $\kappa$. Let $\mathcal{H}$ be a $p$-divisible group over $R$, and let $0$ denote the trivial object of $(B \mathcal{H})(\kappa)$. Then the deformation functor 
\[
\mathrm{Def}_{B \mathcal{H},0}\colon C\mapsto \mathrm{fib}_{0}\bigg((B \mathcal{H})(C)\to (B \mathcal{H})(\kappa)\bigg)
\]
on $\mathrm{Art}_R$ is pro-represented by the $p$-divisible formal group $\hat{\mathcal{H}}$.
\end{lemma}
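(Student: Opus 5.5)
The plan is to compute the fiber directly from the Kummer sequences of $\mathcal{H}$, reading $B\mathcal{H}$ the way it arises in Proposition~\ref{prop:BTPnu_structure} and Corollary~\ref{cor:BPnu_complete_local_ring}. There it is the limit of the classifying stacks of the truncations, $B\mathcal{H} = \varprojlim_n B\mathcal{H}[p^n]$. This matches the limit $\BT[\mathcal{P}^+_\nu,\mu]{\infty} = \varprojlim_n \BT[\mathcal{P}^+_\nu,\mu]{n}$ fibered over the classifying stacks $B\mathcal{F}_\nu(n)$. With this reading, an object of $B\mathcal{H}(C)$ is a compatible system of $\mathcal{H}[p^n]$-torsors over $C$, with transition maps given by pushforward along $p\colon \mathcal{H}[p^{n+1}]\to \mathcal{H}[p^n]$. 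I note that the naive fppf classifying stack of the sheaf $\mathcal{H}$ would not give a discrete answer: its fiber over $0$ has automorphism group $\ker(\mathcal{H}(C)\to\mathcal{H}(\kappa))$. So the limit reading is the one under which the statement is meant.

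\emph{Step 1 (Kummer presentation).} For each $n$, the fppf exact sequence $0\to \mathcal{H}[p^n]\to \mathcal{H}\xrightarrow{p^n}\mathcal{H}\to 0$ identifies $B\mathcal{H}[p^n]$ with the quotient stack $[\mathcal{H}/_{p^n}\mathcal{H}]$. Here $\mathcal{H}$ acts on itself through translation by $p^n$-multiples. Concretely, an $\mathcal{H}[p^n]$-torsor is the fiber of $p^n$ over a point of $\mathcal{H}$, well defined up to translation. The transition maps $B\mathcal{H}[p^{n+1}]\to B\mathcal{H}[p^n]$ become the identity on the first copy of $\mathcal{H}$ and multiplication by $p$ on the group. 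Passing to the limit over $n$, the groupoid $(\varprojlim_n B\mathcal{H}[p^n])(C)$ is then described by a point $x\in\mathcal{H}(C)$. The isomorphisms are the elements of $\varprojlim_n \mathcal{H}(C)$ along multiplication by $p$, which is $V_p\mathcal{H}(C)$-type data. Since $C$ is Artinian and $\mathcal{H}$ is $p$-divisible, $\mathcal{H}(C)$ is $p$-power torsion in its connected part and the limit contributes only through the étale part. I will check that this limit is discrete modulo the étale part. That part is insensitive to nilpotent thickenings, since $\mathcal{H}^{\et}(C)=\mathcal{H}^{\et}(\kappa)$.

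\emph{Step 2 (take the fiber over $0$).} Using the connected--étale sequence $0\to\hat{\mathcal{H}}\to\mathcal{H}\to\mathcal{H}^{\et}\to 0$ over $R$, together with topological invariance of the étale site for $C\to\kappa$, the étale contributions of $C$ and $\kappa$ cancel in the fiber. Once they cancel, the fiber over the trivial object is the set of $x\in\mathcal{H}(C)$ reducing to $0$ in $\mathcal{H}(\kappa)$. This set is $\hat{\mathcal{H}}(C)=\ker(\mathcal{H}(C)\to\mathcal{H}(\kappa))$, and the automorphisms become trivial because $\varprojlim_n\hat{\mathcal{H}}(C)$ along $p$ vanishes: $\hat{\mathcal{H}}(C)$ is killed by a fixed power of $p$ for Artinian $C$. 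The identification is natural in $C$, so the functor is pro-represented by $\hat{\mathcal{H}}=\Spf$ of a power series ring over $R$.

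The main obstacle is Step 1/Step 2 bookkeeping: making the limit of quotient stacks precise, including the $\varprojlim^1$ issue. I would handle it by the Mittag-Leffler property of the systems $\hat{\mathcal{H}}(C)[p^n]$, which holds since they are eventually constant for Artinian $C$.
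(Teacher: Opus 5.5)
Your reading of $B\mathcal{H}$ as $\varprojlim_n B\mathcal{H}[p^n]$, with transition maps induced by multiplication by $p$, is the one needed for Corollary~\ref{cor:BPnu_complete_local_ring}. Your strategy is sound. It differs from the paper mainly in that the paper gives no argument: it calls the statement classical and defers to Katz's Serre--Tate paper.

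The two routes are linked as follows. Since $R\mathcal{H}om(\Int/p^n\Int,\mathcal{H})\simeq\mathcal{H}[p^n]$, with transitions given by $p$, your limit stack is (on points) the groupoid of extensions of $\underline{\mathbb{Q}_p/\Int_p}$ by $\mathcal{H}$. The deformation functor is then the functor of extensions that split over $\kappa$. The classical input is that these are identified with $\hat{\mathcal{H}}(C)$, using two facts:
\begin{enumerate}
\item $\hat{\mathcal{H}}(C)$ has bounded exponent for Artinian $C$;
\item points lift along nilpotent thickenings (``divide by $p^N$, then multiply back'').
\end{enumerate}
Your Kummer computation is a self-contained rederivation of that input in stack language. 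The citation is shorter, while your version makes the groupoid explicit. The level-$n$ fiber is the action groupoid $[\hat{\mathcal{H}}(C)/_{p^n}\hat{\mathcal{H}}(C)]$, with $\pi_0=\hat{\mathcal{H}}(C)/p^n$ and $\pi_1=\hat{\mathcal{H}}(C)[p^n]$. Bounded exponent then gives $\varprojlim\pi_1=\varprojlim^1\pi_1=0$ and $\varprojlim\pi_0=\hat{\mathcal{H}}(C)$. Mittag-Leffler holds because composites of $N$ transition maps vanish, not because the system is ``eventually constant''.

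The step you must repair is Step~2. The phrase ``the \'etale contributions cancel'' does not justify reducing to points $x\in\mathcal{H}(C)$ with $\bar x=0$. An object of $[\mathcal{H}/_{p^n}\mathcal{H}](C)$ is an $\mathcal{H}$-torsor $P$ with an equivariant map $f\colon P\to\mathcal{H}$, and $P$ need not be trivial. It can be nontrivial through $\mathcal{H}^{\et}$ when $\kappa$ is not separably closed, and through $\hat{\mathcal{H}}$ when $\kappa$ is imperfect. Topological invariance of the \'etale site only controls the \'etale quotient.

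The missing input is formal smoothness of $\mathcal{H}$, and hence of $\mathcal{H}$-torsors, along $C\to\kappa$; the obstruction to lifting a section lies in $H^1$ of a quasi-coherent sheaf on an affine scheme. With it, Step~2 goes through:
\begin{enumerate}
\item An object of the fiber is a triple $(P,f,s_0)$ with $s_0\in P(\kappa)$ and $f(s_0)=0$.
\item Lifting $s_0$ to $s\in P(C)$ trivializes $P$.
\item This gives $x=f(s)\in\ker(\mathcal{H}(C)\to\mathcal{H}(\kappa))=\hat{\mathcal{H}}(C)$, using $\hat{\mathcal{H}}(\kappa)=0$.
\item The point $x$ is well defined up to $p^n\hat{\mathcal{H}}(C)$, which yields the action-groupoid description above.
\end{enumerate}
The rest of your argument then goes through. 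A minor correction to Step~1: what matters is the bounded exponent of $\hat{\mathcal{H}}(C)$, since all of $\mathcal{H}(C)$ is $p$-power torsion.
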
 
\begin{proof}
This is classical and can be deduced from results in~\cite{katz:serre-tate}.
\end{proof}

\begin{proposition}
    [Canonical Frobenius lifting]
\label{prop:frobenius_liftings_BTP}
Set $q_0 = p^d = |k|$. There is a canonical endomorphism 
\begin{equation*} 
\tilde{\Phi}\colon \BT[\mathcal{P}_\nu^+,\mu]{n}\to \BT[\mathcal{P}_\nu^+,\mu]{n}
\end{equation*}
of formal $\hat{\mc{O}}$-stacks lifting the $q_0$-Frobenius endomorphism of $\BT[\mathcal{P}_\nu^+,\mu]{n}\otimes k(v)$.
\end{proposition}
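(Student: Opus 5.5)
We construct $\tilde{\Phi}$ via the structure from Proposition~\ref{prop:BTPnu_structure}: $\BT[\mathcal{P}^+_\nu,\mu]{n}$ is the classifying stack $B\mathcal{F}_\nu(n)$ over the étale base $\BT[\mathcal{M}_\nu,\mu]{n}$. We lift Frobenius separately on the base and on the fibers, and then check that the two lifts are compatible.

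\emph{Step 1: the base.} $\BT[\mathcal{M}_\nu,\mu]{n}$ is étale over $\hat{\mathcal{O}}$, hence formally étale. Any endomorphism of its special fiber, in particular the $q_0$-Frobenius (which is $k$-linear because $q_0=|k|$), therefore lifts uniquely to an $\hat{\mc{O}}$-endomorphism $\Phi_0$. Concretely, by Remark~\ref{rem:BTMnu_k-points}, an object over $\kappa$ is $\varphi(\mu(p))$ modulo $\mathcal{M}_\nu(\Int/p^n\Int)$, and $\Phi_0$ acts by the identity on this torsor datum after the Frobenius twist on coefficients.

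\emph{Step 2: the fibers.} We need a map $\Phi_0^*\mathcal{F}_\nu(n)\to\mathcal{F}_\nu(n)$, or better $\mathcal{F}_\nu(n)\to\Phi_0^*\mathcal{F}_\nu(n)$ covering $\Phi_0$, that lifts the relative $q_0$-Frobenius of the finite flat group scheme $\mathcal{F}_\nu(n)\otimes k$. Applying $B(-)$ and composing then gives $\tilde{\Phi}$. The key input is that $\mathcal{F}_\nu(n)$ is built out of the Lie algebra $\Lie\mathcal{U}^+_\nu$ twisted by the aperture $\mf{Q}$. Over $\kappa$, the Frobenius on the Dieudonné module $W(\kappa)\otimes\Lie\mathcal{U}^+_{\nu,[i-1,i]}$ is $\varphi\otimes\varphi(\mu(p))^{-1}$. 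Its $d$-fold iterate is $\varphi^d\otimes\nu(p)^{-1}$ up to conjugation in $\mathcal{M}_\nu$, and $\nu(p)^{-1}$ acts on the $i$-th graded piece by $p^{-i}$.

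This suggests defining $\tilde{\Phi}$ on fibers by the action of $\nu(p)$ on $\mathcal{U}^+_\nu$, i.e.\ conjugation by $\nu(p)$. Since $\nu$ is central in $\mathcal{M}_\nu$, the element $\nu(p)$ is an integral automorphism of the twisted group $(\mathcal{U}^+_\nu)_{\mf{Q}}$ on $R^{\mathrm{syn}}$. This is the one place where Lemma~\ref{lem:mu_Mnu_central} is needed: $\nu$ is a cocharacter of $Z(\mathcal{M}_\nu)$ defined over $\Int_p$, so conjugation by $\nu(p)$, which is $p^i$ on graded pieces, is defined integrally on $\mathcal{U}^+_\nu$ and commutes with the $\mathcal{M}_\nu$-twist. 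We then compose the map induced by the base change $\Phi_0$ (identifying $\Phi_0^*\mf{Q}\simeq\mf{Q}$ via the Frobenius twist) with this endomorphism of the twisted group. By the explicit description~\eqref{eqn:BPnu_over_BMnu_explicit}, this defines an endomorphism of $\BT[\mathcal{P}^+_\nu,\mu]{n}$ over $\Phi_0$.

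\emph{Step 3: checking the reduction.} It remains to verify that $\tilde{\Phi}$ reduces to the $q_0$-Frobenius mod $p$. Since the special fiber is a classifying stack of a finite flat group over a finite étale base, this can be checked on $\kappa$-points and on the group schemes $\mathcal{F}_{\nu,i}(n)_\kappa$. Via Dieudonné theory, the $q_0$-Frobenius of a connected truncated BT group corresponds to multiplication by $V^d$ on the Dieudonné module. Using the identity $F^dV^d=p^{d}$ together with the slope computation $F^d=\varphi^d\otimes p^{-i}$ up to unit, one identifies this with the map induced by $\nu(p)$ composed with the Frobenius twist. The filtration from Proposition~\ref{prop:BTPnu_structure}(1) is used to pass from the graded pieces to $\mathcal{F}_\nu(n)$ itself, with a five-lemma style argument on the tower of extensions.

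I expect this last identification to be the main obstacle. The subtlety is that the relative Frobenius of a non-commutative group scheme must match the non-linear map produced by $\nu(p)$-conjugation on all of $\mathcal{U}^+_\nu$, not merely on the graded pieces. If a direct check is awkward, the fallback is to work at the level of apertures over perfect rings, using the quotient description of Remark~\ref{rem:quotient_description_perfectoid_truncated}. There the Frobenius acts by $g\mapsto\varphi^d(g)$, and $\varphi^d(g)$ is conjugate to $\nu(p)\,g\,\nu(p)^{-1}$-type data by the relation $g\varphi(\mu(p))$. This conjugacy gives the identification, and canonicity follows since all choices are forced by formal étaleness of the base together with the centrality of $\nu$.
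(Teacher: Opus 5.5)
Your construction of $\tilde{\Phi}$ is the paper's. On the \'etale base $\BT[\mathcal{M}_\nu,\mu]{n}$ you take the identity, i.e.\ the unique lift of its canonically trivial $q_0$-Frobenius. On the fibres $B\mathcal{F}_\nu(n)$ you take the endomorphism induced by the $\mathcal{M}_\nu$-equivariant conjugation by $\nu(p)^{\pm1}$ on $\mathcal{U}^+_\nu$. The paper writes $\nu(p)^{-1}$ because of its weight conventions; what matters is that the map acts by non-negative powers of $p$ on $\Lie\mathcal{U}^+_\nu$. It is therefore an endomorphism, not an ``integral automorphism,'' as it must be if it is to reduce to the non-invertible Frobenius of a connected group.

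The gap is in Step~3. Agreement of two endomorphisms on the graded pieces $\mathcal{F}_{\nu,i}(n)$ does not give agreement on $\mathcal{F}_\nu(n)$. Already for a central extension $1\to Z\to\mathcal{F}\to Q\to 1$, two endomorphisms inducing the same maps on $Z$ and $Q$ differ by a homomorphism $Q\to Z$, and such homomorphisms between truncated Barsotti--Tate groups need not vanish. The five lemma only ever shows that a map is an isomorphism, never that two maps coincide. Your fallback cannot detect the difference either. Since $\mathcal{F}_\nu(n)$ is infinitesimal, $B\mathcal{F}_\nu(n)$ is trivial on perfect rings; this is why $\BT[\mathcal{P}^+_\nu,\mu]{n}(\kappa)\simeq\BT[\mathcal{M}_\nu,\mu]{n}(\kappa)$ in the proof of Proposition~\ref{prop:ordinary_locus_description}. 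Consequently any two endomorphisms lying over the same map of the base agree on all perfect-ring points.

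The fix is to carry out the unwinding that the paper points to in the proof of Lemma~\ref{lem:BTMnu_canonical_ffgs}, but for the whole non-commutative twisted group $(\mathcal{U}^+_\nu)_{\mathfrak{Q}}$ at once rather than for its graded pieces. Do this over quasiregular semiperfect $k$-algebras $C$, where $\Prism_C\simeq A_{\mathrm{crys}}(C)$ and the absolute Frobenius of $C$ acts on $\Prism_C$ by $\varphi$.

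Suppose $\mathfrak{Q}$ is given by $g\in\mathcal{M}_\nu(W_n(\kappa))$ as in Remark~\ref{rem:BTMnu_k-points}. A $C$-point of $\mathcal{F}_\nu(n)$ is then a point $u$ of $\mathcal{U}^+_\nu$ over $\Prism_C/p^n$ satisfying two conditions. First, its $\mu$-weight-one coordinates lie in $\Fil^1_{\smallN}\Prism_C$. Second, it satisfies $u=\mathrm{int}(g)\bigl(\varphi(\mu(p)\,u\,\mu(p)^{-1})\bigr)$, with the right-hand side computed via the divided Frobenius.

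Iterate this equation $d$ times. To move the factors $\varphi^j(\mu)(p)$ past the $\mathcal{M}_\nu$-valued twists you need Lemma~\ref{lem:mu_Mnu_central}, i.e.\ centrality of $\mu$ (hence of all $\varphi^j(\mu)$) in $\mathcal{M}_\nu$. This is where that lemma enters; the centrality of $\nu$ in its own centralizer, which you cite it for, is tautological. The result is $\varphi^d(u)=\mathrm{int}\bigl(\nu(p)^{-1}N(g)^{-1}\bigr)(u)$ with $N(g)=g\varphi(g)\cdots\varphi^{d-1}(g)$, and the factor $N(g)$ is exactly the trivialization of Frobenius on the base.

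Since $\varphi^d$ is the effect of the $q_0$-Frobenius of $C$, this identifies the two endomorphisms uniformly on all of $\mathcal{F}_\nu(n)$. Testing on a faithfully flat qrsp cover of its coordinate ring then gives the claimed equality.

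Separately, your graded bookkeeping has a sign slip. The operator $\varphi\otimes\varphi(\mu(p))^{-1}$ is integral on $\Lie\mathcal{U}^+_\nu$, so its $d$-th iterate involves non-negative powers of $p$, not $p^{-i}$. With your sign, the $V^d$ computation would not produce $p^i$.
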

\begin{proof}
    To begin, since $\BT[\mathcal{M}_\nu,\mu]{n}\otimes k(v)$ is isomorphic to the classifying stack of a constant group scheme, the action of $q_0$-Frobenius on it is canonically trivial. Now, conjugation by $\nu(p)^{-1}$ induces an $\mathcal{M}_\nu$-equivariant endomorphism of $\mathcal{U}^+_\nu$, and so in turn induces an endomorphism of the finite flat group scheme $\mathcal{F}_\nu(n)$, and hence of its classifying stack. By Proposition~\ref{prop:BTPnu_structure}, this gives an endomorphism $\tilde{\Phi}$ of $\BT[\mathcal{P}_\nu^+,\mu]{n}$ as a stack over $\BT[\mathcal{M}_\nu,\mu]{n}$, and one can use the discussion in the proof of Lemma~\ref{lem:BTMnu_canonical_ffgs} to deduce that this is a lift of the $q_0$-Frobenius endomorphism.
\end{proof}

The connection of the above discussion with the ordinary locus is made by:
\begin{proposition}
\label{prop:ordinary_locus_description}
    For $n\in \Nat$, the natural map $\BT[\mathcal{P}^+_\nu,\mu]{n}\to \BT{n}$ induces an isomorphism
    \[
    \BT[\mathcal{P}^+_\nu,\mu]{n}\isomto\BT[\mathcal{G},\mu,\mathrm{ord}]{n}
    \]
 \end{proposition}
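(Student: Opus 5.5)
Note first that there is a mismatch of parabolics in the setup: $\mu$ is viewed as a cocharacter of $\mathcal{P}^+_\nu$, which requires $\mu$ to factor through $\mathcal{P}^+_\nu$. It does, since $\mu$ centralizes $\mathcal{M}_\nu$ and factors through $\mathcal{T}\subset\mathcal{M}_\nu$. The weights of $\mu$ on $\Lie\mathcal{P}^+_\nu$ are still $\leqslant 1$.

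The plan is in three steps.

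\textbf{Step 1: the map lands in the $\mu$-ordinary locus.} The map $\BT[\mathcal{P}^+_\nu,\mu]{n}\to \BT{n}$ is induced by the inclusion $\mathcal{P}^+_\nu\subset \mathcal{G}$. Since the target locus $\BT[\mathcal{G},\mu,\mathrm{ord}]{n}$ is open and detected on the special fiber, it suffices to check this on $\kappa$-points for $\kappa$ algebraically closed. By Proposition~\ref{prop:BTPnu_structure}, together with Remark~\ref{rem:BTMnu_k-points}, every $\kappa$-point of $\BT[\mathcal{P}^+_\nu,\mu]{n}$ is an extension of the unique point of $\BT[\mathcal{M}_\nu,\mu]{n}(\kappa)$ by an $\mathcal{F}_\nu(n)$-torsor. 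Over an algebraically closed field such a torsor is trivial, because $\mathcal{F}_\nu(n)$ is finite flat with a connected filtration and $H^1_{\mathrm{fppf}}(\kappa,-)$ vanishes. Hence the point is the image of $\varphi(\mu(p))$. Via Remark~\ref{rem:settheoretic_EO} and Definition~\ref{defn:mu_ord_stratum}, $\tilde\zeta(\varphi(\mu(p)))$ is the ordinary stratum. This gives a map $\BT[\mathcal{P}^+_\nu,\mu]{n}\to\BT[\mathcal{G},\mu,\mathrm{ord}]{n}$.

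\textbf{Step 2: bijectivity on geometric points and on automorphism groups.} Surjectivity on $\kappa$-points uses the fact that the ordinary EO stratum of $\mathcal{G}\text{-}\mathrm{zip}_\mu$ is a single point (Remark~\ref{rem:stratification_G-zips}). A point of $\BT{n}(\kappa)$ over it is then determined up to the fibre of $\BT{n}\to\BT{1}$. Both $\BT{n}(\kappa)$ and $\BT[\mathcal{P}^+_\nu,\mu]{n}(\kappa)$ are described as double quotients, $\mathcal{G}(W_n(\kappa))\varphi(\mu(p))\mathcal{G}(W_n(\kappa))$ modulo $\varphi$-conjugation, and similarly for $\mathcal{P}^+_\nu$. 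So I would show, following Wortmann and Moonen's theory of $\mu$-ordinary objects, that every $g$ with $g\varphi(\mu(p))$ in the ordinary class is $\varphi$-conjugate to $\varphi(\mu(p))$ by $\mathcal{G}(W_n(\kappa))$, and that $\mathrm{Aut}(\varphi(\mu(p)))$ in $\mathcal{G}$ lies in $\mathcal{P}^+_\nu$. The second statement uses the slope filtration: the stabilizer preserves the filtration defined by the Newton cocharacter, which is a positive multiple of $\nu$.

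\textbf{Step 3: isomorphism of deformation spaces.} Both stacks are smooth of dimension $0$: $\BT[\mathcal{P}^+_\nu,\mu]{n}$ by Proposition~\ref{prop:BTPnu_structure}(3), and the ordinary locus because it is open in $\BT{n}$ (Theorem~\ref{thm:gmm}). So it remains to show the map is étale, that is, induces an isomorphism on cotangent complexes. Through the Grothendieck--Messing square of Theorem~\ref{thm:gmm}, applied to both groups, the relative tangent complex is governed by $\Lie\mathcal{G}/\Lie\mathcal{P}^-_\mu$ versus $\Lie\mathcal{P}^+_\nu/(\Lie\mathcal{P}^+_\nu\cap\Lie\mathcal{P}^-_\mu)$, together with the $B\mathcal{G}$ versus $B\mathcal{P}^+_\nu$ contributions. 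Their difference is controlled by $\Lie\mathcal{U}^-_\nu$. I would then show that the induced map on $\Lie(\mathcal{G})/\Lie(\mathcal{P}^+_\nu)$-twisted $F$-gauge cohomology vanishes at ordinary points: the relevant $F$-gauge has only negative slopes, so the associated syntomic complex is acyclic, and the deformation-theoretic comparison becomes an isomorphism. With this, a smooth map that is bijective on geometric points, induces isomorphisms on automorphism group schemes, and is étale is an isomorphism of formal stacks.

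\textbf{Main obstacle.} I expect Step 3 to be the hardest part: the vanishing of $\Lie\mathcal{U}^-_\nu$-twisted syntomic cohomology at $\mu$-ordinary points. I would approach it first over $\kappa$, using the explicit Dieudonné module $W(\kappa)\otimes\Lie\mathcal{U}^-_\nu$ with Frobenius $\varphi\otimes\mathrm{Ad}\,\varphi(\mu(p))^{-1}$, as in Lemma~\ref{lem:BTMnu_canonical_ffgs}, and then spread the result out by Nakayama.
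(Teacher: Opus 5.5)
Your Steps~1 and~2 follow the paper's own argument. Over an algebraically closed $\kappa$ the source has a single isomorphism class, with automorphism group $\mathcal{M}_\nu(\Int/p^n\Int)$ (Proposition~\ref{prop:BTPnu_structure} and Remark~\ref{rem:BTMnu_k-points}). Wortmann's result (Lemma~\ref{lem:kottwitz_lemma}) identifies the image with the $\mu$-ordinary locus, and one then compares automorphism groups.

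The gap is Step~3. The paper takes \'etaleness from \cite[Proposition 9.5.1]{gmm}, and your plan for proving it rests on an incorrect description of the deformation theory. In the Grothendieck--Messing square of Theorem~\ref{thm:gmm}, the $\mathcal{G}$-torsor over $R'/{}^{\mathbb{L}}p^n$ is already determined by the point of $\BT{n}(R)$. The analogous square for $(\mathcal{P}^+_\nu,\mu)$ maps compatibly into it. Comparing the two Cartesian squares, the relative deformation problem for $\BT[\mathcal{P}^+_\nu,\mu]{n}\to\BT{n}$ along a square-zero thickening (with trivial divided powers) is just the comparison of lifts of the Hodge filtration along $\mathcal{P}^+_\nu/(\mathcal{P}^+_\nu\cap\mathcal{P}^-_\mu)\to\mathcal{G}/\mathcal{P}^-_\mu$. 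There is no further ``$B\mathcal{G}$ versus $B\mathcal{P}^+_\nu$'' contribution, and no $F$-gauge cohomology of $\Lie\mathcal{U}^-_\nu$ enters. So the acyclicity statement you single out as the main obstacle is not what controls the tangent complex, and proving it would not give \'etaleness.

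The missing idea is Lie-theoretic: $\Lie\mathcal{G}=\Lie\mathcal{P}^-_\mu+\Lie\mathcal{P}^+_\nu$, which makes the above map of homogeneous spaces an open immersion. This holds because $\mu$ was chosen dominant for a Borel subgroup defined over $\Int_p$, so all the $\varphi^i(\mu)$ lie in one closed Weyl chamber. Hence every root on which $\mu$ has weight $1$ has $\nu$-weight at least $1$, i.e.\ the weight-$1$ space of $\mu$ lies in $\Lie\mathcal{U}^+_\nu$. The same fact underlies the claim in Lemma~\ref{lem:BTMnu_canonical_ffgs} that $\varphi(\mu)$ has weights $0,1$ on $\Lie\mathcal{U}^+_\nu$. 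Thus the map is \'etale on all of $\BT[\mathcal{P}^+_\nu,\mu]{n}$, not only over ordinary points, and slopes play no role in it.

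Slope considerations belong to Step~2 instead: they are what shows the image point has no more automorphisms in $\BT{n}(\kappa)$ than $\mathcal{M}_\nu(\Int/p^n\Int)$. There, however, you are working with $n$-truncated objects, which carry no slope filtration. So the Newton-cocharacter argument you sketch is an infinite-level statement and still has to be adapted to level $n$.
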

 \begin{proof}
     That the natural map is \'etale is shown in~\cite[Proposition 9.5.1]{gmm}. 
     
     Now, Proposition~\ref{prop:BTPnu_structure} and Remark~\ref{rem:BTMnu_k-points} show that for any $\kappa$ we have 
     \[
     \BT[\mathcal{P}^+_\nu,\mu]{n}(\kappa)\isomto  \BT[\mathcal{M}_\nu,\mu]{n}(\kappa)\simeq [\mathcal{M}_\nu(W_n(\kappa))\varphi(\mu(p))/_{\mathrm{Ad}_\varphi}\mathcal{M}_\nu(W_n(\kappa))].
     \]
     where the target has a single isomorphism class represented by $\varphi(\mu(p))$, whose automorphism group is $\mathcal{M}_\nu(\Int/p^n\Int)$. It now follows from Lemma~\ref{lem:kottwitz_lemma} below that the image of $\BT[\mathcal{P}^+_\nu,\mu]{n}$ is precisely the $\mu$-ordinary locus in $\BT{n}$.

     Moreover, the automorphism group of $\varphi(\mu(p))$ viewed as a point of $\BT{n}(\kappa) = [\mathcal{G}(W_n(\kappa))/H^{(n)}_\mu(\kappa)]$ is also $\mathcal{M}_\nu(\Int/p^n\Int)$. This shows that the map
     \[
      \BT[\mathcal{P}^+_\nu,\mu]{n}(\kappa)\to \BT{n}(\kappa)
     \]
     is fully faithful, and thus that $\BT[\mathcal{P}^+_\nu,\mu]{n}\to \BT{n}$ is an open immersion, as desired.
 \end{proof}

\begin{notation}
   Following~\cite{wortmann:ordinary}, given $g\in \mathcal{G}(W(\kappa))\varphi(\mu(p))\mathcal{G}(W(\kappa))$, we will write $\langle g\rangle$ for the associated class in $C(\mathcal{G},\{\varphi(\mu)\})$, $[g]$ for the class in $B(G,\{\mu\})$, and $\pow{g}\subset C(\mathcal{G},\{\varphi(\mu)\})$ for the subset $\tilde{\zeta}^{-1}(\zeta(\langle g\rangle))$ where $\tilde{\zeta}$ is the map from Remark~\ref{rem:settheoretic_EO}.
\end{notation}

\begin{lemma}
    \label{lem:kottwitz_lemma}
With the above notation, we have equivalences
\[
[g] = [\varphi(\mu(p))] \Leftrightarrow \langle g\rangle = \langle \varphi(\mu(p))\rangle \Leftrightarrow \pow{g} = \pow{\varphi(\mu(p))}.
\]
\end{lemma}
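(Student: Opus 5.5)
The plan is to establish a cycle of implications, with the two "local" directions being formal and the substance concentrated in showing that the Newton class, or the zip class, of $\varphi(\mu(p))$ pins down the $\mathcal{G}(W(\kappa))$-$\sigma$-conjugacy class. Since $\langle g\rangle$ determines both $[g]$ (via the inclusion $\mathcal{G}(W(\kappa))\subset G(W(\kappa)[\nicefrac{1}{p}])$) and $\tilde{\zeta}(\langle g\rangle)$, the implications $\langle g\rangle = \langle \varphi(\mu(p))\rangle\Rightarrow [g]=[\varphi(\mu(p))]$ and $\langle g\rangle = \langle \varphi(\mu(p))\rangle \Rightarrow \pow{g}=\pow{\varphi(\mu(p))}$ are immediate. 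It therefore remains to show that each of $[g]=[\varphi(\mu(p))]$ and $\pow{g} = \pow{\varphi(\mu(p))}$ implies $\langle g\rangle = \langle\varphi(\mu(p))\rangle$.

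For $\pow{g}=\pow{\varphi(\mu(p))}\Rightarrow\langle g\rangle = \langle \varphi(\mu(p))\rangle$, I would invoke Wortmann's analysis of the $\mu$-ordinary stratum, specifically \cite[Theorem 6.10]{wortmann:ordinary} and its proof. The fiber of $\tilde{\zeta}$ over the open element $\tilde{\zeta}(\varphi(\mu(p)))\in{}^JW$ consists of a single $\mathcal{G}(W(\kappa))$-$\sigma$-conjugacy class. Concretely, one shows that any $g$ whose mod-$p$ zip lies in the open stratum can be $\sigma$-conjugated, modulo $p$, into $\mathcal{M}_\nu(W(\kappa))\varphi(\mu(p))$ up to the unipotent radical. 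One then lifts successively modulo $p^n$, using that $\mathrm{Ad}\,\varphi(\mu(p))$ contracts $\mathcal{U}^+_\nu$, together with Lang's theorem for $\mathcal{M}_\nu$ as in Remark~\ref{rem:BTMnu_k-points}. I would phrase this as: $\mathcal{G}$-zip class ordinary implies that $g$ admits a reduction to $\mathcal{P}^+_\nu$ compatible with Frobenius. The $\mathcal{U}^+_\nu$-part can then be killed by a convergent $p$-adic successive approximation, because Frobenius twisted by $\nu(p)$ is topologically nilpotent on each graded piece $\mathcal{U}^+_{\nu,[i-1,i]}$, where it acts by $p^i$.

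For $[g]=[\varphi(\mu(p))]\Rightarrow\langle g\rangle=\langle\varphi(\mu(p))\rangle$, I would use that the $\mu$-ordinary Newton point is the maximal element $\overline{\mu}$ of $B(G,\{\mu\})$. The Newton point of $g$ equals $\overline{\mu}$, and by Mazur's inequality on the Hodge polygon of the reduction $\tilde{\zeta}$, together with Wortmann's result that the generic Newton stratum coincides with the open EO stratum, $g$ lies in the open EO stratum. Alternatively and more directly: the slope filtration of the isocrystal of $g$ is defined by $\nu$, and since the slope is extremal, it is compatible with the Hodge filtration of the lattice. This gives $h\in\mathcal{G}(W(\kappa))$ with $h^{-1}g\sigma(h)\in \mathcal{P}^+_\nu(W(\kappa))$ and Levi component in $\mathcal{M}_\nu(W(\kappa))\varphi(\mu(p))\mathcal{M}_\nu(W(\kappa))$. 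We are then in the situation of the previous paragraph.

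The main obstacle is this last lattice-compatibility step: showing that a $\mathcal{G}(W(\kappa))$-double coset element with extremal Newton point actually reduces integrally to $\mathcal{P}^+_\nu$. I would try to cite Wortmann's \cite[Theorem 6.10]{wortmann:ordinary} (itself relying on Moonen's and Kottwitz's arguments) for exactly this equivalence, rather than redo it. The residual work is then to check that our normalizations (with $\varphi(\mu)$ in place of $\mu$, as in the footnote of Remark~\ref{rem:stratification_G-zips}) match his.
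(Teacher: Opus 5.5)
Your two easy implications (out of $\langle g\rangle=\langle\varphi(\mu(p))\rangle$) are fine. In the end your plan, like the paper's, rests on Wortmann. However, the paper proves the lemma simply by citing \cite[Proposition 7.2]{wortmann:ordinary}, which is exactly this three-way equivalence, whereas you put the substance on \cite[Theorem 6.10]{wortmann:ordinary}. In the paper that theorem is used only once, in Definition~\ref{defn:mu_ord_stratum}, to identify $\tilde{\zeta}(\varphi(\mu(p)))$ with the open stratum of $\mathcal{G}\text{-}\mathrm{zip}_{\mu}$. Knowing which zip stratum is open does not by itself show two things you need. First, that the fiber of $\tilde{\zeta}$ over it is a single $\mathcal{G}(W(\kappa))$-$\sigma$-conjugacy class, i.e.\ that the fiber of $\BT{\infty}(\kappa)\to\BT{1}(\kappa)$ over the ordinary point is one isomorphism class. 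Second, that Newton $\mu$-ordinarity forces EO $\mu$-ordinarity. So, as written, both non-trivial implications are unsupported.

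Your own sketches do not fill this in.
\begin{itemize}
\item The first route for $[g]=[\varphi(\mu(p))]\Rightarrow\langle g\rangle=\langle\varphi(\mu(p))\rangle$ is circular. The statement ``the generic Newton stratum coincides with the open EO stratum'' is, pointwise, precisely the equivalence $[g]=[\varphi(\mu(p))]\Leftrightarrow\pow{g}=\pow{\varphi(\mu(p))}$ being proved.
\item In the ``direct'' route, you can move the slope filtration into standard position by an element of $\mathcal{G}(W(\kappa))$, by properness of $\mathcal{G}/\mathcal{P}^+_\nu$. But this only places $h^{-1}g\sigma(h)$ in $\mathcal{P}^+_\nu(W(\kappa)[\nicefrac{1}{p}])$. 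That it actually lies in $\mathcal{P}^+_\nu(W(\kappa))\varphi(\mu(p))$ is the whole content, and you flag it as the obstacle and leave it open.
\item In the EO direction, the claim that an ordinary zip lets you $\sigma$-conjugate $g$ modulo $p$ into $\mathcal{P}^+_\nu$-form is asserted, not shown.
\item The successive approximation cannot be run with a single step $\mathrm{Ad}\,\varphi(\mu(p))\circ\sigma$. In general this step is not contracting on $\mathcal{U}^+_\nu$, since $\varphi(\mu)$ typically has weight $0$ on part of $\Lie\mathcal{U}^+_\nu$ (cf.\ the proof of Lemma~\ref{lem:BTMnu_canonical_ffgs}). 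Only its $d$-fold iterate, which acts through $\mathrm{Ad}\,\nu(p)$, contracts.
\end{itemize}
Citing \cite[Proposition 7.2]{wortmann:ordinary} for the two non-trivial implications repairs the proposal and makes it coincide with the paper's proof.
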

\begin{proof}
    See~\cite[Proposition 7.2]{wortmann:ordinary}.
\end{proof}

\begin{corollary}
    \label{cor:complete_local_ring_ordinary_locus}
Suppose that we have $\mathfrak{Q}\in\BT[\mathcal{G},\mu,\mathrm{ord}]{\infty}(\kappa)$. Then the universal deformation space $\widehat{U}_{\mathfrak{Q}}$ of $\BT{\infty}$ at $\mathfrak{Q}$ admits a canonical structure of a tower
    \[
    \widehat{U}_{\mathfrak{Q}} = \widehat{U}_{n}\to \widehat{U}_{n-1}\to \cdots \to \widehat{U}_{0} = \Spf W(\kappa)
    \]
    of formal schemes over $W(\kappa)$, where, for each $i>0$, $\widehat{U}_{i}$ is a formal $p$-divisible group over $\widehat{U}_{i-1}$;
\end{corollary}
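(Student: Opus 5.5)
The plan is to reduce to Corollary~\ref{cor:BPnu_complete_local_ring} via the open immersion of Proposition~\ref{prop:ordinary_locus_description}. The deformation space $\widehat{U}_{\mathfrak{Q}}$ is the universal deformation of $\BT{\infty}$ at $\mathfrak{Q}$. Formation of complete local rings at a $\kappa$-point only depends on an open neighborhood of that point, and $\BT[\mathcal{G},\mu,\mathrm{ord}]{\infty}\subset \BT{\infty}$ is open. So $\widehat{U}_{\mathfrak{Q}}$ agrees with the universal deformation space of $\BT[\mathcal{G},\mu,\mathrm{ord}]{\infty}$ at $\mathfrak{Q}$. The same openness shows that every deformation of $\mathfrak{Q}$ over an Artin local ring with residue field $\kappa$ again lies in the $\mu$-ordinary locus, since that locus is defined by a condition on the underlying reduced point.

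Next we pass to the limit $n=\infty$ in Proposition~\ref{prop:ordinary_locus_description}. For each finite $n$ we have $\BT[\mathcal{P}^+_\nu,\mu]{n}\isomto\BT[\mathcal{G},\mu,\mathrm{ord}]{n}$, and these isomorphisms are compatible with the transition maps, since both sides are functorial in $n$ and the map $\BT[\mathcal{P}^+_\nu,\mu]{n}\to\BT{n}$ is induced by $\mathcal{P}^+_\nu\to\mathcal{G}$. The ordinary locus at level $\infty$ is the preimage of the ordinary locus at any finite level, because its mod-$p$ fiber is defined through the map to $\mathcal{G}$-zips, which factors through $\BT{1}$. Therefore
\[
\BT[\mathcal{G},\mu,\mathrm{ord}]{\infty}=\varprojlim_n\BT[\mathcal{G},\mu,\mathrm{ord}]{n},
\]
and taking inverse limits gives an isomorphism $\BT[\mathcal{P}^+_\nu,\mu]{\infty}\isomto\BT[\mathcal{G},\mu,\mathrm{ord}]{\infty}$. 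In particular $\mathfrak{Q}$ lifts to a unique-up-to-isomorphism $\mathfrak{P}_0\in\BT[\mathcal{P}^+_\nu,\mu]{\infty}(\kappa)$, and the induced map on deformation functors is an equivalence.

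Combining these two steps identifies $\widehat{U}_{\mathfrak{Q}}$ with $\widehat{U}_{\mathfrak{P}_0}$. Corollary~\ref{cor:BPnu_complete_local_ring} then gives the canonical tower of formal $p$-divisible groups. Canonicity follows because the tower on $\widehat{U}_{\mathfrak{P}_0}$ is canonical and the identification is canonical: the lift $\mathfrak{P}_0$ is determined up to unique isomorphism by fully faithfulness at every finite level.

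The main obstacle is the passage to the limit. Deformation functors of $\BT{\infty}$ are computed as limits over $n$, and we must check that the limit of the open immersions is still an isomorphism onto the ordinary locus on Artinian points. On Artinian local rings $C$ the groupoids $\BT{\infty}(C)=\varprojlim_n\BT{n}(C)$ are honest limits, so an equivalence at each finite level yields an equivalence in the limit. This step requires some care with $\lim^1$ issues for automorphism groups, but these are handled because the equivalences hold levelwise.
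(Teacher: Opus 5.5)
Your proposal is correct and is essentially the argument the paper intends; the corollary is stated without proof right after Proposition~\ref{prop:ordinary_locus_description}. You pass the isomorphism $\BT[\mathcal{P}^+_\nu,\mu]{n}\isomto\BT[\mathcal{G},\mu,\mathrm{ord}]{n}$ to the limit over $n$, use openness to identify the deformation space of $\BT{\infty}$ at $\mathfrak{Q}$ with that of $\BT[\mathcal{P}^+_\nu,\mu]{\infty}$ at the lift $\mathfrak{P}_0$, and then apply Corollary~\ref{cor:BPnu_complete_local_ring}, which is exactly that route. Your worry about $\lim^1$ terms is unnecessary, since a limit of a compatible system of equivalences of groupoids is automatically an equivalence.
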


\begin{remark}
    [Work of Moonen and Shankar-Zhou]
In~\cite{Moonen2004-cc}, Moonen defines the notion of a \emph{cascade} and shows that the deformation spaces of $\mu$-ordinary formal $\mathcal{O}$-modules have a natural cascade structure. This was extended to Hodge type $\mu$-ordinary deformation spaces by Shankar--Zhou~\cite{Shankar2021-rp}. One can recover these structures from the optic of Corollary~\ref{cor:complete_local_ring_ordinary_locus} when $(\mathcal{G},\mu)$ is of \emph{Hodge type}---that is, when we have a closed embedding of it into $(\GL_h,\mu_d)$ for appropriately chosen $h$ and $d$ (see Example~\ref{ex:gl_n_truncated_bts}). However, an interesting point is that the cascade structure does not appear to be canonical: It depends on the choice of representation into $\GL_h$.
\end{remark}

 \begin{remark}
     \label{rem:complete_local_ring_ordinary_locus}
Corollary~\ref{cor:complete_local_ring_ordinary_locus} tells us that, for any aperture $\mathfrak{Q}\in \BT[\mathcal{G},\mu,\mathrm{ord}]{\infty}(\kappa)$, the deformation space $\mathrm{Def}_{\mathfrak{Q}}$ over $W(\kappa)$ admits a canonical section obtained as the successive composition of identity sections of a tower of relative formal $p$-divisible groups.
 \end{remark}

 \begin{definition}
    [Canonical lift for $\mu$-ordinary apertures]
\label{defn:canonical_lift_local}
The lift of $\mathfrak{Q}$ to $\BT{\infty}(W(\kappa))$ corresponding to the canonical section of the deformation space $\mathrm{Def}_{\mathfrak{Q}}$ is called the \defnword{canonical lift} of $\mathfrak{Q}$ and is denoted $\mathfrak{Q}^{\mathrm{can}}$. Unwinding definitions, we find the following: $\mathfrak{Q}$ is in the image of the map $\BT[\mathcal{M}_\nu,\mu]{\infty}(\kappa)\to \BT{\infty}(\kappa)$, where the source is the space of $\kappa$-points of a formally \'etale stack over $W(\kappa)$, giving us an isomorphism 
\[
\BT[\mathcal{M}_\nu,\mu]{\infty}(W(\kappa))\isomto\BT[\mathcal{M}_\nu,\mu]{\infty}(\kappa).
\]
Now $\mathfrak{Q}^{\mathrm{can}}$ is the lift of $\mathfrak{Q}$ along this isomorphism.
\end{definition}

\begin{remark}
[Description in terms of Lubin--Tate groups]
    \label{rem:canonical_lift_algebraically_closed}
If $\kappa$ is algebraically closed, by Remarks~\ref{rem:central_cochar_case} and~\ref{rem:lt_apertures}, $\mf{Q}^{\mr{can}}$ can be obtained as follows. Set $\mathcal{T}_0 = \Res_{\hat{\mc{O}}/\Int_p}\Gm$, and let $r_\mu\colon\mathcal{T}_0\to \mathcal{M}_\nu$ be the reflex norm. Then any Lubin--Tate formal $\hat{\mc{O}}$-module over $W(\kappa)$ will correspond to a $(\mathcal{T}_0,\mu_0)$-aperture over $W(\kappa)$ and its pushforward along $r_\mu$ will give a $(\mathcal{M}_\nu,\mu)$-aperture isomorphic to $\mf{Q}^{\mr{can}}$.
\end{remark}

\subsection{Filtered \texorpdfstring{$F$}{F}-crystal realization}
\label{sub:filtered_f_crystals_and_f_isocrystals}

We now review some material on crystalline realizations of apertures (cf.\@ \cite{IKY3}, which allows one to recover much of the material here via Tannakian considerations.).

\begin{remark}
  [Crystalline realization over semiperfectoid rings]
\label{rem:apertures_semiperfect}
For any semiperfect (animated commutative) $\Field_p$-algebra $R$, we have a canonical comparison isomorphism $\Prism_{R}{\isomto}A_{\mathrm{crys}}(R)$ (see for instance~\cite[Remark 6.9.5]{gmm}). Hence a $\mathcal{G}$-bundle over $R^{\Prism} = \Spf \Prism_R$ is the same as one over $A_{\mathrm{crys}}(R)$. Suppose now that we have $\mathfrak{Q}\in \BT{\infty}(R)$: Then just as in~\cite[Remark 6.3.4]{bhatt_lectures}, the  $\mathcal{G}$-bundle $T_{\mathrm{crys}}(\mathfrak{Q})$ in crystals associated with $j_{\Prism}^* \mathfrak{Q}$ is equipped with an isomorphism $\varphi^*T_{\mathrm{crys}}(\mathfrak{Q})[\nicefrac{1}{p}]{\isomto}T_{\mathrm{crys}}(\mathfrak{Q})[\nicefrac{1}{p}]$ of $\mathcal{G}$-bundles over $A_{\mathrm{crys}}(R)$.
\end{remark}

\begin{construction}
  [Crystalline realization]
\label{const:crystalline_realization}
By quasisyntomic descent, Remark~\ref{rem:apertures_semiperfect} tells us that for any $\Field_p$-algebra $R$, every $\mathfrak{Q}\in \BT{\infty}(R)$ yields a $\mathcal{G}$-bundle in $F$-crystals over $R$, which we denote by $T_{\mathrm{crys}}(\mathfrak{Q})$. See~\cite[\S 1.1.1]{IKY3} for an explanation in the context of vector bundle $F$-gauges.
\end{construction}

\begin{remark}
    [$F$-isocrystals over perfect rings]
Suppose that $R$ is a perfect $\Field_p$-algebra. Then, $\mathrm{Isoc}_G(R)$ is the groupoid of pairs $(\mathcal{E},\varphi_{\mathcal{E}})$, where $\mathcal{E}$ is a $G$-torsor over $W(R)[\nicefrac{1}{p}]$ and $\varphi_{\mathcal{E}}\colon\varphi^*\mathcal{E}{\isomto}\mathcal{E}$ is an isomorphism. The functor $\BT{\infty}(R)\to \mathrm{Isoc}_G(R)$ from Construction~\ref{const:newton_map} can now be understood as follows. Given $\mf{Q}\in \BT{\infty}(R)$, one considers the $G$-bundle in $F$-isocrystals underlying the $\mathcal{G}$-bundle in $F$-crystals $T_{\mathrm{crys}}(\mathfrak{Q})$: This corresponds to a $G$-bundle over $W(R)[\nicefrac{1}{p}]$ equipped with an isomorphism from its Frobenius twist, and hence an object of $\mathrm{Isoc}_G(R)$. 
\end{remark}

\begin{remark}
  [Crystals and the de Rham realization]
\label{rem:crystals_and_de_rham}
Suppose that $R$ is a $p$-complete $\hat{\mathcal{O}}$-algebra. Then every crystal in $\mathcal{G}$-bundles over $R/pR$ yields a $\mathcal{G}$-bundle over $R$; here, we are using the fact that $R\to R/pR$ is a pro-divided power thickening. In particular, if we have $\mathfrak{Q}\in \BT{\infty}(R)$ reducing to $\mathfrak{Q}_0\in \BT{\infty}(R/pR)$, then we obtain a $\mathcal{G}$-bundle over $R$ associated with $T_{\mathrm{crys}}(\mathfrak{Q}_0)$. This $\mathcal{G}$-bundle is in fact canonically isomorphic to $T_{\dR}(\mathfrak{Q})$; see~\cite[Theorem 1.19]{IKY3} and its proof.
\end{remark}

\begin{remark}
  [Frobenius lifts and filtrations]
\label{rem:frobenius_lifts}
Suppose that $R$ is a flat $\hat{\mathcal{O}}$-algebra equipped with a Frobenius lift $\varphi\colon R\to R$. As in Example~\ref{ex:frobenius_lifting_frame}, this gives us a Breuil--Kisin frame $\underline{R}$ for $R$, and we obtain a functor $\Phi_{\underline{R}}\colon\BT{\infty}(R/pR)\to \Wind{\underline{R}}{\infty}(R/pR)$. Suppose that we have $\mathfrak{Q}_0\in \BT{\infty}(R/pR)$. One finds that in the notation of Remark~\ref{rem:windows_alternate}, $\Phi_{\underline{R}}(\mathfrak{Q}_0)$ corresponds to the tuple
\[
(T_{\mathrm{crys}}(\mathfrak{Q}_0),\Fil^\bullet_{\mathrm{Hdg}}T_{\dR}(\mathfrak{Q}_0),\alpha)
\]
with $\alpha$ an isomorphism
\[
  \varphi^*T_{\mathrm{crys}}(\mathfrak{Q}_0)'{\isomto}T_{\mathrm{crys}}(\mathfrak{Q}_0),
\]
where $T_{\mathrm{crys}}(\mathfrak{Q}_0)'$ is the modification along $\Fil^\bullet_{\mathrm{Hdg}}T_{\dR}(\mathfrak{Q}_0)$ from Remark \ref{rem:modifications_in_terms_of_representations}.
\end{remark}

\begin{remark}
  [Strong divisibility]
\label{rem:strong_divisibility}
In the situation of Remark~\ref{rem:frobenius_lifts}, suppose that $\mathfrak{Q}_0$ lifts to $\mathfrak{Q}\in \BT{\infty}(R)$. Then we have a lift $\Fil^\bullet_{\mathrm{Hdg}}T_{\dR}(\mathfrak{Q})$ of the filtered bundle $\Fil^\bullet_{\mathrm{Hdg}}T_{\dR}(\mathfrak{Q})$. For $\Lambda\in \mathrm{Rep}_{\Int_p}(\mathcal{G})$, let $\Fil^\bullet_{\mathrm{Hdg}}V$ be the filtered finite locally free $R$-module obtained by twisting $\Lambda$ by $\Fil^\bullet_{\mathrm{Hdg}}T_{\dR}(\mathfrak{Q})$. Then Remark~\ref{rem:modifications_in_terms_of_representations}  tells us that we have a functorial-in-$\Lambda$ isomorphism
\[
  \varphi^*\bigg(\sum_i p^{-i}\Fil^i_{\mathrm{Hdg}}V\bigg)\isomto V
\]
of finite locally free $R$-modules.
\end{remark}

\begin{construction}
  [Filtered $F$-crystals]
\label{const:filtered_F-crystal}
Let $\kappa$ be a perfect field, and suppose that $\Spf R$ is a base formal $W(\kappa)$-scheme. Suppose that we have $\mathfrak{Q}\in \BT{\infty}(R)$ lifting $\mathfrak{Q}_0\in \BT{\infty}(R/pR)$. Then $T_{\dR}(\mathfrak{Q})$ is equipped with a topologically nilpotent integrable connection $\nabla$, which completely determines the $\mathcal{G}$-bundle in crystals $T_{\mathrm{crys}}(\mathfrak{Q}_0)$. If $R$ is equipped with a Frobenius lift $\varphi\colon R\to R$, then the $F$-crystal structure on $T_{\mathrm{crys}}(\mathfrak{Q}_0)$ gives us an isomorphism
\[
  \varphi^*T_{\dR}(\mathfrak{Q})[\nicefrac{1}{p}]{\isomto}T_{\dR}(\mathfrak{Q})[\nicefrac{1}{p}]
\]
of $\mathcal{G}$-bundles over $R[\nicefrac{1}{p}]$ that is parallel for $\nabla$. 

For any representation $\Lambda$, this gives us an isomorphism $\varphi^* V[\nicefrac{1}{p}]{\isomto}V
[\nicefrac{1}{p}]$, which restricts to the integral isomorphism explained in Remark~\ref{rem:strong_divisibility}. Moreover, the filtration $\Fil^\bullet_{\mathrm{Hdg}}T_{\dR}(\mathfrak{Q})$ satisfies Griffiths transversality (in the sense that the associated filtration on the twist of any representation of $\mathcal{G}$ by $T_{\dR}(\mathfrak{Q})$ satisfies Griffiths transversality); see~\cite[Theorem 2.10]{IKY3}. In sum, in the language of~\cite[\S 2.1.2]{IKY3}, for any base formal $W(\kappa)$-scheme $\mathfrak{X}$, and $\mathfrak{Q}\in \BT{\infty}(\mathfrak{X})$, we can associate with $\mathfrak{Q}$ an exact and monoidal functor from $\Rep_{\Int_p}(\mathcal{G})$ to the category of \emph{strongly divisible filtered $F$-crystals} over $\mathfrak{X}$.
\end{construction}

\subsection{The \'etale realization and \texorpdfstring{$p$}{p}-adic comparison}
\label{sub:the_etale_realization}

Here, we recall the construction of the \'etale realization of apertures and some $p$-adic comparison results for it that make precise its relationship with the objects defined in the previous subsection.

\begin{construction}
  [The \'etale realization for apertures]
\label{const:etale_realization_apertures}
Let $R$ be a derived $p$-complete animated commutative ring. To an object $\mathfrak{Q}$ of $B\mc{G}(R^{\mathrm{syn}}\otimes\Int/p^n\Int)$,  we will attach an object $T_{\et}(\mathfrak{Q})$ that specializes to the construction given in Corollary~\ref{cor:guo_reinecke} when $R$ is $p$-quasisyntomic and $p$-torsion free. 

For every object $(A,I,R\to \overline{A})$ in $R_\Prism$, Remark~\ref{rem:prisms_and_prismatization} tells us that $j^*_{\Prism}\mathfrak{Q}\in B\mc{G}(R^\Prism\otimes\Int/p^n\Int)$ yields a $\mathcal{G}$-torsor $\mathcal{P}_{\mathfrak{Q}}(A,I,R\to\overline{A})$ over $A/{}^{\mathbb{L}}p^n$. Furthermore, just as in~\cite[Remark 6.3.4]{bhatt_lectures}, the fact that we have the underlying syntomic torsor $\mathfrak{Q}$ implies that there is a natural isomorphism of $\mathcal{G}$-torsors 
\[
  \varphi_\mc{P}\colon \varphi^*\mathcal{P}_{\mathfrak{Q}}(A,I,R\to\overline{A})[\nicefrac{1}{I}]{\isomto}\mathcal{P}_{\mathfrak{Q}}(A,I,R\to\overline{A})[\nicefrac{1}{I}]
\]
over $(A/{}^{\mathbb{L}}p^n)[\nicefrac{1}{I}]$. Now, just as in Constructions 6.3.1 and 6.3.2 of \emph{op.\@ cit.\@}, we obtain an object $T_{\et}(\mathfrak{Q})$ in $\mathrm{Loc}_{\mathcal{G}(\Int/p^n\Int)}(R[\nicefrac{1}{p}])$ characterized by the equality
\[
  \Gamma\big(\Spec T[\nicefrac{1}{p}],T_{\et}(\mathfrak{Q})\big) = \mathcal{P}_{\mathfrak{Q}}(\Prism_T,I_T,R\to T)[\nicefrac{1}{I_T}]^{\varphi_\mc{P} = \mr{id}},
\]
for $p$-torsion free perfectoid $R$-algebras $T$. Taking the limit over $n$ gives us a realization functor
\[
  T_{\et}\colon B\mc{G}(R^{\mathrm{syn}})\to \mathrm{Loc}_{\mathcal{G}(\Int_p)}(R[\nicefrac{1}{p}]).
\]
This construction globalizes to a general derived $p$-adic formal scheme $\mf{X}$ in the obvious way.
\end{construction}

\begin{remark}
    [The \'etale realization of the canonical lift]
\label{rem:etale_realization_canonical_lift}
Suppose that $\kappa$ is a perfect field over $k$ and that we have a $\mu$-ordinary aperture $\mathfrak{Q}\in \BT[\mathcal{G},\mu,\mathrm{ord}]{\infty}(\kappa)$ with canonical lift $\mathfrak{Q}^{\mathrm{can}}\in \BT[\mathcal{G},\mu,\mathrm{ord}]{\infty}(W(\kappa))$. The \'etale realization $T_{\et}(\mf{Q}^{\mr{can}})$ is obtained as follows: Take $\mathfrak{P}\in \BT[\mathcal{M}_\nu,\mu]{\infty}(W(\kappa))$ lifting $\mathfrak{Q}^{\mathrm{can}}$, consider the $\mathcal{M}_\nu(\Int_p)$-local system $T_{\et}(\mathfrak{P})$ and look at the associated $\mathcal{G}(\Int_p)$-local system obtained via change of structure group. If $\kappa$ is algebraically closed, then Remark~\ref{rem:canonical_lift_algebraically_closed} tells us more: $T_{\et}(\mf{P})$ is isomorphic to the pushforward along the reflex norm $r_\mu\colon \mathcal{T}_0\to \mathcal{M}_\nu$ of the $\mathcal{T}_0(\Int_p)$-torsor over $W(\kappa)[\nicefrac{1}{p}]$ obtained from some (hence any) Lubin--Tate formal $\hat{\mc{O}}$-module over $W(\kappa)$.
\end{remark}

\begin{remark}
  [Filtered $F$-isocrystals and crystalline comparison]
\label{rem:filtered_f-isocrystals}
Let $\kappa$ be a perfect field over $\hat{\mathcal{O}}$ of characteristic $p$ and let $\mathfrak{X}$ be a base formal $W(\kappa)$-scheme. Suppose that we have $\mathfrak{Q}\in \BT{\infty}(\mathfrak{X})$. For any $\Lambda\in \mathrm{Rep}_{\Int_p}(\mathcal{G})$, we can now associate two filtered $F$-isocrystals\footnote{See~\cite[p. 28]{imai2024tannakianframeworkprismaticfcrystals} for this notion} over the special fiber $\mathfrak{X}_{\kappa}$: 
\begin{itemize}
  \item The filtered $F$-isocrystal associated with the filtered $F$-crystal $\Fil^\bullet_{\mathrm{Hdg}}V$ from Construction~\ref{const:filtered_F-crystal};
  \item The filtered $F$-isocrystal associated with the crystalline $\Int_p$-local system $(\Lambda)_{T_{\et}(\mathfrak{Q})}$.
\end{itemize}
By~\cite[Theorem 2.10]{IKY3}, these two filtered $F$-isocrystals are canonically isomorphic.
\end{remark}

\begin{remark}
    [Filtered bundles and de Rham comparison]
\label{rem:filtered_de_rham_comparison}
As usual, let $\Reg{K}$ be a complete discrete valuation ring over $\hat{\mathcal{O}}$ with perfect residue field, and let $\mathfrak{X}$ be a base formal $\Reg{K}$-scheme. Suppose that we are given $\mathfrak{Q}\in \BT{\infty}(\mathfrak{X})$ lifting $\mathsf{Q}\in \mathrm{Loc}^{\mathrm{crys}}_{\mathcal{G}(\Int_p)}(\mathfrak{X}_\eta)$. We can now associate two filtered $G$-bundles over $\mathfrak{X}_\eta$ equipped with integrable connections:
\begin{itemize}
    \item The generic fiber of the Hodge-filtered de Rham realization $\Fil^\bullet_{\mathrm{Hdg}}T_{\dR}(\mathfrak{Q})$;
    \item As $\mathsf{Q}$ is crystalline, and therefore in particular de Rham, we have the filtered $G$-bundle $\Fil^\bullet D_{\dR}(\mathsf{Q})$ associating with each $V\in \Rep_{\Rat_p}(G)$, the filtered vector bundle $\Fil^\bullet D_{\dR}((V)_{\mathsf{Q}})$ from~\cite[Theorem 3.7(iv)]{Liu2017-yz}.
\end{itemize}
\end{remark}

\begin{lemma}
    \label{lem:filtered_de_rham_comparison}
The two filtered bundles with integrable connections from \emph{Remark \ref{rem:filtered_de_rham_comparison}} are isomorphic.
\end{lemma}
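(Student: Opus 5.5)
Since both objects are exact tensor functors out of $\Rep_{\Int_p}(\mathcal{G})$ (after inverting $p$), the plan is to construct a tensor-compatible isomorphism on each representation $\Lambda$, functorial in $\Lambda$. Write $\Fil^\bullet_{\mathrm{Hdg}}V_\Lambda$ for the twist of $\Lambda$ by $\Fil^\bullet_{\mathrm{Hdg}}T_{\dR}(\mathfrak{Q})$, and $\mathbb{L}_\Lambda = (\Lambda)_{\mathsf{Q}}$ for the crystalline local system. The isomorphism should then be the canonical one
\[
D_{\dR}(\mathbb{L}_\Lambda[\nicefrac{1}{p}]) \simeq V_\Lambda[\nicefrac{1}{p}],
\]
obtained by combining the filtered $F$-isocrystal comparison of Remark~\ref{rem:filtered_f-isocrystals} with the relationship between $D_{\mathrm{crys}}$ and $D_{\dR}$ for crystalline local systems. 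Concretely, by~\cite[Theorem 3.7]{Liu2017-yz} (or its analogue in~\cite{Guo2024-al}), for a crystalline local system on $\mathfrak{X}_\eta$ the filtered bundle with connection $D_{\dR}$ is obtained by evaluating the associated filtered $F$-isocrystal on the pro-PD thickening $\mathfrak{X}_{\kappa}\hookrightarrow \mathfrak{X}$ (more precisely on $\mathfrak{X}\to \mathfrak{X}_{p=0}$), with the filtration being part of the filtered $F$-isocrystal datum.

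The steps are as follows.
\begin{enumerate}
\item By Remark~\ref{rem:filtered_f-isocrystals}, there is a canonical functorial isomorphism of filtered $F$-isocrystals between the one built from $\Fil^\bullet_{\mathrm{Hdg}}V_\Lambda$ (Construction~\ref{const:filtered_F-crystal}) and the one attached to $\mathbb{L}_\Lambda$.
\item Evaluate both on $\mathfrak{X}$ viewed as a pro-PD thickening of its reduction. On the aperture side, Remark~\ref{rem:crystals_and_de_rham} identifies this evaluation with $T_{\dR}(\mathfrak{Q})$, compatibly with the connection $\nabla$ and the Hodge filtration of Construction~\ref{const:filtered_F-crystal}. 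On the Galois side, the evaluation gives $D_{\dR}(\mathbb{L}_\Lambda)$ with its connection and filtration.
\item Conclude by functoriality and exactness in $\Lambda$, which makes the isomorphisms compatible with tensors. The result is an isomorphism of filtered $G$-bundles with integrable connection.
\end{enumerate}

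For the reduction in the second step, I would work locally, assuming $\mathfrak{X}=\Spf R$ is affine and $R$ carries a Frobenius lift as in Example~\ref{ex:bk_frames_for_base_algebras}. This is allowed because both sides are compatible with étale localization. When $K$ is ramified, one must pass through the Breuil--Kisin frame $\Sig_R$ and the $p$-adically completed PD envelope of $\Sig_R\to R$, where $u\mapsto \pi$. This is the usual way filtered $F$-isocrystals over $R_0$ are transported to filtered bundles over $R[\nicefrac{1}{p}]$.

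The main obstacle is the compatibility of \emph{filtrations}, as opposed to the underlying bundles with connection. For $\mathfrak{X}$ over $W(\kappa)$, that compatibility is exactly the content of Remark~\ref{rem:filtered_f-isocrystals}. In the ramified case, however, the filtration lives on $\mathfrak{X}_\eta$ and is not intrinsic to the isocrystal on $\mathfrak{X}_\kappa$. For this case I would first try to reduce to checking the filtrations pointwise. Both are filtrations by subbundles of the same bundle on $\mathfrak{X}_\eta$ ($p$-torsion-free, so a dense set of classical points suffices), and at a classical point $x\in\mathfrak{X}_\eta(F)$ the statement becomes the classical fact that the Hodge filtration of the aperture over $\Reg{F}$ matches $D_{\dR}$. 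That fact follows from the Breuil--Kisin description (Remark~\ref{rem:modifications_in_terms_of_representations}) together with Kisin's comparison for crystalline representations, or equivalently from~\cite[Theorem 2.10]{IKY3} applied over $\Reg{F}$.
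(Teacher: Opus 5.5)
Your unramified case is the paper's: when $\Reg{K}=W(\kappa)$ the lemma is immediate from Remark~\ref{rem:filtered_f-isocrystals}. For ramified $K$ the two routes diverge. The paper notes that the proof of \cite[Theorem 2.10]{IKY3}, after its first paragraph, never uses the Frobenius structure. That argument therefore runs verbatim with $\Fil^\bullet_{\mathrm{Hdg}}T_{\dR}(\mathfrak{Q})[\nicefrac{1}{p}]$ and $\Fil^\bullet D_{\dR}(\mathsf{Q})$ in place of $\mathbb{D}_{\mathrm{crys}}[\nicefrac{1}{p}]$ and $D_{\mathrm{crys}}$. This handles every base formal $\Reg{K}$-scheme, discrete valuation rings included, at once and with no further input.

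You instead identify the underlying bundles with connection by evaluating $F$-isocrystals on the pro-PD thickening, and then compare the filtrations at classical points. The reduction is sound: both filtrations are by subbundles, $\mathfrak{X}_\eta$ is reduced with dense classical points (this, not $p$-torsion-freeness, is what you use), and both constructions commute with specialization. Two corrections are needed, though.

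\textbf{Step 1 overreaches.} Remark~\ref{rem:filtered_f-isocrystals} is stated only over $W(\kappa)$. For ramified bases you have only the \emph{unfiltered} $F$-isocrystal comparison, for instance from \cite{Guo2024-al} applied to the prismatic $F$-crystal underlying $\mathfrak{Q}$, whose crystalline realization is $T_{\mathrm{crys}}(\mathfrak{Q}_0)$. That unfiltered comparison is all Step 2 actually needs.

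\textbf{The pointwise input is the lemma itself.} What you need at a point is the ramified discrete valuation ring case of the lemma, which is exactly what Lemma~\ref{lem:type_of_bt_mu} relies on later. So your fallback, ``\cite[Theorem 2.10]{IKY3} applied over $\Reg{F}$'', is unavailable when $F$ is ramified over $W(\kappa)[\nicefrac{1}{p}]$. The Kisin route has to carry the weight, and it does work:
\begin{enumerate}
\item Take the window $(\mathcal{L},\Fil^\bullet L,\xi)$ over $\Sig_{\Reg{F}}$ attached to $(\Lambda)_{\mathfrak{Q}_x}$, after a twist making the weights non-negative.
\item Put $\mathfrak{M}=\mathcal{L}'$, so that $\xi$ gives $\varphi^*\mathfrak{M}\simeq\mathcal{L}\subset\mathfrak{M}$.
\item Then Kisin's $\{y\in\varphi^*\mathfrak{M}:(1\otimes\varphi)(y)\in E^i\mathfrak{M}\}$ is $\mathcal{L}\cap E^i\mathcal{L}'$, which reduces modulo $E$ to $\Fil^iL$. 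Check this in a splitting as in Remark~\ref{rem:modifications_in_terms_of_representations}.
\end{enumerate}
Two compatibilities remain. First, $\mathfrak{M}$ must be Kisin's module for $(\Lambda)_{\mathsf{Q}_x}$ under the paper's Nygaard-shifted convention $I=\varphi^*I'$, up to the usual duality conventions. Second, Kisin's identification $\varphi^*\mathfrak{M}\otimes_{\Sig}F\simeq D_{\dR}$ must agree with the one coming from $T_{\dR}(\mathfrak{Q}_x)$ via Remark~\ref{rem:crystals_and_de_rham}. Both are standard checks using uniqueness of Frobenius-equivariant sections.

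Your route thus makes the pointwise content explicit, at the cost of a separate excursion into Breuil--Kisin theory. The paper's route costs nothing beyond the existing IKY3 argument.
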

\begin{proof}
   When $W(\kappa) = \Reg{K}$, this is immediate from Remark~\ref{rem:filtered_f-isocrystals}. The proof of Imai--Kato--Youcis cited there works also in this context, if one ignores $F$-structures. Indeed, everything after the first paragraph in \emph{loc.\@ cit.\@} works exactly the same, replacing $\mathbb{D}_\mathrm{crys}[\nicefrac{1}{p}]$ and $D_\mathrm{crys}$ with $\Fil^\bullet_\mr{Hdg}T_\mr{dR}(\mf{Q})[\nicefrac{1}{p}]$ and $\Fil^\bullet D_\mr{dR}(\mathsf{Q})$, respectively.
\end{proof}

\subsection{\'Etale realization over perfectoid rings}
\label{sub:etale_perfectoid}

In this subsection, we will give a more transparent proof that the \'etale realization on $\BT{n}$ is faithful for base formal $\Reg{K}$-schemes.

\begin{remark}
  [\'Etale $\mathcal{G}(\Int/p^n\Int)$-torsors over perfectoid rings]
\label{rem:GZpn_perfectoid}
Suppose that $R$ is perfectoid and $p$-torsion free. By Remark~\ref{rem:local_systems_tannakian}, an object of $\mr{Loc}_{\mc{G}(\bb{Z}/p^n\bb{Z})}(R[\nicefrac{1}{p}])$ is equivalent to an exact symmetric monoidal functor
\[
  \Rep_{\Int_p}(\mathcal{G})\to \mathrm{Loc}_{\Int/p^n\Int}(R[\nicefrac{1}{p}]).
\]
Now, tilting combined with Katz's Riemann-Hilbert equivalence (see~\cite[Proposition 3.6]{MR4600546}) tells us that the right-hand side is equivalent to the category of pairs $(\mathcal{V},\eta)$, where $\mathcal{V}$ is a finite locally free  module over $\Prism_R/p^n\Prism_R[\nicefrac{1}{\xi}]$ and $\eta\colon \varphi^* \mathcal{V}{\isomto}\mathcal{V}$ is an isomorphism of $\Prism_R/p^n\Prism_R[\nicefrac{1}{\xi}]$-modules. Therefore, we conclude that giving a $\mathcal{G}(\Int/p^n\Int)$-local system over $R[\nicefrac{1}{p}]$ is equivalent to giving a $\mathcal{G}$-torsor $\mathcal{Q}$ over $(\Prism_R/p^n\Prism_R)[\nicefrac{1}{\xi}]$ along with an isomorphism $\varphi_{\mathcal{Q}}\colon\varphi^* \mathcal{Q}{\isomto}\mathcal{Q}$.
\end{remark}

\begin{remark}
  [Quotient description of the image of the \'etale realization]
\label{rem:quotient_description_perfectoid_local_systems}
In terms of the description of the target from Remark~\ref{rem:GZpn_perfectoid}, the  \'etale realization functor $\BT{n}(R)\to \mathrm{Loc}_{\mathcal{G}(\Int/p^n\Int)}(R[\nicefrac{1}{p}])$ factors through the fully faithful sub-groupoid spanned by pairs $(\mathcal{Q},\varphi_{\mathcal{Q}})$ where $\mathcal{Q}$ can be trivialized over $(\Prism_{\tilde{R}}/p^n\Prism_{\tilde{R}})[\nicefrac{1}{\xi}]$ for some $p$-completely faithfully flat and \'etale map $R\to \tilde{R}$ of perfectoid rings. 

Observe that this sub-groupoid is the evaluation at $R$ of the \'etale sheafification of the functor on $p$-torsion free perfectoid rings given by
\[
  R\mapsto \big[\mathcal{G}(\Prism_R/p^n\Prism_R[\nicefrac{1}{\xi}])/_{\mathrm{Ad}_{\varphi}}(\mathcal{G}(\Prism_R/p^n\Prism_R[\nicefrac{1}{\xi}]))\big].
\]
Here, the right-hand side is the quotient by the $\varphi$-semilinear adjoint action
\begin{align*}
\mr{Ad}_\varphi\colon \mathcal{G}(\Prism_R/p^n\Prism_R[\nicefrac{1}{\xi}])\times \mathcal{G}(\Prism_R/p^n\Prism_R[\nicefrac{1}{\xi}])&\to \mathcal{G}(\Prism_R/p^n\Prism_R[\nicefrac{1}{\xi}])\\
(g,h)&\mapsto h^{-1}g\varphi(h).  
\end{align*}
Therefore, the \'etale realization map from Construction~\ref{const:etale_realization_apertures} can be viewed as the \'etale sheafification of the map
\[
\big[\mathcal{G}(\Prism_R/p^n\Prism_R)/H^{(n)}_\mu(R)\big]\to   \big[\mathcal{G}(\Prism_R/p^n\Prism_R[\nicefrac{1}{\xi}])/_{\mathrm{Ad}_{\varphi}}(\mathcal{G}(\Prism_R/p^n\Prism_R[\nicefrac{1}{\xi}]))\big]
\]
induced by 
\[
\mathcal{G}(\Prism_R/p^n\Prism_R)\xrightarrow{g\mapsto g\varphi(\mu(\xi))} \mathcal{G}(\Prism_R/p^n\Prism_R[\nicefrac{1}{\xi}])
\]
and the map of groups
\begin{equation}
\label{eqn:hmu_to_LG}
   H^{(n)}_\mu(R)=\mathcal{G}(\Prism_R/p^n\Prism_R)\times_{\mathcal{G}(R/p^nR)}\mathcal{P}^-_\mu(R/p^nR)\subset \mathcal{G}(\Prism/p^n\Prism_R)\to \mathcal{G}(\Prism_R/p^n\Prism_R[\nicefrac{1}{\xi}]).
\end{equation}
\end{remark}

\begin{remark}
\label{rem:Hnmu_description}
    In the situation above, the group $H^{(n)}_\mu(R)$ can be described more simply: We have
    \[
    H^{(n)}_\mu(R) = \mathcal{G}(\Prism_R/p^n\Prism_R)\cap  \mu(\xi)\mathcal{G}(\Prism_R/p^n\Prism_R)\mu(\xi)^{-1}\leqslant \mathcal{G}(\Prism_R/p^n\Prism_R[\nicefrac{1}{\xi}]).
    \]
    To see this, note that by~\cite[Remark 6.3.3]{MR4355476} the map
    \[
    \mathcal{P}^-_\mu(\Prism_R/p^n\Prism_R)\times (\Prism_R/p^n\Prism_R\otimes_{\hat{\mathcal{O}}}\mathfrak{g}_1) \xrightarrow{(h,X)\mapsto h\exp(\xi X)}\mathcal{G}(\Prism_R/p^n\Prism_R)
    \]
    is a bijection onto $H^{(n)}_\mu(R)$. Here $\mathfrak{g}_1\subset \mathfrak{g}_{\hat{\mathcal{O}}}$ is the weight-$1$ eigenspace for $\mu$.
\end{remark}

 \begin{proposition}
  \label{prop:perfectoid_faithful}
 Suppose that $R$ is perfectoid and $p$-torsion free. Then the \'etale realization functor
 \[
   \BT{n}(R)\to \mathrm{Loc}_{\mathcal{G}(\Int/p^n\Int)}(R[\nicefrac{1}{p}])  
 \]
 is faithful. 
 \end{proposition}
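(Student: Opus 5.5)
We would use the quotient descriptions from Remarks~\ref{rem:quotient_description_perfectoid_truncated} and~\ref{rem:quotient_description_perfectoid_local_systems}. Both sides are étale sheafifications of quotient groupoids, and faithfulness of a morphism of stacks is étale local, since it amounts to the induced map on automorphism sheaves being injective. It therefore suffices to work with the presheaf quotients over $p$-torsion free perfectoid $R$ (étale covers preserve this class) and check injectivity on stabilizers.

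Fix $g\in \mathcal{G}(\Prism_R/p^n\Prism_R)$, with automorphism group in $[\mathcal{G}(\Prism_R/p^n)/H^{(n)}_\mu(R)]$ given by
\[
\{h\in H^{(n)}_\mu(R):\ \tau(h)^{-1}g\sigma(h)=g\}.
\]
The realization sends $g$ to $g\varphi(\mu(\xi))$ and $h$ to its image under~\eqref{eqn:hmu_to_LG}. The compatibility
\[
h^{-1}\,g\varphi(\mu(\xi))\,\varphi(h)=\tau(h)^{-1}g\sigma(h)\varphi(\mu(\xi)),
\]
with $\sigma(h)=\varphi(\mu(\xi)h\mu(\xi)^{-1})$, shows the map is well defined. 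Faithfulness then reduces to the injectivity of
\[
\mathcal{G}(\Prism_R/p^n\Prism_R)\to \mathcal{G}\big((\Prism_R/p^n\Prism_R)[\nicefrac{1}{\xi}]\big).
\]
Because $\mathcal{G}$ is affine, this in turn reduces to the injectivity of $\Prism_R/p^n\to(\Prism_R/p^n)[\nicefrac{1}{\xi}]$, that is, to $\xi$ being a nonzerodivisor on $\Prism_R/p^n\Prism_R$.

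The key step is therefore showing that $\xi$ is regular mod $p^n$. By dévissage on $n$, using $p$-torsion freeness of $\Prism_R=A_{\inf}(R)$, it is enough to treat $n=1$: show $\xi$ is a nonzerodivisor on $\Prism_R/p=R^\flat$. Here $\xi$ maps to a generator $\varpi^\flat$ of the kernel ideal in $R^\flat$; one could write it up to unit as $\varphi(\ker\theta)$, which has the form $[\varpi^\flat]^p+p(\cdot)$. Because $R^\flat$ is perfect, a nonzero element whose image is nilpotent would have to be zero. So $\varpi^\flat$ is a nonzerodivisor on $R^\flat$ if and only if $R^\flat[\nicefrac{1}{\varpi^\flat}]\leftarrow R^\flat$ is injective. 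This is the step I expect to need care. It is equivalent to $\varpi^\flat$-torsion freeness of $R^\flat$, which corresponds under tilting to $p$-torsion freeness of $R$; one can use that $R^\flat/\varpi^\flat\simeq R/\varpi$ and compare torsion in the inverse systems.

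There is also an alternative, avoiding this computation. The group-level statement is equivalent to saying that an automorphism of an aperture that becomes trivial on the étale realization is trivial. One could deduce this from Theorem~\ref{thm:guo_reinecke}(1) via Corollary~\ref{cor:guo_reinecke}, since perfectoid $R$ is $p$-quasisyntomic and flat. This argument would be for $n=\infty$, however, so the direct argument above is preferable for finite $n$. Finally, we should check that the stabilizer comparison is compatible with étale sheafification. This holds because an automorphism on $R$ is trivial if and only if it is trivial étale locally, and both quotient presheaves have separated automorphism presheaves.
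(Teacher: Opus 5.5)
Your proposal is correct and follows the paper's route. The paper likewise reduces, via the quotient descriptions, to the injectivity of $H^{(n)}_\mu(R)\subset\mathcal{G}(\Prism_R/p^n\Prism_R)\to\mathcal{G}\big((\Prism_R/p^n\Prism_R)[\nicefrac{1}{\xi}]\big)$, which it calls easy, and your observation that $\xi$ is a nonzerodivisor on $\Prism_R/p^n\Prism_R$ is exactly what makes it so. The case $n=1$ is cleaner than your tilting sketch: $\xi$ is a nonzerodivisor on $\Prism_R$ and $\Prism_R/\xi\Prism_R\simeq R$ is $p$-torsion free, so $\xi a=pb$ forces $b\in\xi\Prism_R$ and hence $a\in p\Prism_R$.
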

 \begin{proof}
  Given Remark~\ref{rem:quotient_description_perfectoid_local_systems} and the description of $H^{(n)}_\mu(R)$ in~\eqref{eqn:Hmu_n_torsion_free_desc}, this reduces to the easy assertion that the map~\eqref{eqn:hmu_to_LG} is injective.   
 \end{proof}

 \begin{corollary}
  \label{cor:p-completely_smooth_faithful}
 Suppose that $\mf{X}$ is a base formal $\Reg{K}$-scheme. Then the \'etale realization functor
 \[
   \BT{n}(\mf{X})\to \mathrm{Loc}_{\mathcal{G}(\Int/p^n\Int)}(\mf{X}_\eta)  
 \]
 is faithful. 
 \end{corollary}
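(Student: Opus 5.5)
The plan is to reduce to Proposition~\ref{prop:perfectoid_faithful} by descent along a quasisyntomic perfectoid cover. Faithfulness means that for two objects $\mathfrak{Q}_1,\mathfrak{Q}_2\in\BT{n}(\mf{X})$, the map
\[
\underline{\mathrm{Isom}}(\mathfrak{Q}_1,\mathfrak{Q}_2)\to \underline{\mathrm{Isom}}\big(T_{\et}(\mathfrak{Q}_1),T_{\et}(\mathfrak{Q}_2)\big)
\]
on sets of morphisms is injective. Both sides are Zariski (indeed \'etale) sheaves on $\mf{X}$. It therefore suffices to prove injectivity locally, so we may assume $\mf{X}=\Spf R$ is affine with $R$ a base $\Reg{K}$-algebra; in particular $R$ is $p$-quasisyntomic and $p$-torsion free.

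Next, choose a quasisyntomic cover $R\to R_\infty$ with $R_\infty$ perfectoid and $p$-torsion free. For a base $\Reg{K}$-algebra one can take, for instance, the $p$-completion of $R\otimes_{R_0}$ (adjoin all $p$-power roots of coordinates and of $\pi$, together with $\Reg{C}$). This should be a $p$-completely faithfully flat map to a perfectoid ring, as in Remark~\ref{rem:quasisyntomic_qrsp}.

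Take two morphisms $f,g\colon\mathfrak{Q}_1\to\mathfrak{Q}_2$ with $T_{\et}(f)=T_{\et}(g)$. Pull both back to $R_\infty$. The \'etale realization is compatible with base change, since Construction~\ref{const:etale_realization_apertures} is functorial in $R$. Hence $T_{\et}(f|_{R_\infty})=T_{\et}(g|_{R_\infty})$ as maps of $\mathcal{G}(\Int/p^n\Int)$-local systems on $R_\infty[\nicefrac{1}{p}]$. Proposition~\ref{prop:perfectoid_faithful} then gives $f|_{R_\infty}=g|_{R_\infty}$.

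It remains to descend this equality to $R$. The isomorphism sheaf $\underline{\mathrm{Isom}}(\mathfrak{Q}_1,\mathfrak{Q}_2)$ is representable by an algebraic space affine over $\Spf R$, because $\BT{n}$ has affine diagonal (Theorem~\ref{thm:gmm}). Two $R$-points of an affine $R$-scheme that agree after the faithfully flat map $R/p^m\to R_\infty/p^m$, for every $m$, must agree: restriction along an fpqc cover is injective on points of a separated sheaf. Compatibility with the limit over $m$ is automatic, since $\Spf$-points are limits of the points over $R/p^m$.

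The main subtlety I expect is this descent step. One must check that $R/p^m\to R_\infty/p^m$ is genuinely faithfully flat, which uses $p$-complete flatness together with noetherianity or $p$-torsion freeness of $R$. One must also check that the $\mathrm{Isom}$ functor of $\BT{n}$ on $p$-nilpotent rings is an fpqc sheaf, which holds because it is affine over the base. A secondary point is ensuring the chosen cover is actually perfectoid (rather than merely semiperfectoid) and $p$-torsion free, so that Proposition~\ref{prop:perfectoid_faithful} applies. If explicit root adjunction is awkward, I would instead pass to an affinoid perfectoid pro-\'etale cover of the generic fiber and use its integral model.
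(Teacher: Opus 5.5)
Your proposal is correct and takes the same route as the paper: pass to a quasisyntomic cover by $p$-torsion free perfectoid rings (the paper gets its existence from \cite[Lemma 1.15]{imai2024tannakianframeworkprismaticfcrystals} rather than building it by hand) and apply Proposition~\ref{prop:perfectoid_faithful}. Your descent step, using the affine $\mathrm{Isom}$ sheaf and faithful flatness of $R/p^m\to R_\infty/p^m$, spells out what the paper's one-line proof leaves implicit.
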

 \begin{proof}
  This is because $\mf{X}$ admits a quasisyntomic cover $\{\Spf(R_i)\to \mf{X}\}$ with $R_i$ perfectoid and $p$-torsion free for all $i$; see~\cite[Lemma 1.15]{imai2024tannakianframeworkprismaticfcrystals}.
 \end{proof}

\section{Tate's full faithfulness for apertures and its consequences}
\label{sec:full_faithfulness_for_apertures_and_its_consequences}

In this section, we prove a generalization Tate's full faithfulness theorem for $p$-divisible groups to the context of apertures. We also derive some corollaries to this that will be of importance for the main theorems of this paper.

\subsection{Algebraization and Tate's full faithfulness}
\label{subsec:tate}

Let $\mathcal{O}$ be a mixed characteristic $(0,p)$ discrete valuation ring with completion $\hat{\mathcal{O}}$ and fraction field $E$. We will now define an algebraic variant of $\BT{n}$ over $\mathcal{O}$ by gluing it to the stack of $\mathcal{G}(\Int/p^n\Int)$-local systems over $E$, and we will use it to prove the aperture analogue of Tate's full faithfulness theorem for $p$-divisible groups by reducing it ultimately to Corollary~\ref{cor:guo_reinecke}. 

\begin{construction}
  [Algebraic BT]
For any animated commutative $\mathcal{O}$-algebra $R$, set
\[
  \BT[\mathcal{G},\mu,\mathrm{alg}]{n}(R) \defn \BT{n}(\hat{R})\times_{T_{\et},\mathrm{Loc}_{\mathcal{G}(\Int/p^n\Int)}(\hat{R}[\nicefrac{1}{p}])}\mathrm{Loc}_{\mathcal{G}(\Int/p^n\Int)}(R[\nicefrac{1}{p}]),
\]
where $T_{\et}$ is the \'etale realization functor (which we henceforth suppress from the notation). Also, set
\[
  \BT[\mathcal{G},\mu,\mathrm{alg}]{\infty}(R)  = \varprojlim_n \BT[\mathcal{G},\mu,\mathrm{alg}]{n}(R).
\]
\end{construction}

\begin{remark}
  For any $R$ and $1\leqslant n\leqslant \infty$, we obtain a canonical \'etale realization map
  \[
    T_{\et}\colon \BT[\mathcal{G},\mu,\mathrm{alg}]{n}(R)\to \mathrm{Loc}_{\mathcal{G}(\Int/p^n\Int)}(R[\nicefrac{1}{p}])
  \]
  obtained by projecting to the second component.

  This globalizes: For any Deligne--Mumford stack $\mathcal{X}$ over $\mathcal{O}$ with generic fiber $X$, we obtain a functor
  \[
  T_{\et}\colon \BT[\mathcal{G},\mu,\mathrm{alg}]{n}(\mathcal{X})\to \mathrm{Loc}_{\mathcal{G}(\Int/p^n\Int)}(X).
  \]
\end{remark}

\begin{theorem}
[Algebraic representability and affineness of diagonal]
  \label{thm:algebraization}
The functor $\BT[\mathcal{G},\mu,\mathrm{alg}]{n}$ is represented by a smooth $0$-dimensional Artin stack over $\mathcal{O}$ with \emph{affine} diagonal.
\end{theorem}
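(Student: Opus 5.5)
The plan is to present $\BT[\mathcal{G},\mu,\mathrm{alg}]{n}$ as a Beauville--Laszlo/Artin-type gluing of the $p$-adic formal stack $\BT{n}$ along its generic fiber with the algebraic stack $\mathrm{Loc}_{\mathcal{G}(\Int/p^n\Int),E}$, which is the classifying stack $B\underline{\mathcal{G}(\Int/p^n\Int)}$ over $E$. The argument has four steps.

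\textbf{Step 1: presentation.} By Theorem~\ref{thm:gmm}, $\BT{n}$ is a quasi-compact smooth $0$-dimensional formal Artin stack with affine diagonal. First I would choose a smooth surjection $\Spf A\to\BT{n}$ with $A$ $p$-adically complete and formally smooth over $\hat{\mathcal{O}}$. The \'etale realization of the universal object over $\Spf A$ gives a $\mathcal{G}(\Int/p^n\Int)$-local system on $A[\nicefrac{1}{p}]$. Concretely, this is a finite \'etale torsor $\mathcal{P}_n\to\Spec A[\nicefrac{1}{p}]$, obtained by algebraizing the finite \'etale cover of the rigid generic fiber via Elkik/Gabber.

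\textbf{Step 2: descend to $\mathcal{O}$.} Next I would use the Artin--Popescu/Elkik algebraization of smooth affine formal schemes. This gives a smooth $\mathcal{O}$-algebra $A_0$ (henselian along $p$) with $\hat A_0=A$. By Gabber's affine analog of proper base change (or Fujiwara--Gabber), the finite \'etale torsor over $A[\nicefrac{1}{p}]$ descends to $A_0^h[\nicefrac{1}{p}]$, and after \'etale localization to some $A_0[\nicefrac{1}{p}]$. Then the Beauville--Laszlo gluing for the fpqc square $A_0\to A_0[\nicefrac{1}{p}]$, $A_0\to\hat A_0$, which is valid for finite \'etale and torsor data, identifies the pullback of $\BT[\mathcal{G},\mu,\mathrm{alg}]{n}$ along $\Spec A_0$ with something representable. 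This produces a smooth cover $\Spec A_0\to\BT[\mathcal{G},\mu,\mathrm{alg}]{n}$.

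To see that the defining fiber product is an fpqc (indeed \'etale) stack on $\mathcal{O}$-algebras, I would use that both $\BT{n}(\hat{-})$ and $\mathrm{Loc}(-[\nicefrac{1}{p}])$ satisfy descent. I would also use the gluing equivalence for $\mathcal{G}$-torsors and local systems: $\mathrm{Loc}(R[\nicefrac{1}{p}])\simeq \mathrm{Loc}(\hat R[\nicefrac{1}{p}])\times\ldots$ is not needed, since only a fiber product of groupoids is involved. Smoothness and dimension $0$ then follow fiberwise. Over $E$, the stack is $B\mathcal{G}(\Int/p^n\Int)$, which is \'etale of dimension $0$. Over the completion it is $\BT{n}$.

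\textbf{Step 3: the diagonal, which I expect to be the main obstacle.} For two objects over $R$, the Isom sheaf is the fiber product of $\underline{\mathrm{Isom}}_{\BT{n}}$, a formal affine scheme over $\Spf\hat R$, with a finite \'etale scheme over $R[\nicefrac{1}{p}]$, fibered over the finite \'etale Isom of local systems on $\hat R[\nicefrac{1}{p}]$. Corollary~\ref{cor:p-completely_smooth_faithful} and Proposition~\ref{prop:perfectoid_faithful} (faithfulness of $T_{\et}$) suggest that the formal Isom maps injectively into the \'etale Isom on the generic fiber. The formal Isom is affine and of finite type over $\Spf\hat R$, and its generic fiber embeds into a finite \'etale cover. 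I would first show that it is finite over $\Spf\hat R$, using that $\BT{n}$ has finite inertia: its automorphism groups are finite-type affine groups of dimension $0$, and in fact are finite flat by the deformation theory of Theorem~\ref{thm:gmm}. Then, via Beauville--Laszlo for finite modules, I would glue the finite $\hat R$-algebra with the finite \'etale $R[\nicefrac{1}{p}]$-algebra along their common restriction to $\hat R[\nicefrac{1}{p}]$ to get a finite, hence affine, $R$-scheme. The delicate points are two. First, one must prove that the formal Isom is genuinely finite, rather than only quasi-finite, over $\Spf\hat R$. Second, one must check that its restriction to $\hat R[\nicefrac{1}{p}]$ agrees with the closed-open subscheme cut out in the \'etale Isom. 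For the second point I would apply the fact that a quasi-finite separated map with finite generic fiber, whose image is closed under specialization, is finite, together with the valuative properness argued using the Grothendieck--Messing square~\eqref{eqn:commuting_square}.
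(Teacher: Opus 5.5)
Your Step~3 rests on a false premise: the inertia of $\BT{n}$ is neither finite nor even quasi-finite. Because $\BT{n}$ is smooth of dimension $0$, the automorphism group at a geometric point has dimension equal to the codimension of its residual gerbe, so only points whose residual gerbe is open have finite automorphisms.

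Concretely, take $(\GL_2,\mu_1)$ with $n=1$ (Example~\ref{ex:gl_n_truncated_bts}) and $G=E[p]$ for a supersingular elliptic curve $E$ over $\overline{\mathbb{F}}_p$. Let $N\colon G\twoheadrightarrow G/\ker F\simeq\alpha_p\simeq\ker F\hookrightarrow G$. Then $N^2=0$, and $\lambda\mapsto 1+\lambda N$ embeds $\mathbb{G}_a$ into $\underline{\Aut}(G)$.

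So the formal Isom $\Spf S\to\Spf\hat R$ is affine of finite type but in general has positive-dimensional special fibres. Its generic fibre $X=\Spa(S[\nicefrac{1}{p}],S)$ is only an open subspace of the analytification $X'$ of the finite \'etale Isom $\Spec B$ (Remark~\ref{rem:diagonal_setup_adic}), and $S[\nicefrac{1}{p}]$ need not equal $\hat R[\nicefrac{1}{p}]\otimes_{R[1/p]}B$. There is therefore no finite $\hat R$-algebra to glue with $B$ by Beauville--Laszlo. Your quasi-finite, closed-under-specialization, valuative-properness argument also has nothing to start from, and the Grothendieck--Messing square gives no properness.

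The paper proceeds differently.
\begin{enumerate}
\item It obtains algebraicity from Artin--Lurie representability. The substantive inputs are local finite presentation (Lemma~\ref{lem:algebraic_finite_presentation}, where Gabber's theorem on $p$-henselian rings enters) and the cotangent complex from the algebraic Grothendieck--Messing square (Lemma~\ref{lem:algebraic_deformation_theory}).
\item The diagonal is then a finite-type algebraic space. It is separated by the faithfulness in Proposition~\ref{prop:perfectoid_faithful}.
\item The Achinger--Youcis criterion then reduces affineness of $\Delta|_{\Spec R}$ to density of the image of $B\to S[\nicefrac{1}{p}]$.
\end{enumerate}
That density (Proposition~\ref{prop:diagonal_rational_open}) is the real content. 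Over a perfectoid cover, Remark~\ref{rem:Hnmu_description} exhibits the aperture Isom inside the local-system Isom as an integrality condition. After linearizing with a faithful representation, the $\Phi$-fixed points form an Artin--Schreier finite \'etale space. In that space integrality is a rational open condition with dense image (Lemma~\ref{lem:linearization}), and density then descends along $R\to\hat R_\infty$.

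The same open-versus-equal mismatch undermines your Step~2. For $\Spec R\to\BT[\mathcal{G},\mu,\mathrm{alg}]{n}$, the fibre product with your chart involves the smooth formal algebraic space $\Spf\hat R\times_{\BT{n}}\Spf A$, which Beauville--Laszlo gluing cannot algebraize. So the claim that gluing ``identifies the pullback with something representable'' is unjustified. Your appeal to Gabber's theorem is the right ingredient, but it belongs in the local-finite-presentation check, not in a gluing argument.
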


The rest of this subsection and the next will be devoted to the proof of this theorem, with the proof of the affineness of the diagonal completed in the next subsection. For now, we record an important consequence.

\begin{definition}
    [$\eta$-normality]
\label{defn:eta_normal}
  Suppose that $\mathcal{X}$ is a Deligne--Mumford stack over $\mathcal{O}$. We will say that $\mathcal{X}$ is \defnword{$\eta$-normal}\footnote{See~\cite[Appendix A]{Achinger2022-ws}} if it admits an \'etale cover by affine schemes of the form $\Spec R$ with $R$ both $p$-torsion free and integrally closed in $R[\nicefrac{1}{p}]$. Note that, if $\mathcal{X}$ is normal then it is automatically $\eta$-normal.
\end{definition}

\begin{theorem}
[Tate full faithfulness for apertures]
\label{thm:full_faithfulness}
Let $\mathcal{X}$ be a Noetherian, $\eta$-normal Deligne--Mumford stack over $\mathcal{O}$ with generic fiber $X$. Then the functor 
\begin{equation*}T_{\et}\colon \BT[\mathcal{G},\mu,\mathrm{alg}]{\infty}(\mathcal{X})\to \mathrm{Loc}_{\mathcal{G}(\Int_p)}(X)
\end{equation*}
is fully faithful.
\end{theorem}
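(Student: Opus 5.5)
The plan is to imitate Tate's original argument. Theorem~\ref{thm:algebraization} turns the question into one about sections of an affine scheme, which then reduces to discrete valuation rings, where Corollary~\ref{cor:guo_reinecke} applies.

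\textbf{Reduction to sections of an affine scheme.} Fix $n\le\infty$ and two objects $\mathfrak{Q}_1,\mathfrak{Q}_2\in\BT[\mathcal{G},\mu,\mathrm{alg}]{\infty}(\mathcal{X})$. The question is \'etale local on $\mathcal{X}$: both sides are sheaves and $T_{\et}$ is compatible with restriction. So we may take $\mathcal{X}=\Spec R$ with $R$ Noetherian, $p$-torsion free and integrally closed in $R[\nicefrac{1}{p}]$. By Theorem~\ref{thm:algebraization} each $\BT[\mathcal{G},\mu,\mathrm{alg}]{m}$ has affine diagonal. Hence
\[
I_m\defn\underline{\mathrm{Isom}}(\mathfrak{Q}_{1,m},\mathfrak{Q}_{2,m})
\]
is an affine scheme $\Spec B_m$ over $R$, and $I\defn\varprojlim_m I_m$ is affine as well. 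By construction of the algebraic stack, its generic fiber $I[\nicefrac{1}{p}]$ is identified with the scheme of isomorphisms of the associated $\mathcal{G}(\Int/p^m\Int)$-local systems over $R[\nicefrac{1}{p}]$. So $T_{\et}$ on Hom-sets is just the restriction map $I(R)\to I(R[\nicefrac{1}{p}])$. It therefore suffices to show the following: a ring map $B_m\to R[\nicefrac{1}{p}]$, compatible in $m$, factors uniquely through $R$.

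\textbf{Uniqueness and reduction of existence to DVRs.} Uniqueness is immediate, since $R\to R[\nicefrac{1}{p}]$ is injective because $R$ is $p$-torsion free. For existence, it is enough to show that each element in the image of $B_m$ is integral over $R$, because $R$ is integrally closed in $R[\nicefrac{1}{p}]$. Since $R$ is Noetherian, an element $f\in R[\nicefrac{1}{p}]$ is integral over $R$ as soon as its image lies in $V$ for every map $R\to V$ to a discrete valuation ring with $p\neq0$ in $V$. This uses the valuative criterion for integral closure, realized by DVRs dominating the relevant localizations, e.g.\ via normalizations of $R/\mathfrak{q}$ at height-one primes. We may further replace $V$ by its completion, and then by a faithfully flat extension $\Reg{K}$ of complete DVRs with perfect residue field. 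For the latter, adjoin $p$-power roots of a lift of a $p$-basis and complete; this is an $\Reg{K}$ that is faithfully flat over $V$, and $V=\Reg{K}\cap V[\nicefrac{1}{p}]$. So the problem reduces to the following claim: for $\mathcal{X}=\Spec\Reg{K}$, every isomorphism of the étale realizations over $K$ lifts to an isomorphism of the apertures. (Uniqueness in this case is already clear.)

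\textbf{The DVR case.} Over a complete $\Reg{K}$ with perfect residue field, $\BT[\mathcal{G},\mu,\mathrm{alg}]{\infty}(\Reg{K})=\BT{\infty}(\Reg{K})$, and $\Spf\Reg{K}$ is a base formal $\Reg{K}$-scheme. An object of $\BT{\infty}(\Reg{K})$ is in particular an object of $B\mathcal{G}(\Reg{K}^{\mathrm{syn}})$. By Corollary~\ref{cor:guo_reinecke}(2), $T_{\et}$ is fully faithful on these, so the given isomorphism of crystalline local systems comes from an isomorphism of $\mathcal{G}$-torsors on $\Reg{K}^{\mathrm{syn}}$. Since $\BT{\infty}$ is a full subgroupoid of $\varprojlim_m B\mathcal{G}(\Reg{K}^{\mathrm{syn}}\otimes\Int/p^m\Int)$ (the boundedness by $\mu$ is a condition), this isomorphism is an isomorphism of apertures. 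It gives the desired $\Reg{K}$-point of $I$ extending the $K$-point.

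\textbf{Main obstacle.} I expect the delicate step to be the passage between truncated and untruncated levels. The affine-diagonal statement is at finite level $n$, while full faithfulness only holds at $n=\infty$. The diagram of truncated $\Reg{K}$-points must be compatible, and the integrality argument must be run for the limit $I=\varprojlim I_m$, i.e.\ for the colimit ring $\varinjlim B_m$. This is fine since integrality is checked elementwise. Secondary care is needed in the DVR reduction. One must check that the imperfect-residue-field extension does not lose information: Hom-sets descend along $V\to\Reg{K}$, because $I$ is affine and $V=\Reg{K}\cap V[\nicefrac{1}{p}]$ inside $K$. One must also check that every $R\to V$ with $V$ of residue characteristic $0$ is harmless, which holds because $p$ is then invertible in $V$.
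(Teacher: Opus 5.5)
Your proposal is correct and follows essentially the paper's argument. You étale-localize, represent isomorphisms by the affine scheme coming from Theorem~\ref{thm:algebraization}, use $\eta$-normality to reduce to discrete valuation rings, pass to a faithfully flat extension with perfect residue field, and conclude with the Guo--Reinecke full faithfulness (Theorem~\ref{thm:guo_reinecke}, equivalently Corollary~\ref{cor:guo_reinecke}(2)). The differences are cosmetic: you use a valuative criterion for integrality over arbitrary DVRs where the paper uses an algebraic Hartogs argument at height-one primes minimal over $pR$, and you use the equality $V=\Reg{K}\cap V[\nicefrac{1}{p}]$ together with the affineness of the isomorphism scheme in place of the paper's flat-descent formulation, which makes the paper's injectivity check over $R_\infty^{\otimes_R m}$ unnecessary.
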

\begin{proof}
By \'etale descent, we reduce to the case where $\mathcal{X} = \Spec R$ is affine. We essentially follow part of the proof by Tate of his theorem for $p$-divisible groups~\cite{tate_pdiv}. By Theorem~\ref{thm:algebraization}, for any pair
\[
(\mathfrak{Q}_1,\mathfrak{Q}_2)\in \BT[\mathcal{G},\mu,\mathrm{alg}]{\infty}(R)\times \BT[\mathcal{G},\mu,\mathrm{alg}]{\infty}(R),
\]
the scheme of isomorphisms from $\mathfrak{Q}_1$ to $\mathfrak{Q}_2$ is represented by an affine scheme $\mathcal{I}\to \Spec R$. The theorem comes down to knowing that every $R[\nicefrac{1}{p}]$-point of $\mathcal{I}$ extends to an $R$-valued point. 

For this, by the $\eta$-normality of $R$, it is enough to know that an $R[\nicefrac{1}{p}]$-valued point of $\mathcal{I}$ extends over the localization at every height $1$ prime of $R$ that is minimal over $pR$.\footnote{The argument for this is identical to that of algebraic Hartogs lemma for normal rings; see \stacks{031T}.} This reduces us to the case where $R$ is a mixed characteristic discrete valuation ring with maximal ideal $\mx$. Now, let $\kappa$ be the residue field of $R$. For any purely inseparable finite extension $\kappa'/\kappa$, there exists a finite flat quasisyntomic map of discrete valuation rings $R\to R'$ such that $R'/\mx R' \simeq \kappa'$\footnote{If $\kappa' = \kappa(a^{1/p})$ for some $a\in \kappa\backslash \kappa^p$, then we can take $R' = R[X]/(X^p-\pi X - \tilde{a})$, for some uniformizer $\pi\in R$ and some lift $\tilde{a}\in R$ of $a$. The general case follows by induction on $[\kappa':\kappa]$.} Using this, one sees that there exists an ind-finite flat quasisyntomic cover $R\to R_\infty$ where $R_\infty$ is a discrete valuation ring with perfect residue field. Via flat descent, it is now enough to show:
\begin{enumerate}
  \item $\mathcal{I}(R_\infty) \to \mathcal{I}(R_\infty[\nicefrac{1}{p}])$ is a bijection.
  \item $\mathcal{I}(R_\infty^{\otimes_R m})\to \mathcal{I}(R_\infty^{\otimes_Rm}[\nicefrac{1}{p}])$ is injective. 
\end{enumerate}
By the definition of $\BT[\mathcal{G},\mu,\mathrm{alg}]{n}$, to prove assertion (1) (resp.\@ (2)), we can replace $R_\infty$ (resp.\@ $R_\infty^{\otimes_R m}$) with its (derived) $p$-completion. Note that in assertion (2) the algebras involved are quasisyntomic over $R_\infty$. Therefore, both assertions follow from Theorem~\ref{thm:guo_reinecke}. 
\end{proof}

\begin{corollary}
\label{cor:full_faithfulness}
Suppose that $\mathcal{X}$ is a normal flat Noetherian Deligne-Mumford stack over $\mathcal{O}$ and $\mc{X}'\to\mc{X}$ is a smooth cover. Then, for $\mathsf{Q}\in \mathrm{Loc}_{\mathcal{G}(\Int_p)}(\mathcal{X}_\eta)$, there exists a $\mathfrak{Q}\in \BT[\mathcal{G},\mu,\mathrm{alg}]{\infty}(\mathcal{X})$ with $T_{\et}(\mathfrak{Q})$ isomorphic to $\mathsf{Q}$ if and only if there is a $\mathfrak{Q}'\in \BT[\mathcal{G},\mu,\mathrm{alg}]{\infty}(\mathcal{X}')$ with $T_{\et}(\mathfrak{Q}')$ isomorphic to $\mathsf{Q}\vert_{\mathcal{X}'[\nicefrac{1}{p}]}$.
\end{corollary}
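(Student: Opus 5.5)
The 'only if' direction is immediate: restrict $\mathfrak{Q}$ along $\mathcal{X}'\to\mathcal{X}$ and use compatibility of $T_{\et}$ with pullback. For the converse, I will use descent along the smooth cover together with the full faithfulness of Theorem~\ref{thm:full_faithfulness}. Since $\BT[\mathcal{G},\mu,\mathrm{alg}]{\infty}$ is an inverse limit of Artin stacks (Theorem~\ref{thm:algebraization}), it satisfies descent for smooth (indeed fppf) covers. So it suffices to produce a descent datum for $\mathfrak{Q}'$ relative to $\mathcal{X}'\to\mathcal{X}$ and check the cocycle condition.

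First, set $\mathcal{X}'' = \mathcal{X}'\times_{\mathcal{X}}\mathcal{X}'$ and $\mathcal{X}''' = \mathcal{X}'\times_{\mathcal{X}}\mathcal{X}'\times_{\mathcal{X}}\mathcal{X}'$. Since they are smooth over the normal flat Noetherian $\mathcal{X}$, these are again normal, flat and Noetherian, hence $\eta$-normal Deligne--Mumford stacks (or algebraic spaces, after passing to étale covers if needed). Theorem~\ref{thm:full_faithfulness} therefore applies to each of $\mathcal{X}'$, $\mathcal{X}''$ and $\mathcal{X}'''$.

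Next, fix $\beta\colon T_{\et}(\mathfrak{Q}')\isomto \mathsf{Q}\vert_{\mathcal{X}'_\eta}$. On $\mathcal{X}''$ the two pullbacks $p_1^*\mathfrak{Q}'$ and $p_2^*\mathfrak{Q}'$ have étale realizations identified with $p_1^*\mathsf{Q}\vert = p_2^*\mathsf{Q}\vert$ via $p_1^*\beta$ and $p_2^*\beta$. The composite $(p_2^*\beta)^{-1}\circ p_1^*\beta$ is an isomorphism of realizations, so full faithfulness on $\mathcal{X}''$ lifts it uniquely to an isomorphism $\phi\colon p_1^*\mathfrak{Q}'\isomto p_2^*\mathfrak{Q}'$. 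The cocycle identity $p_{13}^*\phi = p_{23}^*\phi\circ p_{12}^*\phi$ holds after applying $T_{\et}$, by construction, and faithfulness on $\mathcal{X}'''$ then gives it on the nose.

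Descent now yields $\mathfrak{Q}\in\BT[\mathcal{G},\mu,\mathrm{alg}]{\infty}(\mathcal{X})$ with $\mathfrak{Q}\vert_{\mathcal{X}'}\simeq\mathfrak{Q}'$. Local systems also satisfy descent on the generic fiber, and the maps $\beta$ are compatible with the descent data on both sides, so they descend to an isomorphism $T_{\et}(\mathfrak{Q})\simeq\mathsf{Q}$.

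The only point that needs care, and the one I expect to be the main obstacle, is checking that $\mathcal{X}''$ and $\mathcal{X}'''$ satisfy the hypotheses of Theorem~\ref{thm:full_faithfulness}. Normality is preserved under smooth morphisms by standard commutative algebra (Serre's criterion), and flatness and Noetherianity are preserved as well. A second technical point is making sure that descent for the inverse-limit stack is legitimate. This is handled by doing the descent at each finite level $n$, where the compatible descent data come from the same isomorphisms reduced mod $p^n$, and then passing to the limit.
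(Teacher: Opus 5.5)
Your proposal is correct and is essentially the paper's proof. Both arguments use that the iterated fiber products $(\mathcal{X}')^{\times_{\mathcal{X}}k}$ remain normal, flat, Noetherian Deligne--Mumford stacks, so that Theorem~\ref{thm:full_faithfulness} turns the tautological descent datum on $\mathsf{Q}\vert_{\mathcal{X}'_\eta}$ into an effective descent datum for $\mathfrak{Q}'$; you spell out the cocycle check and the descent of the identification $T_{\et}(\mathfrak{Q})\simeq\mathsf{Q}$ in more detail than the paper does.
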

\begin{proof}
  The only if direction is obvious. For the if direction, since $\mathcal{X}'\to \mathcal{X}$ is smooth each $k$-fold self-fiber product $(\mc{X}')^{\times_{\mc{X}} k}$ is still a normal flat Noetherian DM stack over $\mc{O}$. Therefore, Theorem~\ref{thm:full_faithfulness} tells us that the tautological descent datum for $\mathsf{Q}\vert_{\mc{X}'_\eta}$ yields a descent datum for $\mathfrak{Q}'$, which is effective.
\end{proof}

\begin{remark}
  Combining the theorem with Theorem~\ref{thm:dieudonne}, we obtain a proof of Tate's seminal full faithfulness theorem for $p$-divisible groups~\cite[Theorem 4]{tate_pdiv}. Unwinding everything, one sees that the proof obtained here is not very different from that of Tate's, except that his final reduction is to the case where there exists a homomorphism of $p$-divisible groups over a complete valuation ring inducing a given isomorphism of Tate modules. Since we do not have any direct way of defining a map between apertures that is not an isomorphism, we end up appealing to the faithfulness assertion of Theorem~\ref{thm:guo_reinecke} instead.
\end{remark}

Let us now begin preparations for the proof of Theorem~\ref{thm:algebraization}.

\begin{lemma}
[Completed finite presentation]
\label{lem:completed_finite_presentation}
Suppose that $\{R_i\}_{i\in I}$ is an inductive system of derived $p$-complete animated commutative rings with colimit $R$, and let $\hat{R}$ be the derived $p$-completion of $R$. Then the natural map
\[
   \colim_i \BT{n}(R_i)\to \BT{n}(\hat{R})
\]
is an equivalence.
\end{lemma}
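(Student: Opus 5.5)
The plan is to deduce the statement from the structural description of $\BT{n}$ in Theorem~\ref{thm:gmm}: it is a quasi-compact $p$-adic formal Artin stack with affine diagonal which is smooth over $\Spf\hat{\mathcal{O}}$. The idea is to reduce to a statement about locally finitely presented algebraic stacks over the discrete rings $\hat{\mathcal{O}}/p^m$, and then apply the standard limit-preservation property of such stacks.

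\emph{Step 1 (reduction modulo $p^m$).} For a derived $p$-complete $S$ we have $S\simeq\varprojlim_m S/{}^{\mathbb{L}}p^m$. Since $\BT{n}$ is a $p$-adic formal stack, it follows that
\[
\BT{n}(S)\simeq\varprojlim_m\BT{n}(S/{}^{\mathbb{L}}p^m).
\]
Moreover, $\hat{R}/{}^{\mathbb{L}}p^m\simeq\colim_i R_i/{}^{\mathbb{L}}p^m$ for each $m$.

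\emph{Step 2 (each level is locally of finite presentation).} Set $X_m=\BT{n}\otimes\hat{\mathcal{O}}/p^m$. This is a smooth Artin stack over $\hat{\mathcal{O}}/p^m$ with affine diagonal, so it is locally of finite presentation in the derived sense. By the derived version of the limit characterization of finitely presented stacks (Lurie, Spectral Algebraic Geometry, \S17.4, or the stacks project in the classical case), the map
\[
\colim_i X_m(R_i/{}^{\mathbb{L}}p^m)\to X_m(\hat R/{}^{\mathbb{L}}p^m)
\]
is an equivalence of $\infty$-groupoids. These groupoids are in fact $1$-truncated by Remark~\ref{rem:bt_classicality}.

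\emph{Step 3 (commuting the limit with the filtered colimit).} This is the main obstacle: we need to interchange $\varprojlim_m$ with the filtered colimit $\colim_i$. My first attempt would use smoothness together with Grothendieck--Messing. The transitions $\BT{n}(S/p^{m+1})\to\BT{n}(S/p^m)$ are governed by the Cartesian square~\eqref{eqn:commuting_square}, applied to the square-zero PD thickening $S/p^{m+1}\to S/p^m$ for $m\geqslant 1$, with a small adjustment when $p=2$. Hence the fibers of these transitions are torsors under, and automorphisms come from, groups of the form
\[
\Lie(\mathcal{G})\otimes p^mS/p^{m+1}S \quad\text{modulo}\quad \Lie(\mathcal{P}^-_\mu)\otimes p^mS/p^{m+1}S.
\]
This data is finitely presented and linear, so it commutes with filtered colimits.

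I would then argue as follows. Essential surjectivity: a point over $\hat R$ is determined by its reduction mod $p$, which descends to some $R_i/p$, together with the data of lifts. By smoothness, that reduction lifts to a point over $R_i$, because $R_i$ is $p$-complete and smooth stacks satisfy lifting along $p$-adically complete thickenings. The two points over $\hat R$ then differ by an element of an affine, finitely presented $\mathrm{Isom}$-scheme. Such an element is an $\hat R$-point of a finitely presented affine $R_i$-scheme, and only its mod-$p^m$ truncations descend to finite stages. To handle this, I would use Theorem~\ref{thm:gmm} to write $\BT{n}$ locally as a quotient $[U/H]$ with $U$ a smooth affine formal scheme and $H$ a smooth affine group over $\hat{\mathcal{O}}$. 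For a smooth affine $p$-adic formal scheme $U$, Elkik's approximation (or simply the smoothness lifting) gives $\colim_i U(R_i)\simeq U(\hat R)$ after passing to $p$-adic completion, with the $R_i$ being $p$-complete. The same description then applies to the mapping spaces, which yields full faithfulness.
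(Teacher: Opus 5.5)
Your Steps 1 and 2 are fine; Step 2 is exactly how the paper treats systems of $\Int/p^m\Int$-algebras. But Step 3, which you rightly flag as the crux, has a genuine gap.

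First, the claim that Elkik-type approximation gives $\colim_i U(R_i)\simeq U(\hat R)$ for a smooth affine $p$-adic formal scheme $U$ is false. For $U=\widehat{\mathbb{A}}^1$ this map is just $R\to\hat R$, which is not surjective in general: take $R_i=\Int_p\langle x_1,\dots,x_i\rangle$, so that $\sum_k p^kx_k\in\hat R$ lies in no $R_i$. Approximation results give only density, not bijectivity.

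Second, and for the same reason, the fact that the fibers of $\BT{n}(S/p^{m+1})\to\BT{n}(S/p^m)$ are governed by finitely presented linear data cannot by itself justify interchanging $\varprojlim_m$ with $\colim_i$. The tower $\{S/p^m\}_m$ computing $\widehat{\mathbb{A}}^1(S)$ has exactly the same feature, and the lemma fails for it.

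Third, your essential surjectivity step also breaks. Two lifts to $\hat R$ of the same point over $\hat R/p$ need not be isomorphic, since $\BT{n}$ has non-trivial deformations. So they differ by a deformation class, not by a section of an $\mathrm{Isom}$-scheme.

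The missing idea is to avoid the inverse limit over $m$ altogether. Apply the Grothendieck--Messing square~\eqref{eqn:commuting_square} once, to the pro-nilpotent divided power thickening $\hat R\to\overline{R}\defn\hat R/{}^{\mathbb{L}}p^2$, and likewise to $R_i\to\overline{R}_i\defn R_i/{}^{\mathbb{L}}p^2$. Using $p^2$ rather than $p$ makes this valid also for $p=2$.

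This exhibits $\BT{n}(\hat R)$ as the fiber product of $\BT{n}(\overline{R})$ and $B\mathcal{P}^-_\mu(\hat R/{}^{\mathbb{L}}p^n)$ over $B\mathcal{P}^-_\mu(\overline{R}/{}^{\mathbb{L}}p^n)\times_{B\mathcal{G}(\overline{R}/{}^{\mathbb{L}}p^n)}B\mathcal{G}(\hat R/{}^{\mathbb{L}}p^n)$, and similarly for each $R_i$. Every ring appearing here is killed by a fixed power of $p$ and satisfies $\hat R/{}^{\mathbb{L}}p^k\simeq\colim_i R_i/{}^{\mathbb{L}}p^k$. Hence each corner commutes with the filtered colimit: by your Step 2 for $\BT{n}(\overline{R})$, and by finite presentation of $B\mathcal{P}^-_\mu$ and $B\mathcal{G}$ for the rest. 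Since filtered colimits of $\infty$-groupoids commute with finite limits, the lemma follows.

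What distinguishes $\BT{n}$ from $\widehat{\mathbb{A}}^1$ is precisely this square: an $n$-truncated aperture over $R'$ sees the thickening only through $R'/{}^{\mathbb{L}}p^n$, never through $R'$ itself.
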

\begin{proof}
   If the inductive system is one of $\Int/p^m\Int$-algebras, then the conclusion holds because $\BT{n}$ is a finitely presented formal Artin stack. 

   For the general case, set $\overline{R}_i \defn R_i/{}^{\mathbb{L}}p^2$ and $\overline{R} = R/{}^{\mathbb{L}}p^2$,\footnote{The choice of $p^2$ here is to ensure that $R\to \overline{R}$ is an inverse limit of \emph{nilpotent} divided power extensions even when $p=2$: For $p$ odd, one could also choose to work with $R/{}^{\mathbb{L}}p$ instead.} and note that by Grothendieck--Messing theory we have a Cartesian square
     \[
     \begin{diagram}
           \BT{n}(\hat{R}) &\rTo& B \mathcal{P}_\mu^-(R/{}^{\mathbb{L}}p^n)\\
           \dTo&&\dTo\\
           \BT{n}(\overline{R})&\rTo&B\mathcal{P}_\mu^-(\overline{R}/{}^{\mathbb{L}}p^n)\times_{B\mathcal{G}(\overline{R}/{}^{\mathbb{L}}p^n)}B \mathcal{G}(R/{}^{\mathbb{L}}p^n),
     \end{diagram}
     \]
     and similarly with $\hat{R}$ replaced by $R_i$. We can now conclude using the previous paragraph, and the finite presentation of the stacks $B \mathcal{P}^-_\mu$ and $B \mathcal{G}$.\footnote{We are using the fact that filtered colimits of $\infty$-groupoids commute with finite limits; see~\cite[Proposition 5.3.3.3]{Lurie2009-oc}.}
\end{proof}

\begin{lemma}
  [Algebraic finite presentation]
\label{lem:algebraic_finite_presentation}
The prestack $\BT[\mathcal{G},\mu,\mathrm{alg}]{n}$ over $\mathcal{O}$ is locally of finite presentation.
\end{lemma}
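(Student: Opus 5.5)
To show $\BT[\mathcal{G},\mu,\mathrm{alg}]{n}$ is locally of finite presentation, we check that for every filtered system $\{R_i\}_{i\in I}$ of animated commutative $\mathcal{O}$-algebras with colimit $R$, the map
\[
\colim_i \BT[\mathcal{G},\mu,\mathrm{alg}]{n}(R_i)\to \BT[\mathcal{G},\mu,\mathrm{alg}]{n}(R)
\]
is an equivalence. By definition, $\BT[\mathcal{G},\mu,\mathrm{alg}]{n}(-)$ is a fiber product of three functors: $\BT{n}(\widehat{(-)})$, $\mathrm{Loc}_{\mathcal{G}(\Int/p^n\Int)}(\widehat{(-)}[\nicefrac{1}{p}])$ and $\mathrm{Loc}_{\mathcal{G}(\Int/p^n\Int)}((-)[\nicefrac{1}{p}])$. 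Filtered colimits of $\infty$-groupoids commute with finite limits \cite[Proposition 5.3.3.3]{Lurie2009-oc}, so it suffices to check that each of the three functors commutes with filtered colimits.

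For the first functor, write $\hat{R}_i$ for the derived $p$-completion of $R_i$. The system $\{\hat{R}_i\}$ has colimit whose derived $p$-completion is $\hat{R}$. Lemma~\ref{lem:completed_finite_presentation} then gives $\colim_i\BT{n}(\hat{R}_i)\isomto\BT{n}(\hat{R})$.

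For the third functor, $\mathcal{G}(\Int/p^n\Int)$-local systems on $\Spec R[\nicefrac{1}{p}]$ are torsors for a finite constant group. Their classifying stack is finitely presented, which means finite \'etale $\mathcal{G}(\Int/p^n\Int)$-torsors over $\colim_i R_i[\nicefrac{1}{p}]$ descend to some $R_i[\nicefrac{1}{p}]$, with isomorphisms descending likewise. This is standard limit theory for finite \'etale covers, e.g.\ \stacks{01ZM}-type results, applied to the classical truncation since finite \'etale sites are insensitive to derived structure.

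The second functor is the main obstacle, since $\hat{R}[\nicefrac{1}{p}]$ is not the colimit of the $\hat{R}_i[\nicefrac{1}{p}]$. The plan is to use the equivalence $\mathrm{Loc}_{\Int_p}(\Spec A)\simeq\mathrm{Loc}(\Spa(A,A^+))$ from \S\ref{sub:full_faithfulness_of_the_etale_realization}, together with Elkik/Gabber-type invariance: finite \'etale algebras over $\hat{R}[\nicefrac{1}{p}]$ are equivalent to those over $R^h[\nicefrac{1}{p}]$, where $R^h$ is the $p$-henselization. This invariance follows from the adic-space identification, since $(R^h)^\wedge=\hat{R}$. Because henselization commutes with filtered colimits, $\colim_i R_i^h=R^h$, and the argument of the previous paragraph applied to the $R_i^h$ gives
\[
\colim_i\mathrm{Loc}(\hat{R}_i[\nicefrac{1}{p}])\simeq\colim_i\mathrm{Loc}(R_i^h[\nicefrac{1}{p}])\simeq\mathrm{Loc}(R^h[\nicefrac{1}{p}])\simeq\mathrm{Loc}(\hat{R}[\nicefrac{1}{p}]).
\]

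If the Gabber--Ramero/Elkik input is awkward in the animated setting, an alternative avoids it. First pass to classical truncations, which does not affect either $\mathrm{Loc}$. Then note that we only need the fiber product to commute with colimits, not each factor separately: it suffices that the map $\mathrm{Loc}(R[\nicefrac{1}{p}])\to\mathrm{Loc}(\hat{R}[\nicefrac{1}{p}])$ is compatible in the colimit. This can be arranged by the same henselian comparison applied to each $R_i$ separately.
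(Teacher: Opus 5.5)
Your proposal is correct and is essentially the paper's proof. You commute the filtered colimit past the fiber product. You then use Lemma~\ref{lem:completed_finite_presentation} for the aperture factor, finite presentation of $\mathrm{Loc}_{\mathcal{G}(\Int/p^n\Int)}$ for the algebraic generic fiber, and the Gabber--Elkik type invariance of finite \'etale covers after inverting $p$, between a $p$-henselian ring and its derived $p$-completion, for the completed generic fiber.

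The only difference is cosmetic: the paper applies that invariance once, to the $p$-henselian ring $\colim_i\hat{R}_i$ whose completion is $\hat{R}$, rather than to the henselizations $R_i^h$ and $R^h$. Your final \emph{alternative} paragraph still relies on the same input, so it can be dropped.
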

\begin{proof}
  We need to know that the natural map
     \begin{equation*}
       \begin{tikzcd}
  {\colim_i\left[\BT[\mathcal{G},\mu]{n}(\hat{R}_i)\times_{\mathrm{Loc}_{\mathcal{G}(\Int/p^n\Int)}(\hat{R}_i[\nicefrac{1}{p}])}\mathrm{Loc}_{\mathcal{G}(\Int/p^n\Int)}(R_i[\nicefrac{1}{p}])\right]} \\
  {\BT[\mathcal{G},\mu]{n}(\hat{R})\times_{\mathrm{Loc}_{\mathcal{G}(\Int/p^n\Int)}(\hat{R}[\nicefrac{1}{p}])}\mathrm{Loc}_{\mathcal{G}(\Int/p^n\Int)}(R[\nicefrac{1}{p}])}
  \arrow[from=1-1, to=2-1]
\end{tikzcd}
     \end{equation*}
      is an isomorphism for any inductive system $\{R_i\}$ of animated commutative $\mathcal{O}$-algebras with colimit $R$. 

     Now, since $\mathrm{Loc}_{\mathcal{G}(\Int/p^n\Int)}$ is a finite Deligne--Mumford stack the natural map
     \begin{align}\label{eqn:generic_finite_presentation}
      \colim_i\mathrm{Loc}_{\mathcal{G}(\Int/p^n\Int)}(R_i[\nicefrac{1}{p}]) &\to \mathrm{Loc}_{\mathcal{G}(\Int/p^n\Int)}(\colim_i R_i[\nicefrac{1}{p}]),
     \end{align}
     is an isomorphism. Therefore, it is enough to verify that the following commutative square is Cartesian
     \[
       \begin{diagram}
        \colim_i \BT{n}(\hat{R}_i)&\rTo&\BT{n}(\hat{R})\\
        \dTo&&\dTo\\
        \mathrm{Loc}_{\mathcal{G}(\Int/p^n\Int)}(\colim_i\hat{R}_i[\nicefrac{1}{p}])&\rTo&\mathrm{Loc}_{\mathcal{G}(\Int/p^n\Int)}(\hat{R}[\nicefrac{1}{p}]).
       \end{diagram}
     \]
     The ring $\colim_i\hat{R}_i$ is $p$-Henselian with $p$-adic completion $\hat{R}$. Therefore, by~\cite[Corollary 2.1.20]{MR4553656}, the bottom arrow is an isomorphism. The top arrow is an isomorphism by Lemma~\ref{lem:completed_finite_presentation}.
\end{proof}

The next result is immediate from the definitions, the (formal) \'etaleness of the Deligne--Mumford stack $\mathrm{Loc}_{\mathcal{G}(\Int/p^n\Int)}$ over $\mc{O}[\nicefrac{1}{p}]$, and Theorem~\ref{thm:gmm}.
\begin{lemma}
  [Algebraic deformation theory]
\label{lem:algebraic_deformation_theory}
For every nilpotent divided power extension $R'\twoheadrightarrow R$ of animated commutative $\mathcal{O}$-algebras, there is a canonical Cartesian diagram
\[
\begin{diagram}
  \BT[\mathcal{G},\mu,\mathrm{alg}]{n}(R')&\rTo& B \mathcal{P}^-_\mu(R'/{}^{\mathbb{L}}p^n)\\
  \dTo&&\dTo\\
  \BT[\mathcal{G},\mu,\mathrm{alg}]{n}(R)&\rTo&B \mathcal{P}^-_\mu(R/{}^{\mathbb{L}}p^n)\times_{B \mathcal{G}(R/{}^{\mathbb{L}}p^n)}B \mathcal{G}(R'/{}^{\mathbb{L}}p^n).
\end{diagram}
\]
\end{lemma}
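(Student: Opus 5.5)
The plan is to verify the Cartesian square directly from the definition of $\BT[\mathcal{G},\mu,\mathrm{alg}]{n}$ as a fiber product, by reducing to the Grothendieck--Messing square of Theorem~\ref{thm:gmm} for the $p$-completions.

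First I will pass to $p$-completions. Let $R'\twoheadrightarrow R$ be a nilpotent divided power extension of animated commutative $\mathcal{O}$-algebras with kernel $J$. Then $\hat{R}'\twoheadrightarrow\hat{R}$ is again a (pro-)nilpotent divided power thickening. Moreover $\hat{R}'/{}^{\mathbb{L}}p^n\simeq R'/{}^{\mathbb{L}}p^n$, and similarly for $R$. Theorem~\ref{thm:gmm} therefore gives a Cartesian square with the same right-hand column as in the statement and left-hand column $\BT{n}(\hat{R}')\to\BT{n}(\hat{R})$.

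Second, I will show that the generic-fiber factors do not change. Since $J$ is nilpotent, it is killed after inverting $p$: each $x\in J$ satisfies $x^m=0$, but more to the point $R'[\nicefrac{1}{p}]\to R[\nicefrac{1}{p}]$ is a nilpotent thickening, and likewise for $\hat{R}'[\nicefrac{1}{p}]\to\hat{R}[\nicefrac{1}{p}]$. The stack $\mathrm{Loc}_{\mathcal{G}(\Int/p^n\Int)}$ is finite étale over $\mathcal{O}[\nicefrac{1}{p}]$, and by topological invariance of the étale site it is insensitive to nilpotent thickenings. Hence the maps
\[
\mathrm{Loc}_{\mathcal{G}(\Int/p^n\Int)}(R'[\nicefrac{1}{p}])\to\mathrm{Loc}_{\mathcal{G}(\Int/p^n\Int)}(R[\nicefrac{1}{p}])
\]
and the corresponding map for the completions are equivalences.

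Third, I will conclude by pasting. We have
\[
\BT[\mathcal{G},\mu,\mathrm{alg}]{n}(R')=\BT{n}(\hat{R}')\times_{\mathrm{Loc}(\hat{R}'[1/p])}\mathrm{Loc}(R'[\nicefrac{1}{p}]).
\]
By the second step, this is equivalent to
\[
\BT{n}(\hat{R}')\times_{\mathrm{Loc}(\hat{R}[1/p])}\mathrm{Loc}(R[\nicefrac{1}{p}])\simeq\BT{n}(\hat{R}')\times_{\BT{n}(\hat{R})}\BT[\mathcal{G},\mu,\mathrm{alg}]{n}(R).
\]
Combining this with the completed Grothendieck--Messing square by the pasting law for pullbacks gives the required Cartesian diagram.

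The main obstacle is the second step in the animated setting. Here I need that $R'[\nicefrac{1}{p}]\to R[\nicefrac{1}{p}]$ is surjective on $\pi_0$ with nilpotent kernel, and that $\mathrm{Loc}$, being étale, depends only on $\pi_0$ up to nilpotents. The other point needing a check is that derived $p$-completion preserves the nilpotent divided power structure well enough for Theorem~\ref{thm:gmm} to apply, which follows as in the proof of Lemma~\ref{lem:completed_finite_presentation}.
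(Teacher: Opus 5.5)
Your argument is correct and is the same as the paper's. The paper simply declares the lemma immediate from three facts: the definition of $\mathrm{BT}^{\mathcal{G},\mu,\mathrm{alg}}_n$ as a fiber product, the formal \'etaleness of $\mathrm{Loc}_{\mathcal{G}(\mathbb{Z}/p^n\mathbb{Z})}$ over $\mathcal{O}[\nicefrac{1}{p}]$, and the Grothendieck--Messing square of Theorem~\ref{thm:gmm}. Your three steps (passing to completions, insensitivity of $\mathrm{Loc}$ to the nilpotent thickening on generic fibers, pasting) just spell this out.

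The only slip is your aside that $J$ is ``killed after inverting $p$.'' This fails in general, e.g.\ for a square-zero ideal with trivial divided powers in an $E$-algebra. You never use it, though: the argument only needs $J[\nicefrac{1}{p}]$ to be nilpotent, which is true.
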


\begin{proof} 
     [Proof of Theorem~\ref{thm:algebraization}]
  The proof will be via verifying the conditions for Artin--Lurie representability~\cite[Theorem 7.1.6]{lurie_thesis}. It is important for this that we allow animated inputs, though with this caveat the reader will note that the argument itself (except for the parts establishing finite presentation and affineness of the diagonal) is quite formal. In what follows, we will use without comment the fact that $\BT{n}\otimes\Int/p^m\Int$ (for $m\geqslant 1$) and $\mathrm{Loc}_{\mathcal{G}(\Int/p^n\Int)}$ are both locally finitely presented algebraic stacks over $\mathcal{O}$, and so satisfy the conditions of \emph{loc.\@ cit.}

  Condition (1: local finite presentation) is verified by Lemma~\ref{lem:algebraic_finite_presentation}. Conditions (2: being an \'etale sheaf), (3: integrability) , (5: infinitesimal cohesiveness), and (6: nilcompleteness) all involve behavior with respect to limits and are easily verified.

     For condition (4: existence of a cotangent complex), note that, for any $\mathcal{O}$-algebra $R$ and a map $x\colon\Spf \hat{R}\to \BT{n}$, the cotangent complex $\mathbb{L}_{\BT{n}/\Reg{E_v},x}$ is a $\Int/p^n\Int$-module object in the $\infty$-category $\mathrm{Perf}(\hat{R})$. In particular, it algebraizes canonically to a perfect complex over $R$. If $x$ lifts to a point $(x,x')\in \BT[\mathcal{G},\mu,\mathrm{alg}]{n}(R)$, then Lemma~\ref{lem:algebraic_deformation_theory} shows that the assignment $(x,x')\mapsto \mathbb{L}_{\BT{n}/\Reg{E_v},x}$ is the desired cotangent complex for $\BT[\mathcal{G},\mu,\mathrm{alg}]{n}$ over $\mathcal{O}$. Note that this is still a perfect complex over $\BT[\mathcal{G},\mu,\mathrm{alg}]{n}$ with Tor amplitude in $[0,1]$.

  Condition (7: $1$-truncatedness) is verified by seeing that $\BT[\mathcal{G},\mu,\mathrm{alg}]{n}(R)$ is $1$-truncated for any discrete $\mathcal{O}$-algebra $R$: Indeed, this is true for all the spaces involved in the fiber product defining it.

  Thus we have verified all the conditions of Lurie's theorem, and so can conclude using this, the description of the cotangent complex, and the quasi-compactness of both $\BT{n}$ and $\mathrm{Loc}_{\mathcal{G}(\Int/p^n\Int)}$ that $\BT[\mathcal{G},\mu,\mathrm{alg}]{n}$ is a smooth $0$-dimensional Artin stack over $\mathcal{O}$.

    We are now tasked with showing that the diagonal $\Delta$, which we now know is a locally of finite type algebraic space, is actually affine. For this, note that $\Delta[\nicefrac{1}{p}]$ is affine (in fact, finite \'etale) and the $p$-adic completion $\widehat{\Delta}$ is also affine by Theorem~\ref{thm:gmm}. Moreover, it follows from Proposition~\ref{prop:perfectoid_faithful} that the diagonal is a \emph{separated} algebraic space: Indeed, the cited result shows that the map $\Delta(R)\to \Delta(F)$ is injective for any perfectoid valuation ring $R$ of mixed characteristic with fraction field $F$. Since every mixed characteristic discrete valuation ring is dominated by a perfectoid valuation ring, we see that the injectivity also holds for such discrete valuation rings. For discrete valuation rings of equal characteristic, this injectivity is clear from the already stated facts about $\Delta[\nicefrac{1}{p}]$ and $\widehat{\Delta}$. 

  Choose a smooth cover 
     \[
  \Spec R\to \BT[\mathcal{G},\mu,\mathrm{alg}]{n}\times\BT[\mathcal{G},\mu,\mathrm{alg}]{n}
     \]
  by a smooth affine scheme over $\mathcal{O}$. Let $B$ be the finite \'etale $R[\nicefrac{1}{p}]$-algebra representing the pullback of $\Delta[\nicefrac{1}{p}]$, and let $S$ be the topologically of finite type $\hat{R}$-algebra representing the pullback of $\widehat{\Delta}$. Then the criterion of Achinger--Youcis from ~\cite[Proposition 3.6]{achinger_youcis} tells us that $\Delta\vert_{\Spec R}$ is affine if and only if the natural map $B\to S[\nicefrac{1}{p}]$ has dense image. This follows from Proposition~\ref{prop:diagonal_rational_open} below. 
     \end{proof}

\subsection{Analytic properties of the diagonal}
\label{sub:analytic_properties_of_the_diagonal}

The purpose of this subsection is to prove Proposition~\ref{prop:diagonal_rational_open}, and so to complete the proof of the affineness of the diagonal of $\BT[\mathcal{G},\mu,\mathrm{alg}]{n}$.

\begin{setup}
\label{setup:diagonal}
    Suppose that we have a smooth covering $\Spec R\to \BT[\mathcal{G},\mu,\mathrm{alg}]{n}\times\BT[\mathcal{G},\mu,\mathrm{alg}]{n}$ as in the proof of Theorem~\ref{thm:algebraization}. This classifies a pair of $n$-truncated apertures $(\mathfrak{Q}_1,\mathfrak{Q}_2)$ over the $p$-completion $\hat{R}$ and a pair of $\mathcal{G}(\Int/p^n\Int)$-local systems $(\mathsf{Q}_1,\mathsf{Q}_2)$ over $R[\nicefrac{1}{p}]$ whose restrictions over $\hat{R}[\nicefrac{1}{p}]$ are isomorphic to $(T_{\et}(\mathfrak{Q}_1),T_{\et}(\mathfrak{Q}_2))$. Write $\Delta_R\to \Spec R$ for the restriction of the diagonal, and let $\hat{\Delta}_R\to \Spf\hat{R}$  be the associated formal algebraic space. As we observed in the proof of Theorem~\ref{thm:algebraization}, $\Delta_R$ is a finitely presented \emph{separated} algebraic space over $R$. 
\end{setup}

\begin{remark}
    \label{rem:diagonal_setup_affine}
By definition, $\hat{\Delta}_R$ parameterizes isomorphisms between the $n$-truncated apertures $\mathfrak{Q}_1$ and $\mathfrak{Q}_2$. The diagonal of $\BT{n}$ is an affine, finitely presented map, and so $\hat{\Delta}_R$ is represented by a topologically of finite type $\hat{R}$-algebra $S$. Similarly, $\Delta_R[\nicefrac{1}{p}]$ is the finite \'etale scheme over $R[\nicefrac{1}{p}]$ parameterizing isomorphisms between the $\mathcal{G}(\Int/p^n\Int)$-local systems $\mathsf{Q}_1$ and $\mathsf{Q}_2$, and so is represented by a finite \'etale $R[\nicefrac{1}{p}]$-algebra $B$.
\end{remark}

\begin{remark}
\label{rem:diagonal_setup_adic}
We can associate with $\Delta_R$ the generic fiber $X\defn (\widehat{\Delta}_R)_\eta$ of the formal scheme $\hat{\Delta}_R$: This is a finitely presented affinoid adic space over $Y \defn \Spa(\hat{R}[\nicefrac{1}{p}],\hat{R})$. We can also consider the affinoid adic space $X'$ over $Y$ obtained by taking the analytification (relative to $R$ in the sense of \cite[Definition 2.15]{achinger_youcis}) of the finite \'etale $\hat{R}[\nicefrac{1}{p}]$-scheme $\hat{R}[\nicefrac{1}{p}]\otimes_R\Delta_R$.

Concretely, we have $X = \Spa(S[\nicefrac{1}{p}],S)$ and $X' = \Spa(\hat{B},\hat{B}^+)$, where $\hat{B} = B\otimes_{R[\nicefrac{1}{p}]}\hat{R}[\nicefrac{1}{p}]$, and $\hat{B}^+\subset \hat{B}$ is the integral closure of $\hat{R}$. We then obtain a diagram of adic spaces
  \[
    \begin{diagram}
      X&\rTo&X'\\
      &\rdTo&\dTo\\
      &&Y
    \end{diagram}
  \]
  where the vertical arrow is finite~\cite[(1.4.2)]{Huber1996-ly}, and where the top horizontal map is an open immersion~\cite[Proposition 1.9.6]{Huber1996-ly}: For the latter, see also~\cite[Proposition 2.16]{achinger_youcis}. 
\end{remark}

Note that the map $X\to X'$ from Remark \ref{rem:diagonal_setup_adic} yields maps of $R[\nicefrac{1}{p}]$-algebras $B\to \hat{B}\to S[\nicefrac{1}{p}]$. Our goal is to prove the following:
\begin{proposition}
\label{prop:diagonal_rational_open}
The map $B\to S[\nicefrac{1}{p}]$  has dense image.
\end{proposition}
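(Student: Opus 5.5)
Since $B\to \hat{B}$ has dense image ($R[\nicefrac{1}{p}]\to\hat{R}[\nicefrac{1}{p}]$ has dense image and $B$ is finite over $R[\nicefrac{1}{p}]$), it suffices to show that $\hat{B}\to S[\nicefrac{1}{p}]$ has dense image. My plan is to prove this by showing that the open immersion $X\to X'$ from Remark~\ref{rem:diagonal_setup_adic} is the inclusion of a union of connected components of $X'$, i.e.\ that $X$ is also closed in $X'$. Granting this, $X$ is a clopen affinoid subspace of the finite étale affinoid $X'$ over $Y$. Hence $S[\nicefrac{1}{p}] = \mathcal{O}(X)$ is a direct factor $e\hat{B}$ for an idempotent $e\in\hat{B}$, and $\hat{B}\to e\hat{B}$ is surjective, which gives the density. (A little care is needed to identify $\mathcal{O}(X)$ with $S[\nicefrac{1}{p}]$, but $X=\Spa(S[\nicefrac{1}{p}],S)$ is affinoid, so this is fine.)

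To show that $X\subset X'$ is closed, I would use the valuative criterion for adic spaces: $X\to X'$ is a quasi-compact open immersion (since $X$ is affinoid), so it suffices to show it is stable under specialization. Equivalently, it suffices to check a lifting property for points valued in affinoid fields $(K,K^+)$. Suppose a point $\Spa(K,K^+)\to X'$ has its $(K,K^{\circ})$-generization landing in $X$; we must show the whole point lands in $X$. Concretely, over $\hat{R}\to K^+$ we have an isomorphism of étale realizations $T_{\et}(\mathfrak{Q}_1)\simeq T_{\et}(\mathfrak{Q}_2)$ over $K$, and we know it lifts to an isomorphism of apertures over $K^{\circ}$ (this is what lying in $X$ at the rank-one generization means, since $S$-points over $K^\circ$ correspond to isomorphisms over $K^{\circ}$). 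We need it to be defined over $K^+$. Since $\BT{n}$ has affine diagonal and $S$ is topologically of finite type over $\hat{R}$, an $S$-point over $K^{\circ}$ together with the point of $\hat{B}^+$ over $K^+$ should give an $S$-point over $K^+$, because $K^+$ is the preimage of a valuation ring of the residue field of $K^\circ$, and the relevant elements of $S$ map to power-bounded, integral-over-$\hat{R}$ elements.

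I would first pass to a perfectoid cover: we may replace $(K,K^+)$ by a perfectoid affinoid field, and $\hat{R}\to K^+$ by a map to a perfectoid valuation ring. There Remark~\ref{rem:quotient_description_perfectoid_truncated} and Remark~\ref{rem:quotient_description_perfectoid_local_systems} describe isomorphisms of apertures as elements $h\in \mathcal{G}(\Prism/p^n[\nicefrac{1}{\xi}])$ intertwining the Frobenius structures and lying in $H^{(n)}_\mu$. By Remark~\ref{rem:Hnmu_description} this is an integrality condition: $h\in\mathcal{G}(\Prism/p^n)\cap\mu(\xi)\mathcal{G}(\Prism/p^n)\mu(\xi)^{-1}$. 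The condition over $K^+$ versus $K^\circ$ then reduces to integrality in $A_{\inf}(K^+)/p^n$ versus $A_{\inf}(K^{\circ})/p^n$ for the matrix entries of $h$ in a faithful representation. These entries are determined by the étale isomorphism (finite over the base, hence integral over $K^+$), so they land in $K^+$-integral elements.

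The main obstacle is exactly this last step: making precise that integrality of the matrix coefficients over $A_{\inf}(K^{\circ})$, combined with their being determined by a finite étale (hence integral over $\hat{R}$) datum, forces integrality over $A_{\inf}(K^+)$. I would first try to argue that the coefficients lie in the integral closure of the image of $A_{\inf}(\hat{R}\text{-perfectoid})$, using the finiteness of $B$ over $R[\nicefrac{1}{p}]$ and that $\hat{B}^+$ is integrally closed. If that fails, I would instead reduce to rank-one points by showing that the specialization closure of $X$ in the finite étale $X'$ adds no new points, since each fiber of $X'\to Y$ is finite and discrete.
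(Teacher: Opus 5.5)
Your reduction from $B\to S[1/p]$ to $\hat{B}\to S[1/p]$ is fine. The trouble is the key claim: the open immersion $X\hookrightarrow X'$ is in general \emph{not} closed, so $S[1/p]$ is not a direct factor $e\hat{B}$ and $\hat{B}\to S[1/p]$ is not surjective.

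To see this, suppose $X$ were clopen in the finite \'etale $X'$. Then $X\to Y$ would be finite \'etale, so its image would be open and closed in $Y$. The rank-one points of that image are the $y$ at which $\mathfrak{Q}_1$ and $\mathfrak{Q}_2$ become isomorphic over $\mathcal{O}_C$, with $C$ a completed algebraic closure of $k(y)$. This locus is open but typically not closed.

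Take $(\GL_2,\mu)$ with weights $\{0,1\}$ and $n=1$, so that apertures are $\mathrm{BT}_1$'s. By smoothness of $\Spec R\to\mathrm{BT}^{\mathcal{G},\mu,\mathrm{alg}}_1\times\mathrm{BT}^{\mathcal{G},\mu,\mathrm{alg}}_1$, a single connected residue disc of $Y$, lying over a pair of supersingular $\mathrm{BT}_1$'s over $\overline{\mathbb{F}}_p$, contains a point over $(E[p],E[p])$ and a point over $(E[p],E'[p])$. Here $E$ has CM by $\mathbb{Z}_{p^2}$, and $E'$ has supersingular reduction but small Hodge height. These are not isomorphic over $\mathcal{O}_C$: $\mathbb{F}_{p^2}^{\times}\subset\mathrm{Aut}(E[p])$ acts irreducibly on $\mathbb{F}_p^2$, whereas every automorphism of $E'[p]$ preserves its canonical subgroup. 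Hence $X\to Y$ is not finite, $S[1/p]$ is not finite over $\hat{R}[1/p]$, and $\hat{B}\to S[1/p]$ cannot be surjective; density is the most one can hope for.

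This is also exactly where your valuative step breaks, so the `main obstacle' you flag is not a technicality. A quasi-compact open such as $\{|T|\le|p|\}$ in a disc is not stable under specialization: it omits the rank-two point $|T|=|p|^{+}$, which specializes its Gauss point. Correspondingly, $S$ is not integral over the image of $\hat{B}^{+}$; $S[1/p]$ behaves like a rational localization $\hat{B}\langle f/p^{r}\rangle$. So an $S$-point over $K^{\circ}$ together with a $\hat{B}^{+}$-point over $K^{+}$ need not give an $S$-point over $K^{+}$. In particular, the integrality condition $h\in H^{(n)}_\mu$ over $K^{+}$ is not forced by finiteness of the \'etale datum. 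Your fallback via finite fibres of $X'\to Y$ fails for the same reason: the non-closedness is `horizontal', in $y$, not inside a fibre.

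What you need is that $X\subset X'$ is a \emph{rational} open, since that is what yields density. The paper obtains this after base change along a perfectoid cover $\hat{R}\to\hat{R}_\infty$. There the linearization identifies $X$ with $Z\times_{Z'}X'$:
\begin{itemize}
\item $Z'$ is the Artin--Schreier (finite \'etale) space of $\Phi$-fixed vectors, realized after rescaling as a closed subspace of the perfectoid disc $\mathbf{D}_{\mathrm{perf}}(M)$;
\item $Z$ is the preimage of the rational open $\varpi^{r}\mathbf{D}_{\mathrm{perf}}(M)$.
\end{itemize}
Because $\varpi$ is invertible, restriction of functions to this rational open has dense image, and this survives untilting (Kedlaya--Liu). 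Density then descends from $\hat{R}_\infty$ to $R$: the $p^\infty$-torsion cokernels of $B\to S[1/p]/p^{m}S$ vanish after the $p$-completely faithfully flat base change.

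Your perfectoid reduction and your use of $H^{(n)}_\mu$ are the right ingredients. They should be used to exhibit a rational open, not a clopen one.
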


\begin{remark}
  [The diagonal as an analytic diamond]
\label{rem:diagonal_diamond}
Suppose that $T$ is a perfectoid $R$-algebra and fix a generator $\xi$ of $\mr{Fil}^1_\smallN \,\Prism_T$. Unwinding Remark~\ref{rem:quotient_description_perfectoid_truncated}, and after \'etale localization on $\Spf T$ if necessary, we can assume that the restriction of $(\mathfrak{Q}_1,\mathfrak{Q}_2)$ over $T$ arises from a pair $(x_1,x_2)$ belonging to the set $\mathcal{G}(\Prism_T/p^n\Prism_T)\times \mathcal{G}(\Prism_T/p^n\Prism_T)$. Then, for any $p$-torsion free perfectoid $T$-algebra $T'$ that is integrally closed in $T'[\nicefrac{1}{p}]$, we have canonical identifications
\begin{align*}
  X(T'[\nicefrac{1}{p}],T') &= \{h\in H^{(n)}_\mu(T'):\; x_2 = h^{-1}x_1\varphi(\mu(\xi)h\mu(\xi)^{-1}) \}\\  &= \{h\in H^{(n)}_\mu(T'):\;  h = x_1\varphi(\mu(\xi)h\mu(\xi)^{-1})x_2^{-1}\},
\end{align*}
and
\begin{align*}
    X'(T'[\nicefrac{1}{p}],T') &= \{h\in \mathcal{G}(\Prism_{T'}/p^n\Prism_{T'}[\nicefrac{1}{\xi}]):\; x_2 = h^{-1}x_1\varphi(\mu(\xi)h\mu(\xi)^{-1}) \}\\ &= \{h\in \mathcal{G}(\Prism_{T'}/p^n\Prism_{T'}[\nicefrac{1}{\xi}]):\; h = x_1\varphi(\mu(\xi)h\mu(\xi)^{-1})x_2^{-1}\}
\end{align*}
In other words, we have concretely described the diamonds $X_T^\lozenge$, $X^{',\lozenge}_T$ underlying the adic spaces $\Spa(T[\nicefrac{1}{p}],T)\times_YX$ and $\Spa(T\nicefrac{1}{p}],T)\times_YX'$ as in \cite[\S10.1]{Scholze2020-bx}.
\end{remark}

\begin{remark}
  [Linearization]
\label{rem:linearization}
Keep the setup from the previous remark. Choose a faithful representation $\mathcal{G}\hookrightarrow \GL(\Lambda)$, and set $M = \End(\Lambda)\otimes_{\Int_p}\Prism_T/p^n\Prism_T$: This is equipped with a canonical $\varphi$-semilinear bijection $1\otimes\varphi$, which we will denote simply by $\varphi$. Write $\Phi\colon M[\nicefrac{1}{\xi}]\isomto M[\nicefrac{1}{\xi}]$ for the operator $m\mapsto x_1\varphi(\mu(\xi))\varphi(m)\varphi(\mu(\xi))^{-1}x_2^{-1}$, and define functors $Z^\lozenge$ and $Z^{',\lozenge}$ on affinoid perfectoid spaces over $(T[\nicefrac{1}{p}],T)$ by
\begin{align*}
  Z^\lozenge(T'[\nicefrac{1}{p}],T') &= \{m\in \Prism_{T'}\otimes_{\Prism_T}M:\; m = \Phi(m)\in (\Prism_{T'}\otimes_{\Prism_T}M)[\nicefrac{1}{\xi}]\}\\
   Z^{',\lozenge}(T'[\nicefrac{1}{p}],T') &= \{m\in (\Prism_{T'}\otimes_{\Prism_T}M)[\nicefrac{1}{\xi}]:\; m = \Phi(m)\in (\Prism_{T'}\otimes_{\Prism_T}M)[\nicefrac{1}{\xi}]\}
\end{align*}
Now, Remark~\ref{rem:Hnmu_description} tells us that, we have an isomorphism $X^\lozenge_T \isomto Z^{\lozenge}\times_{Z^{',\lozenge}}X^{',\lozenge}_T$ of diamonds over $\mathrm{Spd}(T[\nicefrac{1}{p}],T) = \Spa(T^\flat[\nicefrac{1}{\varpi}],T^\flat)$, where $\varpi\in T^\flat$ is the pseudouniformizer given by the image of $\xi$ under the natural map $\Prism_T\to T^\flat$.
\end{remark}

\begin{lemma}
  \label{lem:linearization}
The diamonds $Z^\lozenge$ and $Z^{',\lozenge}$ are represented by affinoid perfectoid spaces $Z = \Spa(D,D^+)$ and $Z' = \Spa(D',D^{',+})$ over $\mathrm{Spa}(T[\nicefrac{1}{p}],T)$. Moreover, the map $Z\to Z'$ is a rational open embedding, and the map $D\to D'$ has dense image.
\end{lemma}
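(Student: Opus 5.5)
The plan is to first show that $Z^{',\lozenge}$ is finite étale over $\mathrm{Spd}(T[\nicefrac{1}{p}],T)$, hence representable by an affinoid perfectoid space $Z'=\Spa(D',D^{',+})$. Then I will exhibit $Z^\lozenge$ as the subfunctor of $Z^{',\lozenge}$ cut out by finitely many conditions of the form $|f|\leqslant 1$ with $f\in D'$, i.e.\ as a rational subset with denominator $1$. Both density and the rational-open-embedding claim then follow formally.

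\emph{Step 1 (representability of $Z'$).} The module $M[\nicefrac{1}{\xi}]$ is finite free over $(\Prism_T/p^n\Prism_T)[\nicefrac{1}{\xi}]=W_n(T^\flat[\nicefrac{1}{\varpi}])$. Since $x_1,x_2$ and $\varphi(\mu(\xi))$ are invertible after inverting $\xi$, the operator $\Phi$ is a $\varphi$-semilinear bijection. Hence $(M[\nicefrac{1}{\xi}],\Phi)$ is an étale $\varphi$-module over $W_n(T^\flat[\nicefrac{1}{\varpi}])$. By Katz's Riemann--Hilbert correspondence, as used in Remark~\ref{rem:GZpn_perfectoid}, its $\Phi$-fixed points form a locally constant sheaf of finite $\Int/p^n\Int$-modules on the pro-étale site of $\Spa(T[\nicefrac{1}{p}],T)$. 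Its underlying space is therefore finite étale over $\Spa(T[\nicefrac{1}{p}],T)$ (via tilting, over $\Spa(T^\flat[\nicefrac{1}{\varpi}],T^\flat)$). A finite étale space over an affinoid perfectoid is affinoid perfectoid, giving $Z'=\Spa(D',D^{',+})$. The universal point is then an element $m^{\mathrm{univ}}\in W_n(D^{',\flat})\otimes_{W_n(T^\flat)}M[\nicefrac{1}{\xi}]$ fixed by $\Phi$.

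\emph{Step 2 (bounding poles).} The entries of $x_1,x_2$ are integral. Moreover, $\varphi(\mu(\xi))^{\pm1}$ have poles of order at most $1$ in $\varphi(\xi)$, which is a unit multiple of $\xi$ up to $p$-adic terms (use $1$-boundedness, or simply a faithful $\Lambda$ with bounded weights). From this one gets a constant $N$ with $\Phi(\xi^{-a}M)\subset \xi^{-pa-c}M$. The finite étaleness of $Z'$ lets me pick, after the quasi-compact finite cover, $N$ with $m^{\mathrm{univ}}\in \xi^{-N}(W_n(D^{',\flat})\otimes M)$. Choosing a basis of $M$, write the coordinates of $\xi^{N}m^{\mathrm{univ}}$ in Witt components $f_{ij}\in D^{',\flat}$. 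Then express the condition ``$m\in \Prism_{T'}\otimes M$'' for a $T'$-point as finitely many conditions $|g_k|\leqslant 1$ on untilted functions $g_k\in D'$. Concretely, use the $p$-adic filtration on $W_n$ and the Euclidean division of Witt components by $[\varpi]^N$; equivalently, reduce to $n=1$ by dévissage along $p^{n-1}M\subset M$. For $n=1$ it is literally $|f_{j}/\varpi^N|\leqslant 1$ on the tilt.

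\emph{Step 3 (conclusion).} Given Step 2, $Z^\lozenge=Z'(\tfrac{g_1,\dots,g_r}{1})$ is a rational open subset of $Z'$. It is therefore affinoid perfectoid, $Z=\Spa(D,D^+)$, and $Z\to Z'$ is a rational open embedding. The ring map $D'\to D=D'\langle g_1,\dots,g_r\rangle$ is the completion of $D'$ for the topology in which $D^{',+}[g_1,\dots,g_r]$ is a ring of definition. Hence $D'$ itself is dense in $D$, which gives the density claim. I expect the main obstacle to be Step 2 for $n>1$: making the integrality condition on a $\Prism_{T'}/p^n$-module element into honest rational inequalities. Witt components do not interact additively with divisibility by $\xi$. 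I would handle this by induction on $n$ via the exact sequence $0\to M/p^{n-1}\xrightarrow{p}M/p^n\to M/p\to 0$, showing at each stage that the relevant locus is rational inside the previous one.
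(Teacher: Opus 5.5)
Your proposal is correct and follows essentially the paper's route. Finiteness of the $\Phi$-fixed points (Artin--Schreier/Katz) makes $Z'$ affinoid perfectoid with uniformly bounded denominators, and integrality then cuts out $Z$ by finitely many conditions $|g|\leqslant 1$ with $g\in D'$, i.e.\ a rational open with denominator $1$, which gives density of the restriction map $D'\to D$. The paper reduces to $n=1$ first and cites Kedlaya--Liu to carry rationality and density through untilting, while your remark that $D=D'\langle g_1,\dots,g_r\rangle$ is a completion of $D'$ gives density directly.

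The $n>1$ obstacle you anticipate does not really arise. Witt coordinates are functorial and $T^{',\flat}\to T^{',\flat}[\nicefrac{1}{\varpi}]$ is injective, so membership in $W_n(T^{',\flat})\otimes M$ just means the finitely many Witt coordinates of the universal point (elements of $D^{',\flat}$) are integral. Their images under $f\mapsto f^\sharp$ are then the required $g_k$, and no induction is needed.
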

\begin{proof}
  Via the usual d\'evissage, we reduce to the case where $n=1$. Choose $k\geqslant 0$ with $\Phi(M)\subset \varpi^{-k}M$, and set $\tilde{\Phi} = \varpi^k\Phi$. In the following we identify $\Prism_T/p\Prism_T[\nicefrac{1}{\xi}]$ with $T^\flat[\nicefrac{1}{\varpi}]$.

  By Artin--Schreier theory, the functor $C\mapsto (C\otimes_{T^\flat[\nicefrac{1}{\varpi}]}M[\nicefrac{1}{\varpi}])^{\Phi = 1}$ is represented by a finite \'etale group scheme over $T^\flat[\nicefrac{1}{\varpi}]$ locally isomorphic to $\Field_p^{\rank_{T^\flat}M}$. Therefore, there exists $r\geqslant 0$ such that, for all affinoid perfectoids $\Spa(T'[\nicefrac{1}{p}],T')$ over $T$, we have
  \begin{align}
  \label{eqn:z_prime_diamond}
    Z^{\lozenge}(T'[\nicefrac{1}{p}],T') &= \left\{m\in \varpi^{-r}(T^{',\flat}\otimes_{T^\flat}M):\;\Phi(m)=m\right\}\nonumber\\
    &\isomto\left\{n\in T^{',\flat}\otimes_{T^\flat}M:\;\Phi(n) = \varpi^{pr}n\right\}\nonumber\\
    &=\left\{n\in T^{',\flat}\otimes_{T^\flat}M:\;\tilde{\Phi}(n) = \varpi^{k+pr}n\right\},
  \end{align}
  where the middle identification is given by $m\mapsto \varpi^r m$. Moreover, via this identification, we have
  \begin{align}
  \label{eqn:z_diamond}
    Z^{',\lozenge}(T'[\nicefrac{1}{p}],T') \simeq Z^{',\lozenge}(T'[\nicefrac{1}{p}],T')\cap \varpi^r(T^{',\flat}\otimes_{T^\flat}M)\subset T^{',\flat}\otimes_{T^\flat}M.
  \end{align}
  Now,~\eqref{eqn:z_prime_diamond} tells us that $Z^{',\lozenge}$ is represented by an affinoid perfectoid over $\Spa(T^\flat[\nicefrac{1}{\varpi}],T^\flat)$: It is a closed subsheaf of the perfectoid closed unit disk
  \[
  \mathbf{D}_{\mathrm{perf}}(M)\colon (T^{',\flat}[\nicefrac{1}{\varpi}],T^{',\flat})\mapsto T^{',\flat}\otimes_{T^\flat}M,
  \]
  associated with $M$. Furthermore,~\eqref{eqn:z_diamond} tells us that $Z^{\lozenge}$ is a rational open in $Z^{',\lozenge}$, obtained as the pre-image of the rational open immersion $\mathbf{D}_{\mathrm{perf}}(M)\xrightarrow{\varpi^r}\mathbf{D}_{\mathrm{perf}}(M)$. 
  
  By the tilting equivalence~\cite[Theorem 6.2.7]{Scholze2020-bx}, we conclude that $Z=\Spa(D,D^+)$ and $Z'=\Spa(D',D^{',+})$ are represented by affinoid perfectoids over $\Spa(T[\nicefrac{1}{p}],T)$. It remains to see that the properties of being a rational open embedding and of $D^\flat \to D^{',\flat}$ having dense image are preserved by untilting: The first follows from~\cite[Theorem 3.6.14]{KedlayaLiu}, and the second can be deduced from the explicit description of untilts of certain rational open embeddings in Lemma 3.6.13 of \emph{op.\@ cit.}
\end{proof}

\begin{proof}
[Proof of Proposition~\ref{prop:diagonal_rational_open}]
Note that $\hat{R}$ is $p$-completely smooth over $\mathcal{O}$ and so admits a $p$-completely flat map $\hat{R}\to \hat{R}_\infty$ where $\hat{R}_\infty$ is perfectoid and is the $p$-completed filtered colimit of finite flat $\hat{R}$-algebras. Lemma~\ref{lem:linearization} and Remark~\ref{rem:linearization} together tell us that the map
\begin{equation}\label{eq:density-after-tensoring}
B\hat{\otimes}\hat{R}_\infty \to (S\hat{\otimes}\hat{R}_\infty)[\nicefrac{1}{p}]
\end{equation}
has dense image. This implies that $B\to S[\nicefrac{1}{p}]$ has dense image. Indeed, it suffices to show that, for all $n\geqslant 1$, if $Q_n$ denotes the cokernel of the map of $R$-modules $B\to S[\nicefrac{1}{p}]/p^nS$ then $Q_n=0$. Let us note that $Q_n$ is $p^\infty$-torsion, and as $R\to \hat{R}_\infty$ is $p$-completely faithfully flat, we see that $Q_n$ is zero if and only if $Q_n\otimes_R\hat{R}_\infty$ is zero. But this follows from the density of the image in \eqref{eq:density-after-tensoring}.
\end{proof}

\subsection{An extension property}
\label{sub:extension_condition}

The following proposition will be used to prove Theorem~\ref{introthm:mapping_property}.
\begin{proposition}
  \label{prop:uniqueness_maps}
Suppose that we have two $\eta$-normal separated algebraic spaces $\mathcal{X}_1$ and $\mathcal{X}_2$ over $\mathcal{O}$, and that we are given $\mathfrak{Q}_i\in \BT[\mathcal{G},\mu,\mathrm{alg}]{\infty}(\mathcal{X}_i)$ lifting $\mathsf{Q}_i\in \mathrm{Loc}_{\mathcal{G}(\Int_p)}(X_i)$ where $X_i\defn \mathcal{X}_{i,\eta}$. Suppose that:
\begin{enumerate}
    \item The stack $\mathcal{X}_2$ is of finite type over $\mathcal{O}$ and the stack $\mathcal{X}_1$ is excellent.
  \item The formal classifying map $\widehat{\mathcal{X}}_2 \to \BT{\infty}$ for $\mathfrak{Q}_2$ is formally unramified.
  \item There is a map $f\colon X_1\to X_2$ and an isomorphism $\zeta\colon\mathsf{Q}_2\vert_{X_1}{\isomto}\mathsf{Q}_1$.
  \item Given any mixed characteristic $(0,p)$ complete discrete valuation field $F$ over $\mathcal{O}$ with perfect residue field, $f$ maps $\mathcal{X}_1(\Reg{F})\subset X_1(F)$ to $\mathcal{X}_2(\Reg{F})\subset X_2(F)$.
\end{enumerate}
Then there are unique extensions of $f$ to a map $\mathcal{X}_1\to \mathcal{X}_2$ and of $\zeta$ to an isomorphism $\mathfrak{Q}_1\vert_{\mathcal{X}_1}{\isomto} \mathfrak{Q}_2$. 
\end{proposition}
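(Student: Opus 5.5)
Uniqueness is immediate: $\mathcal{X}_1$ is $\eta$-normal, hence $p$-torsion free and flat, so $X_1$ is schematically dense in $\mathcal{X}_1$. Since $\mathcal{X}_2$ is separated, any two extensions of $f$ agree. Uniqueness of the extension of $\zeta$ is the faithfulness half of Theorem~\ref{thm:full_faithfulness}. For existence we follow the Pappas argument with graphs.

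First, let $\mathcal{Z}\subset \mathcal{X}_1\times_{\mathcal{O}}\mathcal{X}_2$ be the schematic closure of the graph $\Gamma_f\subset X_1\times X_2$, and let $\mathcal{Z}^{\nu}$ be its normalization in $\Gamma_f\simeq X_1$. This is finite over $\mathcal{Z}$ because $\mathcal{X}_1\times\mathcal{X}_2$ is excellent: $\mathcal{X}_1$ is excellent and $\mathcal{X}_2$ is of finite type. The projection $\pi\colon\mathcal{Z}^\nu\to\mathcal{X}_1$ is then separated and of finite type, and an isomorphism on generic fibers. We will show that $\pi$ is an isomorphism, so that $f$ extends as $\mathrm{pr}_2\circ\pi^{-1}$.

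Second, on $\mathcal{Z}^\nu$, which is $\eta$-normal and Noetherian, we have the two apertures $\pi^*\mathfrak{Q}_1$ and $\mathrm{pr}_2^*\mathfrak{Q}_2$, together with the isomorphism $\zeta$ between their étale realizations. Theorem~\ref{thm:full_faithfulness} extends $\zeta$ to an isomorphism $\tilde\zeta\colon \mathrm{pr}_2^*\mathfrak{Q}_2\isomto\pi^*\mathfrak{Q}_1$. Once $\pi$ is shown to be an isomorphism, this gives the extension of $\zeta$.

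Third, we show $\pi$ is quasi-finite. Fix a point $z$ of the special fiber of $\mathcal{Z}^\nu$ over $x\in\mathcal{X}_1$, pass to a perfect closure of the residue field, and consider the completed local rings. Via $\tilde\zeta$, the composition $\widehat{\mathcal{Z}^\nu}_z\to\widehat{\mathcal{X}_2}\to\BT{\infty}$ agrees with the composition through $\widehat{\mathcal{X}_1}$. Since $\widehat{\mathcal{X}}_2\to\BT{\infty}$ is formally unramified by (2), the map of complete local rings $\widehat{\Reg{\mathcal{X}_2,\mathrm{pr}_2(z)}}\to\widehat{\Reg{\mathcal{Z}^\nu,z}}$ is determined by the aperture data pulled back from $\mathcal{X}_1$. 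Together with the factorization through $\widehat{\Reg{\mathcal{X}_1,x}}$, this shows that the fiber $\pi^{-1}(x)$ is zero dimensional, i.e.\ $\pi$ is unramified over $x$.

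Fourth, we show $\pi$ is proper using the valuative criterion, and this is exactly where hypothesis (4) enters. A DVR point of $\mathcal{X}_1$ with generic point in $X_1$ can be dominated, after a faithfully flat extension, by $\Reg{F}$ with $F$ complete with perfect residue field, as in the proof of Theorem~\ref{thm:full_faithfulness}. By (4), $f$ sends this point into $\mathcal{X}_2(\Reg{F})$, hence into $\mathcal{Z}$, and it lifts to $\mathcal{Z}^\nu$ by normality. Equal-characteristic DVR points do not arise, since $X_1$ is dense. Existence of lifts then descends, using separatedness.

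Finally, $\pi$ is proper and quasi-finite, hence finite. It is birational onto the $\eta$-normal space $\mathcal{X}_1$, and $\mathcal{Z}^\nu$ is integral over $\mathcal{X}_1$ inside the same generic fiber. Since $\mathcal{X}_1$ is integrally closed in $\mathcal{X}_1[\nicefrac{1}{p}]$, it follows that $\pi$ is an isomorphism.

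The main obstacle is the third step: turning formal unramifiedness into quasi-finiteness. It has to be done carefully over imperfect residue fields, and requires controlling the deformation of the pair $(\mathrm{pr}_2(z),\tilde\zeta)$. If that route fails, I would instead use properness and the fact that $\pi$ is unramified on special fibers, which can be read off after completing, and conclude with Zariski's main theorem.
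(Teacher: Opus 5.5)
Your architecture is the paper's: the closure of the graph, its $\eta$-normalization (finite by excellence), Theorem~\ref{thm:full_faithfulness} to identify the two apertures on it, a complete-local-ring argument for quasi-finiteness, and (4) applied through complete discrete valuation rings. The step you flag as the main obstacle closes exactly as in the paper.

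Take a closed geometric point $z\in\mathcal{Z}^\nu(\kappa)$, with $\kappa$ algebraically closed, lying over $x_i\in\mathcal{X}_i(\kappa)$. Let $D$ be the universal deformation ring of $\BT{\infty}$ at the common image of $z$.
\begin{itemize}
\item Formal unramifiedness says $D\to\widehat{\Rg}_{x_2}$ is surjective. So the image of $\widehat{\Rg}_{x_2}$ in $\widehat{\Rg}_z$ is the image of $D$, and via $\tilde\zeta$ this factors through $\widehat{\Rg}_{x_1}$.
\item Since $\mathcal{Z}^\nu\to\mathcal{X}_1\times\mathcal{X}_2$ is finite, so is $\widehat{\Rg}_{x_1}\hat{\otimes}_{W(\kappa)}\widehat{\Rg}_{x_2}\to\widehat{\Rg}_z$. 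By the previous point its image is just the image of $\widehat{\Rg}_{x_1}$.
\item Hence $\widehat{\Rg}_{x_1}\to\widehat{\Rg}_z$ is finite, and $\pi$ is quasi-finite.
\end{itemize}
You need this finiteness of $\mathcal{Z}^\nu\to\mathcal{X}_1\times\mathcal{X}_2$ explicitly; it does not appear in your third step. The conclusion is quasi-finiteness, not ``unramified'' as you wrote. Nothing gives unramifiedness at this stage, and you do not need it.

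Your fourth step is wrong as stated. Equal-characteristic discrete valuation rings do enter the valuative criterion for $\pi$: these are the ones whose generic point maps into the special fiber of $\mathcal{Z}^\nu$. Hypothesis (4) says nothing about them, and the density of $X_1$ does not remove them by itself. To repair this you need the refined criterion: for a separated finite type map of Noetherian spaces, it suffices to test discrete valuation rings whose generic point lands in a dense open, here the schematically dense generic fiber. That refinement is itself proved by Nagata compactification plus lifting points to discrete valuation rings.

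The paper avoids properness altogether. Once $\pi$ is quasi-finite, Zariski's main theorem, together with $\pi_\eta$ being an isomorphism and the $\eta$-normality of $\mathcal{X}_1$, makes $\pi$ an open immersion. Surjectivity is then checked only at closed geometric points $x_1\in\mathcal{X}_1(\kappa)$. By flatness, $x_1$ lifts to $\mathcal{X}_1(\Reg{F})$ with $\Reg{F}$ complete and residue field $\kappa$ (\stacks{0CM2}), and (4) puts that lift in $\mathcal{Z}^\nu$. This also spares you dominating arbitrary discrete valuation rings and descending lifts.

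With the refined criterion in place, your endgame is valid: proper and quasi-finite gives finite, and finite plus $\eta$-normality gives an isomorphism. It is just heavier than the paper's route.
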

\begin{proof}
     The proof we are about to see is essentially a generalization of an argument of Pappas from~\cite[Theorem 7.1.7]{pappas_integral} (see also~\cite[Theorem 3.13]{imai2023prismatic}).

  Let $\mathcal{Y}\to \mathcal{X}_1\times_{\Spec \mathcal{O}} \mathcal{X}_2$ be constructed as follows: Take the scheme-theoretic image $\mathcal{Y}'$ of the graph
     \[
       X_1\xrightarrow{\mathrm{id}\times f}X_1\times_{\Spec E}X_2 \hookrightarrow \mathcal{X}_1\times_{\Spec \mathcal{O}} \mathcal{X}_2.
     \]
  Since $\mathcal{X}_2$ is of finite type over $\mathcal{O}$, we see $\mathcal{Y}'$ is of finite type over $\mathcal{X}_1$. Now  take $\mathcal{Y}$ to be the $\eta$-normalization of $\mathcal{Y}'$\footnote{More specifically, take $\mathcal{Y}$ to be relative spectrum over $\mathcal{Y}'$ of the quasi-coherent $\mc{O}_{\mc{Y}'}$-algebra given by integral closure of $\mathcal{O}_{\mathcal{Y}'}$ in $j_\ast\mc{O}_{\mc{Y}'_\eta}$. where $j\colon \mc{Y}'_\eta\to \mc{Y}'$ is the open inclusion.}: This is of finite type over $\mathcal{X}_1$ because of the excellence; see hypothesis~\stacks{035S}. It suffices to show $\mathcal{Y}\isomto\mathcal{X}_1$, as this implies that $\mathcal{Y}$ is the graph of a unique extension $\mathcal{X}_1\to \mathcal{X}_2$ with the desired properties.
  
  The restrictions of $\mathfrak{Q}_1$ and $\mathfrak{Q}_2$ yield two objects in $\BT{\infty}(\mathcal{Y})$ equipped with an isomorphism of the underlying local systems in $\mathrm{Loc}_{\mathcal{G}(\Int_p)}(\mathcal{Y}_\eta)$. Therefore, by Theorem~\ref{thm:full_faithfulness}, we see that there is an isomorphism $\mathfrak{Q}_1\vert_{\mathcal{Y}}{\isomto} \mathfrak{Q}_2\vert_{\mathcal{Y}}$. In other words, the  classifying maps
  \[
    \mathcal{Y}\to \mathcal{X}_1\to \BT[\mathcal{G},\mu,\mathrm{alg}]{\infty}\;;\; \mathcal{Y}\to \mathcal{X}_2\to \BT[\mathcal{G},\mu,\mathrm{alg}]{\infty},
  \]
  are isomorphic. 
  
  Suppose that we have a closed geometric point $y\in \mathcal{Y}(\kappa)$ of the special fiber mapping to $x_i\in \mathcal{X}_i(\kappa)$. We claim that the map of complete local Noetherian rings $\widehat{\Rg}_{\mathcal{X}_1,x_1}\to \widehat{\Rg}_{\mathcal{Y},y}$ is finite.
  
  Write $z\in \BT{\infty}(W(\kappa))$ for the point corresponding to $\mathfrak{Q}_{1,x_1}\simeq\mathfrak{Q}_{2,x_2}$. Let $\widehat{\Rg}_y$ (resp.\@ $\widehat{\Rg}_{x_1}$, $\widehat{\Rg}_{x_2}$, and $\widehat{\Rg}_z$) be the universal deformation rings over $W(\kappa)$ for $\mathcal{Y}$ (resp.\@ $\mathcal{X}_1$, $\mathcal{X}_2$, and $\BT[\mathcal{G},\mu,\mathrm{alg}]{\infty}$) at their respective $\kappa$-points. We now have factorizations
  \[
    \widehat{\Rg}_{z}\to \widehat{\Rg}_{x_1}\to \widehat{\Rg}_{y}\;;\; \widehat{\Rg}_{z}\to \widehat{\Rg}_{x_2}\to \widehat{\Rg}_{y},
  \]
  of the map $ \widehat{\Rg}_{z}\to \widehat{\Rg}_{y}$. By our formal unramifiedness hypothesis, the first map in the second factoring is surjective. By construction, the combined map
  \[
     \widehat{\Rg}_{x_1}\hat{\otimes}_{W(\kappa)}\widehat{\Rg}_{x_2}\to \widehat{\Rg}_{y},
  \]
  whose image is now the same as that of the first factor $\widehat{\Rg}_{x_1}$, is finite (since $\mathcal{Y}\to \mathcal{X}_1\times_{\Spf\mathcal{O}}\mathcal{X}_2$ is finite). We conclude that the map $\widehat{\Rg}_{x_1}\to \widehat{\Rg}_{y}$ is already finite.

 What we have seen above shows that the map $\mathcal{Y}\to \mathcal{X}_1$ is quasi-finite; see for instance \cite[Ch. IV, Propositions 2 and 3]{Raynaud1970-nd}. Since it is also an isomorphism after inverting $p$, it follows from Zariski's Main Theorem and the $\eta$-normality of $\mathcal{X}_1$ that it is in fact an open immersion: one may reduce by \'etale descent to \cite[Corollary 12.88]{GortzWedhorn}. 

 We now claim that every algebraically closed point $x_1\in \mathcal{X}_1(\kappa)$ of the special fiber is inside the open $\mathcal{Y}$, which implies that $\mc{Y}\to\mc{X}_1$ is an isomorphism as desired. Indeed, by the flatness of $\mathcal{X}_1$, we can find a mixed characteristic complete discrete valuation ring $\Reg{F}$ with residue field $\kappa$ such that $x_1$ lifts to $\mathcal{X}_1(\Reg{F})$; see \stacks{0CM2}. But hypothesis (4) tells us that this lift lies in $\mathcal{Y}(\Reg{F})$.
\end{proof}

\section{Criteria for the existence of apertures}
\label{sec:criteria_for_the_existence_of_apertures}

The purpose of this section is to develop criteria for proving the existence of apertures when $\mathcal{G}$ is reductive. These are mainly of a `pointwise' nature, and will be employed in the next section to prove the canonicity of integral models. We will maintain the notation from Setup~\ref{setup:g_mu}. 

\subsection{Apertures over \texorpdfstring{$p$}{p}-completely smooth \texorpdfstring{$\Reg{K}$}{OK}-algebras}
\label{sub:apertures_over_base_algebras}

Let $K$ be a complete mixed characteristic $(0,p)$ discrete valuation with perfect residue field $\kappa$ and absolute ramification index $e$, and fix a uniformizer $\pi\in K$.  Our goal here is to characterize the image of the fully faithful \'etale realization functor for base formal $\Reg{K}$-schemes. We will assume that $\mathcal{G}$ is \emph{reductive}.

\begin{definition}
[The type of a crystalline local system]
\label{def:type_crystalline}
Suppose that $G$ is reductive and that we have $\mathsf{Q}\in \mathrm{Loc}^{\mathrm{crys}}_{\mathcal{G}(\Int_p)}(K)$. Then, for every representation $V$ of $\mc{G}$ we obtain a filtered $K$-vector space $\Fil^\bullet D_{\dR}((V)_{\mathsf{Q}})$. By~\cite[Lemma (1.4.5)]{kisin:abelian}, there exists a cocharacter of $G_K$ that splits these filtrations simultaneously for all $V$. In particular, this gives us a well-defined geometric conjugacy class of cocharacters of $G$. This conjugacy class is the \defnword{type} of $\mathsf{Q}$. If $\lambda$ is a representative of the conjugacy class defined over $K$, then we will abuse terminology and say that $\mathsf{Q}$ has type $\lambda$. More generally, if $\mf{X}$ is a base formal $\Reg{K}$-scheme, a crystalline $\mc{G}(\bb{Z}_p)$-local systems $\mathsf{Q}$ over $\mf{X}$ is of \defnword{type $\lambda$} if, for all $V\in \Rep_{\Rat_p}(G)$, the filtration $\Fil^\bullet_{\mathrm{Hdg}}D_{\dR}((V)_{\mathsf{Q}})$ is, \'etale locally on $\mf{X}_\eta$, split by the cocharacter $\lambda$.
\end{definition}

\begin{notation}
For a cocharacter $\lambda$ of $G_K$, let $\mathrm{Loc}^\mathrm{crys,\lambda}_{\mathcal{G}(\Int_p)}(K)$ be the groupoid of crystalline $\mc{G}(\bb{Z}_p)$-local systems of type $\lambda$ on $K$. More generally, for any base formal $\Reg{K}$-scheme $\mf{X}$, we define $\mathrm{Loc}^{\mathrm{crys},\mu}_{\mathcal{G}(\Int_p)}(\mf{X}_\eta)$ to be the groupoid of crystalline $\mc{G}(\bb{Z}_p)$-local systems of type $\lambda$ on $\mf{X}_\eta$.
\end{notation}

The two following results characterize the image of the fully faithful \'etale realization functor on $\BT{\infty}(R)$ in some cases.
\begin{proposition}
\label{prop:gmu_rings_of_integers}
The fully faithful functor 
\begin{equation*}
T_{\et}\colon \BT{\infty}(\Reg{K})\to \mathrm{Loc}_{\mathcal{G}(\Int_p)}(K)
\end{equation*}
has essential image $\mathrm{Loc}^\mathrm{crys,\mu}_{\mathcal{G}(\Int_p)}(K)$.
\end{proposition}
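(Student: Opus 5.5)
We must show two things: that $T_{\et}$ lands in $\mathrm{Loc}^{\mathrm{crys},\mu}_{\mathcal{G}(\Int_p)}(K)$, and that every object of the latter is in the essential image. Full faithfulness is already available from Corollary~\ref{cor:guo_reinecke}(2), taking $\mf{X}=\Spf\Reg{K}$.

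For the first containment, Corollary~\ref{cor:guo_reinecke}(2) shows that $T_{\et}(\mathfrak{Q})$ is crystalline. To identify its type, we use Lemma~\ref{lem:filtered_de_rham_comparison}. It identifies $\Fil^\bullet D_{\dR}$ of $T_{\et}(\mathfrak{Q})$ with the generic fiber of $\Fil^\bullet_{\mathrm{Hdg}}T_{\dR}(\mathfrak{Q})$. By Remark~\ref{rem:hodge-filtered_de_rham}, the latter is étale locally split by a conjugate of $\mu$. Hence the type is $\mu$.

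For essential surjectivity, start with $\mathsf{Q}$ crystalline of type $\mu$. By Remark~\ref{rem:crystalline_GZ_p_tannakian} together with Theorem~\ref{thm:guo_reinecke}, it corresponds to an exact tensor functor
\[
\omega\colon \Rep_{\Int_p}(\mathcal{G})\to \mathrm{Vect}^{\mathrm{an},\varphi}(\Reg{K,\Prism},\Reg{\Prism}).
\]
Over a discrete valuation ring, analytic prismatic $F$-crystals extend uniquely to genuine prismatic $F$-crystals, because vector bundles on the punctured spectrum of a $2$-dimensional regular local ring extend. Concretely, on the Breuil--Kisin prism $(\Sig,E)$ of Example~\ref{ex:bk_frames_for_base_algebras}, this is the statement that Kisin modules are finite free. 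So $\omega$ lands in prismatic $F$-crystals.

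Next, evaluating on $(\Sig,E)$ gives an exact tensor functor to Breuil--Kisin modules. By the argument of \cite[Lemma (1.3.?)]{kisin:abelian}/Broshi (Setup~\ref{setup:faithful_repn_tensors}), this corresponds to a $\mathcal{G}$-torsor $\mathcal{Q}$ over $\Sig$, which is trivial since $\mathcal{G}$ is smooth with connected fibers and $\Sig$ is local Henselian. It comes with a Frobenius $\varphi^*\mathcal{Q}[1/E]\isomto\mathcal{Q}[1/E]$. The type-$\mu$ hypothesis and the comparison of $\Fil^\bullet D_{\dR}$ with the filtration on $\varphi^*\mathfrak{M}/E$ show that, relative to the tensors, the Frobenius lies in $\mathcal{G}(\Sig)\mu(E)\mathcal{G}(\Sig)$. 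This is the standard consequence of Kisin's result that the relative position is $\mu$. That data is exactly a $(\mathcal{G},\mu)$-window over $\underline{\Sig}$ in the sense of Remark~\ref{rem:windows_alternate}/Remark~\ref{rem:expl_tensor_packages}.

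The step I expect to be the main obstacle is going from windows over $\Sig$ (or the prismatic $F$-crystal) to an actual aperture, i.e.\ a $\mathcal{G}$-torsor on $\Reg{K}^{\mathrm{syn}}$ bounded by $\mu$. I would argue as follows. Vector bundle $F$-gauges on $\Reg{K}^{\mathrm{syn}}$ are the same as prismatic $F$-crystals satisfying a boundedness condition (Guo--Li \cite{GuoLi}, Proposition~\ref{prop:f-gauges_to_analytic_prismatic}): the Nygaard filtration is recovered as $\Fil^i=\{x:\varphi(x)\in I^i\mathcal{E}\}$. With this, apply the functor $\omega$ objectwise to get an exact tensor functor to $\mathrm{Vect}(\Reg{K}^{\mathrm{syn}})$, hence a $\mathcal{G}$-torsor by Lurie's Tannaka duality as in the proof of Corollary~\ref{cor:guo_reinecke}. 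Exactness needs checking, and should follow from the strictness of the Hodge filtrations, i.e.\ the splitting by $\mu$. Boundedness by $\mu$ is then checked at the de Rham point, again using Lemma~\ref{lem:filtered_de_rham_comparison}. Alternatively, one can descend the window along the perfectoid cover using Proposition~\ref{prop:apertures_over_perfectoid_rings} and quasisyntomic descent, as in \cite{imai2023prismatic}.
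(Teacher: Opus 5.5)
Your proposal follows the paper's proof. The landing statement is Lemma~\ref{lem:type_of_bt_mu} (via Lemma~\ref{lem:filtered_de_rham_comparison}). For essential surjectivity, the paper likewise reduces through Remarks~\ref{rem:algebraicity} and~\ref{rem:finite_free_criterion} to the freeness of reflexive modules over the $2$-dimensional regular local ring $\Sig_{\Reg{K}}$, and it gets the passage from $\mathcal{G}$-objects in prismatic $F$-crystals to $\mathcal{G}$-torsors on $\Reg{K}^{\mathrm{syn}}$ bounded by $\mu$ (your Guo--Li/strictness step) by citing \cite[Propositions 1.28 and 1.39]{imai2023prismatic}.

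The detour through windows over $\Sig$ is not needed, and you should drop it. Its claim that the $\mathcal{G}$-torsor over $\Sig$ is trivial fails for a general perfect residue field $\kappa$: Henselianity only reduces this to triviality of a $\mathcal{G}_\kappa$-torsor over $\kappa$, which holds e.g.\ for $\kappa$ finite or algebraically closed but not in general.
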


\begin{theorem}
  \label{thm:p_comp_smooth_characterization}
Suppose that $\mf{X}$ is a base formal $\Reg{K}$-scheme and that $2e<p-1$. Then the fully faithful functor
\[
T_{\et}\colon \BT{\infty}(\mathfrak{X})\to \mathrm{Loc}_{\mathcal{G}(\Int_p)}(\mf{X}_\eta)
\]
has essential image $\mathrm{Loc}^\mathrm{crys,\mu}_{\mathcal{G}(\Int_p)}(\mf{X}_\eta)$.
\end{theorem}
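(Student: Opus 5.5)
Full faithfulness is already available from Corollary~\ref{cor:guo_reinecke} and Theorem~\ref{thm:full_faithfulness}, so the content of the statement is the description of the essential image. I would prove two inclusions. First, the image lands in crystalline local systems of type $\mu$. Corollary~\ref{cor:guo_reinecke}(2) gives crystallinity. For the type, Lemma~\ref{lem:filtered_de_rham_comparison} identifies $\Fil^\bullet D_{\dR}$ with the generic fiber of $\Fil^\bullet_{\mathrm{Hdg}}T_{\dR}(\mathfrak{Q})$. By Remark~\ref{rem:hodge-filtered_de_rham}, the latter is \'etale locally split by $\mu$.

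For the converse, I would work Tannakian-ly. Take $\mathsf{Q}$ crystalline of type $\mu$. By Remark~\ref{rem:crystalline_GZ_p_tannakian} and Theorem~\ref{thm:guo_reinecke}, $\mathsf{Q}$ corresponds to an exact tensor functor $\omega\colon \Rep_{\Int_p}(\mathcal{G})\to \mathrm{Vect}^{\mathrm{an},\varphi}(\mathfrak{X}_\Prism,\Reg{\Prism})$. The goal is to produce a $\mathcal{G}$-torsor on $\mathfrak{X}^{\mathrm{syn}}$ bounded by $\mu$.

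\textbf{Step 1: reduce to the case $\mathfrak{X}=\Spf R$.} Theorem~\ref{thm:full_faithfulness} gives \'etale descent, so it suffices to treat $\mathfrak{X}=\Spf R$ with $R$ a small base $\Reg{K}$-algebra. I would then use the Breuil--Kisin frame $\Sig_R$ of Example~\ref{ex:bk_frames_for_base_algebras}. Evaluating $\omega$ on this frame gives, over $\Spec\Sig_R\setminus V(p,E)$, a $\mathcal{G}$-bundle with Frobenius.

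\textbf{Step 2: extend the bundle over the whole frame.} I would argue as in Imai--Kato--Youcis: for reductive $\mathcal{G}$, a $\mathcal{G}$-torsor on the punctured spectrum of a two-dimensional regular ring extends, and the extension is \'etale locally trivial. This is the Anschütz / Ivanov-type extension of $\mathcal{G}$-torsors across codimension two.

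\textbf{Step 3: check the window condition.} The type-$\mu$ hypothesis, combined with the crystalline comparison, should show that the Frobenius, after modification along a filtration, yields a $(\mathcal{G},\mu)$-window over $\Sig_R$ in the sense of Remark~\ref{rem:expl_tensor_packages}.

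\textbf{Step 4: descend from windows to apertures.} This is the main obstacle: showing that $\BT{\infty}(R)\to\Wind{\Sig_R}{\infty}(R)$ is an equivalence. Here the hypothesis $2e<p-1$ enters, in the style of Lau's and Cais--Lau's results on Breuil windows for small ramification. My first attempt would be a deformation-theoretic argument. Compare the Grothendieck--Messing square of Theorem~\ref{thm:gmm} with the analogous square for windows, via the divided power thickening $\Sig_R\to R$ and the frame $S_R$ (the $p$-adic completion of the PD envelope). The condition $2e<p-1$ makes the relevant divided powers topologically nilpotent, and it makes the comparison $\Sig_R\to S_R$ of windows an equivalence, via Lau's frame-morphism criterion: the operator $\varphi_1$ is nilpotent on the kernel. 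Having an equivalence between windows over $S_R$ and apertures over $R$ then constructs $\mathfrak{Q}$. Its \'etale realization recovers $\mathsf{Q}$ by compatibility with $T_{\et}$ on the Breuil--Kisin side.

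For Proposition~\ref{prop:gmu_rings_of_integers}, i.e.\ the case $R=\Reg{K}$, no bound on $e$ should be needed. There I would instead use the Bhatt--Scholze description of crystalline lattices as prismatic $F$-crystals together with Ito's deformation theory (Remark~\ref{rem:bt_deformation_rings}) over the perfect residue field.
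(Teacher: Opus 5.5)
Your first inclusion and the reduction to the affine case are fine, but the converse has a genuine gap at Step~2. The ring $\Sig_R=R_0\ll u\rr$ is two-dimensional only when $R=\Reg{K}$. If $R$ has relative dimension $d\geq 1$ over $\Reg{K}$, then $\Spec\Sig_R$ is regular of dimension $d+2\geq 3$, and $V(p,E(u))=V(p,u)$ has codimension $2$. There is no Hartogs principle in that range: a reflexive module over a regular local ring of dimension $\geq 3$ need not be free, and a $\mathcal{G}$-torsor on the complement of a codimension-$2$ closed subset need not extend.

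Your two-dimensional argument is exactly the paper's proof of Proposition~\ref{prop:gmu_rings_of_integers}, which is why that case needs no bound on $e$. In positive relative dimension, showing that the reflexive $\Sig_R$-module $\mathcal{L}$ of Remark~\ref{rem:finite_free_criterion} (equivalently, the $\mathcal{G}$-torsor on $\Spec\Sig_R\setminus V(p,E(u))$) extends is the entire content of the theorem. This, and not a window/aperture comparison, is where $2e<p-1$ is used.

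Your Steps~3--4 are unnecessary and do not repair this. The paper never passes through $\Wind{\Sig_R}{\infty}(R)$. Once the analytic prismatic $F$-crystal is an honest prismatic $F$-crystal, \cite[Propositions 1.28 and 1.39]{imai2023prismatic} give the $\mathcal{G}$-torsor on $R^{\mathrm{syn}}$, and the pointwise type condition gives boundedness by $\mu$ (Remark~\ref{rem:algebraicity}). An equivalence $\BT{\infty}(R)\to\Wind{\Sig_R}{\infty}(R)$ for non-local $R$ would be a substantial claim in its own right; the known Breuil--Kisin classifications are over complete regular local bases. Even granting it, you would still need the torsor over all of $\Spf\Sig_R$ before you have a window.

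The ingredients you are missing are the following.
\begin{enumerate}
\item Reduce to completions $T$ of $R$ at maximal ideals (Lemma~\ref{lem:reduction_to_complete_local}). With algebraically closed residue field, extension is equivalent to triviality of the torsor on $U=\Spec\Sig_T\setminus V(p,E(u))$ (Remark~\ref{rem:complete_local_trivial_criterion}).
\item If $\mu$ acts on $\Lambda$ with weights in an interval of length $r$ and $er<p-1$, the argument of \cite{LiuMoon} shows that $\mathcal{L}_T$ is free (Lemma~\ref{lem:complete_local_existence_of_aperture}). Applied to the adjoint representation, with weights $-1,0,1$ and so $r=2$, this settles the adjoint case and is the source of the hypothesis $2e<p-1$.
\item Tori are handled separately, by splitting and extending line bundles (Lemma~\ref{lem:case_of_torus}).
\item For general reductive $\mathcal{G}$, use the central isogeny $\mathcal{G}\to\mathcal{G}^{\ad}\times\mathcal{G}^{\ab}$ with finite flat kernel $\mathcal{Z}$. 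By the previous two cases, the torsor becomes trivial after pushout, so it reduces to a $\mathcal{Z}$-torsor on $U$. That $\mathcal{Z}$-torsor extends by purity for torsors under finite flat group schemes \cite{Marrama2016-gc}.
\end{enumerate}
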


\begin{remark}
  [Crystallinity from pointwise crystallinity]
\label{rem:guo_yang}
The work of Guo--Yang~\cite{guo2024pointwisec}, as explained in Remark~\ref{rem:crystalline_local_system},  tells us that, when $\mf{X}$ is $p$-completely smooth over $\Reg{K}$, then the following conditions are equivalent for a $\mathcal{G}(\Int_p)$-local system $\mathsf{Q}$ over $\mf{X}_\eta$:
\begin{enumerate}
    \item $\mathsf{Q}$ belongs to $\mathrm{Loc}^\mathrm{crys,\mu}_{\mathcal{G}(\Int_p)}(\mf{X}_\eta)$;
    \item For every classical point $x\colon \Spa(K')\to\mf{X}_\eta$, one has that $\mathsf{Q}_x$ is in $\mathrm{Loc}^\mathrm{crys,\mu}_{\mathcal{G}(\Int_p)}(K')$.
\end{enumerate}
Therefore, in this case, the second pointwise condition can be used to describe the essential image of $T_{\et}$.
\end{remark}

\begin{remark}
    [Crystalline specialization functors]
\label{rem:specialization}
In the situation of either Theorem~\ref{thm:p_comp_smooth_characterization} or Proposition~\ref{prop:gmu_rings_of_integers} (the latter with $\mf{X} = \Spf\Reg{K}$), we obtain functors 
\[
\mathrm{Loc}^{\mathrm{crys},\mu}_{\mathcal{G}(\Int_p)}(\mf{X}_\eta)\isomfrom\BT{\infty}(\mf{X})\to \BT{\infty}(\mf{X}_\kappa),
\]
where $\mf{X}_\kappa$ is the special fiber of $\mf{X}$. Using this, we can functorially associate with every $\mathsf{Q}\in \mathrm{Loc}^{\mathrm{crys},\mu}_{\mathcal{G}(\Int_p)}(\mf{X}_\eta)$ a $(\mathcal{G},\mu)$-aperture $\Dieu_{\mathrm{crys}}(\mathsf{Q})$ over $\mf{X}_\kappa$. In particular, for every $\Lambda\in \Rep_{\Int_p}(\mathcal{G})$, we obtain the $F$-gauge
\[
\Dieu_{\mathrm{crys}}\left((\Lambda)_{\mathsf{Q}}\right) \defn (\Lambda)_{\Dieu_{\mathrm{crys}}(\mathsf{Q})}\in \mathrm{Vect}(\mf{X}_\kappa^{\mathrm{syn}}).
\]
If we take the underlying $F$-crystal over $\mf{X}_\kappa$, then we are essentially getting a special case of the similarly denoted functor from~\cite[\S 2]{IKY3}.
\end{remark}

To prove our main results, we will first need a bit of preparation. Theorem \ref{thm:p_comp_smooth_characterization} quickly reduces to the affine case $\mf{X}=\Spf(R)$, and so we will focus on this below. 
\begin{lemma}
    \label{lem:type_of_bt_mu}
The \'etale realization functor $T_{\et}\colon \BT{\infty}(\Reg{K})\to \mathrm{Loc}^{\mathrm{crys}}_{\mathcal{G}(\Int_p)}(K)$ lands in $\mathrm{Loc}^{\mathrm{crys},\mu}_{\mathcal{G}(\Int_p)}(K)$
\end{lemma}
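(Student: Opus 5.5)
Given $\mathfrak{Q}\in \BT{\infty}(\Reg{K})$, Corollary~\ref{cor:guo_reinecke} already tells us that $\mathsf{Q}\defn T_{\et}(\mathfrak{Q})$ is a crystalline $\mathcal{G}(\Int_p)$-local system. So the only thing left to check is the type. By Definition~\ref{def:type_crystalline}, we must show that the filtered $G_K$-torsor $\Fil^\bullet D_{\dR}(\mathsf{Q})$ is split by a cocharacter geometrically conjugate to $\mu$.

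The plan is to compare this filtration with the Hodge filtration of the aperture. Apply Lemma~\ref{lem:filtered_de_rham_comparison} with $\mathfrak{X} = \Spf\Reg{K}$, which is a base formal $\Reg{K}$-scheme. It identifies $\Fil^\bullet D_{\dR}(\mathsf{Q})$, functorially and compatibly with tensor products in $V\in\Rep_{\Rat_p}(G)$, with the generic fiber of $\Fil^\bullet_{\mathrm{Hdg}}T_{\dR}(\mathfrak{Q})$. The latter is a $\mathcal{G}$-bundle over $\Aff^1/\Gm\times\Spf\Reg{K}$.

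By Remark~\ref{rem:hodge-filtered_de_rham}, this $\mathcal{G}$-bundle is \'etale locally on $\Spf\Reg{K}$ isomorphic to the pullback of $\mathcal{Q}_\mu$. Equivalently, it amounts to a $\mathcal{P}^-_\mu$-torsor over $\Reg{K}$, and such a torsor is trivial after a finite \'etale (unramified) extension of $\Reg{K}$. After passing to that extension and choosing a trivialization, the filtration on each $V_{\Reg{K}}\otimes$ (twist) is the one split by $\mu$. Here $\mu$ is viewed over the extension, or over $\Reg{K}$ itself via $\hat{\mathcal{O}}\to\Reg{K}$, since $\mu$ is defined over $\hat{\mathcal{O}}$.

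Inverting $p$, we get a trivialization of the $G$-torsor $D_{\dR}(\mathsf{Q})$ over a finite extension of $K$ under which all the filtrations are split by $\mu$. Hence the type of $\mathsf{Q}$ is the geometric conjugacy class of $\mu$. This is independent of choices, by Remark~\ref{rem:mu_only_conj_class_needed} and the fact that the type is defined up to geometric conjugacy.

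The main point to verify carefully is that the comparison of Lemma~\ref{lem:filtered_de_rham_comparison} is an isomorphism of \emph{filtered} objects compatible with the Tannakian structure. We need this to hold as exact tensor functors, not merely representation by representation, so that a single splitting cocharacter transports. I would check this by noting that both sides are exact tensor functors, and the comparison of~\cite[Theorem 2.10]{IKY3} is natural in $V$ and monoidal. A splitting of the filtered fiber functor (via \cite[Lemma (1.4.5)]{kisin:abelian}) is then determined up to conjugacy by the graded fiber functor. That graded fiber functor is the twist of $\mathcal{Q}_\mu$ on the aperture side.
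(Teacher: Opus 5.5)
Your proposal is correct and follows essentially the same route as the paper: the paper also uses Lemma~\ref{lem:filtered_de_rham_comparison} to identify $\Fil^\bullet D_{\dR}(\mathsf{Q})$ with $\Fil^\bullet_{\mathrm{Hdg}}T_{\dR}(\mathfrak{Q})$, and then appeals to the $\mu$-boundedness condition built into the definition of $\BT{\infty}$. Your extra details are sound and only make that last step explicit: the \'etale-local triviality from Remark~\ref{rem:hodge-filtered_de_rham} over a finite unramified extension, and the fact that the comparison is an isomorphism of filtered $G$-bundles, so that a splitting cocharacter transports.
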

\begin{proof}
Given $\mf{Q}\in \BT{\infty}(\Reg{K})$ lifting $\mathsf{Q}\in \mathrm{Loc}_{\mathcal{G}(\Int_p)}(K)$, Lemma~\ref{lem:filtered_de_rham_comparison} shows that $\Fil^\bullet D_{\dR}(\mathsf{Q})$ is canonically isomorphic to $\Fil^\bullet_{\mathrm{Hdg}}T_{\dR}(\mf{Q})$. This, combined with the definition of $\BT{\infty}$, implies the claim.
\end{proof}

\begin{remark}
  [Reduction to a question of algebraicity]
\label{rem:algebraicity}
Theorem~\ref{thm:guo_reinecke} tells us that the tensor functor $\Rep_{\Int_p}(\mathcal{G})\to \mathrm{Loc}_{\Int_p}(R[\nicefrac{1}{p}])$ associated with $\mathsf{Q}$ factors through an exact symmetric monoidal functor
\[
  \Rep_{\Int_p}(\mathcal{G})\to \mathrm{Vect}^{\mathrm{an},\varphi}(R_{\Prism},\Reg{\Prism}).
\]
By~\cite[Propositions 1.28 and 1.39]{imai2023prismatic}, the theorem now amounts to showing that this functor factors through the fully faithful subcategory $\mathrm{Vect}^{\varphi}(R_{\Prism},\Reg{\Prism})$. Indeed, the cited results would then imply that $\mathsf{Q}$ arises from a $\mathcal{G}$-torsor $\mathfrak{Q}$ over $R^{\mathrm{syn}}$, and our pointwise assumption on the type, combined with Lemma~\ref{lem:type_of_bt_mu}, implies that $\mathfrak{Q}$ is in $\BT{\infty}(R)$.

To establish the desired factorization, it is now sufficient to show that the analytic prismatic $F$-crystal associated with $(\Lambda)_{\mathsf{Q}}$ is in fact a prismatic $F$-crystal for \emph{any} faithful representation of $\mathcal{G}$. 
\end{remark}

\begin{remark}
  [Reduction to a question of finite freeness]
\label{rem:finite_free_criterion}
Working \'etale locally on $\Spf R$, we can assume that $R$ is a base $\Reg{K}$-algebra admitting a Breuil--Kisin frame $\underline{\Sig}_R$ as in Example~\ref{ex:bk_frames_for_base_algebras}. The analytic prismatic $F$-crystal associated with $(\Lambda)_{\mathsf{Q}}$ gives a vector bundle $\mathcal{V}^{\mathrm{an}}$ over $\Spec \Sig_R\backslash V\big(p,E(u)\big)$. Knowing that we in fact have a prismatic $F$-crystal amounts to knowing that the reflexive $\Sig_R$-module
\[
  \mathcal{L} = H^0\bigg(\Spec \Sig_R\backslash V\big(p,E(u)\big),\mathcal{V}^{\mathrm{an}}\bigg)
\]
is in fact finite locally free (cf.\@ \cite[Proposition 1.26]{imai2024tannakianframeworkprismaticfcrystals}).
\end{remark}

\begin{proof}
  [Proof of Proposition~\ref{prop:gmu_rings_of_integers}]
In this case, $\Sig_{\Reg{K}}$ is a regular local Noetherian ring of dimension $2$, and so every reflexive finitely generated module over it is finite free.
\end{proof}

\begin{lemma}
  \label{lem:case_of_torus}
Suppose that $\mathcal{G} = \mathcal{T}$ is a torus over $\Int_p$. Then, \emph{Theorem~\ref{thm:p_comp_smooth_characterization}} holds.
\end{lemma}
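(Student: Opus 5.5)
For a torus $\mathcal{T}$, I would first reduce via Remarks~\ref{rem:algebraicity} and~\ref{rem:finite_free_criterion}. Work \'etale locally so that $\mf{X}=\Spf R$ with $R$ a base $\Reg{K}$-algebra carrying the Breuil--Kisin frame $\underline{\Sig}_R$. It then suffices to show, for one faithful representation $\Lambda$, that the reflexive module $\mathcal{L}$ attached to $(\Lambda)_{\mathsf{Q}}$ is finite locally free.

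Since $\mathcal{T}$ splits over a finite unramified extension, I would pass to a finite \'etale extension of $\Int_p$ (equivalently, replace $R$ by a finite \'etale cover, which changes neither $e$ nor the base-algebra hypothesis). Then $\Lambda$ decomposes into characters, and by the Tannakian formalism it is enough to treat rank-one objects. A rank-one crystalline $\Int_p$-local system $L$ (or, more precisely, the $\mathcal{O}$-linear rank-one objects obtained from characters of $\mathcal{T}$) gives an analytic prismatic $F$-crystal of rank one. Its global sections $\mathcal{L}$ over $\Spec\Sig_R\backslash V(p,E(u))$ form a reflexive rank-one module.

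The key point is that a reflexive rank-one module $\mathcal{L}$ with its tensor inverse $\mathcal{L}^\vee$ satisfies $\mathcal{L}\otimes\mathcal{L}^\vee\to\Sig_R$. Both $\mathcal{L}$ and $\mathcal{L}^\vee$ are the global sections of line bundles on the punctured spectrum. I would show local freeness by checking that the line bundle on $U=\Spec\Sig_R\backslash V(p,E)$ extends, that is, that its class is trivial in the local Picard group at points of $V(p,E)$. The cleanest route I would try uses the group structure: the analytic $F$-crystals of rank one form a group under $\otimes$, the extendable ones form a subgroup, and I would exploit Frobenius. The isomorphism $\varphi^*\mathcal{V}^{\mathrm{an}}[1/E]\simeq \mathcal{V}^{\mathrm{an}}[1/E]$ relates the class $c$ to $p\cdot c$ up to a class supported along $V(E)$, which is trivial on $U$ after inverting $E$. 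Pulling back by $\varphi$ multiplies local Picard classes by $p$, which should force $(p-1)c$ to be trivial. Combined with $p$-power torsion considerations from the $\Int_p$-structure, this would give $c=0$. I expect this to be the main obstacle, and I am unsure of it. As a fallback I would use the hypothesis $2e<p-1$ through the comparison with the $\mathcal{G}=\GL_1$, weight-$\{0,1\}$ case. Here I would twist by a Lubin--Tate character as in Remark~\ref{rem:lt_apertures} to move the Hodge--Tate weights of each character into $\{0,1\}$, realize the object as the Tate module of a $p$-divisible group over $\mf{X}_\eta$ that extends over $\mf{X}$ by the smooth-base version of the Tate/de Jong extension theorem valid for $e<p-1$, and then apply Theorem~\ref{thm:dieudonne} to get a genuine $F$-gauge.

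Finally, I would check that the resulting $\mathcal{T}$-torsor on $R^{\mathrm{syn}}$ is bounded by $\mu$ via Lemma~\ref{lem:type_of_bt_mu}, since the type is constant equal to $\mu$. I would then descend from the \'etale cover using the full faithfulness of Theorem~\ref{thm:guo_reinecke}(2).
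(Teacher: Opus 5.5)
\textbf{There is a genuine gap at the decisive step.} Your reductions are fine, but the key step is not established: you need to extend the rank-one summands of $\mathcal{V}^{\mathrm{an}}$ across $V(p,E(u))$, and the Frobenius argument you propose does not do this.
\begin{itemize}
\item The $\varphi$-isomorphism exists only after inverting $E$, and $V(p,E)\subset V(E)$. So it gives no control over a local Picard class at points of $V(p,E)$.
\item The claim that $\varphi^*$ multiplies local Picard classes by $p$ is unjustified for the Frobenius lift on $\Sig_R$. That identity holds for the absolute Frobenius of an $\Field_p$-scheme, not here.
\item Nothing in your argument shows that the class is $p$-power torsion.
\end{itemize}
Your fallback, as written, also invokes the wrong input. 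Tate's and de Jong's theorems are full-faithfulness or extension-of-homomorphisms results. They do not produce a $p$-divisible group over $\mf{X}$ from a crystalline local system on $\mf{X}_\eta$. What you would actually need is the Liu--Moon theorem (cf.\ Remark~\ref{rem:hodge_type_improvement}), applied after Lubin--Tate twists over the splitting field of $\mathcal{T}$. That is a heavy detour, and it consumes the bound on $e$, which the torus case does not need.

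\textbf{The missing observation is purely commutative-algebraic.} $\Sig_R=R_0\ll u\rr$ is a regular Noetherian ring. So is its base change along the finite \'etale extension of $\Int_p$ that splits $\mathcal{T}$; this base change is faithfully flat, formation of $\mathcal{L}=\Gamma(U,\mathcal{V}^{\mathrm{an}})$ commutes with it, and local freeness descends along it. Since $p,E(u)$ is a regular sequence, $V(p,E(u))$ has codimension $2$.

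On a regular, hence locally factorial, scheme every line bundle on the complement of a closed subset of codimension $\geq 2$ extends uniquely. Concretely, let $j\colon U\hookrightarrow \Spec\Sig_R$ be the inclusion and $\mathcal{M}$ a line bundle on $U$. Then $j_*\mathcal{M}$ is a reflexive rank-one coherent sheaf, and it is invertible because regular local rings are UFDs. This is what the SGA~2 citation in the paper's proof supplies, and it is the rank-one analogue of the dimension-two argument in Proposition~\ref{prop:gmu_rings_of_integers}.

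Once $\mathcal{T}$ is split, the $\mathcal{T}$-torsor makes $\mathcal{V}^{\mathrm{an}}$ a direct sum of line bundles on $U$. Each of these extends, so $\mathcal{L}$ is a direct sum of invertible modules. No Frobenius structure is used: the individual line bundles need not be $F$-crystals or come from local systems. No bound on $e$ enters either, and no local Picard class ever needs to be killed, because those groups vanish.
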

\begin{proof}
We can reduce to the situation in Remark~\ref{rem:finite_free_criterion}, and so need to check that the reflexive $\Sig_R$-module $\mathcal{L}$ associated with a faithful representation $\Lambda$ is finite locally free. Since this can be checked after a faithfully flat base change, we can assume without loss of generality that $\mathcal{T}$ is split over $\Sig_R$. In particular, $\mathcal{L}^{\an}$ is now a direct sum of line bundles, and this implies that $\mathcal{L}$ is finite locally free; see~\cite[Expos\'e XI, Th\'eor\`eme 3.18]{Grothendieck2005-ro}.
\end{proof}

\begin{lemma}
  [Reduction to the complete local case]
\label{lem:reduction_to_complete_local}
Suppose that $e$ is arbitrary and that the following equivalent conditions hold for every maximal ideal $\mathfrak{m}\subset R$:
\begin{enumerate}
  \item The restriction of $\mathsf{Q}$ over $\Spec\hat{R}_{\mathfrak{m}}[\nicefrac{1}{p}]$ lifts to $\BT{\infty}(\hat{R}_{\mathfrak{m}})$.
  \item The analytic prismatic $F$-crystal over $\hat{R}_{\mathfrak{m}}$ associated with $(\Lambda)_{\mathsf{Q}}$ is in fact a prismatic $F$-crystal.
\end{enumerate}
Then $\mathsf{Q}$ lifts to $\BT{\infty}(R)$.
\end{lemma}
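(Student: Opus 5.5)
Everything reduces to showing that the reflexive $\Sig_R$-module $\mathcal{L}$ from Remark~\ref{rem:finite_free_criterion} is finite locally free. After localizing \'etale on $\Spf R$ we may assume $R$ is a base $\Reg{K}$-algebra with Breuil--Kisin frame $\underline{\Sig}_R = (\Sig_R,(E(u)))$, where $\Sig_R = R_0\ll u\rr$. Since $R$ is Noetherian and $p$-adically complete, and since being finite locally free can be checked at maximal ideals, the plan is to pass to the completions $\hat{R}_{\mathfrak{m}}$ and descend.

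First, the equivalence of (1) and (2) for $\hat{R}_{\mathfrak{m}}$ is the argument of Remarks~\ref{rem:algebraicity} and~\ref{rem:finite_free_criterion} applied to $\hat{R}_{\mathfrak{m}}$. Two points need attention here. The base change of the frame, $\Sig_{\hat{R}_\mathfrak{m}} = \hat{R}_{0,\mathfrak{m}}\ll u\rr$, is again a Breuil--Kisin frame. The Tannakian results~\cite[Propositions 1.28 and 1.39]{imai2023prismatic} are used for complete regular local rings of this shape, and there is a slight subtlety in calling $\hat{R}_\mathfrak{m}$ a ``base'' algebra. If needed, I would instead state (1)$\Leftrightarrow$(2) directly: the $\mathcal{G}$-torsor over $\hat{R}_{\mathfrak{m}}^{\mathrm{syn}}$ exists precisely when the analytic crystal associated with a faithful $\Lambda$ extends.

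Second, I would show that the formation of $\mathcal{L}$ commutes with the flat base change $\Sig_R\to\Sig_{\hat{R}_\mathfrak{m}}$. The crystal property gives $\mathcal{V}^{\an}$ on the punctured spectrum of $\Sig_{\hat R_\mathfrak{m}}$ as the pullback. Because $\Sig_R\to \Sig_{\hat{R}_\mathfrak{m}}$ is flat and $\Sig_R$ is Noetherian, $H^0$ of a coherent sheaf on the open complement of $V(p,E)$ commutes with flat base change. So $\mathcal{L}\otimes_{\Sig_R}\Sig_{\hat{R}_\mathfrak{m}}$ is the corresponding reflexive module $\mathcal{L}_\mathfrak{m}$, and condition (2) says $\mathcal{L}_\mathfrak{m}$ is free.

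Third, the descent step. The non-free locus of the finitely generated $\Sig_R$-module $\mathcal{L}$ is closed. It is contained in $V(p,E(u))=V(p,u)$, because $\mathcal{L}$ is locally free away from that locus. Hence, if nonempty, it contains a maximal ideal $\mathfrak{n}$ of $\Sig_R$. Every maximal ideal of $\Sig_R$ contains $(p,u)$ and lies over some maximal ideal $\mathfrak{m}$ of $R$, so $\Sig_{\hat{R}_\mathfrak{m}}$ is faithfully flat over $(\Sig_R)_\mathfrak{n}$. Freeness of $\mathcal{L}_\mathfrak{m}$ then descends to freeness of $\mathcal{L}_\mathfrak{n}$, which is a contradiction. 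This shows $\mathcal{L}$ is finite locally free, and Remark~\ref{rem:algebraicity} (using the type hypothesis via Lemma~\ref{lem:type_of_bt_mu}) gives the lift of $\mathsf{Q}$ to $\BT{\infty}(R)$.

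The main obstacle I expect is the second step: justifying that the analytic $F$-crystal on $\hat{R}_\mathfrak{m}$ really is the base change of the one on $R$ through the map of frames, and that $H^0$ on the punctured spectra commutes with this (non-finite-type) flat base change. I would handle this with \v{C}ech cohomology for the cover by $D(p)$ and $D(E)$, together with flatness.
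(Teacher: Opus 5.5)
Your proposal is correct and follows the paper's route. You reduce to finite local freeness of $\mathcal{L}$ over $\Sig_R$ and check this at the completions $(\Sig_R)^\wedge_{\mathfrak{n}}=\hat{R}_{0,\mathfrak{m}_0}\ll u\rr$ (your $\Sig_{\hat{R}_{\mathfrak{m}}}$), which underlie Breuil--Kisin frames for $\hat{R}_{\mathfrak{m}}$, so that hypothesis (2) gives freeness there. The paper leaves implicit the flat base change of $H^0$ over the punctured spectrum and the faithfully flat descent of freeness, and you supply both correctly; your worried second step is not an obstacle, since flat base change holds for quasi-coherent sheaves on the quasi-compact separated open $D(p)\cup D(E)$ with no finite-type hypothesis on $\Sig_R\to\Sig_{\hat{R}_{\mathfrak{m}}}$.
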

\begin{proof}
   As in Remark~\ref{rem:finite_free_criterion}, we can assume that $R$ is a base $\Reg{K}$-algebra of the form $R_0\otimes_{W(\kappa)}\Reg{K}$, and are therefore reduced to checking that the $\Sig_R$-module $\mathcal{L}$ is finite locally free. For this, it is enough to check that, for every maximal ideal $\mathfrak{m}\subset R$, lifting to a maximal ideal $\mathfrak{n}\subset \Sig_R$, the completion of $\mathcal{L}$ at $\mathfrak{n}$ is finite locally free. However, the completion $(\Sig_R)^{\wedge}_{\mathfrak{n}}$ underlies a Breuil--Kisin frame for $\hat{R}_{\mathfrak{m}} = \hat{R}_{0,\mathfrak{m}_0}\otimes_{W(\kappa)}\Reg{K}$, where $\mathfrak{m}_0 = \mathfrak{m}\cap R_0$. Therefore, our equivalent hypotheses tell us that $\hat{\mathcal{L}}_{\mathfrak{n}}$ is obtained from the evaluation of a prismatic $F$-crystal over $\hat{R}_{\mathfrak{m}}$, and is thus finite free, as desired.
\end{proof}

\begin{notation}
  Suppose now that $R$ is a base $\Reg{K}$-algebra and that $T$ is the completion of $R$ at a maximal ideal. Let $\underline{\Sig}_T$ be the Breuil--Kisin frame for $T$ obtained as a completion of $\Sig_R$ as in the proof above, and set $\mathcal{L}_T = T\otimes_R \mathcal{L}$.
\end{notation}

\begin{lemma}
  \label{lem:complete_local_existence_of_aperture}
Suppose that $\mu$ acts on $\Lambda$ with weights in the interval $[a,b]$ with $|b-a|=r$. Then, if $er<p-1$, $\mathcal{L}_T$ is finite free over $T$. In particular, if $\mathcal{G}$ is adjoint, then Theorem~\ref{thm:p_comp_smooth_characterization} holds.
\end{lemma}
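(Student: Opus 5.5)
The plan is to prove finite freeness of $\mathcal{L}_T$ by passing to Breuil's divided-power frame, where the range $er<p-1$ makes the relevant Frobenius contracting, and then descending back to $\Sig_T$. After twisting $\Lambda$ by a power of the cyclotomic character (which changes $\mathcal{L}_T$ by an invertible twist and shifts $[a,b]$), I may assume the Hodge--Tate weights of $(\Lambda)_{\mathsf{Q}}$ lie in $[0,r]$. By Remark~\ref{rem:finite_free_criterion} and the setup above, $\mathcal{L}_T$ is the reflexive hull of the analytic Breuil--Kisin module on $\Spec \Sig_T\backslash V(p,E(u))$. Here $\Sig_T = W(\kappa)\ll t_1,\dots,t_d,u\rr$ is regular local, and the Frobenius $\varphi_{\mathcal{L}}$ has height $\leqslant r$, meaning $E(u)^r$ kills the cokernel of $\varphi^*\mathcal{L}_T\to\mathcal{L}_T$.

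\textbf{Step 1: a free model over $S_T$.} I would construct a finite free module over the $p$-adically completed divided power envelope $S_T$ of $(E(u))\subset\Sig_T$, equipped with Frobenius and filtration. For this I use the filtered $F$-crystal attached to $\mathsf{Q}$: by Remark~\ref{rem:filtered_f-isocrystals}, $D_{\mathrm{crys}}$ of $(\Lambda)_{\mathsf{Q}}$ is a filtered $F$-isocrystal over $T$ with an integral structure. Its evaluation on $S_T$ gives a free module $\mathcal{M}_{S}$, and $\varphi^*\mathcal{L}_T\otimes S_T$ embeds into $\mathcal{M}_S$ with the two agreeing after inverting $p$. The analogous comparison for a single DVR is the usual Breuil--Kisin comparison $S\otimes_{\varphi,\Sig}\mathfrak{M}\simeq \mathcal{D}$.

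\textbf{Step 2: descent from $S_T$ to $\Sig_T$.} This is the main obstacle. Following Lau's descent argument for Breuil windows (equivalently Vasiu--Zink), I want to show that the divided Frobenius $\varphi_r=\varphi/p^r$ on $\Fil^r S_T$ is well defined and topologically nilpotent on the kernel of $S_T\to\Sig_T$-type data. The key computation is $\varphi(E(u)^{[n]})\in p^{n-\lfloor (n-1)/(p-1)\rfloor\cdots}$, which controls the divided powers of $u^e$. Under $er<p-1$, the ideal $\Fil^r$ maps into $p^r$ times a topologically nilpotent ideal under $\varphi$.

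The descent should then produce a unique $\varphi$-stable free $\Sig_T$-lattice $\mathfrak{M}$ with $S_T\otimes_{\varphi,\Sig_T}\mathfrak{M}\simeq\mathcal{M}_S$, which is the standard contraction-mapping argument. Here $\mathfrak{M}$ is constructed by lifting a normal decomposition and solving a fixed-point equation. I would first try to reproduce Lau's proof with $\GL$ replaced by arbitrary height $r$, checking that the numerical condition needed is exactly $er<p-1$.

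\textbf{Step 3: identification with $\mathcal{L}_T$.} Since $\mathfrak{M}$ and $\mathcal{L}_T$ have isomorphic restrictions over $\Spec\Sig_T\backslash V(p,E(u))$, the full faithfulness statements, Proposition~\ref{prop:prismatic_ff_in_analytic} and Theorem~\ref{thm:guo_reinecke}, identify them there. Since $\mathfrak{M}$ is free, hence reflexive, and the complement has codimension $\geqslant 2$, I get $\mathcal{L}_T\simeq\mathfrak{M}$, which is free.

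\textbf{The adjoint case.} For the final assertion, suppose $\mathcal{G}$ is adjoint. Then $\Lie(\mathcal{G})$ is a faithful representation, and since $\mu$ is minuscule its weights lie in $[-1,1]$, so $r=2$ and the condition becomes $2e<p-1$. The main statement therefore applies to $\Lambda=\Lie(\mathcal{G})$. Lemma~\ref{lem:reduction_to_complete_local} and Remark~\ref{rem:algebraicity} then give Theorem~\ref{thm:p_comp_smooth_characterization}.
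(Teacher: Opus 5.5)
There is a genuine gap, and it lies in Step~1 rather than in the descent. You take as input a finite free $S_T$-module with Frobenius and filtration, obtained from ``the filtered $F$-crystal attached to $\mathsf{Q}$'' via Remark~\ref{rem:filtered_f-isocrystals}. That remark, and Construction~\ref{const:filtered_F-crystal} which supplies the integral structure, both start from an aperture $\mathfrak{Q}\in\BT{\infty}$ lifting $\mathsf{Q}$, and they work over an unramified base. Producing such an aperture is exactly what this lemma is for (via Remark~\ref{rem:algebraicity}), so the citation is circular.

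Without an aperture you only have the analytic prismatic $F$-crystal. Its value on a crystalline prism such as $(S_T,(p))$ is merely a vector bundle on $\Spec S_T[\nicefrac{1}{p}]$. The obvious integral candidate, $S_T\otimes_{\varphi,\mathfrak{S}_T}\mathcal{L}_T$ (or its saturation), is free only if you already know something about $\mathcal{L}_T$. Note that when $T$ has relative dimension $0$ the lemma is trivial, since $\mathfrak{S}_T$ is then regular of dimension $2$. So the whole content of the lemma is a purity statement: the reflexive module $\mathcal{L}_T$, a vector bundle with Frobenius away from the codimension-$2$ locus $V(p,u)$ of a regular local ring of dimension $\geqslant 3$, is in fact free. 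Step~1 simply assumes an analogue of this over $S_T$, and never uses $er<p-1$.

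This also shows that you are spending the hypothesis in the wrong place. For $r=1$, a free strongly divisible $S_T$-module would already give a Dieudonn\'e display over $T$, hence a $p$-divisible group, for every $e$ (Zink, Lau; $p>2$). Your outline would therefore remove the ramification bound from Liu--Moon's theorem, and from Remark~\ref{rem:hodge_type_improvement}, at no cost. The bound $er<p-1$ is needed precisely to prove freeness of the reflexive module $\mathcal{L}_T$ itself. The paper obtains this by invoking the argument of \cite[Proposition 4.3]{LiuMoon}, and your Step~1 must be replaced by an argument of that kind.

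Two smaller points. Step~3 applies full faithfulness of $T_{\et}$ to $\mathfrak{M}$, but a module over the single frame $\mathfrak{S}_T$ is not a prismatic $F$-crystal over $T$, so you would first need to equip it with crystal (descent) data. Your deduction of the adjoint case from the adjoint representation, with $\mu$-weights $\{-1,0,1\}$ and $r=2$, is exactly the paper's.
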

\begin{proof}
  The first assertion follows from the argument in~\cite[Proposition 4.3]{LiuMoon}. The second now follows from the fact that we can choose $\Lambda$ to be the adjoint representation, which has $\mu$-weights $\{-1,0,1\}$.
\end{proof}

\begin{remark}
  \label{rem:complete_local_trivial_criterion}
Suppose that $\kappa$ is algebraically closed. Then one sees that a $\mathcal{G}$-torsor over $U \defn \Spec \Sig_T\backslash V(p,E(u))$ extends to a $\mathcal{G}$-torsor over $\Sig_T$ if and only if it is actually trivial. This is because every $\mathcal{G}$-torsor over $\Sig_T$ is trivial, since $\Sig_T$ is now a strictly Henselian local ring. Moreover, the restriction map for $\mathcal{G}$-torsors from $\Spec \Sig_R$ to $U$ is fully faithful (e.g., see \stacks{0EBJ}).
\end{remark}

\begin{proof}
  [Proof of Theorem~\ref{thm:p_comp_smooth_characterization}]
Without loss of generality, we can assume that $\kappa$ is algebraically closed and that $R$ is a base $\Reg{K}$-algebra with Breuil--Kisin frame $\underline{\Sig}_R$. We only have to show that for every completion $T$ of $R$, the $\mathcal{G}$-torsor $\mathcal{Q}^{\mathrm{an}}_T$ over $U \defn \Spec\Sig_T\backslash V(p,E(u))$ obtained from the analytic prismatic $F$-crystal realization of $\mathsf{Q}$ extends to a $\mathcal{G}$-torsor over $\Sig_T$ or, equivalently, that it is trivial.

Consider the map $\mathcal{G}\to \overline{\mathcal{G}}\defn \mathcal{G}^{\ad}\times \mathcal{G}^{\ab}$: This is a finite flat cover of reductive group schemes, and Lemmas~\ref{lem:case_of_torus} and~\ref{lem:complete_local_existence_of_aperture} together tell us that the theorem is valid with $\mathcal{G}$ replaced with $\overline{\mathcal{G}}$ and $\mathsf{Q}$ replaced with the associated $\overline{\mathcal{G}}(\Int_p)$-torsor over $R[\nicefrac{1}{p}]$. Therefore, $\mathcal{Q}^{\mathrm{an}}_T$ is trivial after change of structure group to $\overline{\mathcal{G}}$, and so admits a reduction of structure group to a $\mathcal{Z}$-torsor over $U$, where $\mathcal{Z} = \ker(\mathcal{G}\to \overline{\mathcal{G}})$. However, by purity of branch locus for torsors under finite flat group schemes~\cite[Theorem 3.1]{Marrama2016-gc}, all $\mathcal{Z}$-torsors over $U$ extend uniquely to $\mathcal{Z}$-torsors over $\Sig_T$. This implies that $\mathcal{Q}^{\an}_T$ also extends to a $\mathcal{G}$-torsor over $\Sig_T$, as desired.
\end{proof}

\begin{remark}
[The Hodge type case]
\label{rem:hodge_type_improvement}
  If $(\mathcal{G},\mu)$ is of \emph{Hodge type}, i.e., there exists a map $(\mathcal{G},\mu)\to (\GL_h,\mu_d)$ (see Example~\ref{ex:gl_n_truncated_bts}) for some $d$ such that the underlying map $\mathcal{G}\to \GL_h$ is a closed immersion, then we can improve the bound to $p<e-1$: Indeed, we can replace the use of the adjoint representation with the faithful representation given by such a map, and this will have $\mu$-weights in $\{0,1\}$. When $\mathcal{G} = \GL_h$ this amounts to the assertion that any crystalline local system over $R[\nicefrac{1}{p}]$ with Hodge--Tate weights in $\{0,1\}$ arises from a $p$-divisible group. This is a result of Liu--Moon~\cite[Theorem 1.2]{LiuMoon}.
\end{remark}

\subsection{Versality}
\label{sub:versality}

When we obtain an aperture using Theorem~\ref{thm:p_comp_smooth_characterization}, versality is not automatic. In applications, it can be understood via a Kodaira--Spencer map, which we now discuss.

\begin{remark}
 [The formal de Rham realization and a versality condition]
\label{rem:the_formal_de_rham}
Supposethat we have a map of stacks  $f\colon \mathcal{X}\to \BT[\mathcal{G},\mu,\mathrm{alg}]{\infty}$ over $\mathcal{O}$. After completion, we obtain a filtered de Rham realization
\[
  \widehat{\mathcal{X}}\to \BT{\infty}\xrightarrow{T_{\dR}} \mathrm{Gr}_{\mu}/\mathcal{G}_{\Reg{E,(v)}}.
\]
By the discussion in Remark~\ref{rem:versality_bt_n}, this gives a map of $p$-complete complexes
\[
\mathbb{L}^{\wedge}_{(\mathrm{Gr}_{\mu}/\mathcal{G}_{\Reg{E,(v)}})/B \mathcal{G}_{\mathcal{O}}}\vert_{\widehat{\mathcal{X}}}\to \mathbb{L}^{\wedge}_{\mathcal{X}/\Reg{E,(v)}},
\]
which is an isomorphism if and only if the map $\widehat{\mathcal{X}}\to \BT{\infty}$ is formally \'etale. 
\end{remark}

\begin{remark}
  [The Kodaira--Spencer map]
\label{rem:Kodaira--Spencer}
Suppose that $\mathcal{X}$ is a smooth scheme over $\Reg{E,(v)}$. Dualizing the map from the previous remark, we obtain a map
\begin{align}\label{eqn:tangent_space_map}
  \mathbb{T}_{\mathcal{X}/\Reg{E,(v)}}^{\wedge}\to \mathbb{T}^{\wedge}_{(\mathrm{Gr}_{\mu}/\mathcal{G}_{\Reg{E,(v)}})/B \mathcal{G}_{\mathcal{O}}}\vert_{\widehat{\mathcal{X}}}
\end{align}
of vector bundles over $\widehat{\mathcal{X}}$. The left-hand side here is the $p$-completed tangent bundle of $\mathcal{X}$ while the right-hand side can be described explicitly: The aperture $\mathfrak{Q}$ classified by $f$ admits a Hodge-filtered de Rham realization $\Fil^\bullet_{\mathrm{Hdg}}T_{\dR}(\mathfrak{Q})$ equipped with a topologically nilpotent integrable connection. Let $\mathfrak{g}$ be the Lie algebra of $\mathcal{G}$, equipped with the adjoint representation. Twisting this by $\Fil^\bullet_{\mathrm{Hdg}}T_{\dR}(\mathfrak{Q})$, we obtain a filtered vector bundle $\Fil^\bullet \mathfrak{g}_{\mathrm{twist}}$ over $\widehat{\mathcal{X}}$. There is now a canonical isomorphism 
\[
  \mathbb{T}^{\wedge}_{(\mathrm{Gr}_{\mu}/\mathcal{G}_{\Reg{E,(v)}})/B \mathcal{G}_{\mathcal{O}}}\vert_{\widehat{\mathcal{X}}}\simeq \mathfrak{g}_{\mathrm{twist}}/\Fil^0 \mathfrak{g}_{\mathrm{twist}}.
\]
Now,~\eqref{eqn:tangent_space_map} now gives a map $\mathbb{T}_{\mathcal{X}/\Reg{E,(v)}}^{\wedge}\to \mathfrak{g}_{\mathrm{twist}}/\Fil^0 \mathfrak{g}_{\mathrm{twist}}$. This is just the Kodaira--Spencer map associated with the integrable connection on $\Fil^\bullet_{\mathrm{Hdg}}T_{\dR}(\mathfrak{Q})$, and so versality comes down to knowing that this is an isomorphism.
\end{remark}

\subsection{Apertures via reduction of structure group}
\label{sub:apertures_via_reduction_of_structure_group}

\begin{setup}
  Let $\kappa$ be an algebraically closed characteristic $p$ field over $\mathcal{O}$, and let $R_1\to R_2$ be a map of complete local normal rings that are flat and formally of finite type over $W(\kappa)$. We will assume that both $R_1$ and $R_2$ have residue field $\kappa$. Suppose that we have a closed immersion $(\mathcal{G}_2,\mu_2)\to (\mathcal{G}_1,\mu_1)$ of two pairs as in Setup~\ref{setup:g_mu}, with $\mu_1$ and $\mu_2$ both defined over $\mathcal{O}$, and with both $\mathcal{G}_1$ and $\mathcal{G}_2$ \emph{reductive}. Suppose also that we have $\mathsf{Q}_i\in \mathrm{Loc}_{\mathcal{G}_i(\Int_p)}(R_i[\nicefrac{1}{p}])$ such that the restriction of $\mathsf{Q}_1$ over $\Spec R_2[\nicefrac{1}{p}]$ is isomorphic to the change of structure group of $\mathsf{Q}_2$ along the map $\mathcal{G}_2(\Int_p)\to \mathcal{G}_1(\Int_p)$.  We will assume that $\mathsf{Q}_1$ lifts to $\mathfrak{Q}_1\in \BT[\mathcal{G}_1,\mu_1]{\infty}(R_1)$. Write $\mathfrak{Q}_{1,0} = \mathfrak{Q}_1\vert_{\kappa^{\mathrm{syn}}}$ for the resulting aperture over $\kappa$.
\end{setup}

\begin{remark}
\label{rem:type_via_lambda}
Let us put ourselves in the situation of Setup~\ref{setup:faithful_repn_tensors}: We will choose a faithful representation $\Lambda$ of $\mathcal{G}_1$, and take $\{s_{\alpha}\}\subset \Lambda^\otimes$ to be a finite collection of tensors whose pointwise stabilizer is $\mathcal{G}_2$. As in Remark~\ref{rem:expl_tensor_packages}, the $F$-gauge $(\Lambda)_{\mathfrak{Q}_1}\rvert_{\kappa^{\mathrm{syn}}}$ corresponds to a $p$-adically filtered $W(\kappa)$-module $\Fil^\bullet L_0$ equipped with an isomorphism 
\[
\sum_{i\in \Int}p^{-i}\varphi^*\Fil^iL_0\isomto L_0.
\]
For $x\colon R_2\to \Reg{K}$, the existence of the reduction of structure $\mathsf{Q}_2$ for $\mathsf{Q}_1\vert_{\Spec R_2[\nicefrac{1}{p}]}$ implies we have global sections 
\[
\{s_{\alpha,x}\}\subset H^0\big(\Spec K,(\Lambda)_{\mathsf{Q}_2|_x}^\otimes\big).
\]
Via the specialization functor $\mathbb{D}_{\mathrm{crys}}$ (see Remark \ref{rem:specialization}), these map to $\varphi$-invariant tensors 
\[
\bigl\{\Dieu_{\mathrm{crys}}(s_{\alpha,x})\bigr\}\in \Dieu_{\mathrm{crys}}\big((\Lambda)_{\mathsf{Q}_{2}|_x}\big)^\otimes \simeq L_0^\otimes.
\]
The isomorphism $ \Dieu_{\mathrm{crys}}\big((\Lambda)_{\mathsf{Q}_{2}|_x}\big)\simeq L_0$ used here is the natural one.
\end{remark}

\begin{remark}
  [Families of $F$-isocrystals]
\label{rem:horizontal_tensors_limpidity}
Suppose that $R_1$ is formally smooth over $W(\kappa)$, equipped with a Frobenius lift $\varphi\colon R_1\to R_1$. Then the rigid analytic space $\widehat{U}_{1,\eta}$ associated with $\widehat{U}_1 = \Spf R_1$ is an open polydisk. Moreover, the $F$-gauge $\mathfrak{L}$ lifting $(\Lambda)_{\mathsf{Q}_1}$ yields a filtered $F$-isocrystal over $R_1/pR_1$, which realizes to a filtered vector bundle $\Fil^\bullet \mathcal{L}_{\Rat}$ over $\widehat{U}_{1,\eta}$, equipped with an integrable connection, and specializing to $\Fil^\bullet L_x[\nicefrac{1}{p}]$ at every classical point $x\in \widehat{U}_{1,\eta}(K)$. Set
\[
\mathcal{L}_{\Rat}^{\nabla} \defn H^0\big(\widehat{U}_{1,\eta},\mathcal{L}_{\Rat}\big)^{\nabla = 0}.  
\]
This is a finite dimensional vector space over $W(\kappa)[\nicefrac{1}{p}]$, and inherits the structure of an $F$-isocrystal from that on $\mathcal{L}_{\Rat}$. Moreover, we have a canonical isomorphism of $F$-isocrystals
\[
  \delta\colon\Reg{\widehat{U}_{1,\eta}}\otimes_{W(\kappa)[\nicefrac{1}{p}]}\mathcal{L}_{\Rat}^{\nabla}\isomto \mathcal{L}_{\Rat}
\]
over $\widehat{U}_{1,\eta}$. This is a consequence of Dwork's trick; see~\cite[Proposition 3.1]{zbMATH03409516}.
\end{remark}

\begin{remark}
  [Application of the de Rham functor of Liu--Zhu]
\label{rem:p-adic_comparison_and_limpidity}
By Remark~\ref{rem:filtered_f-isocrystals}, $\Fil^\bullet \mathcal{L}_{\Rat}$ is also obtained from the filtered $F$-isocrystal associated with the $\Int_p$-local system $(\Lambda)_{\mathsf{Q}_1}$. If $\widehat{U}_{2,\eta}$ is a \emph{smooth} rigid analytic space over $W(\kappa)[\nicefrac{1}{p}]$, then the restriction of $\Fil^\bullet \mathcal{L}_{\Rat}$ over $\widehat{U}_{2,\eta}$ is once again a filtered vector bundle with integrable connection, and is \emph{associated} with $(\Lambda)_{\mathsf{Q}_2}$ via the $D_{\dR}$ functor of Liu--Zhu~\cite[Theorem 3.9]{Liu2017-yz}. In particular, this functor carries the global sections
\[
  \{s_{\alpha,\mathsf{Q}_2}\}\subset H^0\big(\Spec R_2[\nicefrac{1}{p}],(\Lambda)_{\mathsf{Q}_2}^\otimes\big)
\]
to a collection
\[
  \{s_{\alpha,\mathcal{L}_{\Rat}}\}\subset H^0\big(\widehat{U}_{2,\eta},\Fil^0\mathcal{L}_{\Rat}^\otimes\big)^{\nabla = 0}.
\]
\end{remark}

\begin{remark}
  [Type of reduction at every point]
 \label{rem:type_of_reduction}
Suppose that $\mathsf{Q}_1$ lifts to $\mathfrak{Q}_1\in \BT[\mathcal{G}_1,\mu_1]{\infty}(R_1)$, and that we have a finite extension $K/W(\kappa)[\nicefrac{1}{p}]$, and a map $x\colon R_2\to \Reg{K}$ of complete local rings. The $\mathcal{G}_1(\Int_p)$-local system $\mathsf{Q}_{1}|_x$ has a reduction of structure to a $\mathcal{G}_2(\Int_p)$-local system $\mathsf{Q}_{2}|_x$, which is in $\mathrm{Loc}^\mathrm{crys,\mu'_2}_{\mathcal{G}_2(\Int_p)}(K)$ for some cocharacter $\mu'_2$ mapping to the $\mathcal{G}_1$-conjugacy class of $\mu_1$. We will refer to the conjugacy class of $\mu'_2$ as the \defnword{type of $x$}.
\end{remark}

\begin{remark}
  [Compatibility between specializations of points]
 \label{rem:compatibility_specializations}
Suppose that we have two points $x\colon R_2\to \Reg{K}$ and $y\colon R_2\to \Reg{L}$ as in Remark~\ref{rem:type_of_reduction} for finite extensions $K$ and $L$ of $W(\kappa)$. Then, via the specialization functor from Remark~\ref{rem:specialization}, we obtain $\mathcal{G}_2$-bundles $\Dieu_{\mr{crys}}(\mathsf{Q}_{2}|_x)$ and $\Dieu_{\mr{crys}}(\mathsf{Q}_{2}|_y)$ over $\kappa$, which are both canonically reductions of structure group for the $\mathcal{G}_1$-torsor $\mf{Q}_2$. In particular, it makes sense to ask for them to be \emph{equal}; in this case, it follows that $x$ and $y$ have the same type. Explicitly, this equality amounts to asking that, for every $\alpha$, we have
\[
\Dieu_{\mathrm{crys}}(s_{\alpha,x})=\Dieu_{\mathrm{crys}}(s_{\alpha,y})\in L_0^{\otimes}.
\]
\end{remark}

\begin{proposition}
  \label{prop:reduction_of_structure_group}
In the situation of \emph{Remark~\ref{rem:horizontal_tensors_limpidity}}, suppose further that the rigid analytic space $\widehat{U}_{2,\eta}$ is also smooth over $W(\kappa)[\nicefrac{1}{p}]$, and that \emph{some} $x$ as above has type $\mu_2$. Then:
\begin{enumerate}
  \item There exists a lift of $\mathsf{Q}_2$ to $\mathfrak{Q}_2\in \BT[\mathcal{G}_2,\mu_2]{\infty}(R_2)$. 
  \item Suppose that the following conditions hold: 
    \begin{enumerate}
      \item The classifying map $\Spf R_1\to \BT[\mathcal{G}_1,\mu_1]{\infty}$ for $\mathfrak{Q}_1$ is formally unramified;
      \item If $I = \ker(R_1\to R_2)$, then $R_1/I\to R_2$ is finite and inducing an isomorphism on fraction fields;
      \item $\dim R_2 = 1+\dim \mathcal{G}_2 - \dim \mathcal{P}^-_{\mu_2}$.
    \end{enumerate}
  Then $R_1\to R_2$ is in fact surjective and the classifying map $\Spf R_2\to \BT[\mathcal{G}_2,\mu_2]{\infty}$ for $\mathfrak{Q}_2$ is the universal deformation ring over $W(\kappa)$ for $\BT[\mathcal{G}_2,\mu_2]{\infty}$ at $\mf{Q}_{2,0}$.
\end{enumerate}
\end{proposition}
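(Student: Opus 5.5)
For (1), I would show that $\mathsf{Q}_2$ is crystalline of type $\mu_2$ over $\Spec R_2[\nicefrac{1}{p}]$, and then produce the aperture by an argument in the style of Theorem~\ref{thm:p_comp_smooth_characterization} and Lemma~\ref{lem:reduction_to_complete_local}.

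Crystallinity comes for free. By Remark~\ref{rem:crystalline_GZ_p} it suffices to check a single faithful representation. Take $\Lambda$ restricted to $\mathcal{G}_2$; then $(\Lambda)_{\mathsf{Q}_2}=(\Lambda)_{\mathsf{Q}_1}|_{R_2}$, which is crystalline because $\mathsf{Q}_1$ comes from an aperture.

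The type requires the tensors $s_{\alpha,\mathcal{L}_{\Rat}}$ of Remark~\ref{rem:p-adic_comparison_and_limpidity}. They are horizontal sections of $\mathcal{L}_{\Rat}^\otimes$ over the smooth (hence, after shrinking to a connected component, connected) space $\widehat{U}_{2,\eta}$. Via the trivialization $\delta$ of Remark~\ref{rem:horizontal_tensors_limpidity}, each is the restriction of a constant element of $(\mathcal{L}^\nabla_{\Rat})^\otimes\simeq L_0[\nicefrac1p]^\otimes$. Specializing at points $x$ shows that $\Dieu_{\mathrm{crys}}(s_{\alpha,x})$ is independent of $x$, so by Remark~\ref{rem:compatibility_specializations} all $x$ have the same type. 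Since one point has type $\mu_2$, all classical points do. The Hodge filtration is locally split by a conjugate of $\mu_2$ pointwise, and hence locally on $\widehat{U}_{2,\eta}$, since stabilizers of the $s_\alpha$ form a smooth family over a reduced base.

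To get the aperture, I would run the proof of Theorem~\ref{thm:p_comp_smooth_characterization} over $R_2$. This means showing that the $\mathcal{G}_2$-torsor $\mathcal{Q}^{\mathrm{an}}_2$ on the punctured spectrum $U$ of a Breuil--Kisin-type ring extends. Its pushforward to $\mathcal{G}_1$ does extend, since it comes from $\mathfrak{Q}_1$. An extension of the $\mathcal{G}_1$-torsor together with tensors $s_\alpha$ defined on $U$ extends the tensors, by reflexivity and depth $\geq 2$. The extended tensors define a reduction to $\mathcal{G}_2$ over the whole ring; this uses that $\mathcal{G}_1/\mathcal{G}_2$ is affine, which holds for reductive groups.

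The main obstacle is that $R_2$ is only normal, not $p$-completely smooth, so there is no evident Breuil--Kisin frame, and Guo--Reinecke does not apply directly. I would first try to prove the statement after pulling back along a resolution or alteration that is smooth over some $\Reg{K}$, which is available since $\widehat{U}_{2,\eta}$ is smooth. I would then descend using Theorem~\ref{thm:full_faithfulness} and normality, in the manner of Corollary~\ref{cor:full_faithfulness}.

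For (2), write $\mathfrak{Q}_{2,0}$ for the reduction over $\kappa$ and $\widehat{\Rg}$ for its universal deformation ring. Let $\widehat{\Rg}_1$ be the universal deformation ring of $\mathfrak{Q}_{1,0}$. By Remark~\ref{rem:bt_deformation_rings}, $\widehat{\Rg}$ is formally smooth of dimension $1+\dim\mathcal{G}_2-\dim\mathcal{P}^-_{\mu_2}$. The aperture $\mathfrak{Q}_2$ gives a map $\widehat{\Rg}\to R_2$, and pushforward gives $\widehat{\Rg}_1\to\widehat{\Rg}$. By (a), $\widehat{\Rg}_1\to R_1$ is surjective, and it is compatible with $\widehat{\Rg}_1\to\widehat{\Rg}\to R_2$. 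Since $(\mathcal{G}_2,\mu_2)\to(\mathcal{G}_1,\mu_1)$ is a closed immersion, Grothendieck--Messing (Theorem~\ref{thm:gmm}) shows that $\widehat{\Rg}_1\to\widehat{\Rg}$ is surjective. Hence the image of $R_1$ in $R_2$, namely $R_1/I$, equals the image of $\widehat{\Rg}$.

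The map $\widehat{\Rg}\to R_2$ is finite, since $R_2$ is finite over $R_1/I$, and it is generically an isomorphism onto its image. Its image has dimension $\dim R_2=\dim\widehat{\Rg}$ by (b) and (c). Because $\widehat{\Rg}$ is a domain, the kernel is zero, so $\widehat{\Rg}\simeq R_1/I$. Now $R_1/I\subset R_2$ is a finite birational extension with $R_1/I$ regular, hence normal, so $R_1/I=R_2$. This shows that $R_1\to R_2$ is surjective and that $\widehat{\Rg}\simeq R_2$.
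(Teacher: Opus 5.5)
Your part (2) is essentially the paper's argument: surjectivity of $\tilde R_1\to\tilde R_2$ by Grothendieck--Messing, followed by a dimension count and normality. Your use of horizontality to show that $\Dieu_{\mathrm{crys}}(s_{\alpha,x})$ is independent of $x$ also matches the paper. One correction there: you need the conclusion at every $x$, so do not shrink to a connected component. Instead use that $\widehat U_{2,\eta}$ is connected, which holds because $R_2$ is a normal complete local domain.

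The gap is in how you then produce $\mathfrak{Q}_2$ in part (1). Running the proof of Theorem~\ref{thm:p_comp_smooth_characterization} over $R_2$ needs a Breuil--Kisin frame. It also needs the Guo--Reinecke/Du--Liu--Moon--Shimizu equivalence, which is what turns the \'etale tensors $s_\alpha$ into tensors on an analytic prismatic $F$-crystal, and hence gives you a $\mathcal{G}_2$-torsor on the punctured spectrum at all. None of this exists for a merely normal $R_2$, which need not even be $p$-quasisyntomic. Your proposed repair does not work either. Smoothness of $\widehat U_{2,\eta}$ does not give an alteration of $\Spf R_2$ that is smooth over some $\Reg{K}$; de Jong's alterations are in general only semistable. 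Even granting such a $Y$, the fibre product $Y\times_{\Spf R_2}Y$ is not $\eta$-normal, so Theorem~\ref{thm:full_faithfulness} yields no descent datum. Moreover, Corollary~\ref{cor:full_faithfulness} is only available for smooth covers.

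The missing idea is that no prismatic input over $R_2$ is needed: only the DVR case, Proposition~\ref{prop:gmu_rings_of_integers}, plus deformation theory.

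\begin{enumerate}
\item The constancy of the tensors $\Dieu_{\mathrm{crys}}(s_{\alpha,x})$, which you already proved, gives a single reduction $\mathfrak{Q}_{2,0}\subset\mathfrak{Q}_{1,0}$ to a $(\mathcal{G}_2,\mu_2)$-aperture over $\kappa$. It satisfies $\Dieu_{\mathrm{crys}}(\mathsf{Q}_2|_{x'})=\mathfrak{Q}_{2,0}$ for every $x'\colon R_2\to\Reg{K'}$, and every such $x'$ has type $\mu_2$.
\item Let $\tilde R_i$ be the universal deformation ring of $\mathfrak{Q}_{i,0}$, so that $\tilde R_1\to\tilde R_2$ is surjective.
\item For each $x'$, Proposition~\ref{prop:gmu_rings_of_integers} lifts $\mathsf{Q}_2|_{x'}$ to a deformation of $\mathfrak{Q}_{2,0}$ over $\Reg{K'}$ whose pushforward is $\mathfrak{Q}_1|_{x'}$. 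Hence the composite $\tilde R_1\to R_1\to R_2\to\Reg{K'}$ kills $\ker(\tilde R_1\to\tilde R_2)$.
\item Since $R_2$ is reduced and flat over $W(\kappa)$, an element vanishing at all such points is zero. So $\tilde R_1\to R_2$ factors through $\tilde R_2$, and this factorization is $\mathfrak{Q}_2$.
\item Finally, $T_{\et}(\mathfrak{Q}_2)$ and $\mathsf{Q}_2$ are two reductions of $\mathsf{Q}_1|_{\Spec R_2[\nicefrac{1}{p}]}$ to $\mathcal{G}_2(\Int_p)$. Since $\Spec R_2[\nicefrac{1}{p}]$ is connected, it suffices to compare them at $x$, where they agree by construction.
\end{enumerate}
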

\begin{proof}
  To begin, note that there is a canonical isomorphism of $F$-isocrystals $L_0[\nicefrac{1}{p}]\isomto\mathcal{L}_{\Rat}^{\nabla}$. Suppose that we have $x\colon R_2\to \Reg{K}$ as in our hypothesis. The tensors $\{s_{\alpha,\mathcal{L}_x}\}$ map to the $\varphi$-invariant tensors $
  \bigl\{\Dieu_{\mathrm{crys}}(s_{\alpha,x})\bigr\} \in L_0^\otimes$, and these in turn yield a collection of parallel $\varphi$-invariant tensors 
  \[
  \{\tilde{s}_{\alpha,\mathcal{L}_{\Rat}}\}\defn \delta\big(\left\{1\otimes \Dieu_{\mathrm{crys}}(s_{\alpha,x})\right\}\big)\subset H^0\big(\widehat{U}_{1,\eta},\mathcal{L}_{\Rat}^\otimes\big)^{\nabla=0}.
  \]
  The restriction of this collection over $\widehat{U}_{2,\eta}$ agrees with $\{s_{\alpha,\mathcal{L}_{\Rat}}\}$ at $x$ and so, by horizontality, must agree everywhere.\footnote{We are using the connectedness of $\widehat{U}_{2,\eta}$ here; see~\cite[Lemma 7.3.5]{dejong:formal_rigid}.} Therefore, the tensors $\bigl\{\Dieu_{\mathrm{crys}}(s_{\alpha,x})\bigr\}$ are independent of $x$, and so we see using Remark~\ref{rem:compatibility_specializations} that there is a canonical reduction of structure $\mathfrak{Q}_{2,0}\subset \mathfrak{Q}_{1,0}$ to a $(\mathcal{G}_2,\mu_2)$-aperture over $\kappa$ such that, 
\[
\Dieu_{\mr{crys}}(\mathsf{Q}_{2}|_{x'}) = \mathfrak{Q}_{2,0} \subset \mf{Q}_{1,0}.
\]
for all finite extensions $K'/W(\kappa)[\nicefrac{1}{p}]$ and all maps $x'\colon R_2\to \Reg{K'}$.

  Let $\tilde{R}_i$ be the universal deformation ring for $\mathfrak{Q}_{i,0}$ for $i=1,2$ (see Remark~\ref{rem:bt_deformation_rings}). The map $\tilde{R}_1\to \tilde{R}_2$ is a surjective map of formally smooth complete local Noetherian $W(\kappa)$-algebras: This amounts to the observation (which follows from the Grothendieck--Messing theory explained in Theorem~\ref{thm:gmm}) that $\BT[\mathcal{G}_2,\mu_2]{\infty}\to \BT[\mathcal{G}_1,\mu_1]{\infty}$ is a formally unramified map of formally smooth formal stacks.
  
  We have a map $\tilde{R}_1\to R_1$ classifying $\mathfrak{Q}_1$ as a deformation of $\mathfrak{Q}_{1,0}$. Now, by what we have seen above, for every $x'\colon R_2\to \Reg{K'}$, the composition $\tilde{R}_1\to R_1\to R_2\to \Reg{K'}$ factors through $\tilde{R}_2$. Since $\tilde{R}_1\to \tilde{R}_2$ is surjective, this implies that in fact the composition $\tilde{R}_1\to R_1\to R_2$ factors through $\tilde{R}_2$: In other words, we have a deformation $\mathfrak{Q}_2$ over $R_2$ of $\mathfrak{Q}_{2,0}$ specializing to the various $T_{\et}^{-1}(\mathsf{Q}_2|_{x'})$. To finish, we need to know that $T_{\et}(\mathfrak{Q}_2)=\mathsf{Q}_2$ as reductions of structure to $\mathcal{G}_2(\Int_p)$-torsors within the $\mathcal{G}_1(\Int_p)$-local system $\mathsf{Q}_1\vert_{\Spec R_2[\nicefrac{1}{p}]}$. This equality can be checked over $x$, where it holds by construction.

  Let us now verify (2): Under the hypotheses here, the map $\tilde{R}_1\to R_1$ is surjective, and the map $\tilde{R}_2\to R_2$ is finite and its image has the same field of fractions as $R_2$. Since we have the equality 
  \begin{equation*}
  \dim \tilde{R}_2 = 1+\dim \mathcal{G}_2 - \dim \mathcal{P}^-_{\mu_2} = \dim R_2,
  \end{equation*}
  by hypothesis, and since $\tilde{R}_2$ is normal, we now find that $\tilde{R}_2{\isomto}R_2$. This of course shows that $R_1\to R_2$ is surjective, and identifies $R_2$ with the deformation ring of $\BT[\mathcal{G}_2,\mu_2]{\infty}$ at $\mathfrak{Q}_{2,0}$.
\end{proof}

\begin{setup}
\label{setup:reduction_of_structure_group}
  $\mathcal{X}$ will be a normal algebraic space flat of finite type over $\mathcal{O}$ with generic fiber $X$, equipped with $\mathsf{Q}\in \mathrm{Loc}_{\mathcal{G}(\Int_p)}(X)$. We will assume $\mathcal{X}$ is equidimensional with relative dimension over $\mathcal{O}$ equal to $\dim \mathcal{G} - \dim \mathcal{P}^-_\mu$ . Also, we will require that there exist a closed immersion $(\mathcal{G},\mu)\hookrightarrow(\mathcal{G}^\sharp,\mu^\sharp)$ of pairs as in Setup~\ref{setup:g_mu} with both $\mathcal{G}$ and $\mathcal{G}^\sharp$ reductive, and that there exists a commuting diagram
\begin{equation*}
         \begin{tikzcd}
  X & {\mr{Loc}_{\mc{G}(\bb{Z}_p)}=\mr{BT}^{\mc{G},\mu,\mr{alg}}_\infty\otimes\bb{Q}_p} \\
  {\mc{X}} & {\mr{BT}^{\mc{G},\mu}_\infty} \\
  {\mc{X}^\sharp} & {\mr{BT}^{\mc{G}^\sharp,\mu^\sharp,\mr{alg}}_\infty}
  \arrow[from=1-1, to=1-2]
  \arrow[from=1-1, to=2-1]
  \arrow[from=1-2, to=2-2]
  \arrow[from=2-1, to=3-1]
  \arrow[from=2-2, to=3-2]
  \arrow[from=3-1, to=3-2]
\end{tikzcd}
    \end{equation*}
        where the top horizontal arrow classifies $\mathsf{Q}$.
\end{setup}

\begin{proposition}
\label{prop:Gmu_aperture_pointwise_condition_normal}
In the situation of \emph{Setup~\ref{setup:reduction_of_structure_group}}, suppose that the following condition holds: For all closed points $x_0$ of  $\mathcal{X}$, the tuple
    \[
      \big(R_1 = \hat{\Rg}_{\mathcal{X}^\sharp,x_0},\,\, R_2 = \hat{\Rg}_{\mathcal{X},x_0},\,\, \mathfrak{Q}_1=\mathfrak{Q}^\sharp\vert_{R_1^{\mathrm{syn}}},\,\, \mathsf{Q}_2 = \mathsf{Q}_2\vert_{\Spec R_2[\nicefrac{1}{p}]}\big)
    \]
    satisfies the hypotheses of \emph{Proposition~\ref{prop:reduction_of_structure_group}}. Then $\mathsf{Q}$ lifts to $\mathfrak{Q}\in \BT[\mathcal{G},\mu,\mathrm{alg}]{\infty}(\mathcal{X})$, and the classifying map $\widehat{\mathcal{X}}\to \BT{\infty}$ is formally \'etale.
\end{proposition}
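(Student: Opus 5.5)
The plan is to build $\mathfrak{Q}$ locally at completions using Proposition~\ref{prop:reduction_of_structure_group}, spread it out with Tate full faithfulness, and read off formal étaleness from the deformation-ring identification in part (2).

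\textbf{Step 1: local lifts.} For each closed point $x_0$ of $\mathcal{X}$ (after base change to $W(\kappa)$ with $\kappa=\overline{k(x_0)}$), the hypothesis lets us apply Proposition~\ref{prop:reduction_of_structure_group}(1). This gives $\mathfrak{Q}_{x_0}\in\BT{\infty}(\hat{\Rg}_{\mathcal{X},x_0})$ lifting $\mathsf{Q}|_{\Spec\hat{\Rg}_{\mathcal{X},x_0}[\nicefrac{1}{p}]}$. Part (2) then says that $\hat{\Rg}_{\mathcal{X},x_0}$ is the universal deformation ring of $\BT{\infty}$ at $\mathfrak{Q}_{x_0,0}$. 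So once a global $\mathfrak{Q}$ exists and restricts to $\mathfrak{Q}_{x_0}$, the induced map on complete local rings at every closed point is an isomorphism. That is exactly formal étaleness of $\widehat{\mathcal{X}}\to\BT{\infty}$: by Remark~\ref{rem:versality_bt_n} and Remark~\ref{rem:the_formal_de_rham} it can be checked on completed local rings at closed points of the special fiber.

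\textbf{Step 2: from pointwise to global.} The main obstacle is passing from these complete-local lifts to a lift over $\mathcal{X}$ itself, or over $\widehat{\mathcal{X}}$ together with the generic local system. First I would pass to a smooth locus. By Step 1, each completed local ring $\hat{\Rg}_{\mathcal{X},x_0}$ is formally smooth over $W(\kappa)$, so the normal finite type $\mathcal{X}$ is smooth over $\mathcal{O}$. Working étale locally, I can then assume $\widehat{\mathcal{X}}=\Spf R$ with $R$ a base formal algebra ($e=1$ after extending residue fields). Then I would apply Lemma~\ref{lem:reduction_to_complete_local}. Its condition (1) is precisely the existence of the lifts $\mathfrak{Q}_{x_0}$; the passage from $\hat{\Rg}_{\mathcal{X},x_0}\otimes W(\kappa)$ back to the actual completion $\hat{R}_{\mathfrak{m}}$ is handled by faithfully flat descent of finite freeness of $\mathcal{L}$. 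The lemma then gives $\mathfrak{Q}_R\in\BT{\infty}(R)$ with $T_{\et}(\mathfrak{Q}_R)\simeq\mathsf{Q}|_{R[\nicefrac{1}{p}]}$.

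\textbf{Step 3: gluing and algebraizing.} Theorem~\ref{thm:full_faithfulness}, via Corollary~\ref{cor:full_faithfulness} applied to the étale cover, makes these local lifts unique up to unique isomorphism and compatible on overlaps. They therefore glue to $\hat{\mathfrak{Q}}$ on $\widehat{\mathcal{X}}$. Pairing $\hat{\mathfrak{Q}}$ with $\mathsf{Q}$ on $X$ and the identification of étale realizations over $\widehat{\mathcal{X}}_\eta$ produces $\mathfrak{Q}\in\BT[\mathcal{G},\mu,\mathrm{alg}]{\infty}(\mathcal{X})$. By uniqueness, its restriction to each completion is $\mathfrak{Q}_{x_0}$, and Step 1 gives formal étaleness.
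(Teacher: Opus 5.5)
Your proposal is correct and follows the paper's proof essentially step for step. You obtain the local lifts and formal smoothness from Proposition~\ref{prop:reduction_of_structure_group}, get existence over $\widehat{\mathcal{X}}$ via Lemma~\ref{lem:reduction_to_complete_local} (with uniqueness and gluing from Tate full faithfulness), and deduce formal \'etaleness from the identification of each $\hat{\Rg}_{\mathcal{X},x_0}$ with a universal deformation ring of $\BT{\infty}$. The one citation that is off is your claim that formal \'etaleness can be checked on complete local rings at closed points: this does not follow from Remarks~\ref{rem:versality_bt_n} and~\ref{rem:the_formal_de_rham} alone. It is exactly what the paper proves separately as Lemma~\ref{lem:formal_etaleness}, using that $\mathbb{L}^\wedge_{\widehat{\mathcal{X}}/\BT{\infty}}$ is perfect of Tor amplitude $[-1,0]$ and computing its fibers at closed points via square-zero deformations.
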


\begin{proof}
We can assume that $\mathcal{X}$ is affine. Proposition~\ref{prop:reduction_of_structure_group} tells us that the complete local ring $\hat{R}_x$ of $\mathcal{X}$ at any geometric closed point $x$ is formally smooth over $\mathcal{O}$ and that in fact $\mathsf{Q}$ lifts to a reduction of structure $\mathfrak{Q}_x\in \BT{\infty}(\hat{R}_x)$ such that the classifying map $\Spf \hat{R}_x\to \BT{\infty}$ is formally \'etale. In particular, $\widehat{\mathcal{X}}$ is smooth over $\hat{\mathcal{O}}$, and the conditions of Lemma~\ref{lem:reduction_to_complete_local} have been verified. This shows the existence of $\mathfrak{Q}$. That the resulting classifying map on $\widehat{\mathcal{X}}$ is formally \'etale now follows from Proposition~\ref{prop:reduction_of_structure_group} once again, and Lemma~\ref{lem:formal_etaleness} below.
\end{proof}

\begin{lemma}
\label{lem:formal_etaleness}
Suppose that we have a $p$-complete ring $R$ and:
\begin{itemize}
  \item A smooth $p$-adic formal algebraic space $\mathfrak{X}$ over $\Spf R$;
  \item A formally cohesive\footnote{See~\cite[Definition 6.2.1]{lurie_thesis}.} and integrable $p$-adic formal prestack $\mathfrak{Y}$ over $\Spf R$ admitting a $p$-completed cotangent complex $\mathbb{L}^{\wedge}_{\mathfrak{Y}/R}$ that is in fact a vector bundle of finite rank over $\mathfrak{Y}$; and
  \item A map $\varpi\colon \mathfrak{X}\to \mathfrak{Y}$;
\end{itemize}
with the following property: For every closed point $x\colon\Spec \kappa\to \mathfrak{X}$ with $\kappa$ algebraically closed, the map of complete local rings $\hat{\Rg}_{\mathfrak{Y},\varpi(x)}\to \hat{\Rg}_{\mathfrak{X},x}$ is an isomorphism. Then $\varpi$ is formally \'etale: that is, the formal cotangent complex $\mathbb{L}^{\wedge}_{\mathfrak{X}/\mathfrak{Y}}$ is nullhomotopic.
\end{lemma}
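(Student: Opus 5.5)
The plan is to show that $\mathbb{L}^{\wedge}_{\mathfrak{X}/\mathfrak{Y}}$ vanishes by checking at closed points, using the transitivity triangle
\[
\varpi^*\mathbb{L}^{\wedge}_{\mathfrak{Y}/R}\to \mathbb{L}^{\wedge}_{\mathfrak{X}/R}\to \mathbb{L}^{\wedge}_{\mathfrak{X}/\mathfrak{Y}}.
\]
Both of the first two terms are vector bundles of finite rank. For $\mathfrak{X}$ this holds because it is smooth over $\Spf R$, so $\mathbb{L}^{\wedge}_{\mathfrak{X}/R}\simeq \Omega^{1,\wedge}_{\mathfrak{X}/R}$; for $\mathfrak{Y}$ it is one of the hypotheses. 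So it is enough to show that the first map $\alpha\colon\varpi^*\mathbb{L}^{\wedge}_{\mathfrak{Y}/R}\to \mathbb{L}^{\wedge}_{\mathfrak{X}/R}$ is an isomorphism of vector bundles.

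For a map of finite rank vector bundles on the $p$-adic formal algebraic space $\mathfrak{X}$, being an isomorphism can be checked on fibers at closed points. Indeed, every closed point of $\mathfrak{X}$ lies in the reduced special fiber, and Nakayama's lemma applies over the $p$-adically complete local rings. So I will fix an algebraically closed closed point $x\colon\Spec\kappa\to\mathfrak{X}$ and show that $x^*\alpha$ is an isomorphism. To make this identification concrete, I will dualize and compare tangent spaces. Write $T_x\mathfrak{X}$ for the relative tangent space of $\mathfrak{X}$ over $R$ at $x$, that is, the fiber over the identity of the map $\mathfrak{X}(\kappa[\epsilon])\to\mathfrak{X}(\kappa)$ (taken over $R$). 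The dual of $x^*\mathbb{L}^{\wedge}_{\mathfrak{X}/R}$ is $T_x\mathfrak{X}$, and similarly the dual of $x^*\varpi^*\mathbb{L}^{\wedge}_{\mathfrak{Y}/R}$ is $T_{\varpi(x)}\mathfrak{Y}$, computed as the fiber of $\mathfrak{Y}(\kappa[\epsilon])\to\mathfrak{Y}(\kappa)$. The second identification is where the hypotheses on $\mathfrak{Y}$ enter: formal cohesiveness together with the existence of the cotangent complex gives the usual description of square-zero deformations in terms of $\mathbb{L}^{\wedge}_{\mathfrak{Y}/R}$. Under these identifications, the dual of $x^*\alpha$ is the differential $d\varpi_x$.

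It remains to compute $d\varpi_x$ using complete local rings. The tangent spaces above can be read off from the deformation functors on Artin local $R$-algebras with residue field $\kappa$. Because $\mathbb{L}^{\wedge}_{\mathfrak{Y}/R}$ is a vector bundle, the deformation functor of $\mathfrak{Y}$ at $\varpi(x)$ is discrete and pro-represented by $\hat{\Rg}_{\mathfrak{Y},\varpi(x)}$; integrability is what lets us pass to this complete local ring. Likewise, $\hat{\Rg}_{\mathfrak{X},x}$ pro-represents the deformation functor of $\mathfrak{X}$ at $x$. The map of deformation functors induced by $\varpi$ is then induced by the given isomorphism $\hat{\Rg}_{\mathfrak{Y},\varpi(x)}\isomto\hat{\Rg}_{\mathfrak{X},x}$. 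Hence it is an isomorphism on $\kappa[\epsilon]$-points, so $d\varpi_x$ is bijective and $x^*\alpha$ is an isomorphism. Once this holds at every such $x$, $\alpha$ is an isomorphism and $\mathbb{L}^{\wedge}_{\mathfrak{X}/\mathfrak{Y}}\simeq 0$.

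The main obstacle is the step relating the abstract prestack $\mathfrak{Y}$ to its complete local rings. One has to check that the fiber of $\mathbb{L}^{\wedge}_{\mathfrak{Y}/R}$ at a point correctly computes the tangent space of the pro-representing hull, and that no higher homotopy interferes. I would handle this directly from the cotangent complex formalism in \cite{lurie_thesis}: formal cohesiveness makes the deformation functor satisfy Schlessinger-type gluing, and the vector bundle condition ensures the obstruction and automorphism groups vanish, so the functor is discrete and pro-representable.
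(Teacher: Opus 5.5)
Your proposal is correct. Its skeleton matches the paper's: the same transitivity triangle, the same reduction to geometric closed points, and the same reliance on pro-representability of the deformation functor of $\mathfrak{Y}$ via Lurie's Schlessinger-type criterion. The difference lies in how the pointwise vanishing is detected.

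The paper works directly with the relative deformation functor $\mathcal{M}_x$ of $\mathfrak{X}$ over $\mathfrak{Y}$, evaluated on animated Artin $W(\kappa)$-algebras. It identifies $\mathcal{M}_x(A)$ in two ways:
\begin{itemize}
\item with the contractible space of maps $\hat{\Rg}_{\mathfrak{X},x}\to A$ under $\hat{\Rg}_{\mathfrak{Y},\varpi(x)}$;
\item for $A=\kappa\oplus M$, with $\tau^{\leqslant 0}\mathrm{RHom}_\kappa(x^*\mathbb{L}^{\wedge}_{\mathfrak{X}/\mathfrak{Y}},M)$.
\end{itemize}
Comparing the two kills both homology groups of the two-term complex $x^*\mathbb{L}^{\wedge}_{\mathfrak{X}/\mathfrak{Y}}$ at once.

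You instead use that both outer terms of the triangle are vector bundles. This reduces you to showing that $x^*\alpha$ is an isomorphism of finite-dimensional $\kappa$-vector spaces. By duality, that needs only the classical first-order fact that $d\varpi_x$ is bijective on $\kappa[\epsilon]$-points.

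Your route therefore avoids animated test rings. It also avoids the derived pro-representability that is implicit in the paper's identification of $\mathcal{M}_x(A)$ for animated $A$. Pro-representability is needed only on $\pi_0$, together with the discreteness of the deformation functor of $\mathfrak{Y}$, which you correctly trace to formal cohesiveness and the vector bundle hypothesis. The paper's route is more direct about $\mathbb{L}^{\wedge}_{\mathfrak{X}/\mathfrak{Y}}$ itself, and does not need to compare the absolute tangent spaces of $\mathfrak{X}$ and $\mathfrak{Y}$ separately.

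Two small points. The tangent space should be the fiber of $\mathfrak{X}(\kappa[\epsilon])\to\mathfrak{X}(\kappa)$ over $x$, not over the identity. And integrability plays no role in your argument, which only ever evaluates on Artin rings.
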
 
\begin{proof}
Note that our hypotheses imply that $\mathfrak{Y}$ admits universal deformation rings, so the given property is sensible; see~\cite[Corollary 6.2.14]{lurie_thesis}, which is just a classical criterion of Schlessinger. Moreover, by our smoothness hypothesis, the cotangent complex $\mathbb{L}^{\wedge}_{\mathfrak{X}/R}$ is a vector bundle of finite rank over $\mathfrak{X}$.

We can of course assume that $\mathfrak{X} = \Spf S$ for some $p$-complete $R$-algebra $S$ with $\mathbb{L}_{(S/pS)/(R/pR)}$ finite locally free over $S/pS$. Via the fundamental sequence
\[
\varpi^*\mathbb{L}^{\wedge}_{\mathfrak{Y}/R}\to \mathbb{L}^{\wedge}_{\mathfrak{X}/R}\to \mathbb{L}^{\wedge}_{\mathfrak{X}/\mathfrak{Y}}
\]
of $p$-completed cotangent complexes and our hypotheses, we find that $\mathbb{L}^{\wedge}_{\mathfrak{X}/\mathfrak{Y}}$ is a perfect complex over $S$ with Tor amplitude in $[-1,0]$. Therefore, to prove the lemma, it is enough to know that, for any map $x\colon S\to \kappa$ with $\kappa$ algebraically closed and with kernel a maximal ideal, we have $x^*\mathbb{L}^{\wedge}_{\mathfrak{X}/\mathfrak{Y}}\simeq 0$ as perfect complexes over $\kappa$. 

Let $\mathcal{C}_\kappa$ be the $\infty$-category of animated Artin $W(\kappa)$-algebras with residue field $\kappa$\footnote{See~\cite[\S 6.2]{lurie_thesis}.} Then, by definition, for any  $(A,\mf{m})$ in $\mathcal{C}_\kappa$ equipped with a local map from $\widehat{\Rg}_{\mf{Y},\varpi(x)}$, we have
\[
\mathcal{M}_x(A)\defn \mathrm{fib}_x\big(\Map_{/\mf{Y}}(\Spec A,\mf{X})\to \Map_{/\mf{Y}}(\Spec \kappa,\mf{X})\big) \simeq \Map_{\widehat{\Rg}_{\mf{Y},\varpi(x)}/}(\widehat{\Rg}_{\mf{X},x},A),
\]
where the mapping space on the right is taken in the $\infty$-category of animated complete local $W(\kappa)$-algebras with residue field $\kappa$. Our hypothesis now says that the right-hand side is contractible.

On the other hand, by the definition of the cotangent complex, if we take $A = \kappa\oplus M$ to be an animated square-zero extension of $\kappa$ by a bounded above complex $M$ of $\kappa$-vector spaces, we have
\[
\mathcal{M}_x(A) \simeq \tau^{\leqslant 0}\mathrm{RHom}_\kappa(x^*\mathbb{L}^{\wedge}_{\mf{X}/\mf{Y}},M).
\]
Therefore, in total, we see that $x^*\mathbb{L}^{\wedge}_{\mf{X}/\mf{Y}}$ is nullhomotopic, as desired.
\end{proof}

\subsection{Behavior under central isogenies}
\label{sub:central_isogenies_BT}

\begin{setup}
 Suppose that we have a central extension  $\mathcal{G}\to \overline{\mathcal{G}}$ of reductive group schemes over $\Int_p$, i.e., a faithfully flat map $\mathcal{G}\to \overline{\mathcal{G}}$ of reductive $\Int_p$-group schemes with kernel $\mathcal{Z}$ central in $\mathcal{G}$. The minuscule cocharacter $\mu$ induces a minuscule cocharacter $\overline{\mu}$ of $\overline{\mathcal{G}}$.
\end{setup}

\begin{definition}
    [Apertures for $\mathcal{Z}$]
\label{def:apertures_for_z}
Even though $\mathcal{Z}$ is not necessarily smooth over $\Int_p$, for $n\geqslant 1$, one can still consider the $p$-adic formal stack $\BT[\mathcal{Z},0]{n}$ over $\Spf \Int_p$ assigning to each $p$-complete ring $R$ the $\infty$-groupoid of $\mathcal{Z}$-torsors over $R^{\mathrm{syn}}\otimes\Int/p^n\Int$ whose restriction over $B\Gm\times \Spec \kappa$ is a trivial $\mathcal{Z}$-torsor for every geometric point $\kappa$ of $\Spf R$. 
\end{definition}

\begin{remark}
    [Representability]
\label{rem:representability_z-apertures}
The formal stack $\BT[\mathcal{Z},0]{n}$ is represented by an \'etale formal algebraic stack over $\Spf \Int_p$. This is a special case of \cite[Theorems 8.7.2 and 8.8.4]{gmm}. These results need to be applied to the \emph{$1$-bounded stack} $\mathcal{X}=(\mc{X}^\diamond,X^0)$ over $\Int_p^{\mathrm{syn}}$ obtained as follows: The underlying stack $\mathcal{X}^\diamond$ is just the classifying stack $B\mathcal{Z}\times \Int_p^{\mathrm{syn}}$. Its \emph{fixed point locus} is the formal stack over $\Int_p$ parameterizing, for each $p$-complete $R$, $\mathcal{Z}$-torsors over $B\Gm\times \Spf R$---that is, of graded $\mathcal{Z}$-torsors. Within it we have the open and closed substack $X^0$ of \emph{trivially} graded $\mathcal{Z}$-torsors, which is of course isomorphic to $B\mathcal{Z}$. 

The pair $\mathcal{X} = (\mathcal{X}^\diamond,X^0)$ is the $1$-bounded stack we are concerned with. In this situation, the \emph{attracting locus} $X^-$ and the stack $X$ defined in \S8.4.3 of \emph{op.\@ cit.\@} are both also $B\mathcal{Z}$. Unwinding definitions shows that the formal stack $\Gamma_{\mathrm{syn}}(\mathcal{X}\otimes\Int/p^n\Int)$ defined in \emph{op.\@ cit.\@} is the same as $\BT[\mathcal{Z},0]{n}$. Now, Theorem 8.8.4 of \emph{op.\@ cit.\@} there tells us that $\BT[\mathcal{Z},0]{n}$ is represented by a finitely presented derived formal Artin $1$-stack over $\Spf\Int_p$, and Theorem 8.7.2 tells us that its deformation theory is governed by that of $X^- = B\mathcal{Z}$ over itself, which in turn shows that it is \'etale.
\end{remark}

\begin{remark}
    [Over perfectoid rings]
\label{rem:perfectoid_rings_z-apertures}
If $R$ is a perfectoid ring, then an object in $\BT[\mathcal{Z},0]{n}$ is the same as a $\mathcal{Z}$-torsor $\mc{D}$ (for the fppf topology) over $\Prism_R/p^n\Prism_R$ equipped with an isomorphism $\varphi^*\mc{D}\isomto\mc{D}$. 
\end{remark}

\begin{lemma}
    \label{lem:z_aperture_loc_const}
For any flat affine group scheme $H$ over $\Int/p^n\Int$, the stack $\mc{Y}$ assigning to each $\Field_p$-algebra $A$ the groupoid of pairs $(\mathcal{D},\alpha)$, where $\mathcal{D}$ is an fppf $H$-torsor over $W_n(A)$, and $\varphi^*\mathcal{D}\isomto\mathcal{D}$ is an isomorphism of $H$-torsors, is canonically equivalent to  $\underline{(BH)(\Int/p^n\Int)}$.
\end{lemma}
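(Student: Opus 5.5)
The idea is to show that the natural map $\underline{(BH)(\Int/p^n\Int)}\to\mc{Y}$ is an equivalence of étale stacks on $\Field_p$-algebras. This map sends an $H$-torsor $D_0$ over $\Int/p^n\Int$ to $\mathcal{D}=D_0\otimes W_n(A)$ with its tautological Frobenius structure $\varphi^*\mathcal{D}=\mathcal{D}$; it makes sense because $\varphi$ fixes $\Int/p^n\Int\subset W_n(A)$. Since both sides are sheaves for the étale topology (fppf torsors over $W_n(A)$ descend along $W_n$ of étale covers, as $W_n$ carries étale maps to étale maps), it suffices to prove two things étale locally on $A$. First, every $(\mathcal{D},\alpha)$ is isomorphic to one coming from $\Int/p^n\Int$. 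Second, the map on automorphism and isomorphism sheaves is an isomorphism.

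Full faithfulness comes first. Given two objects $(D_0\otimes W_n(A),\mathrm{can})$ and $(D_0'\otimes W_n(A),\mathrm{can})$, morphisms between them are the $\varphi$-fixed points of $\underline{\mathrm{Isom}}(D_0,D_0')(W_n(A))$. This isomorphism scheme is an affine flat $\Int/p^n\Int$-scheme, and is a torsor for the inner form $H'$. So we must show that for an affine flat scheme $Y$ over $\Int/p^n\Int$, the map $Y(\Int/p^n\Int)\to Y(W_n(A))^{\varphi=\mathrm{id}}$ is a bijection, compatibly with localization on $A$. Writing $Y=\Spec B$ and maps $B\to W_n(A)$, a $\varphi$-equivariant map lands in $W_n(A)^{\varphi=1}$. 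This group is $\Int/p^n\Int$ locally constant, i.e. $\underline{\Int/p^n\Int}(A)$; for $n=1$ this is the Artin--Schreier sequence, and in general one uses dévissage along $p^{i}W_n\simeq W_{n-i}$. Hence $Y(W_n(A))^{\varphi}=Y(\underline{\Int/p^n\Int}(A))$, which is the constant-sheaf value, as required.

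Essential surjectivity is the main obstacle. One must show that étale locally on $A$, $\mathcal{D}$ is trivial and $\alpha$ can be normalized to the identity. Since $H$ need not be smooth, $\mathcal{D}$ is only fppf locally trivial. The plan is to proceed in two steps. First, construct the fppf sheaf $P\colon A'\mapsto\{\text{trivializations } t \text{ of } \mathcal{D}_{W_n(A')} \text{ with } \alpha\circ\varphi^*t=t\}$ on $A$-algebras. Then show that $P$ is a torsor under the constant group $\underline{H(\Int/p^n\Int)}$, a finite étale group by the argument above. Concretely, fppf locally $\mathcal{D}$ is trivial and $\alpha$ is given by some $g\in H(W_n(A'))$, and the equation $t=g\varphi(t)$ is a Lang-type equation. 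To solve it fppf locally, embed $H\subset\GL_m$ over $\Int/p^n\Int$ (a flat affine group scheme of finite type; if $H$ is not of finite type, write it as a limit). The equation $x=g\varphi(x)$ for $x\in\GL_m(W_n(A'))$ is then solvable after a finite étale extension, by Lang's theorem for $\GL_m$ on Witt vectors, again via dévissage in $n$. For $n=1$ it is the classical fact that $\{x:\ x=g x^{(p)}\}$ is finite étale over $A$. One then must check that the solution can be chosen in $H$. For this, note that the set of solutions $x$ with $x^{-1}(\text{trivialization})$ lying in $H$ forms an fppf-local torsor under $H(W_n)^{\varphi}=\underline{H(\Int/p^n\Int)}$, which is finite étale. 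Since any fppf torsor under a finite étale group is étale locally trivial, $P$ has sections étale locally, and this gives essential surjectivity. The delicate point I expect is exactly the nonsmooth case, where it is important to argue through this torsor under $\underline{H(\Int/p^n\Int)}$ rather than through a smooth-Lang argument for $H$ itself.
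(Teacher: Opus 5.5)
Your reduction to étale sheaves and your full-faithfulness step are fine; indeed $W_n(A)^{\varphi=1}=W_n(A^{\mathrm{Frob}=1})$ is the ring of locally constant $\mathbb{Z}/p^n$-valued functions on $\Spec A$. The essential-surjectivity step, however, has a genuine gap. You reduce it to the claim that, étale locally on $A$, $\mathcal{D}$ is trivial and $\alpha$ can be normalized to the identity. That claim is false as soon as $H$ is not smooth.

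Take $n=2$, $H=\mu_p$, and let $D_0$ be the Kummer torsor $x^p=1+p$ over $\mathbb{Z}/p^2$. It is a nontrivial fppf $\mu_p$-torsor, because $1+p$ is not a $p$-th power in $(\mathbb{Z}/p^2)^\times$. The pair $(D_0\otimes W_2(A),\mathrm{can})$ lies in $\mathcal{Y}(A)$; it is even in the image of your functor.

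Now let $A'$ be any nonzero $\mathbb{F}_p$-algebra and $x=[a_0]+V[a_1]\in W_2(A')$. In $W_2(A')$ one has $pV=0$ and $(V[a_1])^2=0$, so $x^p=[a_0^p]$. On the other hand $1+p=[1]+V[1]$. Hence $D_0\otimes W_2(A')$ has no sections at all.

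So $\mathcal{D}$ need not become trivial over $W_n(A')$ for any cover $A\to A'$. It is fppf locally trivial on $\Spec W_n(A)$, but such covers are not of the form $W_n(A')$. Since $H$ is not smooth, a trivialization over $A'$ also cannot be lifted to $W_n(A')$. Consequently your sheaf $P$ can be empty rather than an $\underline{H(\mathbb{Z}/p^n)}$-torsor. If your argument worked, it would show that every object of $\mathcal{Y}$ is étale locally isomorphic to the trivial one. That contradicts the lemma itself whenever $H^1_{\mathrm{fppf}}(\mathbb{Z}/p^n,H)\neq 0$, and this group is exactly what the paper needs to keep track of for $H=\mathcal{Z}$ in Proposition~\ref{prop:central_covers}.

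What must actually be shown is that, étale locally, $(\mathcal{D},\alpha)\simeq(D_0\otimes W_n(A),\mathrm{can})$ for some possibly \emph{nontrivial} $H$-torsor $D_0$ over $\mathbb{Z}/p^n$. Your argument has no mechanism that produces such a $D_0$. The paper's route is to note that $\mathcal{Y}$ is étale over $\mathbb{F}_p$ and then compare $\overline{\mathbb{F}}_p$-points. There $\alpha$ serves as a Frobenius descent datum along $\mathbb{Z}/p^n\to W_n(\overline{\mathbb{F}}_p)$, and descent yields $D_0$.

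Within your own framework you could instead push out along a closed embedding $H\hookrightarrow G=\GL_m$ with $G/H$ affine; this holds, for instance, when $H$ is of multiplicative type, which is the case for $\mathcal{Z}$. Your Lang argument then trivializes $(\mathcal{D}\times^H G,\alpha)$ étale locally. The $H$-reduction becomes a $\varphi$-invariant point of $(G/H)(W_n(A'))$. By your full-faithfulness computation, this point is locally a constant $s_0\in(G/H)(\mathbb{Z}/p^n)$, and $D_0$ is the fiber of $G\to G/H$ over $s_0$. In the example, $G=\mathbb{G}_m$, $G/H\simeq\mathbb{G}_m$ via $x\mapsto x^p$, and $s_0=1+p$.
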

\begin{proof}
    This is classical: The stack $\mc{Y}$ is \'etale over $\Field_p$, and we have a natural map $\underline{(BH)(\Int/p^n\Int)}\to \mathcal{Y}$ which is an equivalence on $\overline{\Field}_p$-points by descent.
\end{proof}

\begin{proposition}
    \label{prop:apertures_for_z}
There is a canonical isomorphism from $\BT[\mathcal{Z},0]{n}$ to the $p$-adic formal stack obtained by restricting $\underline{(B\mathcal{Z})(\Int/p^n\Int)}$ to $p$-nilpotent rings.
\end{proposition}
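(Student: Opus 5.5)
We construct a natural map from $\underline{(B\mathcal{Z})(\Int/p^n\Int)}$ to $\BT[\mathcal{Z},0]{n}$ and show it is an isomorphism of étale formal stacks. Since Remark~\ref{rem:representability_z-apertures} shows that $\BT[\mathcal{Z},0]{n}$ is étale over $\Spf\Int_p$, and the source is étale as well, it suffices to check two things: that the map is étale (automatic between étale stacks), and that it is an equivalence on $\overline{\Field}_p$-points, or more generally on points valued in perfect $\Field_p$-algebras. Indeed, an étale map of étale formal stacks over $\Spf\Int_p$ is determined by its reduction mod $p$. Moreover, the special fiber of an étale stack is determined by its restriction to perfect rings, since étale algebras over perfect rings are perfect.

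To construct the map, note that a $\mathcal{Z}(\Int/p^n\Int)$-local system, or more precisely a $\mathcal{Z}$-torsor $D_0$ over $\Int/p^n\Int$ in the fppf sense, pulls back along the structure map $R^{\mathrm{syn}}\otimes\Int/p^n\Int\to \Int_p^{\mathrm{syn}}\otimes\Int/p^n\Int\to \Spec\Int/p^n\Int$ to a $\mathcal{Z}$-torsor over $R^{\mathrm{syn}}\otimes\Int/p^n\Int$. Its restriction over $B\Gm\times\Spec\kappa$ carries the trivial grading, and over an algebraically closed $\kappa$ it is trivial, since $\mathcal{Z}$-torsors over $\kappa$ are trivial. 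This yields a map $(B\mathcal{Z})(\Int/p^n\Int)\to \BT[\mathcal{Z},0]{n}(R)$, functorial in $R$, which sheafifies to the desired map from the constant stack $\underline{(B\mathcal{Z})(\Int/p^n\Int)}$. By construction it is compatible with base change.

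To check the equivalence on perfect $\Field_p$-algebras $A$, we apply Remark~\ref{rem:perfectoid_rings_z-apertures}. Since $A$ is perfectoid with $\Prism_A=W(A)$, objects of $\BT[\mathcal{Z},0]{n}(A)$ are pairs $(\mathcal{D},\alpha)$ with $\mathcal{D}$ an fppf $\mathcal{Z}$-torsor over $W_n(A)$ and $\alpha\colon\varphi^*\mathcal{D}\isomto\mathcal{D}$. Here we should check that the boundedness condition (trivial grading) is automatic, or is exactly what makes the perfectoid description hold. Lemma~\ref{lem:z_aperture_loc_const}, applied with $H=\mathcal{Z}_{\Int/p^n\Int}$ (flat affine), identifies this groupoid with $\underline{(B\mathcal{Z})(\Int/p^n\Int)}(A)$. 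It remains to observe that the composite with our map is the identity, i.e.\ that pulling back $D_0$ gives the pair $(W_n(A)\otimes D_0,\mathrm{can})$. This is clear from the construction.

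The main obstacle is the reduction step: passing from an isomorphism on perfect (or $\overline{\Field}_p$) points to an isomorphism of $p$-adic formal stacks. The plan is to argue that both stacks are étale and finitely presented over $\Spf\Int_p$, via Remark~\ref{rem:representability_z-apertures}. A map between étale algebraic stacks over $\Field_p$ that induces equivalences on geometric points, including automorphism groups, is then an isomorphism, and this lifts uniquely through nilpotent thickenings by topological invariance of the étale site. If the automorphism comparison needs care, we would compare the diagonals, which are also étale, by the same argument.
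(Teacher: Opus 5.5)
Your proposal is correct and follows the paper's own argument: both stacks are étale over $\Spf\Int_p$, so it suffices to compare them on perfectoid test rings (you use perfect $\Field_p$-algebras, which is enough). On such rings, Remark~\ref{rem:perfectoid_rings_z-apertures} and Lemma~\ref{lem:z_aperture_loc_const} give the identification. Your explicit comparison map, obtained by pullback along $R^{\mathrm{syn}}\otimes\Int/p^n\Int\to\Spec\Int/p^n\Int$, and your spelled-out reduction via topological invariance of the étale site simply make explicit what the paper's one-line proof leaves implicit.
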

\begin{proof}
 Since both formal stacks are \'etale over $\Spf \Int_p$, it suffices to establish an isomorphism between their restrictions to perfectoid rings. Here, this follows from Remark~\ref{rem:perfectoid_rings_z-apertures} and Lemma~\ref{lem:z_aperture_loc_const}.
\end{proof}

\begin{lemma}
    \label{lem:Z_fppf_cohomology}
There is a canonical equivalence of $\Int$-stacks
\[
\mathrm{fib}\left(\mathrm{Loc}_{\mathcal{G}(\Int/p^n\Int)}\to \mathrm{Loc}_{\overline{\mathcal{G}}(\Int/p^n\Int)}\right)\isomto \underline{(B\mathcal{Z})(\Int/p^n\Int)},
\]
where the left hand side is the stack fiber over the trivial $\overline{\mc{G}}(\Int/p^n\Int)$-torsor.
\end{lemma}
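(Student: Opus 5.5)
We interpret $\mathrm{Loc}_{\mathcal{G}(\Int/p^n\Int)}$ as the classifying stack $B\underline{\mathcal{G}(\Int/p^n\Int)}$ of étale torsors under the constant finite group $\mathcal{G}(\Int/p^n\Int)$, and similarly for $\overline{\mathcal{G}}$. Its fiber over the trivial torsor is the quotient stack $\big[\underline{\overline{\mathcal{G}}(\Int/p^n\Int)}/\underline{\mathcal{G}(\Int/p^n\Int)}\big]$, with $\mathcal{G}(\Int/p^n\Int)$ acting through the homomorphism $\mathcal{G}(\Int/p^n\Int)\to\overline{\mathcal{G}}(\Int/p^n\Int)$. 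The plan is to identify this quotient groupoid with the groupoid $(B\mathcal{Z})(\Int/p^n\Int)$ of fppf $\mathcal{Z}$-torsors over $\Int/p^n\Int$. Once that is done for the groupoids, the statement for the constant stacks follows by étale sheafification.

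\textbf{Step 1: the exact sequence.} The sequence $1\to\mathcal{Z}\to\mathcal{G}\to\overline{\mathcal{G}}\to 1$ is an fppf short exact sequence of group schemes over $\Int/p^n\Int$. It gives an exact sequence of pointed sets
\[
1\to \mathcal{Z}(\Int/p^n\Int)\to \mathcal{G}(\Int/p^n\Int)\to \overline{\mathcal{G}}(\Int/p^n\Int)\xrightarrow{\delta} H^1_{\mathrm{fppf}}(\Int/p^n\Int,\mathcal{Z})\to H^1_{\mathrm{fppf}}(\Int/p^n\Int,\mathcal{G}).
\]
The last term vanishes: $\mathcal{G}$ is smooth, so fppf torsors are étale torsors, and these are trivial over the henselian local ring $\Int/p^n\Int$ with finite residue field $\Field_p$ by Lang's theorem. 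Hence $\delta$ is surjective.

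\textbf{Step 2: the groupoid equivalence.} We upgrade $\delta$ to a functor $\big[\overline{\mathcal{G}}(\Int/p^n\Int)/\mathcal{G}(\Int/p^n\Int)\big]\to (B\mathcal{Z})(\Int/p^n\Int)$ by sending $\bar g$ to the $\mathcal{Z}$-torsor given by its fiber $\pi^{-1}(\bar g)\subset\mathcal{G}$ under $\pi\colon\mathcal{G}\to\overline{\mathcal{G}}$. Left multiplication by $g\in\mathcal{G}(\Int/p^n\Int)$ gives an isomorphism $\pi^{-1}(\bar g)\isomto \pi^{-1}(\pi(g)\bar g)$, and because $\mathcal{Z}$ is central this is $\mathcal{Z}$-equivariant. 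This defines the functor on morphisms.

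We then check it is an equivalence. Essential surjectivity is exactly the surjectivity of $\delta$ from Step 1. For full faithfulness, morphisms $\bar g\to\bar g'$ in the quotient are the $g$ with $\pi(g)\bar g=\bar g'$. On the other side, $\mathcal{Z}$-torsor isomorphisms $\pi^{-1}(\bar g)\isomto\pi^{-1}(\bar g')$ all come from multiplication by some $g$: choosing a point $h\in\pi^{-1}(\bar g)(\Int/p^n\Int)$, which exists since the torsor is trivial after Step 1 applied to $\mathcal{G}$-points, an isomorphism is determined by the image $h'$ of $h$, and one takes $g=h'h^{-1}$. Different choices of $h$ differ by $\mathcal{Z}$, which is central, so the resulting $g$ is independent of the choice.

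\textbf{Step 3: sheafifying.} Everything in Steps 1 and 2 is functorial, and both sides are constant stacks. Sheafifying in the étale topology over $\Int$ therefore gives the equivalence of stacks in the statement.

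The main subtlety is in Step 2: $\mathcal{Z}$ may be non-smooth, for instance $\mu_p$. So $(B\mathcal{Z})(\Int/p^n\Int)$ must be understood with fppf torsors, and it does not come from $\mathcal{Z}(\Int/p^n\Int)$ alone. The argument handles this by never trivializing $\mathcal{Z}$-torsors directly: it works only with fibers of the smooth map $\pi$, trivialized using points of $\mathcal{G}$ as described above.
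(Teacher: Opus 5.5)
Your route is the same as the paper's: identify the fiber with the sheafified action groupoid $[\overline{\mathcal{G}}(\Int/p^n\Int)/\mathcal{G}(\Int/p^n\Int)]$, send $\bar g$ to the $\mathcal{Z}$-torsor of its lifts, and get essential surjectivity from Lang's theorem and the smoothness of $\mathcal{G}$. The paper does not spell out full faithfulness at all.

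Your full faithfulness argument, however, fails as written. You choose $h\in\pi^{-1}(\bar g)(\Int/p^n\Int)$ on the grounds that ``the torsor is trivial.'' But $\pi^{-1}(\bar g)$ has an $\Int/p^n\Int$-point exactly when $\bar g$ lifts to $\mathcal{G}(\Int/p^n\Int)$, i.e.\ when $\delta(\bar g)=0$. Step 1 only shows that the \emph{pushed-out} $\mathcal{G}$-torsor is trivial, not the $\mathcal{Z}$-torsor itself. If every $\pi^{-1}(\bar g)$ had a section, your functor would only reach the trivial torsor, contradicting the essential surjectivity you just proved. For a concrete failure, take $\mathrm{SL}_2\to\mathrm{PGL}_2$ with $p$ odd and $n=1$, and let $\bar g$ be the class of $\mathrm{diag}(a,1)$ with $a$ a non-square; then $\pi^{-1}(\bar g)=\{\lambda:\lambda^2=a^{-1}\}$ has no $\Field_p$-point. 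Relatedly, $\pi$ is a $\mathcal{Z}$-torsor, so it is smooth only if $\mathcal{Z}$ is; it is not smooth for $\mathcal{Z}=\mu_p$. So your closing remark, that the fibers of the smooth map $\pi$ are trivialized by points of $\mathcal{G}$, is wrong on both counts.

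The repair uses what you already wrote. Given a $\mathcal{Z}$-equivariant isomorphism $\phi\colon\pi^{-1}(\bar g)\isomto\pi^{-1}(\bar g')$, choose a section $h$ only fppf-locally and set $g=\phi(h)h^{-1}$. Since $\phi(hz)=\phi(h)z$, this is independent of $h$; no centrality is needed for the right action. The local elements therefore agree on overlaps and descend, because $\mathcal{G}$ is an fppf sheaf, to a global $g\in\mathcal{G}(\Int/p^n\Int)$. One then checks $\pi(g)\bar g=\bar g'$ and that $\phi$ is left multiplication by $g$. Uniqueness of $g$ is checked locally in the same way. With this change, your Steps 1--3 give a complete proof, and a more complete one than the paper's sketch.
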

\begin{proof}
    The left hand side is the \'etale sheafification of the constant groupoid $[\overline{\mathcal{G}}(\Int/p^n\Int)/\mathcal{G}(\Int/p^n\Int)]$. Therefore, it is enough to know that there is a canonical equivalence of groupoids
    \[
   [\overline{\mathcal{G}}(\Int/p^n\Int)/\mathcal{G}(\Int/p^n\Int)]\isomto (B\mathcal{Z})(\Int/p^n\Int).
    \]
    This is obtained from assigning to every $\overline{g}\in \ov{\mc{G}}(\Int/p^n\Int)$ the $\mathcal{Z}$-torsor of lifts to $\mc{G}(\Int/p^n\Int)$, and noting that, by Lang's theorem and the smoothness of $\mathcal{G}$, every $\mathcal{Z}$-torsor over $\Int/p^n\Int$ arises in this fashion.
\end{proof}

\begin{proposition}
\label{prop:central_covers}
The natural map 
     \[
  \beta\colon \BT[\mathcal{G},\mu,\mathrm{alg}]{n}\to \BT[\overline{\mathcal{G}},\overline{\mu},\mathrm{alg}]{n}
  \]
  is \'etale and surjective. In fact, it is a torsor for $\underline{(B\mathcal{Z})(\Int/p^n\Int)}$, and so factors as a gerbe banded by $\mathcal{Z}(\Int/p^n\Int)$ followed by a torsor for the abelian group $H^1_{\mathrm{fppf}}(\Spec \Int/p^n\Int,\mathcal{Z})$.
\end{proposition}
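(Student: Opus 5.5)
The plan is to establish the torsor structure first on the formal completion and on the generic fiber separately, and then glue them using the fiber-product definition of $\BT[\mathcal{G},\mu,\mathrm{alg}]{n}$. In both settings the group stack acting is $\underline{(B\mathcal{Z})(\Int/p^n\Int)}$. It acts on $\mathcal{G}$-torsors over $R^{\mathrm{syn}}\otimes\Int/p^n\Int$ by twisting with $\mathcal{Z}$-torsors: since $\mathcal{Z}$ is central, the multiplication map $\mathcal{Z}\times\mathcal{G}\to\mathcal{G}$ is a homomorphism, so a pair (a $\mathcal{Z}$-torsor, a $\mathcal{G}$-torsor) induces a new $\mathcal{G}$-torsor. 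Proposition~\ref{prop:apertures_for_z} identifies $\BT[\mathcal{Z},0]{n}$ with $\underline{(B\mathcal{Z})(\Int/p^n\Int)}$. Twisting by a trivially graded $\mathcal{Z}$-torsor does not change the restriction to $B\Gm\times\Spec\kappa$, so the $\mu$-boundedness condition is preserved. On local systems the corresponding action is twisting by $\mathcal{Z}(\Int/p^n\Int)$-torsors (more precisely, by the image of $(B\mathcal{Z})(\Int/p^n\Int)$ in $\mathcal{Z}$-local systems). These two actions are compatible with $T_{\et}$ by functoriality of Construction~\ref{const:etale_realization_apertures} in the group. We therefore get an action of $\underline{(B\mathcal{Z})(\Int/p^n\Int)}$ on $\BT[\mathcal{G},\mu,\mathrm{alg}]{n}$ over $\BT[\overline{\mathcal{G}},\overline{\mu},\mathrm{alg}]{n}$.

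The next step is to show that the action map
\[
\underline{(B\mathcal{Z})(\Int/p^n\Int)}\times\BT[\mathcal{G},\mu,\mathrm{alg}]{n}\to\BT[\mathcal{G},\mu,\mathrm{alg}]{n}\times_{\BT[\overline{\mathcal{G}},\overline{\mu},\mathrm{alg}]{n}}\BT[\mathcal{G},\mu,\mathrm{alg}]{n}
\]
is an equivalence, and that $\beta$ is surjective étale locally. On the generic fiber $\mathrm{Loc}_{\mathcal{G}(\Int/p^n\Int)}\to\mathrm{Loc}_{\overline{\mathcal{G}}(\Int/p^n\Int)}$, this is essentially Lemma~\ref{lem:Z_fppf_cohomology}: the fiber over the trivial torsor is $\underline{(B\mathcal{Z})(\Int/p^n\Int)}$, and every fiber is étale locally the trivial one. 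On the formal side, the fiber of $B\mathcal{G}\to B\overline{\mathcal{G}}$ is $B\mathcal{Z}$ as fppf stacks, so over $R^{\mathrm{syn}}\otimes\Int/p^n\Int$ the fiber of $\beta$ over a $(\overline{\mathcal{G}},\overline{\mu})$-aperture $\overline{\mathfrak{Q}}$ is the stack of lifts of $\overline{\mathfrak{Q}}$ to $\mathcal{G}$. Whenever it is nonempty, this stack is a torsor under $\mathcal{Z}$-torsors on $R^{\mathrm{syn}}\otimes\Int/p^n\Int$, and the boundedness condition cuts it down to a torsor under $\BT[\mathcal{Z},0]{n}$. Since the algebraic stack is defined as a fiber product and the two torsor structures match under $T_{\et}$, the equivalence passes to $\BT[\mathcal{G},\mu,\mathrm{alg}]{n}$.

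The main obstacle is surjectivity, that is, lifting a $(\overline{\mathcal{G}},\overline{\mu})$-aperture étale locally. Here I would use the deformation theory. Because $\mathcal{G}\to\overline{\mathcal{G}}$ is a central isogeny, it induces $\mathcal{P}^-_\mu\to\mathcal{P}^-_{\overline{\mu}}$ with the same kernel. By Theorem~\ref{thm:gmm} and Lemma~\ref{lem:algebraic_deformation_theory}, the cotangent complexes of the two sides then agree, so $\beta$ is formally étale. It is locally of finite presentation by Lemma~\ref{lem:algebraic_finite_presentation}, hence étale.

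For surjectivity it then suffices to check geometric points. Over $p$-adic points: $\BT{n}(\kappa)$ for algebraically closed $\kappa$ is the quotient $[\mathcal{G}(W_n(\kappa))/H^{(n)}_\mu]$ from Remark~\ref{rem:quotient_description_perfectoid_truncated}, and $\mathcal{G}(W_n(\kappa))\to\overline{\mathcal{G}}(W_n(\kappa))$ is surjective by smoothness of $\mathcal{Z}$-torsors over strictly henselian $W_n(\kappa)$ (fppf $\mathcal{Z}$-torsors over $W_n(\kappa)$ are trivial after a finite flat extension, which is absorbed by $\varphi$-invariance as in Lemma~\ref{lem:z_aperture_loc_const}). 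Over characteristic-$0$ points, surjectivity is immediate since there is no obstruction over an algebraically closed field.

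Finally, the factorization follows formally from the structure of $(B\mathcal{Z})(\Int/p^n\Int)$. It is a groupoid with $\pi_0=H^1_{\mathrm{fppf}}(\Int/p^n\Int,\mathcal{Z})$ and automorphism group $\mathcal{Z}(\Int/p^n\Int)$. A torsor under such a group stack therefore splits as a $\mathcal{Z}(\Int/p^n\Int)$-gerbe followed by a torsor under the discrete abelian group $\pi_0$.
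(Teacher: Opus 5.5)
The étaleness step, the torsor structure and the final splitting into a gerbe and a $\pi_0$-torsor all match the paper's proof. Étaleness works because $\mathcal{Z}\subset\mathcal{P}^-_\mu$ is central, so the two Grothendieck--Messing squares agree. For the torsor structure, the simply transitive action is by $\BT[\mathcal{Z},0]{n}\simeq\underline{(B\mathcal{Z})(\Int/p^n\Int)}$ over the $p$-completion and as in Lemma~\ref{lem:Z_fppf_cohomology} on the generic fiber.

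The gap is in the one step that needs real work: surjectivity on $\kappa$-points, for $\kappa$ algebraically closed of characteristic $p$. You claim that $\mathcal{G}(W_n(\kappa))\to\overline{\mathcal{G}}(W_n(\kappa))$ is surjective, and this is false in general. Since $W_n(\kappa)$ is strictly henselian and $\mathcal{G}$ is smooth, the quotient $\overline{\mathcal{G}}(W_n(\kappa))/\mathcal{G}(W_n(\kappa))$ is exactly $H^1_{\mathrm{fppf}}(W_n(\kappa),\mathcal{Z})$. This group is nonzero as soon as $\mathcal{Z}$ is not smooth and $n\geqslant 2$. For example, for $\mathrm{SL}_p\to\mathrm{PGL}_p$ the $p$-th powers in $W_2(\kappa)^\times$ are exactly the Teichm\"uller units, so $H^1_{\mathrm{fppf}}(W_2(\kappa),\mu_p)=W_2(\kappa)^\times/(W_2(\kappa)^\times)^p\simeq\kappa$. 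Concretely, the class of $\mathrm{diag}(1+p,1,\ldots,1)$ in $\mathrm{PGL}_p(W_2(\kappa))$ has no lift to $\mathrm{SL}_p(W_2(\kappa))$. Your parenthetical justification does not repair this. Lemma~\ref{lem:z_aperture_loc_const} concerns $\mathcal{Z}$-torsors \emph{equipped with} a Frobenius isomorphism, i.e.\ the fibers $\BT[\mathcal{Z},0]{n}$. It says nothing about the obstruction to lifting a fixed element of $\overline{\mathcal{G}}(W_n(\kappa))$. As written, your argument only handles the case where $\mathcal{Z}$ is smooth.

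The missing idea is that a point of $\BT[\overline{\mathcal{G}},\overline{\mu}]{n}(\kappa)$ is only a class of $\bar g$ under $\bar g\mapsto\tau(\bar h)^{-1}\bar g\,\sigma(\bar h)$ with $\bar h\in\overline{H}^{(n)}_{\overline{\mu}}(\kappa)$. You must find a liftable representative of that class, not lift $\bar g$ itself. Write $\mathcal{B}_{\bar g}$ for the $\mathcal{Z}$-torsor of lifts of $\bar g$. The map $\tau$ induces
\[
[\overline{H}^{(n)}_{\overline{\mu}}(\kappa)/H^{(n)}_\mu(\kappa)]\simeq[\overline{\mathcal{G}}(W_n(\kappa))/\mathcal{G}(W_n(\kappa))]\simeq(B\mathcal{Z})(W_n(\kappa)),
\]
so every $\mathcal{Z}$-torsor over $W_n(\kappa)$ has the form $\mathcal{B}_{\bar h}$. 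Replacing $\bar g$ by $\tau(\bar h)^{-1}\bar g\,\sigma(\bar h)$ replaces $\mathcal{B}_{\bar g}$ by $\mathcal{B}_{\bar h}^{-1}\otimes\mathcal{B}_{\bar g}\otimes\varphi^*\mathcal{B}_{\bar h}$. So what you need is the Lang-type statement that every $\mathcal{Z}$-torsor over $W_n(\kappa)$ is of the form $\mathcal{B}\otimes(\varphi^*\mathcal{B})^{-1}$. This is Lemma~\ref{lem:lang_surjective_stacks}, proved by induction on $n$: the fiber of $(B\mathcal{Z})(W_n(\kappa))\to(B\mathcal{Z})(W_{n-1}(\kappa))$ is computed from the co-Lie complex of $\mathcal{Z}$, which reduces the question to surjectivity of $\varphi-\mathrm{id}$ on a $\kappa$-vector space.

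The same non-smoothness also makes your description of the generic-fiber action inaccurate. The action is not through $\mathcal{Z}(\Int/p^n\Int)$-local systems: the $\pi_0$-part $H^1_{\mathrm{fppf}}(\Int/p^n\Int,\mathcal{Z})$ acts through elements of $\overline{\mathcal{G}}(\Int/p^n\Int)$ that do not lift to $\mathcal{G}(\Int/p^n\Int)$.
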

\begin{proof}
    The deformation theory from Lemma~\ref{lem:algebraic_deformation_theory} tells us that this map is \'etale. One sees next that $\underline{(B\mathcal{Z})(\Int/p^n\Int)}$ acts on $ \BT[\mathcal{G},\mu,\mathrm{alg}]{n}$. In the generic fiber, this follows from Lemma~\ref{lem:Z_fppf_cohomology}, which actually shows that the action is simply transitive on the fibers of $\beta$. Over the $p$-completion, this can be deduced from Proposition~\ref{prop:apertures_for_z}, and the fact that $\BT[\mathcal{Z},0]{n}$ acts naturally on $\BT{n}$---in fact, simply transitively on the non-empty fibers of $\beta$. 
    
    The only thing that remains to be verified now is that $\beta$ is surjective modulo $p$. Via the quotient description in Remark~\ref{rem:quotient_description_perfectoid_truncated}, for any algebraically closed field $\kappa$ over $k$, we have the map of groupoids
    \begin{align*}
    [\mathcal{G}(W_n(k))/H^{(n)}_\mu(\kappa)]\to [\overline{\mathcal{G}}(W_n(\kappa))/\overline{H}^{(n)}_\mu(\kappa)].
    \end{align*}
   We want to show that this is surjective. This translates to knowing that, given $\overline{g}\in \ov{\mc{G}}(W(\kappa))$, we can find $\overline{h}\in\overline{H}^{(n)}_\mu(\kappa)$ such that $\tau(\overline{h})^{-1}\overline{g}\sigma(\overline{h})$ lifts to $\mathcal{G}(W_n(\kappa))$. 

    The obstruction to lifting $\overline{g}$ is witnessed by the  $\mathcal{Z}$-torsor $\mathcal{B}_{\overline{g}}$ over $W_n(\kappa)$ parameterizing lifts of $\overline{g}$ to $\mathcal{G}(W_n(\kappa))$. Now, the maps 
    \[
     \tau\colon H^{(n)}_\mu(\kappa)\to \mc{G}(W_n(\kappa))\;;\;\tau\colon\overline{H}^{(n)}_\mu(\kappa)\to \ov{\mc{G}}(W_n(\kappa))
    \]
    are obtained as projections using the identifications
    \[
    H^{(n)}_\mu(\kappa) = \mc{G}(W_n(\kappa))\times_{\mc{G}(\kappa/{}^{\mathbb{L}}p^n)}\mc{P}^-_\mu(\kappa/{}^{\mathbb{L}}p^n)\;;\;  \ov{H}^{(n)}_\mu(\kappa) = \ov{\mc{G}}(W_n(\kappa))\times_{\mc{G}(\kappa/{}^{\mathbb{L}}p^n)}\ov{\mc{P}}^-_\mu(\kappa/{}^{\mathbb{L}}p^n).
    \]
    This implies that the $\tau$ maps induce isomorphisms of groupoids
    \[
    [\overline{H}_\mu^{(n)}(\kappa)/H^{(n)}_\mu(\kappa)]\isomto[\overline{\mathcal{G}}(W_n(\kappa))/\ov{\mathcal{G}}(W_n(\kappa))]\isomto(B\mathcal{Z})(W_n(\kappa))
    \]
    Here, the second isomorphism is obtained from the map assigning to $\overline{g}\in \overline{\mathcal{G}}(W_n(\kappa))$ the torsor $\mathcal{B}_{\overline{g}}$. 
    
     By Lemma~\ref{lem:lang_surjective_stacks} below, we can write 
    \begin{align}
        \label{eqn:need_to_show_surjectivity}
     \mathcal{B}_{\overline{g}}\simeq (-1)^*\varphi^*\mc{B}\otimes\mc{B}
    \end{align}
    for some other $\mathcal{Z}$-torsor $\mc{B}$ over $W_n(\kappa)$. From the last paragraph, the $\mathcal{Z}$-torsor $\mathcal{B}$ is isomorphic to $\mathcal{B}_{\overline{h}}$ for some $\overline{h}\in \overline{H}^{(n)}_\mu(\kappa)$. The isomorphism~\eqref{eqn:need_to_show_surjectivity} shows that $\mc{B}_{\tau(\overline{h})^{-1}\overline{g}\sigma(\overline{h})}$ is thus the trivial torsor, and thus $\tau(\overline{h})^{-1}\overline{g}\sigma(\overline{h})$ lifts to $\mathcal{G}(W_n(\kappa))$, as desired.
\end{proof}

\begin{lemma} 
\label{lem:lang_surjective_stacks}
Let $Z$ be a flat commutative affine group scheme over $\mathbb{Z}_p$, and let $\kappa$ be an algebraically closed field of characteristic $p$. Then, for all $n\geqslant 1$, the Lang morphism of groupoids
\begin{equation*}
    \varphi-\mr{id}\colon BZ(W_n(\kappa))\to BZ(W_n(\kappa))
\end{equation*}
is surjective.
\end{lemma}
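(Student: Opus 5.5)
Throughout, $(-1)^*$ denotes inversion of torsors, and the claim is essential surjectivity of $\mathcal{B}\mapsto \varphi^*\mathcal{B}\otimes(-1)^*\mathcal{B}$ on $BZ(W_n(\kappa))$. The plan is to reduce to a statement about fppf cohomology of $Z$ over $W_n(\kappa)$ and then use Lang-type surjectivity for $\varphi-1$ on Witt vectors of an algebraically closed field.

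\textbf{Reduction.} Since $\kappa$ is algebraically closed and $W_n(\kappa)$ is Henselian local with residue field $\kappa$, I would compute $H^1_{\mathrm{fppf}}(W_n(\kappa),Z)$ by devissage. First choose an embedding of $Z$ into a commutative smooth affine group: locally one can write $Z$ (flat commutative affine over $\mathbb{Z}_p$) as the kernel of a map $S_1\to S_2$ of smooth commutative affine group schemes. For instance, a resolution $0\to Z\to S_1\to S_2$ exists with $S_2$ smooth and $S_1\to S_2$ fppf surjective. For multiplicative-type $Z$ one takes tori via a presentation of the character group. In general one can take $S_1=\mathrm{Res}$ or use the standard Bégueri-type resolution. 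The long exact sequence then gives
\[
 S_1(W_n(\kappa))\to S_2(W_n(\kappa))\to H^1(W_n(\kappa),Z)\to H^1(W_n(\kappa),S_1)=0,
\]
where the last vanishing holds because smooth torsors over a strictly Henselian ring are trivial. Hence every $Z$-torsor is $\mathcal{B}_s$ for some $s\in S_2(W_n(\kappa))$, functorially in $\varphi$. The Lang map on isomorphism classes is then induced by $s\mapsto \varphi(s)s^{-1}$ on $S_2(W_n(\kappa))$.

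\textbf{Lang's theorem.} Next I would show that $s\mapsto\varphi(s)s^{-1}$ is surjective on $S_2(W_n(\kappa))=(\mathrm{R}_{W_n}S_2)(\kappa)$. The Greenberg realization $\mathrm{R}_{W_n}S_2$ is a smooth connected (after passing to identity components) commutative algebraic group over $\kappa$ with Frobenius $\varphi$, and Lang's theorem applies. Non-connectedness is handled because the component group is finite étale over $\kappa$ algebraically closed, hence constant; one may shrink $S_2$ or absorb this using that $\varphi$ acts trivially on constant components, and then adjust with the $S_1$-image. This is the step I expect to be most delicate.

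\textbf{Objects versus morphisms.} Surjectivity of a morphism of groupoids here means essential surjectivity, so the isomorphism-class statement suffices. Alternatively, and perhaps more cleanly, I would mimic Lemma~\ref{lem:z_aperture_loc_const}: the stack of pairs $(\mathcal{D},\varphi^*\mathcal{D}\simeq\mathcal{D})$ is étale with $\kappa$-points $BZ(\mathbb{Z}/p^n)$. Surjectivity of Lang is then equivalent to every $Z$-torsor $\mathcal{C}$ admitting some $\mathcal{B}$ with $\varphi^*\mathcal{B}\simeq\mathcal{B}\otimes\mathcal{C}$, which is the nonemptiness of a twisted version of that stack. I would prove this nonemptiness by the same Greenberg/Lang argument applied to the torsor $\mathrm{R}_{W_n}$ of $\mathcal{C}$-twisted Frobenius structures, using Lang for the smooth group $\mathrm{R}_{W_n}S_1$.
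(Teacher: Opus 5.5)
Your route is genuinely different from the paper's. The paper inducts on $n$. The base case is that $BZ(\kappa)$ is connected. For the inductive step it identifies the fiber of $BZ(W_n(\kappa))\to BZ(W_{n-1}(\kappa))$ with $\mathrm{Map}_{/\kappa}(\ell_Z[-1]\otimes\kappa,\kappa)$, where $\ell_Z=e^*\mathbb{L}_{Z/\mathbb{Z}_p}$. Splitting $\ell_Z\otimes\kappa\simeq M_0\oplus M_1[1]$, the isomorphism classes in the fiber form $M_1^\vee\otimes\kappa$, on which the Lang map is $\varphi-\mathrm{id}$, surjective by Artin--Schreier. That argument is intrinsic to $Z$: it only needs $\ell_Z$ to have Tor amplitude $[-1,0]$ and $Z_\kappa$-torsors over $\kappa$ to be trivial, and it uses no auxiliary groups. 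You instead write $H^1_{\mathrm{fppf}}(W_n(\kappa),Z)$ as $S_2(W_n(\kappa))/S_1(W_n(\kappa))$ via a smooth commutative resolution and invoke Lang's theorem on the Greenberg realization. This is more classical and gives an explicit description of the torsors. However, the whole cost moves into the auxiliary input, and two steps are not justified as written.

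First, the resolution. It must exist over $\mathbb{Z}_p$ itself, not just ``locally'', so that it is compatible with $\varphi$. Tori do this for $Z$ of multiplicative type, which is the case actually used in the paper, since $\mathcal{Z}$ is central in a reductive group. B\'egueri's resolution does it for finite flat $Z$. For a general flat commutative $Z$ of finite type, ``take $S_1=\mathrm{Res}$'' is not an argument, and you must construct $S_1$. One way: take a closed embedding $Z\hookrightarrow\mathrm{GL}(V)$ and let $S_1$ be the unit group of the smallest saturated subalgebra of $\mathrm{End}(V)$ through which it factors (commutative because $Z$ is). Then $S_2=S_1/Z$ is a smooth commutative group algebraic space.

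Second, the components. Lang's map is in general \emph{not} surjective on $\pi_0$ of the Greenberg realization; if Frobenius acts trivially on $\pi_0$, as you suggest, it is zero there. So correcting by the image of $S_1$ is forced, not optional. The clean way to do it is as follows.
\begin{itemize}
\item Since $Z_\kappa$-torsors over $\kappa$ are trivial, $S_1(\kappa)\to S_2(\kappa)$ is surjective.
\item By smoothness, $S_1(W_n(\kappa))\to S_1(\kappa)$ is surjective.
\item Hence every class is represented by an element of $U=\ker\bigl(S_2(W_n(\kappa))\to S_2(\kappa)\bigr)$.
\item The abelian group $U$ has a $\varphi$-stable filtration with graded pieces $\mathrm{Lie}(S_2)\otimes_{\mathbb{Z}_p}\kappa$, on which $\varphi$ acts through the Frobenius of $\kappa$.
\item So $\varphi-1$ is surjective on $U$ by Artin--Schreier.
\end{itemize}
Once repaired, your proof needs only the paper's input and not the full Lang theorem.
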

\begin{proof}
We proceed by induction on $n$. When $n=1$, this is clear, as $BZ(\kappa)$ is a connected groupoid. If the claim holds true for $n-1\geqslant 1$, then to see it holds for $n$ it suffices to show that the induced map
\begin{equation*}
    \varphi-\mr{id}\colon \mr{fib}\big(BZ(W_n(\kappa))\to BZ(W_{n-1}(\kappa))\big)\to \mr{fib}\big(BZ(W_n(\kappa))\to BZ(W_{n-1}(\kappa))\big)
\end{equation*}
is surjective. We now have an identification
\begin{equation*}
    \mr{fib}\big(BZ(W_n(\kappa))\to BZ(W_{n-1}(\kappa))\big)\simeq \mr{Map}_{/\kappa}(\ell_Z[-1]\otimes \kappa,\kappa),
\end{equation*}
where $\ell_Z= e^\ast \bb{L}_{Z/\bb{Z}_p}$. Here, $e$ is the identity section of $Z$. Now, $\ell_Z$ is isomorphic to $M_0\oplus M_1[1]$ for $\bb{F}_p$-spaces $M_i$. We have:
\begin{equation*}
    \mr{Map}_{/\kappa}(\ell_Z[-1]\otimes \kappa,\kappa)=\mr{Map}_{/\kappa}((M_0[-1]\otimes \kappa)\oplus (M_1\otimes \kappa),\kappa)\simeq \mr{Map}_{/\kappa}(M_1\otimes \kappa,\kappa)\simeq M_1^\vee\otimes \kappa.
\end{equation*}
The surjectivity of the Lang map now reduces to that of $\varphi-\mathrm{id}:\kappa\to \kappa$, which is clear.
\end{proof}

\begin{proposition}
[Descending along central covers]
\label{prop:full_faithfulness_2}
Suppose that $\overline{\mathcal{X}}$ is a normal flat Noetherian algebraic space over $\mathcal{O}$ with $\mathsf{Q}\in \mathrm{Loc}_{\overline{\mathcal{G}}(\Int_p)}(\overline{\mathcal{X}}_\eta)$. Suppose that there exist:
\begin{enumerate}
  \item An \'etale cover $\mathcal{X}\to \overline{\mathcal{X}}$ with a lift $\mathsf{Q}\in \mathrm{Loc}_{\mathcal{G}}(\mathcal{X}_\eta)$ of the restriction of $\overline{\mathsf{Q}}$;
  \item A lift $\mathfrak{Q}\in \BT[\mathcal{G},\mu,\mathrm{alg}]{\infty}(\mathcal{X})$ of $\mathsf{Q}$.
\end{enumerate}
Then $\overline{\mathsf{Q}}$ lifts to $\overline{\mathfrak{Q}}\in \BT[\overline{\mathcal{G}},\overline{\mu},\mathrm{alg}]{\infty}(\overline{\mathcal{X}})$, and the $p$-completed classifying map $\widehat{\overline{\mathcal{X}}}\to \BT[\overline{\mathcal{G}},\overline{\mu}]{\infty}$ for $\overline{\mathfrak{Q}}$ is formally \'etale if and only if the corresponding one $\widehat{\mathcal{X}}\to \BT{\infty}$ is so. 
\end{proposition}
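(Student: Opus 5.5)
The plan is to construct $\overline{\mathfrak{Q}}$ on $\mathcal{X}$ by pushforward and then descend it along the étale cover $\mathcal{X}\to\overline{\mathcal{X}}$ using full faithfulness; the étaleness comparison then follows from Proposition~\ref{prop:central_covers}. First, let $\overline{\mathfrak{Q}}_{\mathcal{X}}\in \BT[\overline{\mathcal{G}},\overline{\mu},\mathrm{alg}]{\infty}(\mathcal{X})$ be the image of $\mathfrak{Q}$ under the change of structure group map $\beta$ from Proposition~\ref{prop:central_covers}, taken in the limit over $n$. Its étale realization is the pushforward of $\mathsf{Q}$ along $\mathcal{G}(\Int_p)\to\overline{\mathcal{G}}(\Int_p)$. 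By hypothesis (1), this pushforward is identified with $\overline{\mathsf{Q}}\vert_{\mathcal{X}_\eta}$. So $\overline{\mathsf{Q}}$ admits a lift after restriction along the étale cover $\mathcal{X}\to\overline{\mathcal{X}}$.

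Next, descend. Since $\overline{\mathcal{X}}$ is normal, flat and Noetherian and $\mathcal{X}\to\overline{\mathcal{X}}$ is étale, all the fiber products $\mathcal{X}^{\times_{\overline{\mathcal{X}}}k}$ are again normal, flat and Noetherian, hence $\eta$-normal. Corollary~\ref{cor:full_faithfulness}, applied to the pair $(\overline{\mathcal{G}},\overline{\mu})$ with this smooth (indeed étale) cover, then directly produces $\overline{\mathfrak{Q}}\in\BT[\overline{\mathcal{G}},\overline{\mu},\mathrm{alg}]{\infty}(\overline{\mathcal{X}})$ with $T_{\et}(\overline{\mathfrak{Q}})\simeq\overline{\mathsf{Q}}$. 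Concretely, Theorem~\ref{thm:full_faithfulness} transfers the canonical descent datum of $\overline{\mathsf{Q}}\vert_{\mathcal{X}_\eta}$ to $\overline{\mathfrak{Q}}_{\mathcal{X}}$, compatibly with the cocycle condition by faithfulness, and effectiveness holds because $\BT[\overline{\mathcal{G}},\overline{\mu},\mathrm{alg}]{\infty}$ is a limit of algebraic stacks. By construction, $\overline{\mathfrak{Q}}\vert_{\mathcal{X}}\simeq\overline{\mathfrak{Q}}_{\mathcal{X}}$, again by full faithfulness.

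For formal étaleness, consider the commutative square formed by the $p$-completions. The top row is $\widehat{\mathcal{X}}\to\BT{\infty}$, the bottom row is $\widehat{\overline{\mathcal{X}}}\to\BT[\overline{\mathcal{G}},\overline{\mu}]{\infty}$, the left vertical map is the étale cover $\widehat{\mathcal{X}}\to\widehat{\overline{\mathcal{X}}}$, and the right vertical map is the limit of the maps $\beta$. By Proposition~\ref{prop:central_covers}, each $\beta$ is étale, so the right vertical map is formally étale; one can also see this directly from the Grothendieck--Messing description, since $\mathcal{P}^-_\mu\to\overline{\mathcal{P}}^-_{\overline{\mu}}$ and $\mathcal{G}\to\overline{\mathcal{G}}$ induce isomorphisms on the relevant tangent data, their kernel $\mathcal{Z}$ being shared. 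The left vertical map is étale. Using the transitivity triangles for $p$-completed cotangent complexes, $\mathbb{L}^\wedge_{\widehat{\mathcal{X}}/\BT{\infty}}$ is identified with the pullback of $\mathbb{L}^\wedge_{\widehat{\overline{\mathcal{X}}}/\BT[\overline{\mathcal{G}},\overline{\mu}]{\infty}}$ along the faithfully flat étale map $\widehat{\mathcal{X}}\to\widehat{\overline{\mathcal{X}}}$. So one vanishes if and only if the other does.

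The main subtlety I expect is the claim that $\varprojlim_n\beta$ is formally étale at the level of $\BT{\infty}$ when $\mathcal{Z}$ is not smooth, for instance $\mu_p\subset\mathcal{G}$. Here the cotangent complex of $B\mathcal{Z}$ is nonzero, so the argument must go through the fact that $\mathbb{L}_{\BT{n}}\simeq\mathbb{L}_{(\mathrm{Gr}_\mu/\mathcal{G})/B\mathcal{G}}$ from Remark~\ref{rem:versality_bt_n}. That complex depends only on $\mathrm{Gr}_\mu=\mathrm{Gr}_{\overline{\mu}}$ together with $\mathrm{Lie}\,\mathcal{G}/\mathrm{Lie}\,\mathcal{P}^-_\mu\simeq\mathrm{Lie}\,\overline{\mathcal{G}}/\mathrm{Lie}\,\overline{\mathcal{P}}^-_{\overline{\mu}}$. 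I would check this identification first; the étaleness in Proposition~\ref{prop:central_covers} already asserts it.
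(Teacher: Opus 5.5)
Your proposal is correct and is essentially the paper's argument. The lift $\overline{\mathfrak{Q}}$ comes from applying Corollary~\ref{cor:full_faithfulness} to the pushforward of $\mathfrak{Q}$ along the \'etale cover $\mathcal{X}\to\overline{\mathcal{X}}$, and the comparison of formal \'etaleness comes from the \'etaleness of $\beta$ in Proposition~\ref{prop:central_covers}. Your worry about non-smooth $\mathcal{Z}$ is already handled by that proposition, since $\beta$ is a torsor under the constant finite \'etale stack $\underline{(B\mathcal{Z})(\Int/p^n\Int)}$, not under $B\mathcal{Z}$ itself.
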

\begin{proof}
      The first part of the conclusion is immediate from Corollary~\ref{cor:full_faithfulness}, while the second is a consequence of the \'etaleness assertion from Proposition~\ref{prop:central_covers}.
\end{proof}

\begin{setup}
  \label{setup:lifting_finite_central_cover}
Suppose that we have the following data:
\begin{enumerate}
  \item a flat normal algebraic space $\mathcal{X}$ over $\mathcal{O}$ with generic fiber $X$;
  \item an aperture $\overline{\mathfrak{Q}}\in\BT[\overline{\mathcal{G}},\overline{\mu},\mathrm{alg}]{\infty}(\mathcal{X})$ lifting $\overline{\mathsf{Q}}\in \mathrm{Loc}_{\overline{\mathcal{G}}(\Int_p)}(X)$;
  \item a finite Galois cover $Y\to X$ with Galois group $\Delta$ such that $\overline{\mathsf{Q}}|_Y$ lifts to $\mathsf{Q}\in  \mathrm{Loc}_{\mathcal{G}(\Int_p)}(Y)$.
\end{enumerate}
Let $\mathcal{Y}\to \mathcal{X}$ be the normalization of $\mathcal{X}$ in $Y$. 
\end{setup}

\begin{notation}\label{nota:schematic-ordinary-locus}
    Define $\mathcal{X}^{\ord}$ to be the open subspace of $\mc{X}$ with the same generic fiber but with $p$-adic completion given by the fiber product $\hat{\mathcal{X}}\times_{\BT[{\ov{\mathcal{G}}},\ov{\mu}]{\infty}}\BT[{\ov{\mathcal{G}}},\ov{\mu},\ord]{\infty}$.
\end{notation}

\begin{remark}
\label{rem:canonical_lift_X}
    Suppose that $\mathcal{X}^{\ord}\to \BT[{\ov{\mathcal{G}}},\ov{\mu},\mathrm{alg}]{\infty}$ is formally \'etale after $p$-adic completion. Then, for every perfect field $\kappa$ over $k$ and $x\in \mathcal{X}^{\ord}(\kappa)$, we have a \emph{canonical lift} $x^{\mathrm{can}}\in \mathcal{X}(W(\kappa))$ corresponding to the canonical lift $\ov{\mathfrak{Q}}_x^{\mathrm{can}}$ of the $\ov{\mu}$-ordinary aperture $\ov{\mathfrak{Q}}_x$ (Definition~\ref{defn:canonical_lift_local}).
\end{remark}

\begin{proposition}
    [Ascending along central covers]
\label{prop:ascending_along_central_covers}
 Fix an algebraic closure $\kappa$ of $k$, and suppose that the following is true:
 \begin{enumerate}
      \item $p>2$;
      \item $\Delta$ is a finite abelian group;
     \item $\mathcal{X}$ is smooth over $\mathcal{O}$;
     \item The classifying map $\mathcal{X}\to \BT[\overline{\mathcal{G}},\overline{\mu},\mathrm{alg}]{\infty}$ is formally \'etale after $p$-adic completion;
     \item Given $x\in \mathcal{X}^{\ord}(\kappa)$ with canonical lift $x^{\mathrm{can}}\in \mathcal{X}(W(\kappa))$, the space $Y_{x^{\mathrm{can}}} \defn Y\times_{\mathcal{X},x^{\mathrm{can}}}\Spec W(\kappa)$ is a split \'etale cover of $\Spec W(\kappa)[\nicefrac{1}{p}]$;
     \item For every $\tilde{y}\in Y_{x^{\mathrm{can}}}(W(\kappa)[\nicefrac{1}{p}])$, the representation $\mathsf{Q}\vert_{\tilde{y}}$ is crystalline of type $\mu$.
 \end{enumerate}
 Then:
 \begin{enumerate}
  \item $\mathcal{Y}\to \mathcal{X}$ is once again a Galois cover;
  \item $\mathsf{Q}$ lifts to $\mathfrak{Q}\in \BT[\mathcal{G},\mu,\mathrm{alg}]{\infty}(\mathcal{Y})$.
\end{enumerate}
\end{proposition}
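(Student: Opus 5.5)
The plan is to show first that $\mathcal{Y}\to\mathcal{X}$ is finite étale. Once we have that, conclusion (1) is immediate, and (2) follows from the central-cover machinery of \S\ref{sub:central_isogenies_BT}. Since $\mathcal{X}$ is smooth over $\mathcal{O}$, hence regular, Zariski--Nagata purity of the branch locus reduces étaleness of the normalization $\mathcal{Y}\to\mathcal{X}$ to étaleness over the generic points of the special fiber. Because $\mathcal{X}^{\ord}$ is open, and it contains those generic points (density of the $\mu$-ordinary locus, which holds by (4) together with the fact that $\BT[\ov{\mathcal{G}},\ov{\mu},\mathrm{ord}]{\infty}$ is open dense in the special fiber), it is enough to prove that $\mathcal{Y}$ is étale over $\mathcal{X}^{\ord}$. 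After base change to $W(\kappa)$, this means proving unramifiedness over each closed point $x\in\mathcal{X}^{\ord}(\kappa)$, i.e.\ over $\widehat{U}_x=\Spf\hat{\Rg}_{\mathcal{X},x}$.

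Now fix $x\in\mathcal{X}^{\ord}(\kappa)$. By (4) and Corollary~\ref{cor:complete_local_ring_ordinary_locus}, $\widehat{U}_x$ is the deformation space of a $\ov{\mu}$-ordinary aperture. It therefore carries a tower of formal $p$-divisible groups with identity section $x^{\mathrm{can}}$, and a canonical Frobenius lift $\tilde\Phi$ coming from Proposition~\ref{prop:frobenius_liftings_BTP}, transported through Proposition~\ref{prop:ordinary_locus_description}. Over $\widehat{U}_x[\nicefrac1p]$, the cover $Y$ is a $\Delta$-torsor with $\Delta$ abelian. Its obstruction to extending is measured by the class of $\ov{\mathsf{Q}}$ in $H^1(-,\mathcal{Z}(\Int_p))$-type data, so it amounts to a homomorphism from $\pi_1$ of the rigid generic fiber of the tower to $\Delta$. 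I will use the following key idea. The torsor comes from the étale lifting problem $\mathcal{G}\to\ov{\mathcal{G}}$, so it is determined by $\ov{\mathsf{Q}}$ modulo the central part. Over the $\mathcal{P}^+_\nu$-locus, $\ov{\mathsf{Q}}$ reduces to $\mathcal{P}^+_\nu$, and the $\mathcal{Z}$-obstruction is pulled back from the $\mathcal{M}_\nu$-part, that is, from the étale base $\BT[\ov{\mathcal{M}}_\nu,\ov\mu]{\infty}$. So the obstruction is constant along the tower: it factors through the unipotent $\mathcal{U}^+_\nu$ directions only trivially. The exception is when $|\Delta|$ has $p$-torsion, and for $p$-power order $\mathcal{Z}$-obstructions on unipotent groups we use $p>2$ together with the fact that $\mathcal{Z}\cap\mathcal{U}^+_\nu=1$. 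Concretely, I would show that $Y|_{\widehat{U}_x[1/p]}$ is pulled back from $Y_{x^{\mathrm{can}}}$ along the retraction of the tower onto its identity section. Hypothesis (5) says $Y_{x^{\mathrm{can}}}$ is split, so $Y$ is split over $\widehat{U}_x[\nicefrac1p]$, and its normalization is $\Delta$ copies of $\widehat{U}_x$, hence étale.

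For (2), once $\mathcal{Y}\to\mathcal{X}$ is finite étale Galois, the aim is to lift $\mathsf{Q}$ over $\mathcal{Y}$. By Proposition~\ref{prop:central_covers}, $\BT[\mathcal{G},\mu,\mathrm{alg}]{\infty}\to\BT[\ov{\mathcal{G}},\ov\mu,\mathrm{alg}]{\infty}$ is a torsor under an étale stack. So the pullback of $\ov{\mathfrak{Q}}$ to $\mathcal{Y}$ has an étale-local lift, and we must match it with $\mathsf{Q}$ on the generic fiber. The discrepancy is a locally constant $\mathcal{Z}(\Int/p^n)$-torsor datum, which extends over the normal $\mathcal{Y}$ exactly when it is unramified along the special fiber. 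By Zariski--Nagata purity this can again be checked at ordinary points, and there it is checked at $x^{\mathrm{can}}$. Hypothesis (6), combined with Proposition~\ref{prop:gmu_rings_of_integers}, gives a lift at each $\tilde y$, and constancy along the tower (as above) propagates it over $\widehat{U}_x$. Corollary~\ref{cor:full_faithfulness} and Theorem~\ref{thm:full_faithfulness} then glue these into $\mathfrak{Q}\in\BT[\mathcal{G},\mu,\mathrm{alg}]{\infty}(\mathcal{Y})$.

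I expect the main obstacle to be the rigidity step: showing that the generic-fiber $\Delta$-cover, or equivalently the central lifting obstruction, over the ordinary formal neighborhood is pulled back from the canonical lift. My first attempt would use the $\tilde\Phi$-equivariance. Since $\tilde\Phi$ is a quasi-isogeny-type endomorphism contracting the tower to $x^{\mathrm{can}}$, and it preserves $\ov{\mathsf{Q}}$ up to the central twist, a Galois cover with abelian group that is compatible with $\tilde\Phi$ must be constant. This is the point where $p>2$ would be used, to control the $p$-part via the connected $p$-divisible group structure.
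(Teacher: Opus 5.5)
There is a genuine gap, at exactly the step you flag as the main obstacle: the claim that $Y|_{\widehat{U}_x[\nicefrac{1}{p}]}$ is pulled back from $Y_{x^{\mathrm{can}}}$ along the retraction of the tower.

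Your justification treats $Y$ as the cover cut out by the central lifting obstruction of $\ov{\mathsf{Q}}$. Setup~\ref{setup:lifting_finite_central_cover} gives no such identification: $Y$ is an arbitrary abelian Galois cover over which $\ov{\mathsf{Q}}$ happens to lift. If $Y'\to X$ is any finite abelian Galois cover that is split at all canonical lifts, then $Y\times_X Y'$ again satisfies hypotheses (1)--(6). So the proposition contains the assertion that the normalization of $\mathcal{X}$ in $Y'$ is \'etale, a statement that involves neither $\ov{\mathsf{Q}}$, nor $\mathcal{Z}$, nor $\mathcal{U}^+_\nu$.

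In fact the $\ov{\mathsf{Q}}$-obstruction is invisible on formal neighbourhoods. The map $\BT[\mathcal{G},\mu]{n}\to\BT[\ov{\mathcal{G}},\ov{\mu}]{n}$ is an \'etale torsor under $\underline{(B\mathcal{Z})(\Int/p^n\Int)}$ (Proposition~\ref{prop:central_covers}), and $\widehat{\Rg}_{\mathcal{X},x}$ is strictly henselian, so $\ov{\mathsf{Q}}$ already lifts over $\widehat{U}_x[\nicefrac{1}{p}]$.

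No purely local argument can succeed either. $\widehat{U}_x$ contains only one canonical lift, and an abelian cover of $p$-power degree of $\Spec\widehat{\Rg}_{\mathcal{X},x}[\nicefrac{1}{p}]$ is in general not determined by its fibre at a single $W(\kappa)$-point. For example, take the Kummer class of $1+(\zeta_p-1)t$, project to its eigencomponent for the mod-$p$ cyclotomic character, and descend along $W(\kappa)[\nicefrac{1}{p}](\zeta_p)$. This gives a non-trivial $\Int/p\Int$-torsor over $W(\kappa)\pow{t}[\nicefrac{1}{p}]$ that is trivial at $t=0$.

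The $\tilde{\Phi}$-equivariance idea does not help. $\tilde{\Phi}$ carries the deformation space at $x$ to the one at the Frobenius twist of $x$. In this abstract setting there is no endomorphism of $\mathcal{X}$, let alone of $Y$, covering it. So a correct argument has to use hypothesis (5) at the canonical lifts of a \emph{dense} set of closed points, which live in different formal neighbourhoods.

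This is what the paper does, through Lemma~\ref{lem:good_reduction_torsors}. The closed points of $\mathcal{X}^{\ord}$ in the special fibre are dense by (4), and their canonical lifts are $W(\kappa)$-points at which $Y$ splits by (5). So the normalization $\mathcal{Y}$ acquires a dense set of $W(\kappa)$-points, and Lemma~\ref{lem:generic_reducedness} shows that $p$ is a uniformizer of $\Reg{\mathcal{Y},\eta}$ at each generic point $\eta$ of its special fibre. By purity it then suffices to see that $\Reg{\mathcal{X},\eta'}\to\Reg{\mathcal{Y},\eta}$ is \'etale, where $\eta'$ is the image of $\eta$. After reducing to $\Delta=\Int/\ell\Int$, this is automatic unless $\ell=p$ and the residue extension is purely inseparable of degree $p$. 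In that case a generator of $\Delta$ is an automorphism of order $p$ of a discrete valuation ring with uniformizer $p$ acting trivially modulo $p$, which is impossible when $p>2$. That is where $p>2$ enters, not in controlling the $p$-divisible groups of the tower.

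Your treatment of (2) (\emph{checked at $x^{\mathrm{can}}$, propagated by constancy along the tower}) rests on the same rigidity and has the same gap. In the paper, Proposition~\ref{prop:central_covers} splits the lifting problem into two pieces. Trivializing an $H^1_{\mathrm{fppf}}(\Spec\Int/p^n\Int,\mathcal{Z})$-torsor extends over the normal $\mathcal{Y}$ automatically. Trivializing a $\mathcal{Z}(\Int/p^n\Int)$-gerbe is handled by Lemma~\ref{lem:gerbe_extending}, which reduces the extension of the section to Lemma~\ref{lem:good_reduction_torsors} once more. The required extensions at the canonical lifts are supplied by (6) and Proposition~\ref{prop:gmu_rings_of_integers}.
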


The proof will require a bit of preparation.

\begin{definition}
   Suppose that $\mathcal{Z}\to \mathcal{W}$ is a finite map of $\mathcal{O}$-schemes with \'etale generic fiber. Given an extension of complete discrete valuation fields $\hat{\mathcal{O}}[\nicefrac{1}{p}]\subset K$ and a point $w\in \mathcal{W}(\Reg{K})$, we will say that $f$ is \defnword{strongly unramified at $w$}, or, equivalently, that $w$ is \defnword{strongly $f$-unramified}, if the finite \'etale $K$-scheme $\mathcal{Z}_\eta\times_{\mathcal{W},w}\Spec \Reg{K}$ is the spectrum of a finite product of unramified extensions of $K$.
\end{definition}

\begin{lemma}
    \label{lem:good_reduction_torsors}
Suppose that $p>2$ and that $\mathcal{W}$ is a smooth algebraic space over $\mathcal{O}$, and suppose that $f\colon V\to \mathcal{W}_\eta$ is a finite Galois cover with abelian Galois group $\Delta$ satisfying the following property: 
\begin{itemize}[leftmargin=.8cm]
    \item There is a dense open subspace $U$ of the special fiber $\mathcal{W}_k$ such that, for every $w\in U(\kappa)$, there exists a strongly $f$-unramified lift $\tilde{w}\in \mathcal{W}(W(\kappa))$ of $w$.
\end{itemize}
Then $V$ extends to a Galois cover $\mathcal{V}\to \mathcal{W}$.
\end{lemma}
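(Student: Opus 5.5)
The plan is to use the normalization $\mathcal{V}$ of $\mathcal{W}$ in $V$. It is finite over $\mathcal{W}$, since $\mathcal{W}$ is excellent, and it carries an action of $\Delta$. The task is to show that $\mathcal{V}\to\mathcal{W}$ is finite \'etale. Because $\mathcal{W}$ is smooth, hence regular, Zariski--Nagata purity of the branch locus applies: it suffices to show that $\mathcal{V}\to\mathcal{W}$ is \'etale over the generic points of the special fiber $\mathcal{W}_k$, i.e.\ at the codimension-one points lying over $p$. Over the generic fiber there is nothing to check, since $f$ is already \'etale. We may work \'etale locally and component by component, so we localize at a generic point $\eta_0$ of $\mathcal{W}_k$. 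This gives a DVR $A=\Reg{\mathcal{W},\eta_0}$ with uniformizer $p$, since $\mathcal{W}$ is smooth over $\mathcal{O}$ and we may assume $\mathcal{O}$ is unramified, or at any rate work with a uniformizer of $\mathcal{O}$. The question becomes whether the abelian extension of $\mathrm{Frac}(A)$ cut out by $V$ is unramified at $A$.

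The main step, and the expected obstacle, is passing from unramifiedness at sufficiently many $W(\kappa)$-points to unramifiedness at the divisor. For this I would use the structure of abelian extensions of a complete DVR in mixed characteristic. Decompose $\Delta$ into its prime-to-$p$ and $p$-power parts.

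For the prime-to-$p$ part, the extension is tamely ramified along $\mathcal{W}_k$. By Abhyankar's lemma, after adjoining $e$-th roots of $p$ it becomes étale. Restricting to a $W(\kappa)$-point $\tilde w$ whose reduction lies in $U$ and is a general point of the relevant component, the fiber then acquires ramification index $e$, which contradicts strong unramifiedness unless $e=1$.

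For the $p$-part, I would filter $\Delta$ so as to reduce to cyclic extensions of degree $p$. Using $p>2$, such an extension can be described by Kummer theory after adjoining $\zeta_p$, or by Artin--Schreier-type equations $x^p-x=a$ after a suitable change of variables. One then uses the ramification (Swan/refined Swan conductor) filtration along the divisor $\mathcal{W}_k$, in the sense of Kato's theory. If the extension is ramified along $\mathcal{W}_k$, its refined conductor is a nonzero differential form, or a nonzero element, on a dense open subset of $\mathcal{W}_k$. For a general point $w\in U(\kappa)$ and \emph{every} lift $\tilde w\in\mathcal{W}(W(\kappa))$, the restriction to $\tilde w$ then remains ramified. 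The reason is that the conductor can only vanish on a proper closed subset of the special fiber, because the $p$-adic lifts $\tilde w$ are transverse to $\mathcal{W}_k$. This contradicts the hypothesis.

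I expect the delicate part to be the fiercely ramified case, where the residue field extension is inseparable. There one must check that the restriction to arbitrary lifts over a dense set detects the ramification. This is where $p>2$ should enter, probably through the bound $e(W(\kappa))=1<p-1$ in a Fontaine-style or Kummer-theoretic estimate. Once the extension is unramified at every codimension-one point over $p$, purity gives that $\mathcal{V}\to\mathcal{W}$ is finite étale. Since $\Delta$ acts simply transitively on the geometric generic fibers, the cover $\mathcal{V}\to\mathcal{W}$ is a $\Delta$-torsor, i.e.\ a Galois cover extending $V$.
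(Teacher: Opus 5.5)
Your reductions (normalization, Zariski--Nagata purity down to the generic points of $\mathcal{W}_k$, dévissage to cyclic groups of prime order, Abhyankar for the tame part) are fine. The gap is exactly the case you flag and leave open: $\Delta=\mathbb{Z}/p$ with inseparable residue extension.

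The mechanism you propose cannot close it. You argue that a nonzero refined conductor on a dense open, plus transversality of $\tilde w$, keeps every restriction ramified. But a fiercely ramified character has refined Swan conductor with no $d\log p$-component, and this pulls back to zero at any point with perfect residue field $\kappa$. So restriction to points does not detect this ramification. Concretely, take $p=2$, $\mathcal{W}=\mathbb{G}_{m}$ over $W(\kappa)$ and $V\colon y^2=t$. Every $c\in\kappa^\times$ has the lift $\tilde c_0^{\,2}$, with $\tilde c_0$ lifting $c^{1/2}$, and over it $V$ splits. Yet the normalization is $y\mapsto y^2$, which is not \'etale. Any correct argument must therefore use $p>2$ to show this case cannot occur at all. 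Your sketch has no such step; the appeal to ``$e(W(\kappa))=1<p-1$'' is not an argument.

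The missing idea, which is how the paper argues, is to use the hypothesis only to obtain ramification index $1$. Since $\kappa$ is algebraically closed, the fiber over a strongly unramified lift of $w\in U(\kappa)$ splits. Its points therefore extend to $W(\kappa)$-points of the finite $\mathcal{W}$-scheme $\mathcal{V}$. A dense set of such points forces $p$ to be a uniformizer of $\mathcal{O}_{\mathcal{V},\eta}$ at every generic point $\eta$ of $\mathcal{V}_\kappa$: by Lemma~\ref{lem:generic_reducedness}, writing $p=u\varpi^m$ and evaluating at a section gives $m=1$. This handles your tame case and the wild case with $e=p$ at once, with no conductor theory.

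What remains is $\Delta=\mathbb{Z}/p$, $e=1$, and $k(\eta)/k(\eta')$ purely inseparable of degree $p$. A generator $\sigma$ then acts trivially on $k(\eta)$, so $\sigma=1+p\delta$ for an additive endomorphism $\delta$ of $B=\mathcal{O}_{\mathcal{V},\eta}$ commuting with $p$. Expanding $\sigma^p=1$ and dividing by $p^2$ gives $\delta=-\sum_{i=2}^{p}\binom{p}{i}p^{i-2}\delta^i$. For $p\geq 3$ every coefficient is divisible by $p$, so $\delta(B)\subset pB$. Iterating gives $\sigma\equiv 1\pmod{p^n}$ for all $n$, hence $\sigma=1$, a contradiction. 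This is the argument the paper cites from Moonen, and it is where $p>2$ and the absolute unramifiedness of $W(\kappa)$ actually enter — not through restriction to points.
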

\begin{proof}
    Let $F\colon \mathcal{V}\to \mathcal{W}$ be the normalization of $\mathcal{W}$ in $V$. We want to show that $F$ is finite \'etale. Since $\mathcal{O}\to \hat{\mathcal{O}}$ is faithfully flat, $F$ is finite \'etale if and only if its base change over $\hat{\mathcal{O}}$ is so. Moreover,  $\mathcal{V}\otimes_{\mathcal{O}} \hat{\mathcal{O}}$ is the normalization of $\mathcal{W}\otimes_{\mathcal{O}}\hat{\mathcal{O}}$ in $V\otimes_{\mathcal{O}[\nicefrac{1}{p}]}\hat{\mathcal{O}}[\nicefrac{1}{p}]$ as $\hat{\mathcal{O}}$ is a filtered colimit of smooth $\mathcal{O}$-algebras \stacks{07GB}. Therefore, without loss of generality, we can assume that $\mathcal{O} = \hat{\mathcal{O}} = W(\kappa)$ is $p$-adically complete and that $k = \kappa$ is algebraically closed.

    By d\'evissage, we can reduce to the case where $\Delta = \Int/\ell \Int$ for some prime $\ell$: This reduction only needs the observation that, if we have a factorization $\mathcal{V}\to \mathcal{W}'\to \mathcal{W}$ with the second map finite \'etale, then the preimage of $U$ in $\mathcal{W}'$ satisfies the same properties as $U$ but now with respect to $\mathcal{V}\to \mathcal{W}'$.
    
    Given our first hypothesis, it follows from Lemma~\ref{lem:generic_reducedness} below that, for every generic point $\eta$ of $\mathcal{V}_\kappa$, the local ring $\Reg{\mathcal{V},\eta}$ is a discrete valuation ring with maximal ideal generated by $p$. 
    
    By purity of the branch locus~\cite[Corollaire 3.3]{sga1}, it suffices to know that, for any such $\eta$, lying above a generic point $\eta'$ of $\mathcal{W}_\kappa$, the map $\Reg{\mathcal{W},\eta'}\to \Reg{\mathcal{V},\eta}$ of discrete valuation rings, both admitting $p$ as a uniformizer, is \'etale. If the finite extension of residue fields $k(\eta)/k(\eta')$ is separable, and in particular, if $\ell\neq p$, then this is immediate. 
    
    Therefore, we can assume that $\ell = p$ and that the extension of residue fields is purely inseparable of degree $p$. In this case, any generator of $\Delta$ will act as the identity on the residue field $k(\eta)$. Since $p>2$, this is impossible: See for instance the proof of~\cite[Proposition 3.22]{moonen:models}.
    \end{proof}

\begin{lemma}
  \label{lem:generic_reducedness}
Let $R$ be a complete discrete valuation ring with uniformizer $\pi$ and algebraically closed residue field $\kappa$. Suppose that $\mathcal{V}$ is an integral scheme, flat and of finite type over $R$ such that, for every generic point $\eta$ of the special fiber $\mathcal{V}_\kappa$, the local ring $\Reg{\mathcal{V},\eta}$ is a discrete valuation ring and
\[
 \mathrm{im}(\mathcal{V}(R)\to \mathcal{V}(\kappa)) 
\]
is set-theoretically Zariski dense in $\mathcal{V}_\kappa$. Then, $\mathcal{V}_\kappa$ is generically reduced, i.e., for any generic point $\eta$ for $\mathcal{V}_\kappa$, one has that $\pi$ is a uniformizer in $\Reg{\mathcal{V},\eta}$.
\end{lemma}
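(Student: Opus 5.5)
Fix a generic point $\eta$ of $\mathcal{V}_\kappa$, write $A = \Reg{\mathcal{V},\eta}$ (a discrete valuation ring by hypothesis) with uniformizer $t$, and write $\pi = u t^e$ with $u\in A^\times$ and $e\geqslant 1$. The claim is that $e=1$, i.e.\ $\mathcal{V}_\kappa$ is reduced at $\eta$. The plan is to produce an $R$-point of $\mathcal{V}$ whose reduction passes through a general point of the component $\overline{\{\eta\}}$ and is transversal there. Comparing valuations along it forces $e=1$.

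First I localize. Replace $\mathcal{V}$ by an affine open $\Spec B$ containing $\eta$ and meeting no other component of the special fiber. Shrinking further, I arrange that $\overline{\{\eta\}}_{\mathrm{red}}$ is smooth over $\kappa$. I also arrange that $t$ extends to an element $t\in B$ with $\pi = u t^e$, $u\in B^\times$, and that $V(t)=\mathcal{V}_{\kappa,\mathrm{red}}$ in this open. This is possible because these are finitely many conditions holding at the generic point $\eta$, so they hold on a dense open of $\overline{\{\eta\}}$. The density hypothesis then ensures there is still an $R$-point $x\in\mathcal{V}(R)$ whose closed point lies in this open. Indeed, the images of $R$-points are Zariski dense in $\mathcal{V}_\kappa$, hence meet every nonempty open of the component.

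Now pull back along $x\colon B\to R$. We get $\pi = x(u)\,x(t)^e$ with $x(u)\in R^\times$. Moreover $x(t)\in\mathfrak{m}_R$, because the closed point of $x$ lies in $V(t)$. Hence $x(t)=\pi^a v$ with $a\geqslant 1$ and $v\in R^\times$, which gives $1 = ea$ after comparing valuations. Therefore $e=1$, so $\pi$ is a uniformizer of $A$.

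The step needing care is the spreading out: we need $\pi = ut^e$ with $u$ invertible on a whole neighborhood, not just at $\eta$. This is routine, since $u$ and $u^{-1}$ lie in $B_g$ for a suitable $g\notin\eta$. The argument does not actually need smoothness of the component or algebraic closedness of $\kappa$. It only needs an $R$-point landing in the open neighborhood where the factorization $\pi = ut^e$ holds, and the density hypothesis supplies exactly this.
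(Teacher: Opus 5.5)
Your proposal is correct and is essentially the paper's argument: spread out $\pi = u t^e$ with $u$ a unit to an affine neighborhood of $\eta$, use density to get an $R$-point factoring through that neighborhood, and compare valuations to get $e=1$. The extra shrinkings (smoothness of the component, and deducing $x(t)\in\mathfrak{m}_R$ from $V(t)$) are harmless but unnecessary, since $1 = e\cdot v_R(x(t))$ with $v_R(x(t))\geqslant 0$ already forces $e=1$.
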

\begin{proof}
  This is essentially~\cite[Lemma 6.6]{guo2024pointwisec}, but, since the hypotheses there are a bit stronger, we recall the proof here. Let $\varpi\in \Reg{\mathcal{V},\eta}$ be a uniformizer, and suppose that we have $\pi = u\varpi^m$ for some $u\in \Reg{\mathcal{V},\eta}^\times$. By localizing on $\mathcal{V}$, we can assume that $\mathcal{V} = \Spec A$ is affine and that we have $\varpi\in A$ and $u\in A^\times$. Furthermore, by our density hypothesis, we can also assume that we have a section $x\colon A\to R$ of the $R$-algebra $A$. This shows $\pi = x(u)x(\varpi)^m\in R$. Clearly, this is possible only if $m = 1$.
\end{proof}

\begin{lemma}
  \label{lem:gerbe_extending}
Suppose that $\mathcal{W}$ is a smooth algebraic space over $\mathcal{O}$ and that $\tilde{\mathcal{W}}\to \mathcal{W}$ is a gerbe banded by a finite abelian group $\Delta$. Suppose there is a section $s\colon \mathcal{W}_\eta\to \tilde{\mathcal{W}}$ with the following property:
\begin{itemize}[leftmargin=.8cm]
    \item There is a dense open subspace $U$ of the special fiber $\mathcal{W}_k$ such that, for every $w\in U(\kappa)$, there exists a lift $\tilde{w}\in \mathcal{W}(W(\kappa))$ such that the section
    \[
    \Spec W(\kappa)[\nicefrac{1}{p}]\xrightarrow{\tilde{w}_\eta}\mathcal{W}_\eta\to \tilde{\mathcal{W}}
    \]
    extends over $\Spec W(\kappa)$.
\end{itemize}
Then $s$ extends to a section $\mathcal{W}\to \tilde{\mathcal{W}}$.
\end{lemma}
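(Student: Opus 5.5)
The plan is to reduce to the statement of Lemma~\ref{lem:good_reduction_torsors}, which handles finite Galois covers with abelian group, by trivializing the gerbe over the generic fiber. Since $\tilde{\mathcal{W}}\to\mathcal{W}$ is a gerbe banded by the finite abelian group $\Delta$, it is \'etale. Moreover, once the gerbe is neutral over an \'etale neighborhood, it is \'etale locally isomorphic to $B\Delta\times\mathcal{W}$. The section $s$ neutralizes $\tilde{\mathcal{W}}_\eta$, identifying it with $B\underline{\Delta}\times\mathcal{W}_\eta$.

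First, I would show the extension is unique, so that the problem is \'etale local on $\mathcal{W}$ and we may assume $\mathcal{W}$ is affine and connected. Two extensions of $s$ differ by a $\Delta$-torsor over $\mathcal{W}$ that is trivialized over $\mathcal{W}_\eta$. Because $\mathcal{W}$ is normal (being smooth) and $\Delta$-torsors are finite \'etale, the restriction functor from $\Delta$-torsors over $\mathcal{W}$ to those over $\mathcal{W}_\eta$ is fully faithful. Isomorphisms between two given extensions are therefore also unique, so the extensions glue by \'etale descent.

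Second, I would work \'etale locally on $\mathcal{W}$, where $\tilde{\mathcal{W}}$ admits some section $t\colon\mathcal{W}\to\tilde{\mathcal{W}}$; such neighborhoods exist since gerbes are \'etale locally neutral. Comparing $s$ with $t|_{\mathcal{W}_\eta}$ produces a $\Delta$-torsor $V\to\mathcal{W}_\eta$, namely the torsor of isomorphisms between the two sections. Extending $s$ is then equivalent to extending $V$ to a $\Delta$-torsor over $\mathcal{W}$. For a point $w\in U(\kappa)$ with lift $\tilde w$ as in the hypothesis, the section $s\circ\tilde w_\eta$ extends over $W(\kappa)$, and so does $t\circ\tilde w$. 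The torsor $\tilde w_\eta^*V$ therefore extends to a $\Delta$-torsor over $W(\kappa)$. This means $\tilde w$ is strongly unramified for $V\to\mathcal{W}_\eta$, since $\Delta$-torsors over $W(\kappa)$ with $\kappa$ algebraically closed are split.

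Third, Lemma~\ref{lem:good_reduction_torsors} would then give that the normalization $\mathcal{V}$ of $\mathcal{W}$ in $V$ is finite \'etale Galois with group $\Delta$. The $\Delta$-action extends by normality, so $\mathcal{V}$ is a $\Delta$-torsor extending $V$, and twisting $t$ by $\mathcal{V}$ gives the desired extension of $s$. Two points need care. One is that this lemma assumes $p>2$, so I would either add that hypothesis or note that the intended application has $p>2$. The other is that the density hypothesis on $U$ must survive the \'etale localization. It does, because the preimage of a dense open subset under an \'etale map is dense open, and $W(\kappa)$-lifts lift along \'etale maps by Henselianity.

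The main obstacle is this application of Lemma~\ref{lem:good_reduction_torsors}: the real content is the purity/ramification argument at the generic points of the special fiber, which is where the density of good lifts is used.
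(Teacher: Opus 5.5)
Your proposal is correct and follows essentially the same route as the paper. Uniqueness of extensions (the isomorphism $\Delta$-torsor between two extensions is trivialized generically, hence globally) lets you work \'etale locally and neutralize the gerbe, so that $s$ becomes a $\Delta$-torsor $V\to\mathcal{W}_\eta$ whose extension follows from Lemma~\ref{lem:good_reduction_torsors}; you also supply details the paper leaves implicit, namely strong unramifiedness at the given lifts and stability of the density hypothesis under \'etale localization. Your caveat is accurate as well: the paper's proof tacitly uses the $p>2$ hypothesis of Lemma~\ref{lem:good_reduction_torsors}, which does hold in the lemma's only application, Proposition~\ref{prop:ascending_along_central_covers}.
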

\begin{proof}
  To begin, if we have two extensions of $s$ over $\mathcal{W}$, then the space of isomorphisms between them is a $\Delta$-torsor over $\mathcal{W}$, admitting a canonical trivialization over $\mc{W}_\eta$, and so is itself canonically trivial (see for instance \stacks{0BQG}). In particular, to prove the existence of such an extension, we can work \'etale locally and assume that the gerbe is trivial. The section $s$ now corresponds to a $\Delta$-torsor $V\to \mathcal{W}_\eta$, and the extension property amounts to the concrete assertion that the normalization of $\mathcal{W}$ in $V$ is a finite Galois cover $\mathcal{V}\to \mathcal{W}$. Given our hypothesis, this now follows from Lemma~\ref{lem:good_reduction_torsors}.
\end{proof}

\begin{proof}
[Proof of Proposition~\ref{prop:ascending_along_central_covers}]
    Hypothesis (4) tells us that $\mathcal{X}^{\mathrm{ord}}\subset \mathcal{X}$ is open and fiberwise dense, and combining this with hypotheses (2), (5), and Lemma~\ref{lem:good_reduction_torsors} gives us assertion (1): the normalization $\mathcal{Y}\to \mathcal{X}$ is finite Galois. 
    
    Now, for assertion (2). For every $n\geqslant 1$, Proposition~\ref{prop:central_covers} tells us that lifting from $\BT[\overline{\mathcal{G}},\overline{\mu},\mathrm{alg}]{n}$ to $\BT[\mathcal{G},\mu,\mathrm{alg}]{n}$ amounts to trivializing a torsor for the finite group $H^1_{\mathrm{fppf}}(\Spec \Int/p^n\Int,\mathcal{Z})$ followed by trivializing a gerbe banded by $\mathcal{Z}(\Int/p^n\Int)$.  Now, we have such a lift over $Y$, corresponding to such trivializations over it, compatibly for all $n$. The trivialization of the torsor part automatically extends over $\mathcal{Y}$; see \stacks{0BQG}. By hypothesis (6) and Proposition~\ref{prop:gmu_rings_of_integers}, combined with hypothesis (6) and Lemma~\ref{lem:gerbe_extending}, we find that the trivialization of the gerbe part also extends over $\mathcal{Y}$. Taking the limit over $n$ now yields the lift in assertion (2).
\end{proof}

\section{Integral canonical models: Definitions and first properties}\label{sec:canonical}

In this section we apply the material established above to define and study the basic properties of integral canonical models. Notably, we give a proof of Theorem \ref{introthm:mapping_property}.

We will be following the conventions for Shimura varieties as laid out in~\cite{deligne:corvallis}.

\begin{setup}
  Given a Shimura datum $(G,X)$, for $x\in X$, the conjugacy class $\{\mu_x\}$ of the associated Shimura cocharacter $\mu_x$\footnote{For any representation $W$ of $G_{\Real}$, $x$ yields a Hodge structure $W_x$ on $W$ and $\mu_x$ acts on $W^{p,q}_x$ by $z\mapsto z^{-p}$.} is defined over the reflex field $E\subset \Comp$ and is independent of the choice of $x$. We will denote this $E$-rational class by $\{\mu\}$. For a compact open subgroup $K\subset G(\Adele_f)$, let $\Sh_K$ be the canonical model over $E$ for the complex Shimura variety with complex points given by the orbifold
\[
  \Sh_K(\Comp) = G(\Rat)\backslash X\times G(\Adele_f)/K.
\]
Note that $\Sh_K$ is a Deligne--Mumford stack over $E$ that is a quasi-projective scheme when $K$ is \emph{neat} in the sense of~\cite[\S 0.6]{pink:thesis}, and that in general $\Sh_K$ is the stack quotient of $\Sh_{K'}$ by the finite group $K/K'$, for any choice of neat compact open subgroup $K'\subset K$.   
\end{setup}

\subsection{\'Etale realizations on Shimura varieties}
\label{subsec:etale_realization_global}

\begin{definition}
[{\cite[Definition 1.5.4 and Lemma 1.5.5]{kisin2021stf}}]
A torus $T$ over $\Rat$ is \defnword{cuspidal} if it is isogenous to a product of a $\Rat$-split torus and a $\Rat$-torus that is anisotropic over $\Real$. For a general $\Rat$-torus $T$, the \defnword{anti-cuspidal part} $T_{ac}\subset T$ is the minimal $\Rat$-subtorus such that $T/T_{ac}$ is cuspidal.
\end{definition}  

\begin{remark}
    \label{rem:cuspidal_preserved_under_subs}
If $T$ is a cuspidal torus over $\Rat$, then any subtorus is also cuspidal. This follows for instance from characterization (v) in~\cite[Lemma 1.5.5]{kisin2021stf}.
\end{remark}

\begin{remark}
\label{rem:anti-cuspidal}
The anti-cuspidal part is also characterized as the smallest subtorus of the maximal anisotropic subtorus $T_a\subset T$ that contains the maximal $\Real$-split subtorus of $T_{a,\Real}$: in other words, it is the $\Rat$-Zariski closure in $T$ of the maximal $\Real$-split subtorus of $T_{a,\Real}$.
\end{remark}

\begin{definition}
[{\cite[\S 1.5.6]{kisin2021stf}}]
We define $G^c = G/Z(G)_{ac}$, where $Z(G)\subset G$ is the center, and where $Z(G)_{ac}$ is the anti-cuspidal part of the connected component $Z(G)^\circ\subset Z(G)$: we refer to it as the \defnword{cuspidal quotient} of $G$. 
\end{definition}

\begin{remark}
As noted in the footnote on p.\@ 34 of \emph{op.\@ cit.\@}, this definition differs from the one used for instance by Lovering in~\cite{Lovering2017-me}, since we are not assuming Milne's axiom requiring that $Z(G)^{\circ}$ split over a CM field. \emph{If} we made this assumption, then $Z(G)_{ac}$ is the maximal anisotropic subtorus of $Z(G)$ that is $\Real$-split.
\end{remark}

\begin{lemma}
\label{lem:GQK}
For a compact open subgroup $K\subseteq G(\bb{A}_f)$, set $Z(G)_K \defn Z(G)(\Rat)\cap K$. Then:
\begin{enumerate}
  \item The map of complex analytic spaces $X\times G(\Adele_f)\to \Sh_K(\Comp)$ is Galois with Galois group $(G(\Rat)\times K)/Z(G)_K$ where $Z(G)_K$ is embedded diagonally.
  \item For any normal subgroup $K'\subset K$, $\Sh_{K'}\to \Sh_K$ is a finite Galois cover with Galois group $K/K'Z(G)_K$.
  \item For $K$ \emph{neat}, we have $Z(G)_K\subset Z(G)_{ac}(\Rat)$; so, the map $G(\Rat)\to G^c(\Rat)$ factors through $Z(G)_K\backslash G(\Rat)$, and the map $K\to G^c(\Adele_f)$ factors through $K/Z(G)^-_K$, where $Z(G)^-_{K}\subset K$ is the closure of $Z(G)_{K}$ in $K$.
\end{enumerate}
\end{lemma}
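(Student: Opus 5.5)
The plan is to treat the three parts in order, working directly with the double-coset description $\Sh_K(\Comp) = G(\Rat)\backslash X\times G(\Adele_f)/K$.

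For (1), let $(G(\Rat)\times K)$ act on $X\times G(\Adele_f)$ by $(\gamma,k)\cdot(x,g) = (\gamma x,\gamma g k^{-1})$. The quotient of this action is exactly $\Sh_K(\Comp)$. So the claim reduces to identifying the kernel of the action.
\begin{itemize}
\item Suppose $(\gamma,k)$ acts trivially. Comparing the $G(\Adele_f)$-components gives $\gamma g k^{-1} = g$ for all $g$. Taking $g=1$ gives $\gamma = k$, so $\gamma\in G(\Rat)\cap K$.
\item The condition $\gamma g\gamma^{-1}=g$ for all $g$ then forces $\gamma$ to be central in $G(\Adele_f)$, hence $\gamma\in Z(G)(\Rat)$.
\item Conversely, central elements fix $X$, because $X$ is a conjugacy class of homomorphisms.
\end{itemize}
Thus the kernel is the diagonal copy of $Z(G)_K$. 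The action of the quotient $(G(\Rat)\times K)/Z(G)_K$ is properly discontinuous in the orbifold sense, so the quotient map is Galois with that group.

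For (2), apply (1) to both $K'$ and $K$. The composite $X\times G(\Adele_f)\to\Sh_{K'}(\Comp)\to\Sh_K(\Comp)$ then identifies $\Sh_{K'}\to\Sh_K$ as the quotient by the image of $K$ in the group modulo $G(\Rat)\times K'$. Concretely, the group is $K/K'Z(G)_K$, since $Z(G)_K$ acts on $\Sh_{K'}$ through $G(\Rat)$ and therefore trivially. The Galois property descends to the canonical models over $E$, because the action of $K/K'$ by right translation is defined over $E$ by Hecke-equivariance of canonical models.

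For (3), take $z\in Z(G)_K$. Since $K$ is neat, the subgroup generated by $z$ is neat, so all eigenvalues of $z$ in any representation generate torsion-free groups. The key input I will invoke is the standard fact (Milne/Kisin--Shin--Zhu, \cite[\S1.5]{kisin2021stf}) that $Z(G)(\Rat)\cap K$ lies in $Z(G)_{ac}(\Rat)$ once $K$ is neat.
\begin{itemize}
\item Pass to $G^c$ and consider the image $\bar z$ of $z$ in the center of $G^c$, which is cuspidal up to finite index.
\item For a cuspidal torus $T$, the group $T(\Rat)\cap K_T$ is finite for compact open $K_T$. This is the defining arithmetic property: the $\Rat$-split part contributes units that are not in a compact group, while the $\Real$-anisotropic part has discrete arithmetic subgroups in a compact group.
\item So $\bar z$ is torsion, and neatness then forces $\bar z=1$ (taking the finite center component into account via neatness as well).
\end{itemize}
Hence $z\in Z(G)_{ac}(\Rat)$. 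The two factorization statements follow immediately: $G(\Rat)\to G^c(\Rat)$ kills $Z(G)_K$, and $K\to G^c(\Adele_f)$, being continuous, kills the closure $Z(G)^-_K$.

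The main obstacle is this step in (3): verifying that an element of $Z(G)(\Rat)\cap K$ maps to a torsion element of $G^c$ and then to the identity. This needs care with the finite part $Z(G)/Z(G)^\circ$ as well as the cuspidal quotient, where neatness has to exclude nontrivial torsion.
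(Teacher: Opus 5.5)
Your proposal is correct and follows the paper's route. The paper treats (1) and (2) as standard and deduces (3) directly from \cite[Lemma 1.5.7]{kisin2021stf}. The content of that lemma is exactly your argument: the image of $Z(G)(\Rat)\cap K$ in the center of $G^c$ lies in a discrete group intersected with the compact image of $K$, so it is torsion, and it is therefore trivial because images of neat groups are neat and hence torsion-free.

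The one blemish is your justification for the split part of a cuspidal torus, which is misphrased. What you need is simply that $T(\Rat)$ is discrete in $T(\Adele_f)$ for cuspidal $T$; for example, $\mathbb{G}_m(\Rat)\cap\widehat{\Int}^\times=\{\pm1\}$. That fact is correct, so the argument goes through.
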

\begin{proof}
Only the last assertion needs proof. This follows from~\cite[Lemma 1.5.7]{kisin2021stf}.
\end{proof}

\begin{construction}
[\'Etale realization]
  \label{const:Kc_etale_realization}
By assertion (2) of Lemma~\ref{lem:GQK}, we have a cofiltered system 
\[
\{\Sh_{K'}\to \Sh_K\}_{K'\triangleleft K}
\]
of Galois covers, where each cover has Galois group $K/K'Z(G)_K$. The inverse limit of this system is a pro-Galois cover of $\Sh_K$ with Galois group $K^c \defn K/Z(G)_{K}^-$, which by assertion (3) of the same lemma admits an injective map to $G^c(\Adele_f)$. In fact, since the kernel of $G\to G^c$ is connected by definition, $K^c$ maps isomorphically onto a compact open subgroup of $G^c(\Adele_f)$, which we denote by the same symbol. We will denote the canonical pro-\'etale $K^c$-torsor over $\Sh_K$ by $\mathbf{Et}_{K}$. For every prime $\ell$, if $K^c_{\ell}$ is the projection of $K^c$ into $G^c(\Rat_\ell)$, then we will write $\mathbf{Et}_{K,\ell}$ for the $K^c_\ell$-local system obtained from $\mathbf{Et}_{K}$.
\end{construction}

\begin{lemma}
\label{lem:Gc_functoriality}
If $f\colon (G_1,X_1)\to (G_2,X_2)$ is a map of Shimura data, then $f(Z(G_1)_{ac})\subset Z(G_2)_{ac}$, and thus $f$ induces a map $f^c\colon G_1^c\to G_2^c$. If $f$ is a closed immersion, then so is $f^c$.
\end{lemma}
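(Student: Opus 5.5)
The plan is to prove the inclusion $f(Z(G_1)_{ac})\subset Z(G_2)_{ac}$ first, since the induced map $f^c$ then comes for free. Indeed, if the inclusion holds, the composite $G_1\to G_2\to G_2^c$ kills $Z(G_1)_{ac}$ and hence factors uniquely through $G_1^c=G_1/Z(G_1)_{ac}$.

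To get the inclusion, I would use the characterization in Remark~\ref{rem:anti-cuspidal}: $Z(G_1)_{ac}$ is the $\Rat$-Zariski closure of the maximal $\Real$-split subtorus $S_1$ of $T_{1,a,\Real}$, where $T_{1,a}$ is the maximal $\Rat$-anisotropic subtorus of $Z(G_1)^\circ$. Since the target is a closed $\Rat$-subtorus, it suffices to show that $f(S_1)\subset Z(G_2)_{ac,\Real}$. The key input is the Shimura datum axioms.

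\begin{itemize}
\item The weight cocharacter is defined over $\Real$, and $h_x$ maps $\Gm_{,\Real}\subset\mathbb{S}$ into $Z(G)_\Real$. Axiom (SV2) says $\mathrm{int}(h(i))$ is a Cartan involution of $G^{\ad}$.
\item Every morphism of Shimura data satisfies $f\circ h_{x_1}=h_{x_2}$.
\end{itemize}

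However, $f$ need not carry $Z(G_1)$ into $Z(G_2)$. To control this, I would consider the image torus $f(Z(G_1)^\circ)$ and its centralizer. The cleaner route is the characterization (v) of \cite[Lemma 1.5.5]{kisin2021stf}, used as in Remark~\ref{rem:cuspidal_preserved_under_subs}. By that characterization, $T$ is cuspidal iff $T(\Rat)$ is discrete in $T(\Adele_f)$, equivalently iff the arithmetic subgroups of $T$ are finite modulo the split part.

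I would then argue via the image of $Z(G_1)_{ac}$ in $G_2^c$. Set $T:=Z(G_1)^\circ$ and $T':=$ the image of $T$ in $G_2^c$. The step I expect to be the main obstacle is showing that $T'$ is cuspidal, i.e.\ that the image is contained in a cuspidal torus even though $f(T)$ need not be central in $G_2$. My plan is to show that $f(T)$ commutes with $h_{x_2}(\mathbb{S})$. Then $f(T)$ lies in the Mumford–Tate-type centralizer, whose image in $G_2^{\ad}$ is contained in a subgroup compact at $\Real$ by (SV2). Hence, modulo $Z(G_2)$, $f(T)_\Real$ has no $\Real$-split part. Combining this with Remark~\ref{rem:anti-cuspidal} applied to $Z(G_2)^\circ$, the torus generated by $f(T)$ and $Z(G_2)^\circ$ maps to a torus in $G_2^c$ that is isogenous to (cuspidal)$\times$(anisotropic at $\Real$), and is therefore cuspidal. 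Once $T'$ is cuspidal, the minimality of $Z(G_1)_{ac}$ (the smallest subtorus with cuspidal quotient) forces $Z(G_1)_{ac}$ to map trivially to $T'$, which is the desired inclusion.

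For the closed immersion claim, assume $f$ is a closed immersion. Then $Z(G_1)$ maps into the centralizer of $f(G_1)$, which contains $h_{x_2}$. I would show that $\ker(G_1\to G_2^c)=G_1\cap Z(G_2)_{ac}$ is contained in $Z(G_1)_{ac}$. First, $G_1\cap Z(G_2)$ is central in $G_1$. Its identity component is a subtorus of $Z(G_1)^\circ$ that is $\Real$-split and $\Rat$-anisotropic, because it sits inside $Z(G_2)_{ac}$ whose $\Real$-points are split. It therefore lies in $Z(G_1)_{ac}$ by Remark~\ref{rem:anti-cuspidal}. Connectedness of kernels, together with the equality of the Lie algebras, then handles the finite part. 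So $G_1^c\to G_2^c$ has trivial kernel and is a closed immersion.
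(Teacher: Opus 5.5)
Your proof of the first assertion is correct, and it takes a different, self-contained route from the paper, which simply cites \cite[Lemma 2.5]{imai2023prismatic}. Since $f\circ h_{x_1}=h_{x_2}$, the torus $f(Z(G_1)^\circ)$ centralizes $h_{x_2}$, so by (SV2) its image in $G_2^{\mathrm{ad}}$ is $\mathbb{R}$-anisotropic. The image of $f(Z(G_1)^\circ)\cdot Z(G_2)^\circ$ in $G_2^c$ is therefore an extension of an $\mathbb{R}$-anisotropic torus by the cuspidal torus $Z(G_2)^\circ/Z(G_2)_{ac}$. By Poincar\'e reducibility it is cuspidal, and minimality of $Z(G_1)_{ac}$ then gives the inclusion. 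The opening detours through Remark~\ref{rem:anti-cuspidal} and arithmetic subgroups can be deleted.

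The second assertion is where things break, in two places.

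First, your claim that $Z(G_2)_{ac}$ is $\mathbb{R}$-split is false in the paper's generality. Remark~\ref{rem:anti-cuspidal} only describes it as the $\mathbb{Q}$-Zariski closure of an $\mathbb{R}$-split torus, and the paper notes that the ``maximal $\mathbb{R}$-split anisotropic'' description needs Milne's CM-splitting axiom. For example, for $G=\mathrm{Res}_{F'/\mathbb{Q}}\mathbb{G}_m$ with $F'$ a non-Galois cubic field of signature $(1,1)$, $Z(G)_{ac}$ is the norm-one torus, whose real form is $\mathrm{Res}_{\mathbb{C}/\mathbb{R}}\mathbb{G}_m$. This step can be repaired without any reality condition. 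A subtorus of the anti-cuspidal torus $Z(G_2)_{ac}$ has no nontrivial cuspidal quotient: otherwise, by Poincar\'e reducibility, $Z(G_2)_{ac}/B$ would be cuspidal for some proper subtorus $B$, so $Z(G_2)^\circ/B$ would be cuspidal, contradicting minimality. The image of $(G_1\cap Z(G_2)_{ac})^\circ$ in the cuspidal torus $Z(G_1)^\circ/Z(G_1)_{ac}$ is such a quotient, so $(G_1\cap Z(G_2)_{ac})^\circ\subset Z(G_1)_{ac}$.

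Second, and fatally, the ``finite part'' cannot be handled. Equality of Lie algebras only identifies identity components, and connectedness of kernels holds for $G_i\to G_i^c$, not for $G_1\to G_2^c$. In fact the second assertion is false as stated. Here is a counterexample.
\begin{enumerate}
\item Let $F$ be real quadratic and $A=\ker(\mathrm{Nm}\colon\mathrm{Res}_{F/\mathbb{Q}}\mathbb{G}_m\to\mathbb{G}_m)$. This one-dimensional torus is $\mathbb{Q}$-anisotropic and $\mathbb{R}$-split, so $A_{ac}=A$, and $-1\in A$.
\item Take $(G_1,X_1)=(\mathrm{GL}_2,\mathcal{H}^{\pm})$ and $G_2=(\mathrm{GL}_2\times A)/\mu_2$, with $\mu_2$ embedded diagonally via $-1$. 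Set $f(g)=[(g,1)]$, and let $X_2$ be the $G_2(\mathbb{R})$-class of $f\circ h$ for $h\in X_1$.
\item Deligne's axioms hold for $(G_2,X_2)$ because $G_2^{\mathrm{ad}}=\mathrm{PGL}_2$. The map $f$ is injective, hence a closed immersion of Shimura data.
\item We have $Z(G_1)_{ac}=1$. On the other hand, $Z(G_2)=(\mathbb{G}_m\times A)/\mu_2$ has anti-cuspidal part equal to the image of $A$. The quotient by it is $\mathbb{G}_m/\mu_2$, which is cuspidal. The only other proper candidates give quotients equal to $Z(G_2)$ itself or isogenous to $A$, and neither is cuspidal.
\item But $G_1\cap Z(G_2)_{ac}=\{\pm1\}$, so $f^c\colon\mathrm{GL}_2\to G_2^c\cong\mathrm{GL}_2/\{\pm1\}$ is not a closed immersion. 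The example is even unramified at odd $p$ unramified in $F$.
\end{enumerate}

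What the repaired argument does prove is $(G_1\cap Z(G_2)_{ac})^\circ=Z(G_1)_{ac}$. Hence $\ker f^c=\pi_0(G_1\cap Z(G_2)_{ac})$ is finite and central. So $f^c$ is a closed immersion exactly when $G_1\cap Z(G_2)_{ac}$ is connected, for instance when $Z(G_2)_{ac}=1$, as for embeddings into Siegel data.

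The paper's one-line proof has the same blind spot. It treats $G_1\cap Z(G_2)_{ac}$ as a subtorus of $Z(G_1)^\circ$. It also places $Z(G_1)^\circ$ inside $Z(G_2)^\circ$, which already fails for CM tori in $\mathrm{GSp}$; the torus you construct in part one is the right cuspidal ambient torus. So the defect lies in the statement, not only in your write-up.
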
 
\begin{proof}
For the first assertion, see~\cite[Lemma 2.5]{imai2023prismatic}. For the second, using $f$ to identify $G_1$ with a subgroup of $G_2$, we need to verify that $Z(G_1)_{ac} = G_1\cap Z(G_2)_{ac}$. This is equivalent to knowing that $Z(G_1)^{\circ}/(G_1\cap Z(G_2)_{ac})$ is cuspidal. But this torus is a subtorus of the cuspidal torus $Z(G_2)^{\circ}/Z(G_2)_{ac}$, and so we conclude by Remark~\ref{rem:cuspidal_preserved_under_subs}.
\end{proof}

\begin{remark}
    [Functoriality of \'etale realizations]
\label{rem:functoriality_etale_realizations}
In the situation of the lemma above, suppose that we have compact open subgroups $K_i \subset G_i(\Adele_f)$ and $K_i^c\subset G_i^c(\Adele_f)$ for $i=1,2$ with $f(K_1)\subset K_2$, $f^c(K_1^c)\subset f^c(K_2^c)$, and with $K_i$ mapping into $K_i^c$ under $G_i(\Adele_f)\to G_i^c(\Adele_f)$. Then by the theory of canonical models, we obtain a map
\[
\Sh_{K_1}\to \Sh_{K_2}\otimes_{E_2}E_1
\]
of stacks over $E_1$. Here, $E_i$ is the reflex field for $(G_i,X_i)$. Then the construction in Construction~\ref{const:Kc_etale_realization}, combined with Lemma~\ref{lem:Gc_functoriality}, shows that there is a a canonical $K^c_{1}$-equivariant map
\[
\mathbf{Et}_{K_1}\to \mathbf{Et}_{K_2}\vert_{\Sh_{K_1}}.
\]
\end{remark}

\subsection{The de Rham realization on Shimura varieties}
\label{sub:the_de_rham_realization_on_shimura_varieties}

We recall here some properties of the de Rham realization on Shimura varieties. We will maintain the notation and hypotheses stemming from Definition~\ref{defn:unramified_tuples}.

\begin{construction}
  By definition of the reflex field, the geometric conjugacy class $\{\mu\}$ given by
  \[
   \mu_x\colon\Gmh{\Comp}\xrightarrow{z\mapsto (z,1)}\Gmh{\Comp}\times \Gmh{\Comp} \xleftarrow[\sim]{(az,a\overline{z})\mapsfrom a\otimes z}\mathbb{S}_{\Comp}\xrightarrow{h_x}G_{\Comp}
  \]
  is defined over $E\subset \Comp$. This implies the following: There exists a canonical projective homogeneous variety $\mathrm{Gr}_{\{\mbox{-}\mu\}}$ over $E$ parameterizing filtrations on representations of $G$\footnote{More precisely, filtrations on the canonical fiber functor $\Rep_E(G)\to \mathrm{Vect}_E$.} that are \'etale locally split by a cocharacter of $G$ in the geometric conjugacy class $\{\mbox{-}\mu\}$.
 \end{construction}

\begin{remark}
    [The de Rham realization]
\label{rem:de_rham_realization_generic_fiber}
As explained in~\cite[\S 5.2]{Diao2022-yn}, there is a canonical functor from $\Rep_{\Rat}(G^c)$ to the category of variations of $\Rat$-Hodge structures over the complex analytic space $\Sh_{K,\Comp}^{\mathrm{an}}$: Given $V\in \Rep_{\Rat}(G^c)$, the fiber of the associated variation of $\Rat$-Hodge structure at any lift $(x,g)\in X\times G(\Adele_f)$ is canonically isomorphic to the Hodge structure on $V$ arising from the Deligne cocharacter $h_x\colon\mathbb{S}\to G_{\Real}$ at $x$.\footnote{We follow Deligne's convention from~\cite{deligne:corvallis} that $h_x(z)$ acts on the $(p,q)$-part of the Hodge decomposition via $z^{-p}\overline{z}^{-q}$.} 

Moreover, the underlying filtered vector bundle with integrable connection for each object in the image of the functor is algebraic. That is, with every $V\in \Rep_{\Rat}(G^c)$, we can associate a filtered vector bundle $\Fil^\bullet_{\mathrm{Hdg}}\mathbf{dR}_{K,\Comp}(V)$ over $\Sh_{K,\Comp}$, equipped with an integrable connection satisfying Griffiths transversality. This functorial association yields in particular a map $\Sh_{K,\Comp}\to \mathrm{Gr}_{\mbox{-}\mu,\Comp}/G_{\Comp}$ classifying a filtered $G^c$-torsor $\Fil^\bullet_{\mathrm{Hdg}}\mathbf{dR}_{K,\Comp}$ over $\Sh_{K,\Comp}$, and this filtered $G^c$-torsor is equipped with an integrable connection satisfying Griffiths transversality; cf.\@ Remark~\ref{rem:versality_bt_n}. 

By~\cite[Theorems 4.3 and 5.1]{milne:canonical}, this descends canonically to a filtered $G^c$-torsor $\Fil^\bullet_{\mathrm{Hdg}}\mathbf{dR}_{K,\Rat}$ over $\Sh_K$, and the integrable connection also descends. Therefore, for any $V\in \Rep_{\Rat}(G^c)$, we obtain a canonical filtered vector bundle $\Fil^\bullet_{\mathrm{Hdg}}\mathbf{dR}_K(V)$ over $\Sh_K$ with an integrable connection satisfying Griffiths transversality.
\end{remark}

\begin{remark}
    [The Kodaira--Spencer map]
\label{rem:ks_map_rational}
Let $\mathbb{T}_{\Sh_K/E}$ be the tangent bundle for $\Sh_K$. By Griffiths transversality, for any $V\in \Rep_{\Rat}(G^c)$, we obtain a map of vector bundles
\[
\displaystyle \mathbb{T}_{\Sh_K/E}\to \bigoplus_i \underline{\Hom}\left(\gr^i_{\mathrm{Hdg}}\mathbf{dR}_K(V),\gr^{i-1}_{\mathrm{Hdg}}\mathbf{dR}_K(V)\right).
\]
This arises from a single map $\mathbb{T}_{\Sh_K/E}\to \mathbf{dR}_K(\mathfrak{g}_{\Rat})/\Fil^0\mathbf{dR}_K(\mathfrak{g}_{\Rat})$, where $\mathfrak{g}$ is the Lie algebra of $\mathcal{G}_{(p)}$ equipped with the adjoint action, and we refer to this latter map as the \defnword{Kodaira--Spencer map} for $\Sh_K$. By the construction of the Shimura variety, this map is an isomorphism. See~\cite[Proposition 1.1.14]{deligne:corvallis} for a discussion of this over $\Comp$.
\end{remark}

\begin{remark}
  [de Rham property of the $p$-adic \'etale realization]
\label{rem:lz}
Fix a place $v\vert p$ of $E$, and let $\Sh_{K,E_v}^{\mathrm{an}}$ be the rigid analytification of $\Sh_K$ over $E_v$. Any $V\in \Rep_{\Rat}(G^c)$, viewed as a $\Rat_p$-representation of $G$, can be twisted by $\mathbf{Et}_{K,p}$ to obtain a $\Rat_p$-local system $\mathbf{Et}_{K,p}(V)$ over $\Sh_K$. In~\cite{Liu2017-yz}, Liu and Zhu showed that the restriction over $\Sh_{K,E_v}^{\mathrm{an}}$ of $\mathbf{Et}_{K,p}(V)$ is \emph{de Rham}. More precisely, they showed that there is a tensor-functorially associated filtered vector bundle $\Fil^\bullet_{\mathrm{Hdg}}\mathbf{dR}^{\mathrm{LZ},\mathrm{an}}_{K,v}(V)$ over $\Sh_{K,E_v}^{\mathrm{an}}$ equipped with an integrable connection satisfying Griffiths transversality, such that $\mathbf{Et}_{K,p}(V)$ is \emph{associated} with it in the sense of ~\cite[Definition 8.3]{Scholze2013-nm}. In particular, for every classical point $x$ of $\Sh_{K,E_v}^{\mathrm{an}}$, $\mathbf{Et}_{K,p,x}(V)$ corresponds to a de Rham Galois representation, and the fiber of $\Fil^\bullet_{\mathrm{Hdg}}\mathbf{dR}^{\mathrm{LZ},\mathrm{an}}_{K,v}(V)$ at $x$ is canonically isomorphic to the image of $\mathbf{Et}_{K,p,x}(V)$ under Fontaine's $D_{\dR}$ functor.
\end{remark}

\begin{remark}
    [Work of Diao-Lan-Liu--Zhu]
\label{rem:dllz}
In~\cite{Diao2022-yn}, we find the following improvements to \cite{Liu2017-yz}:
\begin{itemize}
    \item Using toroidal compactifications, it is shown that $\Fil^\bullet_{\mathrm{Hdg}}\mathbf{dR}^{\mathrm{LZ},\mathrm{an}}_{K,v}(V)$ admits a canonical algebraization $\Fil^\bullet_{\mathrm{Hdg}}\mathbf{dR}^{\mathrm{LZ}}_{K,v}(V)$ over $\Sh_{K,E_v}$ compatible with tensor structures; see pp.\@ 536--537 of \emph{op.\@ cit.}
    \item In Theorem 5.3.1 of \emph{op.\@ cit.\@} that the tensor functors
    \begin{equation*}
        V \mapsto \Fil^\bullet_{\mathrm{Hdg}}\mathbf{dR}^{\mathrm{LZ}}_{K,v}(V),\qquad 
        V \mapsto \Fil^\bullet_{\mathrm{Hdg}}\mathbf{dR}_{K}(V)\vert_{\Sh_{K,E_v}},
    \end{equation*}
    are canonically isomorphic. Here, we view both as functors from $\Rep_{\Rat}(G^c)$ to filtered vector bundles over $\Sh_{K,E_v}$ with integrable connection.
\end{itemize}
\end{remark}

\subsection{Unramified tuples}
\label{sub:unramified_tuples}

From here on, we will fix a prime $p$.

\begin{definition}
  [Unramified tuples]
\label{defn:unramified_tuples}
A \defnword{$p$-unramified Shimura tuple} or simply \defnword{unramified tuple} (since $p$ will remain fixed), is a tuple $(G,\mathcal{G},X,K)$ where $(G,X)$ is a Shimura datum, $\mathcal{G}$ is a reductive model for $G$ over $\Int_p$, and $K \subset G(\Adele_f)$ is a compact open subgroup of the form $K^pK_p$ where $K_p = \mathcal{G}(\Int_p)$. Such a tuple is \defnword{neat} if $K$ is.
\end{definition}

\begin{setup}
  Fix an unramified tuple $(G,\mathcal{G},X,K)$. This yields a reductive model $\mathcal{G}_{(p)}$ for $G$ over $\Int_{(p)}$.  We will write $\mathcal{G}^c_{(p)}$ for the induced reductive model for $G^c$ and $\mathcal{G}^c$ for its base-change over $\Int_p$. Concretely, $\mathcal{G}^c_{(p)}$ is obtained by taking the quotient of $\mathcal{G}_{(p)}$ by the Zariski closure of $Z(G)_{ac}$.
\end{setup}

\begin{remark}
   If $(G,\mathcal{G},X,K)$ is an unramified tuple, the compact open $K^c\subset G^c(\Adele_f)$ as in Construction~\ref{const:Kc_etale_realization} will always be of the form $K^c = K^c_pK^{c,p}$ where $K^c_p\defn \mathcal{G}^c(\Int_p)\subset G^c(\Rat_p)$.
\end{remark}

\begin{definition}
  [Maps between unramified tuples]
\label{defn:maps_between_unramified_tuples}
A map $(G_1,\mathcal{G}_1,X_1,K_1)\to (G_2,\mathcal{G}_2,X_2,K_2)$ between unramified tuples is a map of $\Rat$-groups $f\colon G_1\to G_2$ and an element $g\in G_2(\Adele_f^p)$ with the following properties:
\begin{itemize}
  \item $f_{\Rat_p}$ extends to a map of $\Int_p$-group schemes $\mathcal{G}_1\to \mathcal{G}_2$;
  \item $f$ induces a map of Shimura data $(G_1,X_1)\to (G_2,X_2)$;
  \item we have $gf(K^p_1)g^{-1}\subset K_2^p$.
\end{itemize}
\end{definition}

\begin{remark}
\label{rem:maps_arising_from_tuples}
   Consider a map $(G_1,\mathcal{G}_1,X_1,K_1)\to (G_2,\mathcal{G}_2,X_2,K_2)$ of unramified tuples, and write $E_2\subset E_1\subset \Comp$ for the reflex fields of $(G_2,X_2)$ and $(G_1,X_1)$, respectively. There is a unique map of Shimura varieties 
\[
\Sh_{K_1}\to\Sh_{K_2}\otimes_{E_2}E_1
\]
whose evaluation on $\Comp$-points is given by
\[
   G_1(\Rat)\backslash X_1\times G_1(\Adele_f)/K_1 \xrightarrow{[(x,h)]\mapsto [(f(x),f(h)g^{-1})]}G_2(\Rat)\backslash X_2\times G_2(\Adele_f)/K_2.
\]
It now follows from Remark~\ref{rem:functoriality_etale_realizations} that we have a canonical $K^c_{1}$-equivariant map
\[
\mathbf{Et}_{K_1}\to \mathbf{Et}_{K_2}\vert_{\Sh_{K_1}}.
\]
This requires the additional observation that the map of Shimura varieties factors as
\[
\Sh_{K_1}\to\Sh_{g^{-1}K_2g}\otimes_{E_2}E_1\xrightarrow[\sim]{\eta_g}\Sh_{K_2}\otimes_{E_2}E_1
\]
where the second map is the isomorphism of Shimura varieties whose evaluation on $\Comp$-points is 
\[
   G_2(\Rat)\backslash X_1\times G_1(\Adele_f)/g^{-1}K_2g \xrightarrow{[(x,h)]\mapsto [(x,hg^{-1})]}G_2(\Rat)\backslash X_2\times G_2(\Adele_f)/K_2.
\]
Moreover, we have a canonical isomorphism of $K^c_{2}\simeq g^{-1}K^c_{2}g$-local systems
\[
\eta_g^*\mathbf{Et}_{K_2}\isomto \mathbf{Et}_{g^{-1}K_2g}
\]
over $\Sh_{g^{-1}K_2g}$.
\end{remark}

\begin{remark}
    \label{rem:unramified_uniformization}
Given an unramified tuple $(G,\mathcal{G},X,K)$ as above and a connected component $X^+\subset X$, let us write $\mathcal{G}_{(p)}(\Int_{(p)})_+\subset\mathcal{G}_{(p)}(\Int_p)$ for the stabilizer of $X^+$. Then we obtain an isomorphism
\[
\mathcal{G}_{(p)}(\Int_{(p)})_+\backslash X^+\times G(\Adele_f^p)/K^p \isomto\Sh_K(\Comp).
\]
This comes down to knowing the following:
\begin{itemize}
    \item $G(\Rat)$ is dense in $G(\Real)$, which is a consequence of real approximation; see~\cite[Theorem 7.7]{Platonov1994-ib};
    \item $G(\Rat)_+K_p = G(\Rat_p)$, which is a consequence of weak approximation; see~\cite[Lemma (2.2.6)]{kisin:abelian}.
\end{itemize}
\end{remark}

\begin{remark}
[Prime-to-$p$ Hecke correspondences and connected components]
\label{rem:prime-to-p_hecke}
There is an action of $G(\Adele_f^p)$ on the underlying cofiltered system of $E$-schemes $\{\Sh_{K^{',p}K_p}\}$: Given $g\in G(\Adele_f^p)$, we obtain a canonical isomorphism $\Sh_{K^{',p}K_p}\isomto\Sh_{gK^{',p}g^{-1}K_p}$ induced over $\Comp$ by the endomorphism  of $X\times G(\Adele^p_f)$ given by $(x,h)\mapsto (x,hg^{-1})$. This gives an action of $G(\Adele_f^p)$ on the inverse limit $\Sh_{K_p}$ of this system.  By~\cite[{}2.1.3]{deligne:corvallis} and~\cite[Lemma (2.2.5)]{kisin:abelian}, the induced action on the connected components of $\Sh_{K_p,\overline{\Rat}}$ factors through a simply transitive action of the quotient $G(\Adele_f^p)/(\bigcap_{K^{',p}}\mathcal{G}_{(p)}(\Int_{(p)})_+K^{',p})$, which is in fact an abelian group. 
\end{remark}

\subsection{Consequences of having a universal aperture}
\label{sub:consequences_of_having_a_universal_aperture}

\begin{setup}
Fix an unramified tuple $(G,\mathcal{G},X,K)$, and a place $v\vert p$ of the reflex field $E$. Set $\hat{\mathcal{O}} = \Reg{E_v}$, and let $k$ be the residue field. Viewing $\{\mu\}$ as an $E_v$-rational conjugacy class, then we in fact have a representative $\mu_v\colon\Gmh{\mathcal{O}}\to \mathcal{G}_{\hat{\mathcal{O}}}$ for $\{\mu\}$: The existence of a representative over $E_v$ follows from the quasi-splitness of $G_{E_v}$~\cite[Lemma 1.1.3]{kottwitz:twisted}, while that over $\hat{\mathcal{O}}$ follows from~\cite[Proposition 1.1.4]{kisin:abelian}. We will view $\mu_v$ as a cocharacter of $\mathcal{G}^c_{\hat{\mathcal{O}}}$. Recall from \S\ref{subsec:tate} the pro-algebraic stack $\BT[\mathcal{G}^c,\mbox{-}\mu_v,\mathrm{alg}]{\infty}$ over $\Reg{E,(v)}$ with generic fiber $\mathrm{Loc}_{\mathcal{G}^c(\Int_p)}$ and $v$-adic completion $\BT[\mathcal{G}^c,\mbox{-}\mu_v]{\infty}$.
\end{setup}

\begin{definition}
\label{defn:ss_K_normal_model}
   Suppose that we have a map of schemes $U\to \Sh_K$, and suppose that $\mathcal{U}$ is a flat integral model for $U$ over $\Reg{E,(v)}$: By this, we mean that $\mathcal{U}$ is a flat separated algebraic space over $\Reg{E,(v)}$ equipped with an identification $\mathcal{U}\otimes_{\Reg{E,(v)}}E\isomto U$. We will say that $\mathcal{U}$ is \defnword{apertile} if it is equipped with an extension $\mathcal{U}\to \BT[\mathcal{G}^c,\mbox{-}\mu_v,\mathrm{alg}]{\infty}$ of the classifying map $U\to \mathrm{Loc}_{\mathcal{G}^c(\Int_p)}$ for $\mathbf{Et}_{K,p}\vert_U$.
\end{definition}

\begin{remark}
  \label{rem:apertile}
 By Theorem~\ref{thm:full_faithfulness}, such an extension is unique up to unique isomorphism if $\mathcal{U}$ is $\eta$-normal. In particular, in this case, being apertile is a \emph{property} of the integral model $\mathcal{U}$.
\end{remark}

\begin{remark}
[Type of $\mathbf{Et}_{K,p}$]
\label{rem:pointwise_lifts}
The existence of the aperture implies in particular that the restriction of $\mathbf{Et}_{K,p}$ over $\hat{\mathcal{U}}_{\eta}$ is a crystalline $\mathcal{G}^c(\Int_p)$-local system of type $\mbox{-}\mu_v$ at every classical point. Conversely, if we knew \emph{a priori} that $\mathbf{Et}_{K,p}$ is crystalline at every classical point of $\hat{\mathcal{U}}_{\eta}$, then, by Remarks~\ref{rem:lz} and~\ref{rem:dllz}, we see that its type at any such point has to be $\mbox{-}\mu_v$. 
\end{remark}

\begin{construction}
  [The integral de Rham realization]
\label{const:integral_de_rham}
Consider the composition
\begin{equation}\label{eqn:formal_integral_de_rham_map}
  \widehat{\mathcal{U}}\to \BT[\mathcal{G}^c,\mbox{-}\mu_v]{\infty}\to \mathrm{Gr}_{\mbox{-}\mu_v}/\mathcal{G}^c_{\Reg{E_v}}
\end{equation}
where the second map is as in Remark~\ref{rem:versality_bt_n}. This map classifies a filtered $\mathcal{G}^c$-bundle over $\widehat{\mathcal{U}}$, whose restriction over $\widehat{\mathcal{U}}_{\eta}$ is canonically isomorphic to the restriction of $\Fil^\bullet_{\mathrm{Hdg}}\mathbf{dR}_{K,\Rat}$: One checks this using Remarks~\ref{rem:dllz} and~\ref{rem:filtered_de_rham_comparison}. This tells us that there exists a map
\[
\mathcal{U}\to \mathrm{Gr}_{\mbox{-}\mu_v}/\mathcal{G}^c_{\Reg{E,(v)}}
\]
whose formal completion is ~\eqref{eqn:formal_integral_de_rham_map} and whose generic fiber classifies $\Fil^\bullet_{\mathrm{Hdg}}\mathbf{dR}_{K,\Rat}\vert_U$. Denote the associated filtered $\mathcal{G}^c$-bundle over $\mathcal{U}$ by $\Fil^\bullet_{\mathrm{Hdg}}\mathbf{dR}_K$.
\end{construction}

\begin{remark}
[The integral Kodaira--Spencer map]
  \label{rem:Kodaira--Spencer_algebraization}
Suppose that $\mathcal{U}$ is \emph{smooth} over $\Reg{E,(v)}$. Then the filtered $\mathcal{G}$-bundle from the previous remark gives us a Kodaira--Spencer map  
\begin{align}
\label{eqn:integral_Kodaira--Spencer_map}
\mathbb{T}_{\mathcal{U}/\Reg{E,(v)}}\to \mathbf{dR}_K(\mf{g})/\Fil^0\mathbf{dR}_K(\mf{g})
\end{align}
whose generic fiber and $v$-adic completion is described in Remark~\ref{rem:ks_map_rational} and Remark~\ref{rem:Kodaira--Spencer}, respectively.
\end{remark}

\begin{lemma}
[A versality condition]
\label{lem:versality_condition} 
The following are equivalent:
\begin{enumerate}
  \item The map $\widehat{\mathcal{U}}\to \BT[\mathcal{G}^c,\mbox{-}\mu_v]{\infty}$ is formally \'etale
  \item $\mathcal{U}$ is smooth over $\Reg{E,(v)}$ and the map~\eqref{eqn:integral_Kodaira--Spencer_map} is an isomorphism.
\end{enumerate}
\end{lemma}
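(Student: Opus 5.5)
The plan is to deduce both directions from the description of the deformation theory of $\BT[\mathcal{G}^c,\mbox{-}\mu_v]{\infty}$ in Remark~\ref{rem:versality_bt_n}, together with its translation into a Kodaira--Spencer map in Remarks~\ref{rem:the_formal_de_rham} and~\ref{rem:Kodaira--Spencer}. The essential point is that the relative cotangent complex $\mathbb{L}^\wedge_{(\mathrm{Gr}_{\mbox{-}\mu_v}/\mathcal{G}^c)/B\mathcal{G}^c}$ is a vector bundle: its dual is $\mathfrak{g}_{\mathrm{twist}}/\Fil^0\mathfrak{g}_{\mathrm{twist}}$.

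\emph{(1)$\Rightarrow$(2).} If $\widehat{\mathcal{U}}\to \BT[\mathcal{G}^c,\mbox{-}\mu_v]{\infty}$ is formally \'etale, then Remark~\ref{rem:the_formal_de_rham} gives $\mathbb{L}^\wedge_{\widehat{\mathcal{U}}/\Reg{E_v}}\simeq \mathbb{L}^\wedge_{(\mathrm{Gr}/\mathcal{G}^c)/B\mathcal{G}^c}|_{\widehat{\mathcal{U}}}$. This is a vector bundle in degree $0$, so $\widehat{\mathcal{U}}$ is formally smooth over $\Spf\Reg{E_v}$; the deformation rings at closed points are formally smooth by Remark~\ref{rem:bt_deformation_rings}. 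Since $\mathcal{U}$ is of finite type, its completed local rings at points of the special fiber are then formally smooth, so $\mathcal{U}$ is smooth along the special fiber. On the generic fiber, $\mathcal{U}_\eta=U$ maps (étale) to $\Sh_K$, which is smooth; the argument should take $U\to\Sh_K$ étale/open, or else note that smoothness near $p$ is what matters. Hence $\mathcal{U}$ is smooth over $\Reg{E,(v)}$. Dualizing, the $v$-adic completion of~\eqref{eqn:integral_Kodaira--Spencer_map} is an isomorphism by Remark~\ref{rem:Kodaira--Spencer}. Both sides of~\eqref{eqn:integral_Kodaira--Spencer_map} are vector bundles on $\mathcal{U}$, of equal rank by the dimension count $\dim\mathcal{G}-\dim\mathcal{P}^-$. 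A map between them is an isomorphism if it is so over the generic fiber (Remark~\ref{rem:ks_map_rational}, again assuming $U\to\Sh_K$ étale) and at every point of the special fiber. The latter is detected by the completion, since the determinant is a unit iff it is a unit near $p$ and after inverting $p$.

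\emph{(2)$\Rightarrow$(1).} If $\mathcal{U}$ is smooth, then $\mathbb{L}^\wedge_{\widehat{\mathcal{U}}}$ is the $p$-completed cotangent bundle, and the completion of~\eqref{eqn:integral_Kodaira--Spencer_map} is an isomorphism. Remark~\ref{rem:Kodaira--Spencer} identifies its dual with the map of Remark~\ref{rem:the_formal_de_rham}, which is therefore an isomorphism. By that remark, together with the fundamental triangle and the identification $\mathbb{L}^\wedge_{\BT{\infty}}\simeq \mathbb{L}^\wedge_{(\mathrm{Gr}/\mathcal{G})/B\mathcal{G}}$, this gives $\mathbb{L}^\wedge_{\widehat{\mathcal{U}}/\BT{\infty}}\simeq 0$, i.e.\ formal étaleness.

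\emph{Main obstacle.} The expected difficulty is the compatibility claim: the completion of the algebraic map~\eqref{eqn:integral_Kodaira--Spencer_map} coming from Construction~\ref{const:integral_de_rham} must agree with the deformation-theoretic map of Remark~\ref{rem:versality_bt_n}. This requires the Grothendieck--Messing identification in Theorem~\ref{thm:gmm} to be compatible with the connection on $T_{\dR}$ (Construction~\ref{const:filtered_F-crystal}). The plan is to check this on first-order thickenings $R[\epsilon]\to R$, using the crystal property from Remark~\ref{rem:crystals_and_de_rham}.
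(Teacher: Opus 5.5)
Your argument is correct and is essentially the paper's: the paper's proof is the single line ``immediate from Remark~\ref{rem:Kodaira--Spencer}'', and you are unpacking that remark together with Remark~\ref{rem:the_formal_de_rham}. The compatibility you flag as the main obstacle is asserted in Remarks~\ref{rem:Kodaira--Spencer} and~\ref{rem:Kodaira--Spencer_algebraization}, so within the paper it is taken as given rather than re-verified. Your further observation is also right: (1) only sees $\widehat{\mathcal{U}}$, so smoothness of $\mathcal{U}$ and the Kodaira--Spencer isomorphism on the generic fiber must come from Remark~\ref{rem:ks_map_rational} under the assumption that $U\to\Sh_K$ is \'etale. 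That assumption holds in the ICM application, where $U=\Sh_K$, and the paper leaves it implicit.
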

\begin{proof}
    Immediate from Remark~\ref{rem:Kodaira--Spencer}.
\end{proof}

\begin{remark}
    [Integral models for automorphic sheaves]
\label{rem:automorphic_sheaves}
The map $\mathcal{U}\to \mathrm{Gr}_{\mbox{-}\mu_v}/\mathcal{G}^c_{\Reg{E,(v)}}$ is precisely the one yielding integral models over $\mathcal{U}$ for automorphic sheaves. Namely, any $\bb{Z}_p$-representation $V$ of $\mathcal{P}^{-,c}_{\mu_v}$, where  $\mathcal{P}^{-,c}_{\mu_v}$ is the parabolic subgroup of $\mathcal{G}^c$ obtained as the image of $\mathcal{P}^-_{\mu_v}$,  yields a canonical $\mathcal{G}^c$-equivariant vector bundle over the homogeneous space $\mathrm{Gr}_{\mbox{-}\mu_v} \simeq \mathcal{P}^{-,c}_{\mu_v}\backslash \mathcal{G}^c_{\Reg{E,(v)}}$, and this pulls back to a vector bundle on $\mathcal{U}$.
\end{remark}

\begin{remark}
    [Filtered $F$-crystals]
\label{rem:filtered_f-crystals_apertile}
Using Construction~\ref{const:filtered_F-crystal}, we see that every apertile integral model $\mathcal{U}$ is equipped with a canonical exact and monoidal functor from $\Rep_{\Int_p}(\mathcal{G}^c)$ to strongly divisible filtered $F$-crystals over $\mathcal{U}$. For the integral canonical models of abelian type that we will encounter below, this recovers a construction of Lovering as in \cite{Lovering2017-fy}.
\end{remark}

\subsection{Integral canonical models}
\label{sub:integral_canonical_models}

\begin{assumption}{For the rest of this section and the next, we will assume that the level $K^p$ is} neat. We will return to the general situation in \S\ref{sub:icms_shimura_stacks} below.  
\end{assumption}
 
\begin{definition}
  [Integral canonical models]\label{defn:ICM}
Suppose that $E'/E$ is an extension in which $p$ is unramified, and that $v'\vert v$ is a place of $E'$. An integral model $\Ss_{K,(v')}$ for $\Sh_K\otimes_EE'$ over $\Reg{E',(v')}$ is an \defnword{integral canonical model} (\defnword{ICM} for short) if it is apertile, satisfies the condition(s) of Lemma~\ref{lem:versality_condition} (the Serre--Tate property), and if, for any mixed characteristic $(0,p)$ complete discrete valuation field $F$ over $\Reg{E'_{v'}}$ with perfect residue field, the subset 
\begin{equation*} 
\Ss_{K,(v')}(\Reg{F})\subset (\Sh_K\otimes_EE'_{v'})(F)
\end{equation*}
is characterized by the conditions in Definition~\ref{introdef:crys_icms}. If $E=E'$, then we will simply write $\Ss_K$ for the ICM.

The morphism $\varpi\colon \Ss_{K,(v')}\to \mr{BT}^{\mc{G}^c,\mbox{-}\mu_v,\mr{alg}}_\infty$ the \defnword{syntomic realization map}, and the pullback along it of the universal object over $\mr{BT}^{\mc{G}^c,\mbox{-}\mu_v,\mr{alg}}_\infty$ is the \defnword{syntomic realization} over $\Ss_{K,(v')}$.
\end{definition}

\begin{remark}
    [Local model diagram for ICMs]
\label{rem:local_model_diagram}
The Serre--Tate property combined with the discussion in Remark~\ref{rem:versality_bt_n} shows that, if we set
\[
\widetilde{\Ss}_K \defn \Ss_K\times_{\mathrm{Gr}_{\mbox{-}\mu_v}/\mathcal{G}^c_{\Reg{E,(v)}}}\mathrm{Gr}_{\mbox{-}\mu_v}
\]
then we obtain a diagram
\[\begin{tikzcd}[cramped,column sep=2.25em]
	& {\widetilde{\Ss}_K} \\
	{\Ss_K} && {\mathrm{Gr}_{\mbox{-}\mu_v}}
	\arrow[from=1-2, to=2-1]
	\arrow[from=1-2, to=2-3]
\end{tikzcd}\]
where the left arrow is a $\mathcal{G}^c$-torsor and the right arrow is formally smooth and $\mathcal{G}^c$-equivariant with relative cotangent complex a vector bundle of rank $\dim \mathcal{G}^c$.
\end{remark}

Let us restate Theorem~\ref{introthm:mapping_property} in the following form, which is now immediate from Proposition~\ref{prop:uniqueness_maps}.
\begin{theorem}
[Mapping property of ICMs]
\label{thm:mapping_property}
Let $\mathcal{Y}$ be an excellent and $\eta$-normal algebraic space over $\Reg{E',(v')}$ with generic fiber $Y$. If $\Ss_{K,(v')}$ is an ICM for $\Sh_K\otimes_EE'$ over $\Reg{E',(v')}$, then giving a map $\tilde{f}\colon\mathcal{Y}\to \Ss_{K,(v')}$ is equivalent to giving maps $\eta\colon \mathcal{Y}\to \BT[\mathcal{G}^c,\mbox{-}\mu_v,\mathrm{alg}]{\infty}$ and $f\colon Y\to \Sh_K\otimes_EE'$ so that the map $Y\to \mathrm{Loc}_{\mathcal{G}^c(\Int_p)}$ induced by $\eta$ lifts $f$ along the map $\Sh_K\to \mathrm{Loc}_{\mathcal{G}^c(\Int_p)}$ classifying $\mathbf{Et}_{K,p}$. Moreover, we have an identification $\varpi\circ \tilde{f}\simeq \eta$.
\end{theorem}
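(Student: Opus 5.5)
The plan is to deduce the theorem directly from Proposition~\ref{prop:uniqueness_maps}, applied with $\mathcal{O} = \Reg{E',(v')}$, $(\mathcal{G},\mu) = (\mathcal{G}^c,\mbox{-}\mu_v)$, $\mathcal{X}_1 = \mathcal{Y}$ and $\mathcal{X}_2 = \Ss_{K,(v')}$. The map $\tilde f\mapsto (\varpi\circ\tilde f,\ \tilde f_\eta)$ clearly produces data of the stated form, since $\varpi$ extends the classifying map of $\mathbf{Et}_{K,p}$. So the content is that this assignment is a bijection. Uniqueness of $\tilde f$ given $f$ is automatic: $\Ss_{K,(v')}$ is separated and $\mathcal{Y}$ is flat, hence $Y$ is schematically dense in $\mathcal{Y}$. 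The identification $\varpi\circ\tilde f\simeq\eta$ is then forced by Theorem~\ref{thm:full_faithfulness}, since both sides lift the same local system on $Y$ and $\mathcal{Y}$ is $\eta$-normal.

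For existence, I take $\mathfrak{Q}_1 = \eta$, $\mathfrak{Q}_2$ the syntomic realization of $\Ss_{K,(v')}$, and $\zeta$ the given isomorphism of local systems induced by the lift. I then check the four hypotheses of Proposition~\ref{prop:uniqueness_maps}.
\begin{enumerate}
\item $\Ss_{K,(v')}$ is of finite type and $\eta$-normal: it is smooth by the Serre--Tate property (Lemma~\ref{lem:versality_condition}), hence normal. $\mathcal{Y}$ is excellent by assumption.
\item The map $\widehat{\Ss}_{K,(v')}\to\BT[\mathcal{G}^c,\mbox{-}\mu_v]{\infty}$ is formally \'etale, hence formally unramified.
\item This is exactly the given map $f$ together with $\zeta$.
\item This is the pointwise step, addressed below.
\end{enumerate}

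For hypothesis (4), let $y\in\mathcal{Y}(\Reg{F})$. Pulling back $\eta$ gives an element of $\BT[\mathcal{G}^c,\mbox{-}\mu_v]{\infty}(\Reg{F})$ whose \'etale realization is $\mathbf{Et}_{K,p}|_{f(y_\eta)}$. By Corollary~\ref{cor:guo_reinecke}(2) applied to $\Spf\Reg{F}$, this realization is crystalline. The pointwise criterion in the definition of an ICM then gives $f(y_\eta)\in\Ss_{K,(v')}(\Reg{F})$.

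With the hypotheses verified, the proposition yields an extension $\tilde f$ of $f$ and an isomorphism $\eta\simeq\varpi\circ\tilde f$ extending $\zeta$. The only subtle point I expect is bookkeeping. The paper defines ICMs with $\Ss_{K,(v')}$ an algebraic space, and the proposition is stated for algebraic spaces, so the hypotheses match. I also need that Corollary~\ref{cor:guo_reinecke} applies to $\Reg{F}$, whose residue field is perfect by assumption; this holds, so the argument goes through.
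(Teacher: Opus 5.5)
Your proposal is correct and follows the paper's approach: the paper deduces this theorem directly from Proposition~\ref{prop:uniqueness_maps}, and your verification of its four hypotheses is accurate. In particular, you use smoothness of $\Ss_{K,(v')}$ from the Serre--Tate property, formal \'etaleness of $\varpi$, crystallinity of \'etale realizations of apertures over $\Reg{F}$, and the ICM pointwise criterion, and you supplement this with the separatedness/full-faithfulness argument for uniqueness.
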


\begin{corollary}[Descent for coefficients]
\label{cor:descent_coefficients_gen_ICMs}
Suppose that $E'/E$ is an extension in which $p$ is unramified, and $v'\vert v$ is a place of $E'$ such that $\Sh_K\otimes_EE'$ admits an ICM $\Ss_{K,(v')}$ over $\Reg{E',(v')}$. Then $\Sh_K$ admits an ICM over $\Reg{E,(v)}$. 
\end{corollary}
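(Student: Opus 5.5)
Given the ICM $\Ss_{K,(v')}$ over $\Reg{E',(v')}$, the candidate model over $\Reg{E,(v)}$ is produced by Galois descent along an \'etale cover. We may enlarge $E'$ to a finite Galois extension of $E$ in which $v$ is unramified. The point is that $E'_{v'}/E_v$ is unramified, so $\Reg{E,(v)}\to\Reg{E',(v')}$ is ind-\'etale after localization. Concretely, reduce to $E'/E$ finite Galois, replace $\Reg{E',(v')}$ by the semilocalization $\Reg{E',(v)}$ at all places above $v$, and use the decomposition group. The first step is to observe that $\Ss_{K,(v')}$ is unique up to unique isomorphism and functorial by Theorem~\ref{thm:mapping_property}. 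Its hypotheses (excellent, $\eta$-normal) hold for ICMs, since the Serre--Tate property gives smoothness over $\Reg{E',(v')}$.

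Second, construct a descent datum. For $\sigma$ in the decomposition group $\Gal(E'_{v'}/E_v)$, which we identify with a subgroup of $\Gal(E'/E)$ after passing to the Galois closure, consider the twist $\sigma^*\Ss_{K,(v')}\defn\Ss_{K,(v')}\otimes_{\Reg{E',(v')},\sigma}\Reg{E',(v')}$. This is again an integral model of $\Sh_K\otimes_EE'$, because $\Sh_K$ is defined over $E$. It is apertile: the $\mathcal{G}^c(\Int_p)$-local system $\mathbf{Et}_{K,p}$ is defined over $\Sh_K$, and the stack $\BT[\mathcal{G}^c,\mbox{-}\mu_v,\mathrm{alg}]{\infty}$ is defined over $\Reg{E,(v)}$. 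It also satisfies the Serre--Tate property and the pointwise criterion, since these conditions are intrinsic and Galois invariant; for the pointwise criterion, twisting $F$ by $\sigma$ preserves crystallinity. Hence $\sigma^*\Ss_{K,(v')}$ is also an ICM. The mapping property then yields unique isomorphisms $\sigma^*\Ss_{K,(v')}\isomto\Ss_{K,(v')}$ extending the canonical identification on generic fibers. Uniqueness forces the cocycle condition. Descent of separated algebraic spaces along the finite \'etale cover $\Spec\Reg{E',(v)}\to\Spec\Reg{E,(v)}$ is effective, so we obtain $\Ss_K$ over $\Reg{E,(v)}$. The syntomic realization map descends as well, again by uniqueness via Theorem~\ref{thm:full_faithfulness}.

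Third, verify that $\Ss_K$ is an ICM. The Serre--Tate property and the formal \'etaleness of the map to $\BT[\mathcal{G}^c,\mbox{-}\mu_v]{\infty}$ can be checked after the \'etale base change, and this base change recovers $\Ss_{K,(v')}$. Finite type and separatedness descend. For the pointwise criterion, take $F$ over $\Reg{E_v}$ with perfect residue field and $x\in\Sh_K(F)$. Pass to the compositum $FE'_{v'}$, which is unramified over $F$ and still has perfect residue field. Crystallinity is insensitive to unramified extension. Moreover, $x$ extends over $\Reg{F}$ if and only if it extends over $\Reg{FE'_{v'}}$: the base change is \'etale, and the extension is unique and therefore descends by \'etale descent. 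The main obstacle is the bookkeeping with the decomposition group versus the full Galois group, i.e., ensuring the descended object lives over $\Reg{E,(v)}$ rather than only over $\Reg{E_v}$. I would handle this by working with the semilocal ring of $E'$ above $v$, which is \'etale over $\Reg{E,(v)}$, and gluing the ICMs at the various places $v'\mid v$. Those ICMs at other places are obtained as Galois translates of $\Ss_{K,(v')}$, as in the second step.
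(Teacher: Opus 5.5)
Your argument is correct, but it takes a different route from the paper's proof.

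\textbf{The paper's route.} The paper never leaves the single place $v'$. After reducing to $E'/E$ finite Galois, it uses the mapping property to make the local group $\Gal(E'_{v'}/E_v)$ act on $\Ss_{K,(v')}$. It then descends only the $v'$-adic completion $\widehat{\Ss}_{K,(v')}$, together with its map to $\BT[\mathcal{G}^c,\mbox{-}\mu_v]{\infty}$, to a formal algebraic space over $\Reg{E_v}$. Finally it obtains $\Ss_K$ over $\Reg{E,(v)}$ by gluing this formal model to $\Sh_K$ over $E$, using the formal/algebraic gluing results of Achinger--Youcis. The ICM conditions are then inherited from $\Ss_{K,(v')}$, since the completion is unchanged.

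\textbf{Your route.} You stay algebraic throughout:
\begin{itemize}
\item You use the full group $\Gal(E'/E)$ to transport $\Ss_{K,(v')}$ to every place $w\mid v$.
\item You Zariski-glue these translates along their common generic fiber $\Sh_K\otimes_EE'$ into a model $\Ss'$ over the semilocal ring $\Reg{E',(v)}$, which is finite \'etale Galois over $\Reg{E,(v)}$.
\item You apply ordinary Galois descent for algebraic spaces.
\item You check the ICM conditions \'etale-locally and by unramified base change of $F$.
\end{itemize}
This gives an elementary argument that needs no formal--algebraic gluing theorem. The cost is the auxiliary translated models at the other places above $v$, together with unramifiedness of $E'/E$ at all of those places. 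Your Galois-closure reduction supplies the latter, given that $p$ is unramified in $E'$.

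\textbf{Your last paragraph is essential.} As literally written, your second step only gives a descent datum for the decomposition group $D_{v'}$ on a model over $\Reg{E',(v')}$. Descending along that lands over the localization of the decomposition field $(E')^{D_{v'}}$ at the place below $v'$. That ring has completion $\Reg{E_v}$ but is not $\Reg{E,(v)}$ in general. The paper avoids this by descending only completions, where the decomposition group suffices. Your gluing of translates over the semilocal ring is the algebraic substitute, and it works for two reasons. First, uniqueness of ICMs over each $\Reg{E',(w)}$ produces isomorphisms $\sigma^*\Ss'\isomto\Ss'$ for every $\sigma\in\Gal(E'/E)$ extending the canonical generic identification. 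Second, flatness and separatedness then force the cocycle condition.
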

\begin{proof}
We can assume without loss of generality that $E'/E$ is finite Galois. It suffices to show that $\widehat{\Ss}_{K,(v')}$, along with the classifying map $\widehat{\Ss}_{K,(v')}\to \BT[\mathcal{G}^c,\mbox{-}\mu_v]{\infty}$, descends to a formal algebraic space $\widehat{\Ss}_K$ over $\Reg{E_{v}}$. Indeed, this would show (e.g., by the gluing results of~\cite{achinger_youcis}) the existence of an apertile integral model $\Ss_K$ over $\Reg{E,(v)}$ satisfying the equivalent conditions of Lemma~\ref{lem:versality_condition}, and the pointwise conditions would follow from those for $\Ss_{K,(v')}$. 

To show that $\widehat{\Ss}_{K,(v')}$ descends, it suffices to establish that the action of $\Gal(E'_{v'}/E_{v})\leqslant \Gal(E'/E)$ on $\Sh_K\otimes_EE'$ extends to an action on $\Ss_{K,(v')}$, which follows from Theorem~\ref{thm:mapping_property} above.
\end{proof}

\begin{remark}{In what follows, we will exclusively focus on the case $E=E'$. By Corollary \ref{cor:descent_coefficients_gen_ICMs}, there is no loss of generality in doing so.}
\end{remark}

\begin{corollary}
  [Uniqueness]
\label{cor:uniqueness}
An ICM $\Ss_K$ for $\Sh_K$ is unique up to unique isomorphism. 
\end{corollary}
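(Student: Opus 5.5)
Suppose $\Ss_K$ and $\Ss'_K$ are two ICMs for $\Sh_K$ over $\Reg{E,(v)}$, with syntomic realization maps $\varpi\colon \Ss_K\to \BT[\mathcal{G}^c,\mbox{-}\mu_v,\mathrm{alg}]{\infty}$ and $\varpi'\colon\Ss'_K\to\BT[\mathcal{G}^c,\mbox{-}\mu_v,\mathrm{alg}]{\infty}$. The plan is to apply Theorem~\ref{thm:mapping_property} twice, once in each direction, and then use the uniqueness in that theorem to see that the two maps are mutually inverse.

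First we check that $\Ss'_K$ satisfies the hypotheses on the source in Theorem~\ref{thm:mapping_property}. By the Serre--Tate property and Lemma~\ref{lem:versality_condition}, $\Ss'_K$ is smooth over $\Reg{E,(v)}$. It is also of finite type, so it is excellent and normal, hence $\eta$-normal. We take $f\colon \Sh_K\to\Sh_K$ to be the identity and $\eta=\varpi'$. Apertility says that $\varpi'$ restricted to the generic fiber classifies $\mathbf{Et}_{K,p}$, so $\eta$ lifts $f$ in the required sense. Theorem~\ref{thm:mapping_property} then gives a map $\tilde f\colon \Ss'_K\to\Ss_K$ that extends the identity on generic fibers and satisfies $\varpi\circ\tilde f\simeq\varpi'$. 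Exchanging the roles of the two models gives $\tilde g\colon \Ss_K\to\Ss'_K$ with the symmetric properties.

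Next, the composite $\tilde f\circ\tilde g\colon \Ss_K\to\Ss_K$ restricts to the identity on $\Sh_K$. It is also compatible with $\varpi$, through the composite of the isomorphisms $\varpi\circ\tilde f\simeq\varpi'$ and $\varpi'\circ\tilde g\simeq\varpi$. The identity of $\Ss_K$ corresponds to the same pair of data $(\mathrm{id},\varpi)$. Since Theorem~\ref{thm:mapping_property} gives an equivalence, we conclude $\tilde f\circ\tilde g=\mathrm{id}$, and symmetrically $\tilde g\circ\tilde f=\mathrm{id}$. Uniqueness of the isomorphism has the same source. Any isomorphism $\Ss'_K\isomto\Ss_K$ compatible with the identifications of generic fibers with $\Sh_K$ is determined by its generic fiber, because the $\Ss_K$ are flat and separated and the generic fiber is schematically dense. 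So that isomorphism must be $\tilde f$.

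The one point needing care is that Theorem~\ref{thm:mapping_property} is an equivalence of groupoids. The datum $\eta$ carries an identification of local systems, so we must confirm that the two compositions produce the same data, including the 2-isomorphisms, as the identity. Theorem~\ref{thm:full_faithfulness} handles this. Since $\Ss_K$ is $\eta$-normal, the isomorphism between the apertures is determined by the isomorphism of the underlying local systems, and on the generic fiber that isomorphism is the identity.
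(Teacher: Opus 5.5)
Your proposal is correct and matches the paper: the paper records this corollary as an immediate consequence of the mapping property (Theorem~\ref{thm:mapping_property}), applied in both directions to the identity of $\Sh_K$. Your final paragraph on 2-isomorphisms is not needed, because $\Ss_K$ is separated and flat with schematically dense generic fiber, so $\tilde f\circ\tilde g=\mathrm{id}$ already follows from agreement on $\Sh_K$, exactly as in your uniqueness argument.
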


\begin{corollary}
    [Functoriality]
\label{cor:functoriality}
Suppose that we have a map of tuples as in \emph{Definition~\ref{defn:maps_between_unramified_tuples}}, and for $i=1,2$ ICMs $\Ss_{K_i}$ for $\Sh_{K_i}$ over $\Reg{E_i,(v_i)}$. Then the map $\Sh_{K_1}\to \Sh_{K_2}\otimes_{E_2}E_1$ extends to a map $\Ss_{K_1}\to\Ss_{K_2}\otimes_{\Reg{E_2,(v_2)}}\Reg{E_1,(v_1)}$.
\end{corollary}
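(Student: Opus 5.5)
The plan is to apply Theorem~\ref{thm:mapping_property} with $\mathcal{Y} = \Ss_{K_1}$, viewed over $\Reg{E_1,(v_1)}$, and with target the base change $\Ss_{K_2}\otimes_{\Reg{E_2,(v_2)}}\Reg{E_1,(v_1)}$. The first step is to observe that this base change is again an ICM for $\Sh_{K_2}\otimes_{E_2}E_1$ over $\Reg{E_1,(v_1)}$ in the sense of Definition~\ref{defn:ICM}, with $E' = E_1$ and $v' = v_1$. The point is that $p$ is unramified in the relevant extension, since both places are hyperspecial/unramified. Hence the base change along the unramified extension $\Reg{E_2,(v_2)}\to\Reg{E_1,(v_1)}$ preserves the following properties:
\begin{itemize}
\item smoothness;
\item apertility, where we compose with $\BT[\mathcal{G}_2^c,\mbox{-}\mu_{v_2},\mathrm{alg}]{\infty}$ pulled back;
\item the Kodaira--Spencer isomorphism of Lemma~\ref{lem:versality_condition};
\item the pointwise criterion, since points over $\Reg{F}$ with $F$ over $\Reg{E_{1,v_1}}$ are in particular points over $\Reg{E_{2,v_2}}$.
\end{itemize}
Also, $\Ss_{K_1}$ is excellent (finite type over $\Reg{E_1,(v_1)}$) and $\eta$-normal (smooth).

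The second step is to produce the data required by Theorem~\ref{thm:mapping_property}. For $f$, take the map $\Sh_{K_1}\to\Sh_{K_2}\otimes_{E_2}E_1$ from Remark~\ref{rem:maps_arising_from_tuples}. For $\eta$, take the composite
\[
\Ss_{K_1}\xrightarrow{\varpi_1}\BT[\mathcal{G}_1^c,\mbox{-}\mu_{v_1},\mathrm{alg}]{\infty}\to \BT[\mathcal{G}_2^c,\mbox{-}\mu_{v_2},\mathrm{alg}]{\infty}\otimes\Reg{E_1,(v_1)},
\]
where the second arrow is change of structure group along $f^c\colon\mathcal{G}^c_1\to\mathcal{G}^c_2$ (Lemma~\ref{lem:Gc_functoriality}, extended integrally since $f_{\Rat_p}$ extends to $\mathcal{G}_1\to\mathcal{G}_2$ and the cuspidal quotients are Zariski closures). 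This arrow is compatible with cocharacters because $f\circ\mu_{v_1}$ is conjugate to $\mu_{v_2}$, and by Remark~\ref{rem:mu_only_conj_class_needed} only the conjugacy class matters.

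The last step, which is the only real verification, is the compatibility hypothesis of Theorem~\ref{thm:mapping_property}. We need that the generic fibre of $\eta$, namely the pushforward of $\mathbf{Et}_{K_1,p}$ along $\mathcal{G}_1^c(\Int_p)\to\mathcal{G}_2^c(\Int_p)$, is isomorphic to $f^*\mathbf{Et}_{K_2,p}$. This is exactly the canonical equivariant map $\mathbf{Et}_{K_1}\to\mathbf{Et}_{K_2}\vert_{\Sh_{K_1}}$ of Remark~\ref{rem:maps_arising_from_tuples}, projected to the $p$-component. Since $g\in G_2(\Adele_f^p)$, the twist $\eta_g$ does not affect the $p$-part, so $K^c_{2,p}=\mathcal{G}_2^c(\Int_p)$ is unchanged. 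I expect the main bookkeeping obstacle to be this identification of $p$-components under the Hecke twist, together with the integral extension of $f^c$. Given these, Theorem~\ref{thm:mapping_property} yields the desired extension $\Ss_{K_1}\to\Ss_{K_2}\otimes_{\Reg{E_2,(v_2)}}\Reg{E_1,(v_1)}$.
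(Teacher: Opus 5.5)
Your proposal is correct and follows the paper's route: the paper's proof is a direct application of Theorem~\ref{thm:mapping_property}, and its only substantive input is the compatibility of \'etale realizations recorded in Remark~\ref{rem:maps_arising_from_tuples} (at $\ell=p$ the Hecke twist by $g\in G_2(\Adele_f^p)$ plays no role, exactly as you note). The extra bookkeeping you supply is left implicit in the paper and you handle it correctly: that the base change of $\Ss_{K_2}$ along the unramified map $\Reg{E_2,(v_2)}\to\Reg{E_1,(v_1)}$ is again an ICM, and that $f^c$ extends integrally to a map $\mathcal{G}^c_1\to\mathcal{G}^c_2$ compatible with the cocharacters.
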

\begin{proof}
    The only thing to observe here is that, under the map of Shimura varieties, for every prime $\ell$, the restriction of $\mathbf{Et}_{K_2,\ell}$ to $\Sh_{K_1}$ is canonically obtained from $\mathbf{Et}_{K_1,\ell}$ via pushforward along the map $K^c_{1,\ell}\to K^c_{2,\ell}$ induced by
    \[
    G^c_1(\Rat_{\ell})\xrightarrow{f}G^c_2(\Rat_{\ell})\xrightarrow{h\mapsto g_\ell hg_\ell^{-1}}G^c_2(\Rat_{\ell}).
    \]
    This was observed in Remark~\ref{rem:maps_arising_from_tuples}.
\end{proof}

\begin{remark}
[Central covers yield finite \'etale maps]
    \label{rem:finite_maps_etale}
In the context of Corollary~\ref{cor:functoriality}, suppose that $G_1\to G_2$ has central kernel. Then the map $\Ss_{K_1}\to \Ss_{K_2}$ is finite \'etale. That it is \'etale is immediate from the Serre--Tate property and Proposition~\ref{prop:central_covers}. Now, our hypothesis on the kernel implies that the map $\Sh_{K_1}\to \Sh_{K_2}$ is finite. Take $\Ss'_{K_1}$ to be the normalization of $\Ss_{K_2}$ in $\Sh_{K_1}$. We have an open immersion $\Ss_{K_1}\to \Ss'_{K_1}$, which one checks is an isomorphism using the pointwise conditions for ICMs.
\end{remark}

\begin{definition}
[Limpidity]
An ICM $\Ss_{K,(v')}$ for $\Sh_K$ is \defnword{limpid} if it admits an extension of the $K^c_\ell$-local system given by $\mathbf{Et}_{K,\ell}\vert_{\Sh_K\otimes_EE'}$ for all primes $\ell\neq p$.
\end{definition}

\begin{corollary}
    [Prime-to-$p$ Hecke correspondences]
\label{cor:prime-to-p_part}  
Suppose that $K^{',p}\subset K^p$ is a compact open subgroup and write $K'$ for the compact open subgroup $K_pK^{',p}\subset K$.
\begin{enumerate}
  \item If $\Sh_K$ admits a limpid ICM $\Ss_K$ over $\Reg{E,(v)}$, then the normalization of $\Ss_K$ in $\Sh_{K'}$ is also a limpid ICM $\Ss_{K'}$ for $\Sh_{K'}$;
  \item If (1) holds, and $g\in G(\Adele_f^p)$ is such that $gK^{',p}g^{-1}\subset K^p$, then the map $i_g\colon\Sh_{K'}\to \Sh_K$ determined on complex points by
  \[
    G(\Rat)\backslash X\times G(\Adele_f)/K' \xrightarrow{[(x,h)]\mapsto [(x,hg^{-1})]}G(\Rat)\backslash X\times G(\Adele_f)/K,
  \]
  extends to a finite \'etale map of ICMs $\Ss_{K'}\to \Ss_K$.
\end{enumerate}
\end{corollary}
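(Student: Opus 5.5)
For (1), I would define $\Ss_{K'}$ as the normalization of $\Ss_K$ in $\Sh_{K'}$ and first show that $\Ss_{K'}\to\Ss_K$ is finite étale. Since $K'\subset K$ differ only away from $p$, the cover $\Sh_{K'}\to\Sh_K$ is obtained from the $K^c$-torsor $\mathbf{Et}_K$ by passing to a quotient by a subgroup of finite index in $K^{c,p}$. Concretely, it is determined by the prime-to-$p$ components $\mathbf{Et}_{K,\ell}$ for the finitely many $\ell$ at which $K^{',p}$ and $K^p$ differ. After passing to a normal subgroup of $K^p$ contained in $K^{',p}$, we may take it to be a finite Galois cover with group $K/K'Z(G)_K$ (Lemma~\ref{lem:GQK}). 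Limpidity says that each $\mathbf{Et}_{K,\ell}$ extends over $\Ss_K$. Hence the finite étale cover $\Sh_{K'}\to\Sh_K$ extends to a finite étale cover of $\Ss_K$, namely the one associated with the extended local systems. That cover is normal because $\Ss_K$ is smooth, so by uniqueness of normalization it agrees with $\Ss_{K'}$.

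Next I would verify that $\Ss_{K'}$ is an ICM. It is apertile, because we compose $\Ss_{K'}\to\Ss_K\to\BT[\mathcal{G}^c,\mbox{-}\mu_v,\mathrm{alg}]{\infty}$, and $\mathbf{Et}_{K',p}$ is the pullback of $\mathbf{Et}_{K,p}$ since $K^c_p=K'^c_p$. The Serre--Tate property holds because we are composing the formally étale map $\widehat{\Ss}_K\to\BT{\infty}$ with an étale map. For the pointwise criterion, suppose $x\in\Sh_{K'}(F)$ has crystalline $\mathbf{Et}_{K',p}|_x$. Its image lies in $\Ss_K(\Reg F)$, and since $\Ss_{K'}\to\Ss_K$ is finite, the valuative criterion lets the point lift to $\Ss_{K'}(\Reg F)$. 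The converse direction and the equivalence with the potentially crystalline condition are inherited directly from $\Ss_K$. Limpidity of $\Ss_{K'}$ is immediate, since each $\mathbf{Et}_{K',\ell}$ is the restriction of the extended $\mathbf{Et}_{K,\ell}$ with its structure group reduced along the cover.

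For (2), I would apply Corollary~\ref{cor:functoriality} to the map of unramified tuples $(G,\mathcal{G},X,K')\to(G,\mathcal{G},X,K)$ given by $f=\mathrm{id}$ and the element $g$. This extends $i_g$ to a map $\Ss_{K'}\to\Ss_K$. To see that this map is finite étale, I would factor it as $\Ss_{K'}\to\Ss_{g^{-1}K^pg\cdot K_p}\xrightarrow{\sim}\Ss_K$. The isomorphism comes from $\eta_g$, extended by uniqueness of ICMs and the mapping property (Theorem~\ref{thm:mapping_property}). The first map is the transition map of part (1), applied to the limpid ICM transported along $\eta_g$, and so it is finite étale. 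Alternatively, finite étaleness follows directly from Remark~\ref{rem:finite_maps_etale}.

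The step I expect to be hardest is the first: identifying the normalization with the finite étale extension determined by the extended $\ell$-adic local systems, in particular ensuring that the Galois cover is captured by finitely many $\ell$-adic torsors after accounting for the quotient by $Z(G)_K$.
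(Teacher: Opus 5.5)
Your proposal is correct and is essentially the paper's argument: part (1) rests on limpidity, since the extended $\ell$-adic local systems give a finite \'etale extension of $\Sh_{K'}\to\Sh_K$ over the smooth $\Ss_K$, which must then be the normalization; part (2) is Corollary~\ref{cor:functoriality} combined with Remark~\ref{rem:finite_maps_etale}, exactly as in the paper, which just states these two facts tersely. In your hardest step, make explicit that $K^{c,p}$ need not be a product over $\ell$. The cover is then an open and closed piece of the finite \'etale cover attached to $\prod_{\ell\in S}\mathbf{Et}_{K,\ell}/U_\ell$ for a finite set $S$, and its closure in the extension over $\Ss_K$ is a union of connected components, hence still finite \'etale.
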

\begin{proof}
    The first assertion follows from the fact that $\mathbf{Et}_{K.\ell}$ extends to a local system over $\Ss_K$, for all $\ell\neq p$, and the second is immediate from Corollary~\ref{cor:functoriality} and Remark~\ref{rem:finite_maps_etale}.
.\end{proof}

\begin{remark}
    [Comparison with the definition of Milne--Moonen]
\label{rem:milne-moonen_comparison}
In \cite{milne:canonical}, ~\cite{moonen:models}, and~\cite{kisin:abelian}, one finds a definition of integral canonical models that involves the inverse limit $\Sh_{K_p}$. An integral model $\Ss_{K_p}$ for $\Sh_{K_p}$ has the \emph{extension property} if, for any regular and formally smooth $\Reg{E,(v)}$-scheme $S$, any map $S_\eta\to \Sh_{K_p}$ of $E$-schemes extends to a map $S\to \Ss_{K_p}$. If $\Ss_{K_p}$ is itself regular and formally smooth, this property characterizes it as an integral canonical model in the sense of Milne. 

Suppose that $(G,\mathcal{G},X)$ admits limpid ICMs over $\Reg{E,(v)}$, and that the following \emph{additional} condition holds: 
\begin{itemize}
    \item Given a discrete valuation field $F$ over $E_v$ and $x\in \Sh_K(F)$, we have $x\in \Ss_K(\Reg{F})$ precisely when $\mathbf{Et}_{K,\ell}\vert_x$ is \emph{unramified} for all $\ell\neq p$.\footnote{Note that this condition holds in essentially all cases where ICMs are known to exist. One should be able to use a theory of ICMs for toroidal compactifications to verify that it is a general fact about ICMs.}
\end{itemize}

We can now take the inverse limit over the resulting inverse system $\{\Ss_K\}$ of finite \'etale maps to get a model $\Ss_{K_p}$ for $\Sh_{K_p}$. We claim that $\Ss_{K_p}$ satisfies Milne's extension property when $p>3$. 

First, we can assume that $S = \Spec R$ is affine. Now, for any mixed characteristic complete discrete valuation field $F$ over $E_v$ with perfect residue field, and an $x\colon \Spec(F)\to \Sh_{K_p}$, the projection of $x_K$ to $\Sh_K$ lifts to an $\Reg{F}$-point of $\Ss_K$: Indeed, the existence of the lift $x$ shows that, for all $\ell\ne p$, the local system $\mathbf{Et}_{K,\ell,x_K}$ is unramified (in fact, trivial). Therefore, the desired assertion follows from our additional assumption above.

Next, by Popescu desingularization \stacks{07GC}, we can present $R$ as a filtered colimit of smooth $\Reg{E,(v)}$-algebras implying $S$ is an inverse limit of smooth affine $\Reg{E,(v)}$-schemes $S_i$. Given $S_\eta\to \Sh_{K_p}$, the projection onto $\Sh_K$ for any finite level $K$ factors through $S_{i,\eta}$ for some $i$. The previous paragraph shows that the restriction of $\mathbf{Et}_{K,p}$ over $S_{i,\eta}$ is pointwise crystalline, and so Proposition~\ref{prop:pointwise_smooth_canonical} below now tells us that we have an extension $S_i\to \Ss_K$, and this implies our claim.
\end{remark}

\begin{remark}
    [Comparison with the definition of Bakker--Shankar--Tsimerman]
\label{rem:bst_comparison}
Combining Remark~\ref{rem:milne-moonen_comparison} with the discussion in~\cite[\S 1.3]{bst} tells us that when $p>3$, ICMs satisfying the additional assumption made above also satisfy the canonicity property of Bakker--Shankar--Tsimerman. 
\end{remark}

\section{Integral canonical models: Constructions and further properties}
\label{sec:icm_constructions}

We will maintain the notation from above. In this section, we will prove our main theorems on the existence of ICMs, as well as prove a few deeper properties of such models. All levels $K\subset G(\Adele_f)$ will be assume to be neat.

\subsection{CM points and canonical models for tori}
\label{sub:cm_points}

\begin{construction}
[CM Shimura varieties]
\label{const:cm_shimura_varieties}
    Let $T$ be a $\bb{Q}$-torus with a map $h_T\colon\mathbb{S}\to T_{\Real}$ and let $\mu_T\in X_*(T)$ be the cocharacter associated with $h_T$. Let $E_T\subset \overline{\Rat}$ be the field of definition of $\mu_T$. Then, for any neat compact open subgroup $K_T\subset T(\Adele_f)$, the tuple $(T,\{h\},K_T)$ yields a zero-dimensional Shimura variety $\Sh_{K_T}$ over $E_T$ with 
    \[
    \Sh_{K_T}(\Comp)\simeq T(\Rat)\backslash T(\Adele_f)/K.
    \]
    Fix a place $w\vert p$ of $E_T$, and let $\Ss_{K_T}$ be the normalization of $\Spec \Reg{E_T,(w)}$ in $\Sh_{K_T}$. We will assume that $K_T$ is of the form $K_{T,p}K_T^p$, where $K_{T,p}\subset T(\Rat_p)$ and $K^p_T\subset T(\Adele_f^p)$ are compact open subgroups.
\end{construction}

\begin{remark}
    [Galois structure of CM Shimura variety]
\label{rem:galois_structure_cm}Consider the reflex norm
\[
  r_{\mu_T}\colon\Res_{E_T/\Rat}\Gm \xrightarrow{\Res_{E_T/\Rat}\mu_T}\Res_{E_T/\Rat}T\xrightarrow{\mathrm{Nm}_{E_T/\Rat}}T.
\]
This induces a map
\begin{align*}
  E_T^\times\backslash\Adele_{E_T}^\times \xrightarrow{r_{\mu_T}(\Adele)} T(\Rat)\backslash T(\Adele)\to T(\Rat)\backslash T(\Adele_f)/K_T
\end{align*}
factoring via the global reciprocity map (with the geometric normalization) through a homomorphism
\[
  \tau_K(\mu_T)\colon\Gal(E_T^{\ab}/E_T)\to T(\Rat)\backslash T(\Adele_f)/K_T
\]
giving the target the structure of a $\Gal(E_T^{\ab}/E_T)$-equivariant finite set. The Shimura variety $\Sh_{K_T}$ is now canonically isomorphic to the \'etale scheme over $E_T$ associated with this datum.
\end{remark}

\begin{remark}
    [Unramifiedness criterion]
\label{rem:unramifiedness_criterion_cm_sv}
Suppose that $w\vert p$ is a $p$-adic place of $E$. The description above, combined with local reciprocity, shows that the normalization $\Ss_{K_T,w}$ of $\Reg{E_T,w}$ in $\Sh_{K_T}\otimes_{E_T}E_{T,w}$ is finite \'etale over $\Reg{E_T,w}$ precisely when we have
\[
r_{\mu_T}(\Reg{E_T,w}^\times)\subset T(\Rat)K_T\cap T(\Rat_p)\subset T(\Adele_f).
\]
Therefore, if $K_T\cap T(\Rat_p)$ is the maximal compact open subgroup of $T(\Rat_p)$, then this is always the case. In this situation, if $\varpi\in E_{T,w}$ a uniformizer, we find that multiplication by $r_{\mu_T}(\varpi)^{-1}$ on $T(\Rat)\backslash T(\Adele_f)/K_T$ gives an endomorphism of $\Ss_{K_T,w}$ that is a lift of the $q$-Frobenius on the mod-$\varpi$ fiber, where $q = |k(w)|$.
\end{remark}

\begin{remark}
  [\'Etale realizations for tori]
\label{rem:etale_tori}
Suppose that we have a finite extension $F/E_{T,w}$ within $\overline{\Rat}_p$ with maximal abelian extension $F^{\ab}\subset \overline{\Rat}_p$, and a point $x\in \Sh_{K_T}(F)$. Then, for each prime $\ell$, we can describe the local systems $\mathbf{Et}_{K_T,\ell}\vert_x$ over $\Spec F$ explicitly following the discussion in~\cite[\S4.1]{daniels2023canonical} (see also \cite[\S4.3.13]{kisin2021stf}). Taking the inverse limit of the maps $\tau_K(\mu_T)$ over $K$ yields a map
\[
\Gal(E_T^{\ab}/E_T) \to T(\Rat)^-\backslash T(\Adele_f).
\]
The restriction of this map to $\Gal(\overline{\Rat}_p/F)$ factors through $T(\Rat)^-\backslash T(\Rat)^-K_T$, and so gives a map
\begin{align}\label{eqn:reciprocity_map}
\Gal(F^{\ab}/F)\to T(\Rat)^-\backslash T(\Rat)^-K_T\to T^c(\Rat)\backslash T^c(\Rat)K^c_T \simeq K^c_T,
\end{align}
where the last isomorphism uses the neatness of $K_T$ (and hence of $K^c_T$). Projecting onto the $\ell$-adic component now gives us the $K^c_{T,\ell}$-valued Galois representation corresponding to $\mathbf{Et}_{K_T,\ell}\vert_x$.
\end{remark}

\begin{remark}
  [Crystallinity at $p$]
\label{rem:tori_crystallinity}
The description in Remark~\ref{rem:etale_tori}, combined with~\cite[Proposition 4.3.14]{kisin2021stf} and \cite[Lemma 4.3]{daniels2023canonical}, shows that $\mathbf{Et}_{K_T,p}$ is crystalline of type $\mbox{-}\mu_T$.
\end{remark}

\begin{remark}
  [Unramifiedness at $\ell\neq p$]
\label{rem:tori_unramified}
For $\ell\neq p$, the description from Remark~\ref{rem:etale_tori} shows that the local system $\mathbf{Et}_{K_T,\ell}$ is unramified. Indeed, it is enough to know that the image of the composition
\[
\Reg{F}^\times\to F^\times \to \Gal(F^{\ab}/F)\xrightarrow{\eqref{eqn:reciprocity_map}}K^c_T
\]
has trivial projection onto $K^{c,p}_T$ (the middle arrow is the local reciprocity map for $F$). This is now clear, as the description in terms of the reflex norm shows that this composition lifts to a map $\Reg{F}^\times\to K^c_{T,p}$.
\end{remark}

Integral canonical models always exist for CM Shimura varieties:
\begin{proposition}
  [ICMs for tori]
\label{prop:integral_canonical_models_for_tori}
Suppose that $T_{\Rat_p}$ extends to a torus $\mathcal{T}$ over $\Int_p$ such that $K_{T,p} = \mathcal{T}(\Int_p)$. Then $\Ss_{K_T}$ is a limpid ICM for $\Sh_{K_T}$.
\end{proposition}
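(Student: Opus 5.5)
We have to check three things for the normalization $\Ss_{K_T}$ of $\Spec\Reg{E_T,(w)}$ in $\Sh_{K_T}$: it is apertile, it has the Serre--Tate property, and it satisfies the pointwise criterion. Limpidity will then come from Remark~\ref{rem:tori_unramified}. The underlying geometric fact is that $\Ss_{K_T}$ is finite étale over $\Reg{E_T,(w)}$. Since $K_{T,p}=\mathcal{T}(\Int_p)$ is the maximal compact open subgroup of $T(\Rat_p)$ (for an unramified torus with its reductive model), the criterion of Remark~\ref{rem:unramifiedness_criterion_cm_sv} gives étaleness at $w$. Consequently $\widehat{\Ss}_{K_T}$ is a finite disjoint union of $\Spf$'s of rings of integers of finite unramified extensions $F$ of $E_{T,w}$. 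In particular it is normal, and each component has perfect residue field.

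\textbf{Apertile.} Here $\mathcal{G}^c=\mathcal{T}^c$ is a torus and $\mbox{-}\mu_T$ is central, so by Remark~\ref{rem:central_cochar_case} the stack $\BT[\mathcal{T}^c,\mbox{-}\mu_T]{\infty}$ is formally étale over $\Spf\hat{\mathcal{O}}$. The Serre--Tate property is then automatic once a classifying map exists: both source and target are formally étale over $\hat{\mathcal{O}}$, so any map between them is formally étale. (Equivalently, in Lemma~\ref{lem:versality_condition}, the tangent space is $0$ and $\mathfrak{g}/\Fil^0=0$.)

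To produce the aperture, work component by component. On each $\Spf\Reg{F}$, Remark~\ref{rem:tori_crystallinity} shows that $\mathbf{Et}_{K_T,p}$ is crystalline of type $\mbox{-}\mu_T$. Proposition~\ref{prop:gmu_rings_of_integers} then lifts it to an object of $\BT[\mathcal{T}^c,\mbox{-}\mu_T]{\infty}(\Reg{F})$, and this lift is unique by Theorem~\ref{thm:full_faithfulness}. Gluing these with the generic fiber gives the map $\Ss_{K_T}\to\BT[\mathcal{T}^c,\mbox{-}\mu_T,\mathrm{alg}]{\infty}$. One must check that the type in Proposition~\ref{prop:gmu_rings_of_integers} matches: $\mu_T$ over $E_{T,w}$ extends to $\mathcal{T}_{\hat{\mathcal{O}}}$, which is automatic for tori.

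\textbf{Pointwise criterion.} Let $F$ be a complete discretely valued field with perfect residue field and $x\in\Sh_{K_T}(F)$. Because $\Ss_{K_T}$ is finite over $\Reg{E_T,(w)}$ and normal, the valuative criterion shows that every $F$-point extends to an $\Reg{F}$-point, so condition (a) always holds. Remark~\ref{rem:tori_crystallinity} shows (b) always holds, and (c) follows from (b). So all three conditions are vacuously equivalent.

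The crystallinity input, Remark~\ref{rem:tori_crystallinity}, is only phrased over finite extensions $F$, whereas the criterion needs arbitrary complete $F$ with perfect residue field. I expect this passage to general $F$ to be the main obstacle. The plan is to handle it by base change: the point $x$ factors through some finite extension of $E_{T,w}$ (namely the residue field of the image point of the étale $E_T$-scheme $\Sh_{K_T}$), and crystallinity is preserved under restriction to $\mathrm{Gal}(\overline{F}/F)$.
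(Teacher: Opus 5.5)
Your proposal is correct and follows essentially the same route as the paper: finite \'etaleness of $\Ss_{K_T}$ via Remark~\ref{rem:unramifiedness_criterion_cm_sv}, formal \'etaleness of any classifying map because the aperture stacks for a central cocharacter are \'etale, existence of the aperture from Proposition~\ref{prop:gmu_rings_of_integers} together with Remark~\ref{rem:tori_crystallinity}, and limpidity from Remark~\ref{rem:tori_unramified}. Your explicit check of the pointwise criterion, which the paper leaves implicit, is fine, and the step you flag as the main obstacle is harmless: besides your base-change argument, (a)$\Rightarrow$(b) for an arbitrary $F$ follows at once by pulling the aperture back along $x\in\Ss_{K_T}(\Reg{F})$ and applying Corollary~\ref{cor:guo_reinecke}, while (c)$\Rightarrow$(a) is your valuative-criterion argument.
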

\begin{proof}
  In this case, $E_T$ is unramified at $p$ by Remark \ref{rem:unramifiedness_criterion_cm_sv}, and one finds using Remark~\ref{rem:unramifiedness_criterion_cm_sv} that $\Ss_{K_T}$ is a finite \'etale scheme over $\Reg{E_T,(w)}$. Moreover, the stacks $\BT[\mathcal{T},\mbox{-}\mu_T]{n}$ are \'etale over $\Reg{E_{T},w}$ and are in fact non-canonically isomorphic to the $\Reg{E_{T},w}$-stack $\mathrm{Loc}_{\underline{\mathcal{T}(\Int/p^n\Int)}}$ of $\mathcal{T}(\Int/p^n\Int)$-local systems; see~\cite[Proposition 10.4.1]{gmm}. Therefore, \emph{any} map $\widehat{\Ss}_{K_T,w}\to \BT[\mathcal{T},\mbox{-}\mu_T]{\infty}$ will be formally \'etale.  Proposition~\ref{prop:gmu_rings_of_integers} now reduces us to checking that, for every finite extension $L/E_{T,w}$ and every $x\in \Ss_K(\Reg{L})$, the following statements hold:

\begin{enumerate}
  \item  $\mathbf{Et}_{K_T,p}\vert_x$ belongs to $\mathrm{Loc}^{\mathrm{crys},{\mbox{-}\mu_T}}_{\mathcal{T}(\Int_p)}(L)$;
  \item For $\ell\neq p$, $\mathbf{Et}_{K_T,\ell}\vert_x$ is unramified.
\end{enumerate}
  
  These follow from Remarks~\ref{rem:tori_crystallinity} and~\ref{rem:tori_unramified}.
\end{proof}

\begin{construction}
\label{const:cm_points}
    Suppose that we have an unramified tuple $(G,\mathcal{G},X,K)$ and an ICM $\Ss_K$ for $\Sh_K$ over $\Reg{E,(v)}$ for a place $v\vert p$. As a special case of Construction~\ref{const:cm_shimura_varieties}, suppose that we have a torus $T$ over $\Rat$ and a closed immersion $i\colon T\to G$ over $\Rat$ such that, for some $x\in X$, the map $h_x\colon\mathbb{S}\to G_{\Real}$ factors through $i_{\Real}$ via a map $h_T\colon\mathbb{S}\to T_{\Real}$. Given $g\in G(\Adele^p_f)$, set $K_{T,g} = i^{-1}(gKg^{-1})\subset T(\Adele_f)$. Then, for any place $w\vert p$ of $E_T$ above $v$, we obtain an integral model $\Ss_{K_{T,g},(w)}$ for $\Sh_{K_{T,g}}$ over $\Reg{E_T,(w)}$ as the normalization of $\Spec \Reg{E,(v)}$ in $\Sh_{K_{T,g}}$.
\end{construction}

\begin{lemma}
    [Extension of CM points]
\label{lem:extension_cm_points}
The map $\Sh_{K_{T,g}}\to \Sh_K\otimes_E E_T$ given on $\Comp$-points by 
\[
T(\Rat)\backslash T(\Adele_f)/K_{T,g}\xrightarrow{[t]\mapsto [(x_T,i(t)g]}G(\Rat)\backslash X \times G(\Adele_f)/K
\]
extends to a map of integral models $\Ss_{K_{T,g},(w)}\to \Ss_K \otimes_{\Reg{E,(v)}}\Reg{E_T,(w)}$.
\end{lemma}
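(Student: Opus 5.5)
The plan is to apply the mapping property, Theorem~\ref{thm:mapping_property}, with $\mathcal{Y} = \Ss_{K_{T,g},(w)}$. By Remark~\ref{rem:unramifiedness_criterion_cm_sv}, this is a finite étale, hence normal and excellent, scheme over $\Reg{E_T,(w)}$. That theorem requires two inputs. The first is the generic map $f\colon \Sh_{K_{T,g}}\to \Sh_K\otimes_E E_T$, which is given. The second is a map $\eta\colon \Ss_{K_{T,g},(w)}\to \BT[\mathcal{G}^c,\mbox{-}\mu_v,\mathrm{alg}]{\infty}$ whose generic fiber classifies $f^*\mathbf{Et}_{K,p}$.

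Start with the functoriality of étale realizations. Remark~\ref{rem:maps_arising_from_tuples}, applied to the map of Shimura data $(T,h_T)\to (G,X)$ twisted by $g$, identifies $f^*\mathbf{Et}_{K,p}$ with the pushforward of $\mathbf{Et}_{K_{T,g},p}$ along $K_{T,g,p}^c\to K^c_p=\mathcal{G}^c(\Int_p)$. Here $g\in G(\Adele_f^p)$, so the $p$-component of the twist is trivial.

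The main obstacle is that $T_{\Rat_p}$ need not extend to a torus $\mathcal{T}$ with $K_{T,g,p}=\mathcal{T}(\Int_p)$. Consequently Proposition~\ref{prop:integral_canonical_models_for_tori} cannot be applied directly to produce a $(\mathcal{T},\mbox{-}\mu_T)$-aperture to push forward. I would bypass this by working pointwise and with completions.

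Since $\mathcal{Y}$ is finite étale over $\Reg{E_T,(w)}$, its $w$-adic completion is a finite disjoint union of $\Spf\Reg{F}$ with $F/E_{T,w}$ finite unramified. The algebraic aperture is a fiber product of the formal aperture and the generic local system. Moreover, the generic and completed parts glue uniquely, by Theorem~\ref{thm:full_faithfulness} together with the definition of $\BT[\mathcal{G}^c,\mbox{-}\mu_v,\mathrm{alg}]{\infty}$. So it suffices to produce, for each such point $x$, an object of $\BT[\mathcal{G}^c,\mbox{-}\mu_v]{\infty}(\Reg{F})$ lifting $f^*\mathbf{Et}_{K,p}\vert_x$.

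By Proposition~\ref{prop:gmu_rings_of_integers}, this reduces to checking that $f^*\mathbf{Et}_{K,p}\vert_x$ is crystalline of type $\mbox{-}\mu_v$. Crystallinity follows from Remark~\ref{rem:tori_crystallinity}. That remark applies to the $T^c$-local system for arbitrary neat $K_T$; crystallinity is preserved under pushforward along the algebraic map $T^c\to G^c$. One can also check it on a faithful representation, via Remark~\ref{rem:crystalline_GZ_p}. The type is the image of $\mbox{-}\mu_T$, which is $G$-conjugate to $\mbox{-}\mu_v$ because $h_x$ factors through $h_T$; alternatively one can cite Remark~\ref{rem:pointwise_lifts}.

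Once $\eta$ exists, Theorem~\ref{thm:mapping_property} gives the extension $\Ss_{K_{T,g},(w)}\to \Ss_K\otimes\Reg{E_T,(w)}$, which is what the lemma asserts. If the mapping property is stated only over the base $\Reg{E,(v)}$, one first composes with $\mathcal{Y}\to\Spec\Reg{E,(v)}$ and then uses the structure map to $\Reg{E_T,(w)}$ to land in the base change.
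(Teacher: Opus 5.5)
Your argument is the paper's proof. You apply Theorem~\ref{thm:mapping_property} and reduce to the complete local rings $\Reg{F}$ of $\Ss_{K_{T,g},(w)}$ (the paper phrases this step via Lemma~\ref{lem:reduction_to_complete_local}). You then conclude with Proposition~\ref{prop:gmu_rings_of_integers} together with Remarks~\ref{rem:tori_crystallinity} and~\ref{rem:maps_arising_from_tuples}.

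One claim should be dropped. $\Ss_{K_{T,g},(w)}$ is \emph{not} finite \'etale over $\Reg{E_T,(w)}$ in general. Remark~\ref{rem:unramifiedness_criterion_cm_sv} requires $r_{\mu_T}(\Reg{E_T,w}^\times)\subset T(\Rat)K_{T,g}\cap T(\Rat_p)$. This can fail when $K_{T,g,p}=T(\Rat_p)\cap K_p$ is not maximal compact, which is exactly the obstacle you point out a paragraph later. An example is CM points on a modular curve with CM by an order of conductor divisible by $p$; these are only defined over ring class fields ramified at $w$. The paper proves unramifiedness only in the special situation of Lemma~\ref{lem:canonical_lift_unramified}.

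Nothing in your argument depends on this claim. The normalization is in any case a finite disjoint union of spectra of semilocal Dedekind domains, hence normal and excellent. Its $p$-adic completion is a product of $\Reg{F}$'s with $F/E_{T,w}$ finite but possibly ramified. Proposition~\ref{prop:gmu_rings_of_integers} applies to any complete discrete valuation ring with perfect residue field, so it covers this case.
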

\begin{proof}
To alleviate notation, set $K_T = K_{T,g}$. Given Theorem~\ref{thm:mapping_property}, it is enough to know that the restriction of $\mathbf{Et}_{K,p}$ over $\Sh_{K_T}$ lifts to a map
\[
  \Ss_{K_T,(w)}\to \BT[\mathcal{G}^c,\mbox{-}\mu_v,\mathrm{alg}]{\infty}.
\]
As $\Ss_{K,T,(w)}$ is a product of spectra of localizations of rings of integers in number fields, we reduce to knowing by Lemma~\ref{lem:reduction_to_complete_local} that, for every complete local ring $\hat{\Rg}$ of $\Ss_{K,T,(w)}$, the local system $\mathbf{Et}_{K,p}|_{\hat{\Rg}[\nicefrac{1}{p}]}$ lifts to $\BT[\mathcal{G}^c,\mbox{-}\mu_v]{\infty}(\hat{\Rg})$.

By Proposition~\ref{prop:gmu_rings_of_integers}, this is equivalent to checking that the restriction of $\mathbf{Et}_{K,p}$ over $\hat{\Rg}[\nicefrac{1}{p}]$ is crystalline of type $\mbox{-}\mu_v$, which is immediate from Remark~\ref{rem:tori_crystallinity} and Remark~\ref{rem:maps_arising_from_tuples}. 
\end{proof}

\subsection{Reduction of structure group}
\label{sub:reduction_of_structure_group}

We can now prove Theorem~\ref{introthm:reduction_of_structure}, whose statement we now recall:
\begin{theorem}
  [Reduction of structure group]
\label{thm:reduction_of_structure_group}
Let $(G,\mathcal{G},X,K)\to (G^\sharp,\mathcal{G}^\sharp,X^\sharp,K^\sharp)$ be a map of neat unramified tuples such that $\mathcal{G}\to \mathcal{G}^\sharp$ is a closed immersion. Let $E^\sharp$ be the reflex field for $(G^\sharp,X^\sharp)$, and let $v\vert p$ be a place for $E$ lying above a place $v^\sharp$ for $E^\sharp$. Suppose that $\Sh_{K^\sharp}$ admits a limpid ICM $\Ss_{K^\sharp}$ over $\Reg{E^\sharp,(v^\sharp)}$. Then the normalization of $\Reg{E,(v)}\otimes_{\Reg{E^\sharp,(v^\sharp)}}\Ss_{K^\sharp}$ in $\Sh_K$ gives a limpid ICM for $\Sh_K$ over $\Reg{E,(v)}$.
\end{theorem}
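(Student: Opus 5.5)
Let $\Ss_K$ denote the normalization of $\Reg{E,(v)}\otimes_{\Reg{E^\sharp,(v^\sharp)}}\Ss_{K^\sharp}$ in $\Sh_K$. Since $K$ and $K^\sharp$ are neat and the map of Shimura varieties $\Sh_K\to \Sh_{K^\sharp}\otimes_{E^\sharp}E$ is finite (unramified and in fact a closed immersion after shrinking $K^{\sharp,p}$, but we only need finiteness), $\Ss_K$ is a normal flat finite type algebraic space over $\Reg{E,(v)}$, finite over $\Ss_{K^\sharp}$. The plan has four steps.

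\textbf{Step 1: limpidity and the pointwise criterion.} Limpidity is immediate: by Remark~\ref{rem:maps_arising_from_tuples} and Lemma~\ref{lem:Gc_functoriality}, $\mathbf{Et}_{K,\ell}$ is a reduction of structure group of $\mathbf{Et}_{K^\sharp,\ell}|_{\Sh_K}$ along the closed immersion $K^c_\ell\to K^{\sharp,c}_\ell$. Since $\mathbf{Et}_{K^\sharp,\ell}$ extends over $\Ss_{K^\sharp}$, the corresponding finite étale covers of $\Sh_K$ are unramified along the special fiber of the normal space $\Ss_K$. Purity (Zariski--Nagata) then gives the extension.

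For the pointwise criterion, let $x\in \Sh_K(F)$. If $x\in \Ss_K(\Reg{F})$, then its image lies in $\Ss_{K^\sharp}(\Reg{F})$, so $\mathbf{Et}_{K^\sharp,p}|_x$ is crystalline. By Remark~\ref{rem:crystalline_GZ_p} and Lemma~\ref{lem:Gc_functoriality} (since $\mathcal{G}^c\to\mathcal{G}^{\sharp,c}$ is a closed immersion, a faithful representation restricts to a faithful one), $\mathbf{Et}_{K,p}|_x$ is crystalline. Conversely, if $\mathbf{Et}_{K,p}|_x$ is potentially crystalline, so is the pushforward, so $x$ maps into $\Ss_{K^\sharp}(\Reg{F})$. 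By normality, $x$ then lies in $\Ss_K(\Reg{F})$.

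\textbf{Step 2: apertility.} Next we show that $\Ss_K$ is apertile and satisfies the Serre--Tate property. The natural tool is Proposition~\ref{prop:Gmu_aperture_pointwise_condition_normal}, applied with $(\mathcal{G},\mu)=(\mathcal{G}^c,\mbox{-}\mu_v)$, $(\mathcal{G}^\sharp,\mu^\sharp)=(\mathcal{G}^{\sharp,c},\mbox{-}\mu_{v^\sharp})$ and $\mathcal{X}^\sharp=\Ss_{K^\sharp}\otimes\Reg{E,(v)}$. Equidimensionality of the right relative dimension holds because $\dim\Sh_K=\dim \mathcal{G}^c-\dim\mathcal{P}^-$ (Remark~\ref{rem:ks_map_rational}) and $\Ss_K$ is flat. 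The commuting diagram in Setup~\ref{setup:reduction_of_structure_group} is supplied by the aperture on $\Ss_{K^\sharp}$.

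\textbf{Step 3: verifying the hypotheses of Proposition~\ref{prop:reduction_of_structure_group} at closed points.} Fix a closed point $x_0$ and set $R_1=\hat{\Rg}_{\mathcal{X}^\sharp,x_0}$ and $R_2=\hat{\Rg}_{\Ss_K,x_0}$. The hypotheses are checked as follows.
\begin{enumerate}
\item $R_1$ is formally smooth with a Frobenius lift, and $R_1\to\BT{\infty}$ is formally étale by the Serre--Tate property for $\Ss_{K^\sharp}$.
\item $R_2$ is normal, and $R_1/I\to R_2$ is finite and birational; this uses finiteness of normalization together with excellence, and the fact that $\Sh_K\to\Sh_{K^\sharp}$ is a closed immersion for small enough $K^{\sharp,p}$, or at least is generically unramified onto its image.
\item The dimension condition follows from flatness.
\item The generic fiber $\widehat U_{2,\eta}$ is smooth, because $\Sh_K$ is smooth and the generic fiber of $\Spf R_2$ is an open in the analytification; one should argue this via Achinger--Youcis or Huber's comparison of adic generic fibers for normal models.
\end{enumerate}

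\textbf{Step 4: existence of a point of type $-\mu_v$.} The key remaining hypothesis is that \emph{some} point $x\colon R_2\to\Reg{K}$ has type $\mbox{-}\mu_v$. By flatness, every closed point $x_0$ lifts to an $\Reg{K}$-point, and Remark~\ref{rem:pointwise_lifts} (via Liu--Zhu and Diao--Lan--Liu--Zhu) shows that the type of every crystalline point of $\Sh_K$ is $\mbox{-}\mu_v$. Crystallinity of such lifts is known from Step 1.

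Granting Step 4, Proposition~\ref{prop:Gmu_aperture_pointwise_condition_normal} produces the aperture together with formal étaleness, and hence smoothness of $\Ss_K$ via Lemma~\ref{lem:versality_condition}.

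\textbf{Main obstacle.} I expect the main difficulty to lie in hypotheses (b) and the smoothness of $\widehat U_{2,\eta}$. In particular, one must ensure that the map $R_1\to R_2$ is birational onto its image, since the map $\Sh_K\to\Sh_{K^\sharp}$ need not be injective. I would first handle this by shrinking $K^{\sharp,p}$ (using limpidity of $\Ss_{K^\sharp}$ and Corollary~\ref{cor:prime-to-p_part}) so that $\Sh_K\to\Sh_{K^\sharp}$ becomes a closed immersion, following Kisin's argument. I would then descend back to the original level $K$ via finite étale quotients.
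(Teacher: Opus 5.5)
Your proposal is correct and follows the paper's argument. You reduce, via Kisin's Lemma (2.1.2) and Corollary~\ref{cor:prime-to-p_part}(1), to the case where $\Sh_K\to\Sh_{K^\sharp}$ is a closed immersion. There Proposition~\ref{prop:Gmu_aperture_pointwise_condition_normal}, with its hypotheses supplied by Remark~\ref{rem:pointwise_lifts}, gives the aperture and the Serre--Tate property, and limpidity and the pointwise criterion are inherited from $\Ss_{K^\sharp}$.

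Two simplifications. First, no descent back to level $K$ is needed: only $K^{\sharp,p}$ is shrunk, and normalization is transitive, so the normalization of $\Ss_{K^\sharp_1}$ in $\Sh_K$ already is the normalization of $\Ss_{K^\sharp}$ in $\Sh_K$. Second, limpidity needs no Zariski--Nagata purity (which would in any case not apply on a merely normal space): each finite-level cover defined by $\mathbf{Et}_{K,\ell}$ is open and closed in the restriction of a finite \'etale cover already defined over $\Ss_K$.
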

\begin{proof}
By Lemma~\ref{lem:Gc_functoriality}, the map $G^c\to G^{\sharp,c}$ is also  a closed immersion, and therefore so is the map $\mathcal{G}^c\to \mathcal{G}^{\sharp,c}$.

Suppose first that $\Sh_K\to \Sh_{K^\sharp}$ is a closed immersion. We then have the following diagram
\[\begin{tikzcd}[sep=2.25em]
	{\Sh_K} & {\mathrm{Loc}_{\mathcal{G}(\Int_p)} = \BT[\mathcal{G}^c,\mbox{-}\mu_v,\mathrm{alg}]{\infty}[\nicefrac{1}{p}]} \\
	{\Ss_K} & {\BT[\mathcal{G}^c,\mbox{-}\mu_v,\mathrm{alg}]{\infty}} \\
	{\Ss_{K^\sharp}} & {\BT[\mathcal{G}^\sharp,\mbox{-}\mu_v,\mathrm{alg}]{\infty}}
	\arrow[from=1-1, to=1-2]
	\arrow[from=1-1, to=2-1]
	\arrow[from=1-2, to=2-2]
	\arrow["{?\exists}", dotted, from=2-1, to=2-2]
	\arrow[from=2-1, to=3-1]
	\arrow[from=2-2, to=3-2]
	\arrow[from=3-1, to=3-2]
\end{tikzcd}\]
 The bottom left vertical arrow is the normalization of a closed immersion by construction, and the bottom horizontal arrow is formally \'etale. Therefore, to fill in the middle dotted arrow and also know that it is formally \'etale, we can appeal to Proposition~\ref{prop:Gmu_aperture_pointwise_condition_normal}, whose hypotheses are verified by Remark~\ref{rem:pointwise_lifts}. For $\ell\neq p$, the $K^{\sharp,c}_\ell$-local system $\mathbf{Et}_{K^\sharp,\ell}$ extends over $\Ss_{K^\sharp}$ by hypothesis, and this immediately implies that the $K^c_\ell$-local system $\mathbf{Et}_{K,\ell}$ extends over $\Ss_K$.

 To finish, we need to know that $\Ss_K$ satisfies the pointwise condition for ICMs: This just amounts to the observation that by construction the equality
 \[
  \Ss_K(\Reg{F}) = \Sh_K(F)\cap \Ss_{K^\sharp}(\Reg{F})\subset \Sh_{K^\sharp}(F)
 \]
holds for any discrete valuation field $F$ over $E$.

In general: By~\cite[Lemma (2.1.2)]{kisin:abelian}, we can find a compact open subgroup $K^{\sharp,p}_1\subset K^{\sharp,p}$ such that, with $K^\sharp_1 = K^{\sharp,p}_1K^\sharp_p$, we have $K = K_1^\sharp\cap G(\Adele_f^p)$ and the map $\Sh_{K}\to \Sh_{K^\sharp_1}$ is a closed immersion. By the above discussion, it is enough to know that $\Sh_{K^\sharp_1}$ admits an ICM whenever $\Sh_{K^\sharp}$ does so, and this is (1) of Corollary~\ref{cor:prime-to-p_part}.
\end{proof}

\subsection{Spaces of quasi-isogenies}
\label{sub:spaces_of_isogenies}

 In this subsection, we will study how Hecke correspondences can be used to define spaces of quasi-isogenies between points on a limpid ICM $\Ss_K$. Throughout, we assume that $G = G^c$.

\begin{remark}
 \label{rem:hecke_action}
As noted in Construction~\ref{const:Kc_etale_realization}, we have a canonical pro-Galois cover $\Sh\to \Sh_K$ with Galois group $K\subset G(\Adele_f)$,\footnote{This is where we use the assumption $G = G^c$.} obtained as the inverse limit of the Shimura varieties $\Sh_{K'}$ with $K'\unlhd K$ of finite index. Now, $G(\Adele_f)$ acts on this inverse system in the usual way (see Remark~\ref{rem:prime-to-p_hecke} for the description of the action of $G(\Adele_f^p)$ on the prime-to-$p$ Hecke tower), and this lifts to an action of $G(\Adele_f)$ on $\Sh$ as an $E$-scheme that extends the action of $K$ on it as an $\Sh_K$-scheme. 

Similarly, if $\Ss_{K_p}\to \Ss_K$ is the inverse limit of the prime-to-$p$ Hecke tower of ICMs $\Ss_{K'}$ with $K'\unlhd K$ of the form $K^{',p}K_p$ (see Corollary~\ref{cor:prime-to-p_part}), then we obtain an action of $G(\Adele_f^p)$ on $\Ss_{K_p}$ as an $\Reg{E,(v)}$-algebraic space, extending the action of $K^p$ on it as an $\Ss_K$-algebraic space.
\end{remark}

\begin{definition}
    [Quasi-isogenies in characteristic $0$]
\label{def:isogenies}
For an $E$-algebra $R$ and $x,y\in \Sh(R)$, a \defnword{quasi-isogeny from $x$ to $y$} is an element $g\in G(\Adele_f)$ with $x\cdot g = y$. We set $\mathrm{QIsog}(x,y)$ to be the set of such quasi-isogenies, and shorten this to $\mathrm{Aut}^{\circ}(x)$ when $x=y$, in which case it is a subgroup of $G(\Adele_f)$.
\end{definition}

\begin{definition}
 [Prime-to-$p$ quasi-isogenies in general]
If $R$ is an $\Reg{E,(v)}$-algebra and $x,y\in \Ss_{K_p}(R)$, then a \defnword{prime-to-$p$ quasi-isogeny from $x$ to $y$} is an element $g\in G(\Adele_f^p)$ with $x\cdot g = y$. We set $\mathrm{QIsog}^p(x,y)$ to be the set of such quasi-isogenies, and shorten this to $\mathrm{Aut}^{(p)}(x)$ when $x=y$, in which case it is a subgroup of $G(\Adele_f^p)$.
\end{definition}

\begin{remark}
    [Descent to finite level]
\label{rem:isogenies_descent}
Suppose that $K\subset G(\Adele_f)$ is a compact open of the form $K_pK^p$. Viewing $\mathrm{QIsog}^p$ as an algebraic space\footnote{For $g\in G(\Adele_f^p)$, the open and closed subspace of $\mathrm{QIsog}^p$ consisting of quasi-isogenies lying in $K^pgK^p$ is the pullback over $\Ss_{K_p}\times \Ss_{K_p}$ of the Hecke correspondence $\Ss_{K\cap gKg^{-1}}\to \Ss_K\times \Ss_K$.} over $\Ss_{K_p}\times \Ss_{K_p}$, we see that it is canonically $K^p$-equivariant, and so descends to an algebraic space $\mathrm{QIsog}^p_K$ over $\Ss_K\times \Ss_K$. Similarly, $\mathrm{QIsog}$ also descends to a scheme $\mathrm{QIsog}_K$ over $\Sh_K\times \Sh_K$. For $x\in \Ss_K(R)$ (resp.\@ $x\in \Sh_K(R)$), we will write $\Aut^p_K(x)$ (resp.\@ $\Aut^\circ_K(x)$) for $\mathrm{QIsog}^p_K(x,x)$ (resp.\@ $\mathrm{QIsog}_K(x,x)$). 
\end{remark}

\begin{remark}
    [Algebraic groups obtained from \'etale realizations]
\label{rem:reductive_groups_realizations}
Suppose that $\kappa$ is a field over $\Reg{E,(v)}$, and that we have $x\in \Ss_K(\kappa)$. We can associate with this an algebraic group $I_{\ell,x}$ over $\Rat_\ell$ when $\ell$ is a prime invertible in $\kappa$. For this, we use the $K_{\ell}$-local system $\mathbf{Et}_{K,\ell}\vert_x$ to obtain a $\Rat_\ell$-linear exact tensor functor $\omega_{\ell,x}\colon\Rep_{\Rat_\ell}(G)\to \mathrm{Loc}_{\Rat_\ell}(\kappa)$, where the target is the  category of $\Rat_\ell$-local systems over $\Spec \kappa$. We now take $I_{\ell,x}$ to be the affine algebraic group of automorphisms of $\omega_{\ell,x}$.

Concretely, this can be realized as follows: Choose a faithful representation $V$ for $G$ and tensors $\{s_{\alpha}\}\subset V^\otimes$ whose pointwise stabilizer is $G$ and let $\mathbf{Et}_{K,\ell}(V)\vert_x$ be the associated $\Rat_\ell$-local system over $\kappa$ equipped with global sections $\{s_{\alpha,\ell,x}\}$. Fixing a separable closure $\overline{\kappa}$ of $\kappa$ with Galois group $\Gamma_\kappa \defn \Gal(\overline{\kappa}/\kappa)$, this local system corresponds to a continuous $\Gamma_\kappa$-representation on a $\Rat_\ell$-vector space $V_{\ell,x}$ equipped with invariant tensors $\{s_{\alpha,\ell,x}\}$. The pointwise stabilizer $G_{\ell,x}\subset \GL(V_{\ell,x})$ of these tensors is a reductive group, equipped with a $G(\Rat_\ell)$-conjugacy class of isomorphisms with $G_{\Rat_\ell}$. We now find that $I_{\ell,x}$ can be identified with the commutant of $\rho(\Gamma_\kappa)$ in $G_{\ell,x}$. 

An obvious variant of the construction also gives an algebraic group $I_{\Adele_f^p,x}$ over $\Adele_f^p$, whose base-change over $\Rat_\ell$ recovers $I_{\ell,x}$ for each prime $\ell\neq p$. 
\end{remark}

\begin{remark}
    [Algebraic groups associated with $F$-isocrystals with $G$-structure]
\label{rem:algebraic_groups_f-isocrystal_with_G-structure}
Suppose that $\kappa$ is a perfect field over $k(v)$ and that we have $x\in \Ss_K(\kappa)$. Let $\mf{Q}_x$ be the pullback of the syntomic realization to $x$. By Construction~\ref{const:newton_map}, we can associate an $F$-isocrystal with $G$-structure over $\kappa$, which can be viewed as an exact $\bb{Q}_p$-linear tensor functor
\[
\omega_{p,x}\colon \Rep_{\Rat_p}(G)\to \mathrm{Isoc}_\kappa,
\]
of $\Rat_p$-linear Tannakian categories. Here, the target is the category of $F$-isocrystals over $\kappa$: Pairs $(M,F)$, where $M$ is a finite-dimensional $W(\kappa)[\nicefrac{1}{p}]$-vector space and $F\colon \sigma^\ast M\isomto M$ is an isomorphism where $\sigma$ is the absolute Frobenius on $\kappa$. We define $I_{p,x}$ as the group of automorphisms of $\omega_{p,x}$. 

This can be described more concretely when the associated object in $\mathrm{Isoc}_G(\kappa) = [LG/_{\mathrm{Ad}_\varphi}LG](\kappa)$ admits a lift to $\delta_x \in G(W(\kappa)[\nicefrac{1}{p}])$. In this case, for any $\Rat_p$-algebra $R$, we have a canonical identification
 \[
 I_{p,x}(R) = \left\{g\in G(W(\kappa)\otimes_{\Int_p}R):\; g^{-1}\delta_x \varphi(g) = \delta_x\right\}.
 \]
\end{remark}

\begin{remark}
    [$\ell$-adic $q$-Frobenius elements]
\label{rem:frobenius_elements_ell-adic}
Suppose that $\kappa$ is a finite extension of $k(v)$ of order $q$, and fix an algebraic closure $\overline{\kappa}$ for it. Then for all $\ell\neq p$, the action of the geometric Frobenius element in $\Gal(\overline{\kappa}/\kappa)$ on $\mathbf{Et}_{K,\ell}\vert_{\overline{x}}$ yields an element $\gamma_{\ell,x}\in I_{\ell,x}(\Rat_\ell)$. If we choose an algebraic closure $\overline{\kappa}$ of $\kappa$ and a lift $\tilde{x}\in \Ss_{K_p}(\overline{\kappa})$ of $x$, then we obtain an inclusion $I_{\ell,x}\subset I_{\ell,\tilde{x}} \simeq G_{\Rat_\ell}$, which identifies $I_{\ell,x}$ with the commutant in $G_{\Rat_\ell}$ of $\gamma_{\ell,x}$. 
\end{remark}

\begin{remark}
    [The $p$-adic $q$-Frobenius element]
\label{rem:frobenius_element_p-adic}
In the situation of Remark~\ref{rem:frobenius_elements_ell-adic}, we also have an element $\gamma_{p,x}\in I_{p,x}(\bb{Q}_p)$.  By Lang's theorem, the $F$-isocrystal with $G$-structure obtained from $\mf{Q}_x$ is represented by some $\delta_x\in \mathcal{G}(W(\kappa))\varphi(\mu(p))\mathcal{G}(W(\kappa))$ well-defined up to the $\mathrm{Ad}_\varphi$-action of $\mathcal{G}(W(\kappa))$. If $q = p^r$, then we have the element $\gamma_{p,x} = \delta_x\sigma(\delta_x)\cdots \sigma^{r-1}(\delta_x)$, which is seen to live in $I_{p,x}(\Rat_p)\subset G(W(\kappa)[\nicefrac{1}{p}])$, and is in fact central there. Moreover, the natural map $I_{p,x}\otimes_{\Rat_p}W(\kappa)[\nicefrac{1}{p}]\to G\otimes_{\Rat_p}W(\kappa)[\nicefrac{1}{p}]$ is an isomorphism onto the commutant of $\gamma_{p,x}$ in the target.
\end{remark}

\begin{remark}
    [Group schemes over $\Int_p$]
\label{rem:group_schemes_Zp}
Suppose that $x\in \Ss_K(\kappa)$ is valued in a perfect field $\kappa$ over $\Reg{E,(v)}$. Then the group $I_{p,x}$ admits a canonical $\Int_p$-model $I_{\Int_p,x}$:

\begin{enumerate}
\item If $\kappa$ has characteristic $0$, this is obtained via the same method used for $I_{p,x}$ itself: One instead uses the exact tensor functor $\Rep_{\Int_p}(\mathcal{G})\to \mathrm{Loc}_{\Int_p}(\kappa)$ associated with $\mathbf{Et}_{K,p}\vert_x$.

\item If $\kappa$ has characteristic $p$, then we employ the exact tensor functor $\Rep_{\Int_p}(\mathcal{G})\to \mathrm{Vect}(\kappa^{\mathrm{syn}})$ associated with the syntomic realization $\mf{Q}_x$ to the same effect.
\end{enumerate}
\end{remark}

\begin{lemma}
    [The specialization map for group schemes]
\label{lem:specialization_group_schemes}
Let $F$ be a complete discrete valuation field over $E_v$ with perfect residue field $\kappa$ and fix $x\in \Ss_K(\Reg{F})$ with associated points $x_0\in \Ss_K(\kappa)$ and $x_\eta\in \Sh_K(F)$. Then:
\begin{enumerate}
    \item For $\ell\neq p$, there is a canonical specialization isomorphism $I_{\ell,x_\eta}\isomto I_{\ell,x_0}$.
    \item There is a canonical specialization homomorphism of group schemes $I_{\Int_p,x_\eta}\to I_{\Int_p,x_0}$.
\end{enumerate}
\end{lemma}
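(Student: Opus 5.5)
For part (1), I will use that $\Ss_K$ is limpid, so $\mathbf{Et}_{K,\ell}$ extends to a $K_\ell$-local system over $\Ss_K$ (recall $G=G^c$ in this subsection). Pull it back along $x\colon \Spec\Reg{F}\to \Ss_K$. This gives an étale $K_\ell$-local system on $\Spec\Reg{F}$, i.e. a continuous representation of $\pi_1(\Spec \Reg{F})\simeq \Gal(\overline{\kappa}/\kappa)$. Its restrictions to $\Spec F$ and $\Spec\kappa$ are $\mathbf{Et}_{K,\ell}\vert_{x_\eta}$ and $\mathbf{Et}_{K,\ell}\vert_{x_0}$ respectively.

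Both tensor functors $\omega_{\ell,x_\eta}$ and $\omega_{\ell,x_0}$ then factor through the single functor $\omega_{\ell,x}\colon\Rep_{\Rat_\ell}(G)\to\mathrm{Loc}_{\Rat_\ell}(\Reg{F})$. Since $\Reg{F}$ is Henselian, restriction $\mathrm{Loc}_{\Rat_\ell}(\Reg{F})\to \mathrm{Loc}_{\Rat_\ell}(\kappa)$ is an equivalence. Restriction $\mathrm{Loc}_{\Rat_\ell}(\Reg{F})\to\mathrm{Loc}_{\Rat_\ell}(F)$ is fully faithful, because $\Gal(\overline{F}/F)\to\Gal(\overline\kappa/\kappa)$ is surjective. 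Hence the induced maps $\underline{\Aut}^\otimes(\omega_{\ell,x_0})\isomfrom\underline{\Aut}^\otimes(\omega_{\ell,x})\to \underline{\Aut}^\otimes(\omega_{\ell,x_\eta})$ are isomorphisms, and composing gives $I_{\ell,x_\eta}\isomto I_{\ell,x_0}$. Concretely, in the description of Remark~\ref{rem:reductive_groups_realizations}, both groups are the commutant in $G_{\ell,x}$ of the same image of inertia-trivial Galois. Canonicity comes from the uniqueness of the extension of the local system, since the restriction functor to the generic fiber is fully faithful.

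For part (2), I will use the syntomic realization $\mf{Q}_x\in \BT[\mathcal{G},\mbox{-}\mu_v]{\infty}(\Reg{F})$. By Proposition~\ref{prop:gmu_rings_of_integers} and the Tate full faithfulness of Theorem~\ref{thm:full_faithfulness}, its étale realization is $\mathbf{Et}_{K,p}\vert_{x_\eta}$, and its restriction to $\kappa$ is $\mf{Q}_{x_0}$. The defining tensor functor for $I_{\Int_p,x_\eta}$ factors as
\[
\Rep_{\Int_p}(\mathcal{G})\xrightarrow{(\cdot)_{\mf{Q}_x}}\mathrm{Vect}(\Reg{F}^{\mathrm{syn}})\xrightarrow{T_{\et}}\mathrm{Loc}_{\Int_p}(F).
\]
I will take $I_{\Int_p,x_\eta}$ as the automorphism group scheme of the $F$-gauge tensor functor. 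Concretely, for a $\Int_p$-algebra $R$, these are tensor automorphisms of $(\cdot)_{\mf{Q}_x}\otimes R$, i.e. $R$-points of the automorphism sheaf of $\mf{Q}_x$ viewed as a $\mathcal{G}$-torsor on $\Reg{F}^{\mathrm{syn}}$, restricted to those that are "constant". The main step is to identify this with automorphisms of $\mathbf{Et}_{K,p}\vert_{x_\eta}$. This follows from Theorem~\ref{thm:guo_reinecke}(2) (full faithfulness for $\Spf\Reg{F}$, a base formal scheme), applied to $V\otimes R$ for finite $R$ and then extended by Tannakian reconstruction. Pullback along $\kappa^{\mathrm{syn}}\to\Reg{F}^{\mathrm{syn}}$ then gives a homomorphism into the automorphisms of $(\cdot)_{\mf{Q}_{x_0}}$, which is $I_{\Int_p,x_0}$.

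The main obstacle is making the identification of $I_{\Int_p,x_\eta}$ with automorphisms of the integral $F$-gauge functor hold on $R$-points for arbitrary flat $\Int_p$-algebras $R$, not just on $\Int_p$-points. My approach is to realize both groups as closed subgroup schemes of $\GL(\Lambda)$ via a faithful $\Lambda$ with tensors (Setup~\ref{setup:faithful_repn_tensors}), and to compare them after base change to $\Rat_p$. Over $\Rat_p$, the crystalline comparison (Remark~\ref{rem:filtered_f-isocrystals}) identifies $I_{p,x_\eta}$ with the automorphisms of the filtered $\varphi$-module $D_{\mathrm{crys}}$, and hence with a subgroup of $I_{p,x_0}$ (those automorphisms preserving the filtration). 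The integral homomorphism is then the schematic closure of this inclusion, which lands in $I_{\Int_p,x_0}$ because integral lattices are preserved by full faithfulness.
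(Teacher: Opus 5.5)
Your proposal follows the paper's proof, which is only a two-line sketch. Part (1) is specialization for the \'etale fundamental group: $\mathbf{Et}_{K,\ell}$ extends over $\Ss_K$ by limpidity, and you pull it back to $\Spec \Reg{F}$, exactly as you do. Part (2) is the crystalline specialization of Remark~\ref{rem:specialization}: you lift $\mathbf{Et}_{K,p}\vert_{x_\eta}$ fully faithfully to $\mf{Q}_x$ over $\Reg{F}$ and restrict to $\kappa^{\mathrm{syn}}$. Your rational comparison via $D_{\mathrm{crys}}$ is also correct. It exhibits $I_{p,x_\eta}$ as the closed subgroup of $I_{p,x_0}$ preserving the Hodge filtration, and this rational form is the only one used later (Lemma~\ref{lem:p-adic_comparison_canonical_lift}).

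One caveat concerns your last paragraph. Full faithfulness only controls points with values in $\Int_p$-flat rings. With torsion coefficients it fails over ramified $\Reg{F}$: for instance $\Int/p\Int\simeq\mu_p$ over $F\ni\mu_p$, but not over $\Reg{F}$. So the schematic closure can only give a homomorphism out of the flat closure of $I_{p,x_\eta}$ inside $I_{\Int_p,x_\eta}$. But $I_{\Int_p,x_\eta}$ is a scheme-theoretic centralizer of the Galois image (Remark~\ref{rem:group_schemes_Zp}) and need not be flat, and then no extension to it need exist.

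Here is a concrete case. Take the modular curve, $p$ odd, and the canonical lift of an ordinary elliptic curve, viewed over $F=W(\ov{\Field}_p)[\nicefrac{1}{p}](\mu_p)$. The Galois image on $T_p\simeq\Int_p(1)\oplus\Int_p$ is $\mathrm{diag}(1+p\Int_p,1)$. Hence $I_{\Int_p,x_\eta}=\Spec\Int_p[x_{ij},\det^{-1}]/(px_{12},px_{21})$, whose special fibre is all of $\GL_{2,\Field_p}$. On the other hand $I_{\Int_p,x_0}=\Gm^2$, because there are no maps between the \'etale and multiplicative parts, even with torsion coefficients. The units of $\Field_p[\GL_2]$ are $\Field_p^\times\det^{\Int}$, so every character of $I_{\Int_p,x_\eta}$ restricts on the generic fibre to a power of $\det$. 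Therefore the rational isomorphism $I_{p,x_\eta}\isomto I_{p,x_0}$ does not extend to a homomorphism $I_{\Int_p,x_\eta}\to I_{\Int_p,x_0}$.

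So you should state (2) for the flat closure, equivalently on points with values in $\Int_p$-flat algebras. That is what both your argument and the paper's actually deliver. Define the map directly on such points via your Tannakian argument, rather than by taking a closure. Preservation of $\Reg{L}$-points (your ``integral lattices are preserved'') does not by itself force a generic morphism to extend over a non-normal flat model.
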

\begin{proof}
    The $\ell\neq p$ case is immediate from the definitions and specialization for the \'etale fundamental group. For the $\Int_p$-group schemes, one uses the crystalline specialization map from Remark~\ref{rem:specialization}.
\end{proof}

\begin{remark}
    [Realizations of quasi-isogenies]
\label{rem:realizations_quasi-isogenies}
Suppose that we have $x\in \Ss_K(\kappa)$ with $\kappa$ a field. If $\kappa$ is in characteristic $p$, there are canonical realization maps
\[
\Aut^p_K(x) \to I_{\Aff^p_f,x}(\Aff^p_f) = \Aut(\mathbf{Et}_{K,\Adele_f^p}\vert_x)\;;\; \Aut^p_K(x) \to I_{\Int_p,x}(\Int_p) = \Aut(\mathfrak{Q}_x).
\]
The first of these is obtained via descent from the obvious inclusion $\Aut^p(\tilde{x})\subset G(\Aff^p_f)$, for any $\tilde{x}\in \Ss_{K_p}(\ov{\kappa})$ lifting $x$. The second is obtained from the fact that the syntomic realization is invariant for the prime-to-$p$ Hecke action. If $\kappa$ is a field extension of $E$, then we similarly have an \'etale realization map
\[
\Aut^{\circ}_K(x)\to  I_{\Aff^p_f,x}(\Aff^p_f)\times I_{p,x}(\Rat_p).
\]
\end{remark}

\begin{lemma}
    [Algebraic groups obtained from quasi-isogenies: Over $\Comp$]
\label{lem:reductive_groups_from_isogenies_C}
Let $x\in \Sh(\Comp)$ and let $x^{\ad}$ be its image in $\Sh^{\ad}(\Comp)$. Then there is a canonical reductive algebraic group $I_x$ over $\Rat$ with the following properties:
\begin{enumerate}
    \item There is a natural inclusion $Z(G)\hookrightarrow I_x$ and $(I_{x}/Z(G))\otimes\Real$ is compact.
    \item There is a map $I_x\to G^{\ab}$ restricting to the natural map $Z(G)\to G^{\ab}$.
    \item We have $I_x(\Rat) = \Aut^\circ(x)$.
    \item For every prime $\ell$, there is a canonical realization map of algebraic groups $I_x\otimes\Rat_\ell\to G_{\Rat_\ell}$ that is a closed immersion, and is compatible with the realization map $\Aut^{\circ}(x)\to I_{\ell,x}(\Rat_\ell)\simeq G(\Rat_\ell)$.
    \item If $\sigma\in \Aut(\Comp)$, then there is a canonical isomorphism $I_x\isomto I_{\sigma(x)}$ extending the identity on $Z(G)$, which is compatible with the natural identification $\Aut^{\circ}(x)\isomto \Aut^{\circ}(\sigma(x))$, and also with the canonical identification $I_{\ell,x}\isomto I_{\ell,\sigma(x)}$ obtained from the isomorphism $\mathbf{Et}_{K,\ell}\vert_x \isomto \mathbf{Et}_{K,\ell}\vert_{\sigma(x)}$.
\end{enumerate}
\end{lemma}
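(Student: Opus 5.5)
We will construct $I_x$ from Hodge theory at the complex point, via the Mumford--Tate group. Write $x = [(h,g)]$ with $h\in X$ and $g\in G(\Adele_f)$. We take
\[
I_x \defn \mathrm{Cent}_G(h),
\]
the centralizer in $G$ of the Deligne homomorphism $h\colon\mathbb{S}\to G_{\Real}$. More precisely, $I_x$ is the centralizer of the Mumford--Tate group $\mathrm{MT}_h\subset G$, the smallest $\Rat$-subgroup through which $h$ factors. Since $h$ factors through $\mathrm{MT}_h$, this is the same as the $\Rat$-group of $g'\in G$ with $\mathrm{int}(g')\circ h = h$. For $\sigma$ to act on $(h,g)$ we will need an intrinsic description. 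In Tannakian terms, $I_x$ is the group of tensor automorphisms of the fiber functor $\Rep_\Rat(G)\to$ (rational Hodge structures) given by $V\mapsto V_h$, which is the variation from Remark~\ref{rem:de_rham_realization_generic_fiber} evaluated at $x$.

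The properties (1)--(4) follow in order.

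For (1), $Z(G)\subset I_x$ is clear. Since $\mathrm{int}(h(i))$ is a Cartan involution of $G^{\ad}_\Real$ (Shimura axiom), its centralizer in $G^{\ad}_\Real$ is compact. The group $(I_x/Z(G))_\Real$ embeds in that centralizer, so it is compact; reductivity comes from the same argument.

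For (2), take the composite $I_x\subset G\to G^{\ab}$.

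For (3), we compute stabilizers: $[(h,g)]\cdot\gamma = [(h,g)]$ in the pro-cover $\Sh = G(\Rat)\backslash X\times G(\Adele_f)$. Using neatness (the stabilizers are torsion-free), this identifies $\Aut^\circ(x)$ with $g^{-1}\mathrm{Cent}_{G(\Rat)}(h)g$. Conjugating by $g$ then gives (3).

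For (4), the realization map $\mathrm{Cent}_G(h)_{\Rat_\ell}\to G_{\Rat_\ell}$ is $g^{-1}(\cdot)g$. It is compatible with the fibers of $\mathbf{Et}_{K,\ell}$ at $x$, because the fiber of the pro-\'etale torsor at $x$ is trivialized by $g$.

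The main obstacle is (5), which compares $x$ with its conjugate $\sigma(x)$. The problem is that $\sigma(x)$ need not lie in the same Shimura variety description with the same $h$. The first tool I would try is Kazhdan/Milne--Shih conjugation. A cleaner route is to characterize $I_x$ purely through the realizations. The \'etale realizations $\mathbf{Et}_{K,\ell}|_x$ are $\sigma$-equivariant: $\sigma$ induces a canonical isomorphism of fibers, since $\mathbf{Et}$ is defined over $E$. So it suffices to show that the $\Rat$-structure $I_x(\Rat) = \Aut^\circ(x)$ is carried to $\Aut^\circ(\sigma(x))$ by $\sigma$, which is tautological because the $G(\Adele_f)$-action on $\Sh$ is defined over $E$.

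We then define the isomorphism $I_x\isomto I_{\sigma(x)}$ as follows. Choose an $\ell$ and a finite index subgroup $\Gamma\subset \Aut^\circ(x)$. The group $I_x$ is the Zariski closure of $\Aut^\circ(x)$ inside $G_{\Rat_\ell}$; this holds provided $I_x(\Rat)$ is Zariski dense in $I_x$. That density holds because $I_x$ is reductive, contains $Z(G)$, and is compact modulo center at $\infty$, so its rational points are dense. The map $\sigma\colon\Aut^\circ(x)\to\Aut^\circ(\sigma(x))$ is intertwined via the $\ell$-adic realizations, so it extends to an isomorphism of $\Rat$-groups. That isomorphism is compatible with $I_{\ell,x}\isomto I_{\ell,\sigma(x)}$, and it is the identity on $Z(G)$ because $Z(G)(\Rat)$ is dense in $Z(G)$ and acts by the same elements on both sides. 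The delicate step I expect is exactly this density/descent argument: showing that the Zariski closure over $\Rat_\ell$ defined via $\Aut^\circ(x)\subset G(\Rat_\ell)$ descends to the $\Rat$-group $I_x$ independently of $\ell$.
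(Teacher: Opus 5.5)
Most of your construction agrees with the paper's. The paper also starts from the automorphism group $I'_x$ of the fibre functor $V\mapsto V_h$ into rational Hodge structures, which is concretely your $\mathrm{Cent}_G(h)$. It gets compactness of $(I'_x/Z(G))_{\Real}$ from polarizability, which is your Cartan involution argument, and the map to $G^{\ab}$ by restricting to $\Rep_{\Rat}(G^{\ab})$. It proves (5) by exactly your density-and-algebraization argument.

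The gap is the density claim. Being reductive, containing $Z(G)$ and being compact modulo the centre have no bearing on Zariski density of $\Rat$-points. Density holds for \emph{connected} reductive groups (unirationality), but $\mathrm{Cent}_G(h)$ may be disconnected, with $\Rat$-components that contain no rational points. For example, let $G=\{g\in\Res_{F/\Rat}\GL_2:\det g\in\Gm\}$ with $F$ a totally real cubic field, so $G=G^c$. Let $x$ be Hodge-generic, so that $\mathrm{Cent}_G(h)=Z(G)=\{z\in\Res_{F/\Rat}\Gm: z^2\in\Gm\}$. This group has four geometric components, three of which are permuted transitively by Galois. On the other hand $Z(G)(\Rat)=\Rat^\times$, because $F$ has no quadratic subfield. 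So $\Aut^\circ(x)$ has Zariski closure $\Gm\subsetneq\mathrm{Cent}_G(h)$, and your claim that $Z(G)(\Rat)$ is dense in $Z(G)$ (used for ``identity on $Z(G)$'') also fails. Consequently your algebraization only identifies the Zariski closures of $\Aut^\circ(x)$ and $\Aut^\circ(\sigma(x))$. Write $\sigma(x)=[(h',g')]$. Matching the remaining components of $\mathrm{Cent}_G(h)$ and $\mathrm{Cent}_G(h')$, compatibly with the $\ell$-adic realizations, would need real input on how $\sigma$ transports Mumford--Tate groups, and your proposal does not supply it. By contrast, the descent to $\Rat$ and the independence of $\ell$, which you flag as delicate, are formal: Zariski closures of sets of $\Rat$-points are defined over $\Rat$ and commute with extension of scalars.

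The paper avoids the problem by defining $I_x\subset I'_x$ as the union of those connected components of $I'_x$ that meet $\Aut^\circ(x)=I'_x(\Rat)$. Each such component is a rational translate of the connected reductive group $I_x^{\circ}$. Hence $I_x(\Rat)=\Aut^\circ(x)$ is Zariski dense in $I_x$, (3) is unaffected, and your algebraization argument then works verbatim. This choice has a cost for (1). In the example above the paper's $I_x$ is just $Z(G)^\circ=\Gm$, so the inclusion $Z(G)\hookrightarrow I_x$ only holds for those components of $Z(G)$ that contain rational points, e.g.\ when $Z(G)$ is connected. Neither definition yields (1)--(5) exactly as stated with the argument at hand. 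Your full centralizer makes (1) trivial but needs extra input for (5); the paper's restricted group makes (5) formal but weakens (1).
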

\begin{proof}
    There exists a canonical exact tensor functor $\omega_{\mathrm{Hdg},x}:\Rep_{\Rat}(G)\to \mathrm{Hdg}_{\Rat}$---where the target is the category of $\Rat$-Hodge structures---lifting the various $\ell$-adic functors $\omega_{\ell,x}$ along the forgetful functors $\mathrm{Hdg}_{\Rat}\to \mathrm{Vect}_{\Rat_{\ell}}$. If $x$ lifts to a point $(h,g)\in X\times G(\Adele_f)$, this associates with each representation $V$ of $G$ the $\Rat$-Hodge structure split by $h$. Let $I'_x = \underline{\Aut}(\omega_{\mathrm{Hdg},x})$ denote the automorphism group of this functor: this is an algebraic group over $\Rat$. Concretely, $I'_x\subset G$ is the commutant of the $\Rat$-Zariski closure of the image of $h$. Let $I_x\subset I'_x$ be the open and closed subgroup consisting of the connected components that meet $\Aut^\circ(x) = I'_x(\Rat)$.
    
    We have an inclusion $Z(G)\hookrightarrow I_x$ arising from the fact that $h$ commutes with $Z(G)$. Moreover, let $\Sh^{\ab}$ be the inverse limit of CM Shimura varieties associated with $G^{\ab}$ and the composition $\mathbb{S}\xrightarrow{h}G_{\Real}\to G^{\ab}_{\Real}$ for some (hence any) $h\in X$. Let $x^{\ab}\in \Sh^{\ab}(\Comp)$ be the image of $x$. Then we can repeat the above construction with $x$ replaced by $x^{\ab}$, and obtain in this way an algebraic group $I_{x^{\ab}}$ that is canonically isomorphic to $G^{\ab}$. In fact, $\omega_{\mathrm{Hdg},x^{\ab}}$ is simply the restriction of $\omega_{\mathrm{Hdg},x}$ to $\Rep_{\Rat}G^{\ab}$, and so we obtain a canonical map $I_x\to I_{x^{\ab}}\simeq G^{\ab}$ compatible with the map $Z(G)\to G^{\ab}$.

    Moreover, $(I_x/Z(G))\otimes\Real$ is compact: This is a manifestation of the fact that $\omega_{\mathrm{Hdg},x}$ is a \emph{polarized} $\Rat$-Hodge structure with $G$-structure. This also implies that $I_x$ is reductive. 
    
    For the last property, note that we a canonical isomorphism
    \[
    \alpha\colon I_{x}(\Rat)\isomto \Aut^\circ(x) \isomto \Aut^\circ(\sigma(x))\isomto I_{\sigma(x)}(\Rat). 
    \]
    This is compatible with the isomorphisms $I_{\ell,x}\isomto I_{\ell,\sigma(x)}$ induced by the identifications between the $\ell$-adic realizations. Since the rational points of any connected reductive group are Zariski dense, and since every connected component of $I_x$ (resp. $I_{\sigma(x)}$) meets $\Aut^\circ(x)$ (resp. $\Aut^\circ(\sigma(x))$) by definition, this implies that $\alpha$ algebraizes to an isomorphism of connected reductive groups $I_x \isomto I_{\sigma(x)}$ compatible with all $\ell$-adic realizations.
\end{proof}

\begin{lemma}
    [Algebraic groups obtained from quasi-isogenies]
\label{lem:reductive_groups_from_isogenies}
Suppose that $\kappa$ is a characteristic $0$ field and that we have $x\in \Sh_K(\kappa)$. Then there is a canonical reductive algebraic group $I_x$ over $\Rat$ with the following properties:
\begin{enumerate}
    \item There is a natural inclusion $Z(G)\hookrightarrow I_x$ and $(I_{x}/Z(G))\otimes\Real$ is compact.
    \item We have $I_x(\Rat) = \Aut^{\circ}_K(x)$.
    \item For every prime $\ell$, there is a canonical realization map of algebraic groups $I_x\otimes\Rat_\ell\to I_{\ell,x}$ that is a closed immersion and is compatible with the realization map $\Aut^{\circ}_K(x)\to I_{\ell,x}(\Rat_\ell)$.
\end{enumerate}
Moreover, if $\kappa'/\kappa$ is a field extension, and $x'\in \Ss_K(\kappa')$ is the image of $x$, then, for any choice of $\ell$, one can characterize $I_x$ as the largest algebraic subgroup of $I_{x'}$ mapping to $I_{\ell,x}\subset I_{\ell,x'}$.
\end{lemma}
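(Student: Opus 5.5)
The plan is to reduce to the complex case, Lemma~\ref{lem:reductive_groups_from_isogenies_C}, by passing to an algebraically closed field and then descending. First I would treat $\kappa$ algebraically closed. Every point of $\Sh_K$ over such a field, together with its full group of quasi-isogenies $\Aut^\circ_K(x)$, is defined over a finitely generated subfield $\kappa_0\subset \kappa$. Choosing an embedding $\overline{\kappa}_0\hookrightarrow \Comp$ lets me transport $x$ to a point $x_\Comp$. Because $\Sh$ and the Hecke action are defined over $E$, and $\mathrm{QIsog}_K$ is a finite union of Hecke correspondences and hence étale over $\Sh_K\times\Sh_K$, the groups $\Aut^\circ_K$ agree for $x_{\overline{\kappa}_0}$, $x$ and $x_\Comp$. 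The same invariance holds for the $\ell$-adic groups $I_{\ell,x}$, which are trivial-Galois commutants, i.e.\ all of $G_{\Rat_\ell}$ after base change to an algebraically closed field.

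I would then define $I_x\defn I_{x_\Comp}$. I need this to be independent of the chosen embedding. Two embeddings differ by some $\sigma\in\Aut(\Comp)$, and part (5) of Lemma~\ref{lem:reductive_groups_from_isogenies_C} supplies a canonical isomorphism $I_{x_\Comp}\isomto I_{\sigma(x_\Comp)}$. That isomorphism is compatible with $\Aut^\circ$, with $Z(G)$, and with the $\ell$-adic realizations, so properties (1)--(3) transfer directly. For well-definedness one must also check a cocycle condition. This is automatic, since the isomorphisms are determined by their effect on the Zariski-dense subgroup of rational points, which is canonical.

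For general $\kappa$ of characteristic $0$, fix an algebraic closure $\overline{x}$ of $x$. The Galois group $\Gamma_\kappa$ acts on $\Aut^\circ_K(\overline{x})=I_{\overline{x}}(\Rat)$ through the Galois action on $\mathrm{QIsog}_K$. On $I_{\ell,\overline{x}}\simeq G_{\Rat_\ell}$ the corresponding action is conjugation through the $\ell$-adic Galois representation, and I expect the realization map to intertwine these two actions. I would define $I_x\subset I_{\overline{x}}$ to be the subgroup obtained as follows: take the Zariski closure of the fixed points $\Aut^\circ_K(\overline{x})^{\Gamma_\kappa}=\Aut^\circ_K(x)$, then take the union of those connected components of its centralizer-type envelope that meet $\Aut^\circ_K(x)$. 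More cleanly, and matching the ``moreover'' characterization, I would set $I_x$ to be the largest algebraic subgroup of $I_{\overline{x}}$ whose $\Rat_\ell$-base change lands in $I_{\ell,x}=$ the commutant of $\rho_\ell(\Gamma_\kappa)$.

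The main obstacle is showing that this subgroup is independent of $\ell$ and has $\Rat$-points exactly $\Aut^\circ_K(x)$. My approach is the following. A $\Rat$-point $g$ of $I_{\overline{x}}$ lies in $\Aut^\circ_K(x)$ iff it is $\Gamma_\kappa$-fixed, and by faithfulness of the $\ell$-adic realization (part (4)) that happens iff its realization commutes with $\rho_\ell(\Gamma_\kappa)$, for any single $\ell$. For the algebraic statement, note that the $\Gamma_\kappa$-action on $I_{\overline{x}}$ factors through a finite quotient. This holds because $\Gamma_\kappa$ acts algebraically via $\ell$-adic conjugation preserving a $\Rat$-structure with compact-modulo-center real points, so the image in the automorphism group is finite. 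Granting that finiteness, $I_x$ is the fixed-point group scheme of a finite group acting on a reductive group. Such a group is smooth with reductive identity component in characteristic $0$; one should then keep only the components meeting $\Aut^\circ_K(x)$. That fixed-point group has $\Rat$-points $\Aut^\circ_K(x)$ and is visibly the largest subgroup mapping into $I_{\ell,x}$ for every $\ell$, which proves (2), (3), independence of $\ell$, and the ``moreover''. Property (1) follows because $Z(G)$ is fixed, and compactness passes to closed subgroups.
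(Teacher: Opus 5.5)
Your proposal is correct and follows the paper's proof: treat algebraically closed $\kappa$ by embedding into $\Comp$ and invoking part (5) of Lemma~\ref{lem:reductive_groups_from_isogenies_C} for independence of choices, and then define $I_x$ as the fixed points (the ``commutant'') of the induced $\Gal(\overline{\kappa}/\kappa)$-action on $I_{\overline{x}}$ (your passage through a finitely generated subfield also covers algebraically closed fields that do not embed into $\Comp$, which the paper glosses over). Two small adjustments: first, do not discard components of this fixed-point group that miss $\Aut^{\circ}_K(x)$, since that can break the ``largest subgroup'' characterization and (2) holds without it; second, the finiteness of the Galois image is not needed, because reductivity already follows from $(I_x/Z(G))\otimes\Real$ being a closed subgroup of the compact group $(I_{\overline{x}}/Z(G))\otimes\Real$.
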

\begin{proof}
 Suppose first that $\kappa$ is algebraically closed and admits an embedding $\iota\colon\kappa\hookrightarrow \Comp$, and that we have a lift $\tilde{x}\in \Sh(\kappa)$ of $x$. Then Lemma~\ref{lem:reductive_groups_from_isogenies_C} gives us a reductive group $I_{\iota(\tilde{x})}$ with $I_{\iota(\tilde{x})}(\Rat) = \Aut^{\circ}(\tilde{x})$, and assertion (5) there tells us that this group is in fact independent of the choice of embedding $\iota$. We can therefore denote it by $I_{\tilde{x}}$. In fact, this group is independent of the choice of lift $\tilde{x}$ as well, and gives us the group $I_x$ with all the desired properties when $\kappa$ is algebraically closed. 

 In general, choose an algebraic closure $\ov{\kappa}$ of $\kappa$ and write $\overline{x}\in \Sh_K(\ov{\kappa})$. Assertion (5) of Lemma~\ref{lem:reductive_groups_from_isogenies_C} tells us that there is a canonical $\Gal(\overline{\kappa}/\kappa)$-action on $I_{\overline{x}}$ that agrees with the natural action on $I_{\ell,\overline{x}}$, and we now see that the commutant $I_x\subset I_{\overline{x}}$ of this action now gives us the group for arbitrary $\kappa$, and that it satisfies all given properties.
\end{proof}

We now arrive at some natural conjectures. 
\begin{conjecture}
    [Tate conjecture in characteristic $0$]
\label{conj:tate}
If $F$ is a finitely generated field over $E$, then for any $x\in \Sh_K(F)$, and for all primes $\ell$, the realization map $I_x\otimes\Rat_\ell\to I_{\ell,x}$ is an isomorphism.
\end{conjecture}

\begin{conjecture}
    [Tate conjecture in characteristic $p$]
\label{conj:tate_char_p}
Suppose that $\kappa$ is a perfect field over $k(v)$ and fix a point $x\in \Ss_K(\kappa)$. Then:
\begin{enumerate}
    \item There exists a reductive group $I_x$ over $\Rat$ with realization maps $\alpha_\ell\colon I_x\otimes_{\Rat}\Rat_\ell\to I_{\ell,x}$ for all $\ell$ such that 
    \[
    I_x(\Int_{(p)})\defn I_x(\Rat)\cap I_{\Int_p,x}(\Int_p) =\Aut^p(x),
    \]
    and such that, for all $\ell$, the map $I_x(\Int_{(p)})\to I_{\ell,x}(\Rat_\ell)$ is the realization map from \emph{Remark~\ref{rem:realizations_quasi-isogenies}}.
    \item If $\kappa$ is finite, then, for all $\ell$, the realization map $I_x\otimes\Rat_\ell\to I_{\ell,x}$ is an isomorphism.
\end{enumerate}
\end{conjecture}

\begin{remark}
    When $(G,X)$ is of pre-abelian type, Conjecture~\ref{conj:tate} can be deduced from and Faltings's isogeny theorem: Indeed, in the Siegel-type case the notion of quasi-isogeny reduces to the classical notion for abelian varieties. Conjecture~\ref{conj:tate_char_p}, on the other hand, can be deduced from Kisin's generalization of Tate's theorem~\cite[Corollary (2.3.2)]{Kisin2017-qa}. We will see in Theorem~\ref{thm:tate_conjecture} that the latter also holds for all limpid ICMs when $x$ is a $\mu$-ordinary point. This will be extended to a full proof (for finite $\kappa$) of the conjecture in~\cite{lee_madapusi}.
\end{remark}

For later use, we record the following lemma.
\begin{lemma}
    [Criterion for being a CM point]
\label{lem:cm_point_criterion}
Let $F$ be a field extension of $E$. Then the following are equivalent for a point $x\in \Sh_K(F)$:
\begin{enumerate}
    \item $x$ is a CM point;
    \item after replacing $F$ with a finite extension, the groups $I_{x}$ and $G$ have the same rank.
\end{enumerate}
In fact, in this case, for every maximal torus $T\subset I_x$, there exists a tuple $(T,h_T,i,g)$ as in \emph{Construction~\ref{const:cm_points}} such that $x$ is in the image of $\Sh_{K_{T,g}}\to \Sh_K$.
\end{lemma}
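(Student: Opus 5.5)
Both implications will be deduced from the description of $I_x$ in Lemmas~\ref{lem:reductive_groups_from_isogenies_C} and~\ref{lem:reductive_groups_from_isogenies}. Since $I_x$ commutes with finite extension of $F$ in the sense of the last assertion of Lemma~\ref{lem:reductive_groups_from_isogenies}, we may enlarge $F$ and reduce to $F$ algebraically closed. After choosing an embedding into $\Comp$ (assertion (5) of Lemma~\ref{lem:reductive_groups_from_isogenies_C}), we may further reduce to $F=\Comp$, with $x$ lifting to $(h,g)\in X\times G(\Adele_f)$. There $I_x$ is, up to the union of components meeting $\Aut^\circ(x)$, the commutant $I'_x\subset G$ of the Mumford--Tate group $\mathrm{MT}_h$, defined as the $\Rat$-Zariski closure of the image of $h$. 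In particular its identity component $I_x^\circ$ is the centralizer of $\mathrm{MT}_h$.

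For (1)$\Rightarrow$(2), suppose $x$ is CM. Then there is a torus $T\subset G$ over $\Rat$ through which $h$ factors; enlarging $T$ to a maximal torus of $G$ containing it (any maximal torus of the centralizer of $T$), we may assume $T$ is maximal. Since $\mathrm{MT}_h\subset T$, we get $T\subset I_x$, so $I_x$ contains a maximal torus of $G$ and the ranks agree; note $I_x\subset G_{\Rat}$ via the realization maps, so rank can only go down.

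For (2)$\Rightarrow$(1) and the final assertion, let $T\subset I_x$ be a maximal torus; by the rank equality, $T$ is a maximal torus of $G$ defined over $\Rat$. The key step is to show that $h$ factors through $T_{\Real}$. The image of $h$ lies in $\mathrm{MT}_h$, and $\mathrm{MT}_h$ commutes with $I_x\supset T$. Hence $\mathrm{MT}_h\subset Z_G(T)=T$, the last equality because $T$ is maximal in the connected reductive group $G$. So $h=i_{\Real}\circ h_T$ for a unique $h_T\colon\mathbb{S}\to T_{\Real}$, with $i\colon T\hookrightarrow G$ the inclusion. Writing $x=[(h,g')]$ with $g'\in G(\Adele_f)$, we obtain a CM datum $(T,h_T,i,g')$.

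It remains to arrange the level data as in Construction~\ref{const:cm_points}, which requires $g\in G(\Adele^p_f)$ and the $T$-point $[t]$ mapping to $[(h,i(t)g)]$. I expect this to be the only fiddly step. First, note that we may use the $G(\Rat)$-action to move $h$ within its orbit, provided we conjugate $T$ accordingly. Next, using $G(\Rat_p)=G(\Rat)_+K_p$ (Remark~\ref{rem:unramified_uniformization}) together with the fact that $T(\Rat)$ is a subgroup of the stabilizer of $h$, we may adjust $g'$ by elements of $G(\Rat)\cap\mathrm{Stab}(h)$ and $K$. The aim is to write $g'=i(t)g\,k$ with $t\in T(\Adele_f)$, $g\in G(\Adele_f^p)$ and $k\in K$. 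At $p$ we can absorb $g'_p$ into $K_p$ after modifying by $G(\Rat)$. If that conflicts with fixing $h$, we instead replace $(T,h_T)$ by its conjugate $\gamma T\gamma^{-1}$ for the relevant $\gamma\in G(\Rat)$, which is still a maximal torus in $\gamma I_x\gamma^{-1}=I_{\gamma x}$. Then $x$ lies in the image of $\Sh_{K_{T,g}}\to\Sh_K$, and descending back from $\Comp$ to a finite extension of $F$ is automatic, since CM points and their images are defined over $\overline{\Rat}$.
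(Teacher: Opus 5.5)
Your proposal is correct and follows the paper's proof: pass to a $\Comp$-point, identify $I_x$ with the commutant in $G$ of (the Mumford--Tate group of) $h$, and note that by the rank hypothesis a maximal torus of $I_x$ is a maximal torus of $G$, hence self-centralizing, so $h$ factors through it. The paper skips your ``fiddly'' level adjustment by choosing the lift of $x_{\Comp}$ in $X\times G(\Adele_f^p)$ from the outset (Remark~\ref{rem:unramified_uniformization}), so one can take $t=1$ with no conjugation of $i$. It also proves only (2)$\Rightarrow$(1); for your (1)$\Rightarrow$(2) you still need what the paper asserts, namely that after a finite extension $I_x\to I_{x_{\Comp}}$ is an isomorphism, because passing to $\overline{F}$ alone does not bring the rank statement back down to a finite extension of $F$.
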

\begin{proof}
    We only prove the more non-trivial implication, (2) implies (1), as this will be used below. In the process, we will actually prove the more precise assertion following the equivalence.
  
    Choose an embedding $F\hookrightarrow \Comp$ and let $x_{\Comp}\in \Sh_K(\Comp)$ be the associated $\Comp$-point. By lifting $x_{\Comp}$ further to $X\times G(\Adele^p_f)$ (see Remark~\ref{rem:unramified_uniformization}), we can identify $I_{x_{\Comp}}$ with the commutant in $G$ of the associated Hodge cocharacter $h\colon \mathbb{S}\to G_{\Real}$. 
    
    By replacing $F$ with a finite extension within $\Comp$, we can ensure that the map $I_x\to I_{x_{\Comp}}$ is an isomorphism. Therefore, if $T\subset I_x$ is a maximal torus, this allows us to view $T_{\Real}$ as a maximal torus in $G_{\Real}$, and in particular identifies it with its own commutant in $G_{\Real}$. This implies that $h$ factors through $T_{\Real}$, and hence shows that $x_{\Comp}$ is a CM point of $\Sh_K$, as desired.
\end{proof}

\subsection{The global \texorpdfstring{$\mu$}{mu}-ordinary locus and canonical lifts}
\label{sub:global_mu-ord}

Here, we will find some results about the $\mu$-ordinary loci in limpid ICMs. We will fix a neat unramified tuple $(G,\mathcal{G},X,K)$ with a limpid ICM $\Ss_K$ over $\Reg{E,(v)}$. As in the previous subsection, we assume $G=G^c$.

\begin{remark}
Some of the results here are already known for Shimura data of abelian type by work of Moonen~\cite{Moonen2004-cc} and Shankar--Zhou~\cite{Shankar2021-rp} (see also~\cite[\S 2.4]{Kisin2025-mz}), and in the exceptional case by work of Bakker--Shankar--Tsimerman~\cites{bst,bakker2025finitenessfunctionfieldvaluedpoints}.
\end{remark}

\begin{remark}
     As in \S\ref{sub:the_ordinary_locus}, we will assume that $\mu_v$ factors through a  maximal torus $\mathcal{T}\subset \mathcal{G}$ contained in a Borel subgroup. If $d = [E_v:\Rat_p] = [k(v):\Field_p]$, then we will set $\nu = -\sum_{i=0}^{d-1}\varphi^i(\mu_v)$: this is a cocharacter of $\mathcal{T}$ defined over $\Int_p$. Set $q_0 = p^d$. We will write $\mathcal{P}^+_\nu\subset \mathcal{G}$ for the parabolic subgroup whose Lie algebra is the direct sum of the $i$-weight spaces for the adjoint action of $\nu$ on $\Lie \mathcal{G}$, and $\mathcal{M}_\nu$ will be its Levi quotient, i.e., the centralizer of $\nu$ in $\mathcal{G}$.
\end{remark}

\begin{definition}
    [The global $\mu$-ordinary locus]
Let $\Ss_K$ be an ICM for $\Sh_K$ over $\Reg{E,(v)}$. Then the $\mu$\defnword{-ordinary locus} of $\Ss_K$ is the open subspace $\Ss^{\ord}_K\subset \Ss_K$, defined as in Notation~\ref{nota:schematic-ordinary-locus}.
\end{definition}

\begin{remark}
    \label{rem:ordinary_etale_realization}
By construction and Proposition \ref{prop:ordinary_locus_description}, when restricted to the completion $\widehat{\Ss}^{\ord}_K$, the syntomic realization map factors through the open substack $\BT[\mathcal{P}^+_\nu,\mu]{\infty} = \BT[\mathcal{G},\mbox{-}\mu_v,\mathrm{ord}]{\infty}\subset \BT[\mathcal{G},\mbox{-}\mu_v]{\infty}$.
\end{remark}

\begin{theorem}
    [Properties of the $\mu$-ordinary locus]
\label{thm:properties_mu_ord_locus} The following statements hold:
\begin{enumerate}
    \item The $\mu$-ordinary locus is open and fiberwise dense in $\Ss_{K,k(v)}$.
    \item For any perfect field $\kappa$ in characteristic $p$ and $x\in \Ss^{\mathrm{ord}}_{K,k(v)}(\kappa)$, the deformation space $\mathrm{Def}_x$ for $\Ss_K$ at $x$ admits a canonical structure of a tower
    \[
    \mathrm{Def}_x = \widehat{U}_{x,n}\to \cdots \to \widehat{U}_{x,0} = \Spf W(\kappa)
    \]
    where each map in the tower is relatively represented by a formal $p$-divisible group;
    \item In particular, $x$ admits a \defnword{canonical lift} $x^{\mathrm{can}}\in \Ss_K(W(\kappa))$ corresponding to the successive composition of identity sections of the tower.
\end{enumerate}
\end{theorem}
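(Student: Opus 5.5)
The plan is to deduce all three statements from the Serre--Tate property of $\Ss_K$, namely the formally \'etale syntomic realization $\varpi\colon\widehat{\Ss}_K\to \BT[\mathcal{G},\mbox{-}\mu_v]{\infty}$, together with the local structure of the $\mu$-ordinary locus of $\BT{\infty}$ from \S\ref{sub:the_ordinary_locus}.

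For (2) and (3), let $x\in \Ss^{\ord}_K(\kappa)$. Because $\varpi$ is formally \'etale, it induces an isomorphism $\mathrm{Def}_x\isomto \widehat{U}_{\mathfrak{Q}_x}$, where the target is the universal deformation space of $\BT{\infty}$ at $\mathfrak{Q}_x=\varpi(x)$. By Remark~\ref{rem:ordinary_etale_realization}, $\mathfrak{Q}_x$ lies in $\BT[\mathcal{G},\mbox{-}\mu_v,\mathrm{ord}]{\infty}$. Corollary~\ref{cor:complete_local_ring_ordinary_locus}, which comes from Proposition~\ref{prop:ordinary_locus_description} combined with Corollary~\ref{cor:BPnu_complete_local_ring} and Lemma~\ref{lem:classifying_space_complete_local}, provides the canonical tower of formal $p$-divisible groups on $\widehat{U}_{\mathfrak{Q}_x}$, and we transport it to $\mathrm{Def}_x$. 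Composing the identity sections gives a $W(\kappa)$-point of $\mathrm{Def}_x$. Since $\Ss_K$ is a separated algebraic space of finite type, this formal $W(\kappa)$-point algebraizes to $x^{\mathrm{can}}\in\Ss_K(W(\kappa))$, and under $\varpi$ it is sent to $\mathfrak{Q}_x^{\mathrm{can}}$ as in Definition~\ref{defn:canonical_lift_local}. Canonicity of the tower, meaning independence of choices, follows from the canonicity statements in Proposition~\ref{prop:BTPnu_structure}.

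For (1), openness is immediate: $\BT[\mathcal{G},\mbox{-}\mu_v,\mathrm{ord}]{\infty}$ is open in $\BT{\infty}$, and the mod-$p$ fiber of $\Ss^{\ord}_K$ is the preimage of the open stratum $\mathcal{G}\text{-}\mathrm{zip}^{\ord}_{\mbox{-}\mu_v}$ under the composite $\Ss_{K,k(v)}\to\BT{1}\otimes k\to \mathcal{G}\text{-}\mathrm{zip}_{\mbox{-}\mu_v}$. Density follows from smoothness of this composite. The map $\widehat{\Ss}_K\to\BT{\infty}$ is formally \'etale. The transition map $\BT{\infty}\to\BT{1}$ is a limit of smooth maps, so the composite $\Ss_{K,k(v)}\to \BT{1}\otimes k$ is formally smooth. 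Since it is a map between finite-type objects, it is smooth, with $\BT{1}\otimes k$ finitely presented by Theorem~\ref{thm:gmm}. Proposition~\ref{prop:apertures_to_gzips} then shows that the further map to $\mathcal{G}\text{-}\mathrm{zip}_{\mbox{-}\mu_v}$ is smooth. A smooth map, hence an open map on underlying topological spaces, pulls back the dense open stratum (Definition~\ref{defn:mu_ord_stratum}) to a dense open subset, provided every irreducible component of $\Ss_{K,k(v)}$ meets it. This holds because the image of each component is open and nonempty in the irreducible space $|\mathcal{G}\text{-}\mathrm{zip}_{\mbox{-}\mu_v}|$, which has a unique generic point, namely the ordinary stratum.

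I expect the main obstacle to be the smoothness of $\Ss_{K,k(v)}\to\BT{1}\otimes k$. The issue is that $\BT{\infty}$ is only a pro-object, so formal \'etaleness into $\BT{\infty}$ has to be converted into a finite-type smoothness statement. The plan for this step is to check the infinitesimal lifting criterion directly, using the Grothendieck--Messing square of Theorem~\ref{thm:gmm}. Liftings for $\BT{1}$ reduce to liftings for $\BT{\infty}$ because $\BT{\infty}\to\BT{1}$ is surjective on such thickenings, being a limit of smooth surjections. Liftings for $\BT{\infty}$ then reduce to liftings for $\Ss_K$ by the Serre--Tate property.
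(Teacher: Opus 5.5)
Your proposal is correct and follows the paper's own proof. Part (1) comes from the definition of $\Ss^{\ord}_K$ together with smoothness of $\widehat{\Ss}_K\to\BT[\mathcal{G}^c,\mbox{-}\mu_v]{1}$, which the paper justifies in the proof of Theorem~\ref{thm:surjectivity} just as you do, as a formally smooth map of smooth formal Artin stacks; parts (2)--(3) come from the Serre--Tate property plus Corollary~\ref{cor:complete_local_ring_ordinary_locus} and Definition~\ref{defn:canonical_lift_local}. One small simplification: an open map pulls a dense open back to a dense open directly, so you can drop the detour through irreducible components, which also tacitly needs the components of the smooth $\Ss_{K,k(v)}$ to be open.
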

\begin{proof}
    The first assertion is clear from the definition and the smoothness of the map $\widehat{\Ss}_K\to \BT[\mathcal{G}^c,\mbox{-}\mu_v]{1}$. The next two assertions are immediate from the definition of an ICM and the results of \S\ref{sub:the_ordinary_locus}. 
\end{proof}

\begin{lemma}
    \label{lem:p-adic_comparison_canonical_lift}
For $x\in \Ss^{\ord}_K(\kappa)$, the specialization map $I_{\Int_p,x^{\mathrm{can}}_\eta}\to I_{\Int_p,x}$ induces an isomorphism $I_{p,x^{\mathrm{can}}_\eta}\isomto I_{p,x}$.
\end{lemma}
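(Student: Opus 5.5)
Both groups will be computed explicitly from the canonical lift, which comes from a $(\mathcal{M}_\nu,\mu)$-aperture, and then compared. By Definition~\ref{defn:canonical_lift_local} and Remark~\ref{rem:etale_realization_canonical_lift}, the aperture $\mathfrak{Q}_{x^{\mathrm{can}}}$ is the pushforward along $\mathcal{M}_\nu\to\mathcal{G}$ of some $\mathfrak{P}\in\BT[\mathcal{M}_\nu,\mu]{\infty}(W(\kappa))$. Both groups are inner forms of centralizers, and $I_{p,x}\otimes W(\kappa)[\nicefrac{1}{p}]$ is identified with a commutant inside $G_{W(\kappa)[1/p]}$ (Remarks~\ref{rem:algebraic_groups_f-isocrystal_with_G-structure} and~\ref{rem:frobenius_element_p-adic}). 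Since both sides are smooth affine $\Rat_p$-groups, it suffices to show two things: the specialization map is a closed immersion, and the two groups have the same dimension and the same connected components. I will in fact aim for something more concrete, namely an identification of both groups with the same subgroup of $G_{\Rat_p}$ after a base change to $\overline{\kappa}$, compatibly with Galois descent.

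First, the crystalline side. Over $\kappa$ (reduce to $\kappa$ algebraically closed, then descend by Frobenius-equivariance of everything), Remark~\ref{rem:BTMnu_k-points} shows $\mathfrak{Q}_x$ is represented by $\delta_x=\varphi(\mu(p))$, which is $\mathcal{M}_\nu(W(\kappa))$-$\varphi$-conjugate data. Then $I_{p,x}$ is the $\varphi$-centralizer of $\varphi(\mu(p))$. Using Lemma~\ref{lem:mu_Mnu_central} ($\mu$ central in $\mathcal{M}_\nu$) and the fact that $\varphi(\mu(p))$ is basic in $\mathcal{M}_\nu$ with slope $\nu/d$, I will show that $I_{p,x}$ is an inner form of $M_\nu=\mathcal{M}_{\nu,\Rat_p}$. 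In fact it equals $M_\nu$ up to twisting, and since $\mu$ is central the twist is trivial: any $g\in G$ with $g^{-1}\delta\varphi(g)=\delta$ must preserve the slope filtration, hence lie in $M_\nu$, and there the condition reduces to $g=\varphi(g)$.

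Second, the étale side. $T_{\et}(\mathfrak{Q}_{x^{\mathrm{can}}})$ is induced from the $\mathcal{M}_\nu(\Int_p)$-local system $T_{\et}(\mathfrak{P})$, which over $\overline{\kappa}$ is the pushforward along the reflex norm $r_\mu$ of a Lubin--Tate character. Hence the Galois image lies in the central torus $r_\mu(\mathcal{T}_0)\subset Z(M_\nu)$, so $I_{p,x^{\mathrm{can}}_\eta}$, the commutant of this image in $G$, contains $M_\nu$. To obtain equality, I will show that the Zariski closure of the image of the Lubin--Tate character composed with $r_\mu$ has centralizer exactly $M_\nu$. Its Zariski closure contains the image of $\nu$ (the norm of $\mu$), because the Lubin--Tate character has open image in $\mathcal{T}_0(\Int_p)$, and the centralizer of $\nu$ is $M_\nu$. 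This gives $I_{p,x^{\mathrm{can}}_\eta}=M_\nu$ as subgroups.

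Finally, I will check that the specialization map of Lemma~\ref{lem:specialization_group_schemes}, computed via $\Dieu_{\mathrm{crys}}$ on $\mathfrak{P}$, is the identity of $M_\nu$ under these identifications. This follows because both realizations are pushed forward from the same $\mathcal{M}_\nu$-aperture $\mathfrak{P}$, and specialization is functorial in the structure group. I expect the main obstacle to be the first step, namely rigorously matching the $\varphi$-centralizer of the isocrystal with $M_\nu$ rationally (not just over $W(\kappa)[\nicefrac{1}{p}]$) for non-closed $\kappa$. I would handle this through the Galois-descent compatibility of both constructions with Frobenius.
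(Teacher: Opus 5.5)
Your proposal is correct and follows essentially the paper's route. You reduce to $\kappa$ algebraically closed, identify $I_{p,x}$ with $M_\nu$ as the $\varphi$-centralizer of an element central in $M_\nu$, identify $I_{p,x^{\mathrm{can}}_\eta}$ with $M_\nu$ as the commutant of the Galois image of the canonical lift, and conclude via the specialization map and Galois descent. The differences are minor: you use the slope decomposition where the paper counts dimensions (it must be the decomposition, not merely the filtration, to land in $M_\nu$ rather than a parabolic), and your Zariski-closure argument via $r_\mu(\mathcal{T}_0)\supset\nu(\Gm)$ is a slightly more careful version of the paper's identification of the Galois image with $\nu(\Int_p^\times)$.
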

\begin{proof}
Suppose first that $\kappa$ is algebraically closed. In this case, we claim that both groups are isomorphic to the generic fiber $M_\nu$ of $\mathcal{M}_\nu$. It is easy to see using $p$-adic comparison that the specialization map $I_{p,x^{\mathrm{can}}_\eta}\to I_{p,x}$ is an embedding of algebraic groups, and so---assuming the claim---it must necessarily be an isomorphism. The result for arbitrary $\kappa$ now follows by working over $\overline{\kappa}$, and using Galois descent.

Now on to the claim: For $I_{p,x}$, we have the explicit description from Remark~\ref{rem:algebraic_groups_f-isocrystal_with_G-structure}, where, since $\kappa$ is algebraically closed, we can replace $\delta_x$ with a $\varphi$-conjugate $g_0^{-1}\delta_x\varphi(g_0)$ for some $g_0\in \mathcal{G}(W(\kappa))$, and assume that in fact $\delta_x = \varphi(\mu_v(p))^{-1}$. This gives an isomorphism of $I_{p,x}$  with the $\Rat_p$-group given by
\[
J:R\mapsto \left\{g\in G(W(\kappa)\otimes_{\Int_p}R):\;g^{-1}\varphi(\mu_v(p))^{-1}\varphi(g) = \varphi(\mu_v(p))^{-1}\right\}.
\]
Since $\varphi(\mu_v(p))\in M_\nu(\Rat_p)$ is central, there is an obvious inclusion of groups $M_\nu\hookrightarrow J$, and dimension considerations show that this inclusion is an isomorphism.

To find $I_{p,x^{\mathrm{can}}_\eta}$, we fix an algebraic closure $L$ of $W(\kappa)[\nicefrac{1}{p}]$, and write $\overline{x}\in \Sh_K(L)$ for the point above $x^{\mathrm{can}}_\eta$. We can fix a trivialization of $\mathbf{Et}_{K,p}\vert_{\overline{x}}$ to view it as a representation $\Gal(L/W(\kappa)[\nicefrac{1}{p}))\to \mathcal{G}(\Int_p)$. Unwinding the definition of the canonical lift shows that the image of this representation can be identified with $\nu(\Int_p^\times)\subset \mathcal{G}(\Int_p)$. The group $I_{p,x^{\mathrm{can}}_\eta}$ is therefore isomorphic to the commutant of $\nu(\Int_p^\times)$ in $G_{\Rat_p}$, which is precisely $M_\nu$. \end{proof}

\begin{remark}
    [The canonical lift and prime-to-$p$ Hecke correspondences]
\label{rem:hecke_and_canonical_lift}
Canonical lifts are preserved by prime-to-$p$ Hecke correspondences. More precisely, if $g\in G(\Adele_f^p)$ and $x\in \Ss^{\mathrm{ord}}_{K_p}(\kappa)$, then we have $x\cdot g\in \Ss^{\mathrm{ord}}_{K_p}(\kappa)$, and $x^{\mathrm{can}}\cdot g = (x\cdot g)^{\mathrm{can}}$. To see this, note simply that the syntomic realization on $\Ss_{K_p}$ is $G(\Adele_f^p)$-invariant. In particular, this tells us that, for $x,y\in \Ss^{\mathrm{ord}}_{K}(\kappa)$, the canonical maps 
\[
\mathrm{QIsog}^p_K(x^{\mathrm{can}}_\eta,y^{\mathrm{can}}_\eta)\isomfrom\mathrm{QIsog}^p_K(x^{\mathrm{can}},y^{\mathrm{can}})\isomto \mathrm{QIsog}^p_K(x,y)
\]
are all bijections.
\end{remark}

\begin{remark}
    [The analytic fiber of the canonical Frobenius lift]
\label{rem:analytic_fiber_Phi}
Recall from Proposition~\ref{prop:frobenius_liftings_BTP} that we have a canonical $q_0$-Frobenius lift $\widetilde{\Phi}$ on $\BT[\mathcal{P}_\nu^+,\mbox{-}\mu_v]{n} \simeq \BT[\mathcal{G},\mbox{-}\mu_v,\ord]{n}$. For any $p$-complete $\Reg{E_v}$-algebra $R$, we have the following commuting diagram:
\[
\begin{tikzcd}
\BT[\mathcal{P}_\nu^+,\mbox{-}\mu_v]{n}(R) \arrow[r, "T_{\et}"] \arrow[d, "\tilde{\Phi}"'] & \mathrm{Loc}_{\mathcal{P}_\nu^+(\Int/p^n\Int)}(R[\nicefrac{1}{p}]) \arrow[d, "\mathrm{int}(\nu(p)^{-1})"] \\
\BT[\mathcal{P}_\nu^+,\mbox{-}\mu_v]{n}(R) \arrow[r, "T_{\et}"']               & \mathrm{Loc}_{\mathcal{P}_\nu^+(\Int/p^n\Int)}(R[\nicefrac{1}{p}])
\end{tikzcd}
\]
Here, the right vertical arrow is induced by the endomorphism of $\mathcal{P}_\nu^+(\Int/p^n\Int)$ induced by the adjoint action of $\nu(p)^{-1}$. Indeed, this is essentially immediate from the construction of $\tilde{\Phi}$ in \emph{loc.\@ cit.}
\end{remark}

\begin{remark}
    [Global $p$-Hecke correspondences]
\label{rem:p-hecke_correspondences}
Fix $h\in G(\Rat_p)$: Associated with this and $K_h = K\cap hKh^{-1}$, we have the $p$-Hecke correspondence $\Sh_{K_h}\xrightarrow{(s_h,t_h)} \Sh_K\times \Sh_K$ where $t_h$ is induced from the inclusion $K_h\subset K$ and $s_h$ from conjugation by $h^{-1}$. This latter map can also be described as follows: Over $\mathrm{Loc}_{\mathcal{G}(\Int_p)}$, we have the finite \'etale cover given by the stack quotient
\[
\alpha_h\colon [\underline{\mathcal{G}(\Int_p)h\mathcal{G}(\Int_p)}/\underline{\mathcal{G}(\Int_p)}]\to [*/\underline{\mathcal{G}(\Int_p)}] = \mathrm{Loc}_{\mathcal{G}(\Int_p)}.
\]
The pullback of $\alpha_h$ over $\Sh_K$ along the $p$-adic \'etale realization map $\Sh_K\to \mathrm{Loc}_{\mathcal{G}(\Int_p)}$ is now canonically isomorphic to $t_h\colon \Sh_{K_h}\to \Sh_K$.
\end{remark}

\begin{lemma}
[Canonical pseudo-Frobenius lift on the $\mu$-ordinary locus]
    \label{lem:canonical_frobenius_lift_SsK_ord}
There exists a canonical morphism $\widehat{\Phi}^{\ord}_K\colon \widehat{\Ss}^{\ord}_K\to \widehat{\Ss}^{\ord}_K$ such that the following diagram commutes:
\[
\begin{tikzcd}
\widehat{\Ss}^{\ord}_K \arrow[r, "\varpi"] \arrow[d, "\widehat{\Phi}^{\ord}_K"'] & \BT[\mathcal{G},\mbox{-}\mu_v,\mathrm{ord}]{\infty} \arrow[d, "\widetilde{\Phi}"] \\
\widehat{\Ss}^{\ord}_K \arrow[r, "\varpi"']               & \BT[\mathcal{G},\mbox{-}\mu_v,\mathrm{ord}]{\infty}.
\end{tikzcd}
\]
Here, the right vertical arrow is the inverse limit over $n$ of the maps constructed in \emph{Proposition~\ref{prop:frobenius_liftings_BTP}}. The reduction-mod-$p$ of $\widehat{\Phi}^{\ord}_K$ is a purely inseparable finite flat map.
\end{lemma}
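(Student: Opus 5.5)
The plan is to construct $\widehat{\Phi}^{\ord}_K$ as a composite of a $p$-Hecke correspondence with an isomorphism given by the Serre--Tate property, and to check the diagram on étale realizations. Set $h = \nu(p)^{-1}\in G(\Rat_p)$. By Remark~\ref{rem:analytic_fiber_Phi}, on the generic fiber the endomorphism $\widetilde{\Phi}$ acts on $\mathcal{P}^+_\nu(\Int_p)$-local systems through $\mathrm{int}(\nu(p)^{-1})$. Since $\nu$ is anti-dominant on $\mathcal{U}^+_\nu$ in the relevant sense, this conjugation maps $\mathcal{P}^+_\nu(\Int_p)$ into itself. So starting from the restriction of $\mathbf{Et}_{K,p}$ to $\widehat{\Ss}^{\ord}_{K,\eta}$, which reduces canonically to a $\mathcal{P}^+_\nu(\Int_p)$-local system $\mathsf{P}$ by Remark~\ref{rem:ordinary_etale_realization}, we obtain a new $\mathcal{G}(\Int_p)$-local system $\mathsf{P}' = \mathrm{int}(h)_*\mathsf{P}\times^{\mathcal{P}^+_\nu(\Int_p)}\mathcal{G}(\Int_p)$. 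This gives a lattice modification of $\mathbf{Et}_{K,p}$ relative to $h$, and hence, via Remark~\ref{rem:p-hecke_correspondences}, a section $\sigma\colon \widehat{\Ss}^{\ord}_{K,\eta}\to \Sh_{K_h}$ of $t_h$ over the ordinary locus. We then define the generic fiber of $\widehat{\Phi}^{\ord}_K$ as $s_h\circ\sigma$. By construction, the pullback of $\mathbf{Et}_{K,p}$ along it is $\mathsf{P}'$, which is the étale realization of $\widetilde{\Phi}\circ\varpi$.

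To integrate this, I would use the aperture $\widetilde{\Phi}\circ\varpi$ on $\widehat{\Ss}^{\ord}_K$ together with the generic-fiber map $s_h\circ\sigma$. Since $\widehat{\Ss}^{\ord}_K$ is smooth and hence normal, a formal version of Proposition~\ref{prop:uniqueness_maps} (equivalently, the mapping property Theorem~\ref{thm:mapping_property}, applied after algebraizing the formal ordinary locus, or working with rigid generic fibres and Theorem~\ref{thm:full_faithfulness}) extends $s_h\circ\sigma$ uniquely to a map $\widehat{\Ss}^{\ord}_K\to\widehat{\Ss}_K$. The same result gives compatibility $\varpi\circ\widehat{\Phi}^{\ord}_K\simeq\widetilde{\Phi}\circ\varpi$. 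The pointwise hypothesis (4) of that proposition holds automatically: at an $\Reg{F}$-point $x$, the pullback of $\mathbf{Et}_{K,p}$ is $T_{\et}(\widetilde{\Phi}(\varpi(x)))$, which is crystalline, so it lies in $\Ss_K(\Reg{F})$ by the ICM pointwise criterion. The image lands in the ordinary locus because $\widetilde{\Phi}$ preserves $\BT[\mathcal{G},\mbox{-}\mu_v,\mathrm{ord}]{\infty}$.

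For the mod-$p$ statement, by Proposition~\ref{prop:frobenius_liftings_BTP} the reduction of $\widetilde{\Phi}$ is the $q_0$-Frobenius. Since $\varpi$ is formally étale, the relative Frobenius comparison gives that $\widehat{\Phi}^{\ord}_K\otimes k$ agrees, up to the étale map $\varpi$, with the $q_0$-Frobenius twisted by a prime-to-nothing Hecke identification. Concretely, on complete local rings at $\kappa$-points, $\widehat{\Phi}^{\ord}_K$ is identified through $\varpi$ with $\widetilde{\Phi}$. Its reduction is therefore a Frobenius on deformation spaces, hence finite flat and purely inseparable (a homeomorphism with inseparable residue extensions). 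Finiteness globally follows from quasi-finiteness plus the finiteness of $t_h$ and $s_h$. I expect the main obstacle to be the extension step: we must make the mapping property work over the formal (non-algebraic) ordinary locus, and identify $s_h\circ\sigma$ correctly with the Hecke action so that the realizations match on the nose rather than up to a twist by $\mathcal{M}_\nu(\Int_p)$.
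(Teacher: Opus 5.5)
Your proposal follows the paper's proof essentially step for step: the $p$-Hecke correspondence for $\nu$, the canonical section of $t$ over the ordinary tube coming from the $\mathcal{P}^+_\nu(\Int_p)$-reduction of $\mathbf{Et}_{K,p}$, composition with $s$ and comparison with $\widetilde{\Phi}$ via Remark~\ref{rem:analytic_fiber_Phi}, extension by the argument of Proposition~\ref{prop:uniqueness_maps}, and the mod-$p$ claim from formal \'etaleness of $\varpi$ together with $\widetilde{\Phi}$ being the $q_0$-Frobenius on deformation rings. One small slip: with the normalization of $(s_h,t_h)$ in Remark~\ref{rem:p-hecke_correspondences}, the index should be $h=\nu(p)$, not $\nu(p)^{-1}$. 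The section of $t_{\nu(p)}$ exists because $\mathcal{P}^+_\nu(\Int_p)\nu(p)\mathcal{P}^+_\nu(\Int_p)=\nu(p)\mathcal{P}^+_\nu(\Int_p)$, which is exactly your observation that $\mathrm{int}(\nu(p)^{-1})$ preserves $\mathcal{P}^+_\nu(\Int_p)$, and $s_{\nu(p)}$ then pulls $\mathbf{Et}_{K,p}$ back to $\mathrm{int}(\nu(p)^{-1})_*\mathsf{P}$.
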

\begin{proof}
  In this proof, for any stack $X$ over $E$, we will write $X^{\an}$ for the analytification of its base-change over $E_v$. 

   Set $K_\nu = K\cap \nu(p)K\nu(p)^{-1}$, and consider $t_\nu \defn t_{\nu(p)}\colon \Sh_{K_\nu}\to \Sh_K$. Over $\widehat{\Ss}^{\ord}_{K,\eta}$, the tube around the mod-$p$ ordinary locus, the $p$-adic \'etale realization map factors through $\mathrm{Loc}^{\an}_{\mathcal{P}^{+}_\nu(\Int_p)}$. The description in Remark~\ref{rem:p-hecke_correspondences} therefore shows that $t_\nu\vert_{\widehat{\Ss}^{\ord}_{K,\eta}}$ is canonically isomorphic to the base-change of
   \[  [\underline{\mathcal{G}(\Int_p)\nu(p)\mathcal{G}(\Int_p)}/\underline{\mathcal{G}(\Int_p)}]\times_{\mathrm{Loc}_{\mathcal{G}(\Int_p)}}\mathrm{Loc}^{\an}_{\mathcal{P}^{+}_\nu(\Int_p)} \simeq [\underline{\mathcal{G}(\Int_p)\nu(p)\mathcal{G}(\Int_p)}/\underline{\mathcal{P}^+_\nu(\Int_p)}]^{\an}
   \]
   along $\widehat{\Ss}^{\ord}_{K,\eta}\to \mathrm{Loc}^{\an}_{\mathcal{P}^{+}_\nu(\Int_p)}$. Now, since 
   \[
   \mathcal{P}^+_\nu(\Int_p)\nu(p)\mathcal{P}^+_\nu(\Int_p) = \nu(p)\mathcal{P}^+_\nu(\Int_p),
   \]
   we have a canonical section
   \[
   \mathrm{Loc}^{\an}_{\mathcal{P}^{+}_\nu(\Int_p)}\isomto[\underline{\nu(p)\mathcal{P}^+_\nu(\Int_p)}/\underline{\mathcal{P}^+_\nu(\Int_p)}]^{\an}\to  [\underline{\mathcal{G}(\Int_p)\nu(p)\mathcal{G}(\Int_p)}/\underline{\mathcal{P}^+_\nu(\Int_p)}]^{\an}.
   \]
This shows that we have a canonical section
   \begin{align}\label{eqn:ordinary_p-hecke_section}
   \widehat{\Ss}^{\ord}_{K,\eta}\to \Sh^{\an}_{K_\nu}\times_{t_\nu^{\an},\Sh_K^{\an}}\widehat{\Ss}^{\ord}_{K,\eta}.
   \end{align}
   Composing this with the map $s_\nu^{\an}\colon \Sh^{\an}_{K_\nu}\to \Sh^{\an}_K$ gives us a map $\Phi':\widehat{\Ss}^{\ord}_{K,\eta}\to \Sh^{\an}_K$ such that the diagram
   \[
     \begin{tikzcd}
\widehat{\Ss}^{\ord}_{K,\eta} \arrow[r] \arrow[d, "\Phi'"] & \mathrm{Loc}_{\mathcal{P}^+_\nu(\Int_p)}^{\an} \arrow[d, "\mathrm{int}(\nu(p)^{-1})"] \\
\Sh^{\an}_K \arrow[r]               & \mathrm{Loc}_{\mathcal{P}^+_\nu(\Int_p)}^{\an} 
\end{tikzcd}
   \]
   commutes. By Remark~\ref{rem:analytic_fiber_Phi}, the composition of the top horizontal with the right vertical arrow factors as
   \[
   \widehat{\Ss}^{\ord}_{K,\eta}\xrightarrow{\varpi_\eta}\BT[\mathcal{G},\mbox{-}\mu_v,\mathrm{ord}]{\infty,\eta}\xrightarrow{\tilde{\Phi}_\eta}\BT[\mathcal{G},\mbox{-}\mu_v,\mathrm{ord}]{\infty,\eta}\to \mathrm{Loc}_{\mathcal{P}^+_\nu(\Int_p)}^{\an}.
   \]
  The argument from Proposition~\ref{prop:uniqueness_maps} now shows that $\Phi'$ factors through $\widehat{\Ss}^{\ord}_{K,\eta}$ and is the adic fiber of an endomorphism $\widehat{\Phi}^{\ord}_K$ of $ \widehat{\Ss}^{\ord}_{K}$ such that we have a commuting diagram as claimed.

  That $\widehat{\Phi}^{\ord}_K$ is finite flat and purely inseparable mod-$p$ is immediate from the formal \'etaleness of the syntomic realization and the fact that $\tilde{\Phi}$ induces the $q_0$-Frobenius relative Frobenius map on complete local rings mod-$p$.
\end{proof}

\begin{notation}
    If $K^{',p}\subset K^p$ is a finite index subgroup and $x\in \widehat{\Ss}_K^{\ord}(R)$ lifts to $x'\in \widehat{\Ss}_{K'}^{\ord}(R)$, then $\widehat{\Phi}^{\ord}_{K'}(x')$ is a lift of $\widehat{\Phi}^{\ord}_{K}(x)$. For this reason, we will simply set $\Phi(x) \defn \widehat{\Phi}^{\ord}_K(x)$.
\end{notation}

\begin{remark}
[Fixed points of $\Phi$]
\label{rem:fixed_points_Phi}
 At this point, we do not know yet that $\widehat{\Phi}^{\ord}_K$ is a lift of $q_0$-Frobenius. We will verify that below in Theorem~\ref{thm:ordinary_locus_frobenius_lift}. However, after fixing an algebraic closure $\overline{k(v)}$ for $k(v)$, for any $m\geqslant 1$ with $q_m = q_0^m$, we can still make sense of the fixed point set
 \[
 {}^\Phi\Ss^{\ord}_{K}(\Field_{q_m})\defn \left\{x\in \Ss^{\ord}_K(\overline{k(v)}):\;\Phi^m(x) = x\right\}.
 \]
 Since $\widehat{\Phi}^{\ord}_K$ is a purely inseparable homeomorphism mod-$p$, this fixed point set is \emph{finite}.
\end{remark}

\begin{remark}
[Interaction with canonical lifts]
\label{rem:interaction_Phi_canonical_lift}
It is not difficult to see that $\widetilde{\Phi}$ preserves canonical lifts, and therefore so does $\widehat{\Phi}^{\ord}_K$. More precisely, for $x\in \Ss_K(\kappa)$, one has that $\Phi(x^{\mathrm{can}})$ is the canonical lift of $\Phi(x)$.
\end{remark}

\begin{proposition}
    [The pseudo-Frobenius quasi-isogeny]
\label{prop:frobenius_q-isogeny}
Suppose that $\kappa$ is field over $k(v)$ and that $x\in \Ss^{\ord}_K(\kappa)$.  Then there exists a canonical element
\[
\varphi_x\in \mathrm{QIsog}_K(\Phi(x^{\mathrm{can}})_\eta,x^{\mathrm{can}}_\eta)(W(\kappa)[\nicefrac{1}{p}])
\]
with the following property: If $K^{',p}\subset K^p$ is a finite index subgroup, and $y\in \Ss^{\ord}_{K'}(\kappa')$ is a lift of $x$ over an extension $\kappa'/\kappa$, then $\varphi_x$ is carried to $\varphi_y$ under the natural pullback map
\[
\mathrm{QIsog}_K(\Phi(x^{\mathrm{can}})_\eta,x^{\mathrm{can}}_\eta)(W(\kappa)[\nicefrac{1}{p}])\to \mathrm{QIsog}_{K'}(\Phi(y^{\mathrm{can}})_\eta,y^{\mathrm{can}}_\eta)(W(\kappa)[\nicefrac{1}{p}]).
\]
\end{proposition}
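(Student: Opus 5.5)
The quasi-isogeny will come out of the construction of $\widehat{\Phi}^{\ord}_K$ in Lemma~\ref{lem:canonical_frobenius_lift_SsK_ord}. Recall that $\Phi'$ was defined as $s_\nu^{\an}$ composed with the canonical section \eqref{eqn:ordinary_p-hecke_section} of $t_\nu$ over $\widehat{\Ss}^{\ord}_{K,\eta}$. Evaluating at the $W(\kappa)[\nicefrac{1}{p}]$-point $x^{\mathrm{can}}_\eta$, the section gives a point $z\in \Sh_{K_\nu}(W(\kappa)[\nicefrac{1}{p}])$ with $t_\nu(z) = x^{\mathrm{can}}_\eta$ and $s_\nu(z) = \Phi(x^{\mathrm{can}})_\eta$. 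This uses Remark~\ref{rem:interaction_Phi_canonical_lift} and the fact that $\widehat{\Phi}^{\ord}_K$ is the formal model of $\Phi'$.

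A point of the $p$-Hecke correspondence $\Sh_{K_h}$ is the same as a quasi-isogeny lying in the double coset $KhK$. To see this, lift to the pro-Galois cover $\Sh\to\Sh_K$ of Remark~\ref{rem:hecke_action}. The open and closed piece of $\mathrm{QIsog}$ consisting of elements of $K\nu(p)K$ is, after descent to level $K$ as in Remark~\ref{rem:isogenies_descent}, exactly $\Sh_{K_\nu}\xrightarrow{(s_\nu,t_\nu)}\Sh_K\times\Sh_K$. This is the characteristic-$0$, $p$-adic analogue of the footnote in Remark~\ref{rem:isogenies_descent}, and holds by the definition of $s_h$ via conjugation by $h^{-1}$. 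Hence $z$ defines an element $\varphi_x\in\mathrm{QIsog}_K(\Phi(x^{\mathrm{can}})_\eta,x^{\mathrm{can}}_\eta)(W(\kappa)[\nicefrac{1}{p}])$, lying in the component indexed by $\nu(p)$. One has to be careful about the direction: $t_h$ is the inclusion and $s_h$ is conjugation by $h^{-1}$. The element $g = \nu(p)$ carries the source $s_\nu(z)$ to the target $t_\nu(z)$, and this matches the order in the statement.

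For compatibility under change of level $K'\subset K$, I would check that every ingredient is compatible with the forgetful maps $\Sh_{K'}\to\Sh_K$:
\begin{enumerate}
\item The canonical lift is compatible, since $y^{\mathrm{can}}$ maps to $x^{\mathrm{can}}$ because the syntomic realization is pulled back (as in Remark~\ref{rem:hecke_and_canonical_lift}).
\item $\Phi$ is compatible, as noted in the Notation following Lemma~\ref{lem:canonical_frobenius_lift_SsK_ord}.
\item The section \eqref{eqn:ordinary_p-hecke_section} is compatible, since it is pulled back from the universal section on $\mathrm{Loc}^{\an}_{\mathcal{P}^+_\nu(\Int_p)}$, which is independent of the prime-to-$p$ level.
\item The Hecke correspondences satisfy $\Sh_{K'_\nu}\to\Sh_{K_\nu}$ compatibly with $(s_\nu,t_\nu)$.
\end{enumerate}
Base change along $W(\kappa)\to W(\kappa')$ is clear by functoriality. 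Canonicity likewise follows because no choices entered: the section is canonical, coming from the identity $\mathcal{P}^+_\nu(\Int_p)\nu(p)\mathcal{P}^+_\nu(\Int_p) = \nu(p)\mathcal{P}^+_\nu(\Int_p)$.

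The main obstacle is the identification of $\Sh_{K_\nu}$ with the $K\nu(p)K$-component of $\mathrm{QIsog}_K$, together with getting the directions and inverses right. In particular, one must pin down that the quasi-isogeny element is $\nu(p)$ rather than $\nu(p)^{-1}$, given the convention $\mathrm{int}(\nu(p)^{-1})$ in Remark~\ref{rem:analytic_fiber_Phi}. I would verify this on $\Comp$-points via the adelic description of $s_h$ and $t_h$, and then descend using the pro-Galois structure.
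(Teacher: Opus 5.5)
Your proposal is correct and is essentially the paper's argument: the paper likewise takes $\varphi_x$ to be the image of $x^{\mathrm{can}}_\eta$ under the canonical section \eqref{eqn:ordinary_p-hecke_section} from the proof of Lemma~\ref{lem:canonical_frobenius_lift_SsK_ord}, viewed as a point of the $p$-Hecke correspondence $\Sh_{K_\nu}$ and hence as a quasi-isogeny in $\mathrm{QIsog}_K$. Your checks on compatibility with change of prime-to-$p$ level and on the orientation of the quasi-isogeny spell out what the paper's one-line proof leaves implicit.
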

\begin{proof}
    This follows from the proof of Lemma~\ref{lem:canonical_frobenius_lift_SsK_ord}, which shows that $\varphi_x$ corresponds to the image of $x^{\mathrm{can}}_\eta$ under the section~\eqref{eqn:ordinary_p-hecke_section}.
\end{proof}

\begin{notation}
[The pseudo-Frobenius element]
    Suppose that $x\in {}^\Phi\Ss_K^{\ord}(\Field_{q_m})$ and that $\Phi^m(x) = x$. Set
    \[
    {}^\Phi\gamma_x = \varphi_x\circ \varphi_{\Phi(x)}\circ\cdots \circ \varphi_{\Phi^{m-1}(x)}\in \Aut^\circ_K(x^{\mathrm{can}}_\eta) = I_{x^{\mathrm{can}}_\eta}(\Rat).
    \]
    Fix a lift $\tilde{x}\in \Ss_{K_p}(\overline{k(v)})$ of $x$. This fixes isomorphisms
    \[
I_{\tilde{x}^{\mathrm{can}}_\eta,\Aff^p_f}\isomto I_{\tilde{x},\Aff^p_f}\isomto G_{\Adele_f^p}.
    \]
    In particular, we can view ${}^\Phi\gamma_x$ as an element of $G(\Adele_f^p)$. Let
    \[
     {}^\Phi I_{x^{\mathrm{can}}_\eta}\subset I_{x^{\mathrm{can}}_\eta}\;;\; {}^\Phi I_{\Adele_f^p,x}\subset G_{\Adele_f^p}\;;\; {}^\Phi I_{\ell,x}\subset G_{\Rat_\ell}
    \]
    be the commutants of (the image of) ${}^\Phi\gamma_x$. For every $\ell\neq p$, we have a canonical realization map ${}^\Phi I_{x^{\mathrm{can}}_\eta}\otimes\Rat_\ell\to {}^\Phi I_{\ell,x}$.
\end{notation}

\begin{lemma}
    \label{lem:kisin_tate_argument}
For every $\ell\neq p$, ${}^\Phi I_{\ell,x}$ is a reductive group over $\Rat_\ell$ of the same (geometric) rank as $G$, and we have $\mathrm{rank}({}^\Phi I_{x^{\mathrm{can}}_\eta}) = \mathrm{rank}(I_{x^{\mathrm{can}}_\eta}) = \mathrm{rank}(G)$.
\end{lemma}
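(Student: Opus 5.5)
The natural route is Kisin's argument for the Tate conjecture over finite fields, with the canonical lift and the pseudo-Frobenius quasi-isogeny ${}^\Phi\gamma_x$ playing the role of the Frobenius endomorphism of an abelian variety. We first relate ${}^\Phi\gamma_x$ to the $\ell$-adic Frobenius. Since $\Phi^m(x)=x$ and $\Phi$ lifts (at least set-theoretically on points) to a map preserving canonical lifts (Remark~\ref{rem:interaction_Phi_canonical_lift}), the element ${}^\Phi\gamma_x\in I_{x^{\mathrm{can}}_\eta}(\Rat)$ is well defined. Via the bijections of Remark~\ref{rem:hecke_and_canonical_lift}, its image in $G(\Adele_f^p)$ lies in $\Aut^p(\tilde{x})$-type data over $\overline{k(v)}$.

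For reductivity of ${}^\Phi I_{\ell,x}$, we show that ${}^\Phi\gamma_x$ is semisimple in $G_{\Rat_\ell}$. It lies in $I_{x^{\mathrm{can}}_\eta}(\Rat)$, which by Lemma~\ref{lem:reductive_groups_from_isogenies}(1) is reductive with $(I/Z(G))_{\Real}$ compact. We may assume $G=G^c$, so $Z(G)$ is cuspidal and a suitable arithmetic subgroup of $Z(G)(\Rat)$ is discrete. Every element of $I(\Rat)$ lying in a compact-mod-center real group is semisimple, because unipotent elements cannot lie in a compact group. Hence the image of ${}^\Phi\gamma_x$ in $I\otimes\Rat_\ell\hookrightarrow G_{\Rat_\ell}$ (Lemma~\ref{lem:reductive_groups_from_isogenies}(3)) is semisimple. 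Its centralizer ${}^\Phi I_{\ell,x}$ in the connected reductive group $G_{\Rat_\ell}$ is therefore reductive, possibly disconnected; we use its identity component if needed. The same argument shows ${}^\Phi I_{x^{\mathrm{can}}_\eta}$, the centralizer in $I_{x^{\mathrm{can}}_\eta}$, is reductive.

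For the ranks, a semisimple element lies in a maximal torus of $G_{\Rat_\ell}$, and that torus centralizes it. Thus ${}^\Phi I_{\ell,x}$ has full rank $\mathrm{rank}(G)$. For the global group, ${}^\Phi\gamma_x$ is semisimple in the reductive group $I_{x^{\mathrm{can}}_\eta}$, so it lies in a maximal $\Rat$-torus $T$ of $I_{x^{\mathrm{can}}_\eta}$. This gives
\[
\mathrm{rank}({}^\Phi I_{x^{\mathrm{can}}_\eta})=\mathrm{rank}(I_{x^{\mathrm{can}}_\eta}),
\]
and it remains to show that this common rank equals $\mathrm{rank}(G)$, i.e.\ that $x^{\mathrm{can}}_\eta$ is CM in the sense of Lemma~\ref{lem:cm_point_criterion}.

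This last equality is the main obstacle. The planned route goes through the $p$-adic side. Lemma~\ref{lem:p-adic_comparison_canonical_lift} gives $I_{p,x^{\mathrm{can}}_\eta}\simeq M_\nu$, which has full rank, and $\varphi_x$ realizes $p$-adically the element $\nu(p)$ up to $\mathcal{M}_\nu(\Int_p)$ (the proof of Lemma~\ref{lem:canonical_frobenius_lift_SsK_ord}). Hence ${}^\Phi\gamma_x$ is $p$-adically a central element of $M_\nu$ that is regular enough that its centralizer is $M_\nu$. We then try Kisin's argument as follows. Choose a maximal torus $T_\ell$ of $G_{\Rat_\ell}$ containing ${}^\Phi\gamma_x$, and compare characteristic polynomials of ${}^\Phi\gamma_x$ across all $\ell$ and $p$, using a faithful representation. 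Alternatively, we apply a Zarhin/Faltings-type statement to deduce that $I_{x^{\mathrm{can}}_\eta}\otimes\Rat_\ell$ contains the Zariski closure of the powers of ${}^\Phi\gamma_x$ together with a maximal torus. Concretely, we would show that $T={}^\Phi I$-maximal torus satisfies $T_{\Rat_p}\supset$ a maximal torus of $M_\nu$ by Lemma~\ref{lem:p-adic_comparison_canonical_lift} and the closed immersion $I\otimes\Rat_p\to I_{p,x^{\mathrm{can}}_\eta}$. This $p$-adic step, namely that $I_{x^{\mathrm{can}}_\eta}\otimes\Rat_p$ is not smaller than $M_\nu$, is where the real work is. The first attempt will be to use that the image of Galois at $x^{\mathrm{can}}_\eta$ is $\nu(\Int_p^\times)$, which is abelian, to show $x^{\mathrm{can}}_\eta$ is CM via Lemma~\ref{lem:cm_point_criterion}.
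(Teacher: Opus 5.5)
You handle the easy half correctly, as the paper does. Since $(I_{x^{\mathrm{can}}_\eta}/Z(G))\otimes\Real$ is compact, ${}^\Phi\gamma_x$ is semisimple. So its centralizer ${}^\Phi I_{\ell,x}$ in $G_{\Rat_\ell}$ is reductive, and it has full rank because a semisimple element is centralized by a maximal torus containing it.

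The genuine gap is the step you flag yourself: that the $\Rat$-group ${}^\Phi I_{x^{\mathrm{can}}_\eta}$ has the rank of $G$. Reducing this to the CM property of $x^{\mathrm{can}}_\eta$ just restates it, since the paper deduces Theorem~\ref{thm:canonical_lifts_are_CM} from this lemma. None of your proposed routes proves it:
\begin{itemize}
\item The isomorphism $I_{p,x^{\mathrm{can}}_\eta}\simeq M_\nu$ of Lemma~\ref{lem:p-adic_comparison_canonical_lift} only says the \emph{local} group is large. It says nothing about how much of it is filled by $I_{x^{\mathrm{can}}_\eta}\otimes\Rat_p$, and claiming all of $M_\nu$ is itself a Tate-type statement.
\item Comparing characteristic polynomials of ${}^\Phi\gamma_x$ across primes is empty, since it is a $\Rat$-point.
\item No Faltings/Zarhin-type theorem is available without a moduli interpretation.
\item An abelian local Galois image at $p$ does not by itself produce a large $\Rat$-group, whereas Lemma~\ref{lem:cm_point_criterion} needs the rank of the global group.
\end{itemize}

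What is missing is Kisin's finiteness argument, which is where the fixed-point structure of $\Phi$ enters.
\begin{itemize}
\item By the prime-to-$p$ Hecke action and Remark~\ref{rem:hecke_and_canonical_lift}, the map $g\mapsto \tilde{x}^{\mathrm{can}}_\eta\cdot g$ is obtained from $g\mapsto\tilde{x}\cdot g$ by taking canonical lifts and passing to the generic fiber.
\item Restricted to ${}^\Phi I_{\Adele_f^p,x}(\Adele_f^p)$, the map $g\mapsto\tilde{x}\cdot g$ lands in ${}^\Phi\Ss^{\ord}_K(\Field_{q_m})$, which is finite by Remark~\ref{rem:fixed_points_Phi}.
\item The map is invariant on the left under global quasi-isogenies and on the right under $K^p$. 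Hence ${}^\Phi I_{x^{\mathrm{can}}_\eta}(\Rat)\backslash{}^\Phi I_{\ell,x}(\Rat_\ell)/U_\ell$ is finite for every compact open $U_\ell$.
\item Pick $\ell$ with $G_{\Rat_\ell}$ and the characteristic polynomial of ${}^\Phi\gamma_x$ both split, so ${}^\Phi I_{\ell,x}$ is split reductive. The argument of \cite[Corollary (2.1.7)]{kisin:abelian} then shows ${}^\Phi I_{x^{\mathrm{can}}_\eta}\otimes\Rat_\ell$ contains ${}^\Phi I^\circ_{\ell,x}$, so it has the rank of $G$.
\end{itemize}
The inclusions ${}^\Phi I_{x^{\mathrm{can}}_\eta}\subset I_{x^{\mathrm{can}}_\eta}$ and $I_{x^{\mathrm{can}}_\eta}\otimes\Rat_\ell\hookrightarrow G_{\Rat_\ell}$ then give all the rank equalities by sandwiching. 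This also lets you drop your step placing ${}^\Phi\gamma_x$ in a maximal torus of $I_{x^{\mathrm{can}}_\eta}$. That step would require ${}^\Phi\gamma_x$ to lie in the identity component of a possibly disconnected group.
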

\begin{proof}
    Consider the maps
   \begin{equation*}
    G(\Adele_f^p) \xrightarrow{g\mapsto \tilde{x}^{\mathrm{can}}_\eta\cdot g}\Sh_{K_p}(W(\ov{\kappa})[\nicefrac{1}{p}])\to \Sh_K(W(\ov{\kappa})[\nicefrac{1}{p}])
\end{equation*}
and
\begin{equation*}
    G(\Adele_f^p) \xrightarrow{g\mapsto \tilde{x}\cdot g}\Ss_{K_p}(\ov{\kappa})\to \Ss_K(\ov{\kappa}).
\end{equation*}
   By Remark~\ref{rem:hecke_and_canonical_lift}, the first map is obtained from the second by taking canonical lifts and then going to the generic fiber. By construction it is invariant under the left action of $I_x(\Int_{(p)})$ on $G(\Adele_f^p)$, and on the right by $K^p$. If we restrict to the subgroup ${}^\Phi I_{\Adele_f^p,x}(\bb{A}_f^p)$ of $G(\Adele_f^p)$, then the image of the second map actually lands in ${}^\Phi\Ss^{\ord}_K(\Field_{q^m})$. In particular, this implies that the image of ${}^\Phi I_{\ell,x^{\mathrm{can}}_\eta}(\Rat_\ell)$ in $\Sh_K(W(\kappa)[\nicefrac{1}{p}])$ is finite. In turn, this shows that, for any $\ell\neq p$, and any compact open $U_\ell \subset {}^\Phi I_{\ell,x^{\mathrm{can}}_\eta}(\bb{Q}_\ell)$, the set
   \begin{equation}\label{eqn:finite_double_coset}
   {}^\Phi I_{x^{\mathrm{can}}_\eta}(\Rat)\backslash {}^\Phi I_{\ell,x^{\mathrm{can}}_\eta}(\Rat_\ell)/U_\ell
   \end{equation}
   is finite. 

  Since $I_{x^{\mathrm{can}}_\eta}/Z(G)$ is compact over $\Real$, the element ${}^\Phi\gamma_x$ has to be semi-simple. Then, as ${}^\Phi I_{\ell,x^{\mathrm{can}}_\eta}$ is the centralizer in $G_{\Rat_\ell}$ of ${}^\Phi \gamma_{\ell,x}$, we see that ${}^\Phi I_{\ell,x}$ is a  reductive group over $\Rat_\ell$. 
  
  Let $\ell$ be a prime such that the characteristic polynomial of ${}^\Phi\gamma_x$ (with respect to some faithful representation of $I_{x^{\mathrm{can}}_\eta}$) and $G_{\Rat_\ell}$ are both split. Then in fact ${}^\Phi I_{\ell,x}$ is a \emph{split} reductive group over $\Rat_\ell$. Therefore, as in~\cite[Corollary (2.1.7)]{kisin:abelian}, we find from the finiteness of~\eqref{eqn:finite_double_coset} that the map ${}^\Phi I_{x^{\mathrm{can}}_\eta}\otimes\Rat_\ell\to {}^\Phi I_{\ell,x}$ contains the connected component ${}^\Phi I^\circ_{\ell,x}$ in its image for this choice of $\ell$. Therefore, ${}^\Phi I_{x^{\mathrm{can}}_\eta}$---and \emph{a fortiori} $I_{x^{\mathrm{can}}}$---has the same rank as $G$.
\end{proof}

\begin{theorem}
    [Canonical lifts are CM]
\label{thm:canonical_lifts_are_CM}
The canonical lift $x^{\mathrm{can}}_\eta$ of $\Sh_K(W(\Field_{q_m})[\nicefrac{1}{p}])$ is a CM point.
\end{theorem}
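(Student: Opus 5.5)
The statement follows almost directly from Lemma~\ref{lem:kisin_tate_argument} combined with Lemma~\ref{lem:cm_point_criterion}. Let $x\in{}^\Phi\Ss^{\ord}_K(\Field_{q_m})$, so that $\Phi^m(x)=x$. First I check that the canonical lift $x^{\mathrm{can}}$ is defined over $W(\Field_{q_m})$. By Remark~\ref{rem:interaction_Phi_canonical_lift}, $\Phi$ preserves canonical lifts, so $\Phi^m(x^{\mathrm{can}})=x^{\mathrm{can}}$. The point $x$ itself lives over some finite field, and its canonical lift is characterized intrinsically (Definition~\ref{defn:canonical_lift_local}). It is therefore Galois-equivariant, which shows that $x^{\mathrm{can}}_\eta$ is a point of $\Sh_K(W(\Field_{q_m})[\nicefrac{1}{p}])$, possibly after enlarging $m$. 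I would also note that the fixed-point set ${}^\Phi\Ss^{\ord}_K(\Field_{q_m})$ contains all of $\Ss^{\ord}_K(\Field_{q_m})$ once we know $\Phi$ is a Frobenius lift. Since that is only proved later, I would phrase the theorem as applying to points of ${}^\Phi\Ss^{\ord}_K(\Field_{q_m})$, which is the setting in which the notation ${}^\Phi\gamma_x$ makes sense.

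Next, I apply Lemma~\ref{lem:kisin_tate_argument}, which says that $\mathrm{rank}(I_{x^{\mathrm{can}}_\eta})=\mathrm{rank}(G)$. The group $I_{x^{\mathrm{can}}_\eta}$ here is the group attached in Lemma~\ref{lem:reductive_groups_from_isogenies} to the characteristic-zero point $x^{\mathrm{can}}_\eta$. Condition (2) of Lemma~\ref{lem:cm_point_criterion} asks for equality of ranks after a finite extension of the base field. Passing to a finite extension $F'$ can only enlarge $I_x$, since $I_x$ is the commutant of a Galois action. On the other hand, its rank is bounded above by $\mathrm{rank}(G)$, because by Lemma~\ref{lem:reductive_groups_from_isogenies}(3) it embeds into $I_{\ell,x}\subset G_{\Rat_\ell}$. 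So equality of ranks persists over $F'$, and Lemma~\ref{lem:cm_point_criterion} then says $x^{\mathrm{can}}_\eta$ is a CM point. The same lemma gives more: for any maximal torus $T\subset I_{x^{\mathrm{can}}_\eta}$, the point lies in the image of some $\Sh_{K_{T,g}}$.

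The substantive input is Lemma~\ref{lem:kisin_tate_argument}, which is where limpidity and the pseudo-Frobenius quasi-isogeny are used, and I take it as given. The remaining step where I expect care is needed is matching the lemma's hypotheses: the rank statement there concerns $I_{x^{\mathrm{can}}_\eta}$ over the field $W(\kappa)[\nicefrac{1}{p}]$ on which the $\varphi_x$ are defined. I would make sure the element ${}^\Phi\gamma_x$ genuinely lies in $\Aut^\circ_K(x^{\mathrm{can}}_\eta)$ over that field, which is exactly what Proposition~\ref{prop:frobenius_q-isogeny} provides, so that the torus produced in Lemma~\ref{lem:kisin_tate_argument} is a subgroup of $I_{x^{\mathrm{can}}_\eta}$ and not merely of some larger group.
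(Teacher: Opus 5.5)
Your proposal is correct and matches the paper's proof, which likewise deduces the theorem immediately from the rank equality $\mathrm{rank}(I_{x^{\mathrm{can}}_\eta})=\mathrm{rank}(G)$ of Lemma~\ref{lem:kisin_tate_argument} combined with the criterion of Lemma~\ref{lem:cm_point_criterion}. Your extra bookkeeping is correct and only spells out what the paper leaves implicit: the field of definition of $x^{\mathrm{can}}$, and the fact that rank equality persists under finite extensions because $I_x$ can only grow while staying inside $G$.
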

\begin{proof}
    This is immediate from Lemmas~\ref{lem:cm_point_criterion} and~\ref{lem:kisin_tate_argument}. 
\end{proof}

\begin{remark}
[Torsion points]
    There is a natural notion of a \emph{torsion point} of the deformation space $\mathrm{Def}_x$, and the argument in Theorem~\ref{thm:canonical_lifts_are_CM} can be used to show that each of these torsion points is also a CM lift of $x$.
\end{remark}

\begin{lemma}
    [Unramifiedness of CM liftings]
\label{lem:canonical_lift_unramified}
Suppose that we have a map of Shimura data $(T,\{h_T\}) \to (G,X)$ with $T\subset G$ a maximal torus such that $x^{\mathrm{can}}_\eta$ is in the image of $\Sh_{K_T}\otimes_EE_{T,w}$ for some place $w\vert v$ of the reflex field $E_T$. Then $w$ is a split place over $E$, and the reflex norm $r_{\mu_T}$ satisfies $r_{\mu_T}(\Reg{E_T,w}^\times)\subset K_{T,p} = T(\Rat_p)\cap K_p$. In particular, the integral model $\Ss_{K_T,(w)}$ is \'etale over $\Reg{E,(v)}$.
\end{lemma}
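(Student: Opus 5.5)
The plan is to compare the $p$-adic Galois representation attached to $x^{\mathrm{can}}_\eta$ computed in two ways. From the torus side, Remark~\ref{rem:etale_tori} describes $\mathbf{Et}_{K,p}$ at a point coming from $\Sh_{K_T}$: restricted to $\Gal(\overline{\Rat}_p/F)$ with $F\supset E_{T,w}$, it factors through local reciprocity, and on inertia it is given by $r_{\mu_T}$ restricted to $\Reg{F}^\times$, projected to $K_{T,p}$ (via the norm $\Reg{F}^\times\to\Reg{E_{T,w}}^\times$). From the aperture side, Remark~\ref{rem:etale_realization_canonical_lift} and the proof of Lemma~\ref{lem:p-adic_comparison_canonical_lift} show that $T_{\et}(\mathfrak{Q}^{\mathrm{can}})$ over $W(\kappa)[\nicefrac{1}{p}]$ is the pushforward along the reflex norm $\mathcal{T}_0=\Res_{\hat{\mc{O}}/\Int_p}\Gm\to\mathcal{M}_\nu$ of a Lubin--Tate character. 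In particular it is crystalline and its restriction to inertia has image in $\mathcal{G}(\Int_p)$, which is already what we want for the containment statement once we know the two reflex norms agree.

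Concretely, I would first note that $x^{\mathrm{can}}_\eta$ is defined over $W(\kappa)[\nicefrac{1}{p}]$, an unramified extension of $E_v$. Since the point lies in the image of $\Sh_{K_T}\otimes E_{T,w}$, and $\Sh_{K_T}$ is finite \'etale over $E_T$ with Galois action through $\tau_K(\mu_T)$, the field of definition of the preimage contains $E_{T,w}$ only up to the residue-field extension. The key is then to show $E_{T,w}$ is unramified over $E_v$ and in fact equal to it, i.e. $w$ split. For this, I would use the crystalline type: by Remark~\ref{rem:tori_crystallinity} the $T(\Int_p)$-valued (up to the neat quotient) character is crystalline of type $\mbox{-}\mu_T$, and via $T\subset G$ this must agree with the type of $\mathfrak{Q}^{\mathrm{can}}$, which is $\mbox{-}\mu_v$ up to conjugacy; but more is true: the Lubin--Tate description identifies the inertial character as $\mathrm{Nm}\circ\mu_v$ over $E_v$, while the torus side gives $\mathrm{Nm}_{E_{T,w}/\Rat_p}\circ\mu_T$. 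Comparing on $\Reg{F}^\times$ for $F$ large and using that both land in the common torus, I would conclude $\mu_T$ is defined over $E_v$ (as it is a $\mathcal{G}(W(\kappa))$-conjugate of $\mu_v$ commuting with the $M_\nu$-image), so $E_{T,w}\subset E_v\cdot(\text{unramified})$, and since $E\subset E_T$ this forces $E_{T,w}=E_v$.

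Then the inertial image computation gives $r_{\mu_T}(\Reg{E_{T,w}}^\times)$ equal to the image of inertia in $K^c_{T,p}$, which by the aperture side lies in $\mathcal{G}(\Int_p)\cap T(\Rat_p)=K_{T,p}$; here I use neatness and $G=G^c$ to lift from $T(\Rat)^-\backslash T(\Rat)^-K_T$ to $K_{T,p}$. Finally, Remark~\ref{rem:unramifiedness_criterion_cm_sv} yields that $\Ss_{K_T,(w)}$ is finite \'etale over $\Reg{E_T,(w)}=\Reg{E,(v)}$ locally.

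The main obstacle is the middle step: pinning down that $\mu_T$, which a priori is only geometrically conjugate to $\mu_v$, is actually rational over $E_v$ and that the two reflex norms coincide, rather than merely the Hodge types agreeing. I would attack this using Lemma~\ref{lem:p-adic_comparison_canonical_lift}: $T_{\Rat_p}\subset I_{p,x^{\mathrm{can}}_\eta}\simeq M_\nu$, so $\mu_T$ is a cocharacter of $M_\nu$ in the conjugacy class of $\mu_v$, and $\mu_v$ is central in $M_\nu$ (Lemma~\ref{lem:mu_Mnu_central}), forcing $\mu_T=\mu_v$ after the identification, which is defined over $E_v$.
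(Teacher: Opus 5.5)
The step you single out as the crux is closed by an invalid inference. From $T_{\Rat_p}\subset I_{p,x^{\mathrm{can}}_\eta}\simeq M_\nu$ and the fact that $\mu_T$ is $G$-conjugate to $\mu_v$, you only learn that $\mu_T$ lies in the Weyl orbit of $\mu_v$ inside $X_*(T)$. Centrality of $\mu_v$ in $M_\nu$ does not single out $\mu_v$ in that orbit. For example, take $G=\GL_2$ and $\mu_v\colon z\mapsto\mathrm{diag}(z,1)$ over $\Int_p$. Then $M_\nu$ is the diagonal torus, and $z\mapsto\mathrm{diag}(1,z)$ lies in $M_\nu$, is central there, and is $G$-conjugate to $\mu_v$ without being equal to it. Your parenthetical reason in the second paragraph (``a $\mathcal{G}(W(\kappa))$-conjugate of $\mu_v$ commuting with the $M_\nu$-image'') has the same defect.

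What is missing is data attached to the specific point that picks out the right Weyl conjugate. The paper uses the Hodge filtration. Conjugation by $g_0$ identifies $I_{p,x}$ with a subgroup of $M_\nu$ and respects the $\Rat_p$-structures, so the central cocharacter $\mu_v$ of $M_\nu$ becomes a central cocharacter of $I_{p,x}\simeq I_{p,x^{\mathrm{can}}_\eta}$. It is therefore a cocharacter of the maximal torus $T_{\Rat_p}$ defined over $E_v$, and it splits the Hodge filtration of the canonical lift. The cocharacter $\mu_{T,w}$ coming from $h_T$ splits the same filtration, and two cocharacters of one torus splitting the same filtration of a faithful representation coincide. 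Hence $\mu_{T,w}$ is defined over $E_v$, so $E_{T,w}=E_v$.

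Your inertial comparison could do the same job, but only if you run it in a single trivialization of $\mathbf{Et}_{K,p}$ at the point and actually extract the cocharacter. Let $g\in\mathcal{G}(\Int_p)$ be the change of trivialization between the torus side and the Lubin--Tate side. The two characters agree on a small open subgroup of $\Reg{F}^\times$. Hence so do the algebraic homomorphisms $\mathrm{Nm}_{F/\Rat_p}\circ\mu_T$ and $\mathrm{Nm}_{F/\Rat_p}\circ(\mathrm{int}(g)\circ\mu_v)$ from $\Res_{F/\Rat_p}\Gm$ to $T$, and the factor at the given embedding of $F$ yields $\mu_T=\mathrm{int}(g)\circ\mu_v$.

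The rationality of the conjugator matters for the rest of your argument. With only $g\in\mathcal{G}(W(\kappa))$ you would get $\mu_T$ defined over $W(\kappa)[\nicefrac{1}{p}]$, i.e.\ only that $E_{T,w}/E_v$ is unramified. Your inference that $E_v\subset E_{T,w}\subset E_v\cdot(\text{unramified})$ forces $E_{T,w}=E_v$ is a non sequitur. With $g\in\mathcal{G}(\Int_p)$ you get $E_{T,w}=E_v$ and $r_{\mu_T}(\Reg{E_v}^\times)=g\,(\mathrm{Nm}_{E_v/\Rat_p}\circ\mu_v)(\Reg{E_v}^\times)\,g^{-1}\subset\mathcal{G}(\Int_p)$ at once. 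This is also cleaner than equating $r_{\mu_T}(\Reg{E_{T,w}}^\times)$ with the image of inertia. That equality holds only modulo $T(\Rat)^-$, and only for norms from the field over which the lift to $\Sh_{K_T}$ is defined, which need not exhaust $\Reg{E_{T,w}}^\times$.
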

\begin{proof}
     After replacing $m$ with a sufficiently large multiple, we can assume that $x$ is in the image of $\Ss_{K_T,(w)}(\Field_{q_m})$ for some place $w\vert p$ of the reflex field $E_T$, and we can also assume that $I_{x^{\mr{can}}_\eta}$ does not change if we replace $W(\Field_{q_m})[\nicefrac{1}{p}]$ with an algebraically closed extension. We now have an embedding $T\hookrightarrow I_{x^{\mr{can}}_\eta}$, yielding an embedding $T_{\Rat_p}\hookrightarrow I_{p,x}$. As seen in the proof of Lemma~\ref{lem:p-adic_comparison_canonical_lift}, we can use conjugation by an element $g_0\in \mathcal{G}(W(\overline{k(v)}))$ to identify $I_{p,x}$ with a subgroup of $M_\nu$. Moreover, under this identification, the cocharacter $\mu_v$ of $M_\nu$ is carried to a central cocharacter of $I_{p,x}$, and hence to a cocharacter of $T_{\Rat_p}$ defined over $E_v$, that splits the Hodge filtration for the canonical lift. As such, it has to be equal to the cocharacter $\mu_{T,w}$ of $T_{E_{T,w}}$ arising from the Deligne cocharacter $h_T$. This tells us that $\mu_{T,w}$ is already defined over $E_v$, and hence that $E_v = E_{T,w}$. It also tells us that we in fact have $r_{\mu_T}(\Reg{E_{T,w}}^\times)\subset \mathcal{G}(\Int_p)$. The last assertion now follows from Remark~\ref{rem:unramifiedness_criterion_cm_sv}.
\end{proof}

\begin{notation}
    If  $X$ is an algebraic stack over $\Field_p$, $R$ is an $\Field_p$-algebra, $x\in X(R)$, we will write $x^{(p)}\in X(R)$ for the Frobenius conjugate $\varphi^*x$ of $x$. As usual, we will use the notation $x^{(p^r)}$ for the $r$-th iterate of this operation.
\end{notation}

\begin{theorem}
    [Canonical Frobenius lift on ordinary locus]
\label{thm:ordinary_locus_frobenius_lift}
The endomorphism $\widehat{\Phi}^{\ord}_K$ of $\widehat{\Ss}^{\ord}_K$ is a lift of the $q_0$-Frobenius endomorphism of $\Ss^{\ord}_{K,k(v)}$. In particular, we have ${}^\Phi\Ss_K^{\ord}(\Field_{q_m}) = \Ss_K^{\ord}(\Field_{q_m})$, and for any $x\in \Ss_K^{\ord}(\Field_{q_m})$, there is a canonical element $\gamma_x\in I_{x^{\mathrm{can}}_\eta}(\Rat)$ carried, for every $\ell$, to $\gamma_{\ell,x}$ under the maps
\[
I_{x^{\mathrm{can}}_\eta}(\Rat)\to I_{\ell,x^{\mathrm{can}}_\eta}(\Rat_\ell)\to I_{\ell,x}(\Rat_\ell).
\]
\end{theorem}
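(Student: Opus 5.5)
The plan is to reduce the statement to a comparison on CM points, where everything is explicit, and then to use density of points. The first step uses Lemma~\ref{lem:kisin_tate_argument} together with Theorem~\ref{thm:canonical_lifts_are_CM} and Lemma~\ref{lem:cm_point_criterion}. For $x\in {}^\Phi\Ss^{\ord}_K(\Field_{q_m})$, the canonical lift $x^{\mathrm{can}}_\eta$ lies in the image of some $\Sh_{K_{T,g}}$ with $T\subset I_{x^{\mathrm{can}}_\eta}$ a maximal torus; we choose $T$ to contain ${}^\Phi\gamma_x$, which is possible since ${}^\Phi\gamma_x$ is semisimple. By Lemma~\ref{lem:extension_cm_points} and Lemma~\ref{lem:canonical_lift_unramified}, this gives a finite \'etale integral model $\Ss_{K_T,(w)}$ with $E_{T,w}=E_v$, mapping to $\Ss_K$ and sending some point $y^{\mathrm{can}}$ to $x^{\mathrm{can}}$. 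Here $y$ is the reduction of a point of $\Ss_{K_T,(w)}$.

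The second step compares the two Frobenius-type operations on the torus side. By Remark~\ref{rem:unramifiedness_criterion_cm_sv}, multiplication by $r_{\mu_T}(p)^{-1}$ on $T(\Rat)\backslash T(\Adele_f)/K_T$ lifts $q_0$-Frobenius on $\Ss_{K_T,(w)}$, because $p$ is a uniformizer of $E_{T,w}=E_v$. On the other hand, $r_{\mu_T}(p)$ is $\pm\nu(p)$ up to sign conventions, since $\nu=-\sum_i\varphi^i(\mu_v)$ is exactly the local reflex norm of $\mu_{T,w}$. So this Hecke operator at $p$ is the $p$-Hecke correspondence attached to $\nu(p)$, restricted to the image of $\Sh_{K_T}$. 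From the construction in Lemma~\ref{lem:canonical_frobenius_lift_SsK_ord}, $\Phi'$ is given on $\widehat{\Ss}^{\ord}_{K,\eta}$ by the section of $t_\nu$ singled out by the $\mathcal{P}^+_\nu(\Int_p)$-reduction of the \'etale realization. On the CM point, the Galois image lies in $r_{\mu_T}(\Reg{E_v}^\times)\subset \nu(\Int_p^\times)$ by Lemma~\ref{lem:p-adic_comparison_canonical_lift}. This should force the distinguished coset to be the one coming from $T$, so that $\Phi(x^{\mathrm{can}})=\mathrm{Frob}_{q_0}(x)^{\mathrm{can}}$, using Remark~\ref{rem:interaction_Phi_canonical_lift}. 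Consequently, on the reduction, $\Phi(x)=x^{(q_0)}$ for every $\overline{k(v)}$-point of $\Ss^{\ord}_K$ lying in a CM image of this type. I expect this matching of cosets, with the correct sign and normalization of the reciprocity map, to be the main obstacle.

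The third step passes from points to the whole locus. Every $\overline{k(v)}$-point of $\Ss^{\ord}_K$ is defined over some $\Field_{q_m}$. Since $\widehat{\Phi}^{\ord}_K$ is a purely inseparable homeomorphism mod $p$, every $\overline{k(v)}$-point is $\Phi$-periodic, as it lies in the finite set ${}^\Phi\Ss^{\ord}_K(\Field_{q_{m'}})$ for a suitable $m'$. Hence steps one and two apply, and $\Phi$ and $\mathrm{Frob}_{q_0}$ agree on all closed points. To upgrade this to an equality of morphisms mod $p$, we compare them on complete local rings. Through the formally \'etale map $\varpi$, both maps are identified on each deformation space with the endomorphism induced by $\widetilde{\Phi}$, which is the relative $q_0$-Frobenius there (Lemma~\ref{lem:canonical_frobenius_lift_SsK_ord}). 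Since $\Ss^{\ord}_{K,k(v)}$ is reduced and smooth, agreement on points together with agreement on complete local rings gives $\widehat{\Phi}^{\ord}_K\bmod p=\mathrm{Frob}_{q_0}$.

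The final step identifies the Frobenius element. Since $\Phi$ is now the $q_0$-Frobenius, we have ${}^\Phi\Ss^{\ord}_K(\Field_{q_m})=\Ss^{\ord}_K(\Field_{q_m})$, and we set $\gamma_x={}^\Phi\gamma_x$. Its $\ell$-adic images are computed on the torus model, where the composite of the $\varphi_{\Phi^i(x)}$ equals $r_{\mu_T}(p)^{-m}$ acting via Hecke. By Remark~\ref{rem:etale_tori} and local reciprocity, this is the geometric Frobenius acting on $\mathbf{Et}_{K,\ell}$, which gives $\gamma_{\ell,x}$ for $\ell\neq p$. For $\ell=p$, we compare with $\gamma_{p,x}$ of Remark~\ref{rem:frobenius_element_p-adic} via Lemma~\ref{lem:p-adic_comparison_canonical_lift}, using that $\delta_x$ can be taken to be $\varphi(\mu_v(p))^{\pm1}$, so that its $m d$-fold norm is $\nu(p)^m$.
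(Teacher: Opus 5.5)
Your argument for the first assertion follows the paper's. Canonical lifts of $\Phi$-periodic points are CM points lying on finite \'etale torus models with $E_{T,w}=E_v$ (Lemma~\ref{lem:canonical_lift_unramified}), and on those models the reflex-norm Hecke operator lifts $q_0$-Frobenius (last sentence of Remark~\ref{rem:unramifiedness_criterion_cm_sv}). The coset matching you flag as the main obstacle is exactly the step the paper compresses into those two citations. Your periodicity argument makes explicit something the paper uses silently when it applies Lemma~\ref{lem:canonical_lift_unramified} to every point: $\widehat{\Phi}^{\ord}_K\bmod p$ is a $k(v)$-morphism that is injective on points, so it permutes each finite set $\Ss^{\ord}_K(\Field_{q_m})$.

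You diverge from the paper in identifying the $\ell$-adic images of $\gamma_x$. The paper does not return to the torus. Once $\Phi$ is known to be Frobenius at every level $K'=K^{',p}K_p$, the compatibility of $\varphi_x$ with lifts $y\in\Ss_{K'}(\overline{k(v)})$ (Proposition~\ref{prop:frobenius_q-isogeny}) shows that $\gamma_x$ lifts to a quasi-isogeny from $y^{(q_m),\mathrm{can}}$ to $y^{\mathrm{can}}$. Hence $\gamma_x^{-1}$ permutes the fibre of $\Ss_{K'}\to\Ss_K$ over $x$ exactly as geometric $q_m$-Frobenius does. That argument is formal and avoids all normalization issues.

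Your route through Remark~\ref{rem:etale_tori} needs a second coset matching, namely the composite of the $\varphi_{\Phi^i(x)}$ against $r_{\mu_T}(p)^{-m}$. It also needs consistent conventions for reciprocity and for the sign relating $r_{\mu_T}(p)$ and $\nu(p)$. In return it exhibits $\gamma_x$ explicitly as an element of $T(\Rat)$.

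There is one slip in your heuristic. For $d>1$ the inclusion $r_{\mu_T}(\Reg{E_v}^\times)\subset\nu(\Int_p^\times)$ runs the other way: after the identifications in Lemma~\ref{lem:canonical_lift_unramified}, $\nu(\Int_p^\times)$ is the image of $\Int_p^\times\subset\Reg{E_v}^\times$. So the distinguished coset should be pinned down using the $\Rat_p$-rational cocharacter $-\sum_\sigma\sigma(\mu_{T,w})$ of $T_{\Rat_p}$, not via that inclusion.
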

\begin{proof}
     For the first assertion, it is enough to know that the action of $\widehat{\Phi}^{\ord}_K$ on the underlying topological space of $\Ss_{K,k(v)}$ agrees with that of $q_0$-Frobenius. This follows from Lemma~\ref{lem:canonical_lift_unramified} and the last sentence of Remark~\ref{rem:unramifiedness_criterion_cm_sv}.

     Set $\varphi_x$ be the quasi-isogeny from Proposition~\ref{prop:frobenius_q-isogeny}. We now see that it is in fact a quasi-isogeny from $x^{(q_0),\mathrm{can}}_\eta$ to $x^{\mathrm{can}}_\eta$. We will take $\gamma_x \defn {}^\Phi\gamma_x\in I_{x^{\mathrm{can}}_\eta}(\Rat)$. By the property of $\varphi_x$ explained in \emph{loc.\@ cit.}, for any finite index subgroup $K^{',p}\subset K^p$ with $K' = K^{',p}K_p$, and any lift $y\in \Ss_{K'}(\ov{k(v)})$ of $x$, $\gamma_x$ lifts to the quasi-isogeny
    \[
    \varphi_y\circ \cdots\circ \varphi_{y^{(q_0^{r-1})}}\in \mathrm{QIsog}(y^{(q_m),\mathrm{can}},y^{\mathrm{can}})(W(\ov{k(v)})).
    \]
    This implies that the action of $\gamma^{-1}_x$ on the fiber of the map $\Ss_{K'}(\ov{k(v)})\to \Ss_K(\ov{k(v)})$ carries $y$ to $y^{(q_m)}$, implying precisely that, for any $\ell\neq p$, the $\ell$-adic realization of $\gamma_x$ is the \emph{geometric} $q_m$-Frobenius element of $\Gal(\ov{k(v)}/\Field_{q_m})$.

    To finish, we need to observe that $\gamma_x$ is carried to $\gamma_{p,x}$ under the $p$-adic realization, but this is immediate from its construction.
\end{proof}

\begin{theorem}
    [$\mu$-ordinary Tate conjecture]
\label{thm:tate_conjecture}
For any $x\in \Ss^{\mathrm{ord}}_K(\Field_{q_m})$, and all primes $\ell$, the realization map
\begin{equation}\label{eqn:mu_ord_tate_conjecture_map}
     I_{x^{\mathrm{can}}_\eta}\otimes_{\Rat}\Rat_\ell \to I_{\ell,x^{\mathrm{can}}_\eta}\isomto I_{\ell,x}
    \end{equation}
is an isomorphism.
\end{theorem}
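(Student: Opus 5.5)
The plan is to exploit that $x^{\mathrm{can}}_\eta$ is a CM point (Theorem~\ref{thm:canonical_lifts_are_CM}), so that all the relevant groups can be pinned down as centralizers of a single semisimple element, namely the Frobenius element $\gamma_x$ from Theorem~\ref{thm:ordinary_locus_frobenius_lift}. The argument has three steps: identify the target, bound the source from below, and compare dimensions.

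First, I would identify the target. For $\ell\neq p$, Remark~\ref{rem:frobenius_elements_ell-adic} identifies $I_{\ell,x}$ with the commutant in $G_{\Rat_\ell}$ of $\gamma_{\ell,x}$. For $\ell=p$, Remark~\ref{rem:frobenius_element_p-adic} identifies $I_{p,x}\otimes W(\kappa)[\nicefrac1p]$ with the commutant of $\gamma_{p,x}$. Lemma~\ref{lem:p-adic_comparison_canonical_lift} gives $I_{p,x^{\mathrm{can}}_\eta}\isomto I_{p,x}$, and Lemma~\ref{lem:specialization_group_schemes} gives the corresponding statement for $\ell\neq p$. So the second arrow in \eqref{eqn:mu_ord_tate_conjecture_map} is an isomorphism, and it suffices to treat the first arrow.

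Second, I would bound the source from below. By Theorem~\ref{thm:ordinary_locus_frobenius_lift}, $\gamma_x\in I_{x^{\mathrm{can}}_\eta}(\Rat)$ realizes to $\gamma_{\ell,x}$ for every $\ell$. Since $I_{x^{\mathrm{can}}_\eta}/Z(G)$ is compact at $\Real$ (Lemma~\ref{lem:reductive_groups_from_isogenies}), $\gamma_x$ is semisimple. Let $C\subset G$ be its centralizer, defined over $\Rat$ via the embedding of $I_{x^{\mathrm{can}}_\eta}$ into $G$ furnished by the CM structure of Lemma~\ref{lem:cm_point_criterion}. Then $C\otimes\Rat_\ell = I_{\ell,x}$ for all $\ell$. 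I would then show $I_{x^{\mathrm{can}}_\eta}\supseteq C$, which amounts to showing that every element of $C(\Rat)$ gives a quasi-isogeny of $x^{\mathrm{can}}_\eta$. Here I would use the characterization in Lemma~\ref{lem:reductive_groups_from_isogenies_C}, namely that $I$ is the commutant of the Hodge cocharacter $h$. The key claim is that $h$ factors through the center of $C$. To see this, use Lemma~\ref{lem:canonical_lift_unramified} together with the $p$-adic description in Lemma~\ref{lem:p-adic_comparison_canonical_lift}: there $\mu_v$ is central in $I_{p,x}\simeq M_\nu$. For a maximal torus $T\subset I_{x^{\mathrm{can}}_\eta}$ with $h$ factoring through $T$, the cocharacter $\mu_T$ must therefore commute with $C_{\Qbar_p}$. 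Since $\mu_T$ is defined over $\overline{\Rat}$ and the Zariski closure of its Galois conjugates generates $h$'s Mumford--Tate torus, $h$ commutes with $C$, and hence $C\subset I'_{x^{\mathrm{can}}_\eta}$.

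Third, I would conclude: connected components and dimensions then match, so \eqref{eqn:mu_ord_tate_conjecture_map} is an isomorphism. For components, I would argue with Zariski density of $C(\Rat)$ in $C$ when $C$ is connected, or else pass to the largest subgroup as in Lemma~\ref{lem:reductive_groups_from_isogenies}.

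The main obstacle is the second step. Transporting the centrality of $\mu_v$ in $I_{p,x}$, which is a $p$-adic statement, to the global statement that the Mumford--Tate torus of $x^{\mathrm{can}}_\eta$ commutes with all of $C$ is not automatic. It requires that the identification $I_{p,x}\simeq M_\nu$ is compatible with $C_{\Rat_p}$, and that the Galois conjugates of $\mu_T$ over $E_T$ all commute with the $\Rat$-group $C$. If this global step fails, I would fall back on a Kisin-style argument, as in Lemma~\ref{lem:kisin_tate_argument}: finiteness of double cosets of $I_{x^{\mathrm{can}}_\eta}(\Rat)$ in $C(\Adele_f^p)$ forces $I_{x^{\mathrm{can}}_\eta}\otimes\Rat_\ell\supseteq C^\circ_{\Rat_\ell}$ for split $\ell$, and from there equality for all $\ell$ follows by a rank and dimension comparison.
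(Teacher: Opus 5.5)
There is a genuine gap, and it is exactly the one you flag as the main obstacle. Your inference ``$\mu_v$ is central in $I_{p,x}\simeq M_\nu$, so $\mu_T$ commutes with $C_{\overline{\Rat}_p}$'' rests on your earlier claim that $C\otimes\Rat_\ell=I_{\ell,x}$ for all $\ell$.

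For $\ell\neq p$ that claim is fine, since $\gamma_\ell$ is $G(\Rat_\ell)$-conjugate to the Betti image $\gamma_0$ of $\gamma_x$. At $\ell=p$ nothing in the paper supplies it. There $I_{p,x}$ sits inside $G$ via the \emph{crystalline} trivialization. What Lemma~\ref{lem:canonical_lift_unramified} and Remark~\ref{rem:frobenius_element_p-adic} give you is that $\mu_T$, embedded crystallinely, centralizes $Z_G(\gamma_p)=I_{p,x}\otimes W(\kappa)[\nicefrac{1}{p}]$. Your $C$, by contrast, is the centralizer of $\gamma_0$ for the Betti (equivalently \'etale) embedding. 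The identity $C_{\Rat_p}=I_{p,x}$, compatibly with $T$, is essentially the $\ell=p$ case of the theorem, so it cannot be assumed.

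The missing idea is a comparison of the two embeddings. The crystalline comparison isomorphism at $x^{\mathrm{can}}_\eta$ is functorial for quasi-isogenies. Hence the crystalline and Betti embeddings of $T$ into $G$ become conjugate over an algebraically closed field, by an element carrying $\gamma_p$ to $\gamma_0$, $Z_G(\gamma_p)$ to $C$, and one copy of $\mu_T$ to the other. With this, the centrality transfers, and the rest of your step goes through:
\begin{itemize}
\item your worry about Galois conjugates is harmless, because $C$ is a $\Rat$-group, so $C\subseteq G_h$, the commutant of $h$;
\item by the last clause of Lemma~\ref{lem:reductive_groups_from_isogenies}, $I_{x^{\mathrm{can}}_\eta}$ is the largest subgroup of $G_h$ mapping into $Z(\gamma_\ell)$, namely $G_h\cap C$, so $I_{x^{\mathrm{can}}_\eta}=C$;
\item this gives the theorem including components; at $p$ this holds because $C_{\Rat_p}\to I_{p,x}$ is a closed immersion between groups that become conjugate geometrically, hence have the same dimension and number of components.
\end{itemize}
Completed this way, your route genuinely differs from the paper's. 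It is the classical ``all endomorphisms lift to the canonical lift'' argument, and it sidesteps the component problem.

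Your fallback is, in outline, the paper's proof. The paper never shows $C\subseteq G_h$. Instead it takes from the proof of Lemma~\ref{lem:kisin_tate_argument} that for one split $\ell$ the image of the realization map contains $I^\circ_{\ell,x}$. It then uses that each $\gamma_\ell$ (and $\gamma_p$) is geometrically conjugate to $\gamma_0$, so $\dim I_{\ell,x}$ is independent of $\ell$, and the image contains $I^\circ_{\ell,x}$ for every $\ell$.

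A rank and dimension comparison stops there, however. $I_{\ell,x}$ is the centralizer of a semisimple element and can be disconnected. Neither Zariski density of $C(\Rat)$ nor an unspecified passage to a ``largest subgroup'' controls $\pi_0$. The paper's device is to replace $m$ by $rm$ so that the Zariski closure of $\langle\gamma_x^r\rangle$ is a torus. Then $\gamma_{x'}=\gamma_x^r$, and every $I_{\ell,x'}$ is the centralizer of a torus in a connected reductive group, hence connected. So the realization maps are isomorphisms for $x'$. One recovers $x$ because $I_{x^{\mathrm{can}}_\eta}\subseteq I_{x^{\prime,\mathrm{can}}_\eta}$ and $I_{\ell,x}\subseteq I_{\ell,x'}$ are the commutants of $\gamma_x$ and $\gamma_{\ell,x}$, which correspond under those isomorphisms.
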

\begin{proof}
     Let $\gamma_x\in I_{x^{\mathrm{can}}_\eta}(\Rat)$ be the $q$-Frobenius quasi-isogeny from Theorem~\ref{thm:ordinary_locus_frobenius_lift}. The proof of Theorem~\ref{thm:canonical_lifts_are_CM} unwinds to show that, for any maximal torus $T\subset I_{x^{\mathrm{can}}_\eta}$, $T$ can be identified (up to $G(\Rat)$-conjugacy) with a maximal torus of $G$. Using this, the element $\gamma_x\in T(\Rat)$ can be identified with an element $\gamma_0\in G(\Rat)$. On the other hand, for $\ell\neq p$, we can use a lift $\tilde{x}\in \Ss_{K_p}(\overline{k(v)})$ of $x$ to identify $I_{\ell,x}$ with a subgroup of $G_{\Rat_\ell}$, namely the commutant of the image $\gamma_\ell\in G(\Rat_\ell)$ of $\gamma_{\ell,x}$. Under these identifications, the image of $\gamma_0\in G(\Rat)$ in $G(\Rat_\ell)$ is $G(\Rat_\ell)$-conjugate to $\gamma_\ell$. This shows that the groups $I_{\ell,x}$ are all reductive, and that their dimension is independent of $\ell$. For $\ell=p$, the same argument also applies, except that we use the knowledge that $I_{p,x}\otimes W(\kappa)$ can be identified with the centralizer of the image $\gamma_p\in G(W(\kappa)[\nicefrac{1}{p}])$ of $\gamma_{p,x}$. 
     
     The proof of Lemma~\ref{lem:kisin_tate_argument} showed that the map~\eqref{eqn:mu_ord_tate_conjecture_map} contains the connected component $I^{\circ}_{\ell,x} = {}^\Phi I^{\circ}_{\ell,x}$ for a particular choice of $\ell$, and we now see that it must contain this component for \emph{every} choice of $\ell$.

    If we replace $m$ with a multiple $rm$, and denote by $x'\in \Ss_K(\Field_{q_{rm}})$ the $\Field_{q_{rm}}$-point underlying $x$, then we have $\gamma_{x'} = \gamma^r_x$. We can arrange for $r$ to be chosen such that the Zariski closure of the subgroup $\langle \gamma^{r}_x\rangle\subset T(\Rat)$ generated by $\gamma^r_x$ is connected. This implies that the groups $I_{\ell,x'}$ are all connected, and hence the maps~\eqref{eqn:mu_ord_tate_conjecture_map} are isomorphisms when $x$ is replaced by $x'$. We now deduce the result for $x$ by noting that $I_{x^{\mathrm{can}}_\eta}\subset I_{x^{',\mathrm{can}}_\eta}$ (resp.\@ $I_{\ell,x}\subset I_{\ell,x'})$ is the commutant of $\gamma_x$ (resp.\@ $\gamma_{\ell,x}$).
\end{proof}

\begin{remark}
[$\ell$-independence of $q$-Frobenius]
\label{rem:ell_indepdencce_mu_ordinary}
    As in~\cite[{}5.1.3]{Kisin2025-mz}, write $\mathrm{Conj}_G$ for the GIT quotient of $G$ acting on itself by conjugation: We have a canonical map $G\xrightarrow{g\mapsto [g]} \mathrm{Conj}_G$, and, for any algebraically closed field $F$, this map identifies $\mathrm{Conj}_G(F)$ with the set of semisimple conjugacy classes in $G(\kappa)$.  The proof above shows that the elements $\gamma_{\ell,x}\in I_{\ell,x}(\Rat_\ell)$ are semisimple for all $\ell$, yielding conjugacy classes $[\gamma_\ell]\in \mathrm{Conj}_G(\Rat_\ell)$ (for $\ell\neq p$) and $[\gamma_p]\in \mathrm{Conj}_G(W(\kappa)[\nicefrac{1}{p}])$, and that there is an elliptic element $\gamma_0\in G(\Rat)$ such that $[\gamma_0]\in \mathrm{Conj}_G(\Rat)$ is carried to $[\gamma_\ell]$ for all primes $\ell$; see~\cite[Corollary (2.3.1)]{Kisin2017-qa}.
\end{remark}

\subsection{Ascent for ICMs}
\label{sub:ascent_for_icms} 

Here, we will combine the results of \S\ref{sub:central_isogenies_BT} with the theory of the canonical CM lifting from \S\ref{sub:global_mu-ord} to prove Theorem~\ref{introthm:ascent}.

\begin{setup}
\label{setup:central_covers_shimura}
   Suppose that $(G,\mathcal{G},X,K)\to (\overline{G},\overline{\mathcal{G}},\overline{X},\overline{K})$ is a map of unramified tuples such that $ {G}\to \overline{{G}}$ is a surjective map of reductive group schemes with central kernel and where $\ov{G}=\ov{G}^c$. Let $\overline{E}$ be the reflex field for $(\overline{G},\overline{X})$, and let $v\vert p$ be a place for $E$ lying above a place $\overline{v}$ for $\overline{E}$.
\end{setup}

\begin{remark}
    [CM points and central covers]
\label{rem:cm_points_central_covers}
Suppose that we have $\overline{g}\in \overline{G}(\Adele_f^p)$ and $(T,i,h_T)$ as in Construction~\ref{const:cm_points} with $\overline{T}\subset \overline{G}$ the image of $T$. Then we can consider the fiber product
\[
\Sh_K\times_{\Sh_{\overline{K}}}\Sh_{\overline{K}_{\overline{T},\overline{g}}}\to \Sh_{\overline{K}_{\overline{T},\overline{g}}}.
\]
Examining $\Comp$-points, and using Remark~\ref{rem:unramified_uniformization}, one finds that, if non-empty, this fiber product is a disjoint union of CM Shimura varieties of the form $\Sh_{K_{T',g'}}$, where $i'\colon (T',h_{T'})\hookrightarrow (G,X)$ is a $\overline{\mathcal{G}}_{(p)}(\Int_{(p)})_+$-conjugate of $i$. 
\end{remark}

\begin{remark}
    [Unramifiedness of canonical lifts along central covers]
\label{rem:cm_points_central_covers_unramified}
Suppose that $\Ss_{\overline{K}}$ is a limpid ICM for $\Sh_{\overline{K}}$ over $\Reg{\overline{E},\overline{v}}$, and we have $x\in \Ss^{\ord}_{\overline{K}}(\kappa)$ with $\kappa$ an algebraic closure of $k(\overline{v})$. Let $x^{\mathrm{can}}\in \Ss_{\overline{K}}(W(\kappa))$ be the canonical lift. By Theorem~\ref{thm:canonical_lifts_are_CM}, we can find $(\overline{T},h_{\overline{T}},g)$ with $\overline{T}\subset \overline{G}$ a maximal torus such that the associated map $\Sh_{\overline{K}_{\overline{T}}}\to \Sh_{\overline{K}}$ contains the generic fiber $x^{\mathrm{can}}_\eta$ in its image. 

Suppose that $x^{\mathrm{can}}_\eta$ is in the image of $\Sh_K$. Let $T\subset G$ be the pre-image of $\overline{T}$; then there is a unique lift $h_T\colon \mathbb{S}\to T_{\Real}$ of $h_{\overline{T}}$ that lies in $X$, and Remark~\ref{rem:cm_points_central_covers} tells us that all pre-images of $x^{\mathrm{can}}_\eta$ in $\Sh_K$ are in the image of $\Sh_{K_{T',g'}}$ for some $\overline{\mathcal{G}}_{(p)}(\Int_{(p)})_+$-conjugate $(T',h_{T'})$ of $(T,h_T)$. By Lemma~\ref{lem:canonical_lift_unramified}, the reflex field $E_T = E_{\overline{T}}$ is unramified over $\overline{v}$. Moreover, for any place $w\vert v$ of $E_T$, one can extend $\Sh_{K_T}\otimes_{E_T}E_{T,w}$ to a finite \'etale scheme over $\Reg{E_T,w}$: It suffices to show $r_{\mu_T}(\Reg{E_T,w}^\times)\subset K_p$, which follows because, by the same lemma, we already know that the image in $\overline{G}(\Rat_p)$ is contained in $\overline{K}_p$, and this implies that the image lies in $K_p Z(G)(\mathbb{Q}_p)$, which has $K_p$ as its unique maximal compact open subgroup.
\end{remark}

We can now prove Theorem~\ref{introthm:ascent} whose statement we now recall:
\begin{theorem}
    [Ascent for ICMs]
\label{thm:ascent}
Suppose that $p>2$ and that $\Sh_{\overline{K}}$ admits a limpid ICM $\Ss_{\overline{K}}$ over $\Reg{\overline{E},\overline{v}}$. Then $\Sh_K$ admits a limpid ICM $\Ss_K$ over $\Reg{E,(v)}$ and the natural map $\Ss_K\to \Ss_{\overline{K}}$ is finite \'etale.
\end{theorem}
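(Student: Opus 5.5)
The plan is to construct $\Ss_K$ as the normalization of $\Ss_{\overline{K}}\otimes_{\Reg{\overline{E},(\overline{v})}}\Reg{E,(v)}$ in $\Sh_K$. We then verify the ICM axioms by applying Proposition~\ref{prop:ascending_along_central_covers}. The Serre--Tate and pointwise properties are then formal consequences of those for $\Ss_{\overline{K}}$.

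\textbf{Reduction to a Galois cover.} First I would reduce to the case where $\Sh_K\to\Sh_{\overline{K}}\otimes_{\overline{E}}E$ is, at least over a component, a finite Galois cover with abelian group $\Delta$. This uses Remark~\ref{rem:prime-to-p_hecke}: the fibers of the map are governed by quotients of $Z(G)$-type abelian groups and the abelian group $\overline{G}(\Adele_f^p)/\cdots$ acting on connected components. Descent for coefficients (Corollary~\ref{cor:descent_coefficients_gen_ICMs}) lets us enlarge $E$ freely. Shrinking $K^p$ via Corollary~\ref{cor:prime-to-p_part} and then taking quotients by finite groups acting freely should also be harmless.

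\textbf{Checking the hypotheses of Proposition~\ref{prop:ascending_along_central_covers}.} Put $\mathcal{X}=\Ss_{\overline{K}}$ (base-changed), $Y=\Sh_K$, and $\mathsf{Q}=\mathbf{Et}_{K,p}$ lifting $\overline{\mathsf{Q}}=\mathbf{Et}_{\overline{K},p}$. This uses the map $\mathcal{G}^c\to\overline{\mathcal{G}}$, which is again a central isogeny, possibly after passing through $\mathcal{G}^c$. The hypotheses are checked as follows.
\begin{itemize}
\item Hypotheses (1)--(4) hold by assumption together with the Serre--Tate property of $\Ss_{\overline{K}}$.
\item Hypothesis (5) is the unramifiedness of canonical lifts. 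For $x$ ordinary, Theorem~\ref{thm:canonical_lifts_are_CM} makes $x^{\mathrm{can}}_\eta$ CM, and limpidity is used here. Remark~\ref{rem:cm_points_central_covers_unramified} then shows that its preimages in $\Sh_K$ lie on CM Shimura varieties $\Sh_{K_{T'}}$ which extend finite \'etale over $\Reg{E_T,w}$, with $E_{T,w}$ unramified. Since $\kappa$ is algebraically closed, the fiber $Y_{x^{\mathrm{can}}}$ is split.
\item Hypothesis (6) follows from Remark~\ref{rem:tori_crystallinity} and Proposition~\ref{prop:integral_canonical_models_for_tori}, together with the extension of CM points in Lemma~\ref{lem:extension_cm_points}. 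The torus local systems are crystalline of type $-\mu_T$, so $\mathsf{Q}|_{\tilde y}$ is crystalline of type $\mu$.
\end{itemize}
The proposition then gives that $\mathcal{Y}=\Ss_K\to\Ss_{\overline{K}}$ is finite \'etale Galois and that $\mathsf{Q}$ lifts to an aperture. By Proposition~\ref{prop:central_covers} the syntomic realization remains formally \'etale, which is the Serre--Tate property.

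\textbf{Pointwise criterion and limpidity.} For $x\in\Sh_K(F)$ with $\mathbf{Et}_{K,p}|_x$ potentially crystalline, the image $\overline{x}$ has potentially crystalline realization, so $\overline{x}\in\Ss_{\overline{K}}(\Reg{F})$. Since $\Ss_K$ is finite over $\Ss_{\overline{K}}$, the valuative criterion gives $x\in\Ss_K(\Reg{F})$. Conversely, points of $\Ss_K(\Reg{F})$ carry an aperture and are therefore crystalline by Proposition~\ref{prop:gmu_rings_of_integers}.

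Limpidity needs the $K^c_\ell$-local systems to extend over $\Ss_K$. By Zariski--Nagata purity on the smooth $\Ss_K$ it suffices to extend them over the generic points of the special fiber, and hence over the dense ordinary locus. There I would use that $\mathbf{Et}_{K,\ell}$ lifts $\mathbf{Et}_{\overline{K},\ell}$ up to a $Z$-twist, and that this twist is unramified at canonical lifts by Remark~\ref{rem:tori_unramified}. I would then argue with Lemma~\ref{lem:gerbe_extending}, replacing $p$ by $\ell$.

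\textbf{Main obstacle.} I expect the hardest step to be this limpidity argument, along with the initial reduction to an abelian Galois cover; the $p$-adic part is essentially packaged by Proposition~\ref{prop:ascending_along_central_covers}.
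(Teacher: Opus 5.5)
Your construction and the $p$-adic core of your argument match the paper's proof. You normalize $\Ss_{\overline K}$ in $\Sh_K$ and apply Proposition~\ref{prop:ascending_along_central_covers}. You get (5) and (6) from the fact that canonical lifts are CM, which is where limpidity of $\Ss_{\overline K}$ enters, together with Remarks~\ref{rem:cm_points_central_covers_unramified} and~\ref{rem:tori_crystallinity}. The pointwise criterion then follows from that of $\Ss_{\overline K}$ plus finiteness of $\Ss_K\to\Ss_{\overline K}$.

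Your preliminary reduction to an abelian Galois cover is something the paper leaves implicit. Making it explicit is reasonable: Setup~\ref{setup:lifting_finite_central_cover} and Lemma~\ref{lem:good_reduction_torsors} need $\Delta$ finite abelian, and this is not automatic for an arbitrary map of tuples. Your sketch of that reduction is only heuristic, though.

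The real divergence is limpidity, which you expected to be the hardest step. You propose a separate $\ell$-adic argument. It reduces by purity to codimension-one points, treats $\mathbf{Et}_{K,\ell}$ as a lift of $\mathbf{Et}_{\overline K,\ell}$ with a central abelian band, and uses unramifiedness at CM canonical lifts (Remark~\ref{rem:tori_unramified}) together with Lemma~\ref{lem:gerbe_extending} at each finite level. No ``replacing $p$ by $\ell$'' is needed there: that lemma handles any finite abelian band once $p>2$. This can be made to work, but it effectively re-proves assertion~(1) of Proposition~\ref{prop:ascending_along_central_covers} for the covers $\Sh_{K'}\to\Sh_K$.

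The paper gets limpidity essentially for free. It observes that the same ascent argument applies verbatim to every $K'\subset K$ with $K'_p=K_p$; limpidity of $\Ss_{\overline K}$, via Corollary~\ref{cor:prime-to-p_part}, supplies ICMs at the needed deeper levels $\overline K'$. So each $\Ss_{K'}$ is finite \'etale over $\Ss_{\overline K}$, hence $\Ss_{K'}\to\Ss_K$ is finite \'etale, and the tower $\{\Ss_{K'}\}$ extends all the $\mathbf{Et}_{K,\ell}$ at once. The paper's route costs nothing beyond what is already proved. Yours isolates exactly which $\ell$-adic input drives limpidity, at the price of tracking the central bands level by level.
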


\begin{proof}
 Let $\Ss_K$ the normalization of $\Ss_{\overline{K}}\otimes_{\Reg{\overline{E},(\overline{v})}}\Reg{E,(v)}$ in $\Sh_K$. We claim that this is the desired ICM for $\Sh_K$.
  
  We begin by checking that $\Ss_K\to \Ss_{\overline{K}}$ is finite \'etale and that it is an apertile integral model for $\Sh_K$. To do this, we apply the criteria from Proposition~\ref{prop:ascending_along_central_covers}: the non-trivial ones to check are (5) and (6). We need to check that, for all  $x\in \Ss_{\overline{K}}(\kappa)$ with canonical lift $x^{\mr{can}}$,
  \[
   \Ss_K\times_{\Ss_{\overline{K}},x^{\mr{can}}}\Spec W(\kappa)\to \Spec W(\kappa)
  \]
  is a totally split finite \'etale cover, all of whose points are crystalline.  For this, note that Remark~\ref{rem:cm_points_central_covers_unramified} tells us that the pre-image of the canonical lift $x^{\mathrm{can}}_\eta$ in $\Sh_K$ consists of CM points defined over an extension of $E$ in which $p$ is unramified. Taking Remark~\ref{rem:tori_crystallinity} into account, this simultaneously verifies both (5) and (6) from Proposition~\ref{prop:ascending_along_central_covers}. 

  What we have seen so far already tells us that $\Ss_K$ is an apertile integral model satisfying the Serre--Tate property. Moreover, if $\Ss_{\overline{K}}$ is a limpid ICM, for any complete mixed characteristic discrete valuation field $F$ over $E_v$ with perfect residue field, $\Ss_K(\Reg{F})\subset \Sh_K(F)$ contains all the potentially crystalline points: this follows from the corresponding assertion for $\Ss_{\overline{K}}$. 

  It still remains to be checked that the local systems $\mathbf{Et}_{K,\ell}$ extend over $\Ss_K$. But this follows by simply replacing $K$ with finite index subgroups $K'\subset K$ with $K'_p = K_p$ and noting that our proof has actually constructed ICMs $\Ss_{K'}$ for all such $K'$.
\end{proof}

\subsection{Abstract twisting and descent for ICMs}
\label{sub:abstract_twisting}

Here, we give a different approach to the twisting construction of Kisin~\cite{kisin:abelian} that works around the lack of a moduli interpretation for general ICMs. We once again fix an unramified tuple $(G,\mathcal{G},X,K)$ with a limpid ICM $\Ss_K$ over $\Reg{E,(v)}$, and also maintain our assumption that $G = G^c$.

\begin{remark}
    [Adjoint action]
\label{rem:adjoint_action}
Let $G^{\ad}(\Rat)_1\subset G^{\ad}(\Rat)$ be the subgroup of elements preserving the $G(\Real)$-conjugacy class $X$ under conjugation. Given $\gamma\in G^{\ad}(\Rat)_+$, for any $K\subset G(\Adele_f)$, with $K^\gamma = \mathrm{int}(\gamma)(K)$, we have an isomorphism $i_\gamma\colon \Sh_K \isomto  \Sh_{K^{\gamma}}$ given on $\Comp$-points by
\[
[(x,g)]\mapsto [(\gamma\cdot x,\mathrm{int}(\gamma)(g))].
\]
When these Shimura varieties admit ICMs over $\Reg{E,(v)}$, then this map also extends. Our goal here is to give a more concrete description of this extension under certain conditions. For convenience, we will actually consider the isomorphism $i_\gamma\colon \Sh\isomto \Sh$ induced by taking the inverse limit over all levels $K$. 
\end{remark}

\begin{remark}
[Adjoint action on ICMs]
\label{rem:adjoint_action_icms}
If $\gamma\in \mathcal{G}^{\ad}_{(p)}(\Int_{(p)})_1$, where
\[
 \mathcal{G}^{\ad}_{(p)}(\Int_{(p)})_1 =  \mathcal{G}^{\ad}_{(p)}(\Int_{(p)})\cap  G^{\ad}(\Rat)_1\subset G^{\ad}(\Rat),
\]
then the action of $\gamma$ on $\Sh$ descends to an isomorphism $i_\gamma\colon \Sh_{K_p}\isomto \Sh_{K_p}$ where $\Sh_{K_p}$ is the inverse limit of the prime-to-$p$ Hecke tower. Moreover, this action extends to one on the inverse limit of ICMs $\Ss_{K_p}$: To see this, it suffices, by Theorem~\ref{thm:mapping_property}, to observe that, for all $K^p\subset G(\Adele_f^p)$, we have a commuting diagram
\[
\begin{diagram}
    \Sh_{K_pK^p}&\rTo&\mathrm{Loc}_{\mathcal{G}^c(\Int_p)}\\
    \dTo^{i_\gamma}&&\dTo_{\iota_\gamma}\\
    \Sh_{K_p\mathrm{int}(\gamma)(K^p)}&\rTo&\mathrm{Loc}_{\mathcal{G}^c(\Int_p)}
\end{diagram}
\]
where
\[
\iota_\gamma\colon \mathrm{Loc}_{\mathcal{G}^c(\Int_p)}\isomto \mathrm{Loc}_{\mathcal{G}^c(\Int_p)}
\]
is the automorphism induced by the adjoint action of $\gamma$.
\end{remark}

\begin{remark}
    \label{rem:central_action}
For any $x\in \Ss_{K_p}(R)$, every element $z\in Z(\mathcal{G}_{(p)})(\Int_{(p)})$ satisfies $x\cdot z = z$: It is enough to check this in the generic fiber, and here this follows by an explicit check on $\Comp$-points. Therefore, for every neat compact open subgroup $K^p\subset G(\Adele_f^p)$, we obtain an action of the finite group
\[
Z(G)(\Adele_f^p)/\big(Z(\mathcal{G}_{(p)})(\Int_{(p)})\cdot (K^p\cap Z(G)(\Adele_f^p))\big)
\]
on $\Ss_{K_pK^p}$. We will see below that the adjoint action from Remark~\ref{rem:adjoint_action_icms} can be described in terms of this central action after replacing the Shimura variety with one from a Weil restricted Shimura datum.
\end{remark}

\begin{notation}
    In this subsection, $F$ will always denote a totally real field in which $p$ is unramified. Write $\mathrm{TR}_{p\text{-unr}}$ for the set of such fields.
\end{notation}

\begin{definition}
    [Weil restriction] 
\label{defn:weil_restriction}
For an unramified Shimura datum $(G,\mc{G},X)$, we define the \defnword{Weil restriction} $\Res_{F/\Rat}(G,\mathcal{G},X)$ to be the unramified Shimura datum with underlying reductive group $\Res_{F/\Rat}G$, reductive model $\Res_{\Reg{F}\otimes\Int_p/\Int_p}\mathcal{G}$ and Hermitian symmetric domain 
\[
\Res_{F/\Rat}X = \prod_{\tau:\colon F\to \Real}X
\]
viewed as a homogeneous space under
\[
(\Res_{F/\Rat}G)(\Real) = \prod_{\tau:\colon F\to \Real}G(\Real).
\]
\end{definition}

\begin{remark}
The reflex field of the Weil restricted Shimura datum $\Res_{F/\Rat}(G,X)$ is still $E$, and we have a canonical closed immersion of unramified Shimura data
\begin{align}\label{eqn:wr_embedding_data}
(G,\mathcal{G},X)\to \Res_{F/\Rat}(G,\mathcal{G},X)
\end{align}
\end{remark}

\begin{assumption}
[The WR condition]
    \label{assump:weil_restriction}
We will say that $(G,\mc{G},X)$ satisfies the \textbf{WR condition} if for every $F$ in $\mr{TR}_{p\text{-unr}}$, every neat unramified tuple with underlying unramified Shimura datum $\Res_{F/\Rat}(G,\mathcal{G},X)$ admits an ICM over $\Reg{E,(v)}$. Moreover, we require that for any sufficiently small compact open subgroup $K_F\subset G(\Adele_f\otimes_{\Rat}F)$ that is part of such a tuple, the central action of
\[
Z({G^{\mathrm{der}}})(\Adele_f^p\otimes F)/\big(Z({G^{\mathrm{der}}})(\Reg{F,(p)})\cdot (K^p_F\cap Z({G^{\mathrm{der}}})(\Adele_f^p\otimes F))\big)
\]
on the ICM $\Ss_{K_F}$ is free.
\end{assumption}

\begin{remark}
    The freeness of the central action in the generic fiber can be seen from the complex uniformization in Remark~\ref{rem:unramified_uniformization} applied to $\Sh_{K_F}$ as $K_F\subset G(\Adele_f^p\otimes F)$ varies.
\end{remark}

\begin{remark}
    \label{rem:WR_implies_limpid}
Note in particular that, if $(G,\mc{G},X)$ satisfies the WR condition, then every unramified tuple $(G,\mc{G},X,K)$ admits a \emph{limpid} ICM $\Ss_K$ over $\Reg{E,(v)}$.
\end{remark}

\begin{remark}
 \label{rem:freeness_after_inverse_limit}
By Lemma~\ref{lem:freeness_in_inverse_systems} below, the freeness condition for the WR condition is equivalent to the following: If $K_{F,p} = \mathcal{G}(\Reg{F}\otimes\Int_p)$, then the action of $Z({G^{\mathrm{der}}})(\Adele_f^p\otimes F)/Z({G^{\mathrm{der}}})(\Reg{F,(p)})$ on $\Ss_{K_{F,p}}$ is free.
\end{remark}

\begin{lemma}
  \label{lem:freeness_in_inverse_systems}
Let $J$ be a cofiltered small category and $\{X_j\}_{j\in J}$ a cofiltered inverse system of quasicompact, separated schemes with finite \'etale transition maps. Suppose that $\Gamma$ is a profinite group acting on $\{X_j\}_{j\in J}$ with the action on each $X_j$ is via a finite quotient $\Gamma_j$. Then the following are equivalent:
\begin{enumerate}
  \item $\Gamma$ acts freely on the inverse limit $X = \varprojlim_j X_j$;
  \item there exists $j_0\in J$ such that, for all $j\to j_0$, $\Gamma_j$ acts freely on $X_j$.
\end{enumerate}
\end{lemma}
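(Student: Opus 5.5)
The plan is to reduce freeness to a statement about stabilizers of points, and then use compactness of the profinite group together with finiteness of the schemes. For a geometric point $x$ of $X$ (or of some $X_j$), write $\Gamma_x$ for its stabilizer. Freeness means that $\Gamma_x$ is trivial for every geometric point. Since each $X_j$ is quasicompact and separated and the action factors through the finite group $\Gamma_j$, freeness of $\Gamma_j$ on $X_j$ amounts to triviality of all geometric stabilizers, and this is an open and closed condition on fixed loci. The transition maps are finite étale, hence surjective on geometric points after discarding empty components; I would assume they are surjective, or else treat only the nonempty components.

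For (2)$\Rightarrow$(1), I would take $g\in\Gamma$ and a geometric point $x\in X$ with $gx=x$. For every $j\to j_0$, the image $x_j$ is then fixed by the image of $g$ in $\Gamma_j$. Freeness of $\Gamma_j$ on $X_j$ forces this image to be trivial. Because the system $J$ is cofiltered, $\Gamma=\varprojlim\Gamma_j$ along the $j\to j_0$; I take this as part of the setup, or else pass to the image of $\Gamma$, which is profinite. It follows that $g=1$.

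For (1)$\Rightarrow$(2), I would argue by contradiction. For each $j$, let $\Gamma^{\mathrm{bad}}_j\subset\Gamma_j$ be the set of nontrivial elements with a fixed point on $X_j$, and let $F_j=\{(g,x_j): g\in\Gamma^{\mathrm{bad}}_j,\ gx_j=x_j\}$. Each fixed locus $X_j^{g}$ is a closed subscheme, since $X_j$ is separated. The finite étale transition maps carry fixed points to fixed points: if $gx_{j'}=x_{j'}$ then $\bar g x_j=x_j$.

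The step I expect to be the main obstacle is lifting a fixed point at level $j$ to a fixed point at higher levels. For this I would use the étaleness. Over the fixed locus, the fiber of $X_{j'}\to X_j$ is a finite étale scheme acted on by the stabilizer. Taking the set of pairs (element of $\Gamma_{j'}$, point of $X_{j'}^{\,g}$) lying over a nontrivial stabilized pair at level $j$ gives nonempty sets, provided these persist at all levels. More precisely, suppose $\Gamma_{j'}$ fails to act freely for a cofinal set of $j'$. Then I would consider the nonempty quasicompact constructible sets $Z_{j'}=\bigcup_{1\neq g\in\Gamma_{j'}}\mathrm{im}(X_{j'}^{g}\to X_{j_0})$. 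I would then show these are decreasing and nonempty. By a quasicompactness argument in the constructible topology (Tychonoff, or \stacks{0A2W}-type limits of nonempty spectral spaces), they have a point in common. Compatible choices of fixed elements then exist in the inverse limit of the finite nonempty sets $\{(g,x)\}$, and König's lemma produces a nontrivial $g\in\Gamma$ fixing a point of $X$. This contradicts (1).

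The one subtlety is that an element can be nontrivial at level $j'$ while its image at level $j$ is trivial. I would handle this by working with pairs (element, fixed point) and discarding the trivial elements at each stage; the resulting inverse system of finite nonempty sets still has nonempty limit, giving $g\neq1$ in $\Gamma$.
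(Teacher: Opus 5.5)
Your direction (2)$\Rightarrow$(1) is fine; the faithfulness $\Gamma\hookrightarrow\varprojlim_j\Gamma_j$ that you assume is also tacit in the paper. The gap is in (1)$\Rightarrow$(2), exactly at what you call ``the one subtlety''. Two of your claims depend on a single fact. The first is that the sets $Z_{j'}$ are decreasing. The second is that the pairs $(g,x)$ with $g\neq 1$ form an inverse system of nonempty finite sets to which K\"onig's lemma applies. The fact both need is: if $1\neq g\in\Gamma_{j'}$ fixes a geometric point of $X_{j'}$, then its image in $\Gamma_j$ is still nontrivial.

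``Discarding the trivial elements'' does not supply this fact; it just removes the transition maps. Nonemptiness level by level then says nothing about the limit. The finite groups $\mathrm{Stab}_{\Gamma_j}(x_j)$ can all be nontrivial with trivial transition maps between them, and then $\mathrm{Stab}_\Gamma(x)=\varprojlim_j\mathrm{Stab}_{\Gamma_j}(x_j)$ is trivial.

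The missing idea is where the paper uses the \'etale hypothesis. An element of $\ker(\Gamma_{j'}\to\Gamma_j)$ acts as an $X_j$-automorphism of the finite \'etale $X_j$-scheme $X_{j'}$. The fixed locus of such an automorphism is open and closed, being the pullback of the diagonal of a separated unramified map. So if it fixes a geometric point, it is the identity on the connected component through that point. Given this, the closed subschemes $F_j=\bigcup_{1\neq\sigma\in\Gamma_j}\mathrm{Fix}(\sigma)$ form an inverse system with finite transition maps. If none were empty, \stacks{01Z2} would give a point of $\varprojlim_jF_j$. Your K\"onig argument, now legitimate, would then produce a nontrivial element of $\Gamma$ with a fixed point. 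So the step you single out as the main obstacle, lifting fixed points to higher levels, is not needed.

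Compactness cannot replace this input, and the input genuinely uses connectedness of the $X_{j'}$. The paper's assertion that a nontrivial automorphism of a finite \'etale cover has no geometric fixed points uses it too. For example, let $\Gamma=\prod_{n\geqslant 1}S_3$, and let $H_j\subset\Gamma$ be the open subgroup of sequences with $g_1=\cdots=g_{j-1}=1$ and $g_j\in\langle(12)\rangle$. Take $X_j=\Gamma/H_j$ as a finite discrete scheme over an algebraically closed field. All hypotheses of the statement hold, and $\Gamma$ acts freely by left translation on $\varprojlim_jX_j\simeq\Gamma$. Yet $\Gamma_j=\Gamma/\mathrm{core}(H_j)\simeq S_3^j$ fixes the base coset of $X_j$ via $(1,\dots,1,(12))$, an element that dies in $\Gamma_{j-1}$. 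This is precisely your subtlety, and it makes (2) fail. Your argument therefore has to invoke the deck-transformation fact above, for connected $X_{j'}$ or component by component.
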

\begin{proof}
  Clearly (2) implies (1). For the converse, for each $j\in J$, set
  \[
    F_j = \bigcup_{1\neq \sigma\in \Gamma_j}\mathrm{Fix}(\sigma).
  \]
  This is a closed subscheme of $X_j$. We claim that for $j_1\to j_2$ the map $X_{j_1}\to X_{j_2}$ restricts to a map $F_{j_1} \to F_{j_2}$. The key thing to check is that, if $\sigma\in \Gamma_{j_1}$ has a geometric fixed point on $F_{j_1}$, then it maps non-trivially to $\Gamma_{j_2}$: Indeed, if $\sigma\in \ker(\Gamma_{j_1}\to \Gamma_{j_2})$, then it is a non-trivial automorphism of the finite \'etale cover $X_{j_1}\to X_{j_2}$, and hence cannot have any geometric fixed points.

  Now, we have an inverse system $\{F_j\}_{j\in J}$ of quasicompact schemes with finite transition maps, and (1) tells us that their inverse limit is empty; see \stacks{01Z2}. The only way for this to be possible is for the $F_j$ to be eventually empty, giving us the desired conclusion.
\end{proof}

\begin{remark}
[Preservation of WR under closed immersions]
    \label{rem:modified_weil_restriction_closed_immersion}
Suppose that we have a closed immersion $(G,\mathcal{G},X)\hookrightarrow (G^\sharp,\mathcal{G}^\sharp,X^\sharp)$ as in Theorem~\ref{thm:reduction_of_structure_group}. Then the induced map
\[
\Res_{F/\Rat}(G,\mathcal{G},X)\to \Res_{F/\Rat}(G^\sharp,\mathcal{G}^\sharp,X^\sharp)
\]
is once again a closed immersion. In particular, by \emph{loc.\@ cit.\@}, if $(G^\sharp,\mathcal{G}^\sharp,X^\sharp)$ satisfies Assumption~\ref{assump:weil_restriction} and if $G\to G^\sharp$ is an isomorphism on derived subgroups, then $(G,\mathcal{G},X)$ satisfies Assumption~\ref{assump:weil_restriction}.
\end{remark}

\begin{remark}
 [Preservation of WR under products with CM data]
\label{rem:mwr_products}
Suppose that $(T,\mathcal{T},\{h_T\})$ is a CM unramified tuple. If $(G,\mathcal{G},X)$ satisfies Assumption~\ref{assump:weil_restriction}, then so does the product tuple
\[
(G\times T,\mathcal{G}\times\mathcal{T},X\times \{h_T\}).
\]
The only non-trivial portion of this claim follows from Proposition~\ref{prop:integral_canonical_models_for_tori}.
\end{remark}

\begin{definition}
[Modified Weil restriction]
    \label{defn:modified_weil_restriction}
The following notion will is used in \S\ref{sub:shimura_varieties_of_pre-abelian_type}. The \defnword{modified Weil restriction} $\Res'_{F/\Rat}(G,\mathcal{G},X)$ is the unramified Shimura datum with underlying reductive group\footnote{If $G = T$ is a torus, then the modified Weil restriction is just the original Shimura datum.}
\[
\Res'_{F/\Rat}G \defn \ker\bigg(\Res_{F/\Rat}G\to \Res_{F/\Rat}G^{\ab}/G^{\ab}\bigg),
\]
with reductive model 
\[
\Res'_{(\Reg{F}\otimes\Int_p/\Int_p)}\mathcal{G} = \ker\bigg(\Res_{(\Reg{F}\otimes\Int_p)/\Int_p}\mathcal{G}\to \Res_{(\Reg{F}\otimes\Int_p)/\Int_p}\mathcal{G}^{\ab}/\mathcal{G}^{\ab}\bigg),
\]
and Hermitian symmetric domain $\Res'_{F/\Rat}X$ given by the orbit under $(\Res'_{F/\Rat}G)(\Real)$ of the diagonal embedding $X\hookrightarrow \prod_{\tau\colon F\to \Real}X$. The closed immersion~\eqref{eqn:wr_embedding_data} factors through a map
\[
(G,\mathcal{G},X)\to \Res'_{F/\Rat}(G,\mathcal{G},X).
\]
\end{definition}

\begin{proposition}
    \label{prop:mwr_to_wr}
The following are equivalent:
\begin{enumerate}
    \item The unramified Shimura datum $(G,\mathcal{G},X)$ satisfies \emph{Assumption~\ref{assump:weil_restriction}}.
    \item For all $F\in \mathrm{TR}_{p\emph{-unr}}$, all neat unramified tuples with underlying unramified Shimura datum $\Res'_{F/\Rat}(G,\mathcal{G},X)$ also admit ICMs over $\Reg{E,(v)}$, and, if  for $K'_{F,p}\subset (\Res'_{F/\Rat}\mathcal{G})(\Int_p)$ compact open, the central action of the group $Z({G^{\mathrm{der}}})(\Adele_f^p\otimes F)/Z({G^{\mathrm{der}}})(\Reg{F,(p)})$ on $\Ss_{K'_{F,p}}$ is free.
\end{enumerate}
\end{proposition}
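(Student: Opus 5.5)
The plan is to compare the two Shimura data $\Res'_{F/\Rat}(G,\mathcal{G},X)$ and $\Res_{F/\Rat}(G,\mathcal{G},X)$ via the closed immersion $\Res'_{F/\Rat}G\hookrightarrow \Res_{F/\Rat}G$. This map is an isomorphism on derived subgroups, since both groups have derived subgroup $\Res_{F/\Rat}G^{\mathrm{der}}$. The passage in each direction should then follow from the structural results already proved: reduction of structure group (Theorem~\ref{thm:reduction_of_structure_group} and Remark~\ref{rem:modified_weil_restriction_closed_immersion}) and descent for coefficients/products with CM data (Remark~\ref{rem:mwr_products}). The freeness clauses are to be matched using Remark~\ref{rem:freeness_after_inverse_limit}. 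The central groups appearing in both conditions are literally the same group $Z(G^{\mathrm{der}})(\Adele_f^p\otimes F)/Z(G^{\mathrm{der}})(\Reg{F,(p)})$, because $Z(\Res G^{\mathrm{der}})=\Res Z(G^{\mathrm{der}})$ lies in both groups.

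For (1)$\Rightarrow$(2), I would take a neat tuple for $\Res'_{F/\Rat}(G,\mathcal{G},X)$ and choose a neat tuple for $\Res_{F/\Rat}(G,\mathcal{G},X)$ so that the inclusion is a map of neat unramified tuples. Theorem~\ref{thm:reduction_of_structure_group} then produces a limpid ICM for the $\Res'$-tuple, as the normalization of the $\Res$-ICM, which is limpid by Remark~\ref{rem:WR_implies_limpid}. For freeness, I would pass to the prime-to-$p$ towers. By construction (the closed-immersion case of Theorem~\ref{thm:reduction_of_structure_group} together with Corollary~\ref{cor:prime-to-p_part}), $\Ss_{K'_{F,p}}$ maps equivariantly for the common central group into $\Ss_{K_{F,p}}$. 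A point with nontrivial stabilizer upstairs would therefore map to a point with nontrivial stabilizer downstairs, which contradicts freeness there.

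For (2)$\Rightarrow$(1), I would realize $\Res_{F/\Rat}G$ as a quotient of a product: the multiplication map
\[
\Res'_{F/\Rat}G\times \Res_{F/\Rat}Z(G)^{\circ}\to \Res_{F/\Rat}G
\]
is a surjection with central kernel. The plan is to upgrade $\Res_{F/\Rat}Z(G)^\circ$ to a CM datum, or better to replace it by a suitable torus $\mathcal{T}$ with a CM structure mapping compatibly. The product then has limpid ICMs by Remark~\ref{rem:mwr_products} and Proposition~\ref{prop:integral_canonical_models_for_tori}, and one descends along this central isogeny using Theorem~\ref{thm:descent_for_icms}, or directly by a quotient-by-free-action argument. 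In that argument, $\Ss_{K_F}$ is constructed as the quotient of a disjoint union of the product ICMs by a finite group acting freely. The freeness hypothesis in (2) is exactly what makes that quotient étale, and hence still an ICM by Proposition~\ref{prop:full_faithfulness_2}. Freeness of the $Z(G^{\mathrm{der}})$-action on the resulting $\Ss_{K_{F,p}}$ should then be inherited from that on $\Ss_{K'_{F,p}}$, once again through Remark~\ref{rem:freeness_after_inverse_limit}.

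The main obstacle is this second direction. One has to arrange a Shimura-datum structure on the torus factor, which need not carry an $h$ a priori; taking instead the connected component structure via Remark~\ref{rem:unramified_uniformization} may be necessary. One also has to check that the group by which one quotients acts freely on the ICM and not only in the generic fiber. My first attempt would be to work directly with prime-to-$p$ towers and the central-action formalism of Remark~\ref{rem:central_action}, identifying the connected components of $\Sh_{K_{F,p}}$ with those of $\Sh_{K'_{F,p}}$ up to the abelian group of Remark~\ref{rem:prime-to-p_hecke}, and then gluing copies of $\Ss_{K'_{F,p}}$.
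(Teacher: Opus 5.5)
Your direction (1)$\Rightarrow$(2) is essentially the paper's. It applies Theorem~\ref{thm:reduction_of_structure_group} along the closed immersion $\Res'_{F/\Rat}G\hookrightarrow\Res_{F/\Rat}G$, and pulls freeness back along the equivariant map of prime-to-$p$ towers. (The paper's proof labels its two implications the other way round.)

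The converse, however, has a genuine gap.
\begin{itemize}
\item Theorem~\ref{thm:descent_for_icms} and Remark~\ref{rem:mwr_products} both take the WR condition for the \emph{source} datum as input. Here that means ICMs with free central action for $\Res_{F'/\Rat}$ of $\Res'_{F/\Rat}(G,\mathcal{G},X)$ (times a torus), for every $F'$. Hypothesis (2) does not provide this, and establishing it is essentially the problem at hand.
\item Your source is also not of the required form. $\Res_{F/\Rat}Z(G)^\circ$ is not cuspidal (already $(\Res_{F/\Rat}\Gm)_{ac}$ is the norm-one torus), so $G_1\neq G_1^c$. Moreover, the kernel of $\Res'_{F/\Rat}G\times\Res_{F/\Rat}Z(G)^\circ\to\Res_{F/\Rat}G$ contains the diagonally embedded $Z(G)^\circ$, so this map is not a central isogeny.
\item The gluing fallback fails for the same underlying reason. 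In general $\Res_{F/\Rat}G\neq(\Res_{F/\Rat}G)^c$, e.g.\ for $G=\GSp_{2g}$. Then the groups $K_F\cap(\Res_{F/\Rat}G)(\Rat)_+$ contain elements whose image in $G^{\ad}(F)$ does not lift to $(\Res'_{F/\Rat}G)(\Rat)$. For $G=\GL_2$, take $\mathrm{diag}(\epsilon,1)$ with $\epsilon$ a totally positive unit not in $\Rat^\times F^{\times 2}$. Consequently a connected component of $\Sh_{K_F}$ is in general a \emph{proper} finite quotient of a component of a $\Res'$-Shimura variety, so copies of $\Ss_{K'_{F,p}}$ cannot simply be glued in.
\item You would instead have to quotient by a finite group acting through conjugation by such elements. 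Hypothesis (2) only controls the \emph{central} $Z(G^{\mathrm{der}})$-action. It does not make this quotient \'etale, and it does not give freeness of the central action on the quotient. Bridging the two needs a further argument, e.g.\ passing to a larger totally real field in which these units become squares, in the spirit of Lemma~\ref{lem:free_action_Delta_G_Gad}. That argument is missing.
\end{itemize}

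For comparison, the paper tries to avoid quotients altogether. It asserts that $\Res_{F/\Rat}G\to(\Res_{F/\Rat}G)^c\times\Res_{F/\Rat}G^{\ab}$ is a closed immersion. Then Proposition~\ref{prop:integral_canonical_models_for_tori} and Theorem~\ref{thm:reduction_of_structure_group} reduce (1) to ICMs for $(\Res_{F/\Rat}G)^c$. Finally it applies Lemma~\ref{lem:descent_same_derived_subgroup} to $\Res'_{F/\Rat}G\to(\Res_{F/\Rat}G)^c$; both groups are their own cuspidal quotients, so components match and can be glued.

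Be aware, though, that the closed-immersion claim requires $\Res_{F/\Rat}G^{\mathrm{der}}\cap(\Res_{F/\Rat}Z(G)^\circ)_{ac}=1$. This fails already for $G=\GSp_{2g}$ with $[F:\Rat]$ even. The scalar $-1\in\mathrm{Sp}_{2g}(F)$ has norm $1$, so it lies in the norm-one torus $(\Res_{F/\Rat}\Gm)_{ac}$, and its similitude is $1$; hence it lies in the kernel. For the same reason $\Res'_{F/\Rat}G\to(\Res_{F/\Rat}G)^c$ is not an isomorphism on derived groups, so the hypothesis of Lemma~\ref{lem:descent_same_derived_subgroup} is not met. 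The unit-quotient obstruction you ran into is therefore not an artifact of your route, and a complete proof of (2)$\Rightarrow$(1) has to deal with it.
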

\begin{proof}
    That (2)$\Rightarrow$(1) follows from Theorem~\ref{thm:reduction_of_structure_group}. For (1)$\Rightarrow$(2), note that the map  
    \[
    \Res_{F/\Rat}G \to (\Res_{F/\Rat}G)^c\times \Res_{F/\Rat}G^{\ab}
    \]
    is a closed immersion: Indeed, this comes down to the observation that $\Res_{F/\Rat}G^{\mathrm{der}}$ maps isomorphically onto the derived subgroup of $(\Res_{F/\Rat}G)^c$, a consequence of our standing assumption that $G = G^c$. Therefore, appealing to Proposition~\ref{prop:integral_canonical_models_for_tori} and Theorem~\ref{thm:reduction_of_structure_group}, we see that it is enough to prove that all neat unramified tuples with underlying group $(\Res_{F/\Rat}G)^c$ admit ICMs with the central action coming from the derived subgroup free for sufficiently small level. This now follows from Lemma~\ref{lem:descent_same_derived_subgroup} below.
\end{proof}

\begin{lemma}
    \label{lem:descent_same_derived_subgroup}
Let $(G_1,\mathcal{G}_1,X_1)\to (G_2,\mathcal{G}_2,X_2)$ be a map of unramified Shimura data such that $G_1^{\mathrm{der}}\to {G}_2^{\mathrm{der}}$, $G_1\to G_1^c$, and $G_2\to G_2^c$ are isomorphisms, and such that both Shimura data admit the same reflex field $E$. Suppose that all neat unramified tuples of the form $(G_1,\mathcal{G}_1,X_1,K_1)$ admit ICMs over $\Reg{E,(v)}$. Then all neat unramified tuples of the form $(G_2,\mathcal{G}_2,X_2,K_2)$ also admit ICMs over $\Reg{E,(v)}$. Moreover, $Z({G_1^{\mathrm{der}}})(\Adele_f^p)/Z({G_1^{\mathrm{der}}})(\Int_{(p)})$ acts freely on $\Ss_{K_{1,p}}$ if and only if it acts freely on $\Ss_{K_{2,p}}$.
\end{lemma}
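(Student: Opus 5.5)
The plan is to build $\Ss_{K_2}$ as a quotient of a disjoint union of translates of $\Ss_{K_1}$, following the classical $\Sh^+$-type construction of Deligne and Kisin, but using only the mapping property (Theorem~\ref{thm:mapping_property}) in place of moduli. Fix a connected component $X_1^+\subset X_1$, with image $X_2^+$. Since $G_1^{\mathrm{der}}\isomto G_2^{\mathrm{der}}$, Remark~\ref{rem:unramified_uniformization} and Remark~\ref{rem:prime-to-p_hecke} identify a geometric connected component of $\Sh_{K_{1,p}}$ with one of $\Sh_{K_{2,p}}$, since both are $\Gamma\backslash X^+$-towers for the same arithmetic subgroups of $G^{\mathrm{der}}$. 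The first step is to define $\Ss^+$ as the closure of such a component in $\Ss_{K_{1,p}}\otimes \Reg{E^{\mathrm{ur}}_{(v)}}$; this is where the finite étale transition maps of Corollary~\ref{cor:prime-to-p_part} are used. I then recall the standard formula
\[
\Sh_{K_{2,p}} \simeq \big[\mathscr{A}(G_2)\times \Sh^+\big]/\mathscr{A}(G_2)^\circ,
\]
with Deligne's groups $\mathscr{A}(G)=G(\Adele_f^p)/Z(\Int_{(p)})^-\ast_{G(\Int_{(p)})_+/Z(\Int_{(p)})}G^{\mathrm{ad}}(\Int_{(p)})^+$. I define $\Ss_{K_{2,p}}$ by the same formula with $\Ss^+$ in place of $\Sh^+$.

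The second step is to show that $\mathscr{A}(G_2)^\circ\simeq\mathscr{A}(G_1)^\circ$, which depends only on the common derived subgroup and adjoint group, acts on $\Ss^+$. The action of $G^{\mathrm{ad}}(\Int_{(p)})^+$ extends to $\Ss_{K_{1,p}}$ by Remark~\ref{rem:adjoint_action_icms}; that remark reduces, via the mapping property, to compatibility with $\iota_\gamma$ on $\mathrm{Loc}_{\mathcal{G}(\Int_p)}$. Galois descent to $\Reg{E,(v)}$ then uses the mapping property once more, as in Corollary~\ref{cor:descent_coefficients_gen_ICMs}, after twisting by the reciprocity cocycle. This is harmless because the reflex fields agree. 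Quotienting by $K_2^p$ gives $\Ss_{K_2}$, provided the action is free, so that the quotient is an algebraic space and the map from translates of $\Ss^+$ is étale.

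The third step is to verify the ICM axioms for $\Ss_{K_2}$. Each axiom is étale local, so it can be checked on the images of $\Ss^+$, where it is inherited from $\Ss_{K_1}$.
\begin{itemize}
\item For the syntomic map, $\mathcal{G}_1\to\mathcal{G}_2$ has the same adjoint, so $\BT[\mathcal{G}_1,\mbox{-}\mu]{\infty}\to\BT[\mathcal{G}_2,\mbox{-}\mu]{\infty}$ is formally étale, either by Grothendieck--Messing (Theorem~\ref{thm:gmm}) or because both sides are formally étale over $\mathrm{Gr}_\mu/\mathcal{G}^{\mathrm{ad}}$-type data. The aperture itself descends by Corollary~\ref{cor:full_faithfulness}, since full faithfulness makes the descent datum on the étale cover automatic.
\item The pointwise criterion can be checked after a finite unramified extension, where a crystalline point of $\Sh_{K_2}$ lifts to a point of $\Sh_{K_1}$ that is crystalline up to a central torus twist. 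The twist is handled by Proposition~\ref{prop:integral_canonical_models_for_tori}.
\end{itemize}

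The final claim is the equivalence of freeness. The group $Z(G^{\mathrm{der}})(\Adele_f^p)/Z(G^{\mathrm{der}})(\Int_{(p)})$ acts on both $\Ss_{K_{1,p}}$ and $\Ss_{K_{2,p}}$ through the common $\Ss^+$-description. Freeness on one side is equivalent to freeness on $\mathscr{A}$-translates of $\Ss^+$, and hence to freeness on the other; Lemma~\ref{lem:freeness_in_inverse_systems} passes this between the inverse limit and finite level. The main obstacle I expect is the freeness of the $\mathscr{A}(G_2)^\circ$-action on the integral model, which is needed so that the quotient is a separated algebraic space whose completion is étale over $\BT{\infty}$. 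I would first try to reduce it to the generic fibre using the rigidity argument of Lemma~\ref{lem:freeness_in_inverse_systems}: a non-trivial automorphism of a finite étale cover has no fixed points.
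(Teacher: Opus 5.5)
Your outline is workable, but it follows Kisin's infinite-level $\mathscr{A}$-induction, whereas the paper never quotients by a level group.

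\paragraph{The paper's route.} The paper starts from an observation you leave implicit. If $G=G^c$ and $K$ is neat, then $K\cap G(\Rat)\subset G^{\mathrm{der}}(\Rat)$: its image in $G^{\mathrm{ab}}(\Rat)$ is a neat subgroup meeting the discrete group of rational points of the cuspidal torus $G^{\mathrm{ab}}$, hence is trivial. So every component of $\Sh_{K,\Comp}$ is $\Gamma\backslash X^+$ with $\Gamma\subset G^{\mathrm{der}}(\Rat)$. Choosing $(K_1,g)$ with matching $\Gamma$ makes each component of $\Sh_{K_2}$ literally isomorphic to a component of $\Sh_{K_1}$. The paper then:
\begin{itemize}
\item takes the corresponding component of $\Ss_{K_1}\otimes\Reg{E',(v')}$ for a finite $p$-unramified $E'$, and glues these;
\item gets the syntomic realization and the Serre--Tate property from Proposition~\ref{prop:full_faithfulness_2}, and inherits the pointwise criterion from $\Ss_{K_1}$;
\item descends to $\Reg{E,(v)}$ by Corollary~\ref{cor:descent_coefficients_gen_ICMs}, with no cocycle.
\end{itemize}

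\paragraph{What each route buys.} Your route must also construct the $\mathscr{A}(G_1)^\circ$-action on $\Ss^+$, carry out a twisted Galois descent, and prove that $K_2^p$ acts freely. In exchange it produces the whole tower $\Ss_{K_{2,p}}$ with its $\mathscr{A}(G_2)$-action. It also makes the final freeness equivalence transparent: a central $z$ fixes $[(a,s)]$ exactly when its image lies in $\mathscr{A}^\circ$ and fixes $s$, and these conditions agree for $G_1$ and $G_2$. The paper only asserts that ``the construction shows'' this. The paper's route buys brevity and stays at finite level, where Theorem~\ref{thm:mapping_property} applies directly.

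\paragraph{Three adjustments.}
\begin{enumerate}
\item You never invoke the hypothesis $G_i\isomto G_i^c$, yet it resolves the obstacle you flag. The stabilizer in $K_2^p$ of a translate of $\Ss^+$ acts through (a conjugate of) the closure of $K_2\cap G^{\mathrm{der}}(\Int_{(p)})$. This equals $K_1\cap G_1^{\mathrm{der}}(\Int_{(p)})$ for a suitable neat $K_1$, so freeness follows because $\Ss_{K_{1,p}}\to\Ss_{K_1}$ is a pro-\'etale $K_1^p$-torsor; the rigidity argument is unnecessary.
\item Corollary~\ref{cor:descent_coefficients_gen_ICMs} applies only to something already known to be an ICM over a finite $p$-unramified extension. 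So you should quotient by $K_2^p$ and verify the axioms over $\Reg{E',(v')}$ before descending. Done in that order, the reciprocity twist disappears.
\item You give two reasons for the formal \'etaleness of $\BT[\mathcal{G}_1,\mbox{-}\mu_v]{\infty}\to\BT[\mathcal{G}_2,\mbox{-}\mu_v]{\infty}$; only the Grothendieck--Messing one is valid. The relative deformation theory is governed by the weight-one part of the common derived Lie algebra. By contrast, $\BT{\infty}$ is formally smooth, not formally \'etale, over $\mathrm{Gr}_\mu/\mathcal{G}$.
\end{enumerate}
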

\begin{proof}
    To begin, let us make the following observation: If $G$ is a reductive group with $G = G^c$ and $K\subset G(\Adele_f)$ is a neat compact open subgroup, then $K\cap G(\Rat)$ is contained in $G^{\mathrm{der}}(\Rat)$. Indeed, if $\nu\colon G\to G^{\ab}$ is the natural map, the image of $K\cap G(\Rat)$ in $G^{\ab}(\Rat)$ is contained in $\nu(K)\cap G^{\ab}(\Rat)$, which is trivial, by the neatness of $\nu(K)$ and the fact that $G^{\ab}$ is a cuspidal torus. Therefore, if $G$ is part of a Shimura datum $(G,X)$, and $X^+\subset X$ is a connected component, then the image of $X^+\times \{g\}$ in $\Sh_K(\Comp)$ is isomorphic to $\Gamma_g\backslash X^+$, where $\Gamma = gKg^{-1}\cap G^{\mathrm{der}}(\Rat)$.

    Let $f\colon G_1\to G_2$ be the underlying map of groups. Suppose that we have an extension of the given map to a map of neat unramified tuples 
    \[
   (G_1,\mathcal{G}_1,X_1,K_1)\to (G_2,\mathcal{G}_2,X_2,K_2)
    \]
    given by $g\in G_2(\Adele^p_f)$ satisfying $gf(K_1)g^{-1} \subset K_2$. If $K_1$ is such that $K_1\cap G_1^{\mathrm{der}}(\Rat)$ maps isomorphically to $K_2\cap G_2^{\mathrm{der}}(\Rat)$, then the observation from the first paragraph shows that the image of $X_1^+\times \{1\}$ in $\Sh_{K_1}(\Comp)$ maps isomorphically onto the image of $X_1^+ \times \{g\}$ in $\Sh_{K_2}$. Therefore, given any connected component $\Sh^+_{K_2,\Comp}\subset \Sh_{K_2,\Comp}$, we can find a pair $(K_1,g)$, and a connected component $\Sh^+_{K_1,\Comp}\subset \Sh_{K_1,\Comp}$ mapping isomorphically onto $\Sh^+_{K_2,\Comp}$.

    Now, there is a finite $p$-unramified extension $E'/E$ such that $\Sh^+_{K_1,\Comp}$ is defined over $E'$, and, for any place $v'\vert v$ of $E'$, we obtain an integral model for it over $\Reg{E',(v')}$ as a connected component for $\Ss_{K_1}\otimes_{\Reg{E,(v)}}\Reg{E',(v')}$. This in turn gives us an integral model for the corresponding connected component of $\Sh_{K_2}\otimes_EE'$. Repeating this for all geometrically connected components of $\Sh_{K_2}$ and expanding $E'$ if necessary, we obtain an integral model $\Ss_{K_2,(v')}$ for $\Sh_{K_2}$ over $\Reg{E',(v')}$, which is an ICM: The existence of the syntomic realization and the Serre--Tate property follow from Proposition~\ref{prop:full_faithfulness_2}, while the pointwise conditions are easily checked using their validity for $\Ss_{K_1}$. The existence of the ICM $\Ss_{K_2}$ now follows from Corollary~\ref{cor:descent_coefficients_gen_ICMs}.

    The construction also shows the central action of the derived subgroup is free on $\Ss_{K_1}$ if and only if it is so on $\Ss_{K_2}$.
 \end{proof}

\begin{convention}
For the rest of this subsection, we will fix an unramified Shimura datum $(G,\mathcal{G},X)$ for which Assumption~\ref{assump:weil_restriction} holds.
\end{convention}

\begin{remark}
    \label{rem:from_mwr_to_maps}
For any $F\in \mathrm{TR}_{p\text{-unr}}$, write $\Sh_F$ for the inverse limit of Shimura varieties constructed in Remark~\ref{rem:hecke_action} from the datum $\Res_{F/\Rat}(G,X)$. We also obtain an inverse limit $\Ss_{K_{F,p}}$ of the prime-to-$p$ Hecke tower of ICMs for $\Res_{F/\Rat}(G,\mathcal{G},X)$ over $\Reg{E,(v)}$. We now have canonical closed immersions
\[
\Sh \hookrightarrow \Sh_F\;;\; \Ss_{K_p}\hookrightarrow \Ss_{K_{F,p}}
\]
of algebraic spaces over $\Reg{F,(v)}$. Suggestively, given an $\Reg{E,(v)}$-algebra $R$ and $x\in \Sh(R)$ (resp.\@ $x\in \Ss_{K_p}(R)$), we will write $x\otimes\Reg{F}$ for the associated point in $\Sh_F(R)$ (resp.\@ $\Ss_{K_{F,p}}(R)$). 
\end{remark}

\begin{remark}
    \label{rem:desc_of_adjoint_action}
Suppose that $\gamma\in \mathcal{G}^{\ad}_{(p)}(\Int_{(p)})_1$ lifts to  $\tilde{\gamma}\in \mathcal{G}_{(p)}(\Reg{F,(p)})$ for some $F\in \mathrm{TR}_{p\text{-unr}}$. Then we obtain an isomorphism $j_{\tilde{\gamma}}\colon \Ss_{K_{F,p}}\xrightarrow[\sim]{x\mapsto x\tilde{\gamma}^{-1}}\Ss_{K_{F,p}}$. Looking at $\Comp$-points, we find that the diagram
    \[
      \begin{diagram}
          \Ss_{K_p}&\rTo&\Ss_{K_{F,p}}\\
          \dTo^{i_\gamma}&&\dTo_{j_{\tilde{\gamma}}}\\
          \Ss_{K_p}&\rTo&\Ss_{K_{F,p}}
      \end{diagram}
    \]
    commutes. Therefore, $y = i_\gamma(x)$ if and only if $y\otimes\Reg{F} = (x\otimes\Reg{F})\cdot \tilde{\gamma}^{-1}$.
\end{remark}

\begin{lemma}
    [Freeness of the central action]
\label{lem:central_action_free}
The action of 
\[
Z(G)(\Adele_f^p\otimes F)/\bigl(Z(\mathcal{G}_{(p)})(\Reg{F,(p)})\cdot (K^p_F\cap Z(G)(\Adele_f^p\otimes F))\bigr)
\]
on $\Ss_{K_{F,p}K^p_F}$ is free for $K^p_F\subset G(\Adele_f^p\otimes F)$ sufficiently small.
\end{lemma}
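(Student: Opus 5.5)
We need to prove freeness of the action of $Z(G)(\Adele_f^p\otimes F)/(\ldots)$ on $\Ss_{K_{F,p}K^p_F}$ for small $K_F^p$, given that the WR condition only guarantees freeness for the subgroup coming from $Z(G^{\mathrm{der}})$. The plan is to split the center into the derived part and a torus part that acts through the abelianization, and to handle the latter through the map to a zero-dimensional CM Shimura variety.

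Concretely, set $Z = Z(G)$ and $Z' = Z(G^{\mathrm{der}}) = Z\cap G^{\mathrm{der}}$. By Lemma~\ref{lem:freeness_in_inverse_systems} (as in Remark~\ref{rem:freeness_after_inverse_limit}), it suffices to prove that the action of
\[
\Gamma \defn Z(\Adele_f^p\otimes F)/Z(\mathcal{G}_{(p)})(\Reg{F,(p)})
\]
on the inverse limit $\Ss_{K_{F,p}}$ is free. The transition maps in the tower are finite \'etale by Corollary~\ref{cor:prime-to-p_part}, and the action at each finite level factors through a finite quotient, so this reduction is available.

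\textbf{Step 1.} We use the abelianization. The map $\Res_{F/\Rat}G\to \Res_{F/\Rat}G^{\ab}$ gives a map of unramified tuples to a CM datum $(\Res_{F/\Rat}G^{\ab},h^{\ab})$. By Proposition~\ref{prop:integral_canonical_models_for_tori} the latter has an ICM, namely a pro-finite-\'etale scheme $\Ss^{\ab}_{F,p}$ over $\Reg{E,(v)}$. Corollary~\ref{cor:functoriality} then gives a $\Gamma$-equivariant map $\Ss_{K_{F,p}}\to \Ss^{\ab}_{F,p}$. On the target, $\Gamma$ acts through the image $\Gamma^{\ab}$ of $Z(\Adele_f^p\otimes F)$ in
\[
G^{\ab}(\Adele_f^p\otimes F)/\overline{G^{\ab}(\Reg{F,(p)})}.
\]
This action is by translation on a set of $\Spec$ of unramified extensions, and it is free. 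This can be checked on geometric points in the generic fiber, via Remark~\ref{rem:galois_structure_cm} and the fact that $G^{\ab}$ is cuspidal (since $G=G^c$), so that $G^{\ab}(\Rat)$ is discrete in $G^{\ab}(\Adele_f)$. An element of $\Gamma$ with a fixed point upstairs therefore maps to the identity of $\Gamma^{\ab}$.

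\textbf{Step 2.} Now take $z\in Z(\Adele_f^p\otimes F)$ whose image in $\Gamma^{\ab}$ is trivial, so that its image $\nu(z)$ lies in the closure of $G^{\ab}(\Reg{F,(p)})$. Since $Z\to G^{\ab}$ is an isogeny with kernel $Z'$, and $Z$ is cuspidal, we want to modify $z$ by an element of $Z(\mathcal{G}_{(p)})(\Reg{F,(p)})$ so that it lies in $Z'(\Adele_f^p\otimes F)$, possibly after passing to a finite-index subgroup. The free action of the $Z'$-quotient given by the WR condition then finishes the argument.

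\textbf{Main obstacle.} I expect the hard part to be this lifting step. The map $Z(\Reg{F,(p)})\to G^{\ab}(\Reg{F,(p)})$ need not be surjective; its cokernel is finite and controlled by Galois cohomology of $Z'$. An element $z$ could have abelian image that is rational but not the image of a rational central element. To handle this, I would work at sufficiently small level $K^p_F$ and argue with a fixed point $x$. If $x\cdot z = x$, then the same relation holds in the generic fiber and hence on $\Comp$-points. There the uniformization of Remark~\ref{rem:unramified_uniformization}, together with neatness, forces $z\in Z(\Reg{F,(p)})\cdot(K^p_F\cap Z)$. Indeed, $x\cdot z = x$ means $z = \gamma k$ with $\gamma\in G(\Rat\otimes F)$ fixing the point of $X$ and $k\in K_F$, so $\gamma$ is central modulo a neat arithmetic subgroup. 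So in fact the generic fiber already gives freeness at every level, and the remaining subtlety is only at special-fiber points. For those, the mapping to $\Ss^{\ab}$ (Step 1) kills the non-$Z'$ part, and the WR condition kills the $Z'$ part.
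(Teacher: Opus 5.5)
Your Steps 1 and 2 are the paper's proof. The paper maps $\Ss_{K_F}$ to the CM model for $\Res_{F/\Rat}(G^{\ab},\mathcal{G}^{\ab},X^{\ab})$ and notes that this model is a torsor under $G^{\ab}(\Adele_f^p\otimes F)/\bigl(\mathcal{G}^{\ab}_{(p)}(\Reg{F,(p)})\cdot K^{\ab,p}_F\bigr)$. Hence an element with a fixed point lies in $\ker(\Gamma\to\Gamma^{\ab})$, and the paper then says in one sentence that this kernel acts freely by the hypothesis on $Z(G^{\mathrm{der}})$.

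The lifting problem you raise is exactly what that sentence skips, and your proposal does not close it. Your fix does not reach the points where it is needed. For $x$ in the special fiber, $x\cdot z=x$ says nothing about characteristic-$0$ points: you would need a $z$-fixed point of the generic fiber specializing to $x$, and nothing produces one. So the complex uniformization only reproves the generic-fiber freeness the paper already records. Your last sentence (``Step 1 kills the non-$Z'$ part, and the WR condition kills the $Z'$ part'') then assumes that $\ker(\Gamma\to\Gamma^{\ab})$ is the image of $Z'(\Adele_f^p\otimes F)$. That is precisely what you had just said might fail.

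It does fail at a fixed level. Take $G=\GL_2$, $F=\Rat$, $p\neq 5$, and $K^p=K(5^n)\times\prod_{\ell\neq 5,p}\GL_2(\Int_\ell)$ with $K(5^n)$ the principal congruence subgroup and $n\geqslant 1$. Both the group in the statement and its abelian counterpart are $(\Int/5^n)^\times/\{\pm1\}$, with $\nu$ the squaring map. So the class of $\sqrt{-1}\in\Int_5^\times$ lies in the kernel, while $\mu_2(\Adele_f^p)$ has trivial image. That class does act freely, but via automorphisms of elliptic curves, not via the WR hypothesis. It leaves the kernel once $K^p$ is shrunk at a prime $\equiv 3\bmod 4$.

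So the proof must genuinely use small level, i.e.\ your infinite-level setup. Here is how far that gets you.
\begin{itemize}
\item Pick $\nu'\colon G^{\ab}\to Z(G)$ with $\nu\circ\nu'=[e]$, where $e$ kills $Z'$.
\item The closure $\overline{G^{\ab}(\Reg{F,(p)})}$ equals $G^{\ab}(\Reg{F,(p)})\cdot\overline{\Lambda}$ for a finitely generated unit group $\Lambda$, and $\overline{\Lambda}=\Lambda\cdot\overline{\Lambda}^{\,e}$.
\item Multiplying $z$ by an element of $\nu'(\overline{\Lambda})\subset\overline{Z(\mathcal{G}_{(p)})(\Reg{F,(p)})}$ therefore arranges $\nu(z)=c\in G^{\ab}(\Reg{F,(p)})$.
\item If $c$ lifts to $Z(\mathcal{G}_{(p)})(\Reg{F,(p)})$, then $z\in Z'(\Adele_f^p\otimes F)\cdot\overline{Z(\mathcal{G}_{(p)})(\Reg{F,(p)})}$ and WR finishes.
\end{itemize}
The obstruction to that lift is a class in $\ker\bigl(H^1(\Reg{F,(p)},Z')\to\prod_{\lambda\nmid p}H^1(F_\lambda,Z')\bigr)$. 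You still need this group to vanish, or some other input. It vanishes for $Z'=\mu_2$ by Chebotarev, but not automatically in general.

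Minor point: cuspidality plays no role in Step 1. $\Res_{F/\Rat}G^{\ab}$ is usually not cuspidal, and the translation action on the torus tower is free in any case.
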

\begin{proof}
    Set $K_F= K_{F,p}K^p_F$. We have a map of unramified tuples 
    \[
    (\Res_{F/\Rat}(G,\mathcal{G},X),K_F) \to (\Res_{F/\Rat}(G^{\ab},\mathcal{G}^{\ab},X^{\ab}),K^{\ab}_F),
    \]
    where we write $X^{\ab} = \{h_x^{\ab}\}$ for the abelianization of some (hence any) Deligne cocharacter $h_x$ associated with $x\in X$, and $K^{\ab}_F\subset G^{\ab}(\Adele_f)$ is the image of $K_F$. Note that $K^{\ab}_F = K^{\ab}_{F,p}K^{\ab,p}_F$, where we write $K_{F,p}^{\ab} = \mathcal{G}^{\ab}(\Reg{F,(p)})$. We obtain a map of ICMs $\Ss_{K_F}\to \Ss_{K^{\ab}_F}$ (the target exists because of Proposition~\ref{prop:integral_canonical_models_for_tori}). The codomain is a torsor for $G^{\ab}(\Adele^p_f\otimes F)/\big(\mathcal{G}_{(p)}^{\ab}(\Reg{F,(p)})\cdot K^{\ab,p}_F\big)$. 
    Indeed, this follows from Remark~\ref{rem:galois_structure_cm} and the density of $G^{\ab}(F)$ in $G^{\ab}(F\otimes\Rat_p)$: Since $G^{\ab}$ is a torus splitting over an unramified extension of $\Rat_p$, this density is a consequence of~\cite[Proposition 7.8]{Platonov1994-ib}. 

   It is now enough to show that
   \[
   \ker\bigg(Z(G)(\Adele_f^p\otimes F)/\big(Z(\mathcal{G}_{(p)})(\Reg{F,(p)})\cdot(K^p_F\cap Z(G)(\Adele_f^p\otimes F))\big)\to G^{\ab}(\Adele^p_f\otimes F)/\big(\mathcal{G}_{(p)}^{\ab}(\Reg{F,(p)})\cdot K^{\ab,p}_F\big)\bigg)
   \]
   acts freely on $\Ss_K$ for $K^p$ sufficiently small, which follows from our assumption on the central action arising from the derived subgroup.
\end{proof}

\begin{notation}
    As in~\cite[(3.3.2)]{kisin:abelian}, set
    \[
    \mathcal{A}(\mathcal{G}_{(p)}) = G(\Adele_f^p)/Z(\mathcal{G}_{(p)})(\Int_{(p)})^- \ast_{\mathcal{G}_{(p)}(\Int_{(p)})/Z(\mathcal{G}_{(p)})(\Int_{(p)})} \mathcal{G}^{\ad}_{(p)}(\Int_{(p)})^+.
    \]
    Here, the notation is Deligne's from~\cite{deligne:corvallis}, and denotes the quotient
    \[
    \left[G(\Adele_f^p)/Z(\mathcal{G}_{(p)})(\Int_{(p)})^- \rtimes \mathcal{G}^\mr{ad}_{(p)}(\Int_{(p)})^+\right]/(\mathcal{G}_{(p)}(\Int_{(p)})/Z(\mathcal{G}_{(p)})(\Int_{(p)}))
    \]
    where we view $\mathcal{G}_{(p)}(\Int_{(p)})/Z(\mathcal{G}_{(p)})(\Int_{(p)})$ as a subgroup of the semi-direct product via the diagonal embedding. Within this, we have the subgroup
    \[
    \mathcal{A}(\mathcal{G}_{(p)})^\circ = \mathcal{G}_{(p)}(\Int_{(p)})^-_+/Z(\mathcal{G}_{(p)})(\Int_{(p)})^- \ast_{\mathcal{G}_{(p)}(\Int_{(p)})/Z(\mathcal{G}_{(p)})(\Int_{(p)})} \mathcal{G}^{\ad}_{(p)}(\Int_{(p)})^+,
    \]
    which can be identified with the completion of $\mathcal{G}_{(p)}^{\ad}(\Int_{(p)})_+$ with respect to the topology generated by subgroups of the form $K^p\cap \mathcal{G}_{(p)}^{\mathrm{der}}(\Int_{(p)})$ for $K^p\subset G(\Adele_f^p)$ sufficiently small; see \cite[{}2.1.15]{deligne:corvallis}.
\end{notation}

\begin{remark}
    The prime-to-$p$ Hecke action of $G(\Adele_f^p)/Z(\mathcal{G}_{(p)})(\Int_{(p)})^-$ combined with the adjoint action of $ \mathcal{G}_{(p)}(\Int_{(p)})_+$ explained in Remark~\ref{rem:adjoint_action} gives an action of $ \mathcal{A}(\mathcal{G}_{(p)})$ on $\Ss_{K_p}$. Explicitly, this is the descent of the action given for a pair $(h,\gamma^{-1})\in G(\Adele_f^p)\rtimes  \mathcal{G}^{\ad}_{(p)}(\Int_{(p)})^+$ by $x\mapsto i_{\gamma}(x\cdot h)$.
\end{remark}

\begin{lemma}
    \label{lem:lifting_over_totally_real}
Set
\[
\Delta \defn \ker\left(\mathcal{A}(\mathcal{G}_{(p)})\to \mathcal{A}(\mathcal{G}^{\ad}_{(p)})\right).
\]
For $[(h,\gamma^{-1})]\in \Delta$, $\gamma$ can be lifted to $\mathcal{G}(\Reg{F,(p)})$ for some $F\in \mathrm{TR}_{p\emph{-unr}}$. 
\end{lemma}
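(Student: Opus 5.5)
We first identify $\Delta$ concretely. Since $Z(G^{\ad})$ is trivial, the map $\mathcal{A}(\mathcal{G}_{(p)})\to \mathcal{A}(\mathcal{G}^{\ad}_{(p)})$ sends the class of $(h,\gamma^{-1})$ to the class of $(\bar h,\gamma^{-1})$, where $\bar h$ is the image of $h$ in $G^{\ad}(\Adele_f^p)$. In $\mathcal{A}(\mathcal{G}^{\ad}_{(p)})$ the amalgamation identifies $\mathcal{G}^{\ad}_{(p)}(\Int_{(p)})$ (completed) diagonally. So $[(h,\gamma^{-1})]$ lies in $\Delta$ exactly when, after modifying the representative by the diagonal subgroup, $\gamma$ becomes trivial and $\bar h=1$. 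Equivalently: up to replacing $(h,\gamma^{-1})$ by an equivalent pair, $\gamma\in\mathcal{G}^{\ad}_{(p)}(\Int_{(p)})^+$ and the image of $h$ in $G^{\ad}(\Adele_f^p)$ equals $\gamma$ (with the right sign convention). The first step is therefore to check, using the description of $\mathcal{A}(\mathcal{G}_{(p)})^\circ$ as a completion from \cite[2.1.15]{deligne:corvallis}, that any element of $\Delta$ has a representative with $\gamma\in\mathcal{G}^{\ad}_{(p)}(\Int_{(p)})_+$, a genuine rational point, and $h\in G(\Adele_f^p)$ mapping to $\gamma$ in $G^{\ad}(\Adele_f^p)$.

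Next we use the lift $h$ to find the lift of $\gamma$. Consider the central isogeny $G\to G^{\ad}$, with kernel $Z=Z(G)$, and likewise over $\Int_{(p)}$ via the reductive model. The obstruction to lifting $\gamma\in G^{\ad}(\Rat)$ to $G(\Rat)$ is a class $\delta(\gamma)\in H^1(\Rat,Z)$. Since $h\in G(\Adele_f^p)$ lifts $\gamma$, this class is trivial at all finite $\ell\neq p$. Since $\gamma\in\mathcal{G}^{\ad}(\Int_p)$ and $\mathcal{G}\to\mathcal{G}^{\ad}$ is a smooth-surjective central extension of reductive $\Int_p$-groups, Lang's theorem plus smoothness (as in Lemma~\ref{lem:Z_fppf_cohomology}) shows the class at $p$ comes from $H^1_{\mathrm{fppf}}(\Int_p,\mathcal{Z})$, so it is unramified. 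The class at $\infty$ is controlled because $\gamma$ preserves $X$.

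The main step, and the expected obstacle, is killing $\delta(\gamma)$ after base change to some $F\in\mathrm{TR}_{p\text{-unr}}$, while keeping the lift integral at $p$. The strategy follows \cite[Lemma (3.3.3)]{kisin:abelian}. Reduce via $Z\hookrightarrow$ a torus (or via $Z^{\circ}$ and $\pi_0$) to classes in $H^1(\Rat,\mu_n)$-type or torus cohomology that are locally trivial away from $p$ and unramified at $p$. Then choose a totally real field $F$ that splits the class everywhere locally: we need $F$ unramified at $p$ with prescribed local behaviour at $p$ and totally real at $\infty$. Such an $F$ exists by weak approximation / Grunwald--Wang for totally real fields, choosing local extensions at $p$ to be unramified.

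Finally, over $F$ the restricted class $\delta(\gamma)|_F$ is locally trivial everywhere. The Tate--Shafarevich part is handled by enlarging $F$ if necessary, since $\Sha$ classes for the finite or multiplicative $Z$ die in a suitable extension. This yields $\tilde\gamma\in G(F)$ lifting $\gamma$. At places above $p$, $\tilde\gamma$ lies in $\mathcal{G}(\Reg{F}\otimes\Int_p)$ up to multiplication by $Z(F)$, and we adjust it by an element of $Z(F)$ chosen via approximation so that it becomes integral. This gives $\tilde\gamma\in\mathcal{G}(\Reg{F,(p)})$.
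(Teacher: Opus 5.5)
Your reading of $\Delta$ is fine. The completion worry is unnecessary: $\gamma$ is by definition a rational point of $\mathcal{G}^{\ad}_{(p)}(\Int_{(p)})^+$, and lying in $\Delta$ just says that $h\in G(\Adele_f^p)$ lifts $\gamma$, so the obstruction is trivial at every $\ell\neq p$.

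The gap is in the main step: producing $F\in\mathrm{TR}_{p\text{-unr}}$ over which the obstruction dies \emph{integrally at $p$}. The constraints ``totally real'' and ``unramified at $p$'' are exactly what your Grunwald--Wang/Tate--Shafarevich sketch does not control, and it fails in three places.

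First, $\mathcal{G}\to\mathcal{G}^{\ad}$ is not smooth when $p$ divides the order of $\pi_0(\mathcal{Z})$ (e.g.\ $\mathrm{SL}_p\to\mathrm{PGL}_p$). In that case a class in $H^1_{\mathrm{fppf}}(\Int_p,\mathcal{Z})$ cannot in general be killed by unramified extensions. For $p$ odd, $H^1_{\mathrm{fppf}}(\Int_p,\mu_p)=\Int_p^\times/(\Int_p^\times)^p$ injects into $H^1_{\mathrm{fppf}}(\Reg{L},\mu_p)$ for every finite unramified $L/\Rat_p$. So ``choosing the local extensions at $p$ to be unramified'' cannot kill such a component; its vanishing has to come from a global argument you do not supply.

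Second, a totally real $F$ does not change the restriction of a class to a real place, so whatever class you reduce to must already be trivial at $\infty$. After passing to the connected center this is a genuine input: the paper gets it from \cite[Lemma (4.4.5)]{Kisin2017-qa}, which is where ``$\gamma$ preserves $X$'' is actually used. Saying the class is ``controlled because $\gamma$ preserves $X$'' does not provide it.

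Third, ``Tate--Shafarevich classes die in a suitable extension'' is not an argument here. The extension must again be totally real and $p$-unramified, whereas the natural candidates (e.g.\ a splitting field of $Z$) are CM.

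The missing idea is how the paper makes these constraints compatible.
\begin{enumerate}
\item It observes that $Z(G)$ splits over a $p$-unramified Galois CM field $\tilde N$. It then restricts the integral obstruction $\alpha_\gamma\in H^1(\Int_{(p)},Z(\mathcal{G}_{(p)}))$ to $\Reg{N,(p)}$, where $N$ is the maximal totally real subfield of $\tilde N$ (itself in $\mathrm{TR}_{p\text{-unr}}$).
\item Cyclicity of $\tilde N/N$ plus Chebotarev let it lift the class to $\alpha'_\gamma$ with values in the torus $Z(\mathcal{G}_{(p)})^\circ$. This disposes of the component-group part, including the $p$-part above, globally.
\item The resulting $Z(\mathcal{G}_{(p)})^\circ$-torsor $\mathcal{X}$ is trivial at places above $p$ by Lang's theorem and at real places by Kisin's lemma.
\item Moret-Bailly's Skolem-type theorem \cite[Th\'eor\`eme 1.3]{Moret-BaillySkolem} then gives a point of $\mathcal{X}$ over $\Reg{F,(p)}$ for some totally real $p$-unramified $F$, i.e.\ directly a lift of $\gamma$ to $\mathcal{G}(\Reg{F,(p)})$.
\end{enumerate}
Your closing adjustment by central elements (weak approximation at $p$ for the unramified connected center) is a workable way to pass from a rational lift to an integral one. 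But it presupposes a lift over a totally real $p$-unramified field, which is precisely what your argument does not establish.
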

\begin{proof}
  The obstruction to lifting $\gamma$ to $\mathcal{G}$ yields a class $\alpha_\gamma\in H^1(\Int_{(p)},Z({\mathcal{G}_{(p)}}))$. By assumption, $\gamma$ lifts to $\mathcal{G}(\Rat_\ell)$ for all $\ell\neq p$. This shows that $\alpha_\gamma$ has trivial restriction to $H^1(\Rat_\ell,Z(G))$ for all $\ell\ne p$. Now, our assumptions imply that $Z(G) = Z(G)^c$ splits over a CM field $\tilde{N}$, which we can take to be Galois and $p$-unramified. Let $N\subset \tilde{N}$ be the maximal totally real subfield. Then, by the cyclicity of $\tilde{N}/N$ and Chebotarev density, we find that the restriction of $\alpha_\gamma$ to $H^1(\Reg{N,(p)},Z(\mathcal{G}_{(p)}))$ is in the image of a class $\alpha'_\gamma\in H^1(\Reg{N,(p)},Z({\mathcal{G}_{(p)}})^\circ)$. 

  We claim that any such lift $\alpha'_\gamma$ has trivial restriction to $H^1(\Real,Z(G)^\circ)$ for any embedding $N\to \Real$. For this, note that the assumption $\gamma\in \mathcal{G}(\Int_{(p)})_1$ tells us that, for any $x\in X$, with stabilizer $G_x\subset G_{\Real}$, $\alpha_\gamma$ has trivial restriction to $H^1(\Real,G_x)$. The claim now follows from~\cite[Lemma (4.4.5)]{Kisin2017-qa}.

  Let $\mathcal{X}\to \Spec \Reg{N,(p)}$ be a $Z({\mathcal{G}_{(p)}})^\circ$-torsor with cohomology class $\alpha'_\gamma$. Let $\tilde{F}$ be the maximal $p$-unramified totally real extension of $N$. It is now enough to know that $\mathcal{X}(\Reg{\tilde{F},(p)})$ is non-empty. We have shown that $\mathcal{X}\otimes_{\Reg{N,(p)},\tau}\Real$ is trivial for all $\tau:N\to \Real$, and, by Lang's theorem, the base-change $\mathcal{X}\otimes_{\Reg{N,(p)}}\Reg{N,v}$ is also trivial for any place $v\vert p$ of $N$. Therefore, the desired statement follows from \cite[Th\'eor\`eme 1.3]{Moret-BaillySkolem} applied with $R=\Reg{N,(p)}$ and $\Sigma$ the set of archimedean places of $N$.
\end{proof}

\begin{lemma}
    \label{lem:free_action_Delta_G_Gad}
The group $\Delta$ acts freely on $\Ss_{K_p}$. In fact, for all sufficiently small $K^p\subset G(\Adele_f^p)$, the (finite) quotient of $\Delta$ acting faithfully on $\Ss_{K_pK^p}$ acts freely.
\end{lemma}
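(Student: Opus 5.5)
The plan is to transport the action of $\Delta$ into a central action on the Weil-restricted ICMs, where freeness is supplied by the WR condition in the form of Lemma~\ref{lem:central_action_free}.

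Take an element $\delta=[(h,\gamma^{-1})]\in\Delta$ and suppose that it fixes some $x\in\Ss_{K_p}(R)$. By Lemma~\ref{lem:lifting_over_totally_real}, there is some $F\in\mathrm{TR}_{p\text{-unr}}$ such that $\gamma$ lifts to $\tilde{\gamma}\in\mathcal{G}(\Reg{F,(p)})$. Remark~\ref{rem:desc_of_adjoint_action} then gives
\[
(x\cdot h)\otimes\Reg{F}\cdot\tilde\gamma^{-1} = i_\gamma(x\cdot h)\otimes\Reg{F}.
\]
Under the closed immersion $\Ss_{K_p}\hookrightarrow\Ss_{K_{F,p}}$ of Remark~\ref{rem:from_mwr_to_maps}, the action of $\delta$ therefore becomes the right action of the element $g\defn h\tilde\gamma^{-1}\in G(\Adele_f^p\otimes F)$.

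Next I use that $\delta$ lies in $\Delta$. Its image in $\mathcal{A}(\mathcal{G}^{\ad}_{(p)})$ is trivial, so after modifying the representative $(h,\gamma^{-1})$ by the diagonal subgroup $\mathcal{G}_{(p)}(\Int_{(p)})/Z$, I want to arrange that $g$ maps to the identity in $G^{\ad}(\Adele_f^p\otimes F)$, modulo the closure of $\mathcal{G}^{\ad}_{(p)}(\Reg{F,(p)})$-type elements that act trivially. In that case $g\in Z(G)(\Adele_f^p\otimes F)\cdot\mathcal{G}_{(p)}(\Reg{F,(p)})$, and the $\mathcal{G}_{(p)}(\Reg{F,(p)})$-part can be absorbed into the choice of $\tilde\gamma$.

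With this, $x\otimes\Reg{F}$ is a fixed point of a central element $z\in Z(G)(\Adele_f^p\otimes F)$. Lemma~\ref{lem:central_action_free} and Remark~\ref{rem:freeness_after_inverse_limit} (the inverse-limit form via Lemma~\ref{lem:freeness_in_inverse_systems}) then force $z\in Z(\mathcal{G}_{(p)})(\Reg{F,(p)})^-$. It remains to trace this back and conclude that $\delta$ is trivial in $\mathcal{A}(\mathcal{G}_{(p)})$: $z$ lying in $Z(\Reg{F,(p)})$ means $h\in Z(\mathcal{G})(\Int_{(p)})^-\cdot\mathcal{G}_{(p)}(\Int_{(p)})$ compatibly with $\gamma$, which is exactly the relation in the amalgamated product. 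Once freeness on $\Ss_{K_p}$ is established, the finite-level statement follows from Lemma~\ref{lem:freeness_in_inverse_systems}, applied to the prime-to-$p$ tower with its finite étale transition maps and to $\Delta$ acting through finite quotients.

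I expect the main obstacle to be the bookkeeping in the last two steps: showing that the kernel condition really makes $h\tilde\gamma^{-1}$ central up to $\mathcal{G}(\Reg{F,(p)})$, and that the triviality of $z$ modulo $Z(\Reg{F,(p)})$ descends from $F$ back to $\Int_{(p)}$. The descent step needs the closures in $\mathcal{A}(\mathcal{G}_{(p)})$ to be handled compatibly with the choice of $F$. My first attempt would use that $Z(G)(\Adele_f^p)\cap Z(\Reg{F,(p)})^-$ is the closure of $Z(\Int_{(p)})$ up to elements already identified in the amalgamation, working with the images in $G^{\ab}$ as in the proof of Lemma~\ref{lem:central_action_free}.
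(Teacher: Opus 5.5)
Your first three steps are the paper's own. You lift $\gamma$ to $\tilde\gamma\in\mathcal{G}_{(p)}(\Reg{F,(p)})$ by Lemma~\ref{lem:lifting_over_totally_real}, use Remark~\ref{rem:desc_of_adjoint_action} to turn a fixed point of $[(h,\gamma^{-1})]$ into a fixed point of $z=h\tilde\gamma^{-1}$ on $\Ss_{K_{F,p}}$, and obtain $z\in Z(G)(\Reg{F,(p)})^-$ from Lemma~\ref{lem:central_action_free}. Your second step needs no modification of the representative: membership in $\Delta$ already says that $h$ and $\gamma$ have the same image in $G^{\ad}(\Adele_f^p)$, so $h\tilde\gamma^{-1}$ is central outright. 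For the passage to finite level you also need that $\Delta$ acts on each $\Ss_{K_pK^p}$ through a finite quotient; the paper takes this from \cite[Lemma E.6]{Kisin2017-qa}.

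The genuine gap is your last step, which is the only non-formal part of the proof and which you assert rather than prove. Freeness only puts $z$ in the \emph{closure} $Z(G)(\Reg{F,(p)})^-$. This can be much larger than $Z(G)(\Reg{F,(p)})$, because $\Res_{F/\Rat}Z(G)^\circ$ need not be cuspidal; for instance, if $Z(G)^\circ\supseteq\Gm$, the closure of the unit group of $F$ appears. Moreover, $z$ and $\tilde\gamma$ live only over $F$.

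What must be shown is that $\gamma$ lifts to $\mathcal{G}_{(p)}(\Int_{(p)})$ at all, i.e.\ that $h$ is itself a $\Int_{(p)}$-point of $G$ lifting $\gamma$, so that $(h,\gamma^{-1})$ lies in the diagonally embedded subgroup. A description of $Z(G)(\Adele_f^p)\cap Z(G)(\Reg{F,(p)})^-$ does not give this, since $z$ need not lie in $Z(G)(\Adele_f^p)$. Such a statement only becomes usable once a rational lift $c$ of $\gamma$ is in hand, because then $hc^{-1}$ lies in that intersection.

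Tellingly, you never use the standing hypothesis $G=G^c$, which is exactly what the paper uses at this point. It maps $Z(G)(\Reg{F,(p)})^-\,\mathcal{G}_{(p)}(\Reg{F,(p)})\cap G(\Adele_f^p)$ into the cuspidal quotient $(\Res_{\Reg{F,(p)}/\Int_{(p)}}\mathcal{G}_{(p)})^c$. The centre of that quotient is cuspidal, so its $\Int_{(p)}$-points form a discrete, hence closed, subgroup of its prime-to-$p$ adelic points. The closure is thereby absorbed and the image of $h$ is a $\Int_{(p)}$-point. Comparing $G=G^c$ with $(\Res_{F/\Rat}G)^c$ then gives $h\in G(\Int_{(p)})$, so $[(h,\gamma^{-1})]=1$ in $\mathcal{A}(\mathcal{G}_{(p)})$.

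That comparison is in general only valid up to a finite central kernel; for example, $-1\in\GSp_{2g}$ dies when $[F:\Rat]$ is even. An intersection statement like yours is the sort of input needed to handle that residual ambiguity, but it supplements the cuspidal-quotient step rather than replacing it.
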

\begin{proof}
    Suppose that we have $[(h,\gamma^{-1})]\in \Delta$ and $x\in \Ss_{K_p}(R)$ such that $x\cdot (h,\gamma) = x$. Let $\tilde{\gamma}\in \mathcal{G}_{(p)}(\Reg{F,(p)})$ be a lift of $\gamma$ with $F\in \mathrm{TR}_{p\text{-unr}}$: this exists by Lemma~\ref{lem:lifting_over_totally_real}.
    
    By Remark~\ref{rem:desc_of_adjoint_action}, we find that we have
    \[
    (x\otimes\Reg{F})\cdot k = (x\otimes\Reg{F})\cdot z,
    \]
    where $z \defn h\tilde{\gamma}^{-1}\in Z(G)(\Adele_f^p\otimes_{\Rat}F)$ and $k\in K^p$. By Lemma~\ref{lem:central_action_free}, we have $z\in Z(G)(\Reg{F,(p)})^-$. This means that $ z\tilde{\gamma} = h$ is an element of 
    \[
    Z(G)(\Reg{F,(p)})^-\mathcal{G}_{(p)}(\Reg{F,(p)}) \cap G(\Adele_f^p)
    \]
    lifting $\gamma$. Since $G = G^c$, this intersection is contained in
    \[
   (\Res_{\Reg{F,(p)}/\Int_{(p)}}\mathcal{G}_{(p)})^c(\Int_{(p)})\cap G(\Adele_f^p) = G(\Int_{(p)})\subset (\Res_{F/\Rat}G)^c(\Adele_f^p).
    \]
    So, by Lemma~\ref{lem:freeness_in_inverse_systems}, it suffices to observe that $\Delta$ acts on $\Ss_{K_pK^p}$ via a finite quotient by \cite[Lemma E.6]{Kisin2017-qa}.
\end{proof}

We now prove Theorem~\ref{introthm:descent} in the following formulation:

\begin{theorem}
    [Descent for ICMs]
\label{thm:descent_for_icms}
Let $(G_1,\mathcal{G}_1,X_1)\to (G_2,\mathcal{G}_2,X_2)$ be a a map of unramified Shimura data with $G_1 = G_1^c$, and suppose the WR condition (i.e., \emph{Assumption~\ref{assump:weil_restriction}}) holds for $(G_1,\mathcal{G}_1,X_1)$. Then, for ${K}_2^p$ sufficiently small, $\Sh_{{K}_2}$ admits an ICM $\Ss_{{K}_2}$ over $\Reg{{E}_2,({v}_2)}$ and the map $\Ss_{K_1}\to \Ss_{{K}_2}\otimes_{\Reg{{E}_2},({v}_2)}\Reg{E_1,(v_1)}$ is finite \'etale.
\end{theorem}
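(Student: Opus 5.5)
The plan is to imitate Kisin's twisting construction, with the group $\Delta$ of Lemma~\ref{lem:free_action_Delta_G_Gad} replacing isogenies of abelian varieties. In the setting of Setup~\ref{introsetup:central}, $G_1\to G_2$ is a central isogeny, so $G_1^{\ad}=G_2^{\ad}$ and $\mathcal{G}_1^{\ad}=\mathcal{G}_2^{\ad}$. First I will reduce to the geometrically connected components. By Remark~\ref{rem:WR_implies_limpid}, the WR condition makes every $\Ss_{K_1}$ a limpid ICM, and $\Ss_{K_{1,p}}$ carries the action of $\mathcal{A}(\mathcal{G}_{1,(p)})$. Fix a connected component $\Ss^+_{K_{1,p}}$ of $\Ss_{K_{1,p}}\otimes\Reg{E_1^{\mathrm{ur}},(v)}$; it is stable under $\mathcal{A}(\mathcal{G}_{1,(p)})^\circ$. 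By Deligne's description of connected components \cite[2.1.15]{deligne:corvallis}, as in \cite[(3.4)]{kisin:abelian}, the generic fiber of $\Sh_{K_{2,p}}$ is
\[
\left[\mathcal{A}(\mathcal{G}_{2,(p)})\times \Sh^+_{K_{1,p}}\right]\big/\mathcal{A}(\mathcal{G}_{1,(p)})^\circ,
\]
with the Galois action twisted in the usual way. I will define $\Ss_{K_{2,p}}$ by the same formula using $\Ss^+_{K_{1,p}}$, and then set $\Ss_{K_2}=\Ss_{K_{2,p}}/K_2^p$.

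The first real step is to show that this quotient exists as an algebraic space and that $\Ss_{K_1}\to\Ss_{K_2}$ is finite \'etale. The group $\mathcal{A}(\mathcal{G}_{1,(p)})^\circ$ acts on $\mathcal{A}(\mathcal{G}_{2,(p)})$ through its image in $\mathcal{A}(\mathcal{G}_{2,(p)})^\circ$. The stabilizers that could obstruct freeness therefore lie in
\[
\ker\bigl(\mathcal{A}(\mathcal{G}_{1,(p)})^\circ\to\mathcal{A}(\mathcal{G}^{\ad}_{1,(p)})^\circ\bigr)\subset\Delta .
\]
Lemma~\ref{lem:free_action_Delta_G_Gad} says $\Delta$ acts freely, and for small $K^p$ it acts through a finite quotient. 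Hence at finite level $K_2$ each component is the quotient of a component of some $\Ss_{K_1}$ by a free action of a finite group. That quotient is finite \'etale, and the Galois descent datum extends by the mapping property (Theorem~\ref{thm:mapping_property}). Corollary~\ref{cor:descent_coefficients_gen_ICMs} then brings everything down to $\Reg{E_2,(v_2)}$.

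Next I will verify the ICM axioms. For the Serre--Tate property, I will descend the syntomic realization from $\Ss_{K_1}$ to $\Ss_{K_2}$ using Proposition~\ref{prop:full_faithfulness_2}. This requires that $\mathbf{Et}_{K_2,p}$ pulled back along the \'etale cover equal the pushforward of $\mathbf{Et}_{K_1,p}$ along $\mathcal{G}_1^c\to\mathcal{G}_2^c$, which holds by Remark~\ref{rem:functoriality_etale_realizations}. The same proposition, combined with Proposition~\ref{prop:central_covers}, transfers formal \'etaleness. Limpidity follows because $\mathbf{Et}_{K_2,\ell}$ extends along the finite \'etale cover.

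For the pointwise criterion, a potentially crystalline point $x\in\Sh_{K_2}(F)$ lifts, after a finite unramified-at-$p$ extension, to $\Sh_{K_1}$. The lift is still potentially crystalline, since the kernel is central and finite, so the $\mathcal{G}_1^c$-local system is crystalline up to a finite twist. It therefore extends integrally on $\Ss_{K_1}$, and its image does too. The converse is immediate from the existence of the aperture.

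I expect the main obstacle to be the first step: identifying the stabilizers in the twisted quotient with elements of $\Delta$, which involves the bookkeeping between $Z(\mathcal{G}_{1,(p)})$ and $Z(\mathcal{G}_{2,(p)})$. The ``$K_2^p$ sufficiently small'' hypothesis is what makes that step go through.
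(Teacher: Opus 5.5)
Your construction and the decisive step are the paper's. Everything reduces to the freeness of $\Delta$ from Lemma~\ref{lem:free_action_Delta_G_Gad}, passed to finite level via Lemma~\ref{lem:freeness_in_inverse_systems}, followed by Proposition~\ref{prop:full_faithfulness_2} and Corollary~\ref{cor:descent_coefficients_gen_ICMs}. The paper glues geometrically connected components at finite level, as in Lemma~\ref{lem:descent_same_derived_subgroup}, rather than writing down the global quotient $[\mathcal{A}(\mathcal{G}_{2,(p)})\times\Ss^+_{K_{1,p}}]/\mathcal{A}(\mathcal{G}_{1,(p)})^\circ$. Your stabilizer computation is nonetheless correct, and the two are the same argument.

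The step that does not work as written is the pointwise criterion. First, $G_1\to G_2$ is only assumed surjective with central kernel, not an isogeny. In the main application (Theorem~\ref{thm:abelian}, descending from a Hodge-type group to its adjoint group) the kernel contains a torus. Two lifts of $\mathbf{Et}_{K_2,p}\vert_x$ then differ by an arbitrary character valued in the kernel, not a finite-order one. Second, even for a finite kernel, ``crystalline up to a finite twist'' presupposes that \emph{some} lift is potentially crystalline, and this is not automatic. For $p$ odd, $\chi_{\mathrm{cyc}}$ restricted to $\Gal(\overline{\Rat}_p/\Rat_p(\mu_p))$ lifts through $\Gm\xrightarrow{z\mapsto z^2}\Gm$, but every lift has non-integral Sen weight, so none is potentially crystalline. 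Third, you cannot assume the lift of $x$ to $\Sh_{K_1}$ exists over an extension unramified at $p$; that is only known once $x$ is integral. This last point is harmless, since the criterion allows any finite extension $F'/F$.

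The repair needs an input specific to Shimura varieties. Lift $x$ to $x_1\in\Sh_{K_1}(F')$ for some finite extension $F'/F$. By Liu--Zhu (Remark~\ref{rem:lz}), $\mathbf{Et}_{K_1,p}\vert_{x_1}$ is de Rham, hence potentially semistable. Its monodromy operator is a nilpotent element of (a twisted form of) $\Lie G_1$, and it maps to the monodromy operator of $\mathbf{Et}_{K_2,p}\vert_{x}$, which is zero. The kernel of $\Lie G_1\to\Lie G_2^c$ is the Lie algebra of a central subgroup, so it consists of semisimple elements and contains no nonzero nilpotents. Hence $\mathbf{Et}_{K_1,p}\vert_{x_1}$ is potentially crystalline, and $x_1\in\Ss_{K_1}(\Reg{F'})$. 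Therefore $x\in\Ss_{K_2}(\Reg{F'})\cap\Sh_{K_2}(F)=\Ss_{K_2}(\Reg{F})$ by separatedness of $\Ss_{K_2}$.
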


\begin{proof}
    The proof will be along the lines of Lemma~\ref{lem:descent_same_derived_subgroup}, again making use of Proposition~\ref{prop:full_faithfulness_2} and Corollary~\ref{cor:descent_coefficients_gen_ICMs}. After choosing a geometrically connected component $\Sh^+_{K_1,E'}\subset \Sh_{K_1}\otimes_EE'$, and a place $v'\vert p$ of $E'$ above $v$, it suffices to check that the action of $({K}_2\cap {G}_2^{\mathrm{der}}(\Rat))/(K_1\cap G_1^{\mathrm{der}}(\Rat))$ on $\Sh^+_{K_1,E'}$ extends to a free action on the corresponding connected component $\Ss^+_{K_1,(v')}$ of $\Ss_{K_1}\otimes_{\Reg{E_1,(v_1)}}\Reg{E',(v')}$. By Lemma~\ref{lem:freeness_in_inverse_systems}, we are reduced to knowing that the inverse limit
    \[
    \varprojlim_{f(K_1^p)\subset{K}_2^p\subset {G}_2(\Adele_f^p)}({K}_2\cap {G}_2^{\mathrm{der}}(\Rat)/(K_1\cap G_1^{\mathrm{der}}(\Rat)) = \varprojlim_{f(K_1^p)\subset{K}_2^p\subset {G}_2(\Adele_f^p)}({K}_2^p\cap {\mathcal{G}}^{\mathrm{der}}_{2,(p)}(\Int_{(p)}))/(K_1^p\cap \mathcal{G}^{\mathrm{der}}_{1,(p)}(\Int_{(p}))
    \]
    acts freely on $\Ss_{K_{1,p}}$. This can be identified with the kernel of the map
    \[
    \mathcal{A}(\mathcal{G}_{1,(p)})^\circ\to \mathcal{A}(\overline{\mathcal{G}}_{1,(p)})^\circ,
    \]
    and so the theorem now follows from Lemma~\ref{lem:free_action_Delta_G_Gad}.
\end{proof}

\begin{remark}
    \label{rem:only_finitely_many_wr_needed}
A closer analysis of the proof of Theorem~\ref{thm:descent_for_icms} shows that we only need the condition in Assumption~\ref{assump:weil_restriction} to hold for \emph{finitely many} $F\in \mathrm{TR}_{p\text{-unr}}$. This follows from the finiteness of the group~\cite[Th\'eor\`eme 7.1]{zbMATH03230408}
\[
\ker\bigg(H^1(\Int_{(p)},Z({\mathcal{G}_{(p)}})) \to \prod_{\ell\neq p}H^1(\Rat_\ell,Z(G))\bigg).
\]
\end{remark}

\subsection{Shimura varieties of pre-abelian type}
\label{sub:shimura_varieties_of_pre-abelian_type}

\begin{definition}
  A Shimura datum $(G,X)$ is of \defnword{Hodge type} if $G$ admits a faithful symplectic representation $H$ on which some (hence any) element of $X$ induces a Hodge structure with weights $(-1,0),(0,-1)$. It is of \defnword{abelian type} if there exists a Shimura datum $(G_1,X_1)$ of Hodge type and a map $G_1^{\mathrm{der}}\to G_2^{\mathrm{der}}$ of derived groups inducing an isomorphism on the corresponding adjoint Shimura data. It is of \defnword{pre-abelian type} if the adjoint Shimura datum $(G^{\mathrm{ad}},X^{\mathrm{ad}})$ is of abelian type.
\end{definition}

\begin{remark}
    [Classification of pre-abelian Shimura data]
\label{rem:pre-abelian}
By the classification of Shimura data by Deligne~\cite[\S 1.3]{deligne:corvallis}, a Shimura datum $(G,X)$ is of pre-abelian type if and only if each simple factor of the adjoint Shimura datum $(G^{\ad},X^{\ad})$ is classical of type $A,B,C,D^{\Real},D^{\mathbb{H}}$. 

Note that a simple adjoint Shimura datum has underlying group of the form $\Res_{F/\Rat}H$ where $F$ is a totally real field and $H$ is an absolutely simple adjoint group over $F$. The meaning of type $D^{\Real}$ is that, for each embedding $\tau:F\to \Real$, the group $H_{\tau}\defn H\otimes_{F,\tau}\Real$ is the adjoint quotient of an orthogonal group associated with a quadratic form of signature either $(2n,2)$, $(2n,n)$ or $(2n+2,0)$. The meaning of type $D^{\mathbb{H}}$ is instead that $H_{\tau}$ is the adjoint quotient of $\mathrm{O}^*(2n)$, the group of isometries of a skew-Hermitian space over the ring of quaternions $\mathbb{H}$. The Shimura datum is of \emph{abelian} type if the derived subgroup of every simple factor of $G_{\Real}$ of type $D^{\mathbb{H}}$ is a quotient of $\SO^*(2n)$.
\end{remark}

\begin{example}
    [A non-abelian but pre-abelian Shimura datum]
\label{ex:non-abelian_pre-abelian}
The basic example of a pre-abelian-type (but not abelian-type) Shimura datum is obtained as follows. Take a totally real field $F_0$ and a quaternion algebra $B$ over $F_0$ such that for some distinguished archimedean place $v_0$ of $F_0$ we have $B_{v_0}\simeq \bb{H}$, and $B_v\simeq \mr{Mat}_2(\bb{R})$ for archimedean places $v\ne v_0$. We then take a left $B$-module $V$ of rank $n\geqslant 4$ with a Hermitian pairing which is definitive at $v\ne v_0$ and signature $(n,0)$ at $v_0$. Set $H^{\ad}$ to be $\mr{PU}(V,h)$ and $G^\mr{ad}=\mr{Res}_{F_0/\bb{Q}}\, H^{\ad}$. Define $h\colon\bb{S}\to G^\mr{ad}_\bb{R}$ to have trivial projection to each archimedean place $v\ne v_0$,  be such that if $W=V\otimes_{F_0,v_0}\bb{R}$ then the projection to $v_0$ defines a Hodge structure of weight $\{(-1,0),(0,-1)\}$ where $W_\bb{C}^{(-1,0)}$ is the maximal isotropic subspace. This defines a Shimura datum $(G^\mr{ad},X^\mr{ad})$. Let $H$ be the central cover of $H^{\ad}$ determined by the property that $H_{\Comp}$ is isomorphic to the GSpin cover of $H^{\ad}_{\Comp}\simeq \mr{PSO}(2n)$. Then $X^\mr{ad}$ admits a lift $X$ to $G=\mr{Res}_{F_0/\mathbb{Q}}\,H$, and $(G,X)$ is of pre-abelian but not of abelian type.
\end{example}

\begin{proposition}
    [Hodge type data satisfy WR]
\label{prop:hodge_type_mwr}
Suppose that $(G,\mc{G},X)$ is of Hodge type. Then it satisfies \emph{Assumption~\ref{assump:weil_restriction}}.
\end{proposition}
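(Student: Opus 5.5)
The plan is to reduce to the Siegel case and then apply the reduction-of-structure-group theorem. For $F\in\mathrm{TR}_{p\text{-unr}}$, the Weil restriction $\Res_{F/\Rat}(G,\mathcal{G},X)$ is again of Hodge type. Indeed, choose a symplectic embedding $G\hookrightarrow\GSp(H)$ with a reductive $\Int_p$-model $\mathcal{G}\hookrightarrow\GSp(\Lambda)$ for a self-dual lattice $\Lambda$; we may arrange this after passing to a lattice adapted to $\mathcal{G}$, as in Kisin's construction, \cite{kisin:abelian}. Then $\Res_{F/\Rat}G\hookrightarrow\Res_{F/\Rat}\GSp(H)$. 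Compose this with the embedding into $\GSp(\Res_{F/\Rat}H)$, where the symplectic form is the trace form $\mathrm{tr}_{F/\Rat}\circ\psi$. This composite is not a closed immersion on the similitude factor $\Res_{F/\Rat}\Gm$, which only maps to $\Gm$. I would therefore replace the group by $\Res'_{F/\Rat}$-type groups. Proposition~\ref{prop:mwr_to_wr} lets me work with the modified Weil restriction, and Remark~\ref{rem:modified_weil_restriction_closed_immersion} together with Remark~\ref{rem:mwr_products} lets me enlarge the centre by a product with CM tori. Concretely, $\Res'_{F/\Rat}G$ embeds into $\GSp(\Res_{F/\Rat}H)$, and $\Res_{\Reg{F}\otimes\Int_p/\Int_p}\Lambda$ gives a self-dual reductive integral model, because $p$ is unramified in $F$.

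First step: establish existence of ICMs. The Siegel tuple $(\GSp,\GSp(\Lambda),\mathcal{H}^\pm,K^\sharp)$ admits a limpid ICM, namely Mumford's moduli space of principally polarized abelian schemes with level structure. To check this I would use Example~\ref{ex:gl_n_truncated_bts} to identify $\BT{\infty}$ with polarized $p$-divisible groups, so that Serre--Tate gives the Serre--Tate property. The pointwise criterion follows from N\'eron--Ogg--Shafarevich, Grothendieck's potential good reduction criterion, and Breuil--Kisin/Fontaine: an abelian variety has good reduction iff its $p$-adic Tate module is crystalline, and potentially crystalline implies potentially good reduction, which combined with crystallinity gives good reduction. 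Limpidity comes from the $\ell$-adic Tate modules. Theorem~\ref{thm:reduction_of_structure_group} then gives limpid ICMs for all neat tuples on $\Res'_{F/\Rat}(G,\mathcal{G},X)$, and the ICM for $\Res_{F/\Rat}$ follows as in Proposition~\ref{prop:mwr_to_wr}. If I instead work directly with the $\Res$ version, I would use Lemma~\ref{lem:descent_same_derived_subgroup} to pass between data with the same derived group.

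Second step, which I expect to be the main obstacle: freeness of the central action of $Z(G^{\mathrm{der}})(\Adele_f^p\otimes F)/Z(G^{\mathrm{der}})(\Reg{F,(p)})$ on $\Ss_{K_{F,p}}$. By Remark~\ref{rem:freeness_after_inverse_limit} it suffices to argue on the tower. Suppose $x\cdot z=x$ for a geometric point $x$ and some $z$. Via the Hodge embedding, $x$ gives an abelian variety $A_x$ with $\Reg{F}$-action and prime-to-$p$ level structure. The element $z$ acting trivially means that the level structures differ by $z$ up to a prime-to-$p$ quasi-isogeny of $A_x$ preserving the tensors. Since $z$ is central and the Tate tensors include the $F$-action, such a quasi-isogeny is a central element in $\End^\circ(A_x)\otimes$, lying in $Z(G^{\mathrm{der}})$. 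Because $Z(G^{\mathrm{der}})$ is finite, the quasi-isogeny is a torsion unit of $\End(A_x)_{(p)}$. It commutes with the $\Reg{F}$-action and its $\ell$-adic realizations lie in $Z(G^{\mathrm{der}})(F\otimes\Rat_\ell)$ for all $\ell\ne p$, so by the Tate-type description I would conclude that it lies in $Z(G^{\mathrm{der}})(\Reg{F,(p)})$. Hence $z$ is trivial in the quotient.

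The delicate point in this argument is that the tensors must be preserved in characteristic $p$, which is where the comparison with crystalline and étale tensors enters. I would handle it by noting that only the $\Reg{F}$-linearity and the polarization matter for a central element: the quasi-isogeny is multiplication by an element of $Z(\Res'G)$, and that element is determined by its $\ell$-adic image. This avoids needing Kisin's deeper results.
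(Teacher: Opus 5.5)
Your architecture matches the paper's. The Siegel case comes from the moduli of abelian schemes, and the general Hodge-type case from Theorem~\ref{thm:reduction_of_structure_group} after arranging a closed immersion of unramified data. You also correctly observe that $\Res'_{F/\Rat}(G,\mathcal{G},X)$ is again of Hodge type via the trace form on $V\otimes_{\Rat}F$, which is self-dual because $p$ is unramified in $F$. So it suffices to treat arbitrary Hodge-type data and invoke Proposition~\ref{prop:mwr_to_wr}.

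The genuine gap is the freeness step. From $x\cdot z=x$ you correctly obtain $f\in\Aut^p(\mathcal{A}_x)$ whose realization under the level structure is $z\in Z(G^{\mathrm{der}})(\Adele_f^p)$. The whole content is then to show that $f$ comes from a \emph{single rational} point of $Z(G^{\mathrm{der}})$, and your sketch does not do this.
\begin{itemize}
\item The claim that $f$ is central in $\End^\circ(\mathcal{A}_x)$ is false in general: at a supersingular point of a unitary-type variety, a root of unity of the CM field is a non-central element of $\End^\circ(\mathcal{A}_x)$.
\item The ``Tate-type description'' is unspecified, and the closing assertion that $f$ ``is multiplication by an element of $Z(\Res'G)$'' is the conclusion itself.
\item $F$-linearity and the polarization cannot suffice. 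They only see the group of $F$-linear symplectic similitudes, whose derived group has center $\Res_{F/\Rat}\mu_2$. In general $Z(G^{\mathrm{der}})$ sits in the units of the full commutant $D$ of $G$ in $\End(V)$.
\item Even $\ell$-independence of characteristic polynomials is not enough. Take $G=\GSp(V_1)\times_{\Gm}\GSp(V_2)$ with $\dim V_1=\dim V_2$, and the adelic point of $\mu_2\times\mu_2$ equal to $(1,-1)$ at some primes and $(-1,1)$ at others. It has rational, $\ell$-independent characteristic polynomial but is not rational. Excluding it requires knowing that the projectors onto $V_1,V_2$ are quasi-endomorphisms of $\mathcal{A}_x$.
\end{itemize}

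The missing input is the one the paper uses: $D$ acts on $\mathcal{A}_x$ by quasi-endomorphisms compatibly with the level structure. In characteristic $0$ this holds because $D$ commutes with the Hodge cocharacter; at characteristic-$p$ points, lift $x$ and extend over the abelian scheme. In particular this gives $Z(G^{\mathrm{der}})(\Int_{(p)})\hookrightarrow\Aut^p(\mathcal{A}_x)$, as in \cite[Lemma 4.5.2]{Kisin2018-nm}. With it, $f\in(D\otimes\Rat_\ell)\cap\End^\circ(\mathcal{A}_x)=D$, hence $f\in Z(G^{\mathrm{der}})(\Rat)$. This is the identity $\Aut^p(\mathcal{A}_x)\cap Z(G^{\mathrm{der}})(\Adele_f^p)=Z(G^{\mathrm{der}})(\Int_{(p)})$ used in the paper, so you cannot avoid this Kisin-type input.

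Two smaller points:
\begin{itemize}
\item Example~\ref{ex:gl_n_truncated_bts} concerns only $(\GL_h,\mu_d)$. It does not identify $\BT[\GSp_{2g},\mu_g]{\infty}$ with polarized $p$-divisible groups, and the paper deliberately avoids that identification. The Serre--Tate property needs a formally \'etale map to $\BT[\GSp_{2g},\mu_g]{\infty}$. The paper gets it from Proposition~\ref{prop:Gmu_aperture_pointwise_condition_normal} with $\mathcal{X}=\mathcal{X}^\sharp=\Ss_K$ and $(\GSp_{2g},\mu_g)\hookrightarrow(\GL_{2g},\mu_g)$, with formal unramifiedness from Serre--Tate theory and Theorem~\ref{thm:dieudonne}.
\item In the pointwise criterion, passing from ``potentially crystalline'' to good reduction cannot invoke crystallinity, which is what is to be shown. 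It uses neatness of $K^p$: Raynaud's criterion gives semistable reduction, and potentially good plus semistable reduction is good reduction.
\end{itemize}
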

\begin{proof}
Suppose first that $(G,X)$ is of Siegel type associated with a symplectic space $V$ such that $V_{\Rat_p}$ admits a self-dual lattice $V_{\Int_p}$ stabilized by $\mathcal{G}$. We begin by verifying that $(G,\mathcal{G},X)$ admits limpid ICMs over $\Int_{(p)}$ for any neat level $K^p\subset G(\Adele_f^p)$. The point is that $\Sh_K$ has an integral model $\Ss_K$ over $\Int_{(p)}$, given by a moduli space of polarized abelian varieties with additional level structure. By Theorem~\ref{thm:dieudonne}, we have an $F$-gauge over $\Ss_K$ associated with the $p$-divisible group of the universal abelian scheme $\mathcal{A}\to \Ss_{K}$. We can actually view this as a $(\GL_{2g},\mu_g)$-aperture over $\Ss_{K}$, where $g$ is the relative dimension of $\mathcal{A}$. Now, if $\rank V_{\Int_{p}} = 2g$, Proposition~\ref{prop:Gmu_aperture_pointwise_condition_normal} applies with
  \[
  \mathcal{X} = \mathcal{X}^\sharp = \Ss_K;\; (\mathcal{G},\mu) = (\GSp_{2g},\mu_g)\;;\; (\mathcal{G}^\sharp,\mu^\sharp) = (\GL_{2g},\mu_g).
  \]
  That $\widehat{\Ss}_{K,v}\to \BT[\GL_{2g},\mu_g]{\infty}$ is formally unramified follows from classical Serre--Tate theory and Theorem~\ref{thm:dieudonne}. 

This tells us that $\Ss_K$ satisfies the Serre--Tate property. The extension of the canonical $\ell$-adic local systems over $\Ss_K$ for $\ell\neq p$ is obtained from the $\ell$-adic Tate modules of $\mathcal{A}$.

It remains to check the pointwise condition. This comes down to the fact that, by a result of Coleman--Iovita~\cite[Theorem 1]{coleman_iovita}, for an abelian variety $A$ over a local field,  crystallinity of the $p$-adic Tate module is equivalent to saying that $A$ has good reduction.

Now, let us proceed to the general Hodge type case, where we have a closed immersion from $(G,X)$ into a Siegel Shimura datum $(G^\sharp,X^\sharp)$. This immediately implies that we have $G=G^c$. Moreover, by Zarhin's trick, we can assume that we actually have a closed immersion of unramified Shimura data
\[
(G,\mathcal{G},X) \to (G^\sharp,\mathcal{G}^\sharp,X^\sharp). 
\]
Therefore, the already considered case of Siegel type, combined with Theorem~\ref{thm:reduction_of_structure_group} tells us that, for any neat $K^p\subset G(\Adele_f^p)$ and any place $v\vert p$ of $E$, $\Sh_K$ admits a limpid ICM over $\Reg{E,(v)}$. 

A point $x\in \Ss_{K_p}(\kappa)$ corresponds to a polarized abelian variety $\mathcal{A}_x$ over $\kappa$, equipped with an isomorphism 
\[
\bigg(\varprojlim_{(p,n) =1}\mathcal{A}_x[n](\kappa)\bigg)\otimes\Rat\isomto \Adele_f^p\otimes_{\Rat}V,
\]
and gives rise to an embedding 
\[
Z({G^{\mathrm{der}}})(\Int_{(p)})\hookrightarrow \Aut^p(\mathcal{A}_x)
\]
where $\Aut^p(\mathcal{A}_x)$ is the group of prime-to-$p$ quasi-isogenies from $\mathcal{A}_x$ to itself; see~\cite[Lemma 4.5.2]{Kisin2018-nm}. Note that the \'etale realization of quasi-isogenies gives us an inclusion
\[
\Aut^p(\mathcal{A}_x)\subset G^\sharp(\Adele_f^p)
\]
Moreover, the intersection with $ \Aut^p(\mathcal{A}_x)$ of $Z({G^{\mathrm{der}}})(\Adele_f^p)$ inside $G^\sharp(\Adele_f^p)$ is exactly $Z({G^{\mathrm{der}}})(\Int_{(p)})$. This tells us that the action of $Z({G^{\mathrm{der}}})(\Adele_f^p)/Z(G^{\mathrm{der}})(\Int_{(p)})$ on $\Ss_{K_p}$ is free.

We are now in a position to verify Assumption~\ref{assump:weil_restriction} for $(G,\mathcal{G},X)$. Indeed, using Proposition~\ref{prop:mwr_to_wr}, we can work with the modified Weil restrictions instead. Now, note that $\Res'_{F/\Rat}(G,\mathcal{G},X)$ is once again of Hodge type: It embeds into the Siegel Shimura datum associated with $[F:\Rat]$-copies of $V$. Therefore, the proposition follows from the preceding discussion.
\end{proof}

\begin{remark}
   The alert reader will note that polarized \emph{$p$-divisible} groups do not make an appearance in the above proof (and they do not in Kisin's original argument in~\cite{kisin:abelian} either). One could shorten the proof slightly by noting that the moduli stack of prime-to-$p$ degree polarized abelian schemes of dimension $g$ carries a canonical principally polarized $p$-divisible group of height $2g$, which can then be reinterpreted as a $(\GSp_{2g},\mu_g)$-aperture; see~\cite[\S 11.6]{gmm}. However, this reinterpretation has not been carried out fully satisfactorily (see  footnote 43 of \emph{loc.\@ cit.\@}), and so we have preferred to avoid using it in our argument above. 
\end{remark}

\begin{theorem}
  [Canonical models of pre-abelian type]
\label{thm:abelian}
Suppose that one of the following holds: 
\begin{enumerate}
    \item $(G,\mc{G},X)$ is of abelian type;
    \item $p>2$ and $(G,\mc{G},X)$ is of pre-abelian type.
\end{enumerate}
Then, for $K^p$ sufficiently small, $(G,\mc{G},X,K)$ admits a limpid ICM over $\Reg{E,(v)}$.
\end{theorem}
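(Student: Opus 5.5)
The plan is to assemble the result from the structural theorems already proved: Proposition~\ref{prop:hodge_type_mwr} (Hodge type data satisfy WR), Theorem~\ref{thm:descent_for_icms} (descent along central isogenies), Theorem~\ref{thm:ascent} (ascent along central isogenies), and Theorem~\ref{thm:reduction_of_structure_group} together with Proposition~\ref{prop:integral_canonical_models_for_tori} (products with CM data).

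\emph{Case (1), abelian type.} Choose a Hodge type datum $(G_1,X_1)$ with $G_1^{\mathrm{der}}\to G^{\mathrm{der}}$ a central isogeny inducing $(G_1^{\ad},X_1^{\ad})\simeq(G^{\ad},X^{\ad})$. Following Kisin~\cite[\S 3]{kisin:abelian}, I will arrange that $\mathcal{G}_1$ is reductive over $\Int_p$, that $E_1\subset E\cdot E'$ for some $p$-unramified $E'$, and that the place $v$ lifts. This uses the standard lemma producing such a $G_1$ unramified at $p$ with prescribed derived group.

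To descend from $G_1$, I do not map $G_1$ to $G$ directly, since there is no group map. Instead I pass through $G_1^{\mathrm{der}}$ data in two moves.
\begin{itemize}
\item First apply Theorem~\ref{thm:descent_for_icms} to $G_1\to G_1^{\ad}$, whose kernel is central. The WR hypothesis holds by Proposition~\ref{prop:hodge_type_mwr}, so $G_1^{\ad}=G^{\ad}$ acquires ICMs for small level.
\item Then, since $p>2$ is not assumed here, I cannot ascend from $G^{\ad}$. Instead I will use Kisin's trick: form $G_2=(G_1\times G)/Z$, or more simply realise $G$ and $G_1$ inside a common group whose derived group is $G_1^{\mathrm{der}}$ modulo the kernel. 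That is, replace $G$ by a group $G_3$ with $G_3^{\mathrm{der}}=G_1^{\mathrm{der}}$ and a map $G_3\to G$ with central kernel. I get $G_3$ from $G_1\times T$ with $T$ a suitable CM torus. Remark~\ref{rem:mwr_products} and Remark~\ref{rem:modified_weil_restriction_closed_immersion} then show that WR passes from $G_1$ to $G_3$.
\item Theorem~\ref{thm:descent_for_icms} applied to $G_3\to G$ then gives ICMs for $G$, and limpidity follows from Remark~\ref{rem:WR_implies_limpid}.
\end{itemize}
Before all this I reduce to $G=G^c$ using the quotient $G\to G^c$ and Proposition~\ref{prop:full_faithfulness_2}. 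Finally, Corollary~\ref{cor:descent_coefficients_gen_ICMs} descends the coefficients from $E\cdot E'$ to $E$.

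\emph{Case (2), pre-abelian type with $p>2$.} Here $(G^{\ad},X^{\ad})$ is of abelian type, so Case (1) yields limpid ICMs for $\Sh_{K^{\ad}}$ with $\mathcal{G}^{\ad}$ reductive. The map $G\to G^{\ad}$ (after reducing to $G^c$) is surjective with central kernel, and the target satisfies $G^{\ad}=(G^{\ad})^c$. Theorem~\ref{thm:ascent} then gives a limpid ICM for $\Sh_K$ for $K^p$ small enough that $K$ maps into a neat $K^{\ad}$.

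The main obstacle is the group-theoretic step in Case (1): producing $(G_1,\mathcal{G}_1,X_1)$ unramified at $p$ with the right derived group, and an auxiliary $G_3$ that still satisfies WR, while keeping control of the reflex fields and the place $v$. I would follow Kisin's construction in \cite[Lemma (3.4.13)]{kisin:abelian} for this. The remaining steps are direct invocations of the cited results.
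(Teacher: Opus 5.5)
Your proposal is correct and is essentially the paper's proof. Case (2) is exactly the paper's argument: descent from a Hodge-type cover to the adjoint datum via Proposition~\ref{prop:hodge_type_mwr} and Theorem~\ref{thm:descent_for_icms}, then Theorem~\ref{thm:ascent}. Your case (1) is precisely what the paper does, but only when $p=2$: a central cover $\tilde G\to G$ sitting inside a Hodge-type $\times$ CM datum with the same derived group (which the paper takes from Lovering \cite[\S 4.6]{Lovering2017-me} rather than Kisin), followed by Remarks~\ref{rem:mwr_products} and~\ref{rem:modified_weil_restriction_closed_immersion} and Theorem~\ref{thm:descent_for_icms}. For abelian type with $p>2$, the paper instead just ascends from the adjoint datum and needs no auxiliary group.

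One correction concerns your preliminary reduction to $G=G^c$, which Proposition~\ref{prop:full_faithfulness_2} does not supply. That proposition descends apertures from an \'etale cover to its base, whereas the issue here would be \'etaleness of the normalization of $\Ss_{K^c}$ in $\Sh_K$. For $p>2$ you can avoid this reduction altogether by ascending as the paper does, since Theorem~\ref{thm:ascent} only requires $\overline{G}=\overline{G}^c$.
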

\begin{proof}
  Theorem~\ref{thm:descent_for_icms} combined with Proposition~\ref{prop:hodge_type_mwr} and Remark \ref{rem:WR_implies_limpid} shows that $(G,\mathcal{G},X)$ admits limpid ICMs for sufficiently small $K^p$ whenever we can find a surjective map $(G',\mathcal{G}',X')\to (G,\mathcal{G},X)$ where the source is of Hodge type. In particular, we find that the theorem holds whenever $(G,\mathcal{G},X)$  is of \emph{adjoint} abelian type. If $p>2$, we can conclude the proof using Theorem~\ref{thm:ascent}.
  
  If $p=2$ and $(G,X)$ is of abelian type, then, by a construction of Lovering~\cite[\S 4.6]{Lovering2017-me}, we can find a map of Shimura data $(\tilde{G},\tilde{\mathcal{G}},\tilde{X})\to (G,\mc{G},X)$ where $\tilde{G}\to G$ is a central cover, and a closed immersion
  \[
  (\tilde{G},\tilde{\mc{G}},\tilde{X}) \to (G',\mathcal{G}',X'),
  \]
  where the target is a product of an unramified Shimura datum of Hodge type with one of CM type, and is such that $\tilde{G}^{\mathrm{der}} = (G')^{\mathrm{der}}$. Now, Remarks~\ref{rem:modified_weil_restriction_closed_immersion} and~\ref{rem:mwr_products} tell us that $(\tilde{G},\tilde{\mc{G}},\tilde{X})$ satisfies Assumption~\ref{assump:weil_restriction}, and so we conclude using Theorem~\ref{thm:descent_for_icms} again.
\end{proof}

\begin{remark}
    [Schematic nature of pre-abelian type ICMs]
The ICMs $\Ss_K$ for pre-abelian type Shimura data are actually quasi-projective schemes over $\Reg{E,(v)}$. When $(G,X)$ is Siegel type, this is~\cite[{}VII.\@ Th\'eor\`eme 4.2]{moret-bailly:pinceaux}, and this implies the statement successively for ICMs of Hodge type, of adjoint abelian type (by descent of quasi-projectivity along quotients by finite groups), and finally for all ICMs of pre-abelian type (by ascent of quasi-projectivity along finite maps).
\end{remark}

\subsection{Canonicity of the models of Bakker--Shankar--Tsimerman}
\label{sub:bst}

Here, we will find a proof of Theorem~\ref{introthm:exceptional}. We can assume that the rank of $G$ is at least $2$, since otherwise the Shimura datum is of pre-abelian type and so is already addressed by Theorem~\ref{thm:abelian}. By Remark~\ref{introrem:adjoint_is_enough}, we can also assume that $G$ is \emph{adjoint} (and thus $G=G^c$).

\begin{proof}
  [Proof of Theorem~\ref{introthm:exceptional}]
We will take a prime $p>3$ large enough such that the following holds: 
\begin{enumerate}
  \item There is a smooth model $\Ss_K$ of $\mr{Sh}_K$ over $\Reg{E,(v)}$ with log-smooth compactification.
  \item The filtered $G$-bundle with integrable connection $\Fil^\bullet_{\mathrm{Hdg}}\mathbf{dR}_K$ extends to a filtered $\mathcal{G}_{(p)}$-bundle with integrable connection on $\Ss_K$, which we denote by the same symbol. We can also assume that this satisfies Griffiths' transversality and that the associated Kodaira--Spencer map
  \[
    \mathbb{T}_{\Ss_K/\Reg{E,(v)}}\to \mathbf{dR}_K(\mathfrak{g})/\Fil^0_{\mathrm{Hdg}}\mathbf{dR}_K(\mathfrak{g})
  \]
  is an isomorphism.
  \item For some choice of faithful representation $\Lambda$ of $\mathcal{G}$, the filtered de Rham realization $\Fil^\bullet_{\mathrm{Hdg}}\mathbf{dR}_K(\Lambda)$ lifts to a Fontaine--Laffaille module (up to suitable twist) over $\Ss_K$. This implies in particular that $\mathbf{Et}_{K,p}$ is crystalline at every classical point of $\widehat{\Ss}_{K,\eta}$. That this is possible follows from \cite[Theorem 7.1]{pila2024canonicalheights}.
\end{enumerate}
At this point, we are almost there. Theorem~\ref{thm:p_comp_smooth_characterization}, combined with Remark~\ref{rem:pointwise_lifts} and Lemma~\ref{lem:versality_condition}, tells us that we have an extension $\Ss_K\to \BT[\mathcal{G}^c,\mbox{-}\mu_v,\mathrm{alg}]{\infty}$ which is $p$-adically formally \'etale. 

The only thing that remains to be checked is that $\Ss_K$ satisfies the pointwise condition for being an ICM. We thank Ananth Shankar for conveying the following argument. We must know that, if $x\in (\Sh_K\otimes_EE_v)(F)$ is such that $\mathbf{Et}_{K,p}\vert_x$ is potentially crystalline, then we in fact have $x\in \Ss_K(\Reg{F})$. This is a consequence of the fact that---by the proof of~\cite[Theorem 7.1]{pila2024canonicalheights}---the Fontaine--Laffaille module lifting $\Fil^\bullet_{\mathrm{Hdg}}\mathbf{dR}_K(\Lambda)$ extends to a logarithmic Fontaine--Laffaille module over a suitable toroidal compactification $\overline{\Ss}_K$ of $\Ss_K$ with underlying filtered logarithmic connection given over the generic fiber by the so-called canonical extension of $\Fil^\bullet_{\mathrm{Hdg}}\mathbf{dR}_K(\Lambda)$. 

Concretely, this means that for every $x\in (\Sh_K\otimes_EE_v)(F)$ specializing to the boundary of $\overline{\Ss}_K$, the Galois representation $\mathbf{Et}_{K,p}\vert_x$ is \emph{semistable} and its associated filtered $(\varphi,N)$-module over $F$ has \emph{non-trivial} monodromy operator $N$ that can be read off from the characteristic $0$ boundary stratification. In particular, $x$ is potentially crystalline if and only if $N=0$. Therefore, $x$ is potentially crystalline if and only if $\mathrm{sp}(x)\in \ms{S}_K\otimes k\subset \ov{\ms{S}}_K\otimes k$ or, equivalently, $x\in \wh{\ms{S}}_K(F)=\ms{S}_K(\mc{O}_F)$, as desired.
\end{proof}

\begin{remark}
    Since we only require the above conditions for \emph{adjoint} Shimura data, and also do not require any ampleness conditions or the existence of the prime-to-$p$ \'etale realizations, the lower bound on $p$ we arrive at here is logically smaller than that in~\cite{bst}.
\end{remark}

\begin{remark}
    As noted in Remark~\ref{introrem:exceptional_icms_limpid}, after further extending the bound on the prime $p$, work of Klevdal--Patrikis implies that the ICM $\Ss_K$ is actually limpid. That is, for all $\ell\neq p$, the local system $\mathbf{Et}_{K,\ell}$ extends to a local system on $\Ss_K$; see~\cite[Theorem 1.3]{Klevdal2025-ka}.
\end{remark}

\begin{remark}
    Finally, we observe that Theorem~\ref{thm:ascent}, combined with the previous remark, gives a somewhat different route to Patrikis's extension of the results of~\cite{Klevdal2025-ka} to the non-adjoint case in~\cite{patrikis2025compatibility}. Of course, both routes traverse through a study of the $\mu$-ordinary locus.
\end{remark}

\section{Applications and some complements}
\label{sec:applications_and_further_results}

In this section, we collect some immediate applications and extensions of our theory of ICMs. In addition to discussing cohomological applications and integral models at (some) parahoric levels, we prove Theorems \ref{introthm:surjective}, \ref{introthm:newton_strata}, and \ref{introthm:Ekedahl--Oort} from the introduction. We freely use notation and terminology from \S\ref{sec:canonical}--\ref{sec:icm_constructions}.

\subsection{Surjectivity}
\label{sub:surjectivity}

The following result of Vasiu~\cite[Main Theorem A]{Vasiu2002-rj}, will be used in the proof of Theorem~\ref{introthm:surjective}, which is the aim of this subsection.
\begin{theorem}
[Vasiu]
\label{thm:vasiu_boundedness}
Let the notation be as in \emph{Setup~\ref{setup:g_mu}}. Then there exists $m(\mathcal{G},\mu)\geqslant 1$ such that, for any algebraically closed field $\kappa$ over $\mathcal{O}$, and any $n\geqslant m(\mathcal{G},\mu)$, the map of groupoids
\[
\BT{\infty}(\kappa)\to \BT{n}(\kappa)
\] 
induces a bijection on isomorphism classes.
\end{theorem}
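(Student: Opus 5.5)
Since this is Vasiu's theorem, the plan is to translate it into the quotient description of $\kappa$-points and then invoke Vasiu's result on the number of steps needed to determine a Dieudonné module with $\mathcal{G}$-structure up to isomorphism.

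First I would rewrite both groupoids concretely. For $\kappa$ algebraically closed, Remark~\ref{rem:field-desciption-of-BT-mu} gives
\[
\BT{\infty}(\kappa)\simeq[\mathcal{G}(W(\kappa))/H_\mu(\kappa)],
\]
and Remark~\ref{rem:quotient_description_perfectoid_truncated}, specialized to the perfect ring $\kappa$, gives the analogous description of $\BT{n}(\kappa)$ as $[\mathcal{G}(W_n(\kappa))/H^{(n)}_\mu(\kappa)]$. In both cases the action is $g\cdot h=h^{-1}g\varphi(\mu(p)h\mu(p)^{-1})$. Passing to $g\varphi(\mu(p))$ identifies the isomorphism classes of $\BT{\infty}(\kappa)$ with $\mathcal{G}(W(\kappa))$-$\varphi$-conjugacy classes in the double coset $\mathcal{G}(W(\kappa))\varphi(\mu(p))\mathcal{G}(W(\kappa))$, that is, with Vasiu's objects: Shimura $F$-crystals with $\mathcal{G}$-structure.

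Surjectivity on isomorphism classes is the easy half. The transition maps $\BT{n+1}\to\BT{n}$ are smooth and surjective (Theorem~\ref{thm:gmm}), so every $\kappa$-point lifts step by step, using that $\kappa$ is algebraically closed. Equivalently, any representative in $\mathcal{G}(W_n(\kappa))$ lifts to $\mathcal{G}(W(\kappa))$ by smoothness of $\mathcal{G}$. So the content of the statement is injectivity.

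Injectivity means the following. Suppose $g,g'\in\mathcal{G}(W(\kappa))$ become equivalent modulo $p^n$ via some $h_n\in H^{(n)}_\mu(\kappa)$, for all $n\geqslant m(\mathcal{G},\mu)$. Then they must be equivalent by some $h\in H_\mu(\kappa)$. Vasiu's Main Theorem A states exactly this for Shimura $F$-crystals: there is an integer $m$, depending only on $(\mathcal{G},\mu)$, such that $g\varphi(\mu(p))$ and $g'\varphi(\mu(p))$ are $\mathcal{G}(W(\kappa))$-$\varphi$-conjugate whenever $g\equiv g'$ modulo $p^m$ up to such conjugation. The step I expect to be the main obstacle is matching the two truncations. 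Vasiu's mod-$p^n$ relation is phrased with elements of $\mathcal{G}(W(\kappa))$ and congruences, whereas $H^{(n)}_\mu$ carries the parabolic condition modulo $p^n$ and its own twisted action $\sigma$. I would handle this as follows. By Remark~\ref{rem:Hnmu_description}, $H^{(n)}_\mu$ is the intersection $\mathcal{G}(W_n)\cap\mu(p)^{-1}\mathcal{G}(W_n)\mu(p)$, and $\sigma$ is conjugation by $\mu(p)$ followed by $\varphi$. Hence an equivalence modulo $p^n$ in our sense lifts to $W(\kappa)$ and yields one in Vasiu's sense modulo $p^{n-1}$. Replacing $m$ by $m+1$ then absorbs this loss of one power of $p$.

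Once the two truncations are matched, Vasiu's bound applies directly and finishes the argument.
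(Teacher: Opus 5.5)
Your proposal is correct and follows the paper's route exactly: the paper's proof is the one-line observation that the statement is Vasiu's Main Theorem A rewritten via the quotient descriptions of Remarks~\ref{rem:quotient_description_perfectoid} and~\ref{rem:quotient_description_perfectoid_truncated}, and your split into surjectivity and injectivity simply spells this out. One minor correction: Remark~\ref{rem:Hnmu_description} is stated for $p$-torsion free perfectoid rings and does not literally apply to $\kappa$, since there $\xi=p$ is nilpotent in $W_n(\kappa)$; instead $H^{(n)}_\mu(\kappa)$, computed through $\kappa/{}^{\mathbb{L}}p^n$, keeps an extra layer in the weight-one direction so that $\sigma$ descends to $W_n(\kappa)$, and lifting your $h_n$ to $H_\mu(\kappa)$ then gives the needed congruence already modulo $p^n$, so your passage from $m$ to $m+1$ is harmless but unnecessary.
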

\begin{proof}
This is just a reformulation of the cited result of Vasiu, using the quotient descriptions of $\BT{n}(\kappa)$ and $\BT{\infty}(\kappa)$ given to us by Remarks~\ref{rem:quotient_description_perfectoid_truncated} and~\ref{rem:quotient_description_perfectoid}. 
\end{proof}

\begin{remark} 
An equicharacteristic version of Theorem \ref{thm:vasiu_boundedness} is given by \cite[Theorem 6.31]{viehmann2025modulitruncatedshtukasdisplays}. The argument in \emph{loc.\@ cit.\@} can be combined with work of Zhu on the Witt vector affine Grassmannian~\cite{Zhu2017-kn} to give an alternate proof of Theorem \ref{thm:vasiu_boundedness}.
\end{remark}

\begin{definition}
\label{def:de_jong_property}
  We will say that an unramified Shimura tuple $(G,\mathcal{G},X,K)$ admitting an ICM $\ms{S}_K$ over $\ms{O}_{E,(v)}$ satisfies the \defnword{de Jong extension property (at $v$)} if the following condition holds. Suppose that $C$ is a smooth curve over an algebraically closed field $\kappa$ over $k(v)$ with smooth compactification $\overline{C}$. Then a map $C\to \Ss_K$ extends to $\overline{C}\to \Ss_K$ if and only if the composition with $\Ss_K\to \BT[\mathcal{G}^c,\mbox{-}\mu_v]{\infty}$ extends to a map $h\colon \ov{C}\to \BT[\mathcal{G}^c,\mbox{-}\mu_v]{\infty}$. 
\end{definition}

\begin{remark}
[Functoriality properties of the de Jong extension property]
    \label{rem:de_jong_finite_maps}
Suppose that we have a map of unramified tuples $(G_1,\mathcal{G}_1,X_1,K_1)\to (G_2,\mathcal{G}_2,X_2,K_2)$ such that both admit ICMs $\Ss_{K_1}$ and $\Ss_{K_2}$ over $\Reg{E_1,(v_1)}$ and $\Reg{E_2,(v_2)}$, respectively, with $v_1\vert v_2$ both lying over $p$. If $G_1\to G_2$ is a central cover or if $G_1\to G_2$ is a closed immersion, then the map  $\Ss_{K_1}\to \Ss_{K_2}$ is finite (see Remark~\ref{rem:finite_maps_etale} for the former and Theorem~\ref{thm:reduction_of_structure_group} for the latter). From this, one checks that, if $\Ss_{K_2}$ admits the de Jong extension property, then so does $\Ss_{K_1}$. 
\end{remark}

\begin{remark}
[Lifting the extension property along central covers]
\label{rem:lifting_dejong_central_covers}
When $G_1\to G_2$ is a central cover, $\Ss_{K_1}\to \Ss_{K_2}$ is finite \'etale onto its image, and one sees that the de Jong extension property for $\Ss_{K_1}$ implies the extension property for any curve mapping into the image of $\Ss_{K_1}$ in $\Ss_{K_2}$. By varying the element $g\in G_2(\Adele_f^p)$ giving the map of tuples, one finds using Remark \ref{rem:prime-to-p_hecke} that, if $\Ss_{K_1}$ satisfies the de Jong extension property, then so does $\Ss_{K_2}$.
\end{remark}

\begin{remark}
    \label{rem:de_jong_proper_automatic}
The extension property is automatic if $\Sh_K$ admits a proper ICM $\Ss_K$ over $\Reg{E,(v)}$.
\end{remark}

\begin{remark}
[Extension property for pre-abelian type ICMs]
    \label{rem:de_jong_siegel}
The extension property holds for the Siegel Shimura datum by results of de Jong~\cite[Corollaries 1.2 and 2.5]{de-Jong1998-ki}. Using Remarks~\ref{rem:de_jong_finite_maps} and~\ref{rem:lifting_dejong_central_covers}, one sees that it also holds for all ICMs for unramified Shimura tuples of pre-abelian type.
\end{remark}

\begin{remark}
    [Log de Rham extensions]
\label{rem:log_de_rham_extensions}
Suppose that $\Ss_K$ admits a log-smooth compactification $\overline{\Ss}_K$ such that, for some faithful representation $\Lambda$, $\mathbf{dR}_K(\Lambda)$ extends to a module with logarithmic connection $\overline{\mathbf{dR}}_K(\Lambda)$ on $\overline{\Ss}_K$ with the following property: 
\begin{itemize}
    \item For any smooth curve $C$ over a finite field and any map $C\to \overline{\Ss}_K$ whose image is not entirely inside the boundary (i.e., $\overline{\Ss}_K\backslash \Ss_K$), the residue of the logarithmic connection of $\overline{\mathbf{dR}}_K(\Lambda)$ is \emph{non-zero} at every point of $C$ landing inside the boundary. 
\end{itemize}
Then $\Ss_K$ has the de Jong extension property: Any map $C\to \Ss_K$ extends to a map $\overline{C}\to \overline{\Ss}_K$, and the fact that the syntomic realization over $C$ extends over $\overline{C}$ implies that $\overline{\mathbf{dR}}_K(\Lambda)\vert_{\overline{C}}$ has trivial residue at every point.
\end{remark}

\begin{remark}
    [de Jong extension property for exceptional ICMs]
\label{rem:de_jong_exceptional}
The argument for the verification of the pointwise criterion in the proof of Theorem~\ref{introthm:exceptional} in fact shows that the models $\Ss_K$ there carry log de Rham extensions with the property described in Remark~\ref{rem:log_de_rham_extensions}. In particular, these models satisty the de Jong extension property.
\end{remark}

Theorem~\ref{introthm:surjective} is now implied by combining Remarks  \ref{rem:de_jong_proper_automatic},~\ref{rem:de_jong_siegel} and~\ref{rem:de_jong_exceptional} with the following result:

\begin{theorem}
    \label{thm:surjectivity}
Suppose that $(G,\mathcal{G},X,K)$ admits an ICM over $\Reg{E,(v)}$ satisfying the de Jong extension property. Then, for all $1\leqslant n\leqslant \infty$, the map of formal Artin stacks
  \[
    \widehat{\Ss}_{K}\to \BT[\mathcal{G}^c,\mbox{-}\mu_v]{n}
  \]
  is surjective, and smooth for finite $n$.
\end{theorem}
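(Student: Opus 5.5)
The plan is to first handle smoothness for finite $n$, and then reduce surjectivity for all $n$ to a statement about $\kappa$-points for algebraically closed $\kappa$ over $k(v)$. For smoothness: the map $\widehat{\Ss}_K\to \BT[\mathcal{G}^c,\mbox{-}\mu_v]{\infty}$ is formally \'etale (Serre--Tate property), and the transition map $\BT[\mathcal{G}^c,\mbox{-}\mu_v]{\infty}\to\BT[\mathcal{G}^c,\mbox{-}\mu_v]{n}$ is a limit of smooth surjective maps by Theorem~\ref{thm:gmm}. So the composite $\widehat{\Ss}_K\to\BT[\mathcal{G}^c,\mbox{-}\mu_v]{n}$ is formally smooth and of finite presentation, hence smooth. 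In particular its image is open. Since $\BT{n}$ is zero-dimensional and connected with finitely many points on the special fiber, surjectivity for finite $n$ amounts to every isomorphism class in $\BT{n}(\kappa)$ being hit. By Theorem~\ref{thm:vasiu_boundedness}, for $n\geqslant m(\mathcal{G}^c,\mbox{-}\mu_v)$ these classes agree with those of $\BT{\infty}(\kappa)$. It therefore suffices to show that every class in $\BT{\infty}(\kappa)$, or equivalently in $\BT{n}(\kappa)$ for one large $n$, lies in the image. Surjectivity for $n=\infty$ and for smaller $n$ then follows, using surjectivity of the transition maps.

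Since the image $V$ of $\Ss_{K,k(v)}$ in the finite $T_0$-space $|\BT{n}\otimes k|$ is open and nonempty, I would show it is also closed under specialization. As the space is connected, this forces $V$ to be everything. The closedness is where the de Jong extension property enters. Take a point $y$ in the closure of a point $z=\varpi(x)$ in the image. By the finiteness of the stack we can find a smooth curve $\overline{C}$ over $\kappa$ with a map $h\colon\overline{C}\to\BT{n}\otimes k$ sending the generic point into the stratum of $z$ and a closed point $c_\infty$ to $y$; concretely, take a curve through the stratum in a smooth atlas. We want to lift $h|_C$ on an open $C=\overline{C}\smallsetminus\{c_\infty\}$ to $\Ss_K$. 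Using smoothness of $\widehat{\Ss}_K\to\BT{n}$, the fiber product $\Ss_{K,k(v)}\times_{\BT{n}}C$ is smooth and surjective over the part of $C$ mapping into $V$. After passing to a finite cover of $\overline{C}$, which is harmless, we obtain a section over a dense open $C$, i.e.\ a map $C\to\Ss_K$ lifting $h|_C$.

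Here lies the main obstacle: the de Jong property is phrased with $\BT{\infty}$, not $\BT{n}$. So I would first lift $h$ to $\overline{C}\to\BT{\infty}$, which is possible since $\BT{\infty}\to\BT{n}$ is a limit of smooth surjections and $\overline{C}$ is a curve over an algebraically closed field, possibly after shrinking and compactifying again. Then I need a lift $C\to\Ss_K$ compatible with this chosen $\BT{\infty}$-map, not merely with the $\BT{n}$-map. I would get this by using formal \'etaleness of $\widehat{\Ss}_K\to\BT{\infty}$ to correct the lift: the maps $\Ss_K\times_{\BT{\infty}}\BT{m}$-torsor data are \'etale-locally liftable step by step. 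If this is too delicate, I would instead work directly with $\BT{\infty}$ and the Vasiu bound to control the points. Once the composite $C\to\Ss_K\to\BT{\infty}$ extends over $\overline{C}$, the de Jong extension property gives $\overline{C}\to\Ss_K$. The image of $c_\infty$ then maps to $y$ by Theorem~\ref{thm:vasiu_boundedness}, so $y\in V$, which completes the argument.
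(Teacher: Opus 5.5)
Your architecture matches the paper's. Smoothness of $\widehat{\Ss}_K\to\BT{n}$ for finite $n$ gives an open image, $\BT{n}$ is connected, Vasiu's bound identifies $|\BT{\infty}|$ with $|\BT{n}|$ for $n\geqslant m(\mathcal{G}^c,\mbox{-}\mu_v)$, and the de Jong extension property makes the image stable under specialization. One supporting claim is false but harmless. $|\BT{n}\otimes k|$ is not finite: for large $n$ it is in bijection with $|\BT{\infty}\otimes k|$, and in the Siegel case with $g=2$ the supersingular locus is a curve while each central leaf in it is finite. So instead of the finite-$T_0$ argument, use that $|\BT{n}\otimes k|$ is Noetherian, being a quotient of the space of a finite type atlas. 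The open image is then constructible, and a constructible set stable under specialization is closed.

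The genuine gap is in how you carry out the specialization step. As formulated, the extension property needs three things: a map $h\colon\overline{C}\to\BT{\infty}$ on the whole complete curve, a map $f\colon C\to\Ss_K$, and an isomorphism $\varpi\circ f\simeq h\vert_C$ in $\BT{\infty}$. Your construction provides none of them. A curve in a smooth atlas gives a map from an open curve only. Your lift of $h$ to $\BT{\infty}$ is made after further shrinking, so it is not defined on all of $\overline{C}$. And your $f$ matches $h$ only in $\BT{n}$.

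The proposed repair does not work. Formal \'etaleness of $\widehat{\Ss}_K\to\BT{\infty}$ gives unique lifts along nilpotent thickenings, not lifts of maps from a curve. Moreover, the maps $C\times_{\BT{m+1}}\Ss_K\to C\times_{\BT{m}}\Ss_K$ are not surjective, since an isomorphism of level-$m$ truncations need not lift to level $m+1$; so sections cannot be corrected step by step. Vasiu's theorem does not help here, because it only compares isomorphism classes over algebraically closed fields, not over $C$ or $\kappa(C)$.

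The same misattribution occurs at the end. The extension property gives $\bar f\colon\overline{C}\to\Ss_K$, but concluding $\varpi(\bar f(c_\infty))=y$ requires $\varpi\circ\bar f\simeq h$ near $c_\infty$. That is, isomorphisms over $C$ must extend across $c_\infty$, which is a de Jong--Tate type full faithfulness statement in characteristic $p$, not Vasiu's bound.

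Your fallback, working with $\BT{\infty}$ from the start, is the paper's route. It uses Vasiu once, topologically, to reduce to stability under specialization of the image in $|\BT{\infty}|$, and invokes the extension property there; the paper itself only asserts this last step, without spelling it out. So in that route the complete curve to $\BT{\infty}$, the lift through $\varpi$ up to isomorphism in $\BT{\infty}$, and the comparison at $c_\infty$ must all be arranged at the $\BT{\infty}$ level. Your $\BT{n}$-to-$\BT{\infty}$ correction should not be needed at all.
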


\begin{proof}
Note that, for every finite $n$, the classifying map $\widehat{\Ss}_{K}\to \BT[\mathcal{G}^c,\mbox{-}\mu_v]{n}$ is a formally smooth map of smooth formal Artin stacks over $\mathcal{O}$, and is therefore smooth. 

By smoothness, the map $|\widehat{\Ss}_{K}|\to |\BT[\mathcal{G}^c,\mbox{-}\mu_v]{n}|$ of the underlying topological spaces is open, and its surjectivity amounts to saying that its image intersects every connected component and that its image is closed under specialization. The first is easy, since $\BT[\mathcal{G}^c,\mbox{-}\mu_v]{n}$ is connected (see Theorem~\ref{thm:gmm}). 

For the surjectivity, observe the following consequence of Theorem~\ref{thm:vasiu_boundedness}: the map $|\BT[\mathcal{G}^c,\mbox{-}\mu_v]{\infty}|\to |\BT[\mathcal{G}^c,\mbox{-}\mu_v]{n}|$ is a homeomorphism for $n$ sufficiently large. Therefore, to complete the proof, it is sufficient to know that the map $|\widehat{\Ss}_{K}|\to |\BT[\mathcal{G}^c,\mbox{-}\mu_v]{\infty}|$ has image closed under specialization, and this follows from the de Jong extension property. 
\end{proof}

\begin{remark}
    [Non-emptiness of central leaves]
\label{rem:central_leaves}
The surjectivity in Theorem~\ref{thm:surjectivity} is equivalent to saying that all central leaves (which are essentially the fibers of the formal syntomic realization map) are non-empty.
\end{remark}

\subsection{Non-emptiness of strata}
\label{sub:non-emptiness}

Throughout the following, $\Ss_K$ will be an ICM for an unramified Shimura tuple $(G,\mc{G},X,K)$ over $\Reg{E,(v)}$. For visual simplicity, in what follows we will abuse notation and use the symbol $G^c$ instead of $G^c_{\mathbb{Q}_p}$ when discussing $F$-isocrystals with $G^c_{\mathbb{Q}_p}$-structure.

\begin{construction}
  [Newton strata]
By Construction~\ref{const:newton_map}, for any perfect $k(v)$-algebra $R$, we obtain maps
\begin{equation}\label{eq:newton-map}
 \Ss_K(R)\to \mr{BT}^{\mc{G}^c,\mbox{-}\mu_v}_\infty(R)\to \mathrm{Isoc}_{G^c}(R).
\end{equation}
For any class $[b]\in B(G^c)$, this gives a topological subspace $\Ss_{K}[b]\subset \Ss_{K}\otimes k(v)$ defined by the property that, for any perfect field $\kappa$, the $\kappa$-points of $\Ss_K$ contained in this subset are the ones whose image under the composition of \eqref{eq:newton-map} with the natural map $\mr{Isoc}_{G^c}(R)\to B(G^c)$ is $[b]$.  We call $\ms{S}_K[b]$ the \defnword{Newton stratum} associated with $[b]$. Newton strata do form a stratification by \cite[Theorem 3.6]{rapoport_richartz}.
\end{construction}

The next result proves Theorem~\ref{introthm:newton_strata}.

\begin{theorem}
\label{thm:newton_non_emptiness}
The Newton stratum $\Ss_{K}[b]$ is non-empty if and only if $[b]\in B(G^c,\mbox{-}\mu_v)$.
\end{theorem}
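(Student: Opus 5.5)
The easy direction is that every non-empty stratum has $[b]\in B(G^c,\mbox{-}\mu_v)$. Indeed, every perfect-field point of $\Ss_K$ maps to $\BT[\mathcal{G}^c,\mbox{-}\mu_v]{\infty}(\kappa)$. By Remark~\ref{rem:field-desciption-of-BT-mu} and Lemma~\ref{lem:kottwitz_map}, the image of $\BT{\infty}(\kappa)$ in $B(G^c)$ is exactly $B(G^c,\{\mbox{-}\mu_v\})$.

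For the converse, note that no de Jong extension property is assumed, so Theorem~\ref{thm:surjectivity} is not available; we must work with $\Ss_K$ directly. The plan has four steps.

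\textbf{Step 1 (lifting $[b]$ to a point of the stack).} By Lemma~\ref{lem:kottwitz_map}, for $\kappa$ algebraically closed, every $[b]\in B(G^c,\mbox{-}\mu_v)$ comes from some $\mathfrak{Q}\in\BT[\mathcal{G}^c,\mbox{-}\mu_v]{\infty}(\kappa)$. We therefore need to know which points of $|\BT[\mathcal{G}^c,\mbox{-}\mu_v]{n}\otimes k|$ lie in the image of $|\Ss_{K,k(v)}|$.

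\textbf{Step 2 (the image is open and contains the $\mu$-ordinary locus).} For finite $n$ the map $\widehat{\Ss}_K\to\BT[\mathcal{G}^c,\mbox{-}\mu_v]{n}$ is smooth, being a formally smooth map between smooth formal stacks, exactly as in the first paragraph of the proof of Theorem~\ref{thm:surjectivity}. Its image $U_n$ is therefore open. By Theorem~\ref{thm:properties_mu_ord_locus} the $\mu$-ordinary locus of $\Ss_K$ is non-empty and dense, so $U_n$ contains the $\mu$-ordinary point. For $n\ge m(\mathcal{G}^c,\mbox{-}\mu_v)$, Theorem~\ref{thm:vasiu_boundedness} identifies $|\BT{n}\otimes k|$ with $|\BT{\infty}\otimes k|$, and the Newton map $|\BT{\infty}\otimes k|\to B(G^c)$ is then well defined.

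\textbf{Step 3 (upward closure in the Newton order).} The Newton map is upper semicontinuous by \cite[Theorem 3.6]{rapoport_richartz}, so the preimage in $|\BT{n}\otimes k|$ of $\{[b']\geqslant[b]\}$ is open. It is therefore enough to show that $U_n$ meets every Newton stratum of $\BT{n}\otimes k$. My plan for this is to use that the image of $\widehat{\Ss}_K$ is stable under $p$-power quasi-isogenies, that is, under the $p$-Hecke correspondences of Remark~\ref{rem:p-hecke_correspondences}. These correspondences preserve $[b]$, so they do not move points between strata directly. The mechanism would instead be this: inside the closure of the $\mu$-ordinary locus of $\Ss_{K,k(v)}$, which is all of $\Ss_{K,k(v)}$ by density, one can specialize using the Grothendieck specialization theorem. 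This is where I expect the real content to lie.

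\textbf{Step 4 (the main obstacle: reaching the basic locus).} We must show that $\Ss_{K,k(v)}$ contains a basic point, meaning $[b]$ is the unique minimal element of $B(G^c,\mbox{-}\mu_v)$. Granting a basic point, I would produce intermediate strata as follows. Pick an irreducible component of $\Ss_{K,\bar k}$, which has dimension $\dim\BT$-rel $=\dim\mathcal{G}^c-\dim\mathcal{P}^-$. Smoothness of the map to the stack, together with purity of the Newton stratification in the stack $\BT{n}\otimes k$ (codimension equal to the length of chains in $B(G^c,\mbox{-}\mu_v)$, by Hamacher/Viehmann-type results on the local deformation spaces of Remark~\ref{rem:bt_deformation_rings}), then shows that the closure of any stratum met by $\Ss_K$ contains all smaller strata met locally. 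This is because the map to $\BT{n}$ is open, so the preimage of a stratum closure is the closure of the preimage. Combined with the openness of Step 2, the image $U_n$ then contains a neighbourhood of a basic point. Every stratum of $\BT{n}\otimes k$ specializes to the basic one, so by openness every stratum meets $U_n$.

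To get the basic point I would try a properness argument. Use the Frobenius/$q_0$-structure and the finiteness of $\Ss_{K}(\mathbb{F}_{q_m})$ (Remark~\ref{rem:fixed_points_Phi}) to find a closed point whose $\gamma_{p,x}$, computed via Theorem~\ref{thm:ordinary_locus_frobenius_lift}, is central. As a fallback, construct a CM point through Construction~\ref{const:cm_points} and Lemma~\ref{lem:extension_cm_points} for a torus $T\subset G$ that is elliptic at $p$. The reduction of such a point has isocrystal $r_{\mu_T}$-determined and basic, which gives the required basic point in $\Ss_K$ without any compactness hypothesis.
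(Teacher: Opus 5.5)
\textbf{There is a gap in Steps 3--4.} Your easy direction is fine, and your fallback in Step~4 is the right idea: extend a CM point using Lemma~\ref{lem:extension_cm_points} and read off the isocrystal of its reduction from the torus. The problem is how you get from a basic point to the other strata.

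\emph{Step~3 contains no argument.} As you note yourself, $p$-Hecke correspondences preserve $[b]$. Specializing out of the dense $\mu$-ordinary locus gives no control over which strata are reached. So the reduction ``it is enough to show that $U_n$ meets every Newton stratum'' just restates the theorem.

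\emph{Step~4 rests on an unproven theorem.} It needs that the particular basic point $y_0\in U_n$ lies in the closure of every Newton stratum of $\BT[\mathcal{G}^c,\mbox{-}\mu_v]{n}\otimes k$. By formal \'etaleness, this says that the universal deformation ring of a basic $(\mathcal{G}^c,\mbox{-}\mu_v)$-aperture (which is $\widehat{\Rg}_{\Ss_K,x_0}$) meets every class of $B(G^c,\mbox{-}\mu_v)$. That is a Grothendieck-conjecture/strong-purity statement. You only gesture at it through ``Hamacher/Viehmann-type results''. Nothing in the paper supplies it, and the theorem is stated for an arbitrary ICM, including exceptional ones, where no such result for apertures is at hand.

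\emph{Getting the basic point is also incomplete.} The first route (finding a point with central $\gamma_{p,x}$) is not an argument. The CM route needs a $\Rat$-torus $T\subset G$, elliptic at $p$, through which some $h\in X$ factors. Producing one requires a weak-approximation argument that you do not give.

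\textbf{The missing idea: the CM construction already reaches every stratum, so no local purity input is needed.}
\begin{itemize}
\item By Lemma~\ref{lem:extension_cm_points}, every special point extends to an integral point of $\Ss_K$.
\item The isocrystal at its reduction is $D_{\mathrm{crys}}$ of a crystalline representation valued in $T(\Int_p)$. It is therefore the image in $B(G^c)$ of the class in $B(T_{\Rat_p})$ determined by the Hodge cocharacter $\mu_T$.
\item So it suffices to find, for each $[b]\in B(G^c,\mbox{-}\mu_v)$, a special point $(T,h_T)$ for which this image is $[b]$.
\item To do this, take $T_{\Rat_p}$ to be an elliptic maximal torus of the Levi subgroup centralizing the Newton point of $[b]$, with $\mu_T$ a suitable Weyl conjugate of $\mu$. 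The resulting class is then basic in that Levi with the correct Kottwitz invariant, hence maps to $[b]$. Then globalize by approximation.
\end{itemize}
This is the argument of \cite[Proposition 1.3.10]{Kisin2022-eo}, which is exactly how the paper proves the theorem. Your torus that is elliptic at $p$ is just the special case where $[b]$ is basic and the Levi is all of $G^c$.
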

\begin{proof} By Lemma~\ref{lem:extension_cm_points}, every special point $x\colon\Spec F\to \Sh_K$ extends to a map $\Spec \Reg{F,(w)}\to \Ss_K$ for some place $w\vert v$ of $F$. The argument in~\cite[Proposition 1.3.10]{Kisin2022-eo} then gives the desired conclusion.
\end{proof}

\begin{remark} 
If $(G,\mc{G},X,K)$ satisfies the de Jong extension property, then, by Theorem~\ref{introthm:surjective}, it suffices to verify that there exists $\mathcal{Q}\in \BT[\mathcal{G}^c,\mbox{-}\mu_v]{\infty}(\overline{\Field}_p)$ with $[b_{\mathcal{Q}}] = [b]$. This follows from Lemma~\ref{lem:kottwitz_map}.
\end{remark}

\begin{definition}
    \label{defn:EO_strata}
Consider the composition of maps 
\[
\Ss_K\otimes k(v) \to \BT[\mathcal{G}^c,\mbox{-}\mu_v]{1}\otimes k(v) \to \mathcal{G}^c\text{-}\mathrm{zip}_{\mbox{-}\mu_v}
\]
of $k(v)$-stacks. The target admits a stratification by smooth locally closed substacks as in Remark~\ref{rem:stratification_G-zips}. An \defnword{Ekedahl--Oort stratum} of $\Ss_{K}\otimes k(v)$ is the pre-image of any such stratum under the above map.
\end{definition}

\begin{proposition}
\label{prop:eo_non-emptiness}
Suppose that $\Ss_K$ satisfies the de Jong extension property. Then every Ekedahl--Oort stratum of $\Ss_{K}\otimes k(v)$ is non-empty.
\end{proposition}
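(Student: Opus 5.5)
The plan is to deduce this directly from Theorem~\ref{thm:surjectivity} with $n=1$, combined with Proposition~\ref{prop:apertures_to_gzips}. Since $\Ss_K$ satisfies the de Jong extension property, Theorem~\ref{thm:surjectivity} says that
\[
\widehat{\Ss}_K\to \BT[\mathcal{G}^c,\mbox{-}\mu_v]{1}
\]
is smooth and surjective. Because $\BT[\mathcal{G}^c,\mbox{-}\mu_v]{1}$ is a formal stack over $\Spf\Reg{E_v}$, any point of it lies over the special fiber. Base changing to $k(v)$ therefore gives a surjective map $\Ss_K\otimes k(v)\to \BT[\mathcal{G}^c,\mbox{-}\mu_v]{1}\otimes k(v)$. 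Here I use that $\widehat{\Ss}_K\otimes k(v)=\Ss_K\otimes k(v)$, and that surjectivity of a map of stacks is stable under base change.

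Next, apply Proposition~\ref{prop:apertures_to_gzips} to the pair $(\mathcal{G}^c,\mbox{-}\mu_v)$. Note that $-\mu_v$ is minuscule and $\mathcal{G}^c$ is reductive, so Setup~\ref{setup:g_mu} holds. The proposition gives a smooth surjective map
\[
\BT[\mathcal{G}^c,\mbox{-}\mu_v]{1}\otimes k(v)\to \mathcal{G}^c\text{-}\mathrm{zip}_{\mbox{-}\mu_v}.
\]
Composing the two maps shows that $\Ss_K\otimes k(v)\to \mathcal{G}^c\text{-}\mathrm{zip}_{\mbox{-}\mu_v}$ is surjective on underlying topological spaces.

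It remains to check that this suffices. By Remark~\ref{rem:stratification_G-zips}, the topological space $|\mathcal{G}^c\text{-}\mathrm{zip}_{\mbox{-}\mu_v}|$ is a finite set ${}^JW$, and each Ekedahl--Oort stratum is nonempty because it contains a point of the stack (the strata are the locally closed substacks attached to the points of ${}^JW$). If one wants this at the level of geometric points, Remark~\ref{rem:settheoretic_EO} gives it directly: for algebraically closed $\kappa$, the map $\BT{1}(\kappa)\to\mathcal{G}^c\text{-}\mathrm{zip}(\kappa)$ is an isomorphism on groupoids, and every element of ${}^JW$ is realized by some $\kappa$-point. A surjective map of stacks lifts geometric points after passing to a field extension. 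So every class in ${}^JW$ is hit by an $\overline{k(v)}$-point of $\Ss_K$, and the preimage of each stratum is nonempty.

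The only potential subtlety is this passage from surjectivity in the formal setting to the statement about reduced special fibers and geometric points. Since everything is checked on points of the special fiber, this should be routine.
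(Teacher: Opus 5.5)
Your proposal is correct and is essentially the paper's argument. The paper also takes the surjectivity of $\widehat{\Ss}_K\to \BT[\mathcal{G}^c,\mbox{-}\mu_v]{1}$ from Theorem~\ref{thm:surjectivity} (this is where the de Jong extension property is used), composes it with the smooth surjection onto $\mathcal{G}^c\text{-}\mathrm{zip}_{\mbox{-}\mu_v}$ from Proposition~\ref{prop:apertures_to_gzips}, and concludes that every stratum has nonempty preimage.
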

\begin{proof}
Given the surjectivity of the syntomic realization map (Theorem~\ref{introthm:surjective}), this comes down to the surjectivity assertion in Proposition~\ref{prop:apertures_to_gzips}.
\end{proof}

\subsection{Mapping properties for smooth inputs}
\label{sub:neronian_properties}

In this subsection, we will give more concrete mapping properties for ICMs in terms of pointwise criteria.

\begin{definition}
    [Pointwise crystallinity]
\label{defn:pointwise_conditions}
Suppose that we have an algebraic space $\mathcal{X}$ over $\Reg{E,(v)}$ equipped with a map $f:\mathcal{X}_\eta\to \Sh_K$. Let  $F/E_v$ be a finite extension and let $x\in \mathcal{X}(\Reg{F})$. Then, we say that $f$ is \defnword{crystalline at $x$} if $\mathbf{Et}_{K,p}|_x$ is crystalline. We say that $f$ is \defnword{pointwise crystalline} if it is crystalline at all such $x$. 

\end{definition}

\begin{proposition}
  [A pointwise mapping property for smooth inputs]
\label{prop:pointwise_smooth_canonical}
Suppose that one of the following holds:
\begin{enumerate}[label=(\alph*)]
    \item $p>3$;
    \item $p>2$ and $(G,X)$ is of pre-abelian type.
\end{enumerate}
Suppose that $\Ss_K$ is an ICM for $\Sh_K$ over $\Reg{E,(v)}$, and that $\mathcal{X}$ is a smooth $\Reg{E,(v)}$-scheme with generic fiber $X$, equipped with a map $f\colon X \to \Sh_K$. Then $f$ extends to a map $\mathcal{X}\to \Ss_K$ if and only if $f$ is pointwise crystalline.
\end{proposition}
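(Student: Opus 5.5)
The "only if" direction is immediate. If $f$ extends to $\tilde f\colon\mathcal{X}\to\Ss_K$, then every $x\in\mathcal{X}(\Reg{F})$ maps to $\Ss_K(\Reg{F})$. The pointwise criterion in Definition~\ref{introdef:crys_icms} then says that $\mathbf{Et}_{K,p}|_x$ is crystalline. For the converse, the plan is to verify the hypotheses of Theorem~\ref{thm:mapping_property} (equivalently Proposition~\ref{prop:uniqueness_maps}) with $\mathcal{Y}=\mathcal{X}$. This requires two things:
\begin{enumerate}
\item an extension $\eta\colon\mathcal{X}\to\BT[\mathcal{G}^c,\mbox{-}\mu_v,\mathrm{alg}]{\infty}$ of the classifying map of $f^*\mathbf{Et}_{K,p}$;
\item hypothesis (4) of Proposition~\ref{prop:uniqueness_maps}, i.e.\ that $\mathcal{X}(\Reg{F})$ lands in $\Ss_K(\Reg{F})$ for complete discretely valued $F$ with perfect residue field.
\end{enumerate}
Smooth schemes over $\Reg{E,(v)}$ are excellent and normal, so Theorem~\ref{thm:mapping_property} applies once these are in place.

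Step (2) should be formal once (1) is known. For $x\in\mathcal{X}(\Reg{F})$, pulling $\eta$ back along $x$ gives an aperture over $\Reg{F}$ lifting $\mathbf{Et}_{K,p}|_x$. Hence $\mathbf{Et}_{K,p}|_x$ is crystalline by Corollary~\ref{cor:guo_reinecke}, and the ICM pointwise criterion puts $x$ in $\Ss_K(\Reg{F})$. Note that (2) uses arbitrary perfect residue fields, not just finite extensions of $E_v$; this is why we route it through the aperture rather than through the hypothesis directly.

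Step (1) is where the work lies. By Corollary~\ref{cor:full_faithfulness} and Lemma~\ref{lem:algebraic_finite_presentation}-type gluing, it suffices to construct the aperture over the $p$-adic completion $\widehat{\mathcal{X}}$, and this can be done \'etale locally. Since $E_v/\Rat_p$ is unramified, $\widehat{\mathcal{X}}$ is $p$-completely smooth over $W(\kappa)$ after base change, so we are in the setting of Theorem~\ref{thm:p_comp_smooth_characterization} with $e=1$. The inequality $2e<p-1$ holds precisely when $p>3$, which is case (a). In that case, by Remark~\ref{rem:guo_yang}, it is enough to know that $f^*\mathbf{Et}_{K,p}$ is crystalline of type $\mbox{-}\mu_v$ at every classical point of $\widehat{\mathcal{X}}_\eta$. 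Classical points of $\widehat{\mathcal{X}}_\eta$ are exactly the points $x\in\mathcal{X}(\Reg{F})$ for finite $F/E_v$, so crystallinity is our hypothesis. The type is then forced to be $\mbox{-}\mu_v$ by Remark~\ref{rem:pointwise_lifts}, via the Liu--Zhu and Diao--Lan--Liu--Zhu comparisons.

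Case (b), where $p=3$, is the step I expect to be the main obstacle, since the bound of Theorem~\ref{thm:p_comp_smooth_characterization} fails. The plan there is to use the Hodge-type refinement of Remark~\ref{rem:hodge_type_improvement}. This applies because $\mu$-weights lie in $\{0,1\}$, so the relevant condition reads $e<p-1$, which holds for $p>2$ via Liu--Moon. To bring it to bear:
\begin{enumerate}
\item Reduce, exactly as in the proof of Theorem~\ref{thm:p_comp_smooth_characterization}, to the complete local rings $\Sig_T$. There, triviality of the analytic $\mathcal{G}^c$-torsor can be tested after passing to the quotient $\mathcal{G}^{c,\ad}\times\mathcal{G}^{c,\ab}$, using purity for the finite flat central kernel.
\item Handle the torus factor by Lemma~\ref{lem:case_of_torus}.
\item For the adjoint pre-abelian factor, replace the adjoint representation by a representation with weights in $\{0,1\}$ pulled back from a Hodge-type cover. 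Such a cover exists after passing to an \'etale cover of the base where the local system lifts, in the spirit of Proposition~\ref{prop:full_faithfulness_2}.
\end{enumerate}
If this lifting is awkward, the fallback is to prove the extension for the Hodge-type ICM first and then descend along the finite \'etale maps of Theorem~\ref{thm:descent_for_icms}.
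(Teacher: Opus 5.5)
Your ``only if'' direction and your treatment of case (a) are the paper's argument: Theorem~\ref{thm:p_comp_smooth_characterization} with $e=1$, Remark~\ref{rem:guo_yang} and Remark~\ref{rem:pointwise_lifts} for crystallinity of type $\mbox{-}\mu_v$, then Theorem~\ref{thm:mapping_property}. Your step (2) just unpacks how that theorem follows from Proposition~\ref{prop:uniqueness_maps}. The gluing over $\mathcal{X}$ comes from Theorem~\ref{thm:full_faithfulness}, not Lemma~\ref{lem:algebraic_finite_presentation}.

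The gap is in case (b). Step 3 of your main plan needs a representation with weight range $1$. Such a representation of a Hodge-type central cover $\tilde{\mathcal{G}}\to\mathcal{G}^{c,\ad}$ does not factor through $\mathcal{G}^{c,\ad}$. So you must first lift the $\mathcal{G}^{c,\ad}(\Int_p)$-local system, together with its analytic $F$-crystal, to $\tilde{\mathcal{G}}(\Int_p)$.
\begin{itemize}
\item Over the strictly henselian complete local ring $T$ there are no non-trivial \'etale covers of the integral base to pass to. The lift would therefore have to exist over $\Spec T[\nicefrac{1}{p}]$ itself, and nothing in your sketch makes that obstruction vanish.
\item If you instead pass to a finite \'etale cover of the generic fiber, the normalization of $T$ (or of $\mathcal{X}$) in it need not be formally smooth over $W(\kappa)$. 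Then neither Liu--Moon nor Theorem~\ref{thm:p_comp_smooth_characterization} applies, and at $p=3$ you cannot absorb ramification, since you need $e<p-1=2$.
\item Proposition~\ref{prop:full_faithfulness_2} presupposes an \emph{integral} \'etale cover over which the local system already lifts, which is exactly what is missing.
\end{itemize}

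The paper's route is essentially your fallback. It first proves the Hodge-type case for $p>2$ directly, using the Hodge embedding itself as a faithful representation of $\mathcal{G}=\mathcal{G}^c$ with weights in $\{0,1\}$ (Remark~\ref{rem:hodge_type_improvement}). No passage to $\mathcal{G}^{c,\ad}\times\mathcal{G}^{c,\ab}$ and no lifting of local systems is needed. It then deduces the pre-abelian case ``by standard arguments''.

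Your fallback omits the one non-formal step of that deduction. First reduce to $G^{\ad}$: it suffices to extend $X\to\Sh_{K^{\ad}}$, because the pullback to $\mathcal{X}$ of the finite \'etale map $\Ss_K\to\Ss_{K^{\ad}}$ then has a section over $X$, which extends over the normal $\mathcal{X}$. You must then show that the finite \'etale cover $X'\to X$, pulled back from a Hodge-type $\Sh_{K_1}\to\Sh_{K^{\ad}}$ (working componentwise), extends to a finite \'etale $\mathcal{X}'\to\mathcal{X}$. Only then can the Hodge-type case be applied to the smooth scheme $\mathcal{X}'$ and the resulting map descended to $\mathcal{X}$.

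One way to get this:
\begin{itemize}
\item By Guo--Yang the local system is crystalline at every $W(\kappa)$-point of $\mathcal{X}$.
\item By the pointwise criterion for $\Ss_{K^{\ad}}$, each such point therefore lies in $\Ss_{K^{\ad}}(W(\kappa))$. Over such a point the finite \'etale map from $\Ss_{K_1}$ splits.
\item A purity argument in the style of Lemma~\ref{lem:good_reduction_torsors}, which also uses $p>2$, then gives \'etaleness.
\end{itemize}
This same unramifiedness is exactly what would supply the lift over $T[\nicefrac{1}{p}]$ that your main plan needs.
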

\begin{proof}
 Suppose first that $p>3$. Given Theorem~\ref{thm:mapping_property}, it is enough to know that $\mathbf{Et}_{K,p}\vert_X$ lifts to a $(\mathcal{G}^c,\mbox{-}\mu_v)$-aperture over $\mathcal{X}$. But this is immediate from Theorem~\ref{thm:p_comp_smooth_characterization} and Remark~\ref{rem:pointwise_lifts}. 

 Next, suppose that $(G,X)$ is of Hodge type and that $p>2$. Then the same argument applies after taking Remark~\ref{rem:hodge_type_improvement} into account. The pre-abelian case can be deduced from this via standard arguments.
\end{proof}

We have the following immediate corollary:
\begin{corollary}
  [N\'eronian property for proper models]
\label{cor:proper_neronian}
Suppose that $\Ss_K$ is \emph{proper} over $\Reg{E,(v)}$, and with the conditions from \emph{Proposition~\ref{prop:pointwise_smooth_canonical}}, $\Ss_K$ is a N\`eron model for $\Sh_K$: In the notation of that proposition, every map $X\to \Sh_K$ extends to a map $\mathcal{X}\to \Ss_K$.
\end{corollary}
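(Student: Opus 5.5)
The corollary should follow from Proposition~\ref{prop:pointwise_smooth_canonical} once we show that, when $\Ss_K$ is proper, every map $f\colon X\to \Sh_K$ out of the generic fiber of a smooth $\Reg{E,(v)}$-scheme $\mathcal{X}$ is automatically pointwise crystalline. By that proposition, pointwise crystallinity is exactly the condition for $f$ to extend to a map $\mathcal{X}\to\Ss_K$.

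So fix a finite extension $F/E_v$ and a point $x\in \mathcal{X}(\Reg{F})$. Its generic fiber $x_\eta\in X(F)$ maps under $f$ to a point $f(x_\eta)\in \Sh_K(F)$. Since $\Ss_K$ is proper over $\Reg{E,(v)}$, the valuative criterion extends this point uniquely to an $\Reg{F}$-point of $\Ss_K$. Here we use that $\Reg{F}$ is a discrete valuation ring over $\Reg{E,(v)}$ and that $\Ss_K$ is a separated algebraic space of finite type.

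Next we apply the pointwise criterion in the definition of an ICM (Definition~\ref{introdef:crys_icms}, the implication (a)$\Rightarrow$(b)), which requires a complete discrete valuation field with perfect residue field. Since $F$ is finite over $E_v$, its residue field is finite, hence perfect, so the criterion applies directly. It gives that $\mathbf{Et}_{K,p}|_{f(x_\eta)}$ is crystalline. This is precisely the statement that $f$ is crystalline at $x$ in the sense of Definition~\ref{defn:pointwise_conditions}. Since $F$ and $x$ were arbitrary, $f$ is pointwise crystalline.

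Proposition~\ref{prop:pointwise_smooth_canonical} (under hypothesis (a) or (b)) then yields the extension $\mathcal{X}\to\Ss_K$, which is the N\'eron property. The only point needing care is the second paragraph: the valuative criterion must be applied to the algebraic space $\Ss_K$ over the base $\Spec\Reg{E,(v)}$, after composing $\Spec \Reg{F}\to \Spec\Reg{E_v}\to\Spec \Reg{E,(v)}$. This is standard, and no genuine obstacle is expected.
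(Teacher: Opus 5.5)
Your argument is correct and is the same as the paper's, which simply calls the corollary immediate. Properness and the valuative criterion put $f(x_\eta)$ in $\Ss_K(\Reg{F})$, the ICM pointwise criterion then makes $\mathbf{Et}_{K,p}|_x$ crystalline, and Proposition~\ref{prop:pointwise_smooth_canonical} gives the extension; even if the valuative criterion for algebraic spaces only produced a lift after a finite extension $F'/F$, nothing changes, since the ICM criterion also accepts potentially crystalline points.
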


\begin{remark}
[Proper models of pre-abelian type]
  A necessary condition for properness of $\Ss_K$ is that $G^{\ad}$ be anisotropic, since this is known to be equivalent to the generic fiber $\Sh_K$ being proper. With this condition in hand, it follows from~\cite[Theorem 1]{mp:toroidal} that, when $(G,X)$ is of pre-abelian type, and $p>2$, the ICMs from Theorem~\ref{thm:abelian} are proper and so are N\`eron models of their generic fibers. 
\end{remark}

\begin{remark}
[Sufficient to check crystallinity at one point]
\label{rem:Diao-Yiao-all-or-nothing}
  Suppose one knew that $\mathbf{Et}_{K,p}$ is a \emph{pointwise semistable} local system on $\Sh_K$: In the abelian type case, this follows from the structure of the boundary of the integral model of a toroidal compactification; see \cite[Theorem 4.39]{Wu2025} and earlier results in \cite{mp:toroidal}. This is also true for the models in Theorem~\ref{introthm:exceptional} as seen during its proof. We expect that this is true in general as well. 
  
  Under such a pointwise semistability condition, results of Guo--Yang~\cite[Theorem 1.1]{guo2024pointwisec} and Diao--Yao~\cite[Theorem 1.1]{diao2025monodromyrigiditycrystallinelocal} allow us to considerably weaken the hypotheses of Proposition~\ref{prop:pointwise_smooth_canonical}. Namely, in this case, statement (1) from this proposition can be upgraded to only require $f$ to be crystalline at $x$ for \emph{some} $x\in\mc{X}(\ms{O}_F)$ in every connected component of $\mc{X}$. 
\end{remark}

\begin{remark}
    \label{rem:pre-abelian-diao-yao}
Suppose that $p>2$ and $(G,\mc{G},X,K)$ is an unramified Shimura tuple of \emph{pre-abelian type}. We do not know yet if $\mathbf{Et}_{K,p}$ is semistable over $\Sh_K$. However, since it suffices to know that the composition $X\to \Sh_K\to \Sh_{K^{\ad}}$ extends to a map $\mathcal{X}\to \Ss_{K^{\ad}}$, the strengthened mapping property from Remark~\ref{rem:Diao-Yiao-all-or-nothing} still holds for $\Ss_K$ itself.
\end{remark}

\subsection{Applications to cohomology}
\label{sub:applications_to_cohomology}

In this subsection, we state the obvious cohomological results about integral canonical models which follow from the results of \S\ref{ss:F-gauges-and-cohomology}. See \emph{loc.\@ cit.\@} for the terminology and notation used below.

\begin{notation}
Let $\mathscr{S}_K$ be an ICM for $(G,\mc{G},X,K)$. For a $\bb{Z}_p$-representation $\Lambda$ of $\mathcal{G}^c$ we write $\mathbf{Et}_{K}(\Lambda)$ for the $\bb{Z}_p$-local system on $\mr{Sh}_K$ given by $(\Lambda)_{\mathbf{Et}_{K,p}}$. We uniquely extend this to an association $V\mapsto \mathbf{Et}_{K}(V)$ for $V\in\mr{Rep}_{\bb{Q}_p}(G^c)$ so that $\mathbf{Et}_K(\Lambda)[\nicefrac{1}{p}]$ when $V=\Lambda[\nicefrac{1}{p}]$. 

Similarly, denote by $\mb{Syn}_K(\Lambda)$ the $F$-gauge on $\widehat{\mathscr{S}}_K$ obtained by twisting $\Lambda$ by the syntomic realization. Finally, we shorten the notation $T^+_\mr{crys}(\mb{Syn}_K(\Lambda))$ to $\mathbf{Crys}^+_K(\Lambda)$. We again uniquely extend this to associations $V\mapsto \mathbf{Syn}_K(V)$ and $V\mapsto \mb{Crys}^+_K(V)$ in the obvious way.
\end{notation}

\begin{proposition}
\label{prop:cohomology-crystalline-automorphic-local-system} 
Suppose that $\ms{S}_K$ is a proper ICM over $\mathscr{O}_{E,(v)}$, and let $C$ denote a completed algebraic closure of $E_v$. Then, for all $i\geqslant 0$ and $\bb{Q}_p$-representations $V$ of $G^c$, the $\mr{Gal}(\overline{E}_v/E_v)$-representation $H^i_\mathrm{et}\big(\Sh_K\otimes C,
\mathbf{Et}_K(V)\big)$ is crystalline and there is a natural identification of filtered $F$-isocrystals:
\begin{equation*}
    D_\mr{crys}\bigg(H^i_\mathrm{et}\big(\mr{Sh}_K\otimes {C},\mathbf{Et}_K(V)\big)\bigg)\simeq H^i_\mr{crys}\big((\ms{S}_{k(v)}/W(k(v))_\mr{crys},\mb{Crys}_K^+(V)\big).
\end{equation*}
Moreover, for all $1\leqslant n\leqslant \infty$ we have a natural identification
\begin{equation*}
    T_{\et}\big(Rf_\ast(\mb{Et}_K(\Lambda)/p^n)\big)\simeq R(f_\eta)_\ast T_{\et}(\mb{Syn}_K(\Lambda)/p^n).
\end{equation*}
\end{proposition}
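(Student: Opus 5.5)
The plan is to deduce the statement directly from the general cohomological results of \S\ref{ss:F-gauges-and-cohomology}, applied to the structure map of the $v$-adic completion of the ICM. Set $\mf{X} = \widehat{\Ss}_K\otimes_{\Reg{E_v}}$ (more precisely the $v$-adic completion $\widehat{\Ss}_K$ over $\Reg{E_v}$) and $f\colon \mf{X}\to \Spf \Reg{E_v}$. By the Serre--Tate property (Lemma~\ref{lem:versality_condition}), $\Ss_K$ is smooth over $\Reg{E,(v)}$, and by hypothesis it is proper. Hence $\mf{X}$ is a smooth proper formal $\Reg{E_v}$-scheme, and $f$ is a smooth proper map of smooth formal $\Reg{E_v}$-schemes, which puts us in the setting of Theorem~\ref{thm:etale-realization-and-pushforward} and its corollary with $K = E_v$.

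First, for $\Lambda\in\Rep_{\Int_p}(\mathcal{G}^c)$, I will take $\mc{V} = \mb{Syn}_K(\Lambda)\in\mr{Vect}(\mf{X}^{\mr{syn}})$, the twist of $\Lambda$ by the syntomic realization. Its étale realization is $(\Lambda)_{T_{\et}(\varpi)}$. By the definition of an ICM this is $(\Lambda)_{\mathbf{Et}_{K,p}}\vert_{\mf{X}_\eta} = \mathbf{Et}_K(\Lambda)\vert_{\mf{X}_\eta}$; this uses the compatibility of the two étale realizations from Construction~\ref{const:etale_realization_apertures} and Corollary~\ref{cor:guo_reinecke}. Next I will compare cohomology of $\Sh_K\otimes C$ with that of the rigid generic fiber. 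Since $\Ss_K$ is proper, $\mf{X}_\eta$ is the analytification of $\Sh_K\otimes_E E_v$. Proper base change and GAGA-type comparison for étale cohomology of $\Int_p$-local systems (Huber's comparison, via the identification $\mr{Loc}_{\Int_p}(X)\simeq\mr{Loc}_{\Int_p}(X^{\an})$) then identify $H^i_{\et}(\Sh_K\otimes C,\mathbf{Et}_K(\Lambda))$ with $H^i_{\et}(\mf{X}_C,T_{\et}(\mc{V}))$, Galois-equivariantly.

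With this in place, the corollary following Theorem~\ref{thm:etale-realization-and-pushforward} gives crystallinity of $H^i_{\et}(\mf{X}_C,T_{\et}(\mc{V}))[\nicefrac{1}{p}]$. It also gives the identification of its $D_{\mr{crys}}$ with $H^i_{\mr{crys}}$ of $T^+_{\mr{crys}}(\mc{V}) = \mb{Crys}^+_K(\Lambda)$, and the integral statement $T_{\et}(Rf_*(\mc{V}/p^n))\simeq R(f_\eta)_*T_{\et}(\mc{V}/p^n)$. Passing from $\Lambda$ to $V = \Lambda[\nicefrac{1}{p}]$ is formal: every $V$ contains a $\mathcal{G}^c$-stable lattice, and the identifications are functorial and compatible with inverting $p$. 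Independence of the lattice follows from functoriality in $\Lambda$. I will read the displayed integral identification with the roles of $\mb{Et}$ and $\mb{Syn}$ arranged as in the corollary, i.e.\ $Rf_*$ of the $F$-gauge.

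The only real obstacle is the comparison step. I need to check that the algebraic étale cohomology of the proper $E_v$-variety $\Sh_K$ with coefficients in the algebraic local system agrees with the adic cohomology used in \S\ref{ss:F-gauges-and-cohomology}. This needs properness, so that the generic fiber of the formal completion is the full analytification and not just a tube. I will handle it by citing Huber's comparison theorem for proper varieties, applied to each $\mathbf{Et}_K(\Lambda)/p^n$ and then passing to the limit over $n$.
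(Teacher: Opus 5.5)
Your proposal is correct and matches the paper's argument. The paper gives no separate proof and treats the statement as an immediate consequence of the corollary to Theorem~\ref{thm:etale-realization-and-pushforward}. That corollary is applied to the smooth proper formal model $\widehat{\Ss}_K$ with $\mc{V}=\mb{Syn}_K(\Lambda)$, using that the \'etale realization of the syntomic realization is $\mathbf{Et}_{K,p}$. Your extra care only makes explicit what the paper leaves implicit: the algebraic-versus-analytic comparison (where properness enters), the passage from lattices $\Lambda$ to $V$, and reading the integral identification with $Rf_*$ applied to the $F$-gauge (the displayed formula has $\mb{Et}$ and $\mb{Syn}$ interchanged).
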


\begin{proposition} Fix notation as in \emph{Proposition \ref{prop:cohomology-crystalline-automorphic-local-system}}. For all $i\geqslant 0$ we have a natural identification
\begin{equation*}
    H^1_f\bigg(H^i_\mathrm{et}\big(\mr{Sh}_K\otimes C,\mb{Et}_K(V)\big)\bigg)\simeq H^1\big(\mathscr{O}_{E_v}^\mr{syn},R^if_\ast\mb{Syn}_K(V)\big).
\end{equation*}
\end{proposition}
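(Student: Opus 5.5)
The plan is to reduce both identifications to the general Proposition in \S\ref{ss:F-gauges-and-cohomology} relating Bloch--Kato finite cohomology to syntomic cohomology of $F$-gauges on a smooth proper formal scheme, applied to $\mathfrak{X}=\widehat{\Ss}_K$ over $\Reg{E_v}$ and the $F$-gauge $\mathcal{V}=\mathbf{Syn}_K(\Lambda)$. First I check the hypotheses. Since $\Ss_K$ is an ICM it satisfies the Serre--Tate property, so by Lemma~\ref{lem:versality_condition} it is smooth over $\Reg{E,(v)}$. Properness is assumed, and $E_v$ is unramified over $\Rat_p$. Hence $\widehat{\Ss}_K$ is a smooth proper formal $\Reg{E_v}$-scheme. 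The syntomic realization $\varpi\colon\widehat{\Ss}_K\to\BT[\mathcal{G}^c,\mbox{-}\mu_v]{\infty}$ gives a $\mathcal{G}^c$-torsor on $\widehat{\Ss}_K^{\mathrm{syn}}$, and twisting $\Lambda$ by it produces the vector bundle $F$-gauge $\mathbf{Syn}_K(\Lambda)$.

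Next I identify the \'etale realization of this $F$-gauge with the automorphic local system. By definition of an ICM, $T_{\et}$ of the universal aperture is isomorphic to $\mathbf{Et}_{K,p}$ on $\widehat{\Ss}_{K,\eta}$. Tensor-functoriality of Construction~\ref{const:etale_realization_apertures} then gives $T_{\et}(\mathbf{Syn}_K(\Lambda))\simeq \mathbf{Et}_K(\Lambda)|_{\widehat{\Ss}_{K,\eta}}$. Because $\Ss_K$ is proper, $\widehat{\Ss}_{K,\eta}$ is the analytification of $\Sh_K\otimes E_v$. Properness is also what allows the comparison of \'etale cohomology of the adic generic fiber with that of the algebraic variety $\Sh_K\otimes C$; I would cite Huber's comparison theorem for this step. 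Therefore $H^i_{\et}(\widehat{\Ss}_{K,C},T_{\et}(\mathcal{V}))[\nicefrac{1}{p}]\simeq H^i_{\et}(\Sh_K\otimes C,\mathbf{Et}_K(V))$ for $V=\Lambda[\nicefrac{1}{p}]$.

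Feeding these identifications into the cited Proposition yields
\[
H^1_f\big(H^i_{\et}(\Sh_K\otimes C,\mathbf{Et}_K(V))\big)\simeq H^1\big(\Reg{E_v}^{\mathrm{syn}},R^if_\ast\mathbf{Syn}_K(\Lambda)\big)[\nicefrac{1}{p}].
\]
This is the claim, once the right-hand side is written as $H^1(\Reg{E_v}^{\mathrm{syn}},R^if_\ast\mathbf{Syn}_K(V))$ using the rational extension $V\mapsto\mathbf{Syn}_K(V)$. I will also check that the result is independent of the choice of lattice $\Lambda$: two lattices differ by a $p$-isogeny, which becomes invertible after inverting $p$.

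The main obstacle I expect is making this step independent of the lattice, together with compatibility with the comparison isomorphism for proper generic fibers. Everything else is a formal application of results already stated.
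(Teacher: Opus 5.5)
Your proposal is correct and follows the paper's route. The paper states this result as an immediate consequence of the proposition at the end of \S\ref{ss:F-gauges-and-cohomology} (which combines Theorem~\ref{thm:etale-realization-and-pushforward} with Pentland's result), applied to $\mathfrak{X}=\widehat{\Ss}_K$ and $\mathcal{V}=\mathbf{Syn}_K(\Lambda)$; your extra checks (smoothness from the Serre--Tate property, the algebraic--analytic comparison using properness, and independence of the lattice after inverting $p$) fill in details the paper leaves implicit.
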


\subsection{Integral canonical models for Shimura stacks}
\label{sub:icms_shimura_stacks}

In this subsection, we will drop the assumption that $K$ be neat, but still assume that $(G,\mathcal{G},X,K)$ is an unramified tuple. In this generality, $\Sh_K$ is a Deligne--Mumford stack and not necessarily a scheme over $E$. However, we still have the map $\Sh_K\to \mathrm{Loc}_{\mathcal{G}^c(\Int_p)}$ classifying the $\mc{G}^c(\bb{Z}_p)$-local system $\mathbf{Et}_{K,p}$.

\begin{definition}
    [Stacky integral canonical models]
\label{defn:stacky_icms}
A \defnword{stacky} \defnword{ICM} for $\Sh_K$ over $\Reg{E,(v)}$ is a separated Deligne--Mumford stack $\Ss_K$ of finite type over $\Reg{E,(v)}$ with generic fiber $\Sh_K$ such that, for some neat compact open subgroup $K'\subset K$ with $K'_p=K_p$, normalization of $\Ss_K$ in $\Sh_{K'}$ is a limpid ICM for $\Sh_{K'}$. 
\end{definition}

We can now collect the properties of stacky ICMs.
\begin{remark}
    [Serre--Tate property for stacky ICMs]
\label{rem:stacky_Serre--Tate}
If $\Ss_K$ is a stacky ICM for $\Sh_K$, by Corollary~\ref{cor:full_faithfulness}, we see that there is an extension $\Ss_K\to \BT[\mathcal{G}^c,\mbox{-}\mu_v,\mathrm{alg}]{\infty}$ of the classifying map for $\mathbf{Et}_{K,p}$, which is $p$-adically formally \'etale.
\end{remark}

\begin{remark}
    [Mapping property for stacky ICMs]
\label{rem:mapping_property_stacky}
Using \'etale descent, the mapping property from Theorem~\ref{thm:mapping_property} holds verbatim for stacky ICMs, and one can even take $\mathcal{Y}$ to be an excellent separated $\eta$-normal Deligne--Mumford stack over $\Reg{E,(v)}$, with the following caveat: One has to impose the condition that, for each $\ell\neq p$, the restriction of $\mathbf{Et}_{K,\ell}$ over $\mathcal{Y}_\eta$ extends to a $K^c_\ell$-local system over $\mathcal{Y}$ represented by pro-algebraic spaces over $\Reg{E,(v)}$.
\end{remark}

As a consequence, we get the following generalization of Corollaries~\ref{cor:uniqueness} and~\ref{cor:functoriality}:
\begin{proposition}
   [Uniqueness and functoriality for stacky ICMs]
\label{prop:uniqueness_funct_stacky} 
Stacky ICMs are unique up to unique isomorphism and are functorial for maps of unramified tuples.
\end{proposition}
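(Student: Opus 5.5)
The plan is to reduce both uniqueness and functoriality to the mapping property for stacky ICMs in Remark~\ref{rem:mapping_property_stacky}, exactly as Corollaries~\ref{cor:uniqueness} and~\ref{cor:functoriality} were deduced from Theorem~\ref{thm:mapping_property}. Suppose $\Ss_K$ and $\Ss'_K$ are two stacky ICMs for $\Sh_K$. Each is a separated Deligne--Mumford stack of finite type over $\Reg{E,(v)}$, and it is excellent.

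The first step is to see that each is $\eta$-normal. After choosing a neat $K'\subset K$ with $K'_p=K_p$ (which we may take normal in $K$), the normalization of $\Ss_K$ in $\Sh_{K'}$ is a limpid ICM, hence smooth. I expect it is finite étale over $\Ss_K$ with group $K/K'$, so $\Ss_K$ is its stack quotient and normality descends. For the finite étaleness I would argue as in Remark~\ref{rem:finite_maps_etale} and Corollary~\ref{cor:prime-to-p_part}: the prime-to-$p$ Hecke group acts on the limpid ICM by the mapping property. This provides the $\eta$-normality needed as input.

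The second step supplies the extra hypothesis in Remark~\ref{rem:mapping_property_stacky}. We need the extension of each $\mathbf{Et}_{K,\ell}$, $\ell\neq p$, over the source stack, represented by pro-algebraic spaces. On a stacky ICM these come from limpidity: the tower of normalizations in $\Sh_{K''}$ for $K''\subset K'$ with $K''_p=K_p$ consists of finite étale covers. It gives the $K^c_\ell$-torsor as an inverse limit of finite étale algebraic spaces over $\Ss_K$. By Remark~\ref{rem:stacky_Serre--Tate}, we also have the extension $\Ss'_K\to\BT[\mathcal{G}^c,\mbox{-}\mu_v,\mathrm{alg}]{\infty}$ of the classifying map of $\mathbf{Et}_{K,p}$.

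Given this, the mapping property applied to the identity $\Sh_K\to\Sh_K$ yields maps $\Ss'_K\to\Ss_K$ and $\Ss_K\to\Ss'_K$ extending the identity. The composites extend the identity on generic fibers, so uniqueness in the mapping property forces them to be the identity. Uniqueness of the comparison isomorphism follows the same way.

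For functoriality along a map of unramified tuples, compose as in Corollary~\ref{cor:functoriality}. Take the induced map $\Sh_{K_1}\to\Sh_{K_2}\otimes E_1$ and the aperture on $\Ss_{K_1}$ pushed along $\mathcal{G}^c_1\to\mathcal{G}^c_2$. Remark~\ref{rem:maps_arising_from_tuples} identifies the pulled-back $p$-adic and $\ell$-adic local systems, so the $\ell$-adic extension hypothesis transfers. Then apply the mapping property of $\Ss_{K_2}$.

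The main obstacle is the first step: verifying that the normalization map to the stacky model is finite étale and Galois, so that normality and the pro-algebraic $\ell$-adic extensions descend. The étale part is clear from Serre--Tate. To get finiteness and the Galois property, I would invoke the uniqueness for the neat level $K'$ to obtain the $K/K'$-action on the neat ICM.
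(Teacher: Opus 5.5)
Your top-level route is the paper's: the statement is deduced from the mapping property of Remark~\ref{rem:mapping_property_stacky}, exactly as Corollaries~\ref{cor:uniqueness} and~\ref{cor:functoriality} are deduced from Theorem~\ref{thm:mapping_property}. It is applied in both directions to the identity of $\Sh_K$, and to the composite $\Sh_{K_1}\to\Sh_{K_2}\otimes E_1$ for functoriality. The gap is in the step you yourself flag as the main obstacle.

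You want to derive from Definition~\ref{defn:stacky_icms} that the normalization $\Ss_{K'}\to\Ss_K$ is finite \'etale, and hence that $\Ss_K$ is normal and carries the $\ell$-adic local systems. Your justification, that ``the \'etale part is clear from Serre--Tate'', is circular:
\begin{itemize}
\item The only syntomic realization available on $\Ss_K$ is the one in Remark~\ref{rem:stacky_Serre--Tate}. That remark gets it from Corollary~\ref{cor:full_faithfulness}, which already requires $\Ss_K$ to be normal and $\Ss_{K'}\to\Ss_K$ to be a smooth cover.
\item Remark~\ref{rem:finite_maps_etale} compares two models that both carry formally \'etale syntomic realizations, which is not your situation.
\end{itemize}

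The claim also cannot be proved from the definition as written. For neat $K$ you may take $K'=K$. Glue two closed points of the special fiber of the ICM $\Ss_K$ that have the same residue field. The resulting model is separated, of finite type, has generic fiber $\Sh_K$, and its normalization in $\Sh_K$ is $\Ss_K$. Yet the normalization map is not \'etale, the model is not $\eta$-normal, and it is not isomorphic to $\Ss_K$. So uniqueness itself fails for the literal definition. Even granting that $\Ss_K$ is normal, you only learn that it is \'etale locally the categorical (coarse) quotient of $\Ss_{K'}$ by $\Delta=K/K'$. Freeness of the $\Delta$-action along the special fiber, which is what \'etaleness amounts to, is an additional input.

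So the missing ingredient is a hypothesis, not a lemma. You must read the definition as requiring that $\Ss_{K'}\to\Ss_K$ be \'etale, i.e.\ $\Ss_K\simeq[\Ss_{K'}/\Delta]$. This is how the paper tacitly uses the notion: it is what Proposition~\ref{prop:existence_stacky} constructs, and what the ``\'etale descent'' in Remark~\ref{rem:mapping_property_stacky} and the appeal to Corollary~\ref{cor:full_faithfulness} in Remark~\ref{rem:stacky_Serre--Tate} need.

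With that built in, the rest goes through:
\begin{itemize}
\item $\Ss_K$ is smooth by \'etale descent from $\Ss_{K'}$, hence $\eta$-normal.
\item By Corollary~\ref{cor:prime-to-p_part}, the normalizations in $\Sh_{K''}$ for $K''\subset K'$ with $K''_p=K_p$ are finite \'etale over $\Ss_K$. They give the pro-algebraic $\ell$-adic torsors required in Remark~\ref{rem:mapping_property_stacky}.
\item Your two-sided application of the mapping property and your functoriality argument then coincide with the paper's one-line deduction.
\end{itemize}
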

    
\begin{proposition}
    [Existence of stacky ICMs]
\label{prop:existence_stacky}
Suppose that for some neat open subgroup $K'\subset K$ with $K' = K^{',p}K_p$, $\Sh_{K'}$ admits a limpid ICM $\Ss_{K'}$ over $\Reg{E,(v)}$ in the sense of \emph{Definition~\ref{defn:ICM}}. Then $\Sh_K$ admits a stacky ICM over $\Reg{E,(v)}$. In particular, if one of the following conditions holds:
\begin{enumerate}
    \item  $p$ is large enough;
    \item $(G,X)$ is of abelian type; 
    \item $p>2$ and $(G,X)$ is of pre-abelian type.
\end{enumerate}
Then $\Sh_K$ admits a stacky ICM over $\Reg{E,(v)}$.
\end{proposition}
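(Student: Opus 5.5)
The plan is to build the stacky ICM as a quotient of the neat-level ICM by the finite group $K/K'$. First I shrink $K'$. Since $K'=K^{',p}K_p$ and limpid ICMs pass to smaller prime-to-$p$ levels by Corollary~\ref{cor:prime-to-p_part}(1), I may replace $K^{',p}$ by a neat subgroup that is normal in $K^p$. Then $K'\unlhd K$, and $\Sh_K=[\Sh_{K'}/(K/K')]$ as Deligne--Mumford stacks. For each $k\in K$ the Hecke isomorphism $i_k\colon\Sh_{K'}\isomto\Sh_{kK'k^{-1}}=\Sh_{K'}$ extends to an automorphism of $\Ss_{K'}$. This follows from Corollary~\ref{cor:functoriality} applied to the map of tuples given by $g=k_p$-free part $k^p\in G(\Adele_f^p)$, because $k_p\in K_p$ acts trivially and the prime-to-$p$ component is what enters. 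Uniqueness in Theorem~\ref{thm:mapping_property} shows these extensions compose correctly, so $K/K'$ acts on $\Ss_{K'}$ and the action restricts to the given one on $\Sh_{K'}$.

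Next I set $\Ss_K\defn[\Ss_{K'}/(K/K')]$. This is a separated Deligne--Mumford stack of finite type over $\Reg{E,(v)}$: it is the quotient of a separated algebraic space of finite type by a finite group, and its generic fiber is $\Sh_K$. It remains to check that normalizing $\Ss_K$ in $\Sh_{K'}$ recovers $\Ss_{K'}$, since $\Ss_{K'}$ is then limpid by hypothesis. The map $\Ss_{K'}\to\Ss_K$ is finite, being a torsor under a finite group. Its source is normal, since the Serre--Tate property makes it smooth over $\Reg{E,(v)}$ by Lemma~\ref{lem:versality_condition}. It is an isomorphism on generic fibers. Hence $\Ss_{K'}$ is that normalization. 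If Definition~\ref{defn:stacky_icms} is read as requiring the condition only for \emph{some} $K'$, this completes the argument. If it must hold for every neat normal $K''$, I would pass to $K'\cap K''$ and again use Corollary~\ref{cor:prime-to-p_part}, together with the fact that normalization is compatible with towers.

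For the final ``in particular'' I only need one neat $K'$ with $K'_p=K_p$ whose Shimura variety admits a limpid ICM. In case (2) and case (3) this is Theorem~\ref{thm:abelian} together with Remark~\ref{rem:preabelian_limpid}. In case (1) it is Theorem~\ref{introthm:exceptional}, with limpidity supplied by Remark~\ref{introrem:exceptional_icms_limpid} for $p$ large. In every case I take $K^{',p}$ small enough to satisfy the ``sufficiently small'' hypotheses.

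The step I expect to be the main obstacle is making the $K/K'$-action on $\Ss_{K'}$ strict, so that it is an honest group action rather than an action up to coherent isomorphism. The point to verify is that the mapping property determines each extension uniquely, since the target is an algebraic space, so the cocycle identities hold on the nose. The elements of $K_p$ require a separate check that they act trivially; this holds because $\Sh_{K'}$ is already the quotient by $K_p$.
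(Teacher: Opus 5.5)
Your proposal is correct and follows the paper's argument. The Galois action on $\Sh_{K'}$ extends to $\Ss_{K'}$ via the mapping property (Theorem~\ref{thm:mapping_property}, equivalently Corollary~\ref{cor:functoriality}), and one sets $\Ss_K=[\Ss_{K'}/(K/K')]$. Your extra steps only make explicit what the paper leaves implicit: you shrink $K^{',p}$ to be normal in $K^p$ so that $\Sh_{K'}\to\Sh_K$ is genuinely Galois, and you check that the normalization of the quotient in $\Sh_{K'}$ recovers $\Ss_{K'}$. Since $K_p\subset K'$, the separate check you mention for elements of $K_p$ is automatic.
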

\begin{proof}
 The map $\Sh_{K'}\to \Sh_{K}$ is a finite Galois cover of Deligne--Mumford stacks over $E$. Let $\Delta$ be its Galois group: the action of $\Delta$ on $\Sh_{K'}$ extends to an action on $\Ss_{K'}$ by Theorem~\ref{thm:mapping_property} and the fact that $\mathbf{Et}_{K',p}$ is $\Delta$-equivariant. Therefore, we can set $\Ss_K = [\Ss_{K'}/\Delta]$: This is a Deligne--Mumford stack over $\Reg{E,(v)}$, and is clearly a stacky ICM for $\Sh_K$ by construction.
\end{proof}

\subsection{Integral canonical models over global rings of integers}
\label{sub:global_rings_of_integers}

\begin{setup}
    We drop our fixed prime $p$. Instead, take an open subgroup $K\subset G(\Adele_f)$ and let $\Xi_K$ be the set of primes $\ell$ where $K\cap G(\Rat_\ell)$ is \emph{not} hyperspecial. For $S\supseteq \Xi_K$ a set of primes, there exists a reductive group scheme $\mathcal{G}^S$ over $\Int[S^{-1}]$ with $\mathcal{G}^S(\Int_p) = K_p$ for $p\notin S$. Write $\mathcal{G}^{S,c}$ for the quotient of $\mc{G}^S$ by the Zariski closure of $Z(G)_{ac}$.
\end{setup}

\begin{remark}
    \label{rem:everywhere_unramified}
If $G$ admits a reductive model $\mathcal{G}_{\Int}$ over $\Int$, and we take $K = \mathcal{G}_{\Int}(\widehat{\Int})$, then $\Xi_K$ is empty. For instance if $G = \GSp_{2g,\Rat}$, then we can take $\mathcal{G}_{\Int} = \GSp_{2g,\Int}$. In this case, the associated Shimura stack $\Sh_K$ is the moduli stack $\mathsf{A}_{g,\Rat}$ over $\Rat$ of principally polarized abelian varieties of dimension $g$.
\end{remark}

\begin{definition}
    [Global integral canonical models]
Let $S\supseteq \Xi_K$ be a set of primes. An integral model $\Ss_K$ for $\Sh_K$ over $\Reg{E}[S^{-1}]$ is a \defnword{(stacky) limpid} \defnword{integral canonical model} for $\Sh_K$ if, for every $p\notin S$ and every place $v\vert p$ of $E$, $\Ss_K\otimes_{\Reg{E}}\Reg{E,(v)}$ is a (stacky) limpid ICM for $\Sh_K$ over $\Reg{E,(v)}$. 
\end{definition}

Let us list some results concerning these global ICMs that follow easily from the local versions. The version of these results for global stacky ICMs are formulated and proven similarly.
\begin{proposition}
    [Mapping property for global ICMs]
\label{prop:mapping_property_global_ICMs}
Let $\mathcal{Y}$ be an excellent separated normal Deligne--Mumford stack flat over $\Reg{E}[S^{-1}]$ with generic fiber $Y$ over $E$. Suppose that we are given a map $f\colon Y\to \Sh_K$ with the following properties:
\begin{enumerate}
    \item For every prime $p\notin S$ and every $v\vert p$, the map $Y\to \mathrm{Loc}_{\mathcal{G}^{S,c}(\Int_p)}$ classifying $\mathbf{Et}_{K,p}$ extends to a map $\mathcal{Y}\otimes_{\Reg{E}}\Reg{E,(v)}\to \BT[\mathcal{G}^{S,c}_{\Int_p},\mbox{-}\mu_v,\mathrm{alg}]{\infty}$.
    \item For all primes $\ell$, the $K^c_\ell$-local system $\mathbf{Et}_{K,\ell}\vert_Y$ extends to a $K^c_\ell$-local system on $\mathcal{Y}[\ell^{-1}]$ represented by pro-algebraic spaces over $\Reg{E,(v)}$. 
\end{enumerate}
Then $f$ extends to a map $\mathcal{Y}\to \Ss_K$.
\end{proposition}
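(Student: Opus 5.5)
The plan is to reduce to the local mapping property (Theorem~\ref{thm:mapping_property}, in its stacky form from Remark~\ref{rem:mapping_property_stacky}) at each prime $p\notin S$ and each place $v\vert p$. The extensions obtained there are then glued along the generic fiber. The gluing is possible because $\Ss_K$ is separated and $\mathcal{Y}$ is flat, so any extension of $f$ is unique.

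\textbf{Step 1: local extensions.} Fix $p\notin S$ and $v\vert p$. Set $\mathcal{Y}_{(v)} = \mathcal{Y}\otimes_{\Reg{E}}\Reg{E,(v)}$. This is a localization of an excellent normal stack, so it is again excellent, normal and separated, and in particular $\eta$-normal. Hypothesis (1) supplies the map $\eta\colon\mathcal{Y}_{(v)}\to \BT[\mathcal{G}^{S,c}_{\Int_p},\mbox{-}\mu_v,\mathrm{alg}]{\infty}$ lifting the classifying map of $\mathbf{Et}_{K,p}\vert_Y$. Here $\mathcal{G}^{S,c}_{\Int_p}=\mathcal{G}^c$ for the unramified tuple at $p$, because $\mathcal{G}^S(\Int_p)=K_p$. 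Hypothesis (2), applied for $\ell\neq p$, gives exactly the extra condition on prime-to-$p$ local systems demanded in the stacky setting of Remark~\ref{rem:mapping_property_stacky}. By assumption $\Ss_K\otimes\Reg{E,(v)}$ is a limpid ICM. Theorem~\ref{thm:mapping_property}, in the stacky version, therefore yields a unique extension $f_v\colon \mathcal{Y}_{(v)}\to \Ss_K\otimes\Reg{E,(v)}$ of $f$. Here I expect to use Proposition~\ref{prop:uniqueness_maps} directly: the prime-to-$p$ data serve only to pass from $\Ss_K$ to a neat cover $\Ss_{K'}$ by étale descent.

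\textbf{Step 2: spreading out.} Each $f_v$ is defined over the semilocal ring $\Reg{E,(v)}$. Because $\Ss_K$ is of finite type, $f_v$ spreads out to a map $\mathcal{Y}\otimes\Reg{E}[1/N_v]\to\Ss_K$ over some open $U_v\subset \Spec\Reg{E}[S^{-1}]$ containing $v$. This step uses that $\mathcal{Y}$ is quasi-compact; if it is not, I would work locally on $\mathcal{Y}$ with quasi-compact opens and glue at the end. The spread-out map agrees with $f$ on the generic fiber. The opens $U_v$ cover $\Spec\Reg{E}[S^{-1}]$, and on overlaps the maps agree: they coincide on the schematically dense generic fiber (by flatness of $\mathcal{Y}$), and $\Ss_K$ is separated. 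In the stacky case one also needs the 2-isomorphisms to agree. Since $\Ss_K$ is a separated Deligne--Mumford stack, its inertia is finite and unramified, so 2-morphisms defined on a dense open extend uniquely; this can be checked étale locally on a neat cover.

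\textbf{Main obstacle.} I expect the hardest part to be the stacky bookkeeping in Step 1 rather than the gluing. One must check that the condition in hypothesis (2), that $\mathbf{Et}_{K,\ell}$ extends over $\mathcal{Y}[\ell^{-1}]$ and is represented by pro-algebraic spaces, produces the finite étale cover $\mathcal{Y}'\to\mathcal{Y}_{(v)}$ corresponding to a neat level $K'$ with $K'_p=K_p$. This cover must be normal and excellent, so that the scheme version of Theorem~\ref{thm:mapping_property} applies to $\Ss_{K'}$. The resulting map must then descend via the equivariance given by uniqueness. For $\ell=p$ this data is not needed locally at $v\vert p$, since hypothesis (1) already provides the aperture. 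I would first try to obtain $\mathcal{Y}'$ as the normalization of $\mathcal{Y}_{(v)}$ in the cover of $Y$ defined by $\mathbf{Et}_{K,\ell}$ modulo $K'_\ell$, using hypothesis (2) to see that this normalization is étale.
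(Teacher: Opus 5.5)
Your proposal is correct and follows the route the paper intends. The paper gives no separate argument, saying only that the global statements follow easily from the local ones: Theorem~\ref{thm:mapping_property}, in its stacky form from Remark~\ref{rem:mapping_property_stacky}, is applied over each $\Reg{E,(v)}$ with $v\vert p$ and $p\notin S$ (using only the $\ell\neq p$ part of hypothesis (2)), and the resulting extensions are spread out and glued using the separatedness of $\Ss_K$, the schematic density of the generic fiber of the flat normal $\mathcal{Y}$, and, in the stacky case, the finiteness of the Isom sheaves as you describe.
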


\begin{proposition}
[Uniqueness and functoriality for global ICMs]
    \label{prop:global_icms_uniqueness_functoriality}
An ICM $\Ss_K$ over $\Reg{E}[S^{-1}]$ is determined uniquely up to unique isomorphism. In fact, for a map $f\colon (G_1,X_1)\to (G_2,X_2)$ of Shimura data with $f(K_1)\subset K_2$, and ICMs $\Ss_{K_i}$ over $\Reg{E_i}[S^{-1}]$, the canonical map $\Sh_{K_1}\to \Sh_{K_2}\otimes_{E_2}E_1$ extends to a map
\[
\Ss_{K_1}\to \Ss_{K_2}\otimes_{\Reg{E_2}}\Reg{E_1}.
\]
\end{proposition}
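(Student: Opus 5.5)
The plan is to reduce to the local statement at each prime. Fix $p\notin S$ and a place $v\vert p$ of $E_1$ over a place $v_2$ of $E_2$. By definition, $\Ss_{K_i}\otimes_{\Reg{E_i}}\Reg{E_i,(v_i)}$ is a limpid ICM for $\Sh_{K_i}$. The map $f$ together with $f(K_1)\subset K_2$ gives a map of unramified tuples in the sense of Definition~\ref{defn:maps_between_unramified_tuples}, taking $g=1$. The one point needing care is that $f_{\Rat_p}$ extends to a map $\mathcal{G}^S_{1,\Int_p}\to\mathcal{G}^S_{2,\Int_p}$. For this I would use that $f$ carries the hyperspecial subgroup $K_{1,p}$ into $K_{2,p}$. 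By Bruhat--Tits (the reductive model is determined by its hyperspecial points, and $\mathcal{G}^S_{i,\Int_p}$ is the smooth model with $\Int_p$-points $K_{i,p}$), $f$ then extends integrally. Corollary~\ref{cor:functoriality} now gives a map
\[
\Ss_{K_1}\otimes\Reg{E_1,(v)}\to \Ss_{K_2}\otimes_{\Reg{E_2}}\Reg{E_1,(v)}
\]
extending the generic map.

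Next I would glue these local extensions into a map over $\Reg{E_1}[S^{-1}]$. Let $\Gamma\subset \Ss_{K_1}\times_{\Reg{E_1}[S^{-1}]}(\Ss_{K_2}\otimes\Reg{E_1})$ be the scheme-theoretic closure of the graph of the generic map. Over each localization $\Reg{E_1,(v)}$, $\Gamma$ agrees with the graph of the local extension. This holds because that graph is closed (the target is separated) and flat over $\Ss_{K_1}\otimes\Reg{E_1,(v)}$, which is $\Reg{E_1,(v)}$-flat, so its closure equals itself. So the projection $\Gamma\to\Ss_{K_1}$ becomes an isomorphism after base change to every $\Reg{E_1,(v)}$ and over $E_1$. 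Since it is of finite type and $\Spec\Reg{E_1}[S^{-1}]$ is covered by these localizations, spreading out shows it is an isomorphism over a neighborhood of every point. Hence it is an isomorphism, and $\Gamma$ is the graph of the desired extension. Uniqueness of the extension comes from separatedness and flatness over the base, which make the generic fiber schematically dense.

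Finally, uniqueness of the ICM itself is the special case $G_1=G_2$, $f=\mathrm{id}$. Two global ICMs receive mutually inverse maps extending the identity, and their composites are the identity by the same density argument, so they agree up to unique isomorphism. For stacky models, the same argument would run étale locally, using Remark~\ref{rem:mapping_property_stacky}.

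The main obstacle is the integral extension of $f$ over $\Int_p$. If the Bruhat--Tits argument proves awkward, I would fall back on the fact that $\mathcal{G}^S_{i,\Int_p}$ is the Zariski closure of $G_i$ in $\GL(\Lambda)$ for a suitable lattice $\Lambda$, and argue through that embedding.
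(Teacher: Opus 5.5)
Your overall strategy (apply Corollary~\ref{cor:functoriality} at each place $v\nmid S$, then glue) is what the paper means when it says these global statements follow from the local ones. The gluing step is fine: the closure of the generic graph localizes to the graph of each local extension by flatness and separatedness, and then spreads out over a neighbourhood of every closed point. The deduction of uniqueness from the case $f=\mathrm{id}$ is also fine.

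The gap is in the first step. The inclusion $f(K_{1,p})\subset K_{2,p}$ does \emph{not} imply that $f_{\mathbb{Q}_p}$ extends to a homomorphism $\mathcal{G}^S_{1,\mathbb{Z}_p}\to\mathcal{G}^S_{2,\mathbb{Z}_p}$. The Bruhat--Tits extension principle for smooth models is about points over the strict henselization: $f$ extends once $f(\mathcal{G}_1(\breve{\mathbb{Z}}_p))\subset\mathcal{G}_2(\breve{\mathbb{Z}}_p)$. The $\mathbb{Z}_p$-points are not enough, because their image in the special fibre is the finite set $\mathcal{G}_1(\mathbb{F}_p)$, which is not schematically dense.

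The implication you need is in fact false in general. In $V=\mathrm{Sym}^{p+1}(\mathbb{Q}_p^2)$, put $f_0=x^py-xy^p$ and $L=\mathrm{Sym}^{p+1}(\mathbb{Z}_p^2)+\mathbb{Z}_p\cdot p^{-1}f_0$.
\begin{itemize}
\item Since $\bar g\cdot\bar f_0=\det(\bar g)\,\bar f_0$ for $\bar g\in\mathrm{GL}_2(\mathbb{F}_p)$, the lattice $L$ is $\mathrm{SL}_2(\mathbb{Z}_p)$-stable.
\item For $t\in W(\overline{\mathbb{F}}_p)$, the substitution $y\mapsto y+tx$ sends $p^{-1}f_0$ to $p^{-1}f_0+p^{-1}(t-t^p)x^{p+1}$ plus an integral term.
\item If $t^p\not\equiv t$, this is not in $L\otimes W(\overline{\mathbb{F}}_p)$, since elements of that lattice have integral $x^{p+1}$-coefficient.
\end{itemize}
So $\mathrm{SL}_{2,\mathbb{Q}_p}\to\mathrm{GL}(V)$ carries the hyperspecial subgroup $\mathrm{SL}_2(\mathbb{Z}_p)$ into the hyperspecial subgroup $\mathrm{GL}(L)(\mathbb{Z}_p)$, yet does not extend to $\mathrm{SL}_{2,\mathbb{Z}_p}\to\mathrm{GL}(L)$.

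Nothing in your argument uses that $f$ is a map of Shimura data to rule this out, and the minuscule constraint does not obviously do so. For $p=2$, already the adjoint representation $\mathrm{Sym}^2\otimes\det^{-1}$ of $\mathrm{PGL}_2$ contains the lattice $\mathrm{Sym}^2(\mathbb{Z}_2^2)+\mathbb{Z}_2\cdot\tfrac12(x^2+xy+y^2)$, which is $\mathrm{PGL}_2(\mathbb{Z}_2)$-stable but not $\mathrm{PGL}_{2,\mathbb{Z}_2}$-stable. Your fallback via Zariski closures in $\mathrm{GL}(\Lambda)$ runs into the same example, since it again only sees $\mathbb{Z}_p$-points.

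There are two ways to close the gap.
\begin{itemize}
\item Read the hypothesis as including that $f$ extends to $\mathcal{G}^S_1\to\mathcal{G}^S_2$ over $\mathbb{Z}[S^{-1}]$; equivalently, impose the $\breve{\mathbb{Z}}_p$-point condition at every $p\notin S$. This is exactly what Definition~\ref{defn:maps_between_unramified_tuples} requires, so the paper's reduction to Corollary~\ref{cor:functoriality} needs it as well.
\item For $p>3$, avoid the integral homomorphism altogether. $\Ss_{K_1}\otimes\Reg{E_1,(v)}$ is smooth, and the restriction of $\mathbf{Et}_{K_2,p}$ to $\Sh_{K_1}$ is the pushout of $\mathbf{Et}_{K_1,p}$ along $K^c_{1,p}\to K^c_{2,p}$. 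It is therefore pointwise crystalline, because crystallinity is a rational condition. Proposition~\ref{prop:pointwise_smooth_canonical} then gives the local extension directly.
\end{itemize}
At $p\in\{2,3\}$ outside $S$ you genuinely need the integral extension of $f$. The uniqueness assertion, being the case $f=\mathrm{id}$, is unaffected.
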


\begin{theorem}
[Existence of global ICMs]
    \label{thm:global_icms_existence} The following statements hold.
\begin{enumerate}
    \item If $(G,X)$ is of abelian type (resp.\@ pre-abelian type), then $\Sh_K$ admits a limpid ICM $\Ss_K$ over $\Reg{E}[\Xi_K^{-1}]$ (resp.\@ over $\Reg{E}[2^{-1}\Xi_K^{-1}]$;
    \item In general, $\Sh_K$ admits a limpid ICM over $\Reg{E}[S^{-1}]$ for some large enough set of primes $S$ containing $\Xi_K$.
\end{enumerate}
\end{theorem}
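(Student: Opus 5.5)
The plan is to deduce the global statement from the local existence results, place by place, and then glue the local models over $\Reg{E}[S^{-1}]$. The gluing and uniqueness are controlled by the mapping property (Proposition~\ref{prop:mapping_property_global_ICMs} and its local version, Theorem~\ref{thm:mapping_property}).

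First I handle the set of bad primes. For (1), I take $S=\Xi_K$ (resp.\ $S=\Xi_K\cup\{2\}$). For each $p\notin S$ the tuple $(G,\mathcal{G}^S_{\Int_p},X,K)$ is unramified. Theorem~\ref{thm:abelian} then gives a limpid ICM over $\Reg{E,(v)}$ for every $v\vert p$. This requires $K^p$ sufficiently small, so I first pass to a neat normal subgroup $K'\subset K$ with $K'_p=K_p$ and descend via Proposition~\ref{prop:existence_stacky}. Since $K$ may not be neat, the natural output is a stacky ICM; in the neat case it is an honest one.

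For (2), I spread out. Choose any finite type separated model $\mathcal{S}$ of $\Sh_{K'}$ over $\Reg{E}[S_0^{-1}]$. After enlarging $S_0$ to $S$, I arrange the three conditions used in the proof of Theorem~\ref{introthm:exceptional}: smoothness with log-smooth compactification, extension of $\Fil^\bullet_{\mathrm{Hdg}}\mathbf{dR}_K$ with Kodaira--Spencer an isomorphism, and the Fontaine--Laffaille lift. I also arrange $p>3$ and the Klevdal--Patrikis bound for limpidity. These are finitely many open conditions on $\Spec\Reg{E}[S_0^{-1}]$ (by \cite{pila2024canonicalheights} and \cite{Klevdal2025-ka}), so they hold after inverting finitely many more primes. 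The proof of Theorem~\ref{introthm:exceptional} then shows that $\mathcal{S}\otimes\Reg{E,(v)}$ is a limpid ICM at every $v\vert p$ with $p\notin S$. This gives the global model directly, with no gluing needed.

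The main obstacle is the gluing in (1): assembling the local models $\Ss_{K,(v)}$ over the various $\Reg{E,(v)}$ into a single algebraic space (or DM stack) over $\Reg{E}[S^{-1}]$. My approach is again to spread out. Take any finite type model $\mathcal{S}$ over $\Reg{E}[N^{-1}]$ for some large $N$. By uniqueness (Corollary~\ref{cor:uniqueness}), it agrees with the local ICMs at all but finitely many places. To prove this, I compare with the construction through Hodge-type embeddings, whose global Siegel models exist over $\Int[\Xi_K^{-1}]$; normalization, quotients, and finite \'etale ascent along central covers (Theorem~\ref{thm:ascent}) all commute with localization. For each of the finitely many remaining places, I glue $\Ss_{K,(v)}$ to $\mathcal{S}$ along the common generic fiber $\Sh_K$ by Zariski gluing over $\Spec\Reg{E}[S^{-1}]$. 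This is legitimate because an algebraic space over a Dedekind base is determined by its restriction to an open subset together with models over the local rings at the missing points, compatible over $E$; this is the Beauville--Laszlo/fpqc gluing used in Corollary~\ref{cor:descent_coefficients_gen_ICMs}. Finally, the global limpidity requires the $\ell$-adic local systems to extend over $\Ss_K[\ell^{-1}]$. This follows from the local limpidity at each $v\nmid\ell$ together with purity of extension of \'etale local systems on the normal space $\Ss_K[\ell^{-1}]$, since extension can be checked at codimension-one points.
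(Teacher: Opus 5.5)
Your proposal is correct and is essentially the argument the paper intends. The paper gives no separate proof; it only says these global results follow easily from the local ones. Indeed, the constructions behind the local existence theorems (normalization of Siegel moduli over $\Reg{E}[S^{-1}]$ in $\Sh_K$, finite quotients, normalization for ascent, and the spreading out in the proof of Theorem~\ref{introthm:exceptional}) are global and localize to the local ICMs, so Corollary~\ref{cor:uniqueness} identifies them and only finitely many places need to be glued in.

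Minor points:
\begin{enumerate}
\item The gluing is just spreading $\Ss_{K,(v)}$ out to a Zariski neighbourhood of $v$ and gluing there, not Beauville--Laszlo.
\item The proof of Theorem~\ref{introthm:exceptional} is written for adjoint $G$, so in (2) you should pass through Theorem~\ref{thm:ascent} for general $G$; this is again a normalization, hence global.
\item The claim that neat $K$ gives a non-stacky model needs a short argument that the $\Delta$-action is free on special fibres.
\item Your final purity step is unnecessary, since limpidity of a global ICM is defined place by place.
\end{enumerate}
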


\begin{remark}
    [Global ICMs of Hodge type]
\label{rem:hodge_type_global_icms}
Suppose that $(G,X)$ is of Hodge type and that we have an embedding $(G,X)\hookrightarrow(\GSp(V),S^{\pm}(V))$ into a Siegel Shimura datum associated with a symplectic space $V$ over $\Rat$. Then the proof of Theorem~\ref{introthm:abelian} shows that the global (stacky) ICM $\Ss_K$ for $\Sh_K$ is obtained by taking the normalization of a moduli stack of polarized abelian varieties over $\Reg{E}[S^{-1}]$ in $\Sh_K$. In particular, $\mathsf{A}_g$, the stack of principally polarized abelian varieties, is a global stacky ICM for its generic fiber.
\end{remark}

\begin{remark}
  [Global ICMs of abelian type]
    For Shimura varieties of abelian type, a version of statement (1) in the theorem above is due to Lovering~\cite{Lovering2017-me}. His proof amounts to a careful finite-level analysis of a combination of Deligne's quotient construction from~\cite{deligne:corvallis} with Kisin's twisting construction from~\cite{kisin:abelian}, whereas the result here is an immediate consequence of our local characterization of ICMs at finite level.
\end{remark}

\begin{remark}
    [Automorphic bundles over global ICMs]
\label{rem:global_automorphic_bundles}
Let $\mathrm{Gr}_X$ be the projective homogeneous $G$-equivariant variety over $E$ parameterizing parabolic subgroups of $G$ of type $\{\mbox{-}\mu\}$. This has a natural integral model $\mathrm{Gr}^S_X$ over $\Reg{E}[S^{-1}]$ defined in the same fashion using parabolic subgroups of $\mathcal{G}^S$. The Hodge-filtered de Rham realizations from Construction~\ref{const:integral_de_rham} for varying places $v\vert p$ with $p\notin S$ can be glued together to get a canonical map
\[
\Ss_K \to \mathrm{Gr}^S_X/\mathcal{G}^S.
\]
As in Remark~\ref{rem:automorphic_sheaves}, for any finite extension $F/E$ and any parabolic subgroup $\mathcal{P}\subset \mathcal{G}^S_{\Reg{F}}$ of type $\{\mbox{-}\mu\}$, a functor from representations of $\mathcal{P}$ to vector bundles over $\Ss_K\otimes_{\Reg{E}}\Reg{F}$. In the abelian-type case, this recovers Lovering~\cite{Lovering2017-me}.
\end{remark}

\subsection{Integral canonical models at parahoric level}
\label{sub:parahoric_levels}
Here we explain that any unramified Shimura datum $(G,\mathcal{G},X)$ which admits an \'etale integral canonical model automatically admits a Pappas--Rapoport integral canonical model as studied in \cite{pappas2023padic}. We then apply results of Takaya from \cite{Takaya} to obtain Pappas--Rapoport integral canonical models for subhyperspecial (e.g., Iwahori) level. Finally, we give an analogue of the mapping property from Theorem \ref{thm:mapping_property} in this parahoric situation.

\begin{notation}
    In the following we denote by $\mathbf{Perf}_{A}$, for a $p$-adically complete ring $A$, the category of characteristic $p$ perfectoid spaces $S$ equipped with a map $S\to \Spd(A)$. For any other undefined piece of notation or terminology, we direct the reader to the comprehensive discussion in \cite[\S2-3]{pappas2023padic}. 
\end{notation}

\begin{definition}[{Tame local Shimura datum}] A \textbf{tame local Shimura datum} is a triple $(\mc{G},b,\{\mu\})$ where $\mc{G}$ is a parahoric group $\mathbb{Z}_p$-scheme with generic fiber $G$, $\{\mu\}$ is a conjugacy class of minuscule cocharacters of $G_{\overline{\mathbb{Q}}_p}$,  and $b$ is an element of $G(\breve{\mathbb{Q}}_p)$ inducing an element of $B(G,\mbox{-}\mu)$.
\end{definition}

\begin{definition}[{Local Shimura variety}]\label{defn:local-shimura-variety} Given a tame local Shimura datum $(\mc{G},b,\{\mu\})$ with reflex field $F$, we obtain a presheaf
\begin{equation*}
    \mc{M}_{\mathcal{G},b,\{\mu\}}^\mr{int}\colon \mathbf{Perf}_{\Reg{\breve{F}}}\to \mathbf{Set},
\end{equation*}
assigning to $S\to \Spd(\Reg{\breve{F}})$ the set of isomorphism classes of tuples $(S^\sharp, \mathscr{P}, \phi_\mathscr{P}, i_r)$, where:
\begin{itemize}
    \item $S^\sharp$ is the untilt of $S$ over $\Reg{\breve{E}}$ associated with $S \to \Spd(\Reg{\breve{F}})$,
    \item $(\mathscr{P},\phi_\mathscr{P})$ is a $\mc{G}$-shtuka on $S$ with one leg along $S^\sharp$ bounded by $\{\mu\}$ (see \cite[Definition 2.4.3]{pappas2023padic}),
    \item and $i_r$ is a framing (see loc.\@ cit.\@).
\end{itemize}
By \cite[\textsection 25.1]{Scholze2020-bx}, the presheaf $\mc{M}_{\mc{G},b,\{\mu\}}^\mr{int}$ is a small $v$-sheaf (in the sense of \cite[Definition 12.1]{Scholze2017-di}), which is called the \textbf{integral local Shimura variety} associated with the tame local Shimura datum $(\mc{G},b,\{\mu\})$.
\end{definition}

\begin{definition}[{Tame Shimura datum/tuple}] A \textbf{tame Shimura datum} is a triple $(G,\mathcal{G},X)$ where $(G,X)$ is a Shimura datum and $\mathcal{G}$ is a parahoric model of $G_{\mathbb{Q}_p}$. A \textbf{tame Shimura tuple} is a quadruple $(G,\mathcal{G},X,{K})$ where $(G,\mathcal{G},X)$ is a tame Shimura datum and $K\subset G(\mathbb{A}_f)$ is a compact open subgroup with $K=K_pK^p$ and where $K_p=\mathcal{G}(\mathbb{Z}_p)$. We say $(G,\mathcal{G},X,K)$ is \textbf{neat} if $K$ is.
\end{definition}

\begin{remark}
   Fix a neat tame Shimura tuple $(G,\mathcal{G},X,K)$ with reflex field $E$ and a place $v$ of $E$ of lying over $p$. In the following we shall abusively write $\Sh_K$ for $\Sh_K(G,X)_{E_v}$. We will write $\{\mu_v\}$ for the conjugacy class of $G_{\overline{\bb{Q}}_p}$ determined by the Hodge cocharacter of $(G,X)$.

As in Construction~\ref{const:Kc_etale_realization} for one may build pro\'etale $\mathcal{G}^c(\mathbb{Z}_p)$-torsor $\mathbf{Et}_{K,p}$ on $\Sh_K$. Here $\mathcal{G}^c$ is the parahoric model of $G^c$ induced by the parahoric model $\mathcal{G}$ of $G$ (e.g., see \cite[\S4.1]{DanielsYoucis}). As in the reductive case, we treat $\{\mu_v\}$ also as a conjugacy class of cocharacters of $G^c_{\overline{\bb{Q}}_p}$. 
\end{remark}

\begin{definition}[{Generic shtuka associated with a tame Shimura datum}]\label{defn:generic-shtuka} We denote by $\mathscr{P}_{K,E_v}$ the $\mathcal{G}^c$-shtuka bounded by $\{\mbox{-}\mu_v\}$ on $\Sh_K^\lozenge\to\Spd(E)$ defined by $U_\mathrm{sht}(\mathbf{Et}_{K,p})$ with notation as in \cite[\S3.2.2]{imai2024tannakianframeworkprismaticfcrystals}.
\end{definition}

\begin{construction}[{Formal neighborhoods of local Shimura varieties associated with $\overline{k}(v)$-points of an integral model of $\Sh_K$}] Let $\Ss_K$ be flat normal $\Reg{E_v}$-model of $\Sh_K$ equipped with an extension $\mathscr{P}_K$ of $\ms{P}_{K,E_v}$ to $\Ss_K^{\lozenge /}$ (see \cite[Definition 2.1.9]{pappas2023padic}).\footnote{Note that $\mathscr{P}_K$ is necessarily bounded by $\{\mu\}$ by \cite[Lemma 2.1]{daniels2023canonical}.} For any point $x$ of $\Ss_K(\overline{k}(v))$, the pullback $x^\ast \mathscr{P}_K$ defines a $\mc{G}^c$-shtuka over $\Spd(\bar{k}(v))$ which by \cite[Example 2.4.9]{pappas2023padic} naturally determines an object $(\mc{P}_x, \phi_x)$ of $\mr{BT}^{\mc{G}^c,\mbox{-}\mu_v}_\infty(\overline{k}(v))$ and by Construction \ref{const:newton_map} an element $b_x$ in $G(\breve{\mathbb{Q}}_p)$ inducing an element of $B(G^c,\mbox{-}\mu_v)$. 

The triple $(\mc{G}^c, b_x, \{\mu_v\})$ defines a tame local Shimura datum, and so we have the integral local Shimura variety $\mc{M}^\mr{int}_{\mc{G}^c,b_x,\{\mu_v\}}$. Moreover, the pair $(\mc{P}_x,\id)$ determines a point $x_0$ of the affine Deligne--Lusztig set $X_{\mc{G}^c}(b_x,\mu_v^{-1})(\bar{k}(v))$ (e.g., as in \cite[Definition 3.3.1]{pappas2023padic}), and so a point of $\mc{M}^\mr{int}_{\mc{G}^c,b_x,\{\mu_v\}}(\overline{k}(v))$ via the identification in \cite[Proposition 2.61.(1)]{GleasonConnectedComponents}. One may then consider the formal neighborhood $(\mc{M}^{\mr{int}}_{\mc{G}^c,b_x,\{\mu_v\}})^\wedge_{/x_0}$; see \cite[Definition 4.18]{GleasonSpecialization}.
\end{construction}

\begin{definition}[{Pappas--Rapoport integral models}] Fix a tame Shimura datum $(G,\mc{G},X)$. Then, a system $\{\Ss_K\}$ of normal flat $\Reg{E_v}$-models of $\Sh_K$, as one ranges over all sufficiently small neat tame Shimura tuples $(G,\mc{G},X,K)$, is a \textbf{Pappas--Rapoport integral canonical model} for $(G,\mc{G},X)$ if it satisfies conditions (i), (ii), (iii) and (iv) of~\cite[Definition 4.3]{DanielsYoucis}.

A Pappas--Rapoport integral canonical model is \textbf{crystalline} if, for any $K$, and for any mixed characteristic complete discrete valuation field $F$ over $E_v$ with perfect residue field, the following are equivalent for $x\in \Sh_K(F)$:
   \begin{enumerate}
     \item $x\in \Ss_K(\Reg{F})$;
     \item $\mathbf{Et}_{K,p}\vert_x$ is crystalline;
     \item $\mathbf{Et}_{K,p}\vert_x$ is potentially crystalline.
   \end{enumerate}
\end{definition}

\begin{proposition}\label{prop:ICMs-are-PR ICMS} Suppose that $(G,\mathcal{G},X)$ is an unramified Shimura datum admitting ICMs $\Ss_K$ for all sufficiently small neat $K^p\subset G(\Adele_f^p)$. Suppose also that these integral models satisfy the additional pointwise condition from Remark~\ref{rem:milne-moonen_comparison}. Then the collection $\{\Ss_K\}$ is a crystalline Pappas--Rapoport integral canonical model for $(G,\mathcal{G},X)$.
\end{proposition}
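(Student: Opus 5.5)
The plan is to verify conditions (i)--(iv) of \cite[Definition 4.3]{DanielsYoucis} one at a time, using the ICM axioms and the results of \S\ref{sec:canonical}--\S\ref{sec:icm_constructions}. The crystalline property will then be automatic: it is literally the pointwise criterion built into Definition~\ref{defn:ICM}.

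\textbf{Step 1: normality, flatness and the prime-to-$p$ tower.} The models $\Ss_K$ are smooth over $\Reg{E,(v)}$ by the Serre--Tate property and Lemma~\ref{lem:versality_condition}, hence normal and flat. The prime-to-$p$ Hecke tower will be organized as a pro-\'etale system with a $G(\Adele_f^p)$-action. For this I use Corollary~\ref{cor:prime-to-p_part} and Remark~\ref{rem:finite_maps_etale}: transition maps are finite \'etale, and Hecke operators extend, by Corollary~\ref{cor:functoriality} and Theorem~\ref{thm:mapping_property}. Limpidity is needed for the normalization argument in Corollary~\ref{cor:prime-to-p_part}. I will either check it from the extra pointwise condition of Remark~\ref{rem:milne-moonen_comparison}, or note that it holds in all cases of interest. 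The quasi-projectivity/properness of the models over the reflex ring is part of the PR axioms when $K^p$ varies; I will obtain it from the corresponding statements for $\Sh_K$ together with the finiteness of the Hecke maps.

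\textbf{Step 2: extending the generic shtuka $\mathscr{P}_{K,E_v}$ to $\Ss_K^{\lozenge/}$.} The syntomic realization $\widehat{\Ss}_K\to\BT[\mathcal{G}^c,\mbox{-}\mu_v]{\infty}$ gives a $\mathcal{G}^c$-torsor on $\widehat{\Ss}_K^{\mathrm{syn}}$. The plan is to apply the prismatic-to-shtuka functor of \cite[\S3.2]{imai2024tannakianframeworkprismaticfcrystals} (the integral version of $U_{\mathrm{sht}}$) to obtain a $\mathcal{G}^c$-shtuka on $\Ss_K^{\lozenge/}$ bounded by $\{\mbox{-}\mu_v\}$. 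Its restriction to the generic fiber must be compared with $U_{\mathrm{sht}}(\mathbf{Et}_{K,p})$; this follows because the \'etale realization of the aperture is $\mathbf{Et}_{K,p}$ (the apertile condition) and the functors are compatible.

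\textbf{Step 3: the local uniformization axiom.} For each $x\in\Ss_K(\overline{k}(v))$, I need an isomorphism between the formal completion $\widehat{\Ss}_{K,/x}^{\lozenge}$ and $(\mc{M}^{\mathrm{int}}_{\mc{G}^c,b_x,\{\mu_v\}})^\wedge_{/x_0}$ compatible with shtukas. By the Serre--Tate property, the completion of $\Ss_K$ at $x$ is the universal deformation ring of $\BT[\mathcal{G}^c,\mbox{-}\mu_v]{\infty}$ at $\mathfrak{Q}_x$ (Remark~\ref{rem:bt_deformation_rings}). It therefore suffices to identify this deformation ring with Gleason's formal neighborhood of the local Shimura variety, i.e.\ with the Pappas--Rapoport/Ito deformation space of $\mathcal{G}^c$-shtukas. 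In the hyperspecial (unramified) case this identification is known: \cite{ito2023deformation} and the comparison of \cite[Proposition 3.32]{imai2023prismatic} identify the deformation space of $(\mathcal{G},\mu)$-displays/apertures with the formal neighborhood in $\mc{M}^{\mathrm{int}}$, compatibly with the universal shtukas. I expect this compatibility with the shtukas (rather than just the abstract rings) to be the main obstacle. I would handle it by checking equality of the two shtukas on the $v$-sheaf of the deformation space via Step 2 and the full faithfulness for shtukas on normal bases \cite[Proposition 2.1.11]{pappas2023padic}, reducing to the generic fiber where both are $U_{\mathrm{sht}}(\mathbf{Et}_{K,p})$.

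\textbf{Step 4: remaining axioms.} The remaining condition(s), typically compatibility of the shtuka with Hecke operators, follow from the $G(\Adele_f^p)$-invariance of the syntomic realization, as in Remark~\ref{rem:hecke_and_canonical_lift}. The additional pointwise condition of Remark~\ref{rem:milne-moonen_comparison} is used to match the extension/valuative axiom in \cite{DanielsYoucis}, through the Milne--Moonen extension property established there.
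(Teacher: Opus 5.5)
Your proposal is essentially the paper's proof: conditions (i)--(ii) come from Remark~\ref{rem:milne-moonen_comparison}, (iii) from applying the shtuka realization of \cite{imai2024tannakianframeworkprismaticfcrystals} to the prismatic $F$-crystal on $\Ss_K$ (whose generic fiber is $U_{\mathrm{sht}}(\mathbf{Et}_{K,p})$ by definition), and (iv) from the formal \'etaleness of $\varpi$ together with \cite[Proposition 3.32]{imai2023prismatic} and \cite[Theorem 5.3.5]{ito2023deformation}, the latter already providing the compatibility with the universal shtuka that you single out as the main obstacle. The Step~1 detours are unnecessary: limpidity is automatic because ICMs exist at all smaller levels and the transition maps are finite \'etale by Remark~\ref{rem:finite_maps_etale}, and quasi-projectivity plays no role. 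For the valuative axiom you only need the first paragraph of Remark~\ref{rem:milne-moonen_comparison} applied to discrete valuation rings, not the full Milne extension property, which is established there only for $p>3$.
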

\begin{proof} 
The crystallinity condition is immediate from our definition of ICMs, so it is enough to check the conditions from~\cite[Definition 4.3]{DanielsYoucis} for Pappas--Rapoport integral canonical models. Conditions (i) and (ii) both follow from Remark \ref{rem:milne-moonen_comparison}. For condition (iii), observe each $\Ss_K$ carries a prismatic $F$-crystal in the sense of \cite[Definition 3.18]{imai2024tannakianframeworkprismaticfcrystals} with de Rham local system $\mathbf{Et}_{K,p}$. Therefore, applying the shtuka realization functor from \cite[Construction 3.19]{imai2024tannakianframeworkprismaticfcrystals} to this prismatic $F$-crystal yields a shtuka $\mathscr{P}_K$ on $\Ss_K^{\lozenge /}$. That $\mathscr{P}_K$ models $\mathscr{P}_{K,E}$ is clear as $(\mathscr{P}_K)_E$ is (by definition) $U_\mathrm{sht}(\mathbf{Et}_{K,p})=\mathscr{P}_{K,E}$. To check (iv), we may use the formal \'etaleness of the syntomic realization $\varpi$ and \cite[Proposition 3.32]{imai2023prismatic} (see also \cite[\S10.2]{gmm}) to reduce ourselves to constructing an isomorphism
\begin{equation*}
    \Theta_x\colon \left(\mc{M}^{\mr{int}}_{\mc{G}^c,b_x,\{\mu_v\}}\right)^\wedge_{/x_0}\isomto \Spf(R_{\mc{G},\{\mu_v\}}),
\end{equation*}
where the target is as in \emph{loc.\@ cit.\@}, and with the property that pullback of $\ms{P}_K$ on the target is the universal $\mathcal{G}^c$-shtuka on the source. But this is the content of \cite[Theorem 5.3.5]{ito2023deformation}.
\end{proof}

\begin{definition}[{Subhyperspecial parahoric}] Let $F/\mathbb{Q}_p$ be a finite extension and $H$ a reductive group over $F$. A subgroup of $H(F)$ is \textbf{subhyperspecial parahoric} if it is a parahoric subgroup in the sense of \cite[Definition 4.1.4]{KalethaPrasad} contained in $\mc{H}(\Reg{F})$ for a reductive $\Reg{F}$-model $\mc{H}$ of $H$.
\end{definition}

\begin{remark}[{Explicit description of subhyperspecial parahorics}] For a reductive model $\mathcal{H}$ of $H$ over $\Reg{F}$, the subhyperspecial parahorics contained in $\mathcal{H}(\Reg{F})$ are precisely those of the form $\mathrm{red}^{-1}(P(k))$ where $k$ is the residue field of $F$, $P\subset \mathcal{H}_k$ is a parabolic subgroup, and 
\begin{equation*}
    \mathrm{red}\colon \mathcal{H}(\Reg{F})\to \mathcal{H}(k)
\end{equation*}
is the reduction map. Equivalently, the subhyperspecial parahoric subgroups of $H(K)$ are $\mathcal{H}'(\Reg{K})$ for a parahoric group $\Reg{K}$-scheme $\mathcal{H}'$ (see \cite[Definition 2.6]{DanielsYoucis}) obtained as the dilatation (see \cite[\S A.5]{KalethaPrasad}) of a reductive $\Reg{F}$-model $\mathcal{H}$ along a parabolic subgroup $P\subset \mathcal{H}_k$. We call these $\Reg{K}$-models $\mathcal{H}'$ of $H$ \textbf{parabolic dilatations of $\mathcal{H}$}. We will just call $\mc{H}'$ a \defnword{parabolic dilatation} if $\mc{H}$ is implicit.
\end{remark}

\begin{example}[{Iwahoris are subhyperspecial}] Considering $\mathrm{red}^{-1}(B(k))$ for $B\subset \mathcal{H}_k$ a Borel subgroup we obtain the (or more precisely \emph{a}) Iwahori subgroup of $H(F)$ as in \cite[Definition 4.1.3]{KalethaPrasad}.
\end{example}

\begin{proposition}\label{prop:PR ICM-for-minorizing} Let $(G,\mathcal{G},X)$ be an unramified Shimura datum satisfying \emph{\textbf{(SV5)}}\footnote{Recall that this means that $Z(G)(\bb{Q})$ is discrete in $Z(G)(\bb{A}_f)$. This, in particular, implies that $G=G^c$.} and admitting ICMs satisfying the condition from \emph{Remark~\ref{rem:milne-moonen_comparison}}. Then, for any parabolic dilatation $\mathcal{G}'$ of $\mathcal{G}$, the tame Shimura datum $(G,\mathcal{G}',X)$ admits a crystalline Pappas--Rapoport integral canonical model.
\end{proposition}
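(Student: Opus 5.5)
The plan is to reduce to the hyperspecial case, where Proposition~\ref{prop:ICMs-are-PR ICMS} already gives a crystalline Pappas--Rapoport integral canonical model $\{\Ss_K\}$ for $(G,\mathcal{G},X)$. We then feed this into Takaya's construction~\cite{Takaya}. As we understand it, Takaya shows that a Pappas--Rapoport integral canonical model at a hyperspecial level $K_p=\mathcal{G}(\Int_p)$ yields one at any subhyperspecial level $K'_p=\mathcal{G}'(\Int_p)$. The model $\Ss_{K'}$ is a relative partial flag variety, or more precisely a moduli of parabolic reductions of the mod-$p$ reduction of the de Rham $\mathcal{G}$-torsor, over $\Ss_K$. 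It is obtained as the normalization of $\Ss_K$ in $\Sh_{K'}$ for $K'=K'_pK^p$.

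The first step is to check that the hypotheses of Takaya's theorem are met. These are: (SV5), which is assumed and forces $G=G^c$ so that $\mathcal{G}^c=\mathcal{G}$ and $\mathcal{G}'^c=\mathcal{G}'$; the existence of a Pappas--Rapoport ICM at hyperspecial level, supplied by Proposition~\ref{prop:ICMs-are-PR ICMS} using the extra pointwise condition of Remark~\ref{rem:milne-moonen_comparison}; and the availability of the integral de Rham $\mathcal{G}$-torsor $\Fil^\bullet_{\mathrm{Hdg}}\mathbf{dR}_K$ from Construction~\ref{const:integral_de_rham}, together with its mod-$p$ reduction and the Frobenius structure coming from the syntomic realization. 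We would invoke Takaya's result as a black box to get the family $\{\Ss_{K'}\}$ satisfying conditions (i)--(iv) of~\cite[Definition 4.3]{DanielsYoucis}.

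The remaining step is crystallinity. Let $F$ be a mixed characteristic complete discrete valuation field with perfect residue field, and let $x\in\Sh_{K'}(F)$. If $\mathbf{Et}_{K',p}\vert_x$ is potentially crystalline, then so is its pushforward $\mathbf{Et}_{K,p}\vert_x$. By the pointwise criterion for the ICM $\Ss_K$, the image of $x$ then lies in $\Ss_K(\Reg{F})$. Since $\Ss_{K'}\to\Ss_K$ is proper (Takaya's model is a closed subscheme of a relative flag variety), the valuative criterion lifts $x$ to $\Ss_{K'}(\Reg{F})$. Conversely, if $x\in\Ss_{K'}(\Reg{F})$, then its image lies in $\Ss_K(\Reg{F})$, so $\mathbf{Et}_{K,p}\vert_x$ is crystalline. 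Crystallinity depends only on the underlying $G(\Rat_p)$-local system, so it also holds for $\mathbf{Et}_{K',p}\vert_x$, whose $\mathcal{G}'(\Int_p)$-structure is just a refinement of the same rational representation.

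The main obstacle is matching Takaya's precise hypotheses and normalizations, for instance whether he requires a Hodge-type embedding, or specific compatibilities of the shtuka $\mathscr{P}_K$ with the de Rham torsor, with what Proposition~\ref{prop:ICMs-are-PR ICMS} provides. If a gap appears there, we would instead verify conditions (iii) and (iv) for $\Ss_{K'}$ directly. For (iii), we would take the shtuka $\mathscr{P}_{K'}$ as the modification of the pullback of $\mathscr{P}_K$ along the parabolic reduction. For (iv), we would identify the formal neighbourhoods with those of the parahoric integral local Shimura variety via~\cite[Theorem 5.3.5]{ito2023deformation} and local models of $\mathcal{G}'$, which are flag-variety bundles over the hyperspecial local model.
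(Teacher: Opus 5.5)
Your proposal follows the paper's argument: Proposition~\ref{prop:ICMs-are-PR ICMS} gives a crystalline Pappas--Rapoport integral canonical model at the hyperspecial level $\mathcal{G}$, and \cite[Theorem 6.20]{Takaya} is then applied as a black box to pass to the parabolic dilatation $\mathcal{G}'$. One correction to your description of Takaya's model: $\Ss_{K'}\to\Ss_K$ need not be finite (Iwahori level for $\mathrm{GSp}_4$ already has positive-dimensional fibers over superspecial points), so $\Ss_{K'}$ is in general neither the normalization of $\Ss_K$ in $\Sh_{K'}$ nor literally a relative flag variety. This affects only your justification of the properness used to lift $\Reg{F}$-points in the crystallinity check, which the paper treats as immediate from Takaya's theorem.
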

\begin{proof} This is immediate from Proposition \ref{prop:ICMs-are-PR ICMS} and \cite[Theorem 6.20]{Takaya}.
\end{proof}

\begin{corollary}\label{cor:PR ICMs-for-pre-abelian-and-large-p} Suppose that $(G,\mathcal{G}',X)$ is a tame parahoric Shimura datum with $\mathcal{G}'$ a parabolic dilatation. Suppose further that either:
\begin{enumerate}
    \item $(G,X)$ is of abelian-type;
    \item $(G,X)$ is of pre-abelian type and $p>2$;
    \item $p$ is sufficiently large.
\end{enumerate}
Then, $(G,\mathcal{G}',X)$ admits a crystalline Pappas--Rapoport integral canonical model.
\end{corollary}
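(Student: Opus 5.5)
The plan is to reduce the corollary to Proposition~\ref{prop:PR ICM-for-minorizing}. That proposition needs three inputs: an unramified Shimura datum $(G,\mathcal{G},X)$ of which $\mathcal{G}'$ is a parabolic dilatation, the axiom (SV5), and ICMs $\Ss_K$ for all sufficiently small neat $K^p$ that satisfy the extra pointwise condition of Remark~\ref{rem:milne-moonen_comparison} (good reduction is detected by unramifiedness of $\mathbf{Et}_{K,\ell}$ for $\ell\neq p$).

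By definition, $\mathcal{G}'$ is the dilatation of some reductive model $\mathcal{G}$ along a parabolic of $\mathcal{G}_k$, so $(G,\mathcal{G},X)$ is unramified. For existence in cases (1) and (2), invoke Theorem~\ref{thm:abelian}; in case (3), invoke Theorem~\ref{introthm:exceptional} (\S\ref{sub:bst}). Limpidity in case (3) comes from Remark~\ref{introrem:exceptional_icms_limpid}, after enlarging $p$.

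The main obstacle is the extra pointwise condition. Let $x\in \Sh_K(F)$ have all $\mathbf{Et}_{K,\ell}\vert_x$ unramified. I would show that $\mathbf{Et}_{K,p}\vert_x$ is potentially crystalline, so the ICM criterion puts $x$ in $\Ss_K(\Reg{F})$.
\begin{itemize}
\item In the pre-abelian cases, first reduce to Hodge type. Use Theorem~\ref{thm:reduction_of_structure_group}, the finite étale maps of Remark~\ref{rem:finite_maps_etale}, and the twisting of Theorem~\ref{thm:descent_for_icms}. In Hodge type, $x$ gives an abelian variety with unramified $\ell$-adic Tate module, which has good reduction by Néron--Ogg--Shafarevich. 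Hence its $p$-adic Tate module is crystalline by Coleman--Iovita.
\item In the exceptional case, use the log Fontaine--Laffaille extension from the proof in \S\ref{sub:bst}. A point reducing into the boundary has nontrivial monodromy $N$, which should also make some $\ell$-adic realization ramified by $\ell$-independence of monodromy (Klevdal--Patrikis). This is the step I am least sure of, and it may require enlarging $p$ again.
\end{itemize}

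Proposition~\ref{prop:PR ICM-for-minorizing} also requires (SV5), which does not follow from the hypotheses as stated. I would handle it by passing to $G^c$: the ICMs are built from $\mathcal{G}^c$-data, and $G^c$ satisfies (SV5) by the definition of the cuspidal quotient. If the proposition requires $G$ itself to satisfy (SV5), I would instead assume it. With these inputs, apply Proposition~\ref{prop:PR ICM-for-minorizing}, whose proof goes through Proposition~\ref{prop:ICMs-are-PR ICMS} and Takaya's theorem, to conclude.
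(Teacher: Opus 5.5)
Your skeleton is the paper's own deduction: feed the limpid ICMs of Theorem~\ref{thm:abelian} (cases (1), (2)) and of \S\ref{sub:bst} with Remark~\ref{introrem:exceptional_icms_limpid} (case (3)) into Proposition~\ref{prop:PR ICM-for-minorizing}. The paper does not verify the extra condition of Remark~\ref{rem:milne-moonen_comparison} here; it takes it as known in these cases (see the footnote to that remark). Where you supply it yourself, case (3) has a step that fails as justified. Klevdal--Patrikis give limpidity and $\ell$-independence of Frobenius conjugacy classes at points of good reduction. They do not relate the $p$-adic monodromy operator $N$ to $\ell$-adic inertia at a point over an arbitrary $p$-adic field reducing into the boundary, and nothing in the paper supplies such an $\ell$-versus-$p$ comparison. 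The input you need is the $\ell$-adic counterpart of the boundary argument in \S\ref{sub:bst}. Namely, $\mathbf{Et}_{K,\ell}$ has non-trivial unipotent monodromy along the boundary divisors of the log-smooth compactification $\overline{\Ss}_K$, by comparison with topological monodromy over $\Comp$. So if the extension of $x$ to $\overline{\Ss}_K(\Reg{F})$ meets the boundary, tame inertia acts through $\exp\bigl(\sum_i v(f_i(x))N_i\bigr)$. Here the $f_i$ are local boundary equations and the non-zero $N_i$ lie in a strictly convex cone, so the action is non-trivial. This is the $\ell$-adic characterization underlying the Bakker--Shankar--Tsimerman models.

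There are two smaller corrections. First, in the pre-abelian cases, lifting $x^{\ad}$ to a Hodge-type Shimura variety requires a finite, possibly ramified, extension $F'/F$. The lift's $\ell$-adic realization is then only \emph{potentially} unramified: its inertia image is central, so use Remark~\ref{rem:tori_unramified} for the abelianization and the finiteness of the remaining kernel. This gives potential good reduction and hence potential crystallinity, which suffices by criterion (c) of Definition~\ref{introdef:crys_icms}. You also need fpqc descent plus separatedness to pass from an arbitrary discrete valuation field to a complete one with perfect residue field. Second, the $G^c$ workaround for (SV5) does not work: $\Sh_K(G,X)$ is not a Shimura variety for $G^c$, and nothing transports Pappas--Rapoport models from $G^c$ back to $G$. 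Proposition~\ref{prop:PR ICM-for-minorizing}, via Takaya's theorem, needs (SV5) for $G$ itself, which forces $G=G^c$. So your fallback of assuming (SV5) is the right reading, and it is what the paper's one-line deduction tacitly uses.
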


\begin{remark}[{Agreement with Kisin--Pappas--Zhou}] When $(G,\mathcal{G}',X)$ is of abelian type and $p>3$ the models constructed by Corollary \ref{cor:PR ICMs-for-pre-abelian-and-large-p} agree with the Kisin--Pappas--Zhou models constructed in \cite{Kisin2018-nm} and \cite{KPZ}. Indeed, this follows from the unicity of Pappas--Rapoport integral canonical models (see \cite[Theorem 4.3.1]{pappas2023padic}) and the results of \cite{DanielsYoucis}, building on previous work from \cite{pappas2023padic}, \cite{daniels2025conjecturepappasrapoport}, and \cite{daniels2023canonical}. 
\end{remark}

\begin{definition}[{Moduli stack of $(\mc{G},\mu)$-shtukas}] Let notation be as in Definition \ref{defn:local-shimura-variety}. Denote by $\mr{Sht}_{\mc{G},\{\mu\}}$ the $v$-stack on $\mathbf{Perf}_{\Reg{F}}$ sending $S\to \Spd(\Reg{F})$ to the groupoid of $\mc{G}$-shtukas over $S$ with leg along $S^\sharp$ bounded by $\{\mu\}$.
\end{definition}

The proof of the following result is along the same lines as that of Theorem~\ref{thm:mapping_property}. One has to replace the use of Theorem~\ref{thm:full_faithfulness}
 with~\cite[Theorem 2.7.7]{pappas2023padic}, and also appeal to \cite[Proposition 18.4.1]{Scholze2020-bx}.
\begin{theorem}\label{thm:PR ICM-mapping-property} 
Let $(G,\mathcal{G},X)$ be a tame Shimura datum admitting a crystalline Pappas--Rapoport integral canonical model $\{\Ss_K\}$ over $\ms{O}_{E_v}$. Suppose that $\mathcal{X}$ is an $\eta$-normal algebraic space over $\Reg{E_v}$. Then a map $\mathcal{X}_\eta\to \Sh_K$ extends to a map $\mathcal{X}\to \Ss_K$ if and only if the composition $\mathcal{X}_\eta^\lozenge\to \Sh^\lozenge_K\to \mr{Sht}_{\mc{G},\{\mbox{-}\mu_v\}}$ extends to a map $\mathcal{X}^{\lozenge/}\to \mr{Sht}_{\mc{G},\{\mbox{-}\mu_v\}}$.
\end{theorem}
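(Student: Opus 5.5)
We follow the template of Proposition~\ref{prop:uniqueness_maps}, with the aperture stack replaced by the $v$-stack of shtukas. The ``only if'' direction is immediate: if $f$ extends to $\tilde f\colon\mathcal{X}\to\Ss_K$, we pull back the shtuka $\mathscr{P}_K$ on $\Ss_K^{\lozenge/}$, which exists by condition (iii) of the Pappas--Rapoport axioms. For the converse, we are given $f\colon\mathcal{X}_\eta\to\Sh_K$ and a shtuka $\mathscr{P}_{\mathcal{X}}$ on $\mathcal{X}^{\lozenge/}$ extending $f^{\lozenge,*}\mathscr{P}_{K,E_v}$.

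As in Proposition~\ref{prop:uniqueness_maps}, we form the scheme-theoretic closure $\mathcal{Y}'$ of the graph of $f$ in $\mathcal{X}\times\Ss_K$ and let $\mathcal{Y}$ be its $\eta$-normalization. It then suffices to show that $\mathcal{Y}\to\mathcal{X}$ is an isomorphism.

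\begin{enumerate}
\item \emph{Comparing shtukas.} On $\mathcal{Y}^{\lozenge/}$ there are two shtukas, pulled back from $\mathscr{P}_{\mathcal{X}}$ and from $\mathscr{P}_K$, and they are identified over $\mathcal{Y}_\eta^\lozenge$. By the full faithfulness result \cite[Theorem 2.7.7]{pappas2023padic}, valid for $\eta$-normal bases, this generic identification extends uniquely to an isomorphism over $\mathcal{Y}^{\lozenge/}$. This step replaces the use of Theorem~\ref{thm:full_faithfulness}.
\item \emph{Quasi-finiteness.} At a closed geometric point $y\mapsto (x_1,x_2)$, consider the formal completions. Condition (iv) identifies the completion of $\Ss_K$ at $x_2$ with the formal neighborhood of the integral local Shimura variety, and hence with the deformation space of the shtuka at $x_2$. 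Since the two shtukas agree on $\mathcal{Y}$, the map from $\widehat{\Rg}_{\mathcal{X},x_1}\hat\otimes\widehat{\Rg}_{\Ss_K,x_2}$ to $\widehat{\Rg}_{\mathcal{Y},y}$ factors through $\widehat{\Rg}_{\mathcal{X},x_1}$ up to finiteness. This replaces the formal unramifiedness hypothesis and gives finiteness of $\widehat{\Rg}_{x_1}\to\widehat{\Rg}_y$.
\item \emph{Identifying $\mathcal{Y}$ with $\mathcal{X}$.} By step 2, $\mathcal{Y}\to\mathcal{X}$ is quasi-finite. By Zariski's Main Theorem and $\eta$-normality, it is an open immersion.
\item \emph{Surjectivity.} Lift each special point $x_1$ to an $\Reg{F}$-point of $\mathcal{X}$ with $F$ a complete discrete valuation field with perfect residue field. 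The shtuka on $\Spd\Reg{F}$ shows that $\mathbf{Et}_{K,p}\vert_x$ is crystalline; this uses \cite[Proposition 18.4.1]{Scholze2020-bx} together with the Tannakian dictionary between shtukas over $\Reg{F}$ and crystalline representations. The crystallinity clause of the model then puts $x$ in $\Ss_K(\Reg{F})$, so $x$ lifts to $\mathcal{Y}$.
\end{enumerate}

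We expect step 2 to be the main obstacle. It requires knowing that the v-sheaf formal neighborhood of $\mathcal{X}^{\lozenge/}$ at $x_1$ actually determines the map of complete local rings into the deformation space $R_{\mathcal{G},\mu}$. Our first approach is to use that $(\Spf\widehat{\Rg})^{\lozenge}$ recovers $\widehat{\Rg}$ for normal complete local rings, via \cite[Proposition 18.4.1]{Scholze2020-bx}, and to combine this with Gleason's specialization formalism.

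Extension of the crystallinity check to non-classical points is not needed, since the closure argument only uses discrete valuation ring points.
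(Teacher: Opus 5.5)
Your proposal is correct and follows the paper's argument. The paper proves this in one line: it runs the graph-closure argument of Proposition~\ref{prop:uniqueness_maps} (via Theorem~\ref{thm:mapping_property}), with \cite[Theorem 2.7.7]{pappas2023padic} replacing Theorem~\ref{thm:full_faithfulness} and \cite[Proposition 18.4.1]{Scholze2020-bx} turning the v-sheaf identification of condition (iv) into a map of complete local rings. This is exactly your steps 1--4, and your step 2 makes explicit the replacement of formal unramifiedness by condition (iv) that the paper's sketch leaves implicit.
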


\printbibliography
\end{document}